\providecommand{\U}[1]{\protect\rule{.1in}{.1in}}
\newtheorem{theorem}{Theorem}
\theoremstyle{plain}
\newtheorem{corollary}[theorem]{Corollary}
\newtheorem{definition}[theorem]{Definition}
\newtheorem{example}[theorem]{Example}
\newtheorem{lemma}[theorem]{Lemma}
\newtheorem{notation}[theorem]{Notation}
\newtheorem{proposition}[theorem]{Proposition}
\newtheorem{remark}[theorem]{Remark}
\numberwithin{equation}{section}
\numberwithin{theorem}{section}
\begin{document}

\title{Topological radicals, IV. Frattini theory for Banach Lie algebras}
\author{Edward Kissin, Victor S. Shulman and Yurii V. Turovskii}
\begin{abstract}
The paper develops the theory of topological radicals of Banach Lie algebras
and studies the structure of Banach Lie algebras with sufficiently many Lie
subalgebras of finite codimensions -- the intersection of all these
subalgebras is zero. It is shown that the intersections of certain families of
Lie subalgebras (closed Lie subalgebras of finite codimension, closed Lie
ideals of finite codimension, closed maximal Lie subalgebras of finite
codimension, closed maximal Lie ideals of finite codimension) correspond to
different preradicals, and that these preradicals generate the same radical,
\textit{the Frattini radical}. The main attention is given to structural
properties of Frattini-semisimple Banach Lie algebras and, in particular, to a
new infinite-dimensional phenomenon associated with the strong Frattini
preradical introduced in this paper. A constructive description of
Frattini-free Banach Lie algebras is obtained.
\end{abstract}
\maketitle

%
%
%

\section{\label{1}Introduction}

In this paper we pursue two interconnected aims: to develop the theory of
topological radicals of Banach Lie algebras and to apply this theory to the
study of the structure of Banach Lie algebras that have rich families of
closed subalgebras of finite codimension.

The notion of the radical --- the map that associates each Lie algebra
$\mathcal{L}$ with its maximal solvable Lie ideal $\mathrm{rad}\left(
\mathcal{L}\right)  $ --- lies at the core of the classical theory of
finite-dimensional Lie algebras. Another important map of this kind is the
``nil radical''\ which maps a Lie algebra into its largest nilpotent ideal. In
numerous other situations it is often useful and enlightening to construct
specific ``radical-like''\ maps that send Lie algebras into their Lie ideals
and have some special structure properties.

The intensive study of such maps for associative algebras was extremely
fruitful and produced an important branch of modern algebra --- the general
theory of radicals (see \cite{Di,Sz}). A topological counterpart of this
theory --- the theory of topological radicals of associative normed algebras
--- was initiated by Dixon in \cite{D}. He proposed a radical theory approach
to the study of the existence of topologically irreducible representations of
Banach algebras. Stimulated by Dixon's work, Read constructed in \cite{R2} his
famous example of a quasinilpotent operator on a Banach space that has no
non-trivial closed invariant subspaces. In \cite{ST-0,ST-I,ST-II,ST-III} the
second and third authors further developed the theory of topological radicals
of associative normed algebras and related this theory to many important
problems in Banach algebra theory and operator theory, such as the existence
of non-trivial ideals, radicality of tensor products, joint spectral radius,
invariant subspaces, spectral theory of multiplication operators etc.

In this paper we introduce and study topological preradicals and radicals of
Banach Lie algebras. A complex Lie algebra $\mathcal{L}$ with Lie
multiplication $[\cdot,\cdot]$ is a Banach Lie algebra, if it is a Banach
space in some norm $\left\|  \cdot\right\|  $ and there is a
\textit{multiplication constant }$t_{\mathcal{L}}>0$ such that
\[
\left\|  \lbrack a,b]\right\|  \leq t_{\mathcal{L}}\left\|  a\right\|
\left\|  b\right\|  \text{ for all }a,b\in\mathcal{L}.
\]
For example, all Banach algebras are Banach Lie algebras with respect to the
Lie multiplication $[a,b]=ab-ba$. In particular, all closed Lie subalgebras of
the algebra $\mathcal{B}(X)$ of all bounded operators on a Banach space $X$
are Banach Lie algebras. Since bilinear maps on finite-dimensional spaces are
continuous, all complex finite-dimensional Lie algebras (with arbitrary norms)
can be considered as Banach Lie algebras.

Denote by $\mathfrak{L}$ the class of all Banach Lie algebras. We consider the
category $\overline{\mathbf{L}}$ of Banach Lie algebras with $\mathrm{Ob}%
\left(  \overline{\mathbf{L}}\right)  =\mathfrak{L,}$ assuming that morphisms
of $\overline{\mathbf{L}}$ are bounded homomorphisms with dense image, and the
subcategory $\mathbf{L}^{\text{f}}$ of $\overline{\mathbf{L}}$ with
$\mathrm{Ob}\left(  \mathbf{L}^{\text{f}}\right)  =\mathfrak{L}^{\mathrm{f}}$
--- the set of all finite-dimensional Lie algebras. It is sometimes reasonable
to consider the subcategory $\mathbf{L}$ of $\overline{\mathbf{L}}$ with
$\mathrm{Ob}\left(  \mathbf{L}\right)  =\mathrm{Ob}\left(  \overline
{\mathbf{L}}\right)  =\mathfrak{L}$ and bounded epimorphisms as morphisms, but
in this paper we will be mainly working in the category $\overline{\mathbf{L}%
}$.

A map $R$: $\mathfrak{L}\rightarrow\mathfrak{L}$ is\emph{ }a
\textit{preradical} in $\overline{\mathbf{L}}$ (in $\mathbf{L}$) if
$R(\mathcal{L)}$ is a closed Lie ideal of $\mathcal{L}$, for each
$\mathcal{L}\in\mathfrak{L}$, and%
\[
f(R\left(  \mathcal{L}\right)  )\subseteq R\left(  \mathcal{M}\right)  \text{
for each morphism\emph{ }}f\text{: }\mathcal{L}\longrightarrow\mathcal{M}%
\text{ in }\overline{\mathbf{L}}\text{ }(\text{in }\mathbf{L}).
\]
The study of any preradical $R$ leads naturally to the singling out two
subclasses of\textbf{ }$\mathfrak{L}$: the class $\mathbf{Sem}\left(
R\right)  $ of $R$-semisimple Lie algebras and the class $\mathbf{Rad}\left(
R\right)  $ of $R$-radical Lie algebras:%
\[
\mathbf{Sem}(R)=\{\mathcal{L}\in\mathfrak{L}\text{: }R\left(  \mathcal{L}%
\right)  =\{0\}\}\text{ and }\mathbf{Rad}(R)=\{\mathcal{L}\in\mathfrak
{L}\text{: }R\left(  \mathcal{L}\right)  =\mathcal{L}\}.
\]
A preradical $R$ is a radical if it behaves well on ideals and quotients. In
particular, $R(\mathcal{L})\in$ $\mathbf{Rad}(R)$ and $\mathcal{L/}%
R(\mathcal{L})\in\mathbf{Sem}(R)$. Thus the radical theory approach reduces
various problems concerning Lie algebras to the corresponding problems
concerning separately semisimple and radical algebras. For many radicals
constructed in this paper, the structure of Lie algebras in these classes is
far from trivial and the study of their structure is interesting and important
in many respects.

Section \ref{2} contains some basic definitions and preliminary results of the
theory of Banach Lie algebras. In Section \ref{3} we introduce main notions of
the radical theory, consider special classes of preradicals and establish some
of their properties important for what follows.

Many naturally arising and important preradicals (for example, the classical
nil-radical) are not radicals. It is often helpful, using some ''improvement''
procedures, to construct from them other preradicals with certain additional
properties and, in particular, radicals associated with the initial
preradicals.\textbf{ }In Section \ref{section4} we examine these procedures.
They are the Banach Lie algebraic versions of the procedures employed by Dixon
for Banach associative algebras\textbf{ }which, in turn, are counterparts of
the Baer procedures for radicals of rings. They produce radicals that are
either the largest out of all radicals smaller than the original preradicals,
or the smallest out of all radicals larger than the original ones. We
extensively use the results and constructions of this section in the further sections.

A collection $\Gamma=\{\Gamma_{\mathcal{L}}\}_{\mathcal{L}\in\mathfrak{L}}$ of
families $\Gamma_{\mathcal{L}}$ of closed subspaces of Lie algebras
$\mathcal{L}\in\frak{L}$ is called a subspace-multifunction.
Subspace-multifunctions give rise to many important preradicals on
$\overline{\mathbf{L}}$. In Section \ref{section5} we study the link between
subspace-multifunctions and the preradicals they generate.

In Section 6 we consider various subspace-multifunctions $\Gamma
=\{\Gamma_{\mathcal{L}}\}_{\mathcal{L}\in\frak{L}}$ that\textbf{ }consist of
finite-dimensional Lie subalgebras and\textbf{ }of commutative Lie ideals of
$\mathcal{L.}$ We study the preradicals they generate and the corresponding
radicals obtained via the methods discussed in Section \ref{section4}. We show
that although the preradicals generated by these subspace-multifunctions are
different, the corresponding radicals often coincide and their restrictions to
$\mathbf{L}^{\text{f}}$ coincide with the classical radical ''rad''. Using
ideas of Vasilescu (see \cite{V}), we also introduce a new radical that
extends ''rad'' to infinite dimensional Lie algebras.

Aiming to investigate in Section 8 chains of Lie subalgebras and ideals of
Banach Lie algebras, we introduce and study in Section 6 the notion of a
\textit{lower finite-gap} chain of closed subspaces of a Banach space $X$.
This means that each subspace $Y$ in a lower finite-gap chain contains another
subspace $Z$ from this chain such that $Y/Z$ is finite-dimensional.

In Section \ref{7} we consider our main subject\textbf{:} the
subspace-multifunctions
\[
\frak{S}=\{\frak{S}_{\mathcal{L}}\}_{\mathcal{L}\in\frak{L}}\text{ and
}\frak{S}^{\max}=\{\frak{S}_{\mathcal{L}}^{\max}\}_{\mathcal{L}\in\frak{L}},
\]
where families $\frak{S}_{\mathcal{L}}$ and $\frak{S}_{\mathcal{L}}^{\max}$
consist, respectively, of all closed proper and closed \textit{maximal} proper
Lie subalgebras of \textit{finite codimension } in $\mathcal{L}$; and the
subspace-multifunctions%
\[
\frak{J}=\{\frak{J}_{\mathcal{L}}\}_{\mathcal{L}\in\frak{L}}\text{ and
}\frak{J}^{\max}=\{\frak{J}_{\mathcal{L}}^{\max}\}_{\mathcal{L}\in\frak{L}},
\]
where families $\frak{J}_{\mathcal{L}}$ and $\frak{J}_{\mathcal{L}}^{\max}$
consist, respectively, of all closed proper and closed \textit{maximal} proper
Lie ideals of \textit{finite codimension } in $\mathcal{L.}$ The corresponding
preradicals $P_{\mathfrak
{S}},$ $P_{\mathfrak{S}^{\text{max}}},$ $P_{\mathfrak{J}}$ and $P_{\mathfrak
{J}^{\text{max}}}$ are defined by%
\[
P_{\mathfrak{S}}(\mathcal{L})=\cap_{L\in\mathfrak{S}_{\mathcal{L}}}L,\text{
\ }P_{\mathfrak{S}^{\text{max}}}(\mathcal{L})=\cap_{L\in\mathfrak
{S}_{\mathcal{L}}^{\text{max}}}L,\text{ \ }P_{\mathfrak{J}}(\mathcal{L}%
)=\cap_{L\in\mathfrak{J}_{\mathcal{L}}}L\text{ \ and }P_{\mathfrak
{J}^{\text{max}}}(\mathcal{L})=\cap_{L\in\mathfrak{J}_{\mathcal{L}%
}^{\text{max}}}L.
\]
Recall that, for finite-dimensional Lie algebras $\mathcal{L}$, $P_{\mathfrak
{S}^{\max}}\left(  \mathcal{L}\right)  $ is the Frattini ideal of
$\mathcal{L}$ and $P_{\mathfrak
{J}^{\max}}\left(  \mathcal{L}\right)  $ is the Jacobson ideal of
$\mathcal{L}$. The study of the above preradicals is based on the main result
of \cite{KST2} which states that if $\mathcal{L}_{0}$ is a maximal closed Lie
subalgebra of finite codimension in a Banach Lie algebra $\mathcal{L}$, then
$\mathcal{L}_{0}$ contains a closed Lie ideal of finite codimension. Using it,
we prove that the radicals generated by the preradicals $P_{\mathfrak{S}}$,
$P_{\mathfrak{S}^{\text{max}}}$, $P_{\mathfrak{J}}$ and $P_{\mathfrak
{J}^{\text{max}}}$ coincide. The obtained radical is denoted by $\mathcal{F}$
and called \textit{the Frattini radical}. We show that the classes of the
radical Lie algebras corresponding to these preradicals and to the Frattini
radical $\mathcal{F}$ coincide, while the classes of their semisimple Lie
algebras satisfy the inclusions%
\[
\mathbf{Sem}(P_{\mathfrak{J}^{\mathrm{\max}}})\subset\mathbf{Sem}%
(P_{\mathfrak{S}^{\mathrm{\max}}})\subset\mathbf{Sem}(P_{\mathfrak{J}}%
)\subset\mathbf{Sem}(P_{\mathfrak{S}})\subset\mathbf{Sem}(\mathcal{F})
\]
and all these inclusions are proper.

In Section \ref{8} we establish that each Banach Lie algebra $\mathcal{L}%
\in\mathbf{Sem}(P_{\frak{J}})$ has a maximal lower finite-gap chain of closed
Lie ideals between $\{0\}$ and $\mathcal{L}$. We characterize $\mathcal{F}%
$-semisimple Lie algebras in terms of lower finite-gap chains of Lie
subalgebras: a Banach Lie algebra $\mathcal{L}$ is $\mathcal{F}$-semisimple if
and only if it has a lower finite-gap chain of closed Lie subalgebras between
$\{0\}$ and $\mathcal{L}$.

Making use of lower finite-gap chains of Lie ideals in Banach Lie algebras, we
define another important preradical on $\overline{\mathbf{L}}$ --- the strong
Frattini preradical $\mathcal{F}_{s}$. We show that $\mathcal{F}%
_{s}(\mathcal{F}_{s}(\mathcal{L}))=\mathcal{F(L)}$ and that $\mathcal{F}%
_{s}(\mathcal{L)/F(L)}$ is commutative for each Banach Lie algebra. A Banach
Lie algebra $\mathcal{L}$ is $\mathcal{F}_{s}$-semisimple if and only if it
has a lower finite-gap chain of closed Lie ideals between $\{0\}$ and
$\mathcal{L.}$ Moreover, each closed Lie subalgebra of a $\mathcal{F}_{s}%
$-semisimple Lie algebra is also $\mathcal{F}_{s}$-semisimple.

Section 9 is devoted to the study of Frattini-free Banach Lie algebras --- the
Lie algebras satisfying the condition%
\[
P_{\frak{S}^{\text{max}}}(\mathcal{L})=\cap_{L\in\frak{S}_{\mathcal{L}%
}^{\text{max}}}L=\{0\},\text{ that is, }\mathcal{L}\in\mathbf{Sem}%
(P_{\frak{S}^{\mathrm{\max}}}).
\]
In \cite{K} the first author considered Frattini-free Banach Lie algebras all
of whose maximal Lie subalgebras have codimension $1$. In this paper we
consider the general case and prove that each Frattini-free Banach Lie algebra
has the largest closed solvable Lie ideal $S$ and that this ideal has
solvability index 2, that is, $[S,S]$ is commutative. We also obtain a
structural description of Frattini-free Lie algebras as subdirect products of
families of finite-dimensional \textit{subsimple} Lie algebras (see
Definitions \ref{D9.1} and \ref{D9.2}).

At the end of the paper we consider finite-dimensional Lie algebras
$\mathcal{L}$. We show that our characterization of Frattini-free Banach Lie
algebras implies a transparent description of each finite-dimensional
Frattini-free algebra as the direct sum of at most three summands --- a
semisimple Lie algebra, a commutative algebra and a semidirect product
$L\oplus^{\text{id}}X,$ where $L$ is a decomposable Lie algebra of operators
on a finite-dimensional linear space $X$. This immediately gives us the
description of finite-dimensional Frattini-free Lie algebras obtained by
Stitzinger \cite{S} and Towers \cite{T}. Furthermore, using results of
Marshall (see \cite{M}) about the relation between the nil-radical of
$\mathcal{L}$ and the Frattini and Jacobson ideals of $\mathcal{L}$, we obtain
some inequalities that relate the Frattini and Jacobson indices of
$\mathcal{L}$ to the solvability index of the nil-radical of $\mathcal{L}$.

\textbf{Acknowledgment.} We are indebted to Victor Lomonosov for a helpful discussion.

\section{\label{2}Characteristic Lie ideals and subideals of Banach\ Lie\ algebras}

Let $\mathcal{L}$ be a Banach Lie algebra. A subspace $L$ of $\mathcal{L}$ is
a \textit{Lie subalgebra }(\textit{ideal}) if $[a,b]\in L,$ for each $a,b\in
L$ (respectively, $a\in L,$ $b\in\mathcal{L}).$ A linear map $\delta$ on
$\mathcal{L}$ is a \textit{Lie derivation} if%
\begin{equation}
\delta([a,b])=[\delta(a),b]+[a,\delta(b)]\text{ for }a,b\in\mathcal{L}.
\label{1.1}%
\end{equation}
Each $a\in\mathcal{L}$ defines a bounded Lie derivation $\operatorname*{ad}%
\left(  a\right)  $ on $\mathcal{L}$: $\operatorname*{ad}\left(  a\right)  x=[a,x].$

Denote by $\mathfrak{D}\left(  \mathcal{L}\right)  $ the set of all bounded
Lie derivations on $\mathcal{L}$. It is a closed Lie subalgebra of the algebra
$\mathcal{B}\left(  \mathcal{L}\right)  $ of all bounded operators on
$\mathcal{L}$ and ad$\left(  \mathcal{L}\right)  =\left\{  \text{ad}\left(
a\right)  :a\in\mathcal{L}\right\}  $ is a Lie ideal of $\mathfrak{D}\left(
\mathcal{L}\right)  $, as $[\delta,$ad$\left(  a\right)  ]=\mathrm{ad}\left(
\delta(a)\right)  .$ If $J$ is a Lie ideal of $\mathcal{L,}$ we denote by
ad$(a)|_{J}$ the restriction of ad$(a)$ to $J$, and ad$\left(  \mathcal{L}%
\right)  |_{J}=\left\{  \text{ad}(a)|_{J}\text{: }a\in\mathcal{L}\right\}  $.

A \textit{ }Lie ideal of $\mathcal{L}$ is called \textit{characteristic} if it
is invariant for all $\delta\in\mathfrak
{D}\left(  \mathcal{L}\right)  .$

\begin{notation}
\emph{We write} $J\vartriangleleft\mathcal{L}$ \emph{if} $J$ \emph{is a closed
Lie ideal of a Banach Lie algebra} $\mathcal{L,}$ \emph{and }%
$J\vartriangleleft^{\emph{ch}}\mathcal{L}$ \emph{if} $J$ \emph{is a
characteristic closed Lie ideal of} $\mathcal{L}$.
\end{notation}

It is easy to check that the centre of $\mathcal{L}$ is a characteristic Lie
ideal$\mathcal{.}$ If $\mathcal{L}$ is commutative then $\{0\}$ and
$\mathcal{L}$ are the only characteristic Lie ideals of $\mathcal{L}$. Indeed,
each closed subspace of $\mathcal{L}$ is a Lie ideal, each bounded operator on
$\mathcal{L}$ is a derivation and only $\{0\}$ and $\mathcal{L}$ are invariant
for $\mathcal{B}(\mathcal{L}).$

The following lemma shows that subspaces of $\mathcal{L}$ invariant for all
bounded Lie isomorphisms are characteristic ideals.

\begin{lemma}
\label{L1}Let $J$ be a closed linear subspace of a Banach Lie algebra
$\mathcal{L}$ invariant for all bounded Lie isomorphisms of $\mathcal{L}$.
Then $J\vartriangleleft^{\emph{ch}}\mathcal{L}$.
\end{lemma}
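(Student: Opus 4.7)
The plan is to use the exponential of a bounded derivation as a bridge between Lie isomorphisms (on which the hypothesis operates) and Lie derivations (under which we need invariance). The key observation is that if $\delta\in\mathfrak{D}(\mathcal{L})$ is bounded, then the operator
\[
\exp(t\delta)=\sum_{n=0}^{\infty}\frac{t^{n}\delta^{n}}{n!}
\]
is a well-defined bounded operator on $\mathcal{L}$ for every $t\in\mathbb{C}$, with bounded inverse $\exp(-t\delta)$. So I would first verify that $\exp(t\delta)$ is a bounded Lie isomorphism of $\mathcal{L}$.

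For this, I would establish the ``Leibniz formula''
\[
\delta^{n}[a,b]=\sum_{k=0}^{n}\binom{n}{k}\,[\delta^{k}a,\delta^{n-k}b]
\]
by induction on $n$, using (\ref{1.1}). Dividing by $n!$, summing over $n$, and rearranging the absolutely convergent double series via a Cauchy-product argument (legitimate because $\left\|\lbrack\cdot,\cdot]\right\|\leq t_{\mathcal{L}}\left\|\cdot\right\|\left\|\cdot\right\|$ and the exponential series converges in operator norm), I get
\[
\exp(t\delta)[a,b]=[\exp(t\delta)a,\exp(t\delta)b]\qquad(a,b\in\mathcal{L}).
\]
Combined with invertibility, this makes $\exp(t\delta)$ a bounded Lie automorphism of $\mathcal{L}$.

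By hypothesis, then, $\exp(t\delta)J\subseteq J$ for every $t\in\mathbb{C}$. Fix $x\in J$ and $t\neq 0$; since $J$ is a linear subspace and $\exp(t\delta)x,x\in J$, the difference quotient $t^{-1}(\exp(t\delta)x-x)$ lies in $J$. Letting $t\to 0$, the norm-convergence of the exponential series gives
\[
\lim_{t\to 0}\frac{\exp(t\delta)x-x}{t}=\delta x,
\]
and closedness of $J$ forces $\delta x\in J$. Thus $J$ is invariant under every bounded Lie derivation. Applying this to the inner derivations $\operatorname{ad}(a)$, $a\in\mathcal{L}$, yields $[\mathcal{L},J]\subseteq J$, so $J$ is in particular a closed Lie ideal, and hence $J\vartriangleleft^{\mathrm{ch}}\mathcal{L}$.

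The main technical obstacle is the Leibniz-type identity for $\delta^n[a,b]$ and the interchange of sum and bracket needed to conclude that $\exp(t\delta)$ is a Lie homomorphism; everything else (boundedness, invertibility, passage to the derivative using closedness of $J$) is routine given the Banach structure of $\mathcal{L}$.
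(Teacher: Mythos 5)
Your proposal is correct and follows essentially the same route as the paper: exponentiate a bounded derivation to get a one-parameter group of bounded Lie automorphisms, use the hypothesis to get $\exp(t\delta)(J)\subseteq J$, and recover $\delta(J)\subseteq J$ by differentiating at $t=0$ using closedness of $J$. The Leibniz-formula verification that $\exp(t\delta)$ is a Lie automorphism, which you spell out, is exactly the step the paper asserts without proof.
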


\begin{proof}
For each $\delta\in\mathfrak{D}\left(  \mathcal{L}\right)  ,$ $\exp
(t\delta)=\sum_{i=0}^{\infty}\frac{t^{n}\delta^{n}}{n!},$ $t\in\mathbb{R},$ is
a one-parameter group of bounded Lie automorphisms of $\mathcal{L}$:
$\exp(t\delta)([a,b])=[\exp(t\delta)(a),\exp(t\delta)(b)]$ for all
$a,b\in\mathcal{L}.$ Hence $\exp(t\delta)(J)\subseteq J$. Since $\delta
(a)=\lim_{t\rightarrow0}(\exp(t\delta)(a)-a)/t$, for each $a\in\mathcal{L}$,
$J$ is invariant for $\delta,$ so it is a characteristic Lie ideal of
$\mathcal{L}$.
\end{proof}

Clearly, the intersection and the closed linear span of a family of
characteristic Lie ideals are characteristic Lie ideals.

\begin{lemma}
\label{essl}Let $\mathcal{L}$ be a Banach Lie algebra\emph{, }let
$J\vartriangleleft^{\emph{ch}}\mathcal{L}$ and $q:\mathcal{L}\longrightarrow
\mathcal{L}/J$ be the quotient map. If $I\vartriangleleft^{\emph{ch}%
}\mathcal{L}/J$ then $q^{-1}(I)\vartriangleleft^{\emph{ch}}\mathcal{L}$.
\end{lemma}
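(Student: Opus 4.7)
The plan is to show that $q^{-1}(I)$ is a closed Lie ideal of $\mathcal{L}$ that is invariant under every bounded Lie derivation of $\mathcal{L}$. Closedness and the ideal property are essentially automatic: since $q$ is a continuous (in fact bounded) surjective Lie homomorphism, the preimage of the closed ideal $I \vartriangleleft \mathcal{L}/J$ is a closed Lie ideal of $\mathcal{L}$, and moreover it contains $J = q^{-1}(\{0\})$.

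The substantive point is characteristicness. The idea is to transfer any $\delta \in \mathfrak{D}(\mathcal{L})$ to a derivation on the quotient and invoke the hypothesis on $I$. Concretely, I would fix $\delta \in \mathfrak{D}(\mathcal{L})$ and use that $J \vartriangleleft^{\text{ch}}\mathcal{L}$ to conclude $\delta(J) \subseteq J$. This guarantees that the formula $\tilde{\delta}(q(a)) := q(\delta(a))$ yields a well-defined bounded linear map $\tilde{\delta}$ on $\mathcal{L}/J$. A short check using (\ref{1.1}) and the fact that $q$ is a Lie homomorphism shows that $\tilde{\delta}$ satisfies the Leibniz rule on $\mathcal{L}/J$, so $\tilde{\delta} \in \mathfrak{D}(\mathcal{L}/J)$.

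Since $I \vartriangleleft^{\text{ch}}\mathcal{L}/J$ by hypothesis, $\tilde{\delta}(I) \subseteq I$. Now for any $a \in q^{-1}(I)$ we have $q(a) \in I$, whence $q(\delta(a)) = \tilde{\delta}(q(a)) \in I$, i.e.\ $\delta(a) \in q^{-1}(I)$. This proves invariance under $\delta$, and since $\delta$ was an arbitrary bounded Lie derivation, $q^{-1}(I) \vartriangleleft^{\text{ch}}\mathcal{L}$.

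There is no real obstacle here; the lemma is a routine transfer of characteristicness along a quotient by a characteristic ideal, the only thing to verify carefully being that the induced map $\tilde{\delta}$ is well defined and remains a Lie derivation, both of which are immediate from $\delta(J) \subseteq J$ and from $q$ respecting the bracket.
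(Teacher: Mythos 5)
Your argument is correct and is essentially identical to the paper's own proof: both pass from $\delta\in\mathfrak{D}(\mathcal{L})$ to the induced derivation $q(x)\mapsto q(\delta(x))$ on $\mathcal{L}/J$ (well defined because $\delta(J)\subseteq J$), apply characteristicness of $I$ in the quotient, and pull back. No gaps; the paper merely states the same steps more tersely.
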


\begin{proof}
As $J$ is a characteristic Lie ideal, $\delta(J)\subseteq J,$ for each\textbf{
}$\delta\in\mathfrak{D}\left(  \mathcal{L}\right)  .$ Hence the quotient map
$\delta^{q}$: $q(x)\rightarrow q(\delta(x))$ on $\mathcal{L}/J$ is, clearly, a
derivation of $\mathcal{L}/J$. Since $I\vartriangleleft^{\text{ch}}%
\mathcal{L}/J,$ we have $\delta^{q}(I)\subseteq I$. This means that
$\delta(q^{-1}(I))\subseteq q^{-1}(I),$ so that $q^{-1}(I)$ is a
characteristic Lie ideal of $\mathcal{L}$.
\end{proof}

If $I\!\vartriangleleft J\vartriangleleft\mathcal{L}$ then $I$ is not
necessarily a Lie ideal of $\mathcal{L.}$ For example, each subspace $I$ of a
commutative ideal $J$ of a Lie algebra $\mathcal{L}$ is not necessarily a Lie
ideal of $\mathcal{L}$ (e.g. subspaces of a Banach space $X$ in the semidirect
product $\mathcal{L}=\mathcal{B}(X)\oplus^{\text{id}}X$ (see (\ref{sem})) are
not Lie ideals of $\mathcal{L)}$.

Statements (i) and (ii) in the following lemma are related to Lemma 0.4
\cite{St}, and (iii) belongs to the mathematical folklore; for the sake of
completeness we present the proofs.

\begin{lemma}
\label{L3.1}\emph{(i) }If $I\vartriangleleft^{\emph{ch}}J\vartriangleleft
\mathcal{L}$ then $I\vartriangleleft\mathcal{L.}$

\begin{itemize}
\item [$\mathrm{(ii)}$]\emph{ }If $I\vartriangleleft^{\emph{ch}}%
J\vartriangleleft^{\emph{ch}}\mathcal{L}$ then $I\vartriangleleft^{\emph{ch}%
}\mathcal{L}$.

\item[$\mathrm{(iii)}$] \emph{ }If $J\vartriangleleft\mathcal{L}$ and
$J=\overline{[J,J]}$ then $J\vartriangleleft^{\emph{ch}}\mathcal{L}$.
\end{itemize}
\end{lemma}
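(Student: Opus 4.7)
The three parts are essentially independent consequences of two principles: (a) each inner derivation $\mathrm{ad}(a)$ of $\mathcal{L}$ restricts, on any Lie ideal, to a bounded Lie derivation of that ideal; and (b) the derivation identity together with density/continuity lets us propagate invariance from generators to their closed span. I would treat (i), (ii), (iii) in order, reusing (i) for (ii).

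For (i), my plan is to show that every element of $\mathrm{ad}(\mathcal{L})$ restricts to an element of $\mathfrak{D}(J)$. Given $a\in\mathcal{L}$ and $J\vartriangleleft\mathcal{L}$, the map $\mathrm{ad}(a)|_J$ is bounded (since $\mathrm{ad}(a)$ is bounded and $J$ is $\mathrm{ad}(a)$-invariant by the ideal property), and it satisfies the Leibniz rule on $J$ by the Jacobi identity applied inside $\mathcal{L}$. Hence $\mathrm{ad}(a)|_J\in\mathfrak{D}(J)$, and the hypothesis $I\vartriangleleft^{\mathrm{ch}}J$ gives $[a,I]=\mathrm{ad}(a)|_J(I)\subseteq I$. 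As $I$ is closed (being a characteristic ideal of $J$), this yields $I\vartriangleleft\mathcal{L}$.

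For (ii), I would start from (i), which already gives $I\vartriangleleft\mathcal{L}$. It only remains to show invariance under every $\delta\in\mathfrak{D}(\mathcal{L})$. Since $J\vartriangleleft^{\mathrm{ch}}\mathcal{L}$, we have $\delta(J)\subseteq J$, so $\delta|_J$ is well-defined, bounded, and clearly still satisfies the Leibniz identity on $J$; thus $\delta|_J\in\mathfrak{D}(J)$. Then $I\vartriangleleft^{\mathrm{ch}}J$ gives $\delta(I)=\delta|_J(I)\subseteq I$, completing the proof.

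For (iii), the key idea is that the hypothesis $J=\overline{[J,J]}$ lets us test derivations only on brackets $[x,y]$ with $x,y\in J$. Fix $\delta\in\mathfrak{D}(\mathcal{L})$; for any $x,y\in J$,
\[
\delta([x,y])=[\delta(x),y]+[x,\delta(y)].
\]
Since $J\vartriangleleft\mathcal{L}$ and $\delta(x),\delta(y)\in\mathcal{L}$, both terms lie in $[\mathcal{L},J]\subseteq J$. By linearity $\delta([J,J])\subseteq J$, and by continuity of $\delta$ plus closedness of $J$,
\[
\delta(J)=\delta\bigl(\overline{[J,J]}\bigr)\subseteq \overline{\delta([J,J])}\subseteq \overline{J}=J.
\]
Hence $J\vartriangleleft^{\mathrm{ch}}\mathcal{L}$.

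None of the three parts presents a real obstacle; the only subtlety is in (iii), where one must use the closedness of $J$ and boundedness of $\delta$ to pass from $[J,J]$ to its closure, and recognise that the ideal property $[\mathcal{L},J]\subseteq J$ is precisely what makes each summand of the Leibniz expansion land back in $J$ even though $\delta(x),\delta(y)$ need not lie in $J$ a priori.
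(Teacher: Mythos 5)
Your proof is correct and follows essentially the same route as the paper's: restricting $\mathrm{ad}(\mathcal{L})$ (resp.\ $\mathfrak{D}(\mathcal{L})$) to $J$ for (i) and (ii), and using the Leibniz rule plus boundedness of $\delta$ and closedness of $J$ for (iii). No gaps.
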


\begin{proof}
(i) As $J\vartriangleleft\mathcal{L},$ ad$\left(  \mathcal{L}\right)
\mathcal{|}_{J}$ is a Lie subalgebra of $\mathfrak{D}\left(  J\right)  $.
Hence $I$ is invariant for ad$\left(  \mathcal{L}\right)  \mathcal{|}_{J}.$
Thus $I$ is a Lie ideal of $\mathcal{L}$.

(ii) We have $\delta(J)\subseteq J$ and $\delta|_{J}\in\mathfrak{D}\left(
J\right)  $, for all $\delta\in\mathfrak{D}\left(  \mathcal{L}\right)  $, and
$\Delta(I)\subseteq I$ for all $\Delta\in\mathfrak{D}\left(  J\right)  $.
Hence $\delta(I)\subseteq I$ for all $\delta\in\mathfrak{D}\left(
\mathcal{L}\right)  $. Thus $I$ is a characteristic Lie ideal of $\mathcal{L}$.

(iii) By (\ref{1.1}), for each $\delta\in\mathfrak{D}\left(  \mathcal{L}%
\right)  $, we have $\delta([J,J])\subseteq\lbrack\delta(J),J]+[J,\delta
(J)]\subseteq\lbrack J,\mathcal{L}]\subseteq J.$ As $\delta$ is bounded,
$\delta(J)=\delta(\overline{[J,J]})\subseteq J.$ Hence $J$ is a characteristic
Lie ideal of $\mathcal{L}$.
\end{proof}

The existence of Lie ideals and characteristic Lie ideals of finite
codimension was studied in \cite{KST1,KST2}. We will often use the following
result obtained in \cite{KST2}.

\begin{theorem}
\label{KST1}\emph{\cite{KST2} }Let a Banach Lie algebra $\mathcal{L}$ have a
closed proper Lie subalgebra $\mathcal{L}_{0}$ of finite codimension$.$ Then
$\mathcal{L}$ has a closed proper Lie ideal of finite codimension. In addition$,$

\begin{itemize}
\item [$\mathrm{(i)}$]If $\mathcal{L}_{0}$ is maximal$,$ then $\mathcal{L}%
_{0}$ contains a closed Lie ideal of $\mathcal{L}$ of finite codimension.

\item[$\mathrm{(ii)}$] If $\mathcal{L}$ is non-commutative$,$ it has a proper
closed characteristic Lie ideal of finite codimension.
\end{itemize}
\end{theorem}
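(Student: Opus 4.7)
The plan is to focus on part (i), from which the main existence statement follows by an extension argument, and then to derive (ii) via a characteristic-closure refinement. Given a closed proper Lie subalgebra $\mathcal{L}_{0}$ of finite codimension $n$, the closed Lie subalgebras of $\mathcal{L}$ containing $\mathcal{L}_{0}$ correspond bijectively to subspaces of the finite-dimensional quotient $\mathcal{L}/\mathcal{L}_{0}$, so a maximal closed proper Lie subalgebra $M \supseteq \mathcal{L}_{0}$ of finite codimension exists; a closed Lie ideal of $\mathcal{L}$ inside $M$ supplied by (i) is then automatically proper and of finite codimension in $\mathcal{L}$, which establishes the main statement. For (i) itself, the normalizer $N_{\mathcal{L}}(M) = \{x \in \mathcal{L} : [x,M] \subseteq M\}$ is a closed Lie subalgebra of $\mathcal{L}$ containing $M$, so maximality forces $N_{\mathcal{L}}(M) \in \{M,\mathcal{L}\}$. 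If $N_{\mathcal{L}}(M) = \mathcal{L}$ then $M$ itself is already the required Lie ideal, so the non-trivial case is the self-normalizing one.

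In the self-normalizing case $N_{\mathcal{L}}(M) = M$, I would construct the ideal inside $M$ via the descending chain $M_{0} = M$, $M_{k+1} = \{x \in M_{k} : [x,\mathcal{L}] \subseteq M_{k}\}$. A short Jacobi-identity calculation shows that each $M_{k}$ is a closed Lie subalgebra of $\mathcal{L}$ and that $M_{k+1}$ is a Lie ideal of $M_{k}$. Since $M_{k}$ is a subalgebra, the adjoint action descends to a well-defined Lie-algebra homomorphism $\bar{\pi}_{k} \colon M_{k} \to \mathrm{End}(\mathcal{L}/M_{k})$, $x \mapsto \mathrm{ad}(x) \bmod M_{k}$, whose kernel is precisely $M_{k+1}$; since the codomain is finite-dimensional, $M_{k+1}$ has finite codimension in $M_{k}$, and hence in $\mathcal{L}$. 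The intersection $M_{\infty} = \bigcap_{k} M_{k}$ then satisfies $[\mathcal{L},M_{\infty}] \subseteq M_{\infty}$, so it is a closed Lie ideal of $\mathcal{L}$ contained in $M$.

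The principal obstacle is to show that $M_{\infty}$ has finite codimension in $\mathcal{L}$, equivalently that the chain $(M_{k})$ stabilizes after finitely many steps. In the finite-dimensional setting this is immediate, but the codimensions only obey the weak bound $n_{k+1} \leq n_{k} + n_{k}^{2}$, which is a priori compatible with $n_{k} \to \infty$. Here the Banach structure of $\mathcal{L}$ and the maximality of $M$ must together rule out indefinite growth; a natural route is to analyze the finite-dimensional Lie subalgebras $\bar{\pi}_{k}(M_{k}) \subseteq \mathrm{End}(\mathcal{L}/M_{k})$ and show by a spectral or compactness argument that perpetual non-triviality of $\bar{\pi}_{k}$ would produce an $(\mathrm{ad}\,\mathcal{L})$-invariant closed subspace of $\mathcal{L}$ of finite codimension strictly between $M_{\infty}$ and $M$, contradicting the maximality of $M$ and thereby forcing $\bar{\pi}_{k} = 0$ from some finite stage on. This stabilization step is the technical heart of \cite{KST2}.

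For (ii), given a closed proper Lie ideal $J$ of finite codimension supplied by the main statement, I would pass to its characteristic closure. Non-commutativity of $\mathcal{L}$ is essential here: in the commutative case only $\{0\}$ and $\mathcal{L}$ are characteristic, so some extra input is needed. One first observes directly that $\overline{[\mathcal{L},\mathcal{L}]}$ is a non-zero closed characteristic Lie ideal of $\mathcal{L}$, since every $\delta \in \mathfrak{D}(\mathcal{L})$ satisfies $\delta([\mathcal{L},\mathcal{L}]) \subseteq [\mathcal{L},\mathcal{L}]$ by the derivation identity and hence preserves the closure. One then extracts the largest characteristic closed Lie ideal contained in $J$ by iterating $J_{0}=J$, $J_{k+1} = \bigcap_{\delta\in\mathfrak{D}(\mathcal{L})} (J_{k} \cap \delta^{-1}(J_{k}))$; arguing as in (i) but now with the action of $\mathfrak{D}(\mathcal{L})$ in place of $\mathrm{ad}(\mathcal{L})$, and invoking the bounded automorphism group $\exp(t\,\mathfrak{D}(\mathcal{L}))$ together with Lemma \ref{L1}, one obtains a proper closed characteristic Lie ideal of finite codimension in $\mathcal{L}$.
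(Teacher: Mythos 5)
First, note that the paper does not prove this theorem: it is imported verbatim from \cite{KST2}, where it is the main result and its proof occupies the bulk of that article. So the question is whether your outline stands on its own, and it does not. Your reductions are correct and well organized: deducing the main statement from (i) by enlarging $\mathcal{L}_0$ to a maximal closed proper subalgebra $M$ (legitimate, since the subalgebras containing $\mathcal{L}_0$ correspond to subspaces of the finite-dimensional quotient space $\mathcal{L}/\mathcal{L}_0$ and are automatically closed), the normalizer dichotomy, and the core chain $M_{k+1}=\{x\in M_k:[x,\mathcal{L}]\subseteq M_k\}$, each step closed, a subalgebra, and of finite codimension because it is the kernel of the induced map into $\mathrm{End}(\mathcal{L}/M_k)$. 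But the entire content of the theorem is the stabilization of that chain, and you leave it unproved. Moreover, the sketch you offer for it is incoherent as stated: a closed $\mathrm{ad}(\mathcal{L})$-invariant subspace of finite codimension lying \emph{between} $M_{\infty}$ and $M$ would not contradict the maximality of $M$ (it sits below $M$, not above it); on the contrary, such a subspace is a closed Lie ideal of finite codimension contained in $M$, i.e.\ exactly the object to be constructed. The actual argument in \cite{KST2} rests on substantial machinery — invariant-subspace results for Lie algebras of operators from \cite{KST1}, Levi decomposition and Engel/weight-space analysis of the finite-dimensional quotients — which a gesture toward "a spectral or compactness argument" does not replace.

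Part (ii) has the same defect in a sharper form. Each single preimage $\delta^{-1}(J_k)$ has finite codimension, but your $J_{k+1}$ is an intersection over the whole, generally infinite-dimensional, algebra $\mathfrak{D}(\mathcal{L})$ of bounded derivations, and nothing in your argument shows that this intersection has finite codimension, let alone that the iteration terminates at a \emph{proper} characteristic ideal. (The observation that $\overline{[\mathcal{L},\mathcal{L}]}$ is characteristic is correct but does not help: it need not be proper, nor of finite codimension.) In short, you have correctly isolated the hard point of the theorem in both parts, but isolating it is not proving it; the proposal is incomplete at precisely the step that makes the statement a theorem.
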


\begin{corollary}
\label{C3.1}Let $\mathcal{L}$ be a Banach Lie algebra and $J$ be a
non-commutative infinite-dimensional closed Lie ideal of $\mathcal{L}$. If $J$
has a proper closed Lie subalgebra of finite codimension\emph{,} then $J$
contains a closed Lie ideal $I$ of $\mathcal{L}$ that has non-zero finite
codimension in $J$.

If\emph{, }in addition\emph{,} $J$ is a characteristic Lie ideal of
$\mathcal{L}$, then $I$ is also a characteristic Lie ideal.
\end{corollary}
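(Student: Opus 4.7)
The plan is to apply Theorem \ref{KST1} directly to $J$ regarded as a Banach Lie algebra in its own right, and then to transfer the resulting ideal back up to $\mathcal{L}$ using Lemma \ref{L3.1}. Since $J$ is non-commutative and, by hypothesis, possesses a proper closed Lie subalgebra of finite codimension, Theorem \ref{KST1} applies to $J$. In particular, part (ii) of that theorem yields a proper closed characteristic Lie ideal $I$ of $J$ of finite codimension in $J$. Because $I$ is proper in $J$, its codimension in $J$ is finite and non-zero, as required.

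Next I must promote $I$ from a characteristic ideal of $J$ to an ideal of the larger algebra $\mathcal{L}$. Since $I \vartriangleleft^{\text{ch}} J$ and $J \vartriangleleft \mathcal{L}$, Lemma \ref{L3.1}(i) immediately gives $I \vartriangleleft \mathcal{L}$. This completes the first assertion.

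For the second assertion, assume in addition that $J \vartriangleleft^{\text{ch}} \mathcal{L}$. Then I have the chain $I \vartriangleleft^{\text{ch}} J \vartriangleleft^{\text{ch}} \mathcal{L}$, and the conclusion $I \vartriangleleft^{\text{ch}} \mathcal{L}$ follows at once from Lemma \ref{L3.1}(ii).

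The argument is essentially a bookkeeping exercise that combines Theorem \ref{KST1} applied to $J$ with the two transfer statements in Lemma \ref{L3.1}. There is no real obstacle; the only thing to be careful about is noting that the two hypotheses on $J$ (non-commutative and possessing a proper closed finite-codimension Lie subalgebra) are exactly what part (ii) of Theorem \ref{KST1} requires, and that the infinite-dimensionality of $J$ is used only to guarantee that the finite codimension of $I$ in $J$ is a non-trivial piece of information.
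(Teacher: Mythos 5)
Your proof is correct and follows essentially the same route as the paper: apply Theorem \ref{KST1}(ii) to $J$ to obtain a proper closed characteristic Lie ideal $I$ of finite (hence non-zero, by properness) codimension in $J$, then transfer via Lemma \ref{L3.1}(i) and (ii). No issues.
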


\begin{proof}
By Theorem \ref{KST1}(ii), $J$ has a proper closed characteristic Lie ideal
$I$ of finite codimension. By Lemma \ref{L3.1}, $I$ is a Lie ideal of
$\mathcal{L}$; if $J$ is characteristic then $I$ is also characteristic.
\end{proof}

\begin{definition}
\label{subideal}A Lie subalgebra $I$ of a Banach Lie algebra $\mathcal{L}$ is
called a \emph{Lie subideal }(more precisely\emph{ }$n$\emph{-subideal}), if
there are Lie subalgebras $J_{1},$...$,J_{n}$ of $\mathcal{L}$ such that
$J_{0}:=I\subseteq J_{1}\subseteq\cdots\subseteq J_{n}=\mathcal{L}$\emph{ }and
each\textbf{ }$J_{i}$ is a Lie ideal of $J_{i+1}$. We write\textit{
}$I\!\vartriangleleft\!\!\!\vartriangleleft\mathcal{L}$ if $I$ is closed. In
this case all $J_{i}$ can be assumed to be closed (otherwise one can replace
them by their closures).
\end{definition}

In some important cases Lie subideals are automatically ideals. Recall that a
finite-dimensional Lie algebra is \textit{semisimple}, if it has no non-zero
commutative Lie ideals.

\begin{lemma}
\label{sem-sub}Let $L\!\vartriangleleft\!\!\!\vartriangleleft\mathcal{L.}$ If
$L$ is a finite-dimensional semisimple Lie algebra$\mathcal{,}$ then it is a
Lie ideal of $\mathcal{L}$.
\end{lemma}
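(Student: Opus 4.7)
The plan is to run a short induction along the subideal chain, using the two items of Lemma \ref{L3.1} as the engine. By hypothesis there exist closed Lie subalgebras $L = J_0 \subseteq J_1 \subseteq \cdots \subseteq J_n = \mathcal{L}$ with $J_i \vartriangleleft J_{i+1}$ for every $i$, so I only need to lift the property ``$L \vartriangleleft J_i$'' one step at a time up this chain.

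The key preparatory observation is that a finite-dimensional semisimple Lie algebra satisfies $[L,L] = L$ by the classical structure theorem, and this commutator space is automatically closed because $L$ is finite-dimensional. Hence $L = \overline{[L,L]}$, which is precisely the hypothesis required to invoke Lemma \ref{L3.1}(iii) with $L$ in place of $J$.

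Now I would induct on $k$, proving that $L \vartriangleleft J_k$ for each $k \in \{0,1,\dots,n\}$. The base case $k=0$ is trivial. For the inductive step, assume $L \vartriangleleft J_k$. Working inside the Banach Lie algebra $J_k$ and using $L = \overline{[L,L]}$, Lemma \ref{L3.1}(iii) upgrades this to $L \vartriangleleft^{\mathrm{ch}} J_k$. Combining $L \vartriangleleft^{\mathrm{ch}} J_k$ with the given $J_k \vartriangleleft J_{k+1}$, Lemma \ref{L3.1}(i) yields $L \vartriangleleft J_{k+1}$, completing the induction. Taking $k = n$ gives $L \vartriangleleft \mathcal{L}$, as required.

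There is no serious obstacle here: the whole argument is a two-line loop of ``promote to characteristic via (iii), then descend one level via (i).'' The one point that must not be overlooked is the justification that $L$ is perfect, i.e.\ $L = \overline{[L,L]}$; this is where finite-dimensionality and semisimplicity are both genuinely used, and it is the sole ingredient that distinguishes this lemma from the general (false) statement that arbitrary closed subideals are ideals.
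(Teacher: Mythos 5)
Your proof is correct and follows essentially the same route as the paper: establish $L=\overline{[L,L]}$ from semisimplicity, then alternate Lemma \ref{L3.1}(iii) (promote to characteristic) and Lemma \ref{L3.1}(i) (ascend one level of the chain). The only difference is that you phrase the "repeating the argument" step of the paper as an explicit induction, which is a matter of presentation rather than substance.
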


\begin{proof}
Let $L=J_{0}\vartriangleleft J_{1}\vartriangleleft\cdots\vartriangleleft
J_{n}=\mathcal{L.}$ Since $L$ is semisimple, it is well known that $[L,L]=L.$
Hence, by Lemma \ref{L3.1}(iii), $L\vartriangleleft^{\text{ch}}J_{1}.$
Therefore, by Lemma \ref{L3.1}(i), $L$ is a Lie ideal of $J_{2}$. Repeating
the argument, we obtain that $L$ is a Lie ideal of $\mathcal{L}$.
\end{proof}

\begin{corollary}
\label{E5.1}Each Lie subideal of a finite-dimensional semisimple Lie algebra
is a Lie ideal.
\end{corollary}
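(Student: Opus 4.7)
The plan is to reduce the statement to Lemma \ref{sem-sub}, which already handles the semisimple case of subideals. To apply that lemma I must show that the subideal $L$ is itself semisimple; once this is established the conclusion is immediate.

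Suppose $L = J_0 \vartriangleleft J_1 \vartriangleleft \cdots \vartriangleleft J_n = \mathcal{L}$ is a subideal chain with $\mathcal{L}$ finite-dimensional semisimple. First I would invoke the classical structure theorem: a finite-dimensional semisimple Lie algebra decomposes into a direct sum of simple ideals, and every ideal of such an algebra is itself the direct sum of a subcollection of those simple summands, hence is semisimple. Applying this fact repeatedly along the chain, I would argue by downward induction that each $J_i$ is semisimple: the base case $J_n = \mathcal{L}$ is semisimple by hypothesis, and if $J_{i+1}$ is a finite-dimensional semisimple Lie algebra, then the ideal $J_i \vartriangleleft J_{i+1}$ is again semisimple. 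In particular $L = J_0$ is a finite-dimensional semisimple Lie algebra.

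Now Lemma \ref{sem-sub} applies verbatim: $L \vartriangleleft\!\!\!\vartriangleleft \mathcal{L}$ and $L$ is finite-dimensional semisimple, so $L$ is a Lie ideal of $\mathcal{L}$.

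There is no real obstacle. The only non-trivial input is the classical structure theorem for finite-dimensional semisimple Lie algebras (decomposition into simple ideals, and the resulting semisimplicity of ideals), which is part of standard Lie theory and does not rely on anything developed later in the paper. Everything else is a short induction plus a direct citation of Lemma \ref{sem-sub}.
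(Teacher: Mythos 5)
Your proof is correct and follows essentially the same route as the paper: establish by downward induction along the subideal chain that $L$ is semisimple (using that ideals of a finite-dimensional semisimple Lie algebra are semisimple), then apply Lemma \ref{sem-sub}. The paper's own proof is just a terser version of the same argument.
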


\begin{proof}
Let $L=J_{0}\vartriangleleft J_{1}\vartriangleleft\cdots\vartriangleleft
J_{n}=\mathcal{L.}$ As $\mathcal{L}$ is semisimple, each Lie ideal of
$\mathcal{L}$ is a semisimple Lie algebra. Hence $L$ is semisimple. By Lemma
\ref{sem-sub}, it is a Lie ideal of $\mathcal{L}$.
\end{proof}

\section{Preradicals\label{3}}

\subsection{Basic properties}

Recall that $\mathfrak{L}$ denotes the class of all Banach Lie algebras and
that the symbol $J\vartriangleleft^{\text{ch}}\mathcal{L}$ means that $J$ is a
closed characteristic ideal of $\mathcal{L}$.

Now we will define a notion which plays the central role in this paper.

\begin{definition}
\label{D3.2}A map $R$ on $\mathfrak{L}$ that sends each $\mathcal{L}%
\in\mathfrak{L}$ into a closed Lie ideal $R\left(  \mathcal{L}\right)  $ of
$\mathcal{L}$ is a topological \textit{preradical} in $\mathbf{L}$ $($in
$\overline{\mathbf{L}})$ if%
\begin{equation}
f(R\left(  \mathcal{L}\right)  )\subseteq R\left(  \mathcal{M}\right)  \text{
for each morphism\emph{ }}f\text{: }\mathcal{L}\longrightarrow\mathcal{M}%
\text{ in }\mathbf{L}\text{ }(\text{in }\overline{\mathbf{L}}). \label{4.0}%
\end{equation}
\end{definition}

\begin{remark}
\emph{We will omit the word ``topological''\ in all notions of the radical
theory, because we do not consider here the radical theory in the purely
algebraic setting.}
\end{remark}

For example, the map $R$: $\mathcal{L}\mapsto\overline{[\mathcal{L,L]}},$ for
all $\mathcal{L}\in\frak{L},$ is a preradical.

If $R$ is a preradical then it follows from (\ref{4.0}) that%
\begin{equation}
\text{if }f:\mathcal{L}\longrightarrow\mathcal{M}\text{ is a bounded Lie
isomorphism, then so is }f:\text{ }R\left(  \mathcal{L}\right)
\longrightarrow R\left(  \mathcal{M}\right)  \text{.} \label{3.1}%
\end{equation}

\begin{corollary}
\label{C1}Let $I\vartriangleleft\mathcal{L}\in\mathfrak{L}$ and $q:\mathcal{L}%
\longrightarrow\mathcal{L}/I$ be the quotient map. For each preradical $R$,

\begin{itemize}
\item [$\mathrm{(i)}$]$R\left(  \mathcal{L}\right)  \vartriangleleft
^{\emph{ch}}\mathcal{L}$.

\item[$\mathrm{(ii)}$] $R\left(  I\right)  \vartriangleleft\mathcal{L}$ and
$q^{-1}\left(  R\left(  \mathcal{L}/I\right)  \right)  \vartriangleleft
\mathcal{L.}$

\item[$\mathrm{(iii)}$] If $I\vartriangleleft^{\emph{ch}}\mathcal{L}$ then
$R\left(  I\right)  \vartriangleleft^{\emph{ch}}\mathcal{L}$ and
$q^{-1}\left(  R\left(  \mathcal{L}/I\right)  \right)  \vartriangleleft
^{\emph{ch}}\mathcal{L}$.
\end{itemize}
\end{corollary}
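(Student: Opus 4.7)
The plan is to reduce all three items to the preradical axiom (\ref{4.0}) together with the structural lemmas on characteristic Lie ideals established in Section \ref{2}; no new calculations beyond identifying the right morphisms should be needed.

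For (i), I would show that $R(\mathcal{L})$ is invariant under every bounded Lie automorphism of $\mathcal{L}$ and then invoke Lemma \ref{L1}. A bounded Lie automorphism $\alpha:\mathcal{L}\to\mathcal{L}$ is a bounded homomorphism with (full, hence) dense image, so it is a morphism in $\overline{\mathbf{L}}$ (and in $\mathbf{L}$); its inverse, bounded by the open mapping theorem, is a morphism for the same reason. Applying (\ref{4.0}) to both $\alpha$ and $\alpha^{-1}$ yields $\alpha(R(\mathcal{L}))=R(\mathcal{L})$, and Lemma \ref{L1} then upgrades this invariance to $R(\mathcal{L})\vartriangleleft^{\text{ch}}\mathcal{L}$.

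For (ii), the second assertion is essentially formal: $q$ is a continuous surjective Lie homomorphism, so the preimage of the closed Lie ideal $R(\mathcal{L}/I)$ is itself a closed Lie ideal of $\mathcal{L}$. For the first assertion, I would apply (i) to the Banach Lie algebra $I$ (which is an object of $\overline{\mathbf{L}}$ in its own right) to obtain $R(I)\vartriangleleft^{\text{ch}}I$; combined with the hypothesis $I\vartriangleleft\mathcal{L}$, Lemma \ref{L3.1}(i) delivers $R(I)\vartriangleleft\mathcal{L}$.

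For (iii), I would chain the lemmas of Section \ref{2} with (i). From $R(I)\vartriangleleft^{\text{ch}}I$ (by (i)) and $I\vartriangleleft^{\text{ch}}\mathcal{L}$, Lemma \ref{L3.1}(ii) yields $R(I)\vartriangleleft^{\text{ch}}\mathcal{L}$. For the quotient, (i) applied to $\mathcal{L}/I$ gives $R(\mathcal{L}/I)\vartriangleleft^{\text{ch}}\mathcal{L}/I$, and Lemma \ref{essl} then transports this property back along $q$ to give $q^{-1}(R(\mathcal{L}/I))\vartriangleleft^{\text{ch}}\mathcal{L}$. I do not expect any real obstacle; the only point requiring care is the initial observation in (i) that bounded Lie automorphisms qualify as morphisms in the ambient category, so that the preradical property can legitimately be invoked. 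Once this is settled, the remaining parts are direct concatenations of the cited lemmas.
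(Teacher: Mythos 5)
Your proposal is correct and follows essentially the same route as the paper: the paper's proof of (i) is exactly "Lemma \ref{L1} plus (\ref{3.1})", where (\ref{3.1}) is the observation you spell out about bounded Lie automorphisms being morphisms whose application of (\ref{4.0}) (to $\alpha$ and $\alpha^{-1}$) forces $\alpha(R(\mathcal{L}))=R(\mathcal{L})$; parts (ii) and (iii) are the same concatenations of Lemma \ref{L3.1}(i),(ii) and Lemma \ref{essl} that the paper uses.
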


\begin{proof}
Part (i) follows from Lemma \ref{L1} and (\ref{3.1}).

(ii) By (i), $R(I)$ is a characteristic Lie ideal of $I.$ Hence,\textbf{ }by
Lemma \ref{L3.1}(i), $R\left(  I\right)  \vartriangleleft\mathcal{L}$. As
$R(\mathcal{L}/I)\vartriangleleft\mathcal{L}/I,$ we have that $q^{-1}\left(
R\left(  \mathcal{L}/I\right)  \right)  \vartriangleleft\mathcal{L}$.

(iii)\textbf{ }Let $I\vartriangleleft^{\text{ch}}\mathcal{L}$. Then, by (i),
$R(I)\vartriangleleft^{\text{ch}}I.$ Hence, by Lemma \ref{L3.1}(ii),
$R(I)\vartriangleleft^{\text{ch}}\mathcal{L}$.

By (i), $R\left(  \mathcal{L}/I\right)  \vartriangleleft^{\text{ch}%
}\mathcal{L}/I$. Hence, by Lemma \ref{essl}, $q^{-1}\left(  R\left(
\mathcal{L}/I\right)  \right)  \vartriangleleft^{\text{ch}}\mathcal{L}$.
\end{proof}

We are interested in preradicals with some additional algebraic properties:
$R$ is called%
\begin{align}
\text{\textit{lower stable }if }R\left(  R\left(  \mathcal{L}\right)
\right)   &  =R\left(  \mathcal{L}\right)  \text{ for all }\mathcal{L}%
\in\mathfrak{L;}\label{4.1'}\\
\text{\textit{upper stable }if }R\left(  \mathcal{L}/R\left(  \mathcal{L}%
\right)  \right)   &  =\{0\}\text{ for all }\mathcal{L}\in\mathfrak
{L;}\label{4.2'}\\
\text{\textit{balanced} if}\mathit{\ }R\left(  I\right)   &  \subseteq
R\left(  \mathcal{L}\right)  \text{ for all }I\vartriangleleft\mathcal{L}%
\in\mathfrak{\mathfrak{L};}\label{4.3'}\\
\text{\textit{hereditary } if}\mathit{\ }R\left(  I\right)   &  =I\cap
R\left(  \mathcal{L}\right)  \text{ for all }I\vartriangleleft\mathcal{L}%
\in\mathfrak{\mathfrak{L}.} \label{4.4'}%
\end{align}

\begin{definition}
\label{D3.1}A preradical is called

\begin{itemize}
\item [$\mathrm{(i)}$]an\emph{ under radical}\textit{ }if it is lower stable
and balanced.

\item[(ii)] an \emph{over radical}\textit{ }if it is upper stable and balanced.

\item[$\mathrm{(iii)}$] a\textit{\emph{radical} }if it is lower stable, upper
stable and balanced.
\end{itemize}
\end{definition}

For example, the maps $R_{0}$: $\mathcal{L}\mapsto\{0\}$ and $R_{1}$:
$\mathcal{L}\mapsto\mathcal{L}$, for all $\mathcal{L}\in\mathfrak{L,}$ are radicals.

\begin{remark}
\emph{The statement ``}$I\vartriangleleft\mathcal{L}$ \emph{implies} $R\left(
I\right)  \vartriangleleft\mathcal{L}$''\ \emph{proved in Corollary
\ref{C1}(ii) is not generally true for associative algebras, so it was
included as a separate condition in the definition of the topological radical
in \cite{D}}.
\end{remark}

Let $R$ be a preradical. A Banach Lie algebra $\mathcal{L}$ is called%
\begin{equation}
1)\text{ }R\text{-\textit{semisimple }if }R\left(  \mathcal{L}\right)
=\{0\},\text{ \ \ \ }2)\text{ }R\text{-\textit{radical }if }R\left(
\mathcal{L}\right)  =\mathcal{L}. \label{1sr}%
\end{equation}
Set\textbf{ Sem}$\left(  R\right)  =\{\mathcal{L}\in\mathfrak{L}$: $R\left(
\mathcal{L}\right)  =\{0\}\}$ and \textbf{Rad}$\left(  R\right)
=\{\mathcal{L}\in\mathfrak{L}$: $R\left(  \mathcal{L}\right)  =\mathcal{L}\}$.

\begin{lemma}
\label{L-sem}Let $R$ be a preradical\emph{, }let $I\vartriangleleft
\mathcal{L}$ and let $q:$ $\mathcal{L}\longrightarrow\mathcal{L}/I$ be the
quotient map.

\begin{itemize}
\item [$\mathrm{(i)}$]If $\mathcal{L}\in\mathbf{Rad}\left(  R\right)  $ then
$q(\mathcal{L})\in\mathbf{Rad}\left(  R\right)  .$

\item[$\mathrm{(ii)}$] If $q(\mathcal{L})\in\mathbf{Sem}\left(  R\right)  $
then $R(\mathcal{L})\subseteq I.$

\item[$\mathrm{(iii)}$] Let $R$ be balanced. If $\mathcal{L}\in\mathbf{Sem}%
\left(  R\right)  $ then $I\in\mathbf{Sem}\left(  R\right)  $.

\item[$\mathrm{(iv)}$] Let\emph{ }$R$ be balanced and upper stable. If $I$
and\textbf{ }$q(\mathcal{L})$ belong to\textbf{ }$\mathbf{Rad}\left(
R\right)  $ then $\mathcal{L}\in\mathbf{Rad}\left(  R\right)  .$

\item[$\mathrm{(v)}$] Let\emph{ }$R$ be balanced and lower stable. If $I$
and\textbf{ }$q(\mathcal{L})$ belong to\textbf{ }$\mathbf{Sem}\left(
R\right)  $ then $\mathcal{L}\in\mathbf{Sem}\left(  R\right)  .$
\end{itemize}
\end{lemma}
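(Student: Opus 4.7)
The plan is to prove each of the five statements as a direct formal consequence of the definitions, exploiting that the quotient map $q: \mathcal{L} \to \mathcal{L}/I$ is a bounded surjective (in particular, dense-image) homomorphism, hence a morphism in $\overline{\mathbf{L}}$ to which the preradical inclusion $f(R(\mathcal{L})) \subseteq R(\mathcal{M})$ applies. With that in hand, (i) follows by writing $q(\mathcal{L}) = q(R(\mathcal{L})) \subseteq R(q(\mathcal{L}))$, forcing equality; (ii) follows from $q(R(\mathcal{L})) \subseteq R(q(\mathcal{L})) = \{0\}$, giving $R(\mathcal{L}) \subseteq \ker q = I$; and (iii) is immediate from the definition of balancedness, since $R(I) \subseteq R(\mathcal{L}) = \{0\}$.

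The slightly more substantive step is (iv). Here I would first use balancedness on $I \vartriangleleft \mathcal{L}$ to obtain $I = R(I) \subseteq R(\mathcal{L})$. Because $I \subseteq R(\mathcal{L})$, the quotient map $q: \mathcal{L} \to \mathcal{L}/I$ factors through a natural bounded surjective homomorphism $p: \mathcal{L}/I \to \mathcal{L}/R(\mathcal{L})$, which is again a morphism in $\overline{\mathbf{L}}$. Since $\mathcal{L}/I \in \mathbf{Rad}(R)$, part (i) applied to $p$ yields $\mathcal{L}/R(\mathcal{L}) \in \mathbf{Rad}(R)$. But upper stability gives $R(\mathcal{L}/R(\mathcal{L})) = \{0\}$, so $\mathcal{L}/R(\mathcal{L}) = \{0\}$, i.e., $\mathcal{L} = R(\mathcal{L})$.

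For (v), I would apply (ii) first to conclude $R(\mathcal{L}) \subseteq I$. By Corollary~\ref{C1}(i), $R(\mathcal{L}) \vartriangleleft \mathcal{L}$, and since $R(\mathcal{L}) \subseteq I$, it is automatically a Lie ideal of $I$. Balancedness applied to $R(\mathcal{L}) \vartriangleleft I$ gives $R(R(\mathcal{L})) \subseteq R(I) = \{0\}$, and lower stability then forces $R(\mathcal{L}) = R(R(\mathcal{L})) = \{0\}$. The main obstacle, modest as it is, lies in (iv): one must be careful that the composition $\mathcal{L} \to \mathcal{L}/R(\mathcal{L})$ genuinely factors through $\mathcal{L}/I$, which requires the balanced axiom to place $I$ inside $R(\mathcal{L})$ before upper stability can be invoked. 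All other parts are essentially one-line verifications built on the preradical property together with a single one of the axioms of Definition~\ref{D3.1}.
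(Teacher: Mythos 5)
Your proposal is correct and follows essentially the same route as the paper's proof: (i)--(iii) are the same one-line applications of the preradical inclusion and balancedness, (iv) uses the identical factorization $p:\mathcal{L}/I\rightarrow\mathcal{L}/R(\mathcal{L})$ obtained from $I=R(I)\subseteq R(\mathcal{L})$ followed by upper stability, and (v) is the same combination of (ii), balancedness applied to $R(\mathcal{L})\vartriangleleft I$, and lower stability. The only cosmetic difference is that in (iv) you invoke part (i) for the map $p$ where the paper writes out the chain $p(R(\mathcal{L}/I))\subseteq R(p(\mathcal{L}/I))$ explicitly; these are the same computation.
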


\begin{proof}
(i) As $R(\mathcal{L})=\mathcal{L}$, we have $q(\mathcal{L})=q\left(  R\left(
\mathcal{L}\right)  \right)  \overset{(\ref{4.0})}{\subseteq}R\left(
q(\mathcal{L})\right)  \subseteq q(\mathcal{L}).$ Hence $q(\mathcal{L}%
)=R\left(  q(\mathcal{L})\right)  $.

(ii) We have $q\left(  R\left(  \mathcal{L}\right)  \right)  \overset
{(\ref{4.0})}{\subseteq}R\left(  q(\mathcal{L})\right)  =\{0\}.$ Hence
$R(\mathcal{L})\subseteq I.$

(iii) If $I\vartriangleleft\mathcal{L}$ then $R\left(  I\right)  \subseteq
R\left(  \mathcal{L}\right)  =\{0\}$.

(iv) As $R$ is balanced and $I\in\mathbf{Rad}\left(  R\right)  $, we
have\textbf{ }$I=R(I)\subseteq R\left(  \mathcal{L}\right)  $. Hence there is
a quotient map $p$: $\mathcal{L}/I\rightarrow\mathcal{L}/R\left(
\mathcal{L}\right)  $. As $R$ is upper stable and $\mathcal{L}/I\in
\mathbf{Rad}\left(  R\right)  $,%
\[
\mathcal{L}/R(\mathcal{L})=p(\mathcal{L}/I)=p(R(\mathcal{L}/I))\subseteq
R(p(\mathcal{L}/I))=R(\mathcal{L}/R(L))=\{0\}.
\]
Thus $\mathcal{L}=R(\mathcal{L}).$

(v) It follows from (ii) that $R(\mathcal{L})\subseteq I$. Then $R(\mathcal{L}%
)\vartriangleleft I$. As $R$ is balanced, $R\left(  R(\mathcal{L})\right)
\subseteq R\left(  I\right)  =\{0\}$. As $R$ is lower stable, $R(\mathcal{L}%
)=R\left(  R(\mathcal{L})\right)  =\{0\}$.
\end{proof}

In particular, it follows from Lemma \ref{L-sem} that if $R$ is a radical then
both classes \textbf{Sem}$\left(  R\right)  $ and \textbf{Rad}$\left(
R\right)  $ are closed under extensions.

There is a natural order in the class of all preradicals. If $R$ and $T$ are
preradicals, we write%
\begin{equation}
T\leq R,\text{ if }T\left(  \mathcal{L}\right)  \subseteq R\left(
\mathcal{L}\right)  \text{ for all }\mathcal{L}\in\mathfrak{L}. \label{4}%
\end{equation}
We write $T<R$, if $T\leq R$ and there is a Banach Lie algebra $\mathcal{L}$
such that $T\left(  \mathcal{L}\right)  \neq R\left(  \mathcal{L}\right)  $.

If $T\leq R$ then \textbf{Sem}$\left(  R\right)  \subseteq\mathbf{Sem}\left(
T\right)  $ and $\mathbf{Rad}\left(  T\right)  \subseteq\mathbf{Rad}\left(
R\right)  $. Conversely, the following result shows that in many cases the
order is determined by these inclusions.

\begin{proposition}
\label{P3.7}Let $T,R$ be preradicals.

\begin{itemize}
\item [$\mathrm{(i)}$]If $T$ is lower stable and $R$ is balanced then
$\mathbf{Rad}\left(  T\right)  \subseteq\mathbf{Rad}\left(  R\right)  $
implies $T\leq R$.

\item[$\mathrm{(ii)}$] If $T$ and $R$ are under radicals then $\mathbf{Rad}%
\left(  T\right)  =\mathbf{Rad}\left(  R\right)  $ if and only if $T=R$.

\item[$\mathrm{(iii)}$] If $R$ is upper stable then $\mathbf{Sem}\left(
R\right)  \subseteq\mathbf{Sem}\left(  T\right)  $ implies $T\leq R$.

\item[$\mathrm{(iv)}$] If $T$ and $R$ are upper stable then $\mathbf{Sem}%
\left(  T\right)  =\mathbf{Sem}\left(  R\right)  $ if and only if $T=R$.

\item[\textrm{(v)}] Let $T\leq R,$ $T$ be balanced and $I\vartriangleleft
\mathcal{L.}$ If $T(I)=I$ and $R(\mathcal{L}/I)=\{0\}$ then $T(\mathcal{L}%
)=R(\mathcal{L})=I.$
\end{itemize}
\end{proposition}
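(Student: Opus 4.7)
The plan is to treat the five statements as a package built on two short lever arguments: (i) and (iii) are the work, while (ii) and (iv) are immediate corollaries, and (v) is a direct computation. The key tools are already laid out in the excerpt: Corollary \ref{C1}(i) tells us $T(\mathcal{L})\vartriangleleft\mathcal{L}$, and Lemma \ref{L-sem}(ii) converts a semisimple quotient into a containment $R(\mathcal{L})\subseteq I$.

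For (i), I would start from lower stability: for any $\mathcal{L}\in\mathfrak{L}$, $T(T(\mathcal{L}))=T(\mathcal{L})$, so $T(\mathcal{L})\in\mathbf{Rad}(T)\subseteq\mathbf{Rad}(R)$, i.e.\ $R(T(\mathcal{L}))=T(\mathcal{L})$. Since $T(\mathcal{L})\vartriangleleft\mathcal{L}$ by Corollary \ref{C1}(ii) (indeed characteristic by (i) of that corollary), the balanced hypothesis on $R$ gives $R(T(\mathcal{L}))\subseteq R(\mathcal{L})$. Chaining the two yields $T(\mathcal{L})\subseteq R(\mathcal{L})$, which is exactly $T\le R$. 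Statement (ii) is then immediate: if $T,R$ are under radicals and $\mathbf{Rad}(T)=\mathbf{Rad}(R)$, apply (i) symmetrically to get $T\le R$ and $R\le T$; the converse is trivial.

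For (iii), I would use upper stability of $R$ at the quotient $\mathcal{L}/R(\mathcal{L})$: this quotient lies in $\mathbf{Sem}(R)\subseteq\mathbf{Sem}(T)$, so $T(\mathcal{L}/R(\mathcal{L}))=\{0\}$. Composing the quotient map $\mathcal{L}\to\mathcal{L}/R(\mathcal{L})$ with Lemma \ref{L-sem}(ii) (with $I=R(\mathcal{L})$) gives $T(\mathcal{L})\subseteq R(\mathcal{L})$. Statement (iv) then follows from (iii) applied in both directions.

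For (v), the argument is a two-step sandwich. Upper-style use of Lemma \ref{L-sem}(ii) applied to $R$ and the quotient $\mathcal{L}/I$ gives $R(\mathcal{L})\subseteq I$; combined with $T\le R$ this yields $T(\mathcal{L})\subseteq R(\mathcal{L})\subseteq I$. In the other direction, since $T$ is balanced and $I\vartriangleleft\mathcal{L}$, $I=T(I)\subseteq T(\mathcal{L})$. Thus $I\subseteq T(\mathcal{L})\subseteq R(\mathcal{L})\subseteq I$, forcing $T(\mathcal{L})=R(\mathcal{L})=I$. There is no real obstacle here — everything is a direct assembly of earlier lemmas; the only mild subtlety is keeping track of which hypothesis (lower/upper stability, balancedness) is invoked to legalize each containment, and I would flag each use explicitly in the write-up.
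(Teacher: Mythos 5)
Your proof is correct and follows essentially the same route as the paper's: (i) via lower stability plus balancedness of $R$ applied to $T(\mathcal{L})\vartriangleleft\mathcal{L}$, (iii) via upper stability and Lemma \ref{L-sem}(ii), (ii) and (iv) by symmetry, and (v) by the same sandwich $I=T(I)\subseteq T(\mathcal{L})\subseteq R(\mathcal{L})\subseteq I$. The only cosmetic point is that $T(\mathcal{L})\vartriangleleft\mathcal{L}$ is already part of the definition of a preradical, so the appeal to Corollary \ref{C1} there is unnecessary.
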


\begin{proof}
(i) As $T$ is lower stable, $T(\mathcal{L})\in\mathbf{Rad}\left(  T\right)  $
for each $\mathcal{L}\in\frak{L}$. Hence $T(\mathcal{L})\in\mathbf{Rad}\left(
R\right)  .$ Then $T\left(  \mathcal{L}\right)  =R\left(  T\left(
\mathcal{L}\right)  \right)  $. Since $R$ is balanced and $T(\mathcal{L}%
)\vartriangleleft\mathcal{L}$, we have\textbf{ }$T(\mathcal{L})=R\left(
T\left(  \mathcal{L}\right)  \right)  \subseteq R(\mathcal{L})$.

(iii) As $R$ is upper stable, $\mathcal{L}/R(\mathcal{L})\in\mathbf{Sem}%
\left(  R\right)  $ for each $\mathcal{L}\in\frak{L.}$ Hence $\mathcal{L}%
/R(\mathcal{L})\in\mathbf{Sem}\left(  T\right)  $. By Lemma \ref{L-sem}(ii),
$T(\mathcal{L})\subseteq R(\mathcal{L})$. Part (iii) is proved.

Part (ii) follows from (i), and (iv) from (iii).

(v) As $R(\mathcal{L}/I)=\{0\},$ we have from Lemma \ref{L-sem}(ii) that
$R(\mathcal{L})\subseteq I.$ As $T$ is balanced,%
\[
I=T(I)\subseteq T(\mathcal{L})\subseteq R(\mathcal{L})\subseteq I.
\]
\end{proof}

\begin{corollary}
\label{Crad}\emph{(i) }If $R$ is a radical then $R(\mathcal{L})\in
\mathbf{Rad}\left(  R\right)  $ and $\mathcal{L}/R(\mathcal{L})\in
\mathbf{Sem}\left(  R\right)  $ for each $\mathcal{L}\in\frak{L.}$ Moreover$,$
$R(\mathcal{L})$ contains each $R$-radical Lie ideal of $\mathcal{L.}$

\emph{(ii) }Let $T$ and $R$ be radicals. Then%
\[
T=R\,\,\,\,\Longleftrightarrow\,\,\,\,\mathbf{Rad}\left(  T\right)
=\mathbf{Rad}\left(  R\right)  \,\,\,\,\Longleftrightarrow\,\,\,\,\mathbf{Sem}%
\left(  T\right)  =\mathbf{Sem}\left(  R\right)  .
\]
\end{corollary}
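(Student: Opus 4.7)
The plan is to reduce everything to the definitions of ``radical'' (Definition \ref{D3.1}(iii)) and to the equivalences already established in Proposition \ref{P3.7}.

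For part (i), the first two assertions are essentially restatements of the two stability properties. The assertion $R(\mathcal{L})\in\mathbf{Rad}(R)$ means $R(R(\mathcal{L}))=R(\mathcal{L})$, which is exactly lower stability (\ref{4.1'}); and $\mathcal{L}/R(\mathcal{L})\in\mathbf{Sem}(R)$ means $R(\mathcal{L}/R(\mathcal{L}))=\{0\}$, which is exactly upper stability (\ref{4.2'}). For the ``moreover'' clause, I would take an $R$-radical Lie ideal $I$ of $\mathcal{L}$, i.e.\ $I\vartriangleleft\mathcal{L}$ with $R(I)=I$, and apply the balanced property (\ref{4.3'}) of $R$ to obtain $I=R(I)\subseteq R(\mathcal{L})$. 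That handles (i) in a couple of lines with no real obstacle.

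For part (ii), the implication $T=R\Longrightarrow \mathbf{Rad}(T)=\mathbf{Rad}(R)$ and $T=R\Longrightarrow \mathbf{Sem}(T)=\mathbf{Sem}(R)$ are immediate from the definitions of these classes. The nontrivial converses are already packaged in Proposition \ref{P3.7}. Since every radical is, by Definition \ref{D3.1}, in particular an under radical (lower stable and balanced), Proposition \ref{P3.7}(ii) applies and gives
\[
\mathbf{Rad}(T)=\mathbf{Rad}(R)\,\Longleftrightarrow\, T=R.
\]
Similarly, since every radical is upper stable, Proposition \ref{P3.7}(iv) applies and yields
\[
\mathbf{Sem}(T)=\mathbf{Sem}(R)\,\Longleftrightarrow\, T=R.
\]
Chaining these two equivalences through $T=R$ gives the three-way equivalence claimed in (ii).

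There is no serious obstacle here: the corollary is designed as a clean repackaging of Proposition \ref{P3.7} under the stronger hypothesis that both preradicals are full radicals, so the only thing to check carefully is that a ``radical'' in the sense of Definition \ref{D3.1}(iii) satisfies the hypotheses needed for each invocation of Proposition \ref{P3.7} (lower stability plus balanced for (ii), upper stability for (iv)), which is immediate.
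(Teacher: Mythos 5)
Your proposal is correct and follows exactly the route intended by the paper: the paper's own proof only writes out the ``moreover'' clause of (i) via the balanced property ($I=R(I)\subseteq R(\mathcal{L})$), treating the first two assertions of (i) as immediate from lower and upper stability and deriving (ii) from Proposition \ref{P3.7}(ii) and (iv), just as you do. No gaps.
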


\begin{proof}
We only need to prove that $R(\mathcal{L})$ contains each $R$-radical Lie
ideal $I$ of $\mathcal{L.}$ Indeed, as $R$ is balanced, $I=R(I)\subseteq
R(\mathcal{L)}$.
\end{proof}

\begin{definition}
\label{D3.3}Let $R$ be a preradical. A closed Lie ideal $I$ of a Banach Lie
algebra $\mathcal{L}$ is called $R$\textit{-primitive} if $\mathcal{L}/I$ is
$R$-semisimple. \emph{Prim}$_{R}\left(  \mathcal{L}\right)  $ denotes the set
of all $R$-primitive ideals of $\mathcal{L}$.
\end{definition}

The following useful result was proved in \cite[Theorem 2.11]{ST-I} for
radicals in normed associative algebras. We will just check that the proof
also works for Banach Lie algebras.

\begin{theorem}
\label{primgen}Let $R$ be a preradical and $\mathcal{L}$ be a Banach Lie
algebra. Then

\emph{(i) \ \ }the intersection of any family of $R$-primitive Lie ideals of
$\mathcal{L}$ is $R$-primitive\emph{;}

\emph{(ii) \ }each $R$-primitive Lie ideal of $\mathcal{L}$ contains
$R(\mathcal{L);}$

\emph{(iii) }if $R$ is an upper stable then $R\left(  \mathcal{L}\right)  $ is
the smallest $R$-primitive ideal of $\mathcal{L}$.
\end{theorem}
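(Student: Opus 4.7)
The plan is to dispatch parts (ii) and (iii) by direct appeal to earlier results, and to reduce (i) to the preradical property applied to suitable quotient morphisms.

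First I would handle (ii): if $I \in \mathrm{Prim}_R(\mathcal{L})$, then by definition the quotient $\mathcal{L}/I$ is $R$-semisimple, so Lemma \ref{L-sem}(ii) applied to the quotient map $q:\mathcal{L}\to\mathcal{L}/I$ gives $R(\mathcal{L}) \subseteq I$ at once. Then (iii) is immediate: if $R$ is upper stable, the defining identity $R(\mathcal{L}/R(\mathcal{L})) = \{0\}$ says exactly that $R(\mathcal{L})$ is itself $R$-primitive, and (ii) makes it the smallest such ideal.

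The substantive part is (i). Let $\{I_\alpha\}_{\alpha}$ be a family of $R$-primitive Lie ideals of $\mathcal{L}$ and set $I = \cap_\alpha I_\alpha$, which is a closed Lie ideal. For each $\alpha$ the inclusion $I \subseteq I_\alpha$ yields a well-defined Lie epimorphism
\[
\pi_\alpha : \mathcal{L}/I \longrightarrow \mathcal{L}/I_\alpha, \qquad \pi_\alpha(x + I) = x + I_\alpha,
\]
with kernel $I_\alpha/I$. I would first check that $\pi_\alpha$ is a legitimate morphism in $\overline{\mathbf{L}}$: it is surjective, hence has dense image, and it is contractive because the quotient norm on $\mathcal{L}/I_\alpha$ is obtained as an infimum over a larger set than the quotient norm on $\mathcal{L}/I$. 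Applying the preradical property (\ref{4.0}) to $\pi_\alpha$ gives
\[
\pi_\alpha\bigl( R(\mathcal{L}/I) \bigr) \subseteq R(\mathcal{L}/I_\alpha) = \{0\},
\]
so $R(\mathcal{L}/I) \subseteq \ker \pi_\alpha = I_\alpha/I$. Intersecting over all $\alpha$,
\[
R(\mathcal{L}/I) \subseteq \bigcap_\alpha I_\alpha/I = \Bigl(\bigcap_\alpha I_\alpha\Bigr)\big/I = I/I = \{0\},
\]
which means $R(\mathcal{L}/I) = \{0\}$, i.e.\ $I$ is $R$-primitive.

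The only place where one has to be at all careful is the verification that $\pi_\alpha$ qualifies as a morphism in $\overline{\mathbf{L}}$, since the preradical axiom is stated only for such morphisms; once contractivity and surjectivity are in place, the rest is formal. No new ideas beyond the preradical property and the elementary description of the kernel of $\pi_\alpha$ are needed, which is why the associative-algebra argument of \cite[Theorem 2.11]{ST-I} transfers verbatim to the Banach Lie setting.
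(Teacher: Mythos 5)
Your proof is correct and follows essentially the same route as the paper: parts (ii) and (iii) are dispatched exactly as in the paper via Lemma \ref{L-sem}(ii) and the upper stability identity, and for (i) the paper likewise introduces the bounded quotient epimorphisms $p_\lambda:\mathcal{L}/J\to\mathcal{L}/J_\lambda$, applies the preradical axiom to conclude $R(\mathcal{L}/J)\subseteq J_\lambda/J$, and intersects over $\lambda$. Your extra care in checking that $\pi_\alpha$ is a morphism of $\overline{\mathbf{L}}$ is a welcome but minor elaboration of a step the paper takes for granted.
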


\begin{proof}
(i) Let $\left\{  J_{\lambda}\right\}  $ be a family of $R$-primitive ideals
of $\mathcal{L}$ and $J=\cap J_{\lambda}.$ Since $J\subseteq J_{\lambda}$,
there is a bounded epimorphism $p_{\lambda}:\mathcal{L}/J\longrightarrow
\mathcal{L}/J_{\lambda}$ with $q_{\lambda}=p_{\lambda}q$, where $q_{\lambda
}:\mathcal{L}\longrightarrow\mathcal{L}/J_{\lambda}$ and $q:\mathcal{L}%
\longrightarrow\mathcal{L}/J$ are quotient maps. Therefore $p_{\lambda}\left(
R\left(  \mathcal{L}/J\right)  \right)  \overset{(\ref{4.0})}{\subseteq
}R\left(  \mathcal{L}/J_{\lambda}\right)  =\{0\},$ so that $R\left(
\mathcal{L}/J\right)  \subseteq J_{\lambda}/J$ for every $\lambda$. Then
$q^{-1}\left(  R\left(  \mathcal{L}/J\right)  \right)  \subseteq\cap
J_{\lambda}=$ $J$, whence $R\left(  \mathcal{L}/J\right)  =\{0\}$.

Part (ii) follows from Lemma \ref{L-sem}(ii).

(iii) If $R$ is upper stable then, by (\ref{4.2'}), $R\left(  \mathcal{L}%
\right)  \in\mathrm{Prim}_{R}\left(  \mathcal{L}\right)  $. So $R\left(
\mathcal{L}\right)  $ is the smallest $R$-primitive ideal of $\mathcal{L}$.
\end{proof}

Note that in general not every ideal containing $R(\mathcal{L})$ is $R$-primitive.

\subsection{Preradicals of direct and semidirect products}

Many examples below will be based on the following well known construction
(see \cite[Sec 1.8]{Bo}).

Let $L_{1},$ $L_{0}$ be Banach Lie algebras and $\varphi$ be a bounded Lie
homomorphism from $L_{1}$ into $\frak{D}\left(  L_{0}\right)  $. Endowing
their direct Banach space sum $L_{1}\dotplus L_{0}$ with Lie multiplication
given by%
\begin{equation}
\left[  (a;x),(b;y)\right]  =\left(  [a,b];\varphi\left(  a\right)
y-\varphi\left(  b\right)  x+\left[  x,y\right]  \right)  ,\text{ for }a,b\in
L_{1},\text{ }x,y\in L_{0}, \label{fsemi}%
\end{equation}
we get the \textit{semidirect product} $\mathcal{L}=L_{1}\oplus^{\varphi}%
L_{0}.$ It is a Lie algebra. Moreover, it is a Banach Lie algebra with norm
$\left\|  (a;x)\right\|  =\max\left\{  \left\|  a\right\|  ,\left\|
x\right\|  \right\}  $ and the multiplication constant $t_{\mathcal{L}}%
=\max\left\{  t_{L_{1}},2\left\|  \varphi\right\|  +t_{L_{0}}\right\}  .$
Identify $\{0\}\oplus^{\varphi}L_{0}$ and $L_{0}.$ Then $L_{0}\vartriangleleft
\mathcal{L}$ and $\mathcal{L}/L_{0}$ is isomorphic to $L_{1}$.

If $\varphi=0,$ we obtain the direct product $L_{1}\oplus L_{0}.$

If $L_{1}$ is a Lie subalgebra of $\mathcal{B}\left(  L_{0}\right)  $ then we
take $\varphi=$ id and write $L_{1}\oplus^{\text{id}}L_{0}$.

Let $L_{0}$ be commutative. Denote $X=L_{0}.$ Then $X$ is a Banach space and
$\frak{D}\left(  X\right)  =\mathcal{B}\left(  X\right)  $, as (\ref{1.1})
holds for all $x,y\in X$ and $T\in\mathcal{B}(X).$ Let us identify $L_{1}$
with the Lie subalgebra $\varphi(L_{1})$ of $\mathcal{B}(X)$ and write $ax$
instead of $\varphi(a)x,$ for $a\in L_{1}$ and $x\in X.$ Then the above
construction gives us the semidirect product $\mathcal{L}=L_{1}\oplus
^{\text{id}}X$ with binary operation%
\begin{equation}
\lbrack(a;x),(b;y)]=([a,b];ay-bx)\text{ for }a,b\in L_{1}\text{ and }x,y\in X.
\label{sem}%
\end{equation}
Let $M$ be a closed Lie subalgebra of $L_{1}$ and $Y$ be a closed subspace of
$X$ invariant for all operators in $M$. Then $M\oplus^{\text{id}}Y$ can be
identified with the closed subalgebra of $\mathcal{L}$ consisting of all pairs
$(a;x)$ with $a\in M$ and $x\in Y$.

Consider now the behavior of the semidirect product with respect to preradicals.

\begin{proposition}
\label{semi}Let $\mathcal{L}=L_{1}\oplus^{\varphi}L_{0}$ and let $R$ be a
preradical. Then

\begin{itemize}
\item [$\mathrm{(i)}$]$R\left(  \mathcal{L}\right)  \subseteq R\left(
L_{1}\right)  \oplus^{\varphi}L_{0}$.

\item[$\mathrm{(ii)}$] Let $R$ be balanced$.$ Then $R\left(  R\left(
L_{1}\right)  \oplus^{\varphi}L_{0}\right)  \subseteq R\left(  \mathcal{L}%
\right)  $ and

\begin{itemize}
\item [$1)$]if $\varphi=0,$ so that $\mathcal{L}=L_{1}\oplus L_{0},$ then
$R(\mathcal{L})=R(L_{1})\oplus R(L_{0}).$

\item[$2)$] if $R$ is upper stable and $L_{0},L_{1}\in\mathbf{Rad}(R),$ then
$\mathcal{L}\in\mathbf{Rad}(R).$

\item[$3)$] if $L_{1}\in\mathbf{Sem}(R)$ then $R^{2}\left(  \mathcal{L}%
\right)  \subseteq R\left(  L_{0}\right)  \subseteq R\left(  \mathcal{L}%
\right)  \subseteq L_{0}$. If $R$ is also lower stable then $R\left(
\mathcal{L}\right)  =R\left(  L_{0}\right)  $.
\end{itemize}
\end{itemize}
\end{proposition}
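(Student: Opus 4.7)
The plan is to exploit the built-in exact sequence $L_{0}\hookrightarrow\mathcal{L}\twoheadrightarrow L_{1}$ of the semidirect product. Since $L_{0}\vartriangleleft\mathcal{L}$, the quotient map $q:\mathcal{L}\longrightarrow\mathcal{L}/L_{0}$ is a morphism in $\overline{\mathbf{L}}$ (bounded and surjective), and $\mathcal{L}/L_{0}$ is (isomorphic to) $L_{1}$. For (i), the preradical condition (\ref{4.0}) applied to $q$ gives $q(R(\mathcal{L}))\subseteq R(L_{1})$, hence $R(\mathcal{L})\subseteq q^{-1}(R(L_{1}))$. The crucial observation is that $q^{-1}(R(L_{1}))=R(L_{1})\oplus^{\varphi}L_{0}$, which is meaningful because $R(L_{1})\vartriangleleft L_{1}$ is $\varphi(L_{1})$-invariant on the level of $L_{0}$ only trivially (no action needed): $R(L_{1})\oplus^{\varphi}L_{0}$ is just the set of pairs $(a;x)$ with $a\in R(L_{1})$, $x\in L_{0}$. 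This proves (i).

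For (ii), the same identification $R(L_{1})\oplus^{\varphi}L_{0}=q^{-1}(R(L_{1}))$ shows that $R(L_{1})\oplus^{\varphi}L_{0}$ is a closed Lie ideal of $\mathcal{L}$. When $R$ is balanced, (\ref{4.3'}) immediately yields $R(R(L_{1})\oplus^{\varphi}L_{0})\subseteq R(\mathcal{L})$. Subcase (1) follows by symmetry: when $\varphi=0$ the projection $p:\mathcal{L}\longrightarrow L_{0}$ is also a morphism of $\overline{\mathbf{L}}$, so the argument of (i) repeated with $p$ in place of $q$ yields $R(\mathcal{L})\subseteq L_{1}\oplus R(L_{0})$; intersecting with (i) gives $R(\mathcal{L})\subseteq R(L_{1})\oplus R(L_{0})$. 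The reverse inclusion uses balancedness applied to the two closed ideals $L_{1}, L_{0}\vartriangleleft\mathcal{L}$: $R(L_{i})\subseteq R(\mathcal{L})$ for $i=0,1$. Subcase (2) is a direct application of Lemma \ref{L-sem}(iv) to $I=L_{0}\vartriangleleft\mathcal{L}$ with $\mathcal{L}/L_{0}\cong L_{1}$, both in $\mathbf{Rad}(R)$.

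For subcase (3), the hypothesis $R(L_{1})=\{0\}$ collapses (i) to $R(\mathcal{L})\subseteq L_{0}$. Since $L_{0}\vartriangleleft\mathcal{L}$, balancedness gives $R(L_{0})\subseteq R(\mathcal{L})$. On the other hand $R(\mathcal{L})$ is itself a closed Lie ideal of $\mathcal{L}$ (by Corollary \ref{C1}(i)) contained in $L_{0}$, so $R(\mathcal{L})\vartriangleleft L_{0}$; balancedness applied inside $L_{0}$ then gives $R(R(\mathcal{L}))\subseteq R(L_{0})$, which is the asserted $R^{2}(\mathcal{L})\subseteq R(L_{0})$. If $R$ is also lower stable, $R(R(\mathcal{L}))=R(\mathcal{L})$ upgrades this to $R(\mathcal{L})\subseteq R(L_{0})$, giving the equality $R(\mathcal{L})=R(L_{0})$.

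The only genuinely non-automatic step is recognizing in (ii) that $R(L_{1})\oplus^{\varphi}L_{0}$ coincides with $q^{-1}(R(L_{1}))$ and is therefore a closed Lie ideal of $\mathcal{L}$; once this is in hand, all three subcases reduce to the standard consequences of the balanced, upper stable, and lower stable axioms already packaged in Lemma \ref{L-sem} and Corollary \ref{C1}. No serious obstacle is expected.
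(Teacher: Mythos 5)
Your proof is correct and follows essentially the same route as the paper: project onto $L_{1}$ (your quotient map $q$ is the paper's map $f((a;x))=a$) to get (i), observe that $R(L_{1})\oplus^{\varphi}L_{0}$ is a closed Lie ideal of $\mathcal{L}$ and apply balancedness for (ii), use the two projections plus balancedness for 1), Lemma \ref{L-sem}(iv) for 2), and the chain $R(L_{0})\subseteq R(\mathcal{L})\vartriangleleft L_{0}$ with balancedness and lower stability for 3). The only cosmetic difference is that you justify the ideal property of $R(L_{1})\oplus^{\varphi}L_{0}$ via the preimage description $q^{-1}(R(L_{1}))$, which the paper leaves implicit.
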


\begin{proof}
(i) The map $f$: $\mathcal{L}\longrightarrow L_{1}$ defined by $f\left(
\left(  a;x\right)  \right)  =a,$ for all $\left(  a;x\right)  \in
\mathcal{L},$ is a homomorphism from $\mathcal{L}$ onto $L_{1}$. As $R$ is a
preradical, $f\left(  R\left(  \mathcal{L}\right)  \right)  \subseteq R\left(
L_{1}\right)  $. Thus $R\left(  \mathcal{L}\right)  \subseteq R\left(
L_{1}\right)  \oplus^{\varphi}L_{0}$.

(ii) Let $R$ be balanced. As $R\left(  L_{1}\right)  \oplus^{\varphi}L_{0}$ is
a closed Lie ideal of $\mathcal{L}$, $R\left(  R\left(  L_{1}\right)
\oplus^{\varphi}L_{0}\right)  \subseteq R\left(  \mathcal{L}\right)  $.

1) If $\varphi=0$ then, by (i), $R(\mathcal{L})\subseteq R(L_{1})\oplus L_{0}$
and $R(\mathcal{L})\subseteq L_{1}\oplus R(L_{0}).$ Hence $R(\mathcal{L}%
)\subseteq R(L_{1})\oplus R(L_{0}).$ As $L_{1}$ and $L_{0}$ are closed Lie
ideals of $\mathcal{L}$, we have $R(L_{1})\subseteq R(\mathcal{L)}$ and
$R(L_{0})\subseteq R(\mathcal{L)}$. Hence $R(\mathcal{L})=R(L_{1})\oplus
R(L_{0}).$

Part 2) follows from Lemma \ref{L-sem}(iv).

3) As $R$ is balanced and $R\left(  L_{1}\right)  =0$, (i) implies $R\left(
L_{0}\right)  \subseteq R\left(  \mathcal{L}\right)  \subseteq L_{0}$. As
$R\left(  \mathcal{L}\right)  \vartriangleleft L_{0}$ and $R$ is balanced, we
have $R^{2}\left(  \mathcal{L}\right)  \subseteq R\left(  L_{0}\right)  $. If,
in addition, $R$ is lower stable then $R^{2}\left(  \mathcal{L}\right)
=R\left(  \mathcal{L}\right)  \mathcal{\ }$implies $R\left(  \mathcal{L}%
\right)  =R\left(  L_{0}\right)  $.
\end{proof}

In particular, if $R$ is a radical then a semidirect product of $R$-radical
algebras is $R$-radical.

We will define now the direct product of an arbitrary family of Banach Lie algebras.

\begin{definition}
\label{D3-dir}Let $\left\{  \mathcal{L}_{\lambda}\right\}  _{\lambda\in
\Lambda}$ be Banach Lie algebras with the multiplication constants
$t_{\lambda}$ satisfying $t_{\Lambda}=\sup\{t_{\lambda}\}<\infty.$ The Banach
Lie algebra%
\begin{equation}
\mathcal{L}=\oplus_{\Lambda}\mathcal{L}_{\lambda}=\{a=\left(  a_{\lambda
}\right)  _{\lambda\in\Lambda}\text{\emph{:} }a_{\lambda}\in\mathcal{L}%
_{\lambda}\text{ and }\left\|  a\right\|  =\sup\{\left\|  a_{\lambda}\right\|
_{\mathcal{L}_{\lambda}}\text{\emph{:} }\lambda\in\Lambda\}<\infty\}
\label{e3.1}%
\end{equation}
with coordinate-wise operations and the multiplication constant $t_{\Lambda}$
is called the \emph{normed direct product.}

Identify each $\mathcal{L}_{\lambda}$ with $\left\{  \left(  a_{\mu}\right)
_{\mu\in\Lambda}\in\oplus_{\Lambda}\mathcal{L}_{\mu}\text{\emph{:} }a_{\mu
}=0\text{ for }\mu\neq\lambda\right\}  .$ The closed Lie ideal $\widehat
{\mathcal{L}}=\widehat{\oplus}_{\Lambda}\mathcal{L}_{\lambda}$ of
$\mathcal{L}$ generated by all Lie ideals $\mathcal{L}_{\lambda}$ is called
\emph{the }$c_{0}$\emph{-direct product}.
\end{definition}

If $\Lambda=\mathbb{N}$ then $\widehat{\mathcal{L}}=\widehat{\oplus
}_{\mathbb{N}}\mathcal{L}_{n}=\{\left(  a_{n}\right)  _{n\in\mathbb{N}}%
\in\mathcal{L}\emph{:}$ $\left\|  a_{n}\right\|  _{\mathcal{L}_{n}}%
\rightarrow0$ as $n\rightarrow\infty\}.$

\begin{proposition}
\label{P3.1n}Let $\mathcal{L}=\oplus_{\Lambda}\mathcal{L}_{\lambda}$ and
$\widehat{\mathcal{L}}=\widehat{\oplus}_{\Lambda}\mathcal{L}_{\lambda}.$ If
$R$ is a balanced preradical then%
\[
R\left(  \widehat{\mathcal{L}}\right)  =\widehat{\oplus}_{\Lambda}R\left(
\mathcal{L}_{\lambda}\right)  \subseteq R\left(  \mathcal{L}\right)
\subseteq\oplus_{\Lambda}R\left(  \mathcal{L}_{\lambda}\right)  .
\]
In particular$,$ $\mathcal{L}\in\mathbf{Sem}(R)$ if and only if all
$\mathcal{L}_{\lambda}\in\mathbf{Sem}(R).$
\end{proposition}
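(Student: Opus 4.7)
The plan is to establish the chain of inclusions by three short arguments — each using either the preradical property $(\ref{4.0})$ applied to a coordinate projection, or the balanced condition $(\ref{4.3'})$ applied to an ambient/ideal pair — and then read off the semisimplicity equivalence from them.

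First, for the rightmost inclusion $R(\mathcal{L}) \subseteq \oplus_{\Lambda} R(\mathcal{L}_{\lambda})$, I will exploit that for each $\lambda \in \Lambda$ the coordinate projection $\pi_{\lambda}\colon \mathcal{L} \to \mathcal{L}_{\lambda}$ is a bounded Lie epimorphism, hence a morphism in $\overline{\mathbf{L}}$. By $(\ref{4.0})$ we get $\pi_{\lambda}(R(\mathcal{L})) \subseteq R(\mathcal{L}_{\lambda})$ for every $\lambda$, which is exactly the coordinate-wise statement that elements of $R(\mathcal{L})$ lie in $\oplus_{\Lambda} R(\mathcal{L}_{\lambda})$. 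The middle inclusion $R(\widehat{\mathcal{L}}) \subseteq R(\mathcal{L})$ is immediate from the balanced condition applied to the closed Lie ideal $\widehat{\mathcal{L}} \vartriangleleft \mathcal{L}$.

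Third, I will prove the equality $R(\widehat{\mathcal{L}}) = \widehat{\oplus}_{\Lambda} R(\mathcal{L}_{\lambda})$ by two opposite inclusions. For ``$\supseteq$'', each embedded $\mathcal{L}_{\lambda}$ is a closed Lie ideal of $\widehat{\mathcal{L}}$ (any bracket of an element of $\widehat{\mathcal{L}}$ with an element supported only in coordinate $\lambda$ is again supported only in coordinate $\lambda$), so balance gives $R(\mathcal{L}_{\lambda}) \subseteq R(\widehat{\mathcal{L}})$ for every $\lambda$; since $R(\widehat{\mathcal{L}})$ is a closed Lie ideal of $\widehat{\mathcal{L}}$, the closed Lie ideal of $\widehat{\mathcal{L}}$ generated by the family $\{R(\mathcal{L}_{\lambda})\}_{\lambda\in\Lambda}$ — which is precisely $\widehat{\oplus}_{\Lambda} R(\mathcal{L}_{\lambda})$ by the same construction as in Definition~\ref{D3-dir} — is contained in $R(\widehat{\mathcal{L}})$. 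For ``$\subseteq$'', I apply the restriction $\pi_{\lambda}|_{\widehat{\mathcal{L}}}\colon \widehat{\mathcal{L}} \to \mathcal{L}_{\lambda}$, which is still a bounded Lie epimorphism and thus a morphism in $\overline{\mathbf{L}}$, to conclude $\pi_{\lambda}(R(\widehat{\mathcal{L}})) \subseteq R(\mathcal{L}_{\lambda})$ coordinate-wise; together with the decay condition $\|a_{\lambda}\|_{\mathcal{L}_{\lambda}} \to 0$ inherited from $a \in R(\widehat{\mathcal{L}}) \subseteq \widehat{\mathcal{L}}$, this forces $a \in \widehat{\oplus}_{\Lambda} R(\mathcal{L}_{\lambda})$.

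The ``in particular'' clause then follows directly from the chain of inclusions: if $R(\mathcal{L}) = \{0\}$, the balanced condition applied to each ideal $\mathcal{L}_{\lambda} \vartriangleleft \mathcal{L}$ yields $R(\mathcal{L}_{\lambda}) = \{0\}$, while if all $R(\mathcal{L}_{\lambda}) = \{0\}$ the rightmost inclusion immediately forces $R(\mathcal{L}) = \{0\}$. The one point requiring care — and the most likely obstacle — is identifying the closed Lie ideal $\widehat{\oplus}_{\Lambda} R(\mathcal{L}_{\lambda})$ as defined in Definition~\ref{D3-dir} (the closed ideal generated by the embedded $R(\mathcal{L}_{\lambda})$'s) with the concrete coordinate-plus-decay description used in the second half of step three. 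This matching is the same routine verification as for $\widehat{\oplus}_{\Lambda} \mathcal{L}_{\lambda}$ itself and involves no genuine difficulty, but it is what makes the identification of the two sides of the equality rigorous for arbitrary $\Lambda$.
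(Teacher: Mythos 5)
Your proof is correct and follows essentially the same route as the paper's: the upper bound $R(\mathcal{L})\subseteq\oplus_{\Lambda}R(\mathcal{L}_{\lambda})$ comes from projecting onto each coordinate (the paper packages this as the decomposition $\mathcal{L}=\mathcal{N}_{\mu}\oplus\mathcal{L}_{\mu}$ and Proposition \ref{semi}, but that reduces to the same projection argument), and the lower bounds come from balancedness applied to $\mathcal{L}_{\lambda}\vartriangleleft\widehat{\mathcal{L}}\vartriangleleft\mathcal{L}$, with the final equality obtained by intersecting with $\widehat{\mathcal{L}}$. The one point you flag --- identifying the closed ideal generated by the embedded $R(\mathcal{L}_{\lambda})$ with $\bigl(\oplus_{\Lambda}R(\mathcal{L}_{\lambda})\bigr)\cap\widehat{\mathcal{L}}$ --- is indeed routine (truncations of an element of $\widehat{\mathcal{L}}$ converge to it), and the paper handles it the same way.
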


\begin{proof}
Let $\mathcal{N}_{\mu}=\left\{  \left(  a_{\lambda}\right)  _{\lambda
\in\Lambda}\in\oplus_{\Lambda}\mathcal{L}_{\lambda}:a_{\mu}=0\right\}  $. Then
$\mathcal{L}=\mathcal{N}_{\mu}\oplus\mathcal{L}_{\mu}$ for each $\mu\in
\Lambda.$ By Proposition \ref{semi}(ii) 1), $R\left(  \mathcal{L}\right)
=R(\mathcal{N}_{\mu})\oplus R(\mathcal{L}_{\mu}).$ Hence
\[
R\left(  \mathcal{L}\right)  =\cap_{\mu\in\Lambda}(R(\mathcal{N}_{\mu})\oplus
R(\mathcal{L}_{\mu}))\subseteq\cap_{\mu\in\Lambda}(\mathcal{N}_{\mu}\oplus
R(\mathcal{L}_{\mu}))=\oplus_{\Lambda}R\left(  \mathcal{L}_{\lambda}\right)
.
\]

As all $\mathcal{L}_{\lambda}\vartriangleleft\widehat{\mathcal{L}%
}\vartriangleleft\mathcal{L}$ and $R$ is balanced, all $R\left(
\mathcal{L}_{\lambda}\right)  \subseteq R\left(  \widehat{\mathcal{L}}\right)
\subseteq R\left(  \mathcal{L}\right)  .$ Hence%
\begin{equation}
\widehat{\oplus}_{\Lambda}R\left(  \mathcal{L}_{\lambda}\right)  \subseteq
R\left(  \widehat{\mathcal{L}}\right)  \subseteq R\left(  \mathcal{L}\right)
\subseteq\oplus_{\Lambda}R\left(  \mathcal{L}_{\lambda}\right)  . \label{4.9}%
\end{equation}
As $R\left(  \widehat{\mathcal{L}}\right)  \subseteq\widehat{\mathcal{L}}$ and
$R\left(  \widehat{\mathcal{L}}\right)  \subseteq\oplus_{\Lambda}R\left(
\mathcal{L}_{\lambda}\right)  ,$ we have $R\left(  \widehat{\mathcal{L}%
}\right)  \subseteq(\oplus_{\Lambda}R(\mathcal{L}_{\lambda}))\cap
\widehat{\mathcal{L}}=\widehat{\oplus}_{\Lambda}R\left(  \mathcal{L}_{\lambda
}\right)  .$ Hence, by (\ref{4.9}), $R\left(  \widehat{\mathcal{L}}\right)
=\widehat{\oplus}_{\Lambda}R\left(  \mathcal{L}_{\lambda}\right)  .$ Together
with (\ref{4.9}) this gives us the complete proof.
\end{proof}

\section{\label{section4}Construction of radicals from preradicals}

In this section we consider various\textbf{ }ways to improve preradicals, that
is, to construct from them new preradicals with additional better properties
(in particular, radicals). First we consider some operations on families of
closed subspaces.

Let $G$ be a family of closed subspaces of a Banach space $X.$ Denote by
$\sum_{Y\in G}Y$ the linear subspace of $X$ that consists of all finite sums
of elements from all $Y\in G.$ Set%
\begin{align}
\mathfrak{p}\left(  G\right)   &  =X,\text{ if }G=\varnothing,\text{ and
}\mathfrak{p}\left(  G\right)  =\bigcap\limits_{Y\in G}Y,\text{ if }%
G\neq\varnothing,\label{F0}\\
\mathfrak{s}\left(  G\right)   &  =\{0\},\text{ if }G=\varnothing,\text{ and
}\mathfrak{s}\left(  G\right)  =\overline{\sum_{Y\in G}Y},\text{ if }%
G\neq\varnothing. \label{F0'}%
\end{align}
Let $f$ be a continuous linear map from $X$ into a Banach space $Z.$ Then%
\[
f(G):=\{\overline{f(Y)}:Y\in G\}.
\]
is a family of closed subspaces in $Z.$ As $f(\sum_{Y\in G}Y)=\sum_{Y\in
G}f(Y)$ and $f$ is continuous,%
\begin{align}
f(\mathfrak{s}(G))  &  =f\left(  \overline{\sum_{Y\in G}Y}\right)
\subseteq\overline{\sum_{Y\in G}f(Y)}\subseteq\overline{\sum_{Y\in G}%
\overline{f(Y)}}=\mathfrak{s}(f(G)),\label{F1}\\
f(\mathfrak{p}(G))  &  =f(\bigcap\limits_{Y\in G}Y)\subseteq\bigcap
\limits_{Y\in G}f(Y)\subseteq\bigcap\limits_{Y\in G}\overline{f(Y)}%
=\mathfrak{p}(f(G)). \label{F2}%
\end{align}

\subsection{$R$-superposition series}

We shall now develop a Lie algebraic version of the Dixon's constructions of
radicals\textbf{ }(see \cite{D}) (in pure algebra they are known as Baer procedures).

Let $R$ be a preradical. For $\mathcal{L}\in\mathfrak{L,}$ set $R^{0}\left(
\mathcal{L}\right)  =\mathcal{L}$, $R^{1}\left(  \mathcal{L}\right)  =R\left(
\mathcal{L}\right)  ,$%
\begin{align}
R^{\alpha+1}\left(  \mathcal{L}\right)   &  =R\left(  R^{\alpha}\left(
\mathcal{L}\right)  \right)  ,\text{ for an ordinal }\alpha\text{ }%
\label{4.o}\\
\text{and }R^{\alpha}\left(  \mathcal{L}\right)   &  =\underset{\alpha
^{\prime}<\alpha}{\cap}R^{\alpha^{\prime}}\left(  \mathcal{L}\right)  ,\text{
for a limit ordinal }\alpha.\nonumber
\end{align}
By Corollary \ref{C1}, this is a decreasing transfinite chain of
characteristic Lie ideals of $\mathcal{L}$. It stabilizes at some ordinal
$\beta$: $R^{\beta+1}\left(  \mathcal{L}\right)  =R^{\beta}\left(
\mathcal{L}\right)  $, where $\beta$ is bounded by an ordinal that depends on
cardinality of $\mathcal{L}$. Denote the smallest such $\beta$ by
$r_{R}^{\circ}\left(  \mathcal{L}\right)  $ and set
\begin{equation}
R^{\circ}\left(  \mathcal{L}\right)  =R^{r_{R}^{\circ}(\mathcal{L}%
)}(\mathcal{L)},\text{ so that }R(R^{\circ}(\mathcal{L}))=R^{\circ
}(\mathcal{L}),\text{ for all }\mathcal{L}\in\frak{L.} \label{r1}%
\end{equation}

\begin{lemma}
\label{cin}Let $R$ and $T$ be preradicals. If at least one of them is balanced
and $R\leq T,$ then $R^{\alpha}\leq T^{\alpha}$ for every $\alpha,$ and
$R^{\mathbf{\circ}}\leq T^{\mathbf{\circ}}$.
\end{lemma}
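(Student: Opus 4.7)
The plan is to proceed by transfinite induction on $\alpha$, with the base case $\alpha=0$ being the trivial identity $R^{0}(\mathcal{L})=\mathcal{L}=T^{0}(\mathcal{L})$, and the limit case being immediate from the definition in (\ref{4.o}) since intersections respect inclusions. All the content is in the successor step, and then the statement about $R^{\circ}\leq T^{\circ}$ will follow by choosing an ordinal $\alpha$ larger than both $r_{R}^{\circ}(\mathcal{L})$ and $r_{T}^{\circ}(\mathcal{L})$, so that $R^{\alpha}(\mathcal{L})=R^{\circ}(\mathcal{L})$ and $T^{\alpha}(\mathcal{L})=T^{\circ}(\mathcal{L})$.

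For the successor step I would assume $R^{\alpha}(\mathcal{L})\subseteq T^{\alpha}(\mathcal{L})$ and try to deduce $R(R^{\alpha}(\mathcal{L}))\subseteq T(T^{\alpha}(\mathcal{L}))$. The preliminary remark I would make is that, by Corollary \ref{C1}(i), both $R^{\alpha}(\mathcal{L})$ and $T^{\alpha}(\mathcal{L})$ are (characteristic) closed Lie ideals of $\mathcal{L}$, so in particular the inclusion $R^{\alpha}(\mathcal{L})\subseteq T^{\alpha}(\mathcal{L})$ makes $R^{\alpha}(\mathcal{L})$ a closed Lie ideal of $T^{\alpha}(\mathcal{L})$. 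This is what unlocks the use of the balanced property, and it is the main technical point to record.

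With this in hand the induction splits into two essentially symmetric cases. If $T$ is balanced, then from $R^{\alpha}(\mathcal{L})\vartriangleleft T^{\alpha}(\mathcal{L})$ the balanced property gives $T(R^{\alpha}(\mathcal{L}))\subseteq T(T^{\alpha}(\mathcal{L}))=T^{\alpha+1}(\mathcal{L})$, and combining with $R\leq T$ applied to the Banach Lie algebra $R^{\alpha}(\mathcal{L})$ yields
\[
R^{\alpha+1}(\mathcal{L})=R(R^{\alpha}(\mathcal{L}))\subseteq T(R^{\alpha}(\mathcal{L}))\subseteq T^{\alpha+1}(\mathcal{L}).
\]
If instead $R$ is balanced, then the same inclusion $R^{\alpha}(\mathcal{L})\vartriangleleft T^{\alpha}(\mathcal{L})$ gives $R(R^{\alpha}(\mathcal{L}))\subseteq R(T^{\alpha}(\mathcal{L}))$, and $R\leq T$ applied at $T^{\alpha}(\mathcal{L})$ gives $R(T^{\alpha}(\mathcal{L}))\subseteq T(T^{\alpha}(\mathcal{L}))=T^{\alpha+1}(\mathcal{L})$, so the inclusion again holds.

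I do not expect any real obstacle: the only conceptual subtlety is noticing that one does not automatically have $R^{\alpha}(\mathcal{L})\vartriangleleft T^{\alpha}(\mathcal{L})$ from the inductive hypothesis alone, but rather from the fact that both are ideals of $\mathcal{L}$ together with the containment. Once that is observed, the two cases are parallel and the limit and stabilisation steps are bookkeeping.
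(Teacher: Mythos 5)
Your proof is correct and follows essentially the same route as the paper's: transfinite induction where the successor step hinges on observing that $R^{\alpha}(\mathcal{L})\vartriangleleft T^{\alpha}(\mathcal{L})$ (both being closed Lie ideals of $\mathcal{L}$), then splitting into the two symmetric cases according to which preradical is balanced, with the limit step and the passage to $R^{\circ}\leq T^{\circ}$ handled exactly as in the paper. Your explicit justification of the ideal containment via Corollary \ref{C1}(i) is a point the paper leaves implicit, but otherwise the arguments coincide.
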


\begin{proof}
Follows by induction. Indeed, let $\mathcal{L}$ be a Banach Lie algebra and
$R^{\alpha}\leq T^{\alpha}$ for some $\alpha$. Since $R^{\alpha}\left(
\mathcal{L}\right)  \vartriangleleft T^{\alpha}\left(  \mathcal{L}\right)  $,
it follows that $R\left(  R^{\alpha}\left(  \mathcal{L}\right)  \right)
\subseteq R\left(  T^{\alpha}\left(  \mathcal{L}\right)  \right)  \subseteq
T^{\alpha+1}\left(  \mathcal{L}\right)  $ if $R$ is balanced, and
$R^{\alpha+1}\left(  \mathcal{L}\right)  \subseteq T\left(  R^{\alpha}\left(
\mathcal{L}\right)  \right)  \subseteq T\left(  T^{\alpha}\left(
\mathcal{L}\right)  \right)  $ if $T$ is balanced.

If $R^{\alpha^{\prime}}\left(  \mathcal{L}\right)  \subseteq T^{\alpha
^{\prime}}\left(  \mathcal{L}\right)  $ for all $\alpha^{\prime}<\alpha$, then
$R^{\alpha}\left(  \mathcal{L}\right)  \subseteq T^{\alpha}\left(
\mathcal{L}\right)  $ follows from (\ref{4.o}). Taking $\alpha\geq\max\left\{
r_{R}^{\circ}(\mathcal{L}),r_{T}^{\circ}(\mathcal{L})\right\}  $, we obtain
that $R^{\mathbf{\circ}}\left(  \mathcal{L}\right)  \subseteq T^{\mathbf{\circ
}}\left(  \mathcal{L}\right)  $ for every Banach Lie algebra $\mathcal{L}$.
\end{proof}

\begin{theorem}
\label{T4.1}Let $R$ be a balanced preradical$.$ Then

\begin{itemize}
\item [$\mathrm{(i)}$]$R^{\alpha}$ is a balanced preradical for each ordinal
$\alpha.$

\item[$\mathrm{(ii)}$] $R^{\circ}$ is an under radical$,$ $\mathbf{Rad}%
(R)=\mathbf{Rad}(R^{\circ})$ and $\mathbf{Sem}(R)\subseteq\mathbf{Sem}%
(R^{\circ}).$ Moreover$,$ $R^{\circ}$ is the largest under radical smaller
than or equal to $R.$ If $R$ is lower stable then $R^{\circ}=R.$

\item[$\mathrm{(iii)}$] If $\mathcal{L}=\oplus_{\Lambda}\mathcal{L}_{\lambda}$
is the normed direct product of $\left\{  \mathcal{L}_{\lambda}\right\}
_{\Lambda},$ then $r_{R}^{\circ}\left(  \mathcal{L}\right)  \leq\max_{\Lambda
}r_{R}^{\circ}\left(  \mathcal{L}_{\lambda}\right)  $.
\end{itemize}
\end{theorem}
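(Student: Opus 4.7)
The plan is to prove all three parts by transfinite induction on the ordinal $\alpha$. Part (i) is the workhorse: I will show that $R^{\alpha}$ is a balanced preradical for every $\alpha$. The base cases $R^{0}=\mathrm{id}$ and $R^{1}=R$ are immediate. For the successor step, assuming $R^{\alpha}$ is a balanced preradical, Corollary \ref{C1}(i) gives $R^{\alpha}(\mathcal{L})\vartriangleleft^{\text{ch}}\mathcal{L}$, so $R^{\alpha+1}(\mathcal{L})=R(R^{\alpha}(\mathcal{L}))$ is a closed Lie ideal of $R^{\alpha}(\mathcal{L})$, and hence of $\mathcal{L}$ by Lemma \ref{L3.1}(i). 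For the preradical property, given a morphism $f\colon\mathcal{L}\to\mathcal{M}$ in $\overline{\mathbf{L}}$, the induction hypothesis places $f(R^{\alpha}(\mathcal{L}))\subseteq R^{\alpha}(\mathcal{M})$; restricting $f$ to $R^{\alpha}(\mathcal{L})$ with codomain $\overline{f(R^{\alpha}(\mathcal{L}))}$ (which is a closed Lie ideal of $R^{\alpha}(\mathcal{M})$) yields a genuine morphism, and applying the preradical property of $R$ followed by balancedness of $R$ on this ideal of $R^{\alpha}(\mathcal{M})$ lands us inside $R^{\alpha+1}(\mathcal{M})$. Balancedness of $R^{\alpha+1}$ follows by combining the inductive balancedness of $R^{\alpha}$ (giving $R^{\alpha}(I)\subseteq R^{\alpha}(\mathcal{L})$ for $I\vartriangleleft\mathcal{L}$), Corollary \ref{C1}(ii) (to see $R^{\alpha}(I)\vartriangleleft\mathcal{L}$), and balancedness of $R$ applied to the inclusion $R^{\alpha}(I)\vartriangleleft R^{\alpha}(\mathcal{L})$. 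The limit step is direct, since arbitrary intersections preserve the preradical and balanced properties.

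Part (ii) follows cleanly from (i). The map $R^{\circ}$ is lower stable by its defining stabilization identity in (\ref{r1}) and balanced as a limit of balanced $R^{\alpha}$, hence an under radical. For $\mathbf{Rad}(R)=\mathbf{Rad}(R^{\circ})$: the inclusion $\subseteq$ is a transfinite induction ($R(\mathcal{L})=\mathcal{L}$ forces $R^{\alpha}(\mathcal{L})=\mathcal{L}$ for every $\alpha$), while $\supseteq$ is immediate from $R(R^{\circ}(\mathcal{L}))=R^{\circ}(\mathcal{L})$. The inclusion $\mathbf{Sem}(R)\subseteq\mathbf{Sem}(R^{\circ})$ is trivial because $R^{\circ}\leq R$. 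For maximality, given an under radical $T\leq R$, Lemma \ref{cin} gives $T^{\alpha}\leq R^{\alpha}$ for every $\alpha$; lower stability of $T$ yields $T=T^{\alpha}$ for all $\alpha\geq 1$, so $T\leq R^{\alpha}$ for all $\alpha$ and thus $T\leq R^{\circ}$. The last assertion is immediate: if $R$ is lower stable then $R^{2}=R$, so the chain stabilizes at $\alpha=1$ and $R^{\circ}=R$.

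For part (iii), set $\alpha^{*}=\max_{\Lambda}r_{R}^{\circ}(\mathcal{L}_{\lambda})$ and $J_{\lambda}=R^{\circ}(\mathcal{L}_{\lambda})$. For each $\mu\in\Lambda$ the coordinate projection $p_{\mu}\colon\mathcal{L}\to\mathcal{L}_{\mu}$ is a morphism in $\overline{\mathbf{L}}$, so by (i) every balanced preradical $R^{\alpha}$ satisfies $p_{\mu}(R^{\alpha}(\mathcal{L}))\subseteq R^{\alpha}(\mathcal{L}_{\mu})$ and $R^{\alpha}(\mathcal{L}_{\mu})\subseteq R^{\alpha}(\mathcal{L})$. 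Closing sums on one side and intersecting kernels of projections on the other yields the sandwich $\widehat{\oplus}_{\Lambda}R^{\alpha}(\mathcal{L}_{\lambda})\subseteq R^{\alpha}(\mathcal{L})\subseteq\oplus_{\Lambda}R^{\alpha}(\mathcal{L}_{\lambda})$ for every $\alpha$. At $\alpha=\alpha^{*}$ both bounds stabilize, with $R^{\alpha^{*}}(\mathcal{L}_{\lambda})=J_{\lambda}$ being $R$-radical for every $\lambda$. Applying Proposition \ref{P3.1n} to the normed direct product $\oplus_{\Lambda}J_{\lambda}$ (combined with the fact that $R$ acts as the identity on each $J_{\lambda}$, so each $J_{\lambda}\vartriangleleft R^{\alpha^{*}}(\mathcal{L})$ lies inside $R(R^{\alpha^{*}}(\mathcal{L}))$ by balancedness) forces $R(R^{\alpha^{*}}(\mathcal{L}))=R^{\alpha^{*}}(\mathcal{L})$, giving $r_{R}^{\circ}(\mathcal{L})\leq\alpha^{*}$.

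The step I expect to be most delicate is the preradical verification in the successor case of (i): the naive restriction of a morphism $f$ to the ideal $R^{\alpha}(\mathcal{L})$ need not have dense image in $R^{\alpha}(\mathcal{M})$, so one has to choose the codomain correctly to apply the preradical property of $R$. In (iii) the subtlety is that the sandwich between $\widehat{\oplus}$ and $\oplus$ versions does not by itself force stabilization; one needs the $R$-radicality of each $J_{\lambda}$ together with balancedness of $R$ to conclude that $R^{\alpha^{*}}(\mathcal{L})$ is $R$-radical as a whole, which is where Proposition \ref{P3.1n} and the $c_{0}$-equality case are used essentially.
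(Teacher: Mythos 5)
Your parts (i) and (ii) are correct and essentially reproduce the paper's argument: the successor step of (i) turns on exactly the point you flag (replacing the codomain by $\overline{f(R^{\alpha}(\mathcal{L}))}$, a closed Lie ideal of $R^{\alpha}(\mathcal{M})$, before applying the preradical property and then the balancedness of $R$), and your maximality argument in (ii) via Lemma \ref{cin} and $T=T^{\alpha}$ is a legitimate, slightly more direct variant of the paper's route through $\mathbf{Rad}$-classes and Proposition \ref{P3.7}(i).

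Part (iii) does not close. What your reasoning actually establishes is the sandwich
\[
\widehat{\oplus}_{\Lambda}J_{\lambda}\ \subseteq\ R^{\alpha^{*}+1}(\mathcal{L})\ \subseteq\ R^{\alpha^{*}}(\mathcal{L})\ \subseteq\ \oplus_{\Lambda}J_{\lambda},
\]
the left inclusion because $J_{\lambda}=R(J_{\lambda})\subseteq R(R^{\alpha^{*}}(\mathcal{L}))$ by balancedness, the right by applying Proposition \ref{P3.1n} to the balanced preradical $R^{\alpha^{*}}$. This does not \emph{force} $R^{\alpha^{*}+1}(\mathcal{L})=R^{\alpha^{*}}(\mathcal{L})$: both ideals are merely trapped between the $c_{0}$-direct product and the full normed direct product of the $J_{\lambda}$, and when infinitely many $J_{\lambda}$ are non-zero the quotient $\oplus_{\Lambda}J_{\lambda}/\widehat{\oplus}_{\Lambda}J_{\lambda}$ is large, so nothing you have shown excludes a strict decrease at step $\alpha^{*}+1$ or beyond. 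The difficulty is already visible at $\alpha^{*}=0$, where the claim reduces to ``a normed direct product of $R$-radical algebras is $R$-radical for every balanced preradical $R$,'' while Proposition \ref{P3.1n} only gives $\widehat{\mathcal{L}}\subseteq R(\mathcal{L})\subseteq\mathcal{L}$. (For finite $\Lambda$, where $\widehat{\oplus}=\oplus$, your argument is complete.) To be fair, the paper's own proof of (iii) is the single sentence ``follows from Proposition \ref{P3.1n},'' and your text is the natural unpacking of it; but the final inference is a non sequitur as written, and an additional argument showing that $R^{\alpha^{*}}(\mathcal{L})$ itself is $R$-radical would be needed to complete the proof.
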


\begin{proof}
(i) Let $R^{\alpha}$ be a balanced preradical for some $\alpha.$ Let us show
that $R^{\alpha+1}$ is a balanced preradical. We have $f\left(  R^{\alpha
}\left(  \mathcal{L}\right)  \right)  \subseteq R^{\alpha}\left(
\mathcal{M}\right)  $ for each morphism $f$: $\mathcal{L}\longrightarrow
\mathcal{M}$. As $f$ is a homomorphism, $R^{\alpha}\left(  \mathcal{L}\right)
\vartriangleleft\mathcal{L}$ and $f(\mathcal{L)}$ is dense in $\mathcal{M}$,
we have\textbf{ }$\overline{f(R^{\alpha}(\mathcal{L)})}\vartriangleleft
\mathcal{M}$. Hence $\overline{f(R^{\alpha}\left(  \mathcal{L}\right)
)}\vartriangleleft R^{\alpha}\left(  \mathcal{M}\right)  $ and%
\[
f(R^{\alpha+1}\left(  \mathcal{L}\right)  )=f(R\left(  R^{\alpha}\left(
\mathcal{L}\right)  \right)  )\subseteq R\left(  \overline{f\left(  R^{\alpha
}\left(  \mathcal{L}\right)  \right)  }\right)  \subseteq R(R^{\alpha
}(\mathcal{M}))=R^{\alpha+1}\left(  \mathcal{M}\right)  .
\]
Thus $R^{\alpha+1}$ is a preradical$.$ As $R^{\alpha}$ is balanced,
$R^{\alpha}(I)\subseteq R^{\alpha}\left(  \mathcal{L}\right)  $ if
$I\vartriangleleft\mathcal{L}$. By Corollary \ref{C1}(ii), $R^{\alpha}(I)$ is
a Lie ideal of $\mathcal{L}$. Hence $R^{\alpha}(I)\vartriangleleft R^{\alpha
}\left(  \mathcal{L}\right)  $. Since $R$ is balanced, we have $R^{\alpha
+1}(I)=R(R^{\alpha}(I))\subseteq R(R^{\alpha}\left(  \mathcal{L}\right)
)=R^{\alpha+1}(\mathcal{L}).$ Thus $R^{\alpha+1}$ is balanced.

Let $\alpha$ be a limit ordinal and $R^{\alpha^{\prime}}$, $\alpha^{\prime
}<\alpha,$ be balanced preradicals$.$ For $I\vartriangleleft\mathcal{L}$,
$R^{\alpha}(I)=\underset{\alpha^{\prime}<\alpha}{\cap}R^{\alpha^{\prime}%
}\left(  I\right)  \subseteq\underset{\alpha^{\prime}<\alpha}{\cap}%
R^{\alpha^{\prime}}\left(  \mathcal{L}\right)  =R^{\alpha}(\mathcal{L})$ and,
by (\ref{F2}), for each morphism $f$: $\mathcal{L}\longrightarrow\mathcal{M}$,%
\[
f(R^{\alpha}\left(  \mathcal{L}\right)  )=f\left(  \underset{\alpha^{\prime
}<\alpha}{\cap}R^{\alpha^{\prime}}\left(  \mathcal{L}\right)  \right)
\subseteq\underset{\alpha^{\prime}<\alpha}{\cap}f(R^{\alpha^{\prime}}\left(
\mathcal{L}\right)  )\subseteq\underset{\alpha^{\prime}<\alpha}{\cap}%
R^{\alpha^{\prime}}\left(  \mathcal{M}\right)  =R^{\alpha}\left(
\mathcal{M}\right)  .
\]
Thus $R^{\alpha}$ are balanced preradicals for all $\alpha.$

(ii) From (i) and from the definition of $R^{\circ}$ we have that $R^{\circ}$
is a balanced preradical. As $\{R^{\alpha}(\mathcal{L)}\}$ is decreasing,
$R^{\circ}(\mathcal{L})\subseteq R\left(  \mathcal{L}\right)  $ for all
$\mathcal{L}\in\mathfrak{L,}$ so that $R^{\circ}\leq R.$ From this and from
the definition of $R^{\circ}$ it follows that $R(\mathcal{L})=\mathcal{L}%
\Longleftrightarrow R^{\circ}(\mathcal{L})=\mathcal{L}.$ Thus $\mathbf{Rad}%
(R)=\mathbf{Rad}(R^{\circ}).$ If $R(\mathcal{L})=\{0\},$ it follows from the
construction that $R^{\circ}(\mathcal{L})=\{0\}.$ Hence $\mathbf{Sem}%
(R)\subseteq\mathbf{Sem}(R^{\circ}).$

By (\ref{r1}), $R^{\circ}\left(  \mathcal{L}\right)  \in\mathbf{Rad}%
(R)=\mathbf{Rad}(R^{\circ}).$ Thus $R^{\circ}$ is lower stable. Hence
$R^{\circ}$ is an under radical.

If $R$ is lower stable, then $R$ is an under radical. As $\mathbf{Rad}%
(R)=\mathbf{Rad}(R^{\circ}),$ it follows from Proposition \ref{P3.7}(ii) that
$R=R^{\circ}.$

Let $T$ be an under radical and $T\leq R$. If $\mathcal{L}\in\mathbf{Rad}%
\left(  T\right)  $ then $\mathcal{L}=T(\mathcal{L})\subseteq R(\mathcal{L}%
)\subseteq\mathcal{L}$. Hence $\mathcal{L}\in$ $\mathbf{Rad}\left(  R\right)
=\mathbf{Rad}\left(  R^{\circ}\right)  .$ Thus $\mathbf{Rad}\left(  T\right)
\subseteq$ $\mathbf{Rad}\left(  R^{\circ}\right)  .$ By Proposition
\ref{P3.7}(i), $T\leq R^{\circ}$. Part (ii) is proved. Part (iii) follows from
Proposition \ref{P3.1n}.
\end{proof}

\begin{proposition}
\label{C-order}Let $R$ be a balanced preradical and let $I\vartriangleleft
\mathcal{L}$.

\begin{itemize}
\item [$\mathrm{(i)}$]If $\mathcal{L}\in\mathbf{Sem}(R^{\circ})$ then
$I\in\mathbf{Sem}(R^{\circ})$ and $r_{R}^{\circ}\left(  I\right)  \leq
r_{R}^{\circ}\left(  \mathcal{L}\right)  $.

\item[$\mathrm{(ii)}$] If $I,\mathcal{L}/I\in\mathbf{Sem}(R^{\circ}),$ then
$\mathcal{L}\in\mathbf{Sem}(R^{\circ})$ and $r_{R}^{\circ}\left(
\mathcal{L}\right)  \leq r_{R}^{\circ}\left(  \mathcal{L}/I\right)
+r_{R}^{\circ}\left(  I\right)  $.
\end{itemize}
\end{proposition}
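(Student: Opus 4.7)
The plan is to exploit the transfinite iterate construction from Theorem \ref{T4.1}, together with the crucial fact (established in Theorem \ref{T4.1}(i)) that every iterate $R^{\alpha}$ is itself a balanced preradical.

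For part (i), I would argue that since $R^{\alpha}$ is balanced for every ordinal $\alpha$, a routine transfinite induction yields
\[
R^{\alpha}(I)\subseteq R^{\alpha}(\mathcal{L})\quad\text{for every }\alpha.
\]
The successor step is immediate from balance, and at a limit ordinal $\alpha$ the inclusion is preserved because intersection respects inclusion. Setting $\alpha=r_{R}^{\circ}(\mathcal{L})$ then gives $R^{\alpha}(I)\subseteq R^{\alpha}(\mathcal{L})=\{0\}$, so the decreasing chain $\{R^{\beta}(I)\}$ has already reached $\{0\}$ by stage $\alpha$. This simultaneously yields $I\in\mathbf{Sem}(R^{\circ})$ and $r_{R}^{\circ}(I)\leq r_{R}^{\circ}(\mathcal{L})$.

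For part (ii), set $\beta=r_{R}^{\circ}(\mathcal{L}/I)$ and $\gamma=r_{R}^{\circ}(I)$; the goal is to show $R^{\beta+\gamma}(\mathcal{L})=\{0\}$. First I would apply Theorem \ref{T4.1}(i) (each $R^{\alpha}$ is a preradical) to the quotient morphism $q\colon\mathcal{L}\to\mathcal{L}/I$ to obtain $q(R^{\alpha}(\mathcal{L}))\subseteq R^{\alpha}(\mathcal{L}/I)$ for every $\alpha$. At $\alpha=\beta$ this forces
\[
R^{\beta}(\mathcal{L})\subseteq I.
\]
Next I would prove by transfinite induction on $\gamma$ the composition identity
\[
R^{\beta+\gamma}(\mathcal{L})=R^{\gamma}\bigl(R^{\beta}(\mathcal{L})\bigr),
\]
where the successor step is immediate from the definition $R^{\delta+1}=R\circ R^{\delta}$ and the limit step uses that $\{\beta+\delta:\delta<\gamma\}$ is cofinal in $\beta+\gamma$ (by the definition of ordinal addition), so the intersection defining the left side coincides with the intersection defining the right side. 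Combining the identity with the balance of $R^{\gamma}$ and the inclusion $R^{\beta}(\mathcal{L})\subseteq I$ gives
\[
R^{\beta+\gamma}(\mathcal{L})=R^{\gamma}\bigl(R^{\beta}(\mathcal{L})\bigr)\subseteq R^{\gamma}(I)=\{0\},
\]
which establishes both $\mathcal{L}\in\mathbf{Sem}(R^{\circ})$ and $r_{R}^{\circ}(\mathcal{L})\leq\beta+\gamma$, as required.

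The main obstacle is the composition identity at limit ordinals: one must remember that ordinal addition is not commutative and verify that the cofinality of $\{\beta+\delta:\delta<\gamma\}$ in $\beta+\gamma$ is what justifies reindexing the intersection. Everything else is a straightforward unfolding of balance and the preradical condition, and the transition from $R^{\beta}(\mathcal{L})\subseteq I$ to applying $R^{\gamma}$ uses nothing beyond the fact that $R^{\beta}(\mathcal{L})$, being a closed Lie ideal of $\mathcal{L}$ contained in $I$, is in particular a closed Lie ideal of $I$, so the balance of $R^{\gamma}$ on ideals of $I$ applies.
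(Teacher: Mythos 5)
Your proposal is correct and follows essentially the same route as the paper: for (i) it uses the balancedness of the iterates $R^{\alpha}$ (Theorem \ref{T4.1}(i)) to get $R^{\beta}(I)\subseteq R^{\beta}(\mathcal{L})=\{0\}$, and for (ii) it uses the preradical property of $R^{\beta}$ on the quotient map to get $R^{\beta}(\mathcal{L})\subseteq I$ and then applies $R^{\gamma}$ together with balance and the identity $R^{\beta+\gamma}=R^{\gamma}\circ R^{\beta}$. The only differences are expository: you spell out the ordinal-composition identity at limit stages (which the paper uses tacitly, remarking only that the order $\beta+\gamma$ matters), and you deduce the semisimplicity assertions from the index bounds rather than citing Lemma \ref{L-sem}(iii),(v) as the paper does.
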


\begin{proof}
The first assertions in (i) and (ii) follow from (iii) and (v) of Lemma
\ref{L-sem}, respectively.

(i) As $R$ is balanced then, by Theorem \ref{T4.1}(i), $R^{\alpha}$ is
balanced for each ordinal $\alpha.$ Let $\beta=r_{R}^{\circ}\left(
\mathcal{L}\right)  $. Then $R^{\beta}(I)\subseteq R^{\beta}(\mathcal{L}%
)=R^{\circ}(\mathcal{L})=\{0\}.$ Hence $r_{R}^{\circ}\left(  I\right)
\leq\beta.$

(ii) Let $q$: $\mathcal{L}\rightarrow\mathcal{L}/I$ be the quotient map,
$\gamma=r_{R}^{\circ}\left(  I\right)  $ and $\beta=r_{R}^{\circ}\left(
\mathcal{L}/I\right)  $. As
\[
q(R^{\beta}(\mathcal{L}))\overset{(\ref{4.0})}{\subseteq}R^{\beta
}(q(\mathcal{L}))=R^{\beta}(\mathcal{L}/I)=R^{\circ}(\mathcal{L}/I)=\{0\},
\]
we have $R^{\beta}(\mathcal{L})\subseteq I.$ Hence $R^{\gamma}(R^{\beta
}\left(  \mathcal{L}\right)  )\subseteq R^{\gamma}(I)=R^{\circ}(I)=\{0\}.$
Thus $r_{R}^{\circ}\left(  \mathcal{L}\right)  \leq\beta+\gamma$.\medskip
\end{proof}

Note that the order of ordinal summands in Proposition \ref{C-order}(ii) is
essential, since, generally speaking, $R^{\beta+\gamma}(\mathcal{L}%
)=R^{\gamma}(R^{\beta}\left(  \mathcal{L}\right)  )\neq R^{\beta}(R^{\gamma
}\left(  \mathcal{L}\right)  )=R^{\gamma+\beta}(\mathcal{L}),$ so that
$\beta+\gamma\neq\gamma+\beta.$

\subsection{$R$-convolution series}

For each preradical $R$, denote by $q_{R}$ the quotient morphism on $\frak{L}%
$: $q_{R}$: $\mathcal{L}\longrightarrow\mathcal{L}/R(\mathcal{L)}$ for all
$\mathcal{L}\in\frak{L.}$ Define a product $R\ast T$ of preradicals $R,T$ on
$\frak{L}$ by the formula%
\begin{equation}
(R\ast T)(\mathcal{L})=q_{T}^{-1}(R(q_{T}(\mathcal{L})))\text{ for each
}\mathcal{L}\in\frak{L.} \label{conv}%
\end{equation}

\begin{proposition}
\label{P1}Let $R,T$ be preradicals. Then

\emph{(i) \ \ }$R\ast T$ is a preradical and $T\leq R\ast T.$

\emph{(ii) \ }If $R$ is balanced then $R\ast T$ is balanced.

\emph{(iii) }If\emph{ }$R$ is lower stable then $R\ast T$ is lower stable.

\emph{(iv) }If $S$ is another preradical and $S\leq T,$ then $R\ast S\leq
R\ast T$ and $S\ast R\leq T\ast R.$
\end{proposition}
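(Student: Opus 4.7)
My plan is to unify all four parts by exploiting the fact that, by construction, $R\ast T$ sees $\mathcal{L}$ through the quotient $\mathcal{L}/T(\mathcal{L})$, where it simply returns $R$. Each assertion therefore reduces to verifying the corresponding property of $R$ on $\mathcal{L}/T(\mathcal{L})$ (or on ideals thereof) and transporting the conclusion back to $\mathcal{L}$ by applying $q_T^{-1}$.

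For (i), I will first observe that $(R\ast T)(\mathcal{L})$ is a closed Lie ideal because it is the preimage under the continuous Lie homomorphism $q_T$ of a closed Lie ideal. Verifying the morphism axiom for $f:\mathcal{L}\to\mathcal{M}$ in $\overline{\mathbf{L}}$ proceeds in two steps: first, the preradical property of $T$ gives $f(T(\mathcal{L}))\subseteq T(\mathcal{M})$ and hence descends $f$ to a bounded Lie homomorphism $\bar f:\mathcal{L}/T(\mathcal{L})\to\mathcal{M}/T(\mathcal{M})$ with dense image, which is again a morphism in $\overline{\mathbf{L}}$; second, I apply the preradical property of $R$ to $\bar f$ and pull back via $q_T$. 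The inclusion $T\leq R\ast T$ is immediate from $\{0\}\subseteq R(\mathcal{L}/T(\mathcal{L}))$. Part (iv) follows along the same template: the inclusion $S(\mathcal{L})\subseteq T(\mathcal{L})$ produces a canonical surjection $\mathcal{L}/S(\mathcal{L})\twoheadrightarrow\mathcal{L}/T(\mathcal{L})$ in $\overline{\mathbf{L}}$, to which I apply the preradical property of $R$ and then pull back via $q_S$ to obtain $R\ast S\leq R\ast T$; the inequality $S\ast R\leq T\ast R$ follows just by evaluating $S\leq T$ at the Banach Lie algebra $\mathcal{L}/R(\mathcal{L})$ and taking $q_R^{-1}$.

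For (ii), fix $I\vartriangleleft\mathcal{L}$ and set $\bar J:=\overline{q_T(I)}\vartriangleleft\mathcal{L}/T(\mathcal{L})$, together with the $\overline{\mathbf{L}}$-morphism $q_T|_I:I\to\bar J$. The preradical property of $T$ descends $q_T|_I$ to a morphism $I/T(I)\to\bar J/T(\bar J)$ in $\overline{\mathbf{L}}$; the preradical property of $R$ then carries $R(I/T(I))$ into $R(\bar J/T(\bar J))$, which after preimage gives $q_T((R\ast T)(I))\subseteq(R\ast T)(\bar J)$. Since $R$ is balanced and $\bar J$ is a closed Lie ideal of $\mathcal{L}/T(\mathcal{L})$, a second application of the same diagram chase (now comparing $\bar J$ with $\mathcal{L}/T(\mathcal{L})$) places this image inside $R(\mathcal{L}/T(\mathcal{L}))$, whence $(R\ast T)(I)\subseteq(R\ast T)(\mathcal{L})$. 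For (iii), I will set $A=(R\ast T)(\mathcal{L})$ and note that $A/T(\mathcal{L})=R(\mathcal{L}/T(\mathcal{L}))$, which is $R$-radical by the lower stability of $R$; transferring this radicality across the surjection $A\twoheadrightarrow A/T(\mathcal{L})$ via Lemma \ref{L-sem}(i) and the preradical property of $T$ on $A$ yields that $A/T(A)$ is $R$-radical, so $(R\ast T)(A)=q_{T,A}^{-1}(R(A/T(A)))=A$.

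The trickiest point, which I expect to be the main obstacle in (ii) and (iii), is the bookkeeping of how $T$ interacts with restriction to the ideal $I$ (or the algebra $A$): since $T$ is only assumed to be a preradical, the convenient comparison $T(I)\subseteq T(\mathcal{L})$ is not available for free, and the diagram chases must thread the preradical behaviour of $T$ through the intermediate ideal $\bar J$ (respectively through $A$) so that the balanced (respectively lower stable) hypothesis on $R$ absorbs the resulting discrepancy.
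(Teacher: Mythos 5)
Parts (i) and (iv) of your proposal are correct and follow the paper's own route: descend $f$ (respectively the canonical surjection $\mathcal{L}/S(\mathcal{L})\rightarrow\mathcal{L}/T(\mathcal{L})$) to the quotients, apply the preradical property of $R$, and pull back. The gaps are in (ii) and (iii), exactly at the point you yourself flag as the trickiest. In (ii), your first chase does give $q_T((R\ast T)(I))\subseteq (R\ast T)(\bar{J})$ with $\bar{J}=\overline{q_T(I)}$, but the advertised ``second application of the same diagram chase'' is not available: there is no morphism $\bar{J}\to \mathcal{L}/T(\mathcal{L})$ in $\overline{\mathbf{L}}$ (the inclusion of a proper closed ideal has non-dense image), and what you actually need is $(R\ast T)(\bar{J})\subseteq R(\mathcal{L}/T(\mathcal{L}))$. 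Balancedness of $R$ yields only $R(\bar{J})\subseteq R(\mathcal{L}/T(\mathcal{L}))$, whereas $(R\ast T)(\bar{J})$ contains $T(\bar{J})$, which need not lie in $R(\mathcal{L}/T(\mathcal{L}))$. In (iii), passing from ``$A/T(\mathcal{L})$ is $R$-radical'' to ``$A/T(A)$ is $R$-radical'' via Lemma \ref{L-sem}(i) requires $A/T(A)$ to be a quotient of $A/T(\mathcal{L})$, i.e. $T(\mathcal{L})\subseteq T(A)$; the preradical property of $T$ provides no morphism $\mathcal{L}\to A$ and hence no such inclusion.

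These gaps cannot be closed under the stated hypotheses, because (ii) and (iii) are in fact false without an additional assumption on $T$. Take $R=R_0$, the zero radical, which is balanced and lower stable; then $(R_0\ast T)(\mathcal{L})=q_T^{-1}(\{0\})=T(\mathcal{L})$, so $R_0\ast T=T$, and (ii), (iii) would assert that \emph{every} preradical is balanced and lower stable. But the centre preradical $K$ of Theorem \ref{T7.2} is not balanced (for the Heisenberg algebra and a two-dimensional commutative Lie ideal $I$ one has $K(I)=I\nsubseteq K(\mathcal{L})$), and $D:\mathcal{L}\mapsto\overline{[\mathcal{L},\mathcal{L}]}$ is not lower stable. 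The statements do become true with a hypothesis on $T$: if $T$ is also balanced in (ii), then $T(I)\subseteq T(\mathcal{L})=\ker q_T$, so $q_T|_I$ factors through $I/T(I)$ and one application of the balancedness of $R$ to $\bar{J}\vartriangleleft\mathcal{L}/T(\mathcal{L})$ finishes; if $T$ is, say, an under radical in (iii), then $T(\mathcal{L})=T(T(\mathcal{L}))\subseteq T(A)$ and your Lemma \ref{L-sem}(i) argument closes. Note that the paper's own proof makes the same tacit identification of $(R\ast T)(I)$ with $q_{T,\mathcal{L}}^{-1}(R(q_{T,\mathcal{L}}(I)))$ --- using the quotient of $\mathcal{L}$ where the definition calls for the quotient of $I$ by $T(I)$ --- so the defect originates in the source; the later applications (to $T=R^{(\alpha)}$) then have to be rerun by transfinite induction under the strengthened hypotheses.
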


\begin{proof}
(i) By the definition, for each $\mathcal{L}\in\frak{L},$ we have that
$R(q_{T}(\mathcal{L}))$ is a closed Lie ideal of $q_{T}(\mathcal{L}).$ As
$q_{T}$ is a bounded epimorphism, $q_{T}^{-1}(R(q_{T}(\mathcal{L})))$ is a
closed Lie ideal of $\mathcal{L.}$

Let $f$: $\mathcal{L}\longrightarrow\mathcal{M}$ be a morphism in
$\overline{\mathbf{L}}.$ Set $q=q_{T}|\mathcal{L}$ and $q_{1}=q_{T}%
|\mathcal{M.}$ For each $x\in\mathcal{L,}$ set $h(x)=q_{1}(f(x)).$ Then $h$ is
a bounded homomorphism from $\mathcal{L}$ into $\mathcal{M/}T(\mathcal{M)}$
with dense image. As $T$ is a preradical, $f(T(\mathcal{L}))\subseteq
T(\mathcal{M).}$ Therefore, for each $a\in T(\mathcal{L),}$%
\[
h(x+a)=q_{1}(f(x)+f(a))=q_{1}(f(x)).
\]
Thus $h$ generates a bounded homomorphism $\widetilde{h}$: $q(\mathcal{L}%
)=\mathcal{L}/T(\mathcal{L})\longrightarrow\mathcal{M/}T(\mathcal{M}%
)=q_{1}(\mathcal{M)}$ with dense image and $\widetilde{h}q=q_{1}f.$ Then
$(R\ast T)(\mathcal{L})=q_{T}^{-1}(R(q_{T}(\mathcal{L})))=q^{-1}%
(R(q(\mathcal{L}))),$ so that%
\[
q_{1}f((R\ast T)(\mathcal{L}))=\widetilde{h}q(q^{-1}(R(q(\mathcal{L}%
))))=\widetilde{h}(R(q(\mathcal{L})))\subseteq R(q_{1}(\mathcal{M})).
\]
Therefore $f((R\ast T)(\mathcal{L}))=q_{1}^{-1}(R(q_{1}(\mathcal{M})))=(R\ast
T)(\mathcal{M}).$ Thus $R\ast T$ is a preradical.

Clearly, $T(\mathcal{L})\subseteq q_{T}^{-1}(R(q_{T}(\mathcal{L})))=(R\ast
T)(\mathcal{L}),$ for each $\mathcal{L}\in\frak{L}$, so that $T\leq R\ast T.$

(ii) For $I\vartriangleleft\mathcal{L}\in\frak{L,}$ we have $q_{T}%
(I)\vartriangleleft q_{T}(\mathcal{L).}$ If $R$ is balanced, $R(q_{T}%
(I))\subseteq R(q_{T}(\mathcal{L)})\mathcal{.}$ Hence%
\[
(R\ast T)(I)=q_{T}^{-1}(R(q_{T}(I)))\subseteq q_{T}^{-1}(R(q_{T}%
(\mathcal{L})))=(R\ast T)(\mathcal{L}).
\]
Thus the preradical $R\ast T$ is balanced.

(iii) If $R$ is low stable, $R(R(\mathcal{L}))=R(\mathcal{L)}$ for all
$\mathcal{L}\in\frak{L.}$ Then $R\ast T$ is low stable, as%
\begin{align*}
(R\ast T)((R\ast T)(\mathcal{L}))  &  =q_{T}^{-1}(R(q_{T}(q_{T}^{-1}%
(R(q_{T}(\mathcal{L}))))))=q_{T}^{-1}(R(R(q_{T}(\mathcal{L}))))\\
&  =q_{T}^{-1}(R(q_{T}(\mathcal{L})))=(R\ast T)(\mathcal{L}).
\end{align*}

(iv) Let $S\leq T$ and $\mathcal{L}\in\frak{L.}$ Then $S(\mathcal{L})\subseteq
T(\mathcal{L).}$ Hence there exists a quotient homomorphism $p$:
$q_{S}(\mathcal{L})=\mathcal{L}/S(\mathcal{L})\longrightarrow\mathcal{L/}%
T(\mathcal{L})=q_{T}(\mathcal{L),}$ such that $q_{T}=pq_{S}.$ Therefore%
\[
q_{T}((R\ast S)(\mathcal{L}))=pq_{S}(q_{S}^{-1}(R(q_{S}(\mathcal{L}%
))))=p(R(q_{S}(\mathcal{L})))\subseteq R(pq_{S}(\mathcal{L}))=R(q_{T}%
(\mathcal{L})).
\]
Thus $(R\ast S)(\mathcal{L})\subseteq q_{T}^{-1}(R(q_{T}(\mathcal{L})))=(R\ast
T)(\mathcal{L}).$ Hence $R\ast S\leq R\ast T.$

As $S\leq T,$ we have $S(q_{R}(\mathcal{L}))\subseteq T(q_{R}(\mathcal{L})).$
Therefore
\[
(S\ast R)(\mathcal{L})=q_{R}^{-1}(S(q_{R}(\mathcal{L})))\subseteq q_{R}%
^{-1}(T(q_{R}(\mathcal{L})))=(T\ast R)(\mathcal{L}).
\]
Thus $S\ast R\leq T\ast R$.
\end{proof}

For each preradical $R$, we will define now an upper stable preradical
$R^{\ast}$ in the following way. For $\mathcal{L}\in\mathfrak{L,}$ set
$R^{\left(  0\right)  }\left(  \mathcal{L}\right)  =\{0\},$ $R^{\left(
1\right)  }\left(  \mathcal{L}\right)  =R\left(  \mathcal{L}\right)  ,$%
\begin{equation}
R^{\left(  \alpha+1\right)  }\left(  \mathcal{L}\right)  =(R\ast R^{\left(
\alpha\right)  })\left(  \mathcal{L}\right)  ,\text{ for an ordinal }\alpha,
\label{4.6}%
\end{equation}
By Proposition \ref{P1}(i), we have $R^{\left(  \alpha\right)  }\left(
\mathcal{L}\right)  \subseteq R^{\left(  \alpha+1\right)  }\left(
\mathcal{L}\right)  .$ Hence we can define%
\begin{equation}
R^{\left(  \alpha\right)  }\left(  \mathcal{L}\right)  =\overline
{\underset{\alpha^{\prime}<\alpha}{\cup}R^{\left(  \alpha^{\prime}\right)
}\left(  \mathcal{L}\right)  },\text{ for a limit ordinal }\alpha.
\label{4.6'}%
\end{equation}
Then $\{R^{(\alpha)}\left(  \mathcal{L}\right)  \}$ is an increasing
transfinite chain. By Corollary \ref{C1}, it consists of characteristic Lie
ideals of $\mathcal{L}$. Since all $\alpha$ are bounded by an ordinal that
depends on cardinality of $\mathcal{L,}$ the chain stabilizes at some ordinal
$\beta$: $R^{\left(  \beta+1\right)  }\left(  \mathcal{L}\right)  =R^{\left(
\beta\right)  }\left(  \mathcal{L}\right)  .$ Set%
\begin{equation}
R^{\ast}(\mathcal{L})=R^{\beta}(\mathcal{L)},\text{ so that }R\ast R^{\ast
}=R^{\ast}. \label{r2}%
\end{equation}

\begin{theorem}
\label{super}\emph{(i) }Let $R$ be a preradical. Then $R^{\ast}$ is an upper
stable preradical$,$%
\[
R\leq R^{\ast},\text{ \ }\mathbf{Sem}(R)=\mathbf{Sem}(R^{\ast})\text{ and
}\mathbf{Rad}(R)\subseteq\mathbf{Rad}(R^{\ast}).
\]
If $R$ is balanced$,$ then $R^{\ast}$ is an over radical$.$ Moreover$,$
$R^{\ast}$ is the smallest over radical larger than or equal to $R.$ If $R$ is
upper stable then $R^{\ast}=R.$

\emph{(ii) }Let $R$ and $T$ be preradicals. If $R\leq T,$ then $R^{\left(
\alpha\right)  }\leq T^{\left(  \alpha\right)  }$ for each $\alpha,$ and
$R^{\mathbf{\ast}}\leq T^{\mathbf{\ast}}.$
\end{theorem}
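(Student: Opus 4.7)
The plan is to establish (ii) first by transfinite induction and then invoke it in the minimality statement of (i). For (ii), I would induct on $\alpha$: the case $\alpha=0$ is trivial, and for a successor ordinal, assuming $R^{(\alpha)} \leq T^{(\alpha)}$, two applications of Proposition \ref{P1}(iv) yield $R \ast R^{(\alpha)} \leq R \ast T^{(\alpha)} \leq T \ast T^{(\alpha)}$, that is, $R^{(\alpha+1)} \leq T^{(\alpha+1)}$. At a limit ordinal the inclusion is preserved by the closed union in (\ref{4.6'}). Taking $\alpha$ past the stabilization ordinals of both chains then gives $R^{\ast} \leq T^{\ast}$.

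For (i), I would first verify by transfinite induction that each $R^{(\alpha)}$ is a preradical, and is balanced when $R$ is. The successor step is Proposition \ref{P1}(i)--(ii). The limit step exploits that $\{R^{(\alpha')}(\mathcal{L})\}_{\alpha' < \alpha}$ is an increasing chain (by Proposition \ref{P1}(i), since $T \leq R \ast T$), so its closure is a closed Lie ideal whose morphism compatibility and balanced inclusion both survive via the estimate (\ref{F1}) applied to closed sums. After stabilization at some $\beta$, the inclusion $R = R^{(1)} \leq R^{(\beta)} = R^{\ast}$ is immediate, and $\mathbf{Rad}(R) \subseteq \mathbf{Rad}(R^{\ast})$ follows at once.

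Next I would establish upper stability. The identity $R \ast R^{\ast} = R^{\ast}$ from (\ref{r2}) unwinds, via (\ref{conv}), to $R(\mathcal{L}/R^{\ast}(\mathcal{L})) = \{0\}$; to upgrade this to $R^{\ast}(\mathcal{L}/R^{\ast}(\mathcal{L})) = \{0\}$ it suffices to show $\mathbf{Sem}(R) = \mathbf{Sem}(R^{\ast})$. One direction follows from $R \leq R^{\ast}$; conversely, if $R(\mathcal{M}) = \{0\}$ then $q_R = \mathrm{id}_{\mathcal{M}}$, so $R^{(2)}(\mathcal{M}) = q_R^{-1}(R(\mathcal{M})) = \{0\}$, and a transfinite induction gives $R^{(\alpha)}(\mathcal{M}) = \{0\}$ for every $\alpha$. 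Combined with the balanced property this makes $R^{\ast}$ an over radical whenever $R$ is balanced.

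Finally, if $R$ is upper stable then $R(\mathcal{L}/R(\mathcal{L})) = \{0\}$, so $(R \ast R)(\mathcal{L}) = q_R^{-1}(\{0\}) = R(\mathcal{L})$; the chain is constant from stage $1$ and $R^{\ast} = R$. For minimality, if $T$ is any over radical with $R \leq T$, then by (ii) we have $R^{(\alpha)} \leq T^{(\alpha)} = T$ for every $\alpha \geq 1$ (the second equality being the case just established, applied to $T$, which is upper stable by definition of an over radical), whence $R^{\ast} \leq T$. The main obstacle I anticipate is the limit-ordinal bookkeeping: making sure closed unions interact correctly with quotient maps (for upper stability) and with ideal inclusions $I \vartriangleleft \mathcal{L}$ (for the balanced property), which is exactly what the continuity estimates (\ref{F1})--(\ref{F2}) are designed to control.
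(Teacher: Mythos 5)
Your proposal is correct and follows essentially the same route as the paper: transfinite induction (via Proposition \ref{P1} and the continuity estimate (\ref{F1})) to show each $R^{\left(\alpha\right)}$ is a preradical, the identity $R\ast R^{\ast}=R^{\ast}$ from (\ref{r2}) combined with $\mathbf{Sem}(R)=\mathbf{Sem}(R^{\ast})$ to get upper stability, Proposition \ref{P1}(ii) for balancedness, and Proposition \ref{P1}(iv) for part (ii). The only (harmless) deviations are in the endgame: you prove $R^{\ast}=R$ for upper stable $R$ by computing $(R\ast R)(\mathcal{L})=q_{R}^{-1}(\{0\})=R(\mathcal{L})$ directly and you deduce minimality from part (ii) together with $T^{\ast}=T$, whereas the paper appeals to Proposition \ref{P3.7}(iv) and (iii) respectively at these two points; both routes are valid.
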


\begin{proof}
Let $R^{\left(  \alpha\right)  }$ be a preradical for some $\alpha.$ By
Proposition \ref{P1}(i), $R^{\left(  \alpha+1\right)  }=R\ast R^{\left(
\alpha\right)  }$ is a preradical for an ordinal $\alpha.$ Let $\alpha$ be a
limit ordinal and let $R^{(\alpha^{\prime})}$, for $\alpha^{\prime}<\alpha,$
be preradicals$.$ For each morphism $f$: $\mathcal{L}\longrightarrow
\mathcal{M}$ it follows from (\ref{F1}) that%
\[
f(R^{(\alpha)}\left(  \mathcal{L}\right)  )=f\left(  \overline{\underset
{\alpha^{\prime}<\alpha}{\cup}R^{\left(  \alpha^{\prime}\right)  }\left(
\mathcal{L}\right)  }\right)  \subseteq\overline{\underset{\alpha^{\prime
}<\alpha}{\cup}f(R^{\left(  \alpha^{\prime}\right)  }\left(  \mathcal{L}%
\right)  )}\subseteq\overline{\underset{\alpha^{\prime}<\alpha}{\cup
}R^{\left(  \alpha^{\prime}\right)  }\left(  \mathcal{M}\right)  }=R^{\alpha
}\left(  \mathcal{M}\right)  .
\]
Thus $R^{(\alpha)}$ are preradicals for all $\alpha,$ so that $R^{\ast}$ is a preradical.

By (\ref{4.6}), (\ref{4.6'}) and Proposition \ref{P1}(i), we have $R\leq
R^{\ast}.$ Hence $R^{\ast}(\mathcal{L})=\{0\}$ implies $R(\mathcal{L})=\{0\}.$
If $R(\mathcal{L})=\{0\},$ it follows from (\ref{conv}) -- (\ref{4.6'}) that
$R^{\ast}(\mathcal{L})=\{0\}.$ Thus $\mathbf{Sem}(R)=\mathbf{Sem}(R^{\ast}).$

If $R(\mathcal{L})=\mathcal{L},$ it follows that all $R^{(\alpha)}%
(\mathcal{L})=\mathcal{L}$, so that $R^{\ast}(\mathcal{L})=\mathcal{L}.$ Hence
$\mathbf{Rad}(R)\subseteq\mathbf{Rad}(R^{\ast}).$

Set $q=q_{R^{\ast}}.$ As $R^{\ast}=R\ast R^{\ast}$ (see (\ref{r2})), we have
from (\ref{conv}) that $R^{\ast}\left(  \mathcal{L}\right)  =(R\ast R^{\ast
})(\mathcal{L})=q^{-1}(R(q(\mathcal{L}))).$ Hence $q(R^{\ast}\left(
\mathcal{L}\right)  )=R(q(\mathcal{L})).$ As $q$: $\mathcal{L}\longrightarrow
\mathcal{L}/R^{\ast}(\mathcal{L)},$ we have $q(R^{\ast}\left(  \mathcal{L}%
\right)  )=\{0\}.$ Hence $R(q(\mathcal{L}))=0.$ Thus $q(\mathcal{L})$ is
$R$-semisimple, so that $q(\mathcal{L})$ is $R^{\ast}$-semisimple by the above
argument. Hence $R^{\ast}(\mathcal{L}/R^{\ast}\left(  \mathcal{L}\right)
)=0$, whence $R^{\ast}$ is upper stable.

By (\ref{r2}), $R^{\ast}=R\ast R^{\ast}.$ Hence, if $R$ is balanced, it
follows from Proposition \ref{P1}(ii) that $R^{\ast}$ is balanced. Thus
$R^{\ast}$ is an over radical.

Let $T$ be an over radical such that $R\leq T$. If $\mathcal{L}\in
\mathbf{Sem}\left(  T\right)  $ then $R(\mathcal{L})\subseteq T(\mathcal{L}%
)=\{0\}$. Hence $\mathcal{L}\in\mathbf{Sem}\left(  R\right)  =\mathbf{Sem}%
\left(  R^{\ast}\right)  .$ Thus $\mathbf{Sem}\left(  T\right)  \subseteq$
$\mathbf{Sem}\left(  R^{\ast}\right)  .$ By Proposition \ref{P3.7}(iii),
$R^{\ast}\leq T$.

Let $R$ be upper stable. As $R^{\ast}$ is upper stable and $\mathbf{Sem}%
(R)=\mathbf{Sem}(R^{\ast}),$ it follows from Proposition \ref{P3.7}(iv) that
$R=R^{\ast}.$ Part (i) is proved.

Part (ii) follows by induction. Indeed, let $R^{\left(  \alpha\right)  }\leq
T^{\left(  \alpha\right)  }$ for some $\alpha$. As $R\leq T,$ we have from
Proposition \ref{P1}(iv) $R^{(\alpha+1)}=R\ast R^{(\alpha)}\leq R\ast
T^{\left(  \alpha\right)  }\leq T\ast T^{\left(  \alpha\right)  }%
=T^{(\alpha+1)}.$ Let $\mathcal{L}$ be a Banach Lie algebra. If $R^{\left(
\alpha^{\prime}\right)  }\left(  \mathcal{L}\right)  \subseteq T^{\left(
\alpha^{\prime}\right)  }\left(  \mathcal{L}\right)  $ for all $\alpha
^{\prime}<\alpha$, then $R^{\left(  \alpha\right)  }\left(  \mathcal{L}%
\right)  \subseteq T^{\left(  \alpha\right)  }\left(  \mathcal{L}\right)  $
follows from (\ref{4.6'}). Taking $\alpha\geq\max\left\{  r_{R}^{\ast
}(\mathcal{L}),r_{T}^{\ast}(\mathcal{L})\right\}  $, we have $R^{\mathbf{\ast
}}\left(  \mathcal{L}\right)  \subseteq T^{\mathbf{\ast}}\left(
\mathcal{L}\right)  $ for each Banach Lie algebra $\mathcal{L}$.
\end{proof}

The following theorem gives sufficient conditions for $R^{\circ}$ and
$R^{\ast}$ to be radicals. It is similar to the result proved in \cite{D} for
the category of associative normed algebras.

\begin{theorem}
\label{under-over}\emph{(i) }If $R$ is an under radical\emph{ }then $R^{\ast}$
is the smallest radical larger than or equal to $R$.

\begin{itemize}
\item [$\mathrm{(ii)}$]If $R$ is an over radical then $R^{\circ}$ is the
largest radical smaller than or equal to $R$.
\end{itemize}
\end{theorem}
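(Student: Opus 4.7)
The plan is to prove (i) directly using the tools already assembled in Theorem \ref{super}, and then to derive (ii) from (i) via a ``sandwich'' argument combined with a short transfinite induction.

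For (i), assume $R$ is an under radical. Theorem \ref{super} already guarantees that $R^{\ast}$ is an over radical and is the smallest over radical above $R$, so the only thing missing to turn it into a radical is lower stability. For this I would invoke Proposition \ref{P1}(iii): since $R$ is lower stable, the convolution $R\ast T$ is lower stable for \emph{every} preradical $T$. Applied to $T = R^{\ast}$, and combined with the stabilization identity $R^{\ast} = R\ast R^{\ast}$ from (\ref{r2}), this immediately forces $R^{\ast}$ to be lower stable, hence a radical. For minimality among radicals $\geq R$, let $T$ be any such radical; since $T$ is upper stable, Theorem \ref{super}(i) gives $T^{\ast} = T$, and then Theorem \ref{super}(ii) yields $R^{\ast} \leq T^{\ast} = T$.

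For (ii), assume $R$ is an over radical, so by Theorem \ref{T4.1}, $R^{\circ}$ is an under radical. The crucial auxiliary claim I would establish first is that every radical $T$ with $T\leq R$ satisfies $T\leq R^{\circ}$. I would prove this by transfinite induction on $\alpha$ showing $T\leq R^{\alpha}$. At a successor step, supposing $T(\mathcal{L})\subseteq R^{\alpha}(\mathcal{L})$, the Lie ideal $T(\mathcal{L})$ sits inside $R^{\alpha}(\mathcal{L})$ as a Lie ideal of the latter; then balance of $T$ gives $T(T(\mathcal{L}))\subseteq T(R^{\alpha}(\mathcal{L}))$, lower stability of $T$ rewrites the left side as $T(\mathcal{L})$, and $T\leq R$ confines the right side inside $R(R^{\alpha}(\mathcal{L})) = R^{\alpha+1}(\mathcal{L})$. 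The limit step is immediate from (\ref{4.o}). Taking $\alpha$ at least the stabilization ordinal $r_R^{\circ}(\mathcal{L})$ gives $T(\mathcal{L})\subseteq R^{\circ}(\mathcal{L})$, which is precisely the largeness assertion of (ii).

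To promote $R^{\circ}$ itself to a radical I would use a sandwich argument: applying part (i) to the under radical $R^{\circ}$ produces a radical $(R^{\circ})^{\ast}$ with $R^{\circ}\leq (R^{\circ})^{\ast}$. Since $R^{\circ}\leq R$, Theorem \ref{super}(ii) gives $(R^{\circ})^{\ast}\leq R^{\ast}$, and $R^{\ast} = R$ by Theorem \ref{super}(i) because $R$ is upper stable. Hence $(R^{\circ})^{\ast}$ is a radical with $(R^{\circ})^{\ast}\leq R$; the auxiliary claim, applied to it, forces $(R^{\circ})^{\ast}\leq R^{\circ}$. Combining inequalities yields $R^{\circ} = (R^{\circ})^{\ast}$, so $R^{\circ}$ is a radical. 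The main conceptual obstacle I anticipate is noticing this indirect route to upper stability of $R^{\circ}$: a direct attempt via $R^{\alpha}(\mathcal{L}/R^{\circ}(\mathcal{L}))$ runs into the difficulty that the decreasing transfinite construction of $R^{\circ}$ does not interact cleanly with quotient maps, whereas the sandwich $R^{\circ}\leq (R^{\circ})^{\ast}\leq R$ reduces everything to the comparison claim, which is a clean transfinite induction.
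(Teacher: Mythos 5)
Your proof is correct. Part (i) coincides with the paper's argument: both obtain lower stability of $R^{\ast}$ from Proposition \ref{P1}(iii) together with the identity $R^{\ast}=R\ast R^{\ast}$ of (\ref{r2}), and both deduce minimality from Theorem \ref{super}. Part (ii), however, takes a genuinely different route. The paper proves directly, by transfinite induction, that \emph{every} $R^{\alpha}$ is upper stable when $R$ is an over radical; the successor step requires the quotient computation $R^{\alpha}\left(\mathcal{L}/R^{\alpha+1}\left(\mathcal{L}\right)\right)\subseteq R^{\alpha}\left(\mathcal{L}\right)/R^{\alpha+1}\left(\mathcal{L}\right)$ followed by an appeal to upper stability and balancedness of $R$ --- so the ``direct attempt via quotients'' that you dismiss as problematic is in fact exactly what the paper carries out. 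You instead sandwich: $(R^{\circ})^{\ast}$ is a radical by part (i), it lies below $R^{\ast}=R$ by Theorem \ref{super}, and it lies below $R^{\circ}$ because $R^{\circ}$ majorizes every radical below $R$; hence $R^{\circ}=(R^{\circ})^{\ast}$. Your auxiliary comparison claim is proved by a clean transfinite induction, though it is already available for free from Theorem \ref{T4.1}(ii) (every radical is an under radical, and $R^{\circ}$ is the largest under radical $\leq R$), so that induction could be omitted. The trade-off: the paper's argument yields the stronger statement that each term $R^{\alpha}$ of the superposition series is upper stable, whereas your argument is shorter given the surrounding machinery and reduces (ii) entirely to (i) and order comparisons, at the cost of concluding upper stability of $R^{\circ}$ only indirectly.
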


\begin{proof}
(i) By (\ref{r2}), $R^{\ast}=R\ast R^{\ast}.$ As $R$ is lower stable,
Proposition \ref{P1}(iii) implies that $R^{\ast}$ is lower stable. Hence, by
Theorem \ref{super}(i), $R^{\ast}$ is a radical, $R\leq R^{\ast}$ and
$R^{\ast}$ is the smallest over radical larger than or equal to $R.$ Hence
$R^{\ast}$ is the smallest radical larger than or equal to $R.$

(ii) Let $R^{\alpha}$ be upper stable for some $\alpha$: $R^{\alpha}\left(
\mathcal{L}/R^{\alpha}\left(  \mathcal{L}\right)  \right)  =\{0\}$ for all
$\mathcal{L}\in\mathfrak{L}$. As $R^{\alpha+1}\left(  \mathcal{L}\right)
\subseteq R^{\alpha}\left(  \mathcal{L}\right)  $, there is the quotient map
$q:\mathcal{L}/R^{\alpha+1}\left(  \mathcal{L}\right)  \longrightarrow
\mathcal{L}/R^{\alpha}\left(  \mathcal{L}\right)  $. Since $R^{\alpha}$ is a
preradical, $q(R^{\alpha}\left(  \mathcal{L}/R^{\alpha+1}\left(
\mathcal{L}\right)  \right)  )\subseteq R^{\alpha}\left(  \mathcal{L}%
/R^{\alpha}\left(  \mathcal{L}\right)  \right)  =\{0\}$. Hence $R^{\alpha
}\left(  \mathcal{L}/R^{\alpha+1}\left(  \mathcal{L}\right)  \right)
\subseteq\ker\left(  q\right)  =R^{\alpha}\left(  \mathcal{L}\right)
/R^{\alpha+1}\left(  \mathcal{L}\right)  .$ As $R$ is upper stable,%
\[
R(R^{\alpha}\left(  \mathcal{L}\right)  /R^{\alpha+1}\left(  \mathcal{L}%
\right)  )=R(R^{\alpha}\left(  \mathcal{L}\right)  /R(R^{\alpha}\left(
\mathcal{L}\right)  ))=\{0\}.
\]
Therefore, as $R$ is balanced,%
\[
R^{\alpha+1}\left(  \mathcal{L}/R^{\alpha+1}\left(  \mathcal{L}\right)
\right)  =R\left(  R^{\alpha}\left(  \mathcal{L}/R^{\alpha+1}\left(
\mathcal{L}\right)  \right)  \right)  \subseteq R\left(  R^{\alpha}\left(
\mathcal{L}\right)  /R^{\alpha+1}\left(  \mathcal{L}\right)  \right)  =\{0\}.
\]
Thus $R^{\alpha+1}(\mathcal{L)}$ is upper stable$.$

Let $\alpha$ be a limit ordinal. For all $\alpha^{\prime}<\alpha$,
$R^{\alpha^{\prime}}\left(  \mathcal{L}/R^{\alpha^{\prime}}\left(
\mathcal{L}\right)  \right)  =\{0\}$ and $R^{\alpha}\left(  \mathcal{L}%
\right)  \subseteq R^{\alpha^{\prime}}\left(  \mathcal{L}\right)  $ for each
$\mathcal{L}\in\mathfrak{L}$. Let $q$ be the quotient map $q:\mathcal{L}%
/R^{\alpha}\left(  \mathcal{L}\right)  \longrightarrow\mathcal{L}%
/R^{\alpha^{\prime}}\left(  \mathcal{L}\right)  $. Since $R^{\alpha^{\prime}}$
is a preradical, $q(R^{\alpha^{\prime}}\left(  \mathcal{L}/R^{\alpha}\left(
\mathcal{L}\right)  \right)  )\subseteq R^{\alpha^{\prime}}\left(
q(\mathcal{L}/R^{\alpha}\left(  \mathcal{L}\right)  )\right)  =R^{\alpha
^{\prime}}\left(  \mathcal{L}/R^{\alpha^{\prime}}\left(  \mathcal{L}\right)
\right)  =\{0\}$. Hence $R^{\alpha^{\prime}}\left(  \mathcal{L}/R^{\alpha
}\left(  \mathcal{L}\right)  \right)  \subseteq\ker\left(  q\right)
=R^{\alpha^{\prime}}\left(  \mathcal{L}\right)  /R^{\alpha}\left(
\mathcal{L}\right)  .$ Therefore, as $R^{\alpha}\left(  \mathcal{L}\right)
=\underset{\alpha^{\prime}<\alpha}{\cap}R^{\alpha^{\prime}}\left(
\mathcal{L}\right)  ,$ we have%
\[
R^{\alpha}(\mathcal{L}/R^{\alpha}\left(  \mathcal{L}\right)  )=\underset
{\alpha^{\prime}<\alpha}{\cap}R^{\alpha^{\prime}}\left(  \mathcal{L}%
/R^{\alpha}\left(  \mathcal{L}\right)  \right)  \subseteq\underset
{\alpha^{\prime}<\alpha}{\cap}\left\{  R^{\alpha^{\prime}}\left(
\mathcal{L}\right)  /R^{\alpha}\left(  \mathcal{L}\right)  \right\}  =\{0\}.
\]
Thus $R^{\alpha}$ is upper stable for all $\alpha,$ so that $R^{\circ}$ is
upper stable. Hence, by Theorem \ref{T4.1}, $R^{\circ}$ is a radical,
$R^{\circ}\leq R$ and $R^{\circ}$ is the largest under radical smaller than or
equal to $R.$
\end{proof}

\subsection{Construction of under radicals by subideals}

Let $\mathcal{L}\in\frak{L}.$ Recall that a closed Lie subalgebra $I$ of
$\mathcal{L}$ is a Lie subideal ($I\!\vartriangleleft\!\!\!\vartriangleleft
\mathcal{L})$,\textit{ }if there is a chain of closed Lie subalgebras $J_{0}%
,$...$,J_{n}$ of $\mathcal{L}$ such that $I=J_{0}\vartriangleleft
J_{1}\vartriangleleft\cdots\vartriangleleft J_{n}=\mathcal{L}$. Let $R$ be a
preradical. Set
\begin{equation}
\mathrm{Sub}\left(  \mathcal{L},R\right)  =\{I\!\vartriangleleft
\!\!\!\vartriangleleft\mathcal{L}:R\left(  I\right)  =I\}\text{ and
}R^{\mathbf{s}}\left(  \mathcal{L}\right)  =\mathfrak{s}\left(  \mathrm{Sub}%
\left(  \mathcal{L},R\right)  \right)  \text{ }(\text{see }(\ref{F0'})).
\label{4.4}%
\end{equation}
The subideals in Sub($\mathcal{L},R)$ are called $R$-radical. Clearly,
$R^{\mathbf{s}}\left(  \mathcal{L}\right)  $ is a closed subspace of
$\mathcal{L}$.

\begin{lemma}
\label{sin}Let $R$ and $T$ be preradicals. If $R\leq T$ then $R^{\mathbf{s}%
}\leq T^{\mathbf{s}}$.
\end{lemma}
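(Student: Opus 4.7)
The plan is straightforward: monotonicity of the operation $\mathfrak{s}$ reduces the statement to showing that the family $\mathrm{Sub}(\mathcal{L}, R)$ of $R$-radical subideals of $\mathcal{L}$ grows as $R$ grows.

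First I would fix $\mathcal{L} \in \mathfrak{L}$ and take any $I \in \mathrm{Sub}(\mathcal{L}, R)$, so that $I\!\vartriangleleft\!\!\!\vartriangleleft \mathcal{L}$ and $R(I) = I$. The condition $I\!\vartriangleleft\!\!\!\vartriangleleft\mathcal{L}$ is a purely structural statement about $\mathcal{L}$ and does not involve any preradical, so it is automatically inherited. For the radicality condition, I would apply the hypothesis $R \leq T$ to the Banach Lie algebra $I$ itself, obtaining $I = R(I) \subseteq T(I)$. Since $T$ is a preradical, $T(I)$ is a closed Lie ideal of $I$, hence $T(I) \subseteq I$. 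Combining these two inclusions gives $T(I) = I$, so $I \in \mathrm{Sub}(\mathcal{L}, T)$. This establishes the inclusion
\[
\mathrm{Sub}(\mathcal{L}, R) \subseteq \mathrm{Sub}(\mathcal{L}, T).
\]

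Second, I would invoke monotonicity of $\mathfrak{s}$ which is immediate from its definition (\ref{F0'}): if $G_1 \subseteq G_2$ are families of closed subspaces of a Banach space, then $\sum_{Y \in G_1} Y \subseteq \sum_{Y \in G_2} Y$, and closure preserves the inclusion. Applying this to the families above yields
\[
R^{\mathbf{s}}(\mathcal{L}) = \mathfrak{s}(\mathrm{Sub}(\mathcal{L}, R)) \subseteq \mathfrak{s}(\mathrm{Sub}(\mathcal{L}, T)) = T^{\mathbf{s}}(\mathcal{L}).
\]
Since $\mathcal{L} \in \mathfrak{L}$ was arbitrary, this gives $R^{\mathbf{s}} \leq T^{\mathbf{s}}$ as required.

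There is no real obstacle here: the lemma is a direct monotonicity statement whose entire content is that both ingredients defining $R^{\mathbf{s}}$, namely the family of $R$-radical subideals and the closed-span operation $\mathfrak{s}$, depend monotonically on their arguments. The only delicate point worth flagging is that the equality $T(I) = I$ used above requires the ``easy'' inclusion $T(I) \subseteq I$, which is just the fact that a preradical sends a Lie algebra into itself; the ``nontrivial'' inclusion $I \subseteq T(I)$ is exactly where the hypothesis $R \leq T$ is consumed.
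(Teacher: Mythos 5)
Your proof is correct and follows essentially the same route as the paper: both reduce the claim to the inclusion $\mathrm{Sub}(\mathcal{L},R)\subseteq\mathrm{Sub}(\mathcal{L},T)$ (the paper cites $\mathbf{Rad}(R)\subseteq\mathbf{Rad}(T)$, which is exactly the computation $I=R(I)\subseteq T(I)\subseteq I$ that you spell out) and then apply monotonicity of $\mathfrak{s}$. No issues.
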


\begin{proof}
Let $\mathcal{L}$ be a Banach Lie algebra and $I\in\mathrm{Sub}\left(
\mathcal{L},R\right)  $. As $\mathbf{Rad}\left(  R\right)  \subseteq
\mathbf{Rad}\left(  T\right)  ,$ we have $I\in\mathrm{Sub}\left(
\mathcal{L},T\right)  $. Hence $R^{\mathbf{s}}\left(  \mathcal{L}\right)
\subseteq T^{\mathbf{s}}\left(  \mathcal{L}\right)  $. Thus $R^{\mathbf{s}%
}\leq T^{\mathbf{s}}$.
\end{proof}

\begin{theorem}
\label{T2}Let $R$ be a preradical in $\overline{\mathbf{L}}$. Then

\begin{itemize}
\item [$\mathrm{(i)}$]$R^{\mathbf{s}}$ is a balanced$,$ lower stable
preradical$,$ so that $R^{\mathbf{s}}$ is an under radical in $\overline
{\mathbf{L}}.$

\item[$\mathrm{(ii)}$] If $R$ is balanced, then $R^{\mathbf{s}}\leq R$ $($see
$(\ref{4}))\mathfrak{\ }$and $R^{\mathbf{s}}$ is the largest under radical
smaller than or equal to $R$.

\item[$\mathrm{(iii)}$] If $R$ is lower stable$,$ then $R\leq R^{\mathbf{s}}$
and $R^{\mathbf{s}}$ is the smallest under radical larger than or equal to $R$.

\item[$\mathrm{(iv)}$] If $R$ is an under radical then $R=R^{\mathbf{s}%
}\mathfrak{.}$
\end{itemize}
\end{theorem}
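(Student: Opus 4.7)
The plan is to prove (i) first, which is the substantive step, and then derive (ii), (iii), (iv) by short bookkeeping arguments. The main obstacle will be showing that $R^{\mathbf{s}}(\mathcal{L})$ is a Lie ideal of $\mathcal{L}$: the members of $\mathrm{Sub}(\mathcal{L},R)$ are only subideals, so a priori the closed sum of them need not be invariant under the adjoint action of $\mathcal{L}$.

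To overcome this, I would exploit boundedness of $\mathrm{ad}(a)$ for each $a\in\mathcal{L}$, so that $f_t=\exp(t\,\mathrm{ad}(a))$ is a bounded Lie automorphism of $\mathcal{L}$ for every $t\in\mathbb{R}$ (as in the proof of Lemma \ref{L1}). If $I=J_0\vartriangleleft J_1\vartriangleleft\cdots\vartriangleleft J_n=\mathcal{L}$ belongs to $\mathrm{Sub}(\mathcal{L},R)$, then $f_t$ sends this chain to a subideal chain for $f_t(I)$ in $\mathcal{L}$, while $f_t|_I\colon I\to f_t(I)$ is a bounded Lie isomorphism, so (\ref{3.1}) gives $R(f_t(I))=f_t(I)$, i.e.\ $f_t(I)\in\mathrm{Sub}(\mathcal{L},R)$. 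Hence $f_t$ leaves $\sum_{I}I$ and therefore $R^{\mathbf{s}}(\mathcal{L})$ invariant; differentiating at $t=0$ and using closedness of $R^{\mathbf{s}}(\mathcal{L})$ yields $[a,R^{\mathbf{s}}(\mathcal{L})]\subseteq R^{\mathbf{s}}(\mathcal{L})$. The morphism condition (\ref{4.0}) for $R^{\mathbf{s}}$ is handled by the same idea applied to a general $f\colon\mathcal{L}\to\mathcal{M}$ with dense image: the closures $\overline{f(J_i)}$ form a subideal chain ending in $\mathcal{M}$, and (\ref{4.0}) for $R$ on $f|_I$ shows $\overline{f(I)}\in\mathrm{Sub}(\mathcal{M},R)$.

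The remaining parts of (i) are routine. Balancedness is immediate since any subideal chain inside $J\vartriangleleft\mathcal{L}$ extends by one step to a subideal chain in $\mathcal{L}$. For lower stability, given $I\in\mathrm{Sub}(\mathcal{L},R)$ with chain $I\vartriangleleft J_1\vartriangleleft\cdots\vartriangleleft\mathcal{L}$, intersecting termwise with the (now established) Lie ideal $R^{\mathbf{s}}(\mathcal{L})$ produces a subideal chain of the same length in $R^{\mathbf{s}}(\mathcal{L})$, so $I\in\mathrm{Sub}(R^{\mathbf{s}}(\mathcal{L}),R)$ and hence $R^{\mathbf{s}}(\mathcal{L})\subseteq R^{\mathbf{s}}(R^{\mathbf{s}}(\mathcal{L}))$; the reverse inclusion is automatic.

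Parts (ii)--(iv) are then short. For (ii), if $R$ is balanced and $I=J_0\vartriangleleft\cdots\vartriangleleft J_n=\mathcal{L}$ with $R(I)=I$, iterating (\ref{4.3'}) along the chain gives $I=R(J_0)\subseteq R(J_1)\subseteq\cdots\subseteq R(\mathcal{L})$, so $R^{\mathbf{s}}\leq R$; maximality follows because for any under radical $T\leq R$ the Lie ideal $T(\mathcal{L})$ is $R$-radical by lower stability of $T$ together with $T\leq R$, hence lies in $\mathrm{Sub}(\mathcal{L},R)$ and is contained in $R^{\mathbf{s}}(\mathcal{L})$. For (iii), if $R$ is lower stable then $R(\mathcal{L})$ itself is an $R$-radical Lie ideal, hence an element of $\mathrm{Sub}(\mathcal{L},R)$, yielding $R\leq R^{\mathbf{s}}$; part (iv) is then the combination of (ii) and (iii), and minimality in (iii) follows from (iv) together with Lemma \ref{sin}, since $R\leq T$ implies $R^{\mathbf{s}}\leq T^{\mathbf{s}}=T$.
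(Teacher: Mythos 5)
Your proposal is correct and follows essentially the same route as the paper: the morphism property via $\overline{f(J_0)}\vartriangleleft\cdots\vartriangleleft\overline{f(J_n)}=\mathcal{M}$ together with $R(\overline{f(I)})=\overline{f(I)}$, balancedness by extending subideal chains one step, lower stability by intersecting a chain termwise with $K=R^{\mathbf{s}}(\mathcal{L})$, and part (ii) by iterating $R(J_0)\subseteq\cdots\subseteq R(J_n)$ along the chain. The only (harmless) deviations are that you make the $\exp(t\,\mathrm{ad}(a))$ argument for the ideal property explicit where the paper leaves it implicit, and you prove maximality in (ii) directly by observing $T(\mathcal{L})\in\mathrm{Sub}(\mathcal{L},R)$ rather than via $T^{\mathbf{s}}\leq R^{\mathbf{s}}$ and (iv); both are valid.
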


\begin{proof}
(i) Let $f$: $\mathcal{L}\longrightarrow\mathcal{M}$ be a morphism in
$\overline{\mathbf{L}},$ $I\in\mathrm{Sub}\left(  \mathcal{L},R\right)  $ and
$I=J_{0}\vartriangleleft\cdots\vartriangleleft J_{n}=\mathcal{L}$ for some
closed Lie algebras $J_{i}$ in $\mathcal{L}$. Then $\overline{f(I)}%
=\overline{f(J_{0})}\vartriangleleft\cdots\vartriangleleft\overline{f(J_{n}%
)}=\mathcal{M}$ and all $\overline{f(J_{i})}$ are closed Lie algebras in
$\mathcal{M}$. Hence $\overline{f(I)}\!\vartriangleleft\!\!\!\vartriangleleft
\mathcal{M}$. As $I=R(I)\mathfrak{,}$ we have from (\ref{4.0}) that
$f(I)=f(R(I))\subseteq R\left(  \overline{f(I)}\right)  \subseteq
\overline{f(I)}.$ As $R\left(  \overline{f(I)}\right)  $ is closed, $R\left(
\overline{f(I)}\right)  =\overline{f(I)}.$ Thus $\overline{f(I)}%
\in\mathrm{Sub}\left(  \mathcal{M},R\right)  $ and%
\[
f\left(  \sum_{I\in\mathrm{Sub}\left(  \mathcal{L},R\right)  }I\right)
=\sum_{I\in\mathrm{Sub}\left(  \mathcal{L},R\right)  }f(I)\subseteq\sum
_{J\in\mathrm{Sub}\left(  \mathcal{M},R\right)  }J,
\]
so that $f(R^{\mathbf{s}}\left(  \mathcal{L}\right)  )\subseteq R^{\mathbf{s}%
}\left(  \mathcal{M}\right)  $. Therefore $R^{\mathbf{s}}$ is a preradical
(see (\ref{4.0})).

Let $K\vartriangleleft\mathcal{L}$. If $I\in\mathrm{Sub}\left(  K,R\right)  $
then $I=J_{0}\vartriangleleft\cdots\vartriangleleft J_{n}=K$ for some closed
Lie algebras $J_{i}$, whence $I\in\mathrm{Sub}\left(  \mathcal{L},R\right)  $.
Thus $\mathrm{Sub}\left(  K,R\right)  \subseteq\mathrm{Sub}\left(
\mathcal{L},R\right)  $. Hence $R^{\mathbf{s}}$ is balanced (see
(\ref{4.3'})), since by (\ref{4.4}),%
\[
R^{\mathbf{s}}\left(  K\right)  =\overline{\sum_{I\in\mathrm{Sub}\left(
K,R\right)  }I}\subseteq\overline{\sum_{I\in\mathrm{Sub}\left(  \mathcal{L}%
,R\right)  }I}=R^{\mathbf{s}}\left(  \mathcal{L}\right)  .
\]

Set $K=R^{\mathbf{s}}\left(  \mathcal{L}\right)  $. If $I\in\mathrm{Sub}%
\left(  \mathcal{L},R\right)  $ then $I=J_{0}\vartriangleleft\cdots
\vartriangleleft J_{n}=\mathcal{L}$. By (\ref{4.4}), $I\subseteq K.$ Hence
$I=(J_{0}\cap K)\vartriangleleft\cdots\vartriangleleft(J_{n}\cap K)=K,$ so
that $I\in\mathrm{Sub}\left(  K,R\right)  .$ Thus $\mathrm{Sub}\left(
\mathcal{L},R\right)  =\mathrm{Sub}\left(  K,R\right)  $. Hence, by
(\ref{4.4}), $R^{\mathbf{s}}\left(  R^{\mathbf{s}}\left(  \mathcal{L}\right)
\right)  =R^{\mathbf{s}}\left(  K\right)  =R^{\mathbf{s}}\left(
\mathcal{L}\right)  .$ Thus (see (\ref{4.1'})) $R^{\mathbf{s}}$ is lower stable.

(iv) Let $R$ be balanced$\mathfrak{.}$ If $I\in\mathrm{Sub}\left(
\mathcal{L},R\right)  $ then $I=J_{0}\vartriangleleft\cdots\vartriangleleft
J_{n}=\mathcal{L}$ and $I=R(I)=R(J_{0})\subseteq...\subseteq R(J_{n})=R\left(
\mathcal{L}\right)  $. Hence, as $R\left(  \mathcal{L}\right)  $ is closed, it
follows from (\ref{4.4}) that $R^{\mathbf{s}}\left(  \mathcal{L}\right)
\subseteq R\left(  \mathcal{L}\right)  .$Thus $R^{\mathbf{s}}\leq R.$ This
also proves the first statement of (ii).

Let $R$ be lower stable. Then $R(R(\mathcal{L}))=R(\mathcal{L})$ for all
$\mathcal{L}\in\mathfrak{L,}$ so that $R(\mathcal{L})\in\mathrm{Sub}\left(
\mathcal{L},R\right)  .$ Hence, by (\ref{4.4}), $R(\mathcal{L})\subseteq
R^{\mathbf{s}}(\mathcal{L})$. Thus $R\leq R^{\mathbf{s}}.$ This also proves
the first statement of (iii).

So if $R$ is balanced and lower stable then $R=R^{\mathbf{s}}$ that proves (iv).

Let us finish the proof of (ii) and (iii).

(ii) Let $R$ be balanced, let $Q$ be an under radical and $Q\leq R$. If $I$ is
a $Q$-radical subideal of $\mathcal{L}$, then $I$ is an $R$-radical subideal
of $\mathcal{L}$. Hence $Q^{\mathbf{s}}\leq R^{\mathbf{s}}$. It follows from
(iv) that $Q=Q^{\mathbf{s}}$. Therefore $R^{\mathbf{s}}$ is the largest under
radical smaller than or equal to $R$.

(iii) Let $R$ lower stable, let $T$ be an under radical and $R\leq T$. If
$I\in\mathrm{Sub}\left(  \mathcal{L},R\right)  $ then $I=R(I)\subseteq
T(I)\subseteq I.$ Hence $I=T(I),$ so that $I\in\mathrm{Sub}\left(
\mathcal{L},T\right)  $. Thus $\mathrm{Sub}\left(  \mathcal{L},R\right)
\subseteq\mathrm{Sub}\left(  \mathcal{L},T\right)  $. Using (\ref{4.4}), we
have $R^{\mathbf{s}}\left(  \mathcal{L}\right)  \subseteq T^{\mathbf{s}%
}\left(  \mathcal{L}\right)  $ for each $\mathcal{L}\in\mathfrak{L}$. By (iv),
$T=T^{\mathbf{s}}$, whence $R^{\mathbf{s}}\leq T$. Therefore $R^{\mathbf{s}}$
is the smallest under radical larger than or equal to $R$.
\end{proof}

Theorem \ref{T2}(i) and (iv) yield that $R^{\mathbf{ss}}=R^{\mathbf{s}}$ for
each preradical $R$.

\begin{corollary}
\label{C4.9}Let $R$ be a preradical in $\overline{\mathbf{L}}$.

\begin{itemize}
\item [$\mathrm{(i)}$]If $R$ is lower stable then $R\leq(R^{\mathbf{s}}%
)^{\ast}$ and $(R^{\mathbf{s}})^{\ast}$ is the smallest radical larger than or
equal to $R$.

\item[$\mathrm{(ii)}$] If $R$ is balanced then $R^{\mathbf{s}}=R^{\circ}\leq
R\leq R^{\ast},$ $(R^{\circ})^{\ast}$ and $(R^{\ast})^{\circ}$ are radicals$,$
and $(R^{\circ})^{\ast}\leq(R^{\ast})^{\circ}$.
\end{itemize}
\end{corollary}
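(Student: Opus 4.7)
The plan is to assemble this corollary from the machinery already developed: Theorems~\ref{T2}, \ref{T4.1}, \ref{super}, and \ref{under-over}, together with the monotonicity in Lemma~\ref{cin}. The statements look substantial, but each one reduces to invoking the appropriate ``smallest/largest'' characterization and tracking the chain of inclusions carefully.

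For part (i), since $R$ is lower stable, Theorem~\ref{T2}(iii) gives $R\le R^{\mathbf{s}}$ together with the fact that $R^{\mathbf{s}}$ is the smallest under radical larger than or equal to $R$. Theorem~\ref{T2}(i) tells us that $R^{\mathbf{s}}$ is itself an under radical, so Theorem~\ref{under-over}(i) applies and $(R^{\mathbf{s}})^{\ast}$ is the smallest radical larger than or equal to $R^{\mathbf{s}}$; in particular $R\le R^{\mathbf{s}}\le (R^{\mathbf{s}})^{\ast}$. To see minimality relative to $R$, I would take any radical $T$ with $R\le T$; since every radical is an under radical, Theorem~\ref{T2}(iii) yields $R^{\mathbf{s}}\le T$, and then the minimality in Theorem~\ref{under-over}(i) forces $(R^{\mathbf{s}})^{\ast}\le T$.

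For part (ii), the equality $R^{\mathbf{s}}=R^{\circ}$ comes at once from identifying two characterizations of the same object: Theorem~\ref{T2}(ii) says $R^{\mathbf{s}}$ is the largest under radical $\le R$, while Theorem~\ref{T4.1}(ii) says the same of $R^{\circ}$. The chain $R^{\circ}\le R\le R^{\ast}$ is then immediate (Theorem~\ref{super}(i) giving the right inequality). That $(R^{\circ})^{\ast}$ is a radical follows because $R^{\circ}$ is an under radical, via Theorem~\ref{under-over}(i); that $(R^{\ast})^{\circ}$ is a radical follows because $R^{\ast}$ is an over radical (Theorem~\ref{super}(i)), via Theorem~\ref{under-over}(ii).

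The only step where I anticipate any real subtlety is the last inequality $(R^{\circ})^{\ast}\le (R^{\ast})^{\circ}$. The plan is: starting from $R^{\circ}\le R^{\ast}$ and the fact that $R^{\circ}$ is balanced, apply Lemma~\ref{cin} to conclude $(R^{\circ})^{\circ}\le (R^{\ast})^{\circ}$. Since $R^{\circ}$ is lower stable, Theorem~\ref{T4.1}(ii) gives $(R^{\circ})^{\circ}=R^{\circ}$, so $R^{\circ}\le (R^{\ast})^{\circ}$. Now $(R^{\ast})^{\circ}$ is, in particular, a radical that dominates $R^{\circ}$; minimality of $(R^{\circ})^{\ast}$ among such radicals (Theorem~\ref{under-over}(i)) then forces $(R^{\circ})^{\ast}\le (R^{\ast})^{\circ}$. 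The conceptual point to keep straight throughout is reading off the correct minimality/maximality property from the correct theorem; no new computation is required beyond the monotonicity lemma.
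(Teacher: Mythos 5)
Your proof is correct and follows essentially the same route as the paper: part (ii) is verbatim the paper's argument (identifying $R^{\mathbf{s}}=R^{\circ}$ via the two "largest under radical" characterizations, then Lemma~\ref{cin} plus minimality of $(R^{\circ})^{\ast}$), and part (i) differs only cosmetically, since you derive $R^{\mathbf{s}}\leq T$ from the minimality clause of Theorem~\ref{T2}(iii) where the paper uses Lemma~\ref{sin} together with $T^{\mathbf{s}}=T$. Both variants are valid and rest on the same theorems.
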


\begin{proof}
(i) If $R$ is lower stable then $R\leq R^{\mathbf{s}}$ and $R^{\mathbf{s}}$ is
an under radical by Theorem \ref{T2}(iii), and $R^{\mathbf{s\ast}}$ is a
radical by Theorem \ref{under-over}(i). By definition, $R^{\mathbf{s}}\leq
R^{\mathbf{s\ast}}$. So $R\leq R^{\mathbf{s\ast}}$.

Let $T$ be a radical and $R\leq T$. Then $R^{\mathbf{s}}\leq T^{\mathbf{s}}$
by Lemma \ref{sin}. As $T$ is an under radical, $T^{\mathbf{s}}=T$ by Theorem
\ref{T2}(iv). Therefore $R^{\mathbf{s}}\leq T$. By Theorem \ref{super}(ii),
$(R^{\mathbf{s}})^{\ast}\leq T^{\ast}$. As $T$ is upper stable, $T^{\ast}=T$
by Theorem \ref{super}(i). Thus $(R^{\mathbf{s}})^{\ast}$ is the smallest
radical larger than or equal to $R$.

(ii) Let $R$ be balanced. Then $R^{\mathbf{s}}=R^{\circ}$ by Theorems
\ref{T4.1}(ii) and \ref{T2}(ii).

It follows from Theorems \ref{T4.1}, \ref{super} and \ref{under-over} that
$(R^{\circ})^{\ast}$ and $(R^{\ast})^{\circ}$ are radicals. Further, by
Theorems \ref{T4.1} and \ref{super}, $R^{\circ}\leq R\leq R^{\ast}$. By Lemma
\ref{cin}, $(R^{\circ})^{\circ}\leq(R^{\ast})^{\circ}$. As $R^{\circ}$ is
lower stable, $R^{\circ}=(R^{\circ})^{\circ}$ by Theorem \ref{T4.1}. Therefore
$R^{\circ}\leq(R^{\ast})^{\circ}$. Since, by Theorem \ref{under-over}(i),
$(R^{\circ})^{\ast}$ is a smallest radical larger than or equal to $R^{\circ}%
$, we have $(R^{\circ})^{\ast}\leq(R^{\ast})^{\circ}$.
\end{proof}

\section{\label{section5}Construction of preradicals from multifunctions}

Some important preradicals in $\overline{\mathbf{L}}$ and its subcategories
arise from subspace-multifunctions on $\mathfrak{L}$; this link we will study now.

Let $F,G$ be non-empty families of subspaces of $X$. We write%
\begin{align}
F\overrightarrow{\subset}G\text{ if, for each }Y  &  \in F,\text{ there is
}Z\in G\text{ such that }Y\subseteq Z;\label{KK}\\
G\overleftarrow{\subset}F\text{ if, for each }Y  &  \in F,\text{ there is
}Z\in G\text{ such that }Z\subseteq Y.\nonumber
\end{align}
We assume that $\varnothing\overrightarrow{\subset}G$ and $G\overleftarrow
{\subset}\varnothing.$ By (\ref{KK}), if $F\subseteq G$ then $F\overrightarrow
{\subset}G$ and $G\overleftarrow{\subset}F.$ It follows from (\ref{F0}),
(\ref{F0'}) and (\ref{KK}) that%
\begin{equation}
\text{if }F\overrightarrow{\subset}G\text{ then }\mathfrak{s}\left(  F\right)
\subseteq\mathfrak{s}\left(  G\right)  ;\text{ \ \ \ \ if }G\overleftarrow
{\subset}F\text{ then }\mathfrak{p}\left(  G\right)  \subseteq\mathfrak
{p}\left(  F\right)  . \label{LP0}%
\end{equation}
If $G\neq\varnothing$ then $\{\{0\}\}\overleftarrow{\subset}G\overrightarrow
{\subset}\left\{  X\right\}  .$ For one-element families $F=\{Y\}$ and
$G=\{Z\},$ both relations coincide with inclusion: if $\left\{  Y\right\}
\overrightarrow{\subset}\{Z\},$ or $\{Y\}\overleftarrow{\subset}\left\{
Z\right\}  ,$ then $Y\subseteq Z$.

\begin{definition}
If$,$ for each $\mathcal{L}\in\mathfrak{L,}$ a family $\Gamma_{\mathcal{L}}$
of closed subspaces $($Lie algebras$,$ Lie ideals$)$ of $\mathcal{L}$ is
given$,$ we say that $\Gamma=\{\Gamma_{\mathcal{L}}\}$ is a \textit{subspace}
$($\textit{Lie algebra}$,$ \textit{Lie ideal}$)$\textit{-multifunc}tion on
$\mathfrak{L.}$
\end{definition}

Let $\Gamma=\{\Gamma_{\mathcal{L}}\}$ be a subspace-multifunction. Making use
of (\ref{F0}), set%
\begin{equation}
P_{\Gamma}\left(  \mathcal{L}\right)  =\mathfrak{p}(\Gamma_{\mathcal{L}%
})\text{ and }S_{\Gamma}\left(  \mathcal{L}\right)  =\mathfrak{s}%
(\Gamma_{\mathcal{L}}),\text{ for each }\mathcal{L}\in\mathfrak{L.}
\label{4.r}%
\end{equation}
If, for example, $\Gamma_{\mathcal{L}}$ is a singleton $\left\{  \phi\left(
\mathcal{L}\right)  \right\}  $ for each $\mathcal{L}\in\mathfrak{L,}$ then
$S_{\Gamma}\left(  \mathcal{L}\right)  =P_{\Gamma}\left(  \mathcal{L}\right)
=\phi\left(  \mathcal{L}\right)  $.

\begin{definition}
Let\emph{ }$\Gamma$ be a subspace-multifunction on $\mathfrak{L.}$%
\emph{\ }If$,$ for each morphism\emph{ }$f:\mathcal{L}\longrightarrow
\mathcal{M},$ the family $f\left(  \Gamma_{\mathcal{L}}\right)  =\{\overline
{f(Y)}:$ $Y\in\Gamma_{\mathcal{L}}\}$ of closed subspaces of $\mathcal{M}$ satisfies

\begin{itemize}
\item [$\mathrm{(i)}$]$f\left(  \Gamma_{\mathcal{L}}\right)  \,\overrightarrow
{\subset}\Gamma_{\mathcal{M}}$ then the multifunction $\Gamma$ is called\emph{ direct;}

\item[$\mathrm{(ii)}$] $f\left(  \Gamma_{\mathcal{L}}\right)  \subseteq
\Gamma_{\mathcal{M}}$ then the multifunction $\Gamma$ is called\emph{ strictly
direct}$;$

\item[$\mathrm{(iii)}$] $f\left(  \Gamma_{\mathcal{L}}\right)
\,\overleftarrow{\subset}\Gamma_{\mathcal{M}}$ then the multifunction $\Gamma$
is called\emph{ inverse}.
\end{itemize}
\end{definition}

\begin{proposition}
\label{L2}\emph{(i) }If $\Gamma$ is a direct multifunction then $S_{\Gamma}$
is a preradical in $\overline{\mathbf{L}}.$

\begin{itemize}
\item [$\mathrm{(ii)}$]If $\Gamma$ is an inverse multifunction then
$P_{\Gamma}$ is a preradical in $\overline{\mathbf{L}}.$
\end{itemize}
\end{proposition}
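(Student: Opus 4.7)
The plan is to treat both parts in parallel, first establishing the morphism condition (\ref{4.0}) by combining the inclusions (\ref{F1}), (\ref{F2}) with the monotonicity (\ref{LP0}), and then deducing via Lemma \ref{L1} that the resulting closed subspaces are in fact (characteristic) closed Lie ideals.

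For part (i), let $f:\mathcal{L}\to\mathcal{M}$ be any morphism in $\overline{\mathbf{L}}$. Directness of $\Gamma$ says $f(\Gamma_{\mathcal{L}})\,\overrightarrow{\subset}\Gamma_{\mathcal{M}}$, so by (\ref{LP0}) we have $\mathfrak{s}(f(\Gamma_{\mathcal{L}}))\subseteq\mathfrak{s}(\Gamma_{\mathcal{M}})=S_{\Gamma}(\mathcal{M})$. Combined with the general bound (\ref{F1}), this gives
\[
f(S_{\Gamma}(\mathcal{L}))=f(\mathfrak{s}(\Gamma_{\mathcal{L}}))\subseteq\mathfrak{s}(f(\Gamma_{\mathcal{L}}))\subseteq S_{\Gamma}(\mathcal{M}).
\]
For part (ii), inverseness gives $f(\Gamma_{\mathcal{L}})\,\overleftarrow{\subset}\Gamma_{\mathcal{M}}$, and (\ref{LP0}) together with (\ref{F2}) yields $f(P_{\Gamma}(\mathcal{L}))\subseteq\mathfrak{p}(f(\Gamma_{\mathcal{L}}))\subseteq\mathfrak{p}(\Gamma_{\mathcal{M}})=P_{\Gamma}(\mathcal{M})$. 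Thus the preradical-morphism condition (\ref{4.0}) is satisfied in each case.

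The nontrivial remaining point is that $S_{\Gamma}(\mathcal{L})$ and $P_{\Gamma}(\mathcal{L})$, which a priori are merely closed linear subspaces (a closed sum, respectively an intersection of closed sets), must actually be closed Lie ideals of $\mathcal{L}$. Here I would apply Lemma \ref{L1}. Any bounded Lie automorphism $\varphi$ of $\mathcal{L}$ is a bijective bounded Lie homomorphism, hence has dense (in fact full) image, and by the open mapping theorem $\varphi^{-1}$ is automatically bounded; thus both $\varphi$ and $\varphi^{-1}$ are morphisms in $\overline{\mathbf{L}}$. Writing $R$ for either $S_{\Gamma}$ or $P_{\Gamma}$, the morphism condition already proved gives $\varphi(R(\mathcal{L}))\subseteq R(\mathcal{L})$ and $\varphi^{-1}(R(\mathcal{L}))\subseteq R(\mathcal{L})$, hence $\varphi(R(\mathcal{L}))=R(\mathcal{L})$. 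Since $R(\mathcal{L})$ is invariant under every bounded Lie automorphism of $\mathcal{L}$, Lemma \ref{L1} implies $R(\mathcal{L})\vartriangleleft^{\mathrm{ch}}\mathcal{L}$; in particular it is a closed Lie ideal, completing the verification that $R$ is a preradical in $\overline{\mathbf{L}}$.

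The main obstacle is precisely this ideal-property step: there is no reason on the face of it for an intersection of arbitrary closed subspaces (or the closed span of such) to be an ideal, and the argument must exploit the functorial hypothesis on $\Gamma$ in an essential way. The key observation that unlocks the proof is that directness and inverseness are global (all morphisms) hypotheses, so they apply in particular to the rich supply of automorphisms generated by one-parameter groups $\exp(t\delta)$ of derivations used in the proof of Lemma \ref{L1}; this is what forces the subspace $R(\mathcal{L})$ to be stable under $\mathfrak{D}(\mathcal{L})$ and hence to be a characteristic Lie ideal.
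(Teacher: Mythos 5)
Your proof is correct and follows essentially the same route as the paper: the morphism condition is obtained exactly as in the paper by combining (\ref{F1}), (\ref{F2}) with (\ref{LP0}), and the ideal property is derived from invariance under automorphisms. The only cosmetic difference is that the paper applies the morphism condition directly to the inner automorphisms $\exp(t\,\mathrm{ad}(a))$ and differentiates at $t=0$, whereas you package the same argument through Lemma \ref{L1} (invariance under all bounded Lie isomorphisms), which incidentally yields the slightly stronger conclusion that $S_{\Gamma}(\mathcal{L})$ and $P_{\Gamma}(\mathcal{L})$ are characteristic ideals, as the paper records separately in Corollary \ref{C1}(i).
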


\begin{proof}
If $\Gamma$ is direct then $f\left(  \Gamma_{\mathcal{L}}\right)
\,\overrightarrow{\subset}\Gamma_{\mathcal{M}}$ for each morphism $f$:
$\mathcal{L}\longrightarrow\mathcal{M}$. Therefore%
\[
f\left(  S_{\Gamma}\left(  \mathcal{L}\right)  \right)  \overset{(\ref{4.r}%
)}{=}f(\mathfrak{s}(\Gamma_{\mathcal{L}}))\overset{(\ref{F1})}{\subseteq
}\mathfrak{s}(f(\Gamma_{\mathcal{L}}))\overset{(\ref{LP0})}{\subseteq
}\mathfrak{s}(\Gamma_{\mathcal{M}})\overset{(\ref{4.r})}{=}S_{\Gamma}\left(
\mathcal{M}\right)  .
\]
If $\Gamma$ is inverse then $f\left(  \Gamma_{\mathcal{L}}\right)
\,\overleftarrow{\subset}\Gamma_{\mathcal{M}}$ for each morphism $f$:
$\mathcal{L}\longrightarrow\mathcal{M}$. Therefore%
\[
f\left(  P_{\Gamma}\left(  \mathcal{L}\right)  \right)  \overset{(\ref{4.r}%
)}{=}f(\mathfrak{p}(\Gamma_{\mathcal{L}}))\overset{(\ref{F2})}{\subseteq
}\mathfrak{p}(f(\Gamma_{\mathcal{L}}))\overset{(\ref{LP0})}{\subseteq
}\mathfrak{p}(\Gamma_{\mathcal{M}})\overset{(\ref{4.r})}{=}P_{\Gamma}\left(
\mathcal{M}\right)  .
\]
Take $\mathcal{M}=\mathcal{L.}$ Considering inner automorphisms $f=\exp
{t(\mathrm{ad}}\left(  {a}\right)  {)}$ for $a\in\mathcal{L,}$ $t\in
\mathbb{C},$ we get that $P_{\Gamma}\left(  \mathcal{L}\right)  $ and
$S_{\Gamma}\left(  \mathcal{L}\right)  $ are ideals of $\mathcal{L}$. Thus we
have from (\ref{4.0}) that $S_{\Gamma}$ and $P_{\Gamma}$ are preradicals.
\end{proof}

If $\Gamma$ is a direct subspace-multifunction on $\mathfrak{L}\mathbf{,}$ set
$I_{\mathcal{L}}=S_{\Gamma}\left(  \mathcal{L}\right)  .$ If $\Gamma$ is an
inverse\emph{ }subspace-multifunction on $\mathfrak{L}\mathbf{,}$ set
$I_{\mathcal{L}}=P_{\Gamma}\left(  \mathcal{L}\right)  $. For
$J\vartriangleleft\mathcal{L,}$ set%
\begin{equation}
\Gamma_{\mathcal{L}}\cap J=\{L\cap J:L\in\Gamma_{\mathcal{L}}\}. \label{4.z}%
\end{equation}

\begin{definition}
\label{D4.7}A direct (respectively, inverse) subspace-multifunction $\Gamma$
is called

\begin{itemize}
\item [$\mathrm{(i)}$]\emph{balanced} if $\Gamma_{J}\overrightarrow{\subset
}\Gamma_{\mathcal{L}}$ (respectively$,$ $\Gamma_{J}\overleftarrow{\subset
}\Gamma_{\mathcal{L}})$ for all $J\vartriangleleft\mathcal{L}\in\mathfrak{L;}$

\item[$\mathrm{(ii)}$] \emph{lower stable} if $\Gamma_{\mathcal{L}%
}\overrightarrow{\subset}\Gamma_{I_{\mathcal{L}}}$ (respectively$,$
$\Gamma_{\mathcal{L}}\overleftarrow{\subset}\Gamma_{I_{\mathcal{L}}})$ for all
$\mathcal{L}\in\mathfrak{L;}$

\item[$\mathrm{(iii)}$] \emph{upper stable} if $\Gamma_{\mathcal{L}%
/I_{\mathcal{L}}}=\{\{0\}\}$ (respectively\emph{, }$\mathfrak{p}%
(\Gamma_{\mathcal{L}/I_{\mathcal{L}}})=\{0\})$ for all $\mathcal{L}%
\in\mathfrak
{L}$.
\end{itemize}
\end{definition}

\begin{theorem}
\label{Cmain}Let $\Gamma$ be a direct $($respectively$,$ inverse$)$
subspace-multifunction on $\mathfrak{L.}$

\begin{itemize}
\item [$\mathrm{(i)}$]If $\Gamma$ is lower stable then $S_{\Gamma}$
$($respectively$,$ $P_{\Gamma})$ is a lower stable preradical.

\item[$\mathrm{(ii)}$] If $\Gamma$ is balanced then $S_{\Gamma}$
$($respectively$,$ $P_{\Gamma})$ is a balanced preradical.

\item[$\mathrm{(iii)}$] If $\Gamma$ is upper stable then $S_{\Gamma}$
$($respectively$,$ $P_{\Gamma})$ is an upper stable preradical.
\end{itemize}
\end{theorem}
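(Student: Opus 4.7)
The proof is essentially a direct unpacking of Definition~\ref{D4.7} against the monotonicity relations in (\ref{LP0}); no new machinery is needed beyond Proposition~\ref{L2} (which gives preradicality). The plan is to handle each of (i), (ii), (iii) in parallel for the direct and inverse cases, each time reducing the claim to one of the two implications in (\ref{LP0}).

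For (ii) (balanced), the work is immediate. In the direct case, the hypothesis $\Gamma_{J}\overrightarrow{\subset}\Gamma_{\mathcal{L}}$ combined with (\ref{LP0}) gives $S_{\Gamma}(J)=\mathfrak{s}(\Gamma_{J})\subseteq\mathfrak{s}(\Gamma_{\mathcal{L}})=S_{\Gamma}(\mathcal{L})$, which is exactly (\ref{4.3'}). Symmetrically, in the inverse case, $\Gamma_{J}\overleftarrow{\subset}\Gamma_{\mathcal{L}}$ and (\ref{LP0}) yield $P_{\Gamma}(J)=\mathfrak{p}(\Gamma_{J})\subseteq\mathfrak{p}(\Gamma_{\mathcal{L}})=P_{\Gamma}(\mathcal{L})$.

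For (i) (lower stable), I would set $I_{\mathcal{L}}=S_{\Gamma}(\mathcal{L})$ in the direct case and prove the two inclusions separately. The hypothesis $\Gamma_{\mathcal{L}}\overrightarrow{\subset}\Gamma_{I_{\mathcal{L}}}$ via (\ref{LP0}) gives $S_{\Gamma}(\mathcal{L})=\mathfrak{s}(\Gamma_{\mathcal{L}})\subseteq\mathfrak{s}(\Gamma_{I_{\mathcal{L}}})=S_{\Gamma}(I_{\mathcal{L}})$. The reverse inclusion is automatic from the fact that every member of $\Gamma_{I_{\mathcal{L}}}$ is by definition a closed subspace of $I_{\mathcal{L}}$, so its closed linear span cannot exceed $I_{\mathcal{L}}$; together this yields $S_{\Gamma}(S_{\Gamma}(\mathcal{L}))=S_{\Gamma}(\mathcal{L})$, i.e.\ (\ref{4.1'}). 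The inverse case runs symmetrically: $\Gamma_{\mathcal{L}}\overleftarrow{\subset}\Gamma_{I_{\mathcal{L}}}$ plus (\ref{LP0}) gives $P_{\Gamma}(\mathcal{L})\subseteq P_{\Gamma}(I_{\mathcal{L}})$, while every member of $\Gamma_{I_{\mathcal{L}}}$ being contained in $I_{\mathcal{L}}$ (or the empty-family convention $\mathfrak{p}(\varnothing)=I_{\mathcal{L}}$ when $\Gamma_{I_{\mathcal{L}}}=\varnothing$) forces $P_{\Gamma}(I_{\mathcal{L}})\subseteq I_{\mathcal{L}}=P_{\Gamma}(\mathcal{L})$.

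For (iii) (upper stable) the argument is a single line. In the direct case $\Gamma_{\mathcal{L}/I_{\mathcal{L}}}=\{\{0\}\}$ gives $S_{\Gamma}(\mathcal{L}/I_{\mathcal{L}})=\mathfrak{s}(\{\{0\}\})=\{0\}$, which is (\ref{4.2'}); in the inverse case the hypothesis is literally $P_{\Gamma}(\mathcal{L}/I_{\mathcal{L}})=\mathfrak{p}(\Gamma_{\mathcal{L}/I_{\mathcal{L}}})=\{0\}$. There is no real obstacle in any part; the only point requiring mild care is the bookkeeping around the empty-family conventions in (\ref{F0}) and (\ref{F0'}), which I would dispense with by noting upfront that both conventions make the extreme cases trivially consistent with the inequalities being proved.
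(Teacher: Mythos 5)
Your proposal is correct and follows essentially the same route as the paper: invoke Proposition \ref{L2} for preradicality and then translate each hypothesis of Definition \ref{D4.7} through (\ref{LP0}) into the corresponding property (\ref{4.1'})--(\ref{4.3'}). Your explicit handling of the reverse inclusion in (i) and of the empty-family conventions is a minor tidying of details the paper leaves implicit.
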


\begin{proof}
By Proposition \ref{L2}, $S_{\Gamma}$ is a preradical if $\Gamma$ is direct,
and $P_{\Gamma}$ is a preradical if $\Gamma$ is inverse.

(i) Let $\Gamma$ be lower stable. If $\Gamma$ is direct, $\Gamma_{\mathcal{L}%
}\overrightarrow{\subset}\Gamma_{I_{\mathcal{L}}}$ for all $\mathcal{L}%
\in\mathfrak{L}$, where $I_{\mathcal{L}}=S_{\Gamma}(\mathcal{L)}$. Hence%
\[
S_{\Gamma}(\mathcal{L})\overset{(\ref{4.r})}{=}\mathfrak{s}(\Gamma
_{\mathcal{L}})\overset{(\ref{LP0})}{\subseteq}\mathfrak{s}(\Gamma
_{I_{\mathcal{L}}})\overset{(\ref{4.r})}{=}S_{\Gamma}(I_{\mathcal{L}%
})=S_{\Gamma}(S_{\Gamma}(\mathcal{L)}).
\]
If $\Gamma$ is inverse, $\Gamma_{\mathcal{L}}\overleftarrow{\subset}%
\Gamma_{I_{\mathcal{L}}}$ for all $\mathcal{L}\in\mathfrak{L}$, where
$I_{\mathcal{L}}=P_{\Gamma}(\mathcal{L)}$. Hence%
\[
P_{\Gamma}(\mathcal{L})\overset{(\ref{4.r})}{=}\mathfrak{p}(\Gamma
_{\mathcal{L}})\overset{(\ref{LP0})}{\subseteq}\mathfrak{p}(\Gamma
_{I_{\mathcal{L}}})\overset{(\ref{4.r})}{=}P_{\Gamma}(I_{\mathcal{L}%
})=P_{\Gamma}(P_{\Gamma}(\mathcal{L)}).
\]
Thus (see (\ref{4.1'})) $S_{\Gamma}$ and $P_{\Gamma}$ are lower stable.

(ii) Let $\Gamma$ be balanced. If $\Gamma$ is direct, $\Gamma_{J}%
\overrightarrow{\subset}\Gamma_{\mathcal{L}}$\ for all $J\vartriangleleft
\mathcal{L}\in\mathfrak{L}$. Hence $S_{\Gamma}(J)\overset{(\ref{4.r})}%
{=}\mathfrak{s}(\Gamma_{J})\overset{(\ref{LP0})}{\subseteq}\mathfrak{s}%
(\Gamma_{\mathcal{L}})\overset{(\ref{4.r})}{=}S_{\Gamma}(\mathcal{L}).$

If $\Gamma$ is inverse, $\Gamma_{J}\overleftarrow{\subset}\Gamma_{\mathcal{L}%
}$\ for all $J\vartriangleleft\mathcal{L}\in\mathfrak{L}$. Hence $P_{\Gamma
}(J)\overset{(\ref{4.r})}{=}\mathfrak{p}(\Gamma_{J})\overset{(\ref{LP0}%
)}{\subseteq}\mathfrak{p}(\Gamma_{\mathcal{L}})\overset{(\ref{4.r})}%
{=}P_{\Gamma}(\mathcal{L}).$ Thus $S_{\Gamma}$ and $P_{\Gamma}$ are balanced
(see (\ref{4.3'})).

(iii) Let $\Gamma$ be upper stable. If $\Gamma$ is direct, $\Gamma
_{\mathcal{L}/I_{\mathcal{L}}}=\{\{0\}\}$ for all $\mathcal{L}\in\mathfrak
{L,}$ where $I_{\mathcal{L}}=S_{\Gamma}\left(  \mathcal{L}\right)  .$ Hence
$S_{\Gamma}(\mathcal{L}/S_{\Gamma}\left(  \mathcal{L}\right)  )=S_{\Gamma
}(\mathcal{L}/I_{\mathcal{L}})\overset{(\ref{4.r})}{=}\mathfrak{s}%
(\Gamma_{\mathcal{L}/I_{\mathcal{L}}})=\{0\}.$

If $\Gamma$ is inverse, $p(\Gamma_{\mathcal{L}/I_{\mathcal{L}}})=\{0\}$ for
all $\mathcal{L}\in\mathfrak{L,}$ where $I_{\mathcal{L}}=P_{\Gamma}\left(
\mathcal{L}\right)  .$ Hence $P_{\Gamma}(\mathcal{L}/P_{\Gamma}\left(
\mathcal{L}\right)  )=P_{\Gamma}(\mathcal{L}/I_{\mathcal{L}})\overset
{(\ref{4.r})}{=}\mathfrak{p}(\Gamma_{\mathcal{L/}I_{\mathcal{L}}})=\{0\}.$
Thus (see (\ref{4.2'})) $S_{\Gamma}$ and $P_{\Gamma}$ are upper stable.
\end{proof}

Now we characterize $S_{\Gamma}^{\ast}$-radical and $P_{\Gamma}^{\circ}%
$-semisimple Lie algebras via multifunctions $\Gamma$.

\begin{theorem}
\label{dir-quot}Let $\Gamma$ be a subspace-multifunction on $\mathfrak{L}$.

\begin{itemize}
\item [$\mathrm{(i)}$]If $\Gamma$ is direct$,$ then the following are
equivalent for each $\mathcal{L}\in\mathfrak{L}$.

\begin{itemize}
\item [\textrm{1)}]$\Gamma_{\mathcal{L}/I}$ is non-empty and $\Gamma
_{\mathcal{L}/I}\neq\{\{0\}\}$ for each $I\vartriangleleft\mathcal{L},$
$I\neq\mathcal{L}$.

\item[\textrm{2)}] $\Gamma_{\mathcal{L}/I}$ is non-empty and $\Gamma
_{\mathcal{L}/I}\neq\{\{0\}\}$ for each $I\vartriangleleft^{\emph{ch}%
}\mathcal{L},$ $I\neq\mathcal{L}$.

\item[\textrm{3)}] $\mathcal{L}$ is $S_{\Gamma}^{\ast}$-radical$.$
\end{itemize}

\item[$\mathrm{(ii)}$] Let $\Gamma$ be inverse and the preradical $P_{\Gamma}$
balanced. The following are equivalent\emph{,} for $\mathcal{L}\in\mathfrak
{L}$.

\begin{itemize}
\item [\textrm{1)}]$\Gamma_{I}$ is a non-empty family of proper subspaces for
each $\{0\}\neq I\vartriangleleft\mathcal{L}$.

\item[\textrm{2)}] $\Gamma_{I}$ is a non-empty family of proper subspaces for
each $\{0\}\neq I\vartriangleleft^{\emph{ch}}\mathcal{L}$.

\item[\textrm{3)}] $\mathcal{L}$ is $P_{\Gamma}^{\circ}$-semisimple$.$
\end{itemize}
\end{itemize}
\end{theorem}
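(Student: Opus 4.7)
The plan is to reformulate conditions 1) and 2) in each part as statements about $S_\Gamma$-semisimplicity of quotients or $P_\Gamma$-radicality of ideals, and then close the loop using the upper- and lower-stable radical constructions from Section \ref{section4}.

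For part (i), note first that, by (\ref{F0'}), $\mathfrak{s}(\Gamma_\mathcal{M}) = \{0\}$ is equivalent to $\Gamma_\mathcal{M}$ being either empty or equal to $\{\{0\}\}$. So conditions 1) and 2) translate to the statements that $S_\Gamma(\mathcal{L}/I) \neq \{0\}$ for every proper ideal (respectively, proper characteristic ideal) $I$. The implication 1) $\Rightarrow$ 2) is immediate. For 2) $\Rightarrow$ 3), set $I = S_\Gamma^{\ast}(\mathcal{L})$: this is a characteristic ideal by Corollary \ref{C1}(i), and by Theorem \ref{super}(i) the preradical $S_\Gamma^{\ast}$ is upper stable with $\mathbf{Sem}(S_\Gamma^{\ast}) = \mathbf{Sem}(S_\Gamma)$, so $S_\Gamma(\mathcal{L}/I) = \{0\}$; if $I \neq \mathcal{L}$ this contradicts 2), so $I = \mathcal{L}$. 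For 3) $\Rightarrow$ 1), Lemma \ref{L-sem}(i) shows that every non-trivial quotient $\mathcal{L}/I$ of an $S_\Gamma^{\ast}$-radical algebra is $S_\Gamma^{\ast}$-radical, hence not $S_\Gamma^{\ast}$-semisimple and \emph{a fortiori} not $S_\Gamma$-semisimple, which gives $S_\Gamma(\mathcal{L}/I) \neq \{0\}$.

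Part (ii) is dual in spirit, with the key translation being that a non-empty family $\Gamma_I$ of proper subspaces of $I$ is equivalent to $\mathfrak{p}(\Gamma_I) \subsetneq I$, i.e., $P_\Gamma(I) \neq I$. Indeed, $P_\Gamma(I) = I$ happens precisely when $\Gamma_I = \varnothing$ (by convention (\ref{F0})) or every $Y \in \Gamma_I$ equals $I$. Thus 1) and 2) say, respectively, that no non-zero (characteristic) ideal of $\mathcal{L}$ is $P_\Gamma$-radical. The implication 1) $\Rightarrow$ 2) is trivial. For 2) $\Rightarrow$ 3), the ideal $J := P_\Gamma^\circ(\mathcal{L})$ is characteristic and satisfies $P_\Gamma(J) = J$ by (\ref{r1}), so 2) forces $J = \{0\}$. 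For 3) $\Rightarrow$ 1), since $P_\Gamma$ is balanced, Corollary \ref{C4.9}(ii) yields $P_\Gamma^\circ = P_\Gamma^{\mathbf{s}}$; any non-zero ideal $I$ with $P_\Gamma(I) = I$ is a $P_\Gamma$-radical $1$-subideal, so by (\ref{4.4}) it is contained in $P_\Gamma^{\mathbf{s}}(\mathcal{L}) = P_\Gamma^\circ(\mathcal{L}) = \{0\}$, a contradiction.

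The main conceptual step, and essentially the only one not formally automatic, is the correct translation of the set-theoretic conditions on $\Gamma$ into preradical language; after that the equivalences follow almost immediately from the two pillars $\mathbf{Sem}(S_\Gamma) = \mathbf{Sem}(S_\Gamma^{\ast})$ and $P_\Gamma^\circ = P_\Gamma^{\mathbf{s}}$ established in Section \ref{section4}. The balanced hypothesis on $P_\Gamma$ in part (ii) enters at exactly one point, namely in identifying the transfinite construction $P_\Gamma^\circ$ with the subideal construction $P_\Gamma^{\mathbf{s}}$, which is what makes the implication 3) $\Rightarrow$ 1) a one-line argument.
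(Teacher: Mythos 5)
Your proposal is correct and takes essentially the same route as the paper's proof: the same translation of the set-theoretic hypotheses into $S_{\Gamma}(\mathcal{L}/I)\neq\{0\}$ and $P_{\Gamma}(I)\neq I$, the same use of upper stability of $S_{\Gamma}^{\ast}$ together with $\mathbf{Sem}(S_{\Gamma})=\mathbf{Sem}(S_{\Gamma}^{\ast})$ for part (i), and the same use of $P_{\Gamma}(P_{\Gamma}^{\circ}(\mathcal{L}))=P_{\Gamma}^{\circ}(\mathcal{L})$ for 2)$\Rightarrow$3) of part (ii). The only (cosmetic) deviation is in 3)$\Rightarrow$1) of part (ii), where the paper argues directly with the superposition series ($P_{\Gamma}(I)=I$ would force $P_{\Gamma}^{\circ}(I)=I$, contradicting $P_{\Gamma}^{\circ}(I)\subseteq P_{\Gamma}^{\circ}(\mathcal{L})=\{0\}$ via balancedness), whereas you route the same fact through $P_{\Gamma}^{\mathbf{s}}=P_{\Gamma}^{\circ}$ and the subideal construction; both are valid and use the balancedness hypothesis at the same point.
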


\begin{proof}
(i) 1) $\Longrightarrow$ 2) is evident. 2) $\Longrightarrow$ 3). Set
$R=S_{\Gamma}$. By Proposition \ref{L2}(i) and Theorem \ref{super}, $R^{\ast}$
is an upper stable preradical and $R\leq R^{\ast}.$ Set $I=R^{\ast}\left(
\mathcal{L}\right)  .$ Then $I\mathcal{\ }$is a characteristic Lie ideal of
$\mathcal{L}$ and $R(\mathcal{L}/I)\subseteq R^{\ast}(\mathcal{L}/I).$ If
$\mathcal{L}$ is not $R^{\ast}$-radical then $I\neq\mathcal{L}.$ As
$\Gamma_{\mathcal{L}/I}$ is non-empty and $\Gamma_{\mathcal{L}/I}%
\neq\{\{0\}\},$ we have $R(\mathcal{L}/I)=S_{\Gamma}(\mathcal{L}%
/I)=\mathfrak{s}(\Gamma_{\mathcal{L}/I})\neq\{0\}.$ Hence $\{0\}\neq
R(\mathcal{L}/I)\subseteq R^{\ast}(\mathcal{L}/I)=R^{\ast}\left(
\mathcal{L}/R\left(  \mathcal{L}\right)  \right)  \overset{(\ref{4.2'})}%
{=}\{0\}$. This contradiction implies that $R^{\ast}(\mathcal{L}%
)=\mathcal{L}.$ Thus (see (\ref{1sr})) $\mathcal{L}$ is $R$-radical.

3) $\Longrightarrow$ 1). Let $\mathcal{L}$ be $R^{\ast}$-radical and let
$I\vartriangleleft\mathcal{L}$, $I\neq\mathcal{L}$. By Lemma \ref{L-sem}(i),
$R^{\ast}(\mathcal{L}/I)=\mathcal{L}/I\neq\{0\}$. Hence it follows from
Theorem \ref{super} that $R(\mathcal{L}/I)=S_{\Gamma}\left(  \mathcal{L}%
/I\right)  \neq0$. This is only possible when $\Gamma_{\mathcal{L}/I}$ is a
non-empty family with non-zero subspaces.

(ii) 1) $\Longrightarrow$ 2) is evident. 2) $\Longrightarrow$ 3). Set
$R=P_{\Gamma}.$ By Proposition \ref{L2}(ii) and Theorem \ref{T4.1}, $R^{\circ
}$ is an under radical. Let $\mathcal{L}$ be not $R^{\circ}$-semisimple. By
Lemma \ref{L1}, $I=R^{\circ}\left(  \mathcal{L}\right)  \neq\{0\}$ is a
characteristic Lie ideal of $\mathcal{L}$. Hence $\Gamma_{I}$ is a non-empty
family of proper subspaces of $I.$ Then $R\left(  R^{\circ}\left(
\mathcal{L}\right)  \right)  =P_{\Gamma}\left(  I\right)  =\mathfrak{p}%
(\Gamma_{I})\subsetneqq I$. By Theorem \ref{T4.1}, $R^{\circ}\leq R.$
Therefore, as $R^{\circ}$ is lower stable, $R^{\circ}\left(  \mathcal{L}%
\right)  \overset{(\ref{4.1'})}{=}R^{\circ}(R^{\circ}\left(  \mathcal{L}%
\right)  )\subseteq R\left(  R^{\circ}\left(  \mathcal{L}\right)  \right)
\subsetneqq R^{\circ}\left(  \mathcal{L}\right)  $, a contradiction. Thus
$\mathcal{L}$ is $R^{\circ}$-semisimple.

3) $\Longrightarrow$ 1). Let $\mathcal{L}$ be $R^{\circ}$-semisimple (that is,
$R^{\circ}(\mathcal{L})=\{0\})$ and let $\{0\}\neq I\vartriangleleft
\mathcal{L}.$ As $R^{\circ}$ is balanced, we have from Lemma \ref{L-sem}(ii)
that $R^{\circ}\left(  I\right)  =\{0\}$. If $R\left(  I\right)  =I,$ it
follows from (\ref{4.o}) that $R^{\circ}(I)=I,$ a contradiction. Thus
$R(I)=P_{\Gamma}\left(  I\right)  =\mathfrak{p}(\Gamma_{I})\subsetneqq I$.
This is only possible when $\Gamma_{I}$ is a non-empty family of proper
subspaces of $I$.
\end{proof}

Let $\Gamma$ be a Lie subalgebra-multifunction on $\mathfrak{L,}$ that is,
each family $\Gamma_{\mathcal{L}},$ $\mathcal{L}\in\mathfrak{L,}$ consists of
closed Lie subalgebras of $\mathcal{L}$. If $R$ is a preradical, then
$R\left(  \Gamma\right)  $ is also a Lie subalgebra-multifunction on
$\mathfrak{L}$, where each $R(\Gamma)_{\mathcal{L}}=R(\Gamma_{\mathcal{L}%
})=\{R(L)$: $L\in\Gamma_{\mathcal{L}}\}$ is a family of closed Lie subalgebras
of $\mathcal{L}$.

\begin{proposition}
\label{strictly}Let $\Gamma$ be a Lie subalgebra-multifunction on
$\mathfrak{L}$ and $R$ be a preradical on $\overline{\mathbf{L}}$. If $\Gamma$
is strictly direct then the multifunction $R\left(  \Gamma\right)  $ is
direct. If $R,$ in addition$,$ is balanced and $\Gamma_{J}\subseteq
\Gamma_{\mathcal{L}}\cap J$ for all $J\vartriangleleft\mathcal{L}\in
\mathfrak{L}$ $($see $(\ref{4.z})),$ then $R\left(  \Gamma\right)  $ is balanced.
\end{proposition}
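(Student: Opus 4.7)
The plan is to unwind the relevant definitions and use in each claim exactly one of the hypotheses, relying on the preradical property (\ref{4.0}) for the first claim and the balanced property (\ref{4.3'}) for the second.

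For the first claim, fix a morphism $f:\mathcal{L}\to\mathcal{M}$ in $\overline{\mathbf{L}}$ and an arbitrary element $R(L)\in R(\Gamma)_{\mathcal{L}}$ with $L\in\Gamma_{\mathcal{L}}$. I would set $M':=\overline{f(L)}$; by strict directness of $\Gamma$ we have $M'\in\Gamma_{\mathcal{M}}$, and the restriction $f|_L:L\to M'$ is a bounded Lie homomorphism whose image $f(L)$ is dense in $M'$, hence a morphism in $\overline{\mathbf{L}}$. Applying (\ref{4.0}) to $f|_L$ gives $f(R(L))\subseteq R(M')$, and since $R(M')$ is closed this upgrades to $\overline{f(R(L))}\subseteq R(M')\in R(\Gamma)_{\mathcal{M}}$. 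This is precisely $f(R(\Gamma)_{\mathcal{L}})\overrightarrow{\subset}R(\Gamma)_{\mathcal{M}}$, so $R(\Gamma)$ is direct.

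For the second claim, assume in addition that $R$ is balanced and $\Gamma_J\subseteq\Gamma_{\mathcal{L}}\cap J$ for all $J\vartriangleleft\mathcal{L}$. Given such a $J$ and an arbitrary $R(I)\in R(\Gamma)_J$ with $I\in\Gamma_J$, the hypothesis supplies some $L\in\Gamma_{\mathcal{L}}$ with $I=L\cap J$. A direct check shows $L\cap J\vartriangleleft L$: for $a\in L\cap J$ and $b\in L$ one has $[a,b]\in L$ because $L$ is a subalgebra and $[a,b]\in J$ because $J\vartriangleleft\mathcal{L}$. Since $R$ is balanced and $I\vartriangleleft L$, we get $R(I)\subseteq R(L)\in R(\Gamma)_{\mathcal{L}}$. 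Hence $R(\Gamma)_J\overrightarrow{\subset}R(\Gamma)_{\mathcal{L}}$, i.e., $R(\Gamma)$ is balanced.

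There is no genuine obstacle here; the whole proposition is a compatibility check between the multifunction language of Section \ref{section5} and the algebraic properties of preradicals from Section \ref{3}. The only points that deserve a line of justification are that restricting $f$ to $L$ with codomain $\overline{f(L)}$ produces a morphism of $\overline{\mathbf{L}}$ (so that the preradical property is applicable in its dense-image form), and that $L\cap J$ is a Lie ideal of $L$ (so that the balanced property of $R$ is applicable).
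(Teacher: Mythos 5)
Your proof is correct and follows essentially the same route as the paper: restrict the morphism $f$ to each $L\in\Gamma_{\mathcal{L}}$ with codomain $\overline{f(L)}$ and apply (\ref{4.0}), then use $(L\cap J)\vartriangleleft L$ together with the balanced property of $R$ for the second claim. The two points you flag as needing justification are exactly the ones the paper relies on, so nothing is missing.
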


\begin{proof}
Let $f$: $\mathcal{L}\longrightarrow\mathcal{M}$ be a homomorphism with dense
range. As $\Gamma$ is strictly direct, $M:=\overline{f\left(  L\right)  }%
\in\Gamma_{\mathcal{M}},$ for each $L\in\Gamma_{\mathcal{L}},$ and $f|_{L}$:
$L\longrightarrow M$ is a homomorphism with dense range. As $R$ is a
preradical on $\overline{\mathbf{L}}$, we have $f\left(  R\left(  L\right)
\right)  \subseteq R\left(  M\right)  $. Hence $f\left(  R\left(
\Gamma_{\mathcal{L}}\right)  \right)  \overrightarrow{\subset}R\left(
\Gamma_{\mathcal{M}}\right)  $.

Let $J\vartriangleleft\mathcal{L}$. Then, for each $L\in\Gamma_{\mathcal{L}},$
we have $(L\cap J)\vartriangleleft L.$ If $R$ is balanced then $R(L\cap
J)\subseteq R\left(  L\right)  $. Hence if $\Gamma_{J}\subseteq\Gamma
_{\mathcal{L}}\cap J$ then $R\left(  \Gamma_{J}\right)  \overrightarrow
{\subset}R\left(  \Gamma_{\mathcal{L}}\right)  $.
\end{proof}

It follows from Proposition \ref{L2} and Theorem \ref{Cmain}(ii) that in the
conditions of Proposition \ref{strictly} $S_{R\left(  \Gamma\right)  }$ is a
preradical and a balanced preradical, respectively.

\section{Examples of multifunctions and radicals\label{6}}

In the first subsection we consider some preliminary results about chains of
closed subspaces which we will later apply to describe examples of
multifunctions and radicals.

\subsection{Finite-gap families of subspaces\label{5e1}}

In the following lemma we gather several elementary results on subspaces of
finite codimension in a normed space.

\begin{lemma}
\label{Closed}Let $Z$ be a subspace of a normed space $X$ and let $Y$ be a
closed subspace of finite codimension in $X$. Then the subspace $Y+Z$ is
closed in $X,$ $Y\cap Z$ is a closed subspace of finite codimension in $Z$ and
$\dim\left(  Z/\left(  Y\cap Z\right)  \right)  =\dim\left(  \left(
Y+Z\right)  /Y\right)  $.
\end{lemma}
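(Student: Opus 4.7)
The plan is to reduce everything to the quotient $X/Y$, which is finite-dimensional by hypothesis, and then invoke the first isomorphism theorem.

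First I would let $q\colon X\longrightarrow X/Y$ be the canonical quotient map. Since $Y$ has finite codimension in $X$, the normed quotient $X/Y$ is finite-dimensional, hence Hausdorff, and every linear subspace of $X/Y$ is automatically closed and finite-dimensional. Applying this to the linear subspace $q(Z)\subseteq X/Y$ shows that $q(Z)$ is closed in $X/Y$ and $\dim q(Z)<\infty$. Then $Y+Z=q^{-1}(q(Z))$ is closed in $X$ by continuity of $q$, which disposes of the first assertion.

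Next I would handle $Y\cap Z$. It is closed in $Z$ simply because $Y$ is closed in $X$, hence $Y\cap Z=Y\cap Z$ is the intersection of $Z$ with a closed subset of $X$. To get finite codimension and the dimension equality simultaneously, I would consider the restriction $q|_Z\colon Z\longrightarrow X/Y$. Its image is $q(Z)=(Y+Z)/Y$ and its kernel is precisely $Y\cap Z$, so the first isomorphism theorem for vector spaces gives the linear isomorphism
\[
Z/(Y\cap Z)\;\cong\;(Y+Z)/Y.
\]
Since $(Y+Z)/Y$ is a subspace of the finite-dimensional space $X/Y$, it is finite-dimensional, so $Y\cap Z$ has finite codimension in $Z$ and the two dimensions coincide.

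There is essentially no main obstacle here: the only point that is not immediate is the closedness of $Y+Z$, and this is standard once one observes that finite-dimensional normed spaces carry a unique Hausdorff vector-space topology in which all subspaces are closed. Everything else is bookkeeping via the first isomorphism theorem applied to $q|_Z$.
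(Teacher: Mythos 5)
Your proof is correct and follows essentially the same route as the paper: pass to the finite-dimensional quotient $X/Y$, deduce closedness of $Y+Z$ as the preimage of the (automatically closed) subspace $q(Z)$, and get the codimension statement from the algebraic isomorphism $Z/(Y\cap Z)\cong (Y+Z)/Y$. No gaps.
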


\begin{proof}
Let $q$: $X\longrightarrow X/Y$ be the quotient map. As $X/Y$ is finite
dimensional, $q\left(  Y+Z\right)  $ is closed in $X/Y$, whence $Y+Z=q^{-1}%
\left(  q\left(  Y+Z\right)  \right)  $ is closed in $X.$ It is clear that
$Y\cap Z$ is closed in $Z$. As $\left(  Y+Z\right)  /Y$ and $Z/\left(  Y\cap
Z\right)  $ are isomorphic in the pure algebraic sense, their dimensions
coincide, whence $Y\cap Z$ has finite codimension in $Z$.
\end{proof}

From now on $X$ denotes a Banach space and $G$ a family of closed subspaces of
$X$. For $Y,Z\in G$ with $Y\subsetneqq Z$, the set $[Y,Z]_{G}=\{W\in G$:
$Y\subseteq W\subseteq Z\}$ is called an \textit{interval} of $G$. If
$[Y,Z]_{G}=\left\{  Y,Z\right\}  $, the pair $\left(  Y,Z\right)  $ is called
a \textit{gap}. Recall (see (\ref{F0})) that
\[
\mathfrak{p}\left(  G\right)  =X\text{ and }\mathfrak{s}\left(  G\right)
=\{0\},\text{ if }G=\varnothing;\text{ otherwise }\mathfrak{p}\left(
G\right)  =\bigcap\limits_{Y\in G}Y\text{ and }\mathfrak
{s}\left(  G\right)  =\overline{\sum_{Y\in G}Y}.
\]

For a family $G$ of closed subspaces of $X\mathcal{,}$ define its\textbf{
}$\frak{p}$-\textit{completion} and $\frak{s}$-\textit{completion }as follows:%
\[
G^{\frak{p}}=\left\{  \frak{p}\left(  G^{\prime}\right)  \text{: }%
\varnothing\neq G^{\prime}\subseteq G\right\}  \cup\left\{  \frak{s}\left(
G\right)  \right\}  \text{ and }G^{\frak{s}}=\left\{  \frak{s}\left(
G^{\prime}\right)  \text{: }\varnothing\neq G^{\prime}\subseteq G\right\}
\cup\left\{  \frak{p}\left(  G\right)  \right\}  .
\]
We add $\frak{s}(G)$ to $G^{\frak{p}}$ and $\frak{p}(G)$ to $G^{\frak{s}}$ for
technical convenience. We say that 1) $G$ is $\frak{p}$-\textit{complete} if
$G=G^{\frak{p}};$ 2) $G$ is $\frak{s}$-\textit{complete} if $G=G^{\frak{s}},$
and 3) $G$ is \textit{complete} if it is $\frak{p}$-complete and $\frak{s}%
$-complete. Clearly, $G\subseteq G^{\frak{p}}\cap G^{\frak{s}}.$

\begin{definition}
\label{D2.1}A family $G$ of closed subspaces of $X$ is called

\begin{itemize}
\item [$\mathrm{(i)}$]a \emph{lower finite-gap family} if$,$\emph{ }for each
$Z\neq\frak{p}(G)$ in $G,$\emph{ }there is $Y\in G$ such that $Y\subset Z$ and
$0<\dim(Z/Y)<\infty.$

\item[$\mathrm{(ii)}$] an \emph{upper finite-gap family} if$,$\emph{ }for each
$Z\neq\frak{s}(G)$ in $G,$ there is $Y\in G$ such that $Z\subset Y$ and\emph{
}$0<\dim(Y/Z)<\infty.$
\end{itemize}
\end{definition}

Note that a lower finite-gap family may have infinite gaps. Let $X$ be a
Hilbert space with a basis $\{e_{n}\}_{n=1}^{\infty}.$ The family $G$ of
subspaces $L_{k}=\{e_{n}\}_{n=k}^{\infty},$ $M_{k}=\{e_{2n-1}\}_{n=k}^{\infty
},$ for all $1\leq k<\infty,$ and $\{0\}$ is a $\frak{p}$-complete, lower
finite-gap family. However, $[M_{1},L_{1}]_{G}$ is an infinite gap.

The next lemma provides us with numerous examples of lower and upper
finite-gap families.

\begin{lemma}
\label{L2.4}Let $G$ be a family of closed subspaces of $X$.

\begin{itemize}
\item [$\mathrm{(i)}$]If $G$ consists of subspaces of finite codimension then
$G^{\frak{p}}$ is a lower finite-gap family.

\item[$\mathrm{(ii)}$] If $G$ consists of subspaces of finite dimension then
$G^{\frak{s}}$ is an upper finite-gap family.
\end{itemize}
\end{lemma}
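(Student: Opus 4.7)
The plan is to exploit the symmetry between parts (i) and (ii): part (i) rests on Lemma \ref{Closed} (finite-codimension intersections stay finite-codimension) and part (ii) on its elementary dual statement (if $L$ is finite-dimensional and $Z\subseteq X$ is closed, then $Z+L$ is closed and $\dim\bigl((Z+L)/Z\bigr)\le\dim L$). Before the case analysis, I would first identify $\frak{p}(G^{\frak{p}})$ and $\frak{s}(G^{\frak{s}})$: since $\frak{p}(G)\in G^{\frak{p}}$ (taking $G'=G$), while $\frak{p}(G')\supseteq\frak{p}(G)$ for every $\varnothing\ne G'\subseteq G$ and $\frak{s}(G)\supseteq\frak{p}(G)$ whenever $G\ne\varnothing$, one obtains $\frak{p}(G^{\frak{p}})=\frak{p}(G)$; dually $\frak{s}(G^{\frak{s}})=\frak{s}(G)$. (The degenerate case $G=\varnothing$ gives singleton completions, for which both conclusions are vacuous.)

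For (i), fix $Z\in G^{\frak{p}}$ with $Z\ne\frak{p}(G)$. There are two shapes $Z$ can have. Shape A: $Z=\frak{p}(G')$ for some $\varnothing\ne G'\subseteq G$. Because $\frak{p}(G')\ne\frak{p}(G)=\bigcap_{L\in G}L$, there exists $L^{*}\in G$ with $Z\not\subseteq L^{*}$. I would then set $Y:=Z\cap L^{*}=\frak{p}\bigl(G'\cup\{L^{*}\}\bigr)\in G^{\frak{p}}$, and Lemma \ref{Closed} applied to the finite-codimensional subspace $L^{*}$ of $X$ gives $Y\subsetneq Z$ with $0<\dim(Z/Y)<\infty$. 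Shape B: $Z=\frak{s}(G)$ is not of Shape A. Since $\frak{s}(G)\ne\frak{p}(G)$, not all $L\in G$ can equal $\frak{s}(G)$, so some $L^{*}\in G$ satisfies $L^{*}\subsetneq\frak{s}(G)=Z$; this $L^{*}=\frak{p}(\{L^{*}\})$ lies in $G^{\frak{p}}$ and, having finite codimension in $X$, has finite codimension in $Z$.

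Part (ii) is carried out by the dual argument. For $Z\in G^{\frak{s}}$ with $Z\ne\frak{s}(G)$, in Shape A' ($Z=\frak{s}(G')$ for some $\varnothing\ne G'\subseteq G$) I would pick $L^{*}\in G$ with $L^{*}\not\subseteq Z$ (which exists, else $\frak{s}(G)\subseteq Z$) and take $Y:=\frak{s}\bigl(G'\cup\{L^{*}\}\bigr)=Z+L^{*}$; the sum is automatically closed because $L^{*}$ is finite-dimensional, and $\dim(Y/Z)\le\dim L^{*}<\infty$. In Shape B' ($Z=\frak{p}(G)$ is not of Shape A'), the inequality $\frak{p}(G)\ne\frak{s}(G)$ forces some $L^{*}\in G$ to strictly contain $\frak{p}(G)$, and $Y:=L^{*}\in G^{\frak{s}}$ meets the requirements, with $\dim(Y/Z)\le\dim L^{*}<\infty$.

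The only step demanding genuine attention is the preliminary identification of $\frak{p}(G^{\frak{p}})$ and $\frak{s}(G^{\frak{s}})$, which is what legitimises the case split and absorbs the technical element ($\frak{s}(G)$ in $G^{\frak{p}}$ and $\frak{p}(G)$ in $G^{\frak{s}}$) adjoined to the completions. Once this is in place, the verification is driven entirely by Lemma \ref{Closed} together with its finite-dimensional mirror and is otherwise pure bookkeeping.
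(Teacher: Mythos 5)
Your proof is correct and takes essentially the same route as the paper's: for a non-minimal $Z=\frak{p}(G')$ one picks $L^{*}\in G$ with $Z\not\subseteq L^{*}$ and applies Lemma \ref{Closed} to $Z\cap L^{*}$ (dually, one adds a finite-dimensional $L^{*}$ with $L^{*}\not\subseteq Z$ in part (ii)). You are in fact slightly more careful than the paper, which does not explicitly treat the adjoined element $\frak{s}(G)\in G^{\frak{p}}$ (your Shape B) nor record the identification $\frak{p}(G^{\frak{p}})=\frak{p}(G)$ that legitimises the case analysis.
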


\begin{proof}
(i) Let $\frak{p}\left(  G\right)  \neq Z\in G^{\frak{p}}.$ Then
$Z=\frak{p}\left(  G^{\prime}\right)  $ for some $\varnothing\neq G^{\prime
}\subsetneqq G$ . Set $G^{\prime\prime}=G\diagdown G^{\prime}$. Then
\[
\frak{p}\left(  G\right)  =\frak{p}\left(  G^{\prime}\right)  \cap
\frak{p}\left(  G^{\prime\prime}\right)  =Z\cap\frak{p}\left(  G^{\prime
\prime}\right)  =\cap_{Y\in G^{\prime\prime}}(Z\cap Y).
\]
As $Z\neq\frak{p}\left(  G\right)  $, there is a subspace $Y\in G^{\prime
\prime}$ such that $Z\cap Y\neq Z$. Then $Z\cap Y\in G^{\frak{p}}$ and, by
Lemma \ref{Closed}, $Z\cap Y$ has finite codimension in $Z.$ Thus
$0<\dim\left(  Z/\left(  Z\cap Y\right)  \right)  <\infty$.

(ii) Let $\frak{s}\left(  G\right)  \neq Z\in G^{\frak{s}}.$ Then
$Z=\frak{s}\left(  G^{\prime}\right)  $ for some $\varnothing\neq G^{\prime
}\subsetneqq G$. Set $G^{\prime\prime}=G\diagdown G^{\prime}$. Then
\[
\frak{s}\left(  G\right)  =\mathrm{span}\left(  \frak{s}\left(  G^{\prime
}\right)  +\cup_{Y\in G^{\prime\prime}}Y\right)  =\mathrm{span}\left(
\cup_{Y\in G^{\prime\prime}}\left(  Z+Y\right)  \right)  .
\]
As $Z\neq\frak{s}\left(  G\right)  $, there is a subspace $Y\in G^{\prime
\prime}$ such that $Z+Y\neq Z$. By Lemma \ref{Closed}, $Z+Y$ is closed. Hence
$Z+Y\in G^{\frak{s}}$ and $0<\dim\left(  \left(  Z+Y\right)  /Z\right)
<\infty$.
\end{proof}

\begin{remark}
\label{R6.1}\emph{In this paper we consider }$\frak{p}$\emph{-complete lower
finite-gap families. Similar results hold for }$\frak{s}$\emph{-complete upper
finite-gap families.}
\end{remark}

A subfamily $C$ of $G$ is a \textit{chain} if every two subspaces in $C$ are
comparable, that is, the order defined by inclusion is linear on $C$. A chain
$C$ is \textit{maximal }if $G$ has no other larger chain.

\begin{lemma}
\label{L2.2} Let $G$ be a $\frak{p}$-complete family of closed subspaces in
$X$. Then

\begin{itemize}
\item [$\mathrm{(i)}$]For each chain $C_{0}$ in $G,$ there is a maximal
$\frak{p}$-complete chain $C_{m}$ in $G$ containing $C_{0}$ with
$\frak{p}(C_{m})=\frak{p}(C_{0})$ and $\frak{s}(C_{m})=\frak{s}(C_{0})$.

\item[$\mathrm{(ii)}$] Let $C$ be a $\frak{p}$-complete$,$ lower finite-gap
chain. Then

\begin{itemize}
\item [$a)$]$C$ is complete and is a complete strictly decreasing transfinite
sequence of closed subspaces\emph{;}

\item[$b)$] each chain in $[\frak{p}(C),\frak{s}(C)]_{G}$ larger than $C$ is
also a lower finite-gap chain\emph{;}

\item[$c)$] $[\frak{p}(C),\frak{s}(C)]_{G}$ has a maximal\emph{, }complete
lower finite-gap chain containing $C.$
\end{itemize}

\item[\textrm{(iii)}] If a lower finite-gap chain $C$ is maximal in the
interval $[\frak{p}(C),\frak{s}(C)]_{G},$ then $C$ is complete.

\item[$\mathrm{(iv)}$] Let $C_{0}$ be a $\frak{p}$-complete$,$ lower
finite-gap chain with $\frak{s}(C_{0})=\frak{s}(G).$ Then $G$ has a maximal$,$
lower finite-gap chain\textbf{ }$C$\textbf{ }containing $C_{0}.$ If $G$ is a
lower finite-gap family$,$ then $\frak{p}(C)=\frak{p}(G)$.
\end{itemize}
\end{lemma}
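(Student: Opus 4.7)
The plan is to treat the four parts in sequence, using Zorn's lemma together with careful bookkeeping of how $\mathfrak{p}$-completeness interacts with chains. I first record the observation that for any chain $C$ in $G$ and any nonempty $C'\subseteq C$, the intersection $\mathfrak{p}(C')$ is comparable with every $V\in C$: split $C'$ into the pieces $C'_+=\{W\in C':W\supseteq V\}$ and $C'_-=\{W\in C':W\subseteq V\}$; if $C'_-\neq\varnothing$ then $\mathfrak{p}(C')\subseteq V$, and otherwise $V\subseteq\mathfrak{p}(C')$. This is used repeatedly.

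For part (i) I would apply Zorn to $\mathfrak{p}$-complete chains in the restricted family $\widetilde{G}:=\{V\in G:\mathfrak{p}(C_0)\subseteq V\subseteq\mathfrak{s}(C_0)\}$ containing $C_0$, ordered by inclusion. The union of a tower is a chain by the comparability observation, and its $\mathfrak{p}$-completion stays in $G$ by the $\mathfrak{p}$-completeness of $G$ and in $\widetilde G$ by construction. Confinement to $\widetilde G$ forces $\mathfrak{p}(C_m)=\mathfrak{p}(C_0)$ and $\mathfrak{s}(C_m)=\mathfrak{s}(C_0)$.

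For part (ii)(a) I would first note that $\mathfrak{p}$-completeness of $C$ places $\mathfrak{s}(C)$ and $\mathfrak{p}(C)$ into $C$ by the very definition of $C^{\mathfrak{p}}$, and places every intersection of a subfamily of $C$ back into $C$. For each $Z\in C$ with $Z\ne\mathfrak{p}(C)$ the lower finite-gap property supplies $Y\in C$ with $Y\subsetneq Z$ and $\dim(Z/Y)<\infty$; the interval $[Y,Z]_C$ embeds into the finite-dimensional subspace lattice of $Z/Y$, hence is finite, and $Z$ therefore has an immediate predecessor in $C$. I then perform a transfinite recursion starting at $V_0=\mathfrak{s}(C)$, passing to immediate predecessors at successors and to intersections at limits (the latter in $C$ by $\mathfrak{p}$-completeness), terminating at $V_\tau=\mathfrak{p}(C)$. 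Any $W\in C$ not appearing in the sequence is excluded by picking the least $\alpha^*$ with $V_{\alpha^*}\subsetneq W$: a limit $\alpha^*$ yields $W\subseteq\cap_{\beta<\alpha^*}V_\beta=V_{\alpha^*}$, contradicting $V_{\alpha^*}\subsetneq W$, while a successor $\alpha^*=\beta+1$ produces an element strictly between $V_{\alpha^*}$ and $V_\beta$, contradicting ``immediate predecessor''. So the sequence exhausts $C$, and $\mathfrak{s}$-completeness follows by reading off the supremum of any subfamily as its least-indexed element.

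For (ii)(b), given $Z\notin C$ in a chain $C^*\supseteq C$ inside $[\mathfrak{p}(C),\mathfrak{s}(C)]_G$, I write $C=\{V_\alpha\}_{\alpha\le\tau}$ from (ii)(a) and let $\alpha^*$ be the least ordinal with $V_{\alpha^*}\subseteq Z$. A limit $\alpha^*$ forces $V_{\alpha^*}=\cap_{\beta<\alpha^*}V_\beta\supseteq Z$, hence $V_{\alpha^*}=Z\in C$, a contradiction; so $\alpha^*=\beta+1$ with $V_{\beta+1}\subsetneq Z\subsetneq V_\beta$, delivering the finite-codim predecessor $V_{\beta+1}\in C\subseteq C^*$; while $Z\in C$ with $Z\ne\mathfrak{p}(C^*)=\mathfrak{p}(C)$ uses its predecessor from $C$ directly. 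For (iii) I read ``maximal in $[\mathfrak{p}(C),\mathfrak{s}(C)]_G$'' as maximal among chains of $G$ in the interval: each $\mathfrak{p}(C')$ with $C'\subseteq C$ lies in $G$, sits in the interval, and is comparable with every element of $C$, so maximality forces $\mathfrak{p}(C')\in C$, and similarly $\mathfrak{s}(C)\in C$ (which exists in $G$ so that the interval is well-posed); thus $C$ is $\mathfrak{p}$-complete, and (ii)(a) upgrades this to full completeness. Part (ii)(c) then follows by Zorn, which produces a maximal chain of $G$ in the interval containing $C$: (ii)(b) gives lower finite-gap-ness and (iii) gives completeness. Finally, for (iv) I apply Zorn to lower finite-gap chains in $G$ containing $C_0$; the tower-union check mirrors (i), noting that each $Z$ in the union inherits a finite-codim predecessor either from its own chain (if it is non-minimal there) or, otherwise, from a larger chain in the tower that contains a strictly smaller element. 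The equality $\mathfrak{s}(C)=\mathfrak{s}(G)$ follows from $\mathfrak{s}(C_0)=\mathfrak{s}(G)$ and $C_0\subseteq C\subseteq G$, and if $G$ is lower finite-gap but $\mathfrak{p}(C)\ne\mathfrak{p}(G)$, then $\mathfrak{p}(C)\in G$ (by $\mathfrak{p}$-completeness of $G$) admits $W\in G$ with $W\subsetneq\mathfrak{p}(C)$ and $\dim(\mathfrak{p}(C)/W)<\infty$, so $C\cup\{W\}$ is a strict lower finite-gap extension contradicting maximality. The principal obstacle I anticipate is the transfinite-recursion step in (ii)(a), specifically verifying that the constructed sequence exhausts $C$ --- which requires coupling the finite-interval argument at successors with $\mathfrak{p}$-completeness at limits --- after which (ii)(b), (iii), (ii)(c) and (iv) reduce to Zorn together with the bookkeeping already developed.
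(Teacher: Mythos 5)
Parts (i), (ii) and (iii) of your argument are correct and essentially the paper's proof. In (i) your restriction to $\widetilde{G}=\{V\in G:\frak{p}(C_0)\subseteq V\subseteq\frak{s}(C_0)\}$ is just a repackaging of the paper's side conditions $\frak{p}(C)=\frak{p}(C_0)$, $\frak{s}(C)=\frak{s}(C_0)$ (both versions tacitly need $\frak{s}(C_0)\in G$ so that the Zorn poset is nonempty, so this is not a point of difference). In (ii)(a) the paper shows $\frak{s}$-completeness directly, by proving that every nonempty $C_0\subseteq C$ contains its supremum (via the gap below $\frak{p}(C_1)$), and deduces well-ordering; you instead establish immediate predecessors with finite-dimensional gaps (a chain inside a finite-dimensional quotient is finite) and enumerate $C$ by transfinite recursion, reading off $\frak{s}$-completeness from the enumeration. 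Same ingredients, different bookkeeping; your exhaustion argument (least $\alpha^*$ with $V_{\alpha^*}\subsetneq W$, split into limit/successor cases) is sound. Your (ii)(b) is the paper's argument expressed through the enumeration, your (iii) coincides with the paper's, and in (ii)(c) taking a Zorn-maximal chain of the interval and then invoking (ii)(b) and (iii) is fine (and even gives a chain maximal among all chains of the interval, slightly more than the paper's appeal to (i)).

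The substantive divergence is (iv). Your tower-union verification (borrowing a finite-gap predecessor for the old minimum from a larger chain of the tower) is correct, and so is the final one-element extension below $\frak{p}(C)$ when $G$ is lower finite-gap; so you do prove the statement under the reading ``maximal among lower finite-gap chains containing $C_0$''. But this is strictly weaker than what the paper's proof produces, namely a lower finite-gap chain that is maximal as a chain in the interval $[\frak{p}(C),\frak{s}(G)]_G$ and hence, by (iii), complete. The two notions genuinely differ: let $G$ consist of $X=A_1\supsetneq A_2\supsetneq\cdots$ with one-dimensional gaps, $V=\cap_nA_n$ infinite-dimensional, and below $V$ only $B_1\supsetneq B_2\supsetneq\cdots\supsetneq\{0\}$ with $\dim(V/B_1)=\infty$ and one-dimensional gaps; this $G$ is $\frak{p}$-complete, and with $C_0=\{X\}$ the chain $\{A_n\}\cup\{B_m\}\cup\{\{0\}\}$ is maximal among lower finite-gap chains (adjoining $V$ destroys the finite-gap condition at $V$) yet is not $\frak{p}$-complete and not maximal as a chain in $[\{0\},X]_G$, whereas the paper's construction returns the complete chain $\{A_n\}\cup\{V\}$. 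The paper's extra machinery — Zorn over the class $\mathcal{G}$ of lower finite-gap chains maximal in their own interval $[\frak{p}(C),\frak{s}(G)]_G$, adjoining $Y_\Lambda=\frak{p}\{\frak{p}(C_\lambda)\}$ at limit stages, and extending at the end by a finite maximal chain of $[W,\frak{p}(C)]_G$ rather than by $W$ alone — is precisely what secures this interval-maximality, and that completeness is what gets used later (Theorem \ref{T6.x} and Corollary \ref{C6.5} apply Corollary \ref{pi} to a maximal lower finite-gap chain, which therefore must be $\frak{p}$-complete). So if ``maximal'' in (iv) is taken in the sense the paper needs, your construction does not deliver it and cannot be repaired a posteriori; you would have to restrict the Zorn poset as the paper does (or otherwise build interval-maximality into the construction).
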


\begin{proof}
(i) As $G$ is $\frak{p}$-complete, $C_{0}^{\frak{p}}$ is a $\frak{p}$-complete
chain in $G,$ $\frak{p}(C_{0}^{\frak{p}})=\frak{p}(C_{0})$ and $\frak{s}%
(C_{0}^{\frak{p}})=\frak{s}(C_{0}).$ The set $\mathcal{G}$ of all $\frak{p}%
$-complete chains $C$ in $G$ containing $C_{0}^{\frak{p}},$ with
$\frak{p}(C)=\frak{p}(C_{0})$ and $\frak{s}(C)=\frak{s}(C_{0}),$ is partially
ordered by inclusion. Let $\{C_{\lambda}\}_{\lambda\in\Lambda}$ be a linearly
ordered subset of $\mathcal{G.}$ Then $C^{\prime}=\left(  \cup_{\lambda
\in\Lambda}C_{\lambda}\right)  ^{\frak{p}}\in\mathcal{G}$. Hence $\mathcal{G}$
is inductive. By Zorn's Lemma, $\mathcal{G}$ has a maximal element $C_{m}$.

(ii) a) We only need to show that $C$ is $\frak{s}$-complete. Let $C_{0}%
\neq\varnothing$ be a subset of $C.$ If $C_{0}=\left\{  \frak{p}\left(
C_{0}\right)  \right\}  $ then $\frak{s}(C_{0})=\frak{p}(C_{0})\in C_{0}.$ If
$C_{0}\neq\left\{  \frak{p}\left(  C_{0}\right)  \right\}  ,$ set
$C_{1}=\{Y\in C$: $Z\subseteq Y$ for all $Z\in C_{0}\}$. As $\frak{s}\left(
C\right)  \in C_{1}$, $C_{1}$ is not empty. It follows that $Z\subseteq
\frak{p}\left(  C_{1}\right)  \in C_{1}$ for each $Z\in C_{0}$.

Assume that $\frak{p}\left(  C_{1}\right)  \notin C_{0}.$ As $C$ is a lower
finite-gap chain, there is $Y_{0}\in C$ such that $[Y_{0},\frak{p}(C_{1}%
)]_{C}$ is a gap. Hence $Y_{0}\notin C_{1}$ and $Y_{0}\subsetneqq Z_{0}$ for
some $Z_{0}\in C_{0}$, otherwise $Y_{0}\in C_{1}.$ Thus $Y_{0}\subsetneqq
Z_{0}\subsetneqq\frak{p}\left(  C_{1}\right)  ,$ so that $[Y_{0}%
,\frak{p}(C_{1})]_{C}$ is not a gap. This contradiction shows that
$\frak{p}\left(  C_{1}\right)  \in C_{0}.$ Hence $\frak{s}\left(
C_{0}\right)  =\frak{p}\left(  C_{1}\right)  .$

We also proved that $C$ is completely ordered by $\supseteq$. So it is
anti-isomorphic to an interval $\lfloor0,\beta\rfloor$ of transfinite numbers.
Thus subspaces in $C$ are indexed by transfinite numbers and $C$ is a strictly
decreasing transfinite sequence $\left\{  Y_{\alpha}\right\}  _{\alpha
\leq\beta}$ of closed subspaces (`strictly' means that $Y_{\alpha^{\prime}%
}\neq Y_{\alpha}$ if $\alpha^{\prime}\neq\alpha$).

b) Let $C\subset C_{1}\subseteq\lbrack\frak{p}(C),\frak{s}(C)]_{G}.$ Let $Y\in
C_{1}\diagdown C.$ Then $\frak{p}(C)\subsetneqq Y\subsetneqq\frak{s}(C).$ The
chain $C^{\prime}=\{Z\in C$: $Y\subseteq Z\}$ is not empty, as $\frak{s}(C)\in
C^{\prime},$ and $\frak{p}(C^{\prime})\in C^{\prime},$ as $C$ is $\frak{p}%
$-complete. As $C$ is a lower finite-gap chain and $\frak{p}(C^{\prime}%
)\neq\frak{p}(C)$, there is $Y_{0}\in C$ such that $[Y_{0},\frak{p}(C^{\prime
})]_{C}$ is a finite gap. Hence $Y_{0}\notin C^{\prime},$ so that
$Y_{0}\subsetneqq Y\subsetneqq\frak{p}(C^{\prime}).$ Thus $0<\dim
Y/Y_{0}<\infty.$ This implies that $C_{1}$ is a lower finite-gap chain.

c) By (i), $[\frak{p}(C),\frak{s}(C)]_{G}$ has a maximal $\frak{p}$-complete
chain containing $C.$ By (ii) a) and b), it is a complete, lower finite-gap chain.

(iii) As $G$ is $\frak{p}$-complete, $C^{\frak{p}}$ is a chain in
$[\frak{p}(C),\frak{s}(C)]_{G}$ larger than $C.$ As $C$ is maximal,
$C=C^{\frak{p}}.$ By (ii) a), $C$ is complete.

(iv) As $G$ is $\frak{p}$-complete, $\frak{p}(G),\frak{s}(G)\in G$. Consider
the set $\mathcal{G}$ of all lower finite-gap chains $C$ in $G$ containing
$C_{0}$ and maximal in the interval $[\frak{p}(C),\frak{s}(G)]_{G}.$ By (ii)
c)$,$ $\mathcal{G}$ is not empty. It is partially ordered by inclusion. Let
$\{C_{\lambda}\}_{\lambda\in\Lambda}$ be a linearly ordered subset of
$\mathcal{G}.$ By (iii), $\frak{p}(C_{\lambda})\in C_{\lambda}.$ Set
$Y_{\lambda}=\frak{p}(C_{\lambda}),$ $Y_{\Lambda}=\frak{p}\{Y_{\lambda}$:
$\lambda\in\Lambda\}$ and%
\[
C_{\Lambda}=\left(  \underset{\lambda\in\Lambda}{\cup}C_{\lambda}\right)  \cup
Y_{\Lambda}.
\]
Then $C_{\Lambda}$ is a chain, $C_{0}\in C_{\Lambda}$ and $\frak{p}%
(C_{\Lambda})=Y_{\Lambda}\in C_{\Lambda}.$ Each $V\in C_{\Lambda},$ $V\neq
Y_{\Lambda},$ lies in some $C_{\lambda}.$ If $Y_{\lambda}\neq V\in C_{\lambda
},$ there is $W\in C_{\lambda}$ such that $W\subset V$ and $0<\dim\left(
V/W\right)  <\infty.$ If $V=Y_{\lambda}\neq Y_{\Lambda},$ there is $\mu
\in\Lambda$ such that $V\in C_{\lambda}\subsetneqq C_{\mu}$ and $V\neq Y_{\mu
}.$ Hence, as in the previous case, there is $W\in C_{\mu}$ such that
$W\subset V$ and $0<\dim\left(  V/W\right)  <\infty.$ Thus $C_{\Lambda}$ is a
lower finite-gap chain.

Let us show that $C_{\Lambda}$ is a maximal chain in $[Y_{\Lambda}%
,\frak{s}(G)]_{G}.$ If not, then there is $V\notin C_{\Lambda},$ $V\in\lbrack
Y_{\Lambda},\frak{s}(G)]_{G}$ such that $C_{\Lambda}\cup V$ is a chain$.$ As
$Y_{\Lambda}\subsetneqq V,$ there is $\lambda\in\Lambda$ such that
$Y_{\lambda}\subsetneqq V.$ Then $C_{\lambda}\cup V$ is a chain in
$[Y_{\lambda},\frak{s}(G)]_{G}$ larger than $C_{\lambda}.$ This contradiction
shows that the chain $C_{\Lambda}$ is maximal in $[Y_{\Lambda},\frak{s}%
(G)]_{G}.$

Clearly, $C_{\Lambda}\leq C^{\prime}$ for each majorant $C^{\prime}$ of
$\{C_{\lambda}\}_{\lambda\in\Lambda}$ in $\mathcal{G}.$ Thus each linearly
ordered subset of $\mathcal{G}$ has a supremum in $\mathcal{G}.$ By Zorn's
Lemma, $\mathcal{G}$ has a maximal element $C$ which is a lower finite-gap
chain containing $C_{0}$ and maximal in the interval $[\frak{p}(C),\frak{s}%
(G)]_{G}.$

Let $G$ be a lower finite-gap family. If $\frak{p}\left(  C\right)
\neq\frak{p}(G)$ then, as $G$\emph{ }is a lower finite-gap family, there is
$W\in G$ such that $W\subsetneqq\frak{p}(C)$ and $\dim(\frak{p}(C)/W)<\infty.$
Choose a finite maximal chain $C_{1}$ in $[W,\frak{p}(C)]_{G}.$ Then the chain
$C\cup C_{1}$ belongs to $\mathcal{G}$ and larger than $C$ --- a
contradiction. Thus $\frak{p}\left(  C\right)  =\frak{p}(G).$
\end{proof}

\begin{theorem}
\label{T2.2}A $\frak{p}$-complete family $G$ of closed subspaces of $X$ is a
lower finite-gap family if and only if there is a maximal\emph{,} lower
finite-gap chain $C$ in $G$ with $\frak{p(}C)=\frak{p}(G)$ and $\frak{s}%
(C)=\frak{s}(G)$.
\end{theorem}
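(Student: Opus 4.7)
The plan is to prove both directions by exploiting the structural results already established in Lemma \ref{L2.2}. For the forward direction, I apply Lemma \ref{L2.2}(iv) to the trivial single-element chain $C_{0}=\{\frak{s}(G)\}$, which is tautologically $\frak{p}$-complete and lower finite-gap with $\frak{s}(C_{0})=\frak{s}(G)$. The lemma then yields a maximal lower finite-gap chain $C\supseteq C_{0}$ in $G$ with $\frak{p}(C)=\frak{p}(G)$, the latter using that $G$ is a lower finite-gap family. The equality $\frak{s}(C)=\frak{s}(G)$ is automatic, since $\frak{s}(G)\in C_{0}\subseteq C$ forces $\frak{s}(C)\supseteq\frak{s}(G)$ while $C\subseteq G$ forces $\frak{s}(C)\subseteq\frak{s}(G)$.

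For the backward direction, I will exploit the complete transfinite structure of $C$. Because $\frak{p}(C)=\frak{p}(G)$ and $\frak{s}(C)=\frak{s}(G)$, the chain $C$ is maximal in the whole interval $[\frak{p}(G),\frak{s}(G)]_{G}$, so by Lemma \ref{L2.2}(iii) $C$ is complete, and by Lemma \ref{L2.2}(ii)(a) $C$ can be indexed as a strictly decreasing transfinite sequence $\{Y_{\alpha}\}_{\alpha\leq\beta}$ with $Y_{0}=\frak{s}(G)$ and $Y_{\beta}=\frak{p}(G)$. The lower finite-gap property of $C$ forces $0<\dim(Y_{\alpha}/Y_{\alpha+1})<\infty$ for every $\alpha<\beta$: indeed, any $Y\in C$ strictly inside $Y_{\alpha}$ of finite codimension must satisfy $Y\subseteq Y_{\alpha+1}$.

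Now, given $Z\in G$ with $Z\neq\frak{p}(G)$, I will set $\alpha^{*}=\sup\{\alpha:Z\subseteq Y_{\alpha}\}$ and verify first that $\alpha^{*}<\beta$ (otherwise $Z\subseteq Y_{\beta}=\frak{p}(G)\subseteq Z$) and second that the supremum is attained---in the limit case, $\frak{s}$-completeness of $C$ gives $Y_{\alpha^{*}}=\bigcap_{\alpha<\alpha^{*}}Y_{\alpha}\supseteq Z$, forcing $\alpha^{*}$ into the supremand set. Then I define $Y:=Z\cap Y_{\alpha^{*}+1}$: the $\frak{p}$-completeness of $G$ places $Y$ in $G$, the defining maximality of $\alpha^{*}$ ensures $Z\not\subseteq Y_{\alpha^{*}+1}$ and hence $Y\subsetneq Z$, and Lemma \ref{Closed} applied to $Y_{\alpha^{*}+1}\subseteq Y_{\alpha^{*}}\supseteq Z$ yields
\[
\dim(Z/Y)=\dim\bigl((Z+Y_{\alpha^{*}+1})/Y_{\alpha^{*}+1}\bigr)\leq\dim(Y_{\alpha^{*}}/Y_{\alpha^{*}+1})<\infty.
\]
This is the required witness, so $G$ is lower finite-gap. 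The main conceptual obstacle is the clean execution of this transfinite book-keeping---establishing attainment of $\alpha^{*}$ via $\frak{s}$-completeness and leveraging the $\frak{p}$-completeness of $G$ to keep the witness $Y=Z\cap Y_{\alpha^{*}+1}$ inside the family.
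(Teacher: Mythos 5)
Your proof is correct and takes essentially the same route as the paper's: the forward direction is the same appeal to Lemma \ref{L2.2}, and in the converse your $Y_{\alpha^{*}}$ and $Y_{\alpha^{*}+1}$ are precisely the paper's $\frak{p}(C_{1})$ (for $C_{1}=\{Y\in C:Z\subseteq Y\}$) and the element witnessing its finite gap in $C$, with the identical final witness $Z\cap Y_{\alpha^{*}+1}\in G$ obtained from Lemma \ref{Closed} and the $\frak{p}$-completeness of $G$. One cosmetic slip: the identity $Y_{\alpha^{*}}=\bigcap_{\alpha<\alpha^{*}}Y_{\alpha}$ at a limit ordinal is an instance of $\frak{p}$-completeness (the family of intersections), not $\frak{s}$-completeness.
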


\begin{proof}
$\Longrightarrow$ follows from Lemma \ref{L2.2}.

$\Longleftarrow$ Let $C$ satisfy the conditions of the theorem and let
$\frak{p}\left(  G\right)  \neq Z\in G$. We need to show that there is
$Y_{1}\subset Z$ such that $0<\dim(Z/Y_{1})<\infty.$ It is evident if $Z\in
C.$ Let $Z\notin C.$ The chain $C_{1}=\left\{  Y\in C\text{: }Z\subseteq
Y\right\}  $ is not empty because at least $\frak{s}\left(  G\right)  \in
C_{1}$. Clearly, $Z\subseteq\frak{p}\left(  C_{1}\right)  $. As $\frak{p}%
\left(  C_{1}\right)  \neq\frak{p}(G)=\frak{p}(C)$, there is a subspace $Y$ in
$C$ with $0<\dim(\frak{p}\left(  C_{1}\right)  /Y)<\infty$. Hence, by Lemma
\ref{Closed}, $Y_{1}=Y\cap Z$ has finite codimension in $Z$. Since $Y\notin
C_{1}$, this codimension is non-zero. As $G$ is $\frak{p}$-complete,
$Y_{1}=\frak{p}(\{Y,Z\})\in G$. Thus $G$ is a lower finite-gap family.
\end{proof}

We will show now that complete, lower finite-gap families of subspaces of $X$
induce complete, lower finite-gap families of subspaces on closed subspaces of $X.$

\begin{corollary}
\label{pi}Let $G$ be a $\frak{p}$-complete\emph{,} lower finite-gap family of
closed subspaces of $X.$ For any closed subspace $W$ of $X,$ $G\cap
W:=\left\{  Y\cap W:Y\in G\right\}  $ is a $\frak{p}$-complete$,$ lower
finite-gap family.
\end{corollary}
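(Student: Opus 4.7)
The plan is to verify both halves of the definition of ``$\frak{p}$-complete, lower finite-gap family'' for $G\cap W$ directly, leveraging Lemma \ref{Closed} (which controls codimensions under intersection with $W$).

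First I would dispose of $\frak{p}$-completeness, which reduces to two observations. For any non-empty subfamily $\{Y_\lambda\cap W\}_{\lambda\in\Lambda}\subseteq G\cap W$, intersection commutes with $\cdot\cap W$, i.e.\ $\bigcap_\lambda(Y_\lambda\cap W) = (\bigcap_\lambda Y_\lambda)\cap W$, so the $\frak{p}$-completeness of $G$ places this in $G\cap W$. For the $\frak{s}(G\cap W)$ clause, note that $\frak{s}(G)\in G$ (since $G$ is $\frak{p}$-complete), hence $\frak{s}(G)\cap W\in G\cap W$; one then checks the two-sided inclusion $\frak{s}(G\cap W)=\frak{s}(G)\cap W$, since $\frak{s}(G)\cap W$ is itself one of the summands defining $\frak{s}(G\cap W)$ and simultaneously bounds every other summand from above.

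The substantive step is the lower finite-gap property. Given $Z'\in G\cap W$ with $Z'\neq \frak{p}(G\cap W)=\frak{p}(G)\cap W$, I need to produce $Y'\in G\cap W$ strictly below $Z'$ of finite codimension. The main obstacle I expect is that the map $Y\mapsto Y\cap W$ can collapse many distinct members of $G$ to the same $Z'$, so that simply picking some $Y_0\in G$ with $Y_0\cap W=Z'$ and applying the finite-gap property of $G$ to $Y_0$ may produce a smaller $Y\in G$ whose intersection with $W$ is still $Z'$.

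To circumvent this I would select the canonical representative by setting $S=\{Y\in G\colon Y\cap W=Z'\}$ and $Y^{*}=\frak{p}(S)$; since $S\neq\varnothing$, the $\frak{p}$-completeness of $G$ gives $Y^{*}\in G$, and $Y^{*}\cap W=Z'$, so $Y^{*}$ is the minimum of $S$. Because $Z'\neq\frak{p}(G)\cap W$ we have $Y^{*}\neq\frak{p}(G)$, so the lower finite-gap property of $G$ supplies $Y\in G$ with $Y\subsetneq Y^{*}$ and $0<\dim(Y^{*}/Y)<\infty$. Minimality of $Y^{*}$ in $S$ forces $Y\cap W\subsetneq Z'$ (otherwise $Y\in S$ with $Y\subsetneq Y^{*}$, contradicting $Y^{*}=\frak{p}(S)$), while Lemma \ref{Closed} applied inside $Y^{*}$ gives $\dim(Z'/(Y\cap W))=\dim((Y^{*}\cap W)/(Y\cap W))\leq \dim(Y^{*}/Y)<\infty$. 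This completes the finite-gap verification; the remaining details are routine.
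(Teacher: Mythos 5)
Your proposal is correct and follows essentially the same route as the paper: the paper likewise forms the subfamily $G_{1}=\{T\in G:T\cap W=Z\cap W\}$, takes its infimum $\frak{p}(G_{1})\in G$ as the canonical representative, applies the lower finite-gap property of $G$ there, and finishes with Lemma \ref{Closed}, exactly as you do with your $S$ and $Y^{*}$. The only difference is that you spell out the $\frak{p}$-completeness of $G\cap W$ (including the identity $\frak{s}(G\cap W)=\frak{s}(G)\cap W$), which the paper leaves as an immediate remark.
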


\begin{proof}
We have $\frak{p}(G\cap W)=\frak{p}(G)\cap W$. If $Y\cap W=p(G)\cap W$ for all
$Y\in G$, then $G\cap W$ consists of one element and our corollary holds.

Let $Z\in G$ be such that $Z\cap W\neq\frak{p}(G)\cap W$. Set $G_{1}=\left\{
T\in G\text{: }T\cap W=Z\cap W\right\}  $. Then $\frak{p}(G_{1})\in G$ and
$\frak{p}\left(  G_{1}\right)  \cap W=Z\cap W\neq\frak{p}(G)\cap W$. Hence
$\frak{p}\left(  G\right)  \subsetneqq\frak{p}(G_{1})\in G_{1}.$ As $G$ is a
lower finite-gap family, there is $Y\in G$ such that $Y\subset\frak{p}\left(
G_{1}\right)  $ and $0<\dim(\frak{p}\left(  G_{1}\right)  /Y)<\infty$.
Replacing in Lemma \ref{Closed} $X$ by $\frak{p}\left(  G_{1}\right)  $ and
$Z$ by $\frak{p}\left(  G_{1}\right)  \cap W$, we get that $Y\cap\left(
\frak{p}\left(  G_{1}\right)  \cap W\right)  =Y\cap W$ has finite codimension
in $\frak{p}\left(  G_{1}\right)  \cap W=Z\cap W$. As $Y\in G\diagdown G_{1}$,
we have $Y\cap W\subsetneqq Z\cap W$. Thus $0<\dim\left(  Z\cap W\right)
/\left(  Y\cap W\right)  <\infty$. This means that $G\cap W$ is a lower
finite-gap family.

As the family $G$ is $\frak{p}$-complete, it follows that the family $G\cap W$
is also $\frak{p}$-complete.
\end{proof}

Note that if $G$ is a lower finite-gap family of closed subspaces,
$G^{\frak{p}}$ is not necessarily a lower finite-gap family. For example, if
$G$ is the family of all closed subspaces of finite codimension in $X$, then
$G^{\frak{p}}$ is the family of all closed subspaces of $X$.

\begin{proposition}
\label{ui}Let $G=\cup_{\lambda\in\Lambda}G_{\lambda}$ where each $G_{\lambda}$
is a $\frak{p}$-complete$,$ lower finite-gap family of closed subspaces of
$X$. If $X\in G_{\lambda},$ for all $\lambda\in\Lambda,$ then $G^{\frak{p}}$
is a lower finite-gap family.
\end{proposition}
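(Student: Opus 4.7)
The plan is to verify the defining property of a lower finite-gap family directly for $G^{\mathfrak{p}}$, and the main tool is Corollary \ref{pi} applied inside each individual family $G_\lambda$.

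First I would normalize what needs to be checked. Note that $\mathfrak{p}(G^{\mathfrak{p}})=\mathfrak{p}(G)$, since every element of $G^{\mathfrak{p}}$ is either $\mathfrak{s}(G)$ or an intersection of elements of $G$. Since $X\in G_\lambda\subseteq G$ for every $\lambda$, we have $\mathfrak{s}(G)=X=\mathfrak{p}(\{X\})\in\{\mathfrak{p}(G'):\varnothing\neq G'\subseteq G\}$, so every element of $G^{\mathfrak{p}}$ can be written as $\mathfrak{p}(G')$ for some nonempty $G'\subseteq G$. Fix such a $Z=\mathfrak{p}(G')\in G^{\mathfrak{p}}$ with $Z\neq\mathfrak{p}(G)$; the goal is to produce $Y\in G^{\mathfrak{p}}$ with $Y\subsetneq Z$ and $0<\dim(Z/Y)<\infty$.

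Next I would localize the problem to a single $G_\lambda$. Since $\mathfrak{p}(G)=\bigcap_\lambda\mathfrak{p}(G_\lambda)\subseteq Z$, we have
\[
\bigcap_{\lambda\in\Lambda}\bigl(\mathfrak{p}(G_\lambda)\cap Z\bigr)=\mathfrak{p}(G)\cap Z=\mathfrak{p}(G)\subsetneq Z,
\]
so there exists some $\lambda$ for which $\mathfrak{p}(G_\lambda)\cap Z\subsetneq Z$. By Corollary \ref{pi}, the family $G_\lambda\cap Z=\{W\cap Z:W\in G_\lambda\}$ is a $\mathfrak{p}$-complete, lower finite-gap family of closed subspaces of $Z$, with $\mathfrak{p}(G_\lambda\cap Z)=\mathfrak{p}(G_\lambda)\cap Z$. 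Because $X\in G_\lambda$, the subspace $Z=X\cap Z$ itself lies in $G_\lambda\cap Z$, and by the displayed inequality $Z\neq\mathfrak{p}(G_\lambda\cap Z)$. Applying the lower finite-gap property of $G_\lambda\cap Z$, there is $W\in G_\lambda$ with $Y:=W\cap Z\subsetneq Z$ and $0<\dim(Z/Y)<\infty$.

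Finally I would place $Y$ back inside $G^{\mathfrak{p}}$: writing $Y=W\cap Z=\mathfrak{p}(G'\cup\{W\})$ and noting $G'\cup\{W\}\subseteq G$, we see $Y\in G^{\mathfrak{p}}$, completing the verification. The only slightly delicate point, and the one I would be most careful about, is the indexing step: making sure that the strict inclusion $\mathfrak{p}(G)\subsetneq Z$ forces at least one individual index $\lambda$ to satisfy $\mathfrak{p}(G_\lambda)\cap Z\subsetneq Z$; this is what lets Corollary \ref{pi} be used nontrivially on that particular $G_\lambda$, and it is precisely where the hypothesis $X\in G_\lambda$ is needed so that $Z$ itself belongs to $G_\lambda\cap Z$ and is eligible to have a smaller, finite-codimension neighbor.
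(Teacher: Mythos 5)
Your proof is correct and follows essentially the same route as the paper's: reduce to a single element $Z=\mathfrak{p}(G')$, find one index $\lambda$ with $\mathfrak{p}(G_\lambda)\cap Z\subsetneqq Z$, and apply Corollary \ref{pi} to $G_\lambda\cap Z$ (using $X\in G_\lambda$ so that $Z$ itself lies in that family), then intersect the resulting gap back into $G^{\mathfrak{p}}$. Your localization step, via $\mathfrak{p}(G)=\bigcap_\lambda\mathfrak{p}(G_\lambda)$, is in fact slightly cleaner than the paper's detour through the sets $\Gamma_\lambda=G_\lambda\setminus(G'\cap G_\lambda)$, but it is the same argument in substance.
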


\begin{proof}
Let $Y\in G^{\frak{p}}$ and $\frak{p}\left(  G\right)  \subsetneqq Y$. Then
$Y=\frak{p}\left(  G^{\prime}\right)  $ for some $\varnothing\neq G^{\prime
}\subsetneqq G.$ Set $\Gamma_{\lambda}=G_{\lambda}\diagdown(G^{\prime}\cap
G_{\lambda})$ for each $\lambda\in\Lambda.$ Then $G\diagdown G^{\prime}%
=\cup_{\lambda}\Gamma_{\lambda}$ and%
\[
\frak{p}\left(  G\right)  =\frak{p}\left(  G\diagdown G^{\prime}\right)
\cap\frak{p}\left(  G^{\prime}\right)  =(\cap_{\lambda\in\Lambda}%
\frak{p}\left(  \Gamma_{\lambda}\right)  )\cap Y=\cap_{\lambda\in\Lambda
}(\frak{p}\left(  \Gamma_{\lambda}\right)  \cap Y).
\]
Hence $\frak{p}\left(  \Gamma_{\lambda}\right)  \cap Y\neq Y,$ for some
$\lambda.$ Thus
\[
\frak{p}\left(  G_{\lambda}\cap Y\right)  =\frak{p}(G_{\lambda})\cap
Y=\frak{p}(\Gamma_{\lambda})\cap\frak{p}(G^{\prime}\cap G_{\lambda})\cap
Y=\frak{p}(\Gamma_{\lambda})\cap Y\neq Y.
\]
By Corollary \ref{pi}, $G_{\lambda}\cap Y$ is a lower finite-gap family and
$Y=X\cap Y\in G_{\lambda}\cap Y.$ Hence there is $Z\in G_{\lambda}$ such that
$0<\dim\left(  Y/(Z\cap Y)\right)  <\infty$. As $Z\cap Y\in G^{\frak{p}},$
then $G^{\frak{p}}$ is a lower finite-gap family.
\end{proof}

We need now the following auxiliary result.

\begin{lemma}
\label{nest} Let $\{X_{\lambda}:\lambda\in\Lambda\}$ be a family of closed
subspaces of a separable Banach space $X$.

\begin{itemize}
\item [$\mathrm{(i)}$]If $\cap_{\lambda\in\Lambda}X_{\lambda}=\{0\}$ then
there is a sequence $\{\lambda_{n}\in\Lambda:n\in\mathbb{N}\}$ such that
$\cap_{n=1}^{\infty}X_{\lambda_{n}}=\{0\}$.

\item[$\mathrm{(ii)}$] If $\overline{\sum_{\lambda\in\Lambda}X_{\lambda}}=X$
then there is a sequence $\{\lambda_{n}\in\Lambda:n\in\mathbb{N}\}$ such that
$\overline{\sum_{n=1}^{\infty}X_{\lambda_{n}}}=X$.
\end{itemize}
\end{lemma}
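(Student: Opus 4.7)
The plan is to reduce both parts to the separability/Lindelöf property of $X$ and its subspaces; no deep structure theory will be needed.

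For part (i), I would exploit the fact that a separable metric space is second countable and therefore Lindelöf, and that this property is inherited by open subspaces. The hypothesis $\cap_{\lambda\in\Lambda} X_\lambda=\{0\}$ can be rewritten as
\[
\bigcup_{\lambda\in\Lambda}(X\setminus X_\lambda)=X\setminus\{0\}.
\]
Each $X\setminus X_\lambda$ is open in $X$ (hence open in $X\setminus\{0\}$), so $\{X\setminus X_\lambda\}_{\lambda\in\Lambda}$ is an open cover of the Lindelöf space $X\setminus\{0\}$. Extract a countable subcover $\{X\setminus X_{\lambda_n}\}_{n\in\mathbb{N}}$; then $\bigcup_{n}(X\setminus X_{\lambda_n})\supseteq X\setminus\{0\}$, which translates back to $\cap_{n=1}^\infty X_{\lambda_n}=\{0\}$.

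For part (ii), I would use a countable dense subset directly. Fix a countable dense set $\{x_n\}_{n\in\mathbb{N}}\subseteq X$. Since $\overline{\sum_{\lambda\in\Lambda}X_\lambda}=X$, for each pair $(n,k)$ there exist a finite set $F_{n,k}\subset\Lambda$ and an element $y_{n,k}\in\sum_{\lambda\in F_{n,k}}X_\lambda$ with $\|x_n-y_{n,k}\|<1/k$. Put $\Lambda'=\bigcup_{n,k}F_{n,k}$, a countable subset of $\Lambda$. Then $y_{n,k}\in\sum_{\lambda\in\Lambda'}X_\lambda$ for every $n,k$, so each $x_n$ lies in $\overline{\sum_{\lambda\in\Lambda'}X_\lambda}$. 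Because $\{x_n\}$ is dense in $X$, we conclude $\overline{\sum_{n=1}^\infty X_{\lambda_n}}=X$, after enumerating $\Lambda'=\{\lambda_n:n\in\mathbb{N}\}$.

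There is no real obstacle here; the only conceptual point is recognising that separability of $X$ suffices to give the Lindelöf property needed in (i) (one could alternatively argue via the weak-$*$ metrisability of the dual unit ball and the Hahn--Banach characterisation $\overline{\bigcup_\lambda X_\lambda^\perp}^{w^*}=X^*$, but the Lindelöf route is cleaner). Parts (i) and (ii) are genuinely independent: (ii) is a direct density/approximation argument, while (i) is essentially a covering argument, and neither invokes any Lie-algebraic structure.
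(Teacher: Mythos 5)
Your proof is correct. Part (ii) is essentially the argument in the paper: the paper takes a dense sequence $\{x_n\}$ in the union $E=\cup_\lambda X_\lambda$ and picks $\lambda_n$ with $x_n\in X_{\lambda_n}$, whereas you take a dense sequence in $X$ and approximate each point by finite sums; both are the same density/countability observation. Part (i), however, is handled quite differently in the paper: there one passes to the dual, setting $W_\lambda=\{f\in X^*:\|f\|\le 1,\ f|_{X_\lambda}=0\}$, uses that the unit ball of $X^*$ is weak$^*$-metrizable and separable because $X$ is separable, extracts a weak$^*$-dense sequence $\{f_n\}$ from $\cup_\lambda W_\lambda$, and recovers the $X_{\lambda_n}$ from the functionals via $\cap_n\ker(f_n)=\cap_\lambda X_\lambda$. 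Your Lindel\"of argument — covering $X\setminus\{0\}$ by the open sets $X\setminus X_\lambda$ and extracting a countable subcover — is exactly the ``alternative'' you allude to in reverse: it avoids duality and Hahn--Banach entirely, works for any family of closed sets with a common point in any hereditarily Lindel\"of space, and is the more elementary and more general route. The only point worth making explicit is that $\cap_n X_{\lambda_n}\subseteq\{0\}$ upgrades to equality because each $X_{\lambda_n}$, being a subspace, contains $0$; you use this implicitly and it is fine.
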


\begin{proof}
(i) For each $\lambda\in\Lambda$, set $W_{\lambda}=\{f\in X^{\ast}$: $\Vert
f\Vert\leq1,f(x)=0$ for all $x\in X_{\lambda}\}$. Then $W=\cup_{\lambda
\in\Lambda}W_{\lambda}$ is a subset of the unit ball $\mathbf{B}$ of $X^{\ast
}.$ As $X$ is separable, $\mathbf{B}$\textbf{ }is a separable metric space in
the weak* topology (see \cite[Section 4.1.7]{Sch}). Hence $W$ has a weak*
dense sequence $\{f_{n}$: $n\in\mathbb{N}\}$. It follows that $\cap_{n}%
\ker(f_{n})=\cap_{f\in W}\ker\left(  f\right)  =\cap_{\lambda}X_{\lambda
}=\{0\}.$ Choosing an index $\lambda_{n}$ such that $f_{n}\in W_{\lambda_{n}}$
for each $n$, we get $\cap_{n=1}^{\infty}X_{\lambda_{n}}\subseteq\cap_{n}%
\ker(f_{n})=\{0\}$.

(ii) Set $E=\cup_{\lambda\in\Lambda}X_{\lambda}$. Then $X$ is the closed
linear span of $E$. As $E$ is separable, it has a dense sequence $\{x_{n}$:
$n\in\mathbb{N}\}$. Choosing $\lambda_{n}$ such that $x_{n}\in X_{\lambda_{n}%
}$ for each $n$, we get $\overline{\sum_{n=1}^{\infty}X_{\lambda_{n}}}=X$.
\end{proof}

\begin{theorem}
\label{T2.5}Let $G$ be a family of closed subspaces of finite codimension in
$X$. Then there is a maximal$,$ lower finite-gap chain $C$ of subspaces in
$G^{\frak{p}}$ with $\frak{p}(C)=\frak{p}(G)$ and $\frak{s}(C)=\frak{s}(G).$

Suppose that the quotient space $\frak{s}\left(  G\right)  /\frak{p}\left(
G\right)  $ is separable and infinite-dimensional\emph{.} Then there are
subspaces $\left\{  V_{k}\right\}  _{k=1}^{\infty}$ in $G$ such that their
finite intersections $Y_{n}=\cap_{k=1}^{n}V_{k}$ together with $\frak{s}%
\left(  G\right)  $ form a decreasing complete\emph{, }lower finite-gap chain
between $\frak{s}\left(  G\right)  $ and $\frak{p}\left(  G\right)
=\cap_{n=1}^{\infty}Y_{n}.$
\end{theorem}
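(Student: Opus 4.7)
The plan has two parts. For the first assertion, I will combine Lemma~\ref{L2.4}(i) with Lemma~\ref{L2.2}(iv). Since $G$ consists of subspaces of finite codimension, $G^{\frak{p}}$ is a lower finite-gap family by Lemma~\ref{L2.4}(i), and is $\frak{p}$-complete by construction. One checks that $\frak{p}(G^{\frak{p}})=\frak{p}(G)$ and $\frak{s}(G^{\frak{p}})=\frak{s}(G)$ (the latter because every $\frak{p}(G')$ lies in some $V\in G'$, hence in $\frak{s}(G)$, while $\frak{s}(G)\in G^{\frak{p}}$). Then $C_0=\{\frak{s}(G)\}$ is a trivially $\frak{p}$-complete, lower finite-gap chain in $G^{\frak{p}}$ with $\frak{s}(C_0)=\frak{s}(G^{\frak{p}})$, and Lemma~\ref{L2.2}(iv) (applied to $G^{\frak{p}}$) supplies a maximal lower finite-gap chain $C\supseteq C_0$ in $G^{\frak{p}}$ satisfying $\frak{p}(C)=\frak{p}(G)$ and $\frak{s}(C)=\frak{s}(G)$.

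For the second assertion, every $V\in G$ satisfies $\frak{p}(G)\subseteq V\subseteq \frak{s}(G)$ and has finite codimension in $\frak{s}(G)$ (since $\frak{s}(G)/V$ embeds into $X/V$). Passing to the quotient $Z=\frak{s}(G)/\frak{p}(G)$, which by hypothesis is separable, the closed subspaces $\widetilde{V}:=V/\frak{p}(G)$ of $Z$ satisfy $\bigcap_{V\in G}\widetilde{V}=\{0\}$. Lemma~\ref{nest}(i) then yields a sequence $\{W_k\}_{k=1}^\infty\subseteq G$ with $\bigcap_{k=1}^\infty \widetilde{W_k}=\{0\}$, equivalently $\bigcap_{k=1}^\infty W_k=\frak{p}(G)$.

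Next, set $S_0=\frak{s}(G)$ and $S_k=\bigcap_{j=1}^k W_j$, and let $\{k_n\}_{n=1}^\infty$ enumerate in increasing order the indices $k\geq 1$ with $S_k\subsetneq S_{k-1}$; put $V_n=W_{k_n}$ and $Y_n=\bigcap_{i=1}^n V_i$. A short induction---using that $W_j\supseteq S_{j-1}$ whenever the index $j$ is skipped---shows $Y_n=S_{k_n}$, so the $Y_n$ strictly decrease, with $0<\dim(Y_n/Y_{n+1})<\infty$ by Lemma~\ref{Closed}. The sequence $\{k_n\}$ is infinite: otherwise $\{S_k\}$ stabilizes at some $S_N$ of finite codimension in $\frak{s}(G)$, forcing $\frak{p}(G)=\bigcap_k S_k=S_N$ and making $\frak{s}(G)/\frak{p}(G)$ finite-dimensional, contrary to hypothesis. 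Cofinality of $\{k_n\}$ in $\mathbb{N}$ then yields $\bigcap_n Y_n=\bigcap_k S_k=\frak{p}(G)$, and $\{\frak{s}(G),Y_1,Y_2,\ldots\}\cup\{\frak{p}(G)\}$ is a strictly decreasing (hence complete) lower finite-gap chain.

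The main technical hurdle is reconciling two competing requirements: strict descent $Y_n\supsetneq Y_{n+1}$ (otherwise the chain has no gaps) and the collapse $\bigcap_n Y_n=\frak{p}(G)$ (otherwise the chain does not reach the infimum). The subsequence trick succeeds precisely because skipping any $W_j$ with $W_j\supseteq S_{j-1}$ does not alter the tail intersection, so the sparsified chain $\{Y_n\}$ inherits the limit of the full chain $\{S_k\}$.
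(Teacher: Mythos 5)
Your proof is correct and follows essentially the same route as the paper: the first assertion via Lemma~\ref{L2.4}(i) combined with Lemma~\ref{L2.2}(iv) applied to the $\frak{p}$-complete family $G^{\frak{p}}$, and the second via passing to the quotient $\frak{s}(G)/\frak{p}(G)$, invoking Lemma~\ref{nest}(i), and extracting a subsequence whose finite intersections strictly decrease while preserving the tail intersection. The only difference is that you spell out the subsequence extraction (and why it is infinite) in more detail than the paper, which simply says ``taking a subsequence, if necessary.''
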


\begin{proof}
The existence\textbf{ }of the chain\textbf{ }$C$ follows from Lemmas
\ref{L2.4} and \ref{L2.2}.

Let $\frak{s}\left(  G\right)  /\frak{p}\left(  G\right)  $ be separable and
infinite-dimensional. First assume that $\frak{s}\left(  G\right)  =X$ and
$\frak{p}\left(  G\right)  =\{0\}$. By Lemma \ref{nest}, there is a sequence
$\left\{  W_{i}\right\}  $ in $G$ such that $\cap_{i=1}^{\infty}W_{i}=\{0\}$.
Taking a subsequence, if necessary, one can assume that $W_{i_{1}}\neq
W_{i_{1}}\cap W_{i_{2}}\neq\ldots\neq\cap_{k=1}^{n}W_{i_{k}}\neq\ldots$ and
$\cap_{k=1}^{\infty}W_{i_{k}}=\cap_{i=1}^{\infty}W_{i}$. Setting
$V_{k}=W_{i_{k}}$ for every $k$, we get the required sequence.

The general case is reduced to the above one if we take $\frak{s}\left(
G\right)  /\frak{p}\left(  G\right)  $ instead of $X$. By the above, we can
find a required sequence $\left\{  V_{i}^{\prime}\right\}  $ for the family
$G^{\prime}=\{V/\frak{p}\left(  G\right)  $: $V\in G\}$. Taking the preimages
$V_{i}$ of $V_{i}^{\prime}$ in $\frak{s}\left(  G\right)  $, we obtain the
required sequence.
\end{proof}

Let $G$ be a $\frak{p}$-complete family of closed subspaces of $X$ and
$\frak{s}(G)=X.$ By Lemma \ref{L2.2}, $G$ has a maximal, lower finite-gap
chain $C$ with $\frak{s}(C)=X.$ Let $G_{\text{f}}$ be the subset of $G$ that
consists of all $Y\in G$ such that there is a $\frak{p}$-complete, lower
finite-gap chain $C_{Y}$ with $\frak{s}(C_{Y})=X$ and $\frak{p}(C_{Y})=Y.$ Set%
\begin{equation}
\Delta_{G}=\frak{p}(G_{\text{f}}). \label{6.d}%
\end{equation}

\begin{theorem}
\label{T6.x}\emph{(i)\ }The subset $G_{\emph{f}}$ of $G$ is a lower finite-gap family\emph{.}

\emph{(ii) }$\Delta_{G}=\frak{p}(C)$ for each maximal\emph{,} lower finite-gap
chain $C$ in $G$ with $\frak{s}(C)=X.$
\end{theorem}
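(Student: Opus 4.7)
The plan is to prove (i) by exhibiting, for each $Z\in G_{\text{f}}$ with $Z\neq\Delta_{G}$, an element $W\in G_{\text{f}}$ with $W\subsetneq Z$ and $\dim(Z/W)<\infty$, and to deduce (ii) by running the same gluing construction across the bottom of a maximal chain and invoking maximality.

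For (i), since $Z\in G_{\text{f}}$ implies $Z\supseteq\Delta_{G}$, the hypothesis $Z\neq\Delta_{G}$ supplies some $Y'\in G_{\text{f}}$ with $Y'\cap Z\subsetneq Z$. Fix witness chains $C_{Z}$ for $Z$ and $C_{Y'}$ for $Y'$. To step just a finite-codimensional amount below $Z$ inside $G$, I shall work in the sub-chain $A=\{V\in C_{Y'}:Z\subseteq V\}$; because $A$ is nonempty (it contains $X$) and $Y'=\mathfrak{p}(C_{Y'})\notin A$, the intersection $V_{*}:=\mathfrak{p}(A)$ lies in $A$ (an intersection of supersets of $Z$ is again a superset of $Z$) and in $C_{Y'}$ by $\mathfrak{p}$-completeness of the latter, while $V_{*}\neq\mathfrak{p}(C_{Y'})$. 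The lower finite-gap property of $C_{Y'}$ then furnishes $V_{0}\in C_{Y'}$ with $V_{0}\subsetneq V_{*}$ and $\dim(V_{*}/V_{0})<\infty$, and Lemma \ref{Closed} yields $W:=V_{0}\cap Z\subsetneq V_{*}\cap Z=Z$ with $\dim(Z/W)<\infty$. Since $G$ is $\mathfrak{p}$-complete, $W\in G$, and a routine check shows $C_{W}:=C_{Z}\cup\{W\}$ is a $\mathfrak{p}$-complete lower finite-gap chain in $G$ with $\mathfrak{s}(C_{W})=X$ and $\mathfrak{p}(C_{W})=W$, so $W\in G_{\text{f}}$.

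For (ii), let $C$ be any maximal lower finite-gap chain in $G$ with $\mathfrak{s}(C)=X$; such chains exist by Lemma \ref{L2.2}(iv) applied to $C_{0}=\{X\}$. Any lower finite-gap chain in $[\mathfrak{p}(C),X]_{G}$ strictly extending $C$ would still satisfy $\mathfrak{s}=X$, contradicting maximality, so $C$ is maximal in that interval; Lemma \ref{L2.2}(iii) then forces $C$ to be complete, in particular $\mathfrak{p}$-complete. Hence $C$ itself witnesses $\mathfrak{p}(C)\in G_{\text{f}}$, giving $\Delta_{G}\subseteq\mathfrak{p}(C)$. For the reverse inclusion, suppose $\mathfrak{p}(C)\not\subseteq\Delta_{G}$, so some $Y\in G_{\text{f}}$ satisfies $Y\cap\mathfrak{p}(C)\subsetneq\mathfrak{p}(C)$; fix a witness $C_{Y}$. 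By Corollary \ref{pi}, $C_{Y}\cap\mathfrak{p}(C)$ is a $\mathfrak{p}$-complete lower finite-gap chain with $\mathfrak{s}=\mathfrak{p}(C)$ and $\mathfrak{p}=Y\cap\mathfrak{p}(C)\subsetneq\mathfrak{p}(C)$. Then $C':=C\cup(C_{Y}\cap\mathfrak{p}(C))$ is a chain in $G$ strictly containing $C$, with $\mathfrak{s}(C')=X$; the only nontrivial verification is that the gap at $\mathfrak{p}(C)$ is finite, and this is handled by repeating the $A$-$V_{*}$-$V_{0}$ argument of (i) with $C_{Y}$ and $\mathfrak{p}(C)$ in place of $C_{Y'}$ and $Z$. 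This contradicts the maximality of $C$, yielding $\mathfrak{p}(C)\subseteq\Delta_{G}$.

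The main obstacle, common to both parts, is producing the finite-codimensional descent at the junction where the two chains are glued; this is precisely what the triple $A\subseteq C_{Y'}$, $V_{*}=\mathfrak{p}(A)$, and its immediate predecessor $V_{0}$ accomplish, and it is the only point where the lower finite-gap hypothesis on the auxiliary chain is genuinely used. All remaining verifications reduce to $\mathfrak{p}$-completeness of $G$ and the completeness property of maximal $\mathfrak{p}$-complete lower finite-gap chains established in Lemma \ref{L2.2}.
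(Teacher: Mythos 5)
Your proof is correct and uses essentially the same machinery as the paper: restricting a witness chain to a smaller subspace via Lemma \ref{Closed}/Corollary \ref{pi}, gluing it onto the given chain, and contradicting maximality. The only organizational difference is that you prove (i) directly and independently, whereas the paper first shows that any two maximal lower finite-gap chains with $\frak{s}(C)=X$ share the same $\frak{p}(C)$ (which yields (ii)) and then deduces (i) from that; both routes are sound.
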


\begin{proof}
Let $C,C^{\prime}$ be maximal, lower finite-gap chains in $G$ with
$\frak{s}(C)=\frak{s}(C^{\prime})=X.$ Set $Y=\frak{p}(C)$ and $Z=\frak{p}%
(C^{\prime})$. Then $Y,Z\in G.$ By Corollary \ref{pi}, $C\cap Z=\{U\cap Z$:
$U\in C\}$ is a $\frak{p}$-complete, lower finite-gap chain in $Z.$ Suppose
that $Z\nsubseteq Y.$ Then $\frak{s}(C\cap Z)=Z,$ as $X\in C,$ and
$\frak{p}(C\cap Z)=Y\cap Z\neq Z.$ Hence $C^{\prime}\cup(C\cap Z)$ is a
$\frak{p}$-complete, lower finite-gap family larger than $C^{\prime}$ --- a
contradiction. Hence $Z\subseteq Y.$ Similarly, $Y\subseteq Z.$ Thus $Y=Z.$
This immediately implies (ii).

If $Y\in G_{\text{f}}$ and $Y\neq\Delta_{G}$ then the chain $C_{Y}$ is not
maximal. By Lemma \ref{L2.2}, there is a maximal\emph{,} lower finite-gap
chain $C$ in $G$ with $\frak{s}(C)=X$ that contains $C_{Y}.$ Hence there is a
subspace $Z\in C$ such that $Z\subsetneqq Y$ and $\dim Y/Z<\infty.$ As $Z\cup
C_{Y}$ is a $\frak{p}$-complete$,$ lower finite-gap family, $Z\in G_{\text{f}%
}.$ This proves (i).
\end{proof}

Note that $G$ may have many different maximal, lower finite-gap chains
starting at $X$. However, they all end at the same subspace $\Delta_{G}$.

Let $L$ be a Lie algebra of operators on a Banach space $X.$ The set Lat $L$
of all closed subspaces of $X$ invariant for all operators in $L$ is\emph{
}$\frak{p}$-complete.\emph{ }Let Lat$_{\text{cf}}L=\{Y\in$ Lat $L:$ $Y$ has
finite codimension in $X\}.$ Then Lemma \ref{L2.2} and Theorems \ref{T2.5} and
\ref{T6.x} yield

\begin{corollary}
\label{C6.5}\emph{(i) }There is a subspace $\Delta_{L}\in$ \emph{Lat} $L$ such
that $\frak{p}(C)=\Delta_{L}$ for each maximal\emph{,} lower finite-gap chain
$C$ of invariant subspaces of $L$ with $s(C)=X,$ and $\Delta_{L}$ has no
invariant subspaces of finite codimension. If \emph{Lat} $L$ is a lower
finite-gap family then $\Delta_{L}=\{0\}.$

\emph{(ii) } If $X$ is separable then there is a sequence $\{Y_{n}%
\}_{n=0}^{\infty}$ of subspaces in \emph{Lat}$_{\text{\emph{cf}}}L$ such that
$Y_{0}=X,$ $Y_{n+1}\subset Y_{n}$ and $\cap_{n}Y_{n}=\frak{p}($\emph{Lat}%
$_{\text{\emph{cf}}}L).$
\end{corollary}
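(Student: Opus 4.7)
The plan is to apply Theorems \ref{T6.x} and \ref{T2.5} to the specific families $\operatorname{Lat} L$ and $\operatorname{Lat}_{\mathrm{cf}} L$ of closed $L$-invariant subspaces of $X$. Note that $\operatorname{Lat} L$ is $\frak{p}$-complete, since the intersection of any family of closed $L$-invariant subspaces is again closed and $L$-invariant; Lemmas \ref{L2.4}(i) and \ref{L2.2}(iv) applied with $C_{0}=\{X\}$ then guarantee that $\operatorname{Lat} L$ admits maximal, lower finite-gap chains $C$ with $\frak{s}(C)=X$.

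For part (i), I define $\Delta_{L}:=\Delta_{\operatorname{Lat} L}$ as in (\ref{6.d}); Theorem \ref{T6.x}(ii) gives $\Delta_{L}=\frak{p}(C)$ for every such $C$. The crux is showing $\Delta_{L}$ has no $L$-invariant subspace of finite codimension. Arguing by contradiction, if $Z\in\operatorname{Lat} L$ satisfied $Z\subsetneq\Delta_{L}$ with $\dim(\Delta_{L}/Z)<\infty$, fix a maximal, lower finite-gap chain $C$ in $\operatorname{Lat} L$ with $\frak{s}(C)=X$ (complete by Lemma \ref{L2.2}(iii)) and form $C^{\prime}:=C\cup\{Z\}$. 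I then verify that $C^{\prime}$ is still a chain in $\operatorname{Lat} L$ (immediate from $Z\subsetneq\frak{p}(C)$), remains $\frak{p}$-complete (intersections involving $Z$ collapse to $Z$, other intersections stay in $C$), and is still a lower finite-gap chain (each $Y\in C$ with $Y\neq\Delta_{L}$ retains its finite-codimension predecessor from $C$, while $\Delta_{L}$ acquires $Z$ as its new predecessor with $0<\dim(\Delta_{L}/Z)<\infty$). This contradicts maximality of $C$. Finally, when $\operatorname{Lat} L$ itself is a lower finite-gap family, the trivial inclusion $\{0\}\in\operatorname{Lat} L$ gives $\frak{p}(\operatorname{Lat} L)=\{0\}$, so Lemma \ref{L2.2}(iv) forces $\Delta_{L}=\{0\}$.

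For part (ii), set $G=\operatorname{Lat}_{\mathrm{cf}} L$. Since $X$ has codimension zero in itself, $X\in G$ and $\frak{s}(G)=X$; separability of $X$ descends to $X/\frak{p}(G)$. If $X/\frak{p}(G)$ is infinite-dimensional, Theorem \ref{T2.5} supplies $V_{k}\in G$ whose finite intersections $Y_{n}:=\bigcap_{k=1}^{n}V_{k}$, augmented with the empty-intersection convention $Y_{0}:=X$, strictly decrease to $\frak{p}(G)$. If $X/\frak{p}(G)$ is finite-dimensional, I pick any strictly decreasing finite chain $X=Y_{0}\supsetneq\cdots\supsetneq Y_{N}=\frak{p}(G)$ of $L$-invariant subspaces --- each automatically of finite codimension in $X$ since it contains the finite-codimensional $\frak{p}(G)$ --- and extend by $Y_{n}:=\frak{p}(G)$ for $n>N$.

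The main obstacle will be the bookkeeping in the contradiction argument of (i): I must confirm that enlarging $C$ by the single subspace $Z$ preserves both $\frak{p}$-completeness and the lower finite-gap property without creating a new element lacking a proper finite-codimension predecessor in $C^{\prime}$. Once this is settled, the remaining claims reduce to direct applications of the established machinery.
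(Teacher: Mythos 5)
Your proposal is correct and follows essentially the route the paper intends: the paper gives no explicit proof, simply asserting that Corollary \ref{C6.5} follows from Lemma \ref{L2.2} and Theorems \ref{T2.5} and \ref{T6.x}, and you apply exactly those results (with $\Delta_{L}=\Delta_{\operatorname{Lat}L}$ as in (\ref{6.d}) and Theorem \ref{T2.5} for the separable case). The extra contradiction argument you spell out for ``no invariant subspaces of finite codimension'' is the same extension-of-a-maximal-chain device used inside the proof of Theorem \ref{T6.x}(i), so nothing is genuinely new or missing.
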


\begin{definition}
\label{modd}Let $G$ and $G^{\prime}$ be families of closed subspaces of $X$.
Then $G$ is called a \emph{lower finite-gap family modulo} $G^{\prime}$ if$,$
for each $Z$ in\emph{ }$G,$\emph{ }$Z\neq\frak{p}(G\cup G^{\prime}),$\emph{
}there is $Y\in G\cup G^{\prime}$ such that $Y\subset Z$ and\emph{ }%
$0<\dim(Z/Y)<\infty.$
\end{definition}

Combining this definition and Definition \ref{D2.1}, we obtain

\begin{lemma}
\label{mod}Let $G$ and $G^{\prime}$ be families of closed subspaces of $X$. If
$G$ is a lower finite-gap family modulo $G^{\prime}$ and $G^{\prime}$ is a
lower finite-gap family$,$ then $G\cup G^{\prime}$ is a lower finite-gap family.
\end{lemma}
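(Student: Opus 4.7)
My plan is a case analysis on where the candidate $Z \in G \cup G'$ with $Z \neq \mathfrak{p}(G \cup G')$ comes from; the goal in each case is to produce some $Y \in G \cup G'$ with $Y \subset Z$ and $0 < \dim(Z/Y) < \infty$. If $Z \in G$, then Definition \ref{modd} applied directly to $Z$ supplies such a $Y$ in $G \cup G'$. If instead $Z \in G' \setminus G$ and $Z \neq \mathfrak{p}(G')$, then the lower finite-gap property of $G'$ (Definition \ref{D2.1}) produces a $Y \in G' \subseteq G \cup G'$ of finite positive codimension in $Z$.

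These two branches cover everything except the borderline situation where $Z \in G' \setminus G$ equals $\mathfrak{p}(G')$ while still differing from $\mathfrak{p}(G \cup G')$. In that subcase the identity $\mathfrak{p}(G \cup G') = \mathfrak{p}(G) \cap \mathfrak{p}(G') = \mathfrak{p}(G) \cap Z$ is properly contained in $Z$, so some $W \in G$ satisfies $W \cap Z \subset Z$ as a proper inclusion. I would then apply Definition \ref{modd} to $W$ (with $W \neq \mathfrak{p}(G \cup G')$) to extract a finite-codimension subspace of $W$ inside $G \cup G'$, and use Lemma \ref{Closed} to transfer the finite-codimension statement from inside $W$ to inside $Z$ via intersection with $Z$.

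The main obstacle is precisely this borderline subcase: the two hypotheses as stated give finite-codimension witnesses only inside elements of $G$ or inside non-minimal elements of $G'$, so handling a minimal element of $G'$ that is not yet minimal in $G \cup G'$ requires actually combining both hypotheses rather than invoking them independently. The first two branches are essentially direct readings of the definitions, and it is this third branch that carries the content of the lemma and justifies describing it as a genuine combination of Definitions \ref{modd} and \ref{D2.1}.
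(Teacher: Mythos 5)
Your Cases 1 and 2 are precisely the two observations behind the paper's one-line justification (the lemma is stated with no proof beyond ``combining Definition \ref{modd} and Definition \ref{D2.1}''), and they are correct. The trouble is Case 3, and the argument you sketch there does not close. First, the element $W\in G$ with $W\cap Z\subsetneqq Z$ may itself equal $\mathfrak{p}(G\cup G^{\prime})$, so Definition \ref{modd} need not be applicable to $W$ at all. Second, even when it is applicable, it only produces some $Y^{\prime}\in G\cup G^{\prime}$ with $0<\dim(W/Y^{\prime})<\infty$, which says nothing about codimensions inside $Z$: Lemma \ref{Closed} passes from finite codimension in an ambient space down to finite codimension in a subspace of \emph{that} space, but neither $W$ nor $Y^{\prime}$ has finite codimension in anything containing $Z$, so intersecting with $Z$ only gives that $Y^{\prime}\cap Z$ has finite codimension in $W\cap Z$, and $W\cap Z$ can have infinite codimension in $Z$. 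Third, even with a favourable codimension count, $Y^{\prime}\cap Z$ (or $W\cap Z$) need not belong to $G\cup G^{\prime}$, which is not assumed $\mathfrak{p}$-complete, so you would still lack a witness in the family.

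In fact your Case 3 cannot be repaired, because the statement fails there. Take $G=\{\{0\}\}$ and $G^{\prime}=\{Z\}$ with $Z$ a closed infinite-dimensional subspace of $X$. Then $\mathfrak{p}(G\cup G^{\prime})=\{0\}$; $G$ is a lower finite-gap family modulo $G^{\prime}$ vacuously (its unique element equals $\mathfrak{p}(G\cup G^{\prime})$); $G^{\prime}$ is a lower finite-gap family vacuously (its unique element equals $\mathfrak{p}(G^{\prime})$). Yet $Z\in G\cup G^{\prime}$, $Z\neq\mathfrak{p}(G\cup G^{\prime})$, and the only proper subspace of $Z$ available in $G\cup G^{\prime}$ is $\{0\}$, of infinite codimension in $Z$; so $G\cup G^{\prime}$ is not a lower finite-gap family. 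The lemma holds only under an extra hypothesis excluding your Case 3, e.g. $\mathfrak{p}(G^{\prime})=\mathfrak{p}(G\cup G^{\prime})$ or $\mathfrak{p}(G^{\prime})\notin G^{\prime}$ (in the paper's applications $\{0\}\in G^{\prime}$, so the former holds). Your instinct that the first two branches do not exhaust the claim was right; the correct conclusion about the third branch is that it is a counterexample to the statement as written, not that it needs a cleverer argument.
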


\subsection{Preradicals corresponding to finite-dimensional Lie subalgebra-multifunctions}

For a Banach Lie algebra $\mathcal{L},$ the sequences $\{\mathcal{L}%
^{[n+1]}\}$ and $\{\mathcal{L}_{[n+1]}\}$ of closed characteristic Lie ideals%
\begin{equation}
\mathcal{L}^{[1]}=\mathcal{L}_{[0]}=\mathcal{L},\text{ }\mathcal{L}%
^{[n+1]}=\overline{[\mathcal{L},\mathcal{L}^{[n]}]}\text{ and }\mathcal{L}%
_{[n+1]}=\overline{[\mathcal{L}_{[n]},\mathcal{L}_{[n]}]},\text{ for }%
n\in\mathbb{N,} \label{5.3}%
\end{equation}
decrease$;$ $\mathcal{L}$ is \textit{nilpotent}, if $\mathcal{L}^{[n]}=\{0\}$
for some $n,$ and \textit{solvable} if $\mathcal{L}_{[n]}=\{0\}$ for some $n.$

Denote by $\frak{L}^{\mathrm{f}}$ the class of all finite-dimensional Lie
algebras and by $\mathbf{L}^{\mathrm{f}}$ the subcategory of $\mathbf{L}$ of
all such algebras. As in (\ref{4.1'})---(\ref{4.3'}) and Definition
\ref{D3.1}, we define lower stable, upper stable and balanced preradicals$,$
under radicals, over radicals and radicals on $\mathbf{L}^{\text{f}}.$

For $\mathcal{L}\in\frak{L}^{\mathrm{f}},$ denote by $\mathrm{rad}\left(
\mathcal{L}\right)  $ its maximal solvable Lie ideal$.$ The map rad:
$\mathcal{L}\in\frak{L}^{\mathrm{f}}\mapsto\mathrm{rad}\left(  \mathcal{L}%
\right)  $ is a radical in $\mathbf{L}^{\mathrm{f}}.$ A Lie algebra
$\mathcal{L}$ is called \textit{semisimple} if it is rad-semisimple$;$
$\mathcal{L}$ is semisimple if and only if it is a direct sum of simple
algebras. We preserve this terminology when dealing with finite-dimensional
subalgebras of Banach Lie algebras.

Each $\mathcal{L}\in\frak{L}^{\mathrm{f}}$ is the semidirect product
(Levi-Maltsev decomposition) of a semisimple Lie subalgebra $N_{\mathcal{L}}$
(uniquely defined up to an inner automorphism) and the largest solvable Lie
ideal rad($\mathcal{L)}$%
\begin{equation}
\mathcal{L}=N_{\mathcal{L}}\oplus^{\text{ad%
$\vert$%
}_{\text{rad}(\mathcal{L)}}}\operatorname*{rad}\left(  \mathcal{L}\right)  .
\label{5.2}%
\end{equation}

Recall that if $\Gamma$ is a Lie subalgebra-multifunction or
ideal-multifunction on\textbf{ }$\frak{L}$ then, for each $\mathcal{L}%
\in\frak{L},$\textbf{ }$\Gamma_{\mathcal{L}}$\textbf{ }is a family of closed
Lie subalgebras (ideals) of $\mathcal{L.}$ In the rest of this subsection we
will consider the following\textbf{ }four Lie
subalgebra-multifunctions\textbf{ }$\Gamma$\textbf{ }on\textbf{ }$\frak{L}$:

\begin{itemize}
\item [$\mathrm{1)}$]\textrm{A}$^{\text{\textrm{sem}}}:$ each family
\textrm{A}$_{\mathcal{L}}^{\text{\textrm{sem}}}$ consists of all
finite-dimensional semisimple Lie subalgebras of $\mathcal{L.}$

\item[$\mathrm{2)}$] \textrm{I}$^{\text{\textrm{sem}}}:$ each family
\textrm{I}$_{\mathcal{L}}^{\text{\textrm{sem}}}$ consists of all
finite-dimensional semisimple Lie ideals of $\mathcal{L}$.

\item[$\mathrm{3)}$] \textrm{I}$^{\text{\textrm{sol}}}:$ each family
\textrm{I}$_{\mathcal{L}}^{\text{\textrm{sol}}}$ consists of all
finite-dimensional solvable Lie ideals of $\mathcal{L}$.

\item[$\mathrm{4)}$] \textrm{I}$^{\text{\textrm{fin}}}:$ each family
\textrm{I}$_{\mathcal{L}}^{\text{\textrm{fin}}}$\textrm{ }consists of all
finite-dimensional Lie ideals of $\mathcal{L}$.\smallskip
\end{itemize}

We\textbf{ }will study the corresponding radicals and describe their
restrictions to\textbf{ }$\mathbf{L}^{\text{f}}.$

\begin{proposition}
\label{P6.1}\emph{(i) }The\emph{ }Lie subalgebra-multifunctions $\Gamma:$
\emph{A}$^{\text{\emph{sem}}},$\emph{ I}$^{\text{\emph{sem}}},$\emph{
I}$^{\text{\emph{sol}}},$ \emph{I}$^{\text{\emph{fin}}}$ on $\mathfrak{L}$ are
strictly direct and lower stable $($see Definition $\ref{D4.7}),$ so that the
corresponding maps $S_{\Gamma}$ are lower stable preradicals$.$

\emph{(ii) }The multifunctions $\Gamma:$ \emph{A}$^{\text{\emph{sem}}}$\emph{
}and \emph{I}$^{\text{\emph{sem}}}$ are balanced $($see Definition
$\ref{D4.7}),$ so that the corresponding maps $S_{\Gamma}$ are under radicals
and $S_{\emph{I}^{\text{\emph{sem}}}}\leq S_{\text{\emph{A}}^{\text{\emph{sem}%
}}}.$
\end{proposition}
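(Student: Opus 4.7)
The plan is to verify, for each of the four multifunctions, the three structural properties introduced in Definition \ref{D4.7} — strict directness, lower stability, and (where applicable) balancedness — and then read off both parts of the proposition from Proposition \ref{L2} together with Theorem \ref{Cmain}. Strict directness and lower stability will hold for all four families and give part (i); balancedness will hold only for the two semisimple families, which is exactly what part (ii) requires.

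For strict directness, fix a morphism $f\colon \mathcal{L}\to\mathcal{M}$ in $\overline{\mathbf{L}}$ and $L\in\Gamma_{\mathcal{L}}$. Since $L$ is finite-dimensional, so is $f(L)$, hence $\overline{f(L)}=f(L)$, and the homomorphism $f|_L$ exhibits $f(L)$ as a Lie algebra quotient of $L$; this preserves finite-dimensionality, semisimplicity, and solvability. When $L$ is additionally an ideal of $\mathcal{L}$, density of $f(\mathcal{L})$ in $\mathcal{M}$ together with continuity of the bracket gives
\[
[\mathcal{M},f(L)]\subseteq\overline{[f(\mathcal{L}),f(L)]}=\overline{f([\mathcal{L},L])}\subseteq f(L),
\]
so $f(L)\vartriangleleft\mathcal{M}$. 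Thus $\overline{f(L)}\in\Gamma_{\mathcal{M}}$ in all four cases.

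For lower stability, writing $I_{\mathcal{L}}=S_\Gamma(\mathcal{L})=\overline{\sum_{L\in\Gamma_{\mathcal{L}}}L}$, any $L\in\Gamma_{\mathcal{L}}$ is contained in $I_{\mathcal{L}}$, is already closed and finite-dimensional there, and retains its semisimplicity or solvability. In the ideal cases, $[L,I_{\mathcal{L}}]\subseteq[L,\mathcal{L}]\subseteq L$ shows $L\vartriangleleft I_{\mathcal{L}}$. Hence $L\in\Gamma_{I_{\mathcal{L}}}$ and we may take $Z=L$ as witness for $\Gamma_{\mathcal{L}}\overrightarrow{\subset}\Gamma_{I_{\mathcal{L}}}$. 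Invoking Proposition \ref{L2}(i) and Theorem \ref{Cmain}(i) then yields part (i).

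For part (ii) I must check balancedness, $\Gamma_J\overrightarrow{\subset}\Gamma_{\mathcal{L}}$ for $J\vartriangleleft\mathcal{L}$, for the two semisimple families. For $\mathrm{A}^{\mathrm{sem}}$ this is immediate: any finite-dimensional semisimple Lie subalgebra $L$ of $J$ is, viewed inside $\mathcal{L}$, still a finite-dimensional semisimple Lie subalgebra, so $\Gamma_J\subseteq\Gamma_{\mathcal{L}}$. The one genuinely non-formal step, which I expect to be the main obstacle, concerns $\mathrm{I}^{\mathrm{sem}}$: a semisimple ideal $L$ of $J$ is only a $2$-subideal of $\mathcal{L}$, and in general a subideal need not be an ideal (as noted in the text just before Definition \ref{subideal}). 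Here Lemma \ref{sem-sub} is tailor-made: because $L$ is finite-dimensional and semisimple, that lemma upgrades $L\vartriangleleft\!\!\!\vartriangleleft\mathcal{L}$ to $L\vartriangleleft\mathcal{L}$, so $L\in\mathrm{I}^{\mathrm{sem}}_{\mathcal{L}}$. With balancedness established, Theorem \ref{Cmain}(ii) combined with (i) makes $S_{\mathrm{A}^{\mathrm{sem}}}$ and $S_{\mathrm{I}^{\mathrm{sem}}}$ lower stable and balanced, i.e.\ under radicals. Finally, every finite-dimensional semisimple Lie ideal is in particular a finite-dimensional semisimple Lie subalgebra, so $\mathrm{I}^{\mathrm{sem}}_{\mathcal{L}}\subseteq\mathrm{A}^{\mathrm{sem}}_{\mathcal{L}}$ for every $\mathcal{L}$, whence $\mathfrak{s}(\mathrm{I}^{\mathrm{sem}}_{\mathcal{L}})\subseteq\mathfrak{s}(\mathrm{A}^{\mathrm{sem}}_{\mathcal{L}})$, giving the stated inequality $S_{\mathrm{I}^{\mathrm{sem}}}\leq S_{\mathrm{A}^{\mathrm{sem}}}$.
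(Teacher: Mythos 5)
Your proof is correct and follows essentially the same route as the paper: verify strict directness and lower stability via Proposition \ref{L2} and Theorem \ref{Cmain}, and for balancedness of $\mathrm{I}^{\mathrm{sem}}$ handle the subideal issue using the fact that a finite-dimensional semisimple Lie ideal of an ideal is an ideal of the whole algebra. The only cosmetic difference is that you invoke Lemma \ref{sem-sub} for this last point, whereas the paper applies Lemma \ref{L3.1}(iii) and (i) directly — but that is exactly how Lemma \ref{sem-sub} is proved, so the arguments coincide.
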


\begin{proof}
(i) Let $f:\mathcal{L}\longrightarrow\mathcal{M}$ be a morphism in
$\overline{\mathbf{L}}$ and let $L$ be a finite-dimensional Lie subalgebra of
$\mathcal{L}$. Then $\overline{f(L)}=f(L)$ is a finite-dimensional Lie
subalgebra of $\mathcal{M}$. If $L$ is a Lie ideal of $\mathcal{L}$, then
$f(L)$ is a Lie ideal of $f\left(  \mathcal{L}\right)  $. As $f\left(
\mathcal{L}\right)  $ is dense in $\mathcal{M}$, $f(L)$ is a Lie ideal of
$\mathcal{M}$.

If $L$ is solvable, $f(L)$ is solvable. If $L$ is simple, $f\left(  L\right)
$ is either $\{0\}$ or simple. If $L$ is semisimple, it is a finite direct sum
of simple Lie algebras. Hence $f\left(  L\right)  $ is either $\{0\}$ or a
semisimple Lie subalgebra of $\mathcal{M}$. This shows that all multifunctions
are strictly direct. By Proposition \ref{L2}, all $S_{\Gamma}$ are preradicals.

Set $I_{\mathcal{L}}=S_{\Gamma}\left(  \mathcal{L}\right)  =\mathfrak
{s}(\Gamma_{\mathcal{L}})$. If $L\in\Gamma_{\mathcal{L}},$ it follows from
(\ref{F0'}) that $L\subseteq I_{\mathcal{L}}.$ Hence $L\in\Gamma
_{I_{\mathcal{L}}}$. Thus $\Gamma_{\mathcal{L}}\subseteq\Gamma_{I_{\mathcal{L}%
}},$ so that all multifunctions are lower stable (see Definition \ref{D4.7}).
By Theorem \ref{Cmain}(i), all preradicals $S_{\Gamma}$ are lower stable.

(ii) Let $I\vartriangleleft\mathcal{L}$. If $\Gamma=$ A$^{\text{sem}}$ then
$\Gamma_{I}\subseteq\Gamma_{\mathcal{L}}$. Let $\Gamma=$ I$^{\text{sem}}$ and
$J$ be a semisimple Lie ideal of $I$. Then $J=\left[  J,J\right]  $. By Lemma
\ref{L3.1}(iii), $J$ is a characteristic Lie ideal of $I$ and, by Lemma
\ref{L3.1}(i), $J$ is a semisimple Lie ideal of $\mathcal{L}$. Hence
$\Gamma_{I}\subseteq\Gamma_{\mathcal{L}}$. Thus the multifunctions
A$^{\text{sem}}$ and I$^{\text{sem}}$ are balanced. Hence, by Theorem
\ref{Cmain}(ii), all $S_{\Gamma}$ are balanced. Thus they are under radicals.
Clearly, $S_{\text{I}^{\text{sem}}}\leq S_{\text{A}^{\text{sem}}}$.
\end{proof}

Combining this with Theorems \ref{under-over} and \ref{T2} yields

\begin{corollary}
\label{C6.2}\emph{(i) }If $\Gamma=$ \emph{A}$^{\text{\emph{sem}}}$\emph{ }or
$\Gamma=$\emph{ I}$^{\text{\emph{sem}}}$ then the maps $S_{\Gamma}^{\ast}$ are radicals.

\begin{itemize}
\item [$\mathrm{(ii)}$]\emph{ }If $\Gamma=$ \emph{I}$^{\text{\emph{fin}}}%
$\emph{ }or $\Gamma=$ \emph{I}$^{\text{\emph{sol}}}$ then the maps $S_{\Gamma
}^{s}$ are under radicals and $(S_{\Gamma}^{s})^{\ast}$ are radicals.
\end{itemize}
\end{corollary}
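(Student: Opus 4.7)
The plan is to read off the corollary mechanically from Proposition~\ref{P6.1} together with Theorems~\ref{T2} and~\ref{under-over}. I would organize the argument in two short steps, one for each part.

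For part (i), Proposition~\ref{P6.1}(ii) shows that for $\Gamma = \mathrm{A}^{\mathrm{sem}}$ or $\Gamma = \mathrm{I}^{\mathrm{sem}}$ the preradical $S_{\Gamma}$ is both lower stable and balanced, i.e.\ an under radical (Definition~\ref{D3.1}). Theorem~\ref{under-over}(i) then asserts that the upper-star $R^{\ast}$ of any under radical $R$ is a radical (in fact the smallest radical $\geq R$). Applying this with $R = S_{\Gamma}$ yields (i) immediately.

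For part (ii), Proposition~\ref{P6.1}(i) tells us that $S_{\mathrm{I}^{\mathrm{fin}}}$ and $S_{\mathrm{I}^{\mathrm{sol}}}$ are lower stable preradicals, but a finite-dimensional (solvable) Lie ideal of a closed ideal $J \vartriangleleft \mathcal{L}$ need not remain an ideal of $\mathcal{L}$, so these preradicals are not balanced and therefore not under radicals directly. The subideal construction $R \mapsto R^{\mathbf{s}}$ of Section~\ref{section4} is precisely what repairs this: Theorem~\ref{T2}(i) guarantees that $R^{\mathbf{s}}$ is always a balanced, lower stable preradical, i.e.\ an under radical. Hence $S_{\mathrm{I}^{\mathrm{fin}}}^{\mathbf{s}}$ and $S_{\mathrm{I}^{\mathrm{sol}}}^{\mathbf{s}}$ are under radicals. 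A second application of Theorem~\ref{under-over}(i) then promotes each of them to a radical via the upper-star operation, so $(S_{\mathrm{I}^{\mathrm{fin}}}^{\mathbf{s}})^{\ast}$ and $(S_{\mathrm{I}^{\mathrm{sol}}}^{\mathbf{s}})^{\ast}$ are radicals.

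The argument is entirely formal and there is no genuine obstacle; the only conceptual point is recognizing which improvement construction to apply first. The failure of balancedness for $\mathrm{I}^{\mathrm{fin}}$ and $\mathrm{I}^{\mathrm{sol}}$ forces us to apply $(\cdot)^{\mathbf{s}}$ before $(\cdot)^{\ast}$, whereas for $\mathrm{A}^{\mathrm{sem}}$ and $\mathrm{I}^{\mathrm{sem}}$ the plain upper-star already suffices. Everything else is simply invoking the machinery assembled in Sections~\ref{section4} and~\ref{section5}, with no additional calculation required.
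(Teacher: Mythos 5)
Your proof is correct and follows exactly the paper's route: Proposition~\ref{P6.1} supplies the needed properties of $S_{\Gamma}$, Theorem~\ref{T2}(i) gives that $S_{\Gamma}^{\mathbf{s}}$ is an under radical, and Theorem~\ref{under-over}(i) promotes under radicals to radicals via $(\cdot)^{\ast}$. The only cosmetic remark is that your assertion that $S_{\mathrm{I}^{\mathrm{fin}}}$ and $S_{\mathrm{I}^{\mathrm{sol}}}$ \emph{are not} balanced is stronger than what the paper establishes (it merely does not prove balancedness for these two), but this plays no role in the argument since Theorem~\ref{T2}(i) applies to arbitrary preradicals.
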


Theorem \ref{dir-quot}(i) and Corollary \ref{C6.2}(i) yield

\begin{corollary}
\label{C6.3}A Banach Lie algebra $\mathcal{L}$ is $S_{\emph{A}%
^{\text{\emph{sem}}}}^{\ast}$-radical $($respectively$,$ $S_{\emph{I}%
^{\text{\emph{sem}}}}^{\ast}$-radical$)$ if and only if$,$ for each closed
proper Lie ideal $I$ of $\mathcal{L}$, the quotient $\mathcal{L}/I$ contains a
finite-dimensional semisimple Lie subalgebra $($respectively$,$ ideal$)$.
\end{corollary}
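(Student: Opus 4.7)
The plan is to invoke Theorem \ref{dir-quot}(i) directly, with the two Lie subalgebra-multifunctions $\Gamma=\mathrm{A}^{\mathrm{sem}}$ and $\Gamma=\mathrm{I}^{\mathrm{sem}}$. By Proposition \ref{P6.1}(i) both of these multifunctions are strictly direct, hence in particular direct, so the hypothesis of Theorem \ref{dir-quot}(i) is satisfied. The equivalence \textrm{1)} $\Longleftrightarrow$ \textrm{3)} there states that $\mathcal{L}$ is $S_{\Gamma}^{\ast}$-radical if and only if, for every closed proper Lie ideal $I$ of $\mathcal{L}$, the family $\Gamma_{\mathcal{L}/I}$ is non-empty and $\Gamma_{\mathcal{L}/I}\neq\{\{0\}\}$. (That $S_{\Gamma}^{\ast}$ is well-defined as a preradical, and in fact a radical, is guaranteed by Corollary \ref{C6.2}(i), but only the direct/indirect classification from Proposition \ref{P6.1}(i) is needed to apply Theorem \ref{dir-quot}(i).)

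The second and final step is to translate the condition ``$\Gamma_{\mathcal{L}/I}$ is non-empty and $\neq\{\{0\}\}$'' into the language of the corollary. In both cases $\Gamma=\mathrm{A}^{\mathrm{sem}}$ and $\Gamma=\mathrm{I}^{\mathrm{sem}}$, the zero subspace is a finite-dimensional semisimple Lie subalgebra (and indeed ideal) of any Banach Lie algebra, so $\{0\}\in\Gamma_{\mathcal{L}/I}$ always; the non-emptiness clause is therefore automatic. Consequently, the remaining clause $\Gamma_{\mathcal{L}/I}\neq\{\{0\}\}$ is equivalent to the existence in $\mathcal{L}/I$ of a non-zero finite-dimensional semisimple Lie subalgebra (for $\Gamma=\mathrm{A}^{\mathrm{sem}}$) or a non-zero finite-dimensional semisimple Lie ideal (for $\Gamma=\mathrm{I}^{\mathrm{sem}}$), which is precisely the condition stated in the corollary.

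There is no substantive obstacle; the corollary is essentially a specialization of Theorem \ref{dir-quot}(i) to two concrete multifunctions, together with the trivial observation that $\{0\}$ is a semisimple subalgebra (and a semisimple ideal). The only thing worth flagging for the reader is this observation, to justify that the ``non-empty'' half of the theorem's hypothesis collapses to the existence of a \emph{non-trivial} fd semisimple subobject, which is what the corollary asserts.
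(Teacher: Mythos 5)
Your proposal is correct and is essentially the paper's own proof: the corollary is obtained by specializing Theorem \ref{dir-quot}(i) (the equivalence 1) $\Leftrightarrow$ 3)) to $\Gamma=\mathrm{A}^{\mathrm{sem}}$ and $\Gamma=\mathrm{I}^{\mathrm{sem}}$, whose directness comes from Proposition \ref{P6.1}(i), with Corollary \ref{C6.2}(i) supplying that $S_{\Gamma}^{\ast}$ is a radical. Your explicit remark that the non-emptiness clause collapses (so the condition amounts to the existence of a non-zero finite-dimensional semisimple subobject) is a harmless clarification of a point the paper leaves implicit.
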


\begin{corollary}
\label{square}If $\mathcal{L}$ is an $S_{\emph{A}^{\emph{sem}}}^{\ast}%
$-radical or an $S_{\emph{I}^{\emph{sem}}}^{\ast}$-radical$,$ then
$\mathcal{L}=\overline{\left[  \mathcal{L},\mathcal{L}\right]  }$.
\end{corollary}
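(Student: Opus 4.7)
The plan is to argue by contradiction using Corollary \ref{C6.3}, which characterizes $S_{\mathrm{A}^{\mathrm{sem}}}^{\ast}$-radical and $S_{\mathrm{I}^{\mathrm{sem}}}^{\ast}$-radical Banach Lie algebras in terms of their proper quotients. Let $I=\overline{[\mathcal{L},\mathcal{L}]}$; this is a closed Lie ideal of $\mathcal{L}$ and the quotient $\mathcal{L}/I$ is a commutative Banach Lie algebra. My goal is to force $I=\mathcal{L}$, so suppose instead that $I\neq\mathcal{L}$. Then $I$ is a closed proper Lie ideal of $\mathcal{L}$.

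Since $\mathcal{L}$ is either $S_{\mathrm{A}^{\mathrm{sem}}}^{\ast}$-radical or $S_{\mathrm{I}^{\mathrm{sem}}}^{\ast}$-radical, Corollary \ref{C6.3} (which in turn is Theorem \ref{dir-quot}(i) applied to the multifunctions $\mathrm{A}^{\mathrm{sem}}$ or $\mathrm{I}^{\mathrm{sem}}$, whose families are required to be non-empty and not equal to $\{\{0\}\}$) supplies a non-zero finite-dimensional semisimple Lie subalgebra $S\subseteq\mathcal{L}/I$. This is the step to be pressed for a contradiction.

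The final observation is purely algebraic: every Lie subalgebra of a commutative Lie algebra is itself commutative, hence solvable, so its rad equals the whole algebra. Thus a semisimple Lie subalgebra of $\mathcal{L}/I$ must be $\{0\}$, contradicting the non-triviality of $S$. This contradiction forces $I=\mathcal{L}$, giving $\mathcal{L}=\overline{[\mathcal{L},\mathcal{L}]}$.

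The only subtle point is to be careful that Corollary \ref{C6.3} really produces a \emph{non-zero} finite-dimensional semisimple Lie subalgebra (respectively, ideal); this is guaranteed by the condition $\Gamma_{\mathcal{L}/I}\neq\{\{0\}\}$ built into Theorem \ref{dir-quot}(i). Aside from that, the argument is a one-line application of the characterization of the semisimple quotients combined with the triviality of semisimple subalgebras of abelian Lie algebras, so no further machinery is needed.
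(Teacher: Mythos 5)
Your proof is correct and follows exactly the paper's argument: take $I=\overline{[\mathcal{L},\mathcal{L}]}$, apply Corollary \ref{C6.3} to get a non-zero finite-dimensional semisimple subalgebra of the commutative quotient $\mathcal{L}/I$, and derive a contradiction. The attention you pay to the non-triviality of the subalgebra (via the condition $\Gamma_{\mathcal{L}/I}\neq\{\{0\}\}$ in Theorem \ref{dir-quot}(i)) is a worthwhile clarification of a point the paper leaves implicit.
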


\begin{proof}
If $\overline{[\mathcal{L},\mathcal{L}]}\neq\mathcal{L}$ then $\mathcal{L}$ is
not $S_{\text{A}^{\text{sem}}}^{\ast}$-radical (or $S_{\text{I}^{\text{sem}}%
}^{\ast}$-radical) because the algebra $\mathcal{L}/\overline{[\mathcal{L}%
,\mathcal{L}]}$ cannot contain semisimple subalgebras (see Corollary
\ref{C6.3}).
\end{proof}

Let $\mathcal{L}\in\frak{L}^{\text{f}}$. Then $\mathcal{L}_{\left[
k+1\right]  }=\mathcal{L}_{\left[  k\right]  }$ for some $k.$ Set%
\begin{equation}
\mathfrak{P}\left(  \mathcal{L}\right)  =\cap\mathcal{L}_{\left[  k\right]  }.
\label{6.2}%
\end{equation}
The next theorem describes the restriction of the preradical $S_{\text{A}%
^{\text{sem}}}$ to $\mathbf{L}^{\text{f}}$.

\begin{theorem}
\label{Levi}\emph{(i) }The restriction of $S_{\emph{A}^{\text{\emph{sem}}}}$
to $\mathbf{L}^{\text{\emph{f}}}$ is a radical$.$

\begin{itemize}
\item [$\mathrm{(ii)}$]For each $\mathcal{L}\in\frak{L}^{\mathrm{f}},$
$S_{\emph{A}^{\text{\emph{sem}}}}\left(  \mathcal{L}\right)  =\mathfrak
{P}\left(  \mathcal{L}\right)  $ and it is the smallest characteristic Lie
ideal of $\mathcal{L}$ that contains all Levi subalgebras $N_{\mathcal{L}}$
\emph{(see }$(\ref{5.2}))$.

\item[$\mathrm{(iii)}$] A Lie algebra $\mathcal{L}\in\frak{L}^{\mathrm{f}}$ is
$S_{\emph{A}^{\text{\emph{sem}}}}$-semisimple if and only if $\mathcal{L}$ is solvable.
\end{itemize}
\end{theorem}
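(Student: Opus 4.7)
The plan is to establish (ii) first; parts (iii) and (i) will follow quickly from the identification $S_{\mathrm{A}^{\mathrm{sem}}}(\mathcal{L})=\mathfrak{P}(\mathcal{L})$. For the inclusion $S_{\mathrm{A}^{\mathrm{sem}}}(\mathcal{L})\subseteq\mathfrak{P}(\mathcal{L})$, every finite-dimensional semisimple subalgebra $L\subseteq\mathcal{L}$ is perfect, $L=[L,L]$, so that $L\subseteq\mathcal{L}_{[1]}$ and, by induction, $L\subseteq\mathcal{L}_{[k]}$ for every $k$; hence $L\subseteq\mathfrak{P}(\mathcal{L})$, and summing over all such $L$ gives the inclusion.

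For the reverse inclusion I would use the Levi--Maltsev decomposition (\ref{5.2}). Set $I:=S_{\mathrm{A}^{\mathrm{sem}}}(\mathcal{L})$; a Levi subalgebra $N_{\mathcal{L}}$ is semisimple, so $N_{\mathcal{L}}\subseteq I$, and therefore $\mathcal{L}/I$ is a quotient of $\mathcal{L}/N_{\mathcal{L}}\cong\mathrm{rad}(\mathcal{L})$ and hence solvable. This gives $\mathcal{L}_{[k]}\subseteq I$ for some $k$, so $\mathfrak{P}(\mathcal{L})\subseteq I$. The identical quotient argument shows that any $J\vartriangleleft^{\mathrm{ch}}\mathcal{L}$ containing some Levi satisfies $\mathfrak{P}(\mathcal{L})\subseteq J$, and $\mathfrak{P}(\mathcal{L})$ itself is characteristic by Lemma \ref{L3.1}(iii): stabilization of the derived series yields $\mathfrak{P}(\mathcal{L})=\mathcal{L}_{[k]}=\mathcal{L}_{[k+1]}=[\mathfrak{P}(\mathcal{L}),\mathfrak{P}(\mathcal{L})]$. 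This completes (ii).

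Part (iii) is then immediate: $\mathcal{L}$ is $S_{\mathrm{A}^{\mathrm{sem}}}$-semisimple iff $\mathfrak{P}(\mathcal{L})=\{0\}$ iff the derived series terminates at $\{0\}$ iff $\mathcal{L}$ is solvable. For (i), Proposition \ref{P6.1} already shows that $S_{\mathrm{A}^{\mathrm{sem}}}|_{\mathbf{L}^{\mathrm{f}}}$ is a balanced, lower stable preradical---an under radical---so only upper stability remains, namely $S_{\mathrm{A}^{\mathrm{sem}}}(\mathcal{L}/\mathfrak{P}(\mathcal{L}))=\{0\}$ for each $\mathcal{L}\in\mathfrak{L}^{\mathrm{f}}$. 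Since $\mathcal{L}/\mathfrak{P}(\mathcal{L})$ is solvable by construction, (iii) delivers the vanishing.

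The main nontrivial step is the reverse inclusion $\mathfrak{P}(\mathcal{L})\subseteq S_{\mathrm{A}^{\mathrm{sem}}}(\mathcal{L})$: $\mathfrak{P}(\mathcal{L})$ need not be spanned by semisimple subalgebras in any structurally transparent way---already for $\mathfrak{sl}_{2}\ltimes V$ with $V$ a nontrivial irreducible $\mathfrak{sl}_{2}$-module one has $\mathfrak{P}(\mathcal{L})=\mathcal{L}$ yet $V$ itself contains no semisimple subalgebra---so a direct generation argument would be awkward. The quotient trick sidesteps this by converting the lower bound into the easy assertion that $\mathcal{L}/I$ is solvable.
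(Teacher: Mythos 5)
Your proof is correct, but it is organized quite differently from the paper's and rests on a more elementary key fact. The paper proves (i) first, by pulling back a semisimple subalgebra $M$ of $\mathcal{L}/I_{\mathcal{L}}$ to $L=q^{-1}(M)$ and applying the Levi--Maltsev decomposition of $L$ to get a contradiction; it then proves (ii) by introducing the smallest characteristic ideal $J$ containing a Levi subalgebra and invoking the \emph{conjugacy} of Levi subalgebras (Bourbaki, Cor.\ 1.6.8.1) to show every semisimple subalgebra lies in $J$, and it needs the already-established part (i) to get $\mathfrak{P}(\mathcal{L})\subseteq I_{\mathcal{L}}$. You instead prove (ii) first and self-containedly: the inclusion $S_{\mathrm{A}^{\mathrm{sem}}}(\mathcal{L})\subseteq\mathfrak{P}(\mathcal{L})$ via perfectness of semisimple subalgebras ($M=[M,M]$ forces $M\subseteq\mathcal{L}_{[k]}$ for all $k$), and the reverse inclusion via the observation that $N_{\mathcal{L}}\subseteq S_{\mathrm{A}^{\mathrm{sem}}}(\mathcal{L})$ makes $\mathcal{L}/S_{\mathrm{A}^{\mathrm{sem}}}(\mathcal{L})$ a homomorphic image of $\operatorname{rad}(\mathcal{L})$, hence solvable; (iii) and the upper stability in (i) then fall out. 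This buys you two things: you need only the \emph{existence} of a Levi decomposition, not the conjugacy theorem, and your parts are logically independent rather than (ii) depending on (i). One phrasing to tighten: $N_{\mathcal{L}}$ is not a Lie ideal, so ``$\mathcal{L}/N_{\mathcal{L}}\cong\operatorname{rad}(\mathcal{L})$'' is only a linear-space statement; what you actually use (and what is true) is that the restriction of the quotient map $\mathcal{L}\to\mathcal{L}/I$ to $\operatorname{rad}(\mathcal{L})$ is onto because $\mathcal{L}=N_{\mathcal{L}}+\operatorname{rad}(\mathcal{L})$ and $N_{\mathcal{L}}\subseteq I$, so $\mathcal{L}/I$ is a Lie-algebra quotient of $\operatorname{rad}(\mathcal{L})$.
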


\begin{proof}
(i) Set $\Gamma=$ A$^{\text{sem}}$ and $I_{\mathcal{L}}=S_{\Gamma}\left(
\mathcal{L}\right)  =\frak{s}(\Gamma_{\mathcal{L}}).$ By Proposition
\ref{P6.1}, $S_{\Gamma}$ is an under radical. If $S_{\Gamma}|\mathbf{L}%
^{\emph{f}}$ is not a radical, $\frak{s}(\Gamma_{\mathcal{L}/I_{\mathcal{L}}%
})=S_{\Gamma}(\mathcal{L}/I_{\mathcal{L}})\neq\{0\}$ for some $\mathcal{L}%
\in\frak{L}^{\text{f}}.$ Hence $\mathcal{L}/I_{\mathcal{L}}$ contains a
semisimple Lie subalgebra $M\neq\{0\}$. Let $q$: $\mathcal{L}\longrightarrow
\mathcal{L}/I_{\mathcal{L}}$ be the quotient map and $L=q^{-1}(M)$. Let
$L=N_{L}\dotplus\operatorname*{rad}\left(  L\right)  $ be the Levi-Maltsev
decomposition, where $N_{L}$ is a semisimple Lie subalgebra of $\mathcal{L}$.
As $N_{L}\subseteq I_{\mathcal{L}}$, we have that $M=q\left(  L\right)
=q\left(  \operatorname*{rad}\left(  L\right)  \right)  $ is solvable, a contradiction.

(ii) Let $J$ be the minimal characteristic Lie ideal of $\mathcal{L}$ that
contains some Levi subalgebra $N_{\mathcal{L}}$. As $N_{\mathcal{L}}\in
\Gamma_{\mathcal{L}},$ we have $N_{\mathcal{L}}\subseteq\frak{s}%
(\Gamma_{\mathcal{L}})=I_{\mathcal{L}}.$ As $I_{\mathcal{L}}$ is a
characteristic Lie ideal, $J\subseteq I_{\mathcal{L}}$. Let $M$ be a
semisimple Lie subalgebra of $\mathcal{L}$. By \cite[Corollary 1.6.8.1]{Bo},
there is $x$ in $\mathcal{L}$ such that $\exp\left(  \operatorname*{ad}\left(
x\right)  \right)  \left(  M\right)  \subseteq N_{\mathcal{L}}$. As $J$ is a
characteristic Lie ideal of $\mathcal{L}$, $M\subseteq\exp\left(
\operatorname*{ad}\left(  -x\right)  \right)  \left(  N_{\mathcal{L}}\right)
\subseteq\exp\left(  \operatorname*{ad}\left(  -x\right)  \right)  J\subseteq
J$. Hence all semisimple Lie subalgebras of $\mathcal{L}$ lie in $J.$
Therefore $I_{\mathcal{L}}\subseteq J$. Thus $I_{\mathcal{L}}=J.$

As $N_{\mathcal{L}}$ is semisimple, $N_{\mathcal{L}}=\left[  N_{\mathcal{L}%
},N_{\mathcal{L}}\right]  ,$ whence $N_{\mathcal{L}}\subseteq\mathcal{L}%
_{\left[  2\right]  }.$ Similarly, $N_{\mathcal{L}}\subseteq\mathcal{L}%
_{\left[  n\right]  }$ for all $n$, so that $N_{\mathcal{L}}\subseteq
\mathfrak{P}\left(  \mathcal{L}\right)  $. As all $\mathcal{L}_{\left[
n\right]  }$ are characteristic Lie ideals of $\mathcal{L}$, $\mathfrak
{P}\left(  \mathcal{L}\right)  $ is a characteristic Lie ideal of
$\mathcal{L}$ that contains $N_{\mathcal{L}}.$ Hence $I_{\mathcal{L}}%
\subseteq\mathfrak{P}\left(  \mathcal{L}\right)  $. As $S_{\Gamma}$ is a
radical, $S_{\Gamma}(\mathcal{L}/I_{\mathcal{L}})=\{0\}.$ Hence, by the
definition of $S_{\Gamma},$ $\mathcal{L}/I_{\mathcal{L}}$ has no semisimple
Lie subalgebras. Thus $\mathcal{L}/I_{\mathcal{L}}$ is solvable. Therefore
$\mathfrak{P}\left(  \mathcal{L}\right)  =\mathcal{L}_{\left[  k\right]
}\subseteq I_{\mathcal{L}}$.

Part (iii) follows from the fact that $\{0\}=S_{\Gamma}\left(  \mathcal{L}%
\right)  =\mathcal{L}_{\left[  k\right]  }$ for some $k$.
\end{proof}

\begin{theorem}
\label{T5.2}\emph{(i) }For each $\mathcal{L}\in\mathfrak{L}^{\mathrm{f}},$
$S_{\emph{I}^{\text{\emph{sem}}}}\left(  \mathcal{L}\right)  $ is the largest
semisimple Lie ideal of $\mathcal{L}$.

\begin{itemize}
\item [$\mathrm{(ii)}$]The restriction of $S_{\emph{I}^{\text{\emph{sem}}}}$
to $\mathbf{L}^{\mathrm{f}}$ is a hereditary radical \emph{(}see
\emph{(\ref{4.4'})).}
\end{itemize}
\end{theorem}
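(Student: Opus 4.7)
The plan is first to reduce part (i) to the following classical fact: in a finite-dimensional Lie algebra, the sum of two semisimple Lie ideals is a semisimple Lie ideal. Indeed, for $\mathcal{L}\in\mathfrak{L}^{\mathrm{f}}$ every linear subspace is closed, so
\[
S_{\mathrm{I}^{\text{sem}}}(\mathcal{L})=\sum_{J\in\mathrm{I}^{\text{sem}}_{\mathcal{L}}}J,
\]
and by the bound $\dim\mathcal{L}<\infty$ this is actually a finite sum $J_{1}+\cdots+J_{n}$ of finite-dimensional semisimple Lie ideals. Once the two-summand case is proved, iteration gives that $S_{\mathrm{I}^{\text{sem}}}(\mathcal{L})$ is itself a semisimple Lie ideal of $\mathcal{L}$; it is automatically the largest because by construction it contains every semisimple Lie ideal of $\mathcal{L}$.

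To prove the key fact, take semisimple Lie ideals $J_{1},J_{2}\vartriangleleft\mathcal{L}$, set $K=J_{1}+J_{2}$, and apply Levi--Maltsev to obtain $K=N\dotplus R$ with $N$ semisimple and $R=\mathrm{rad}(K)$. Since $R\cap J_{i}$ is a solvable Lie ideal of the semisimple algebra $J_{i}$, one has $R\cap J_{i}=\{0\}$, and therefore $[R,J_{i}]\subseteq R\cap J_{i}=\{0\}$. Thus $R$ is central in $K$, so the Levi decomposition is in fact a Lie algebra direct sum $K=N\oplus R$. Because $J_{i}=[J_{i},J_{i}]\subseteq[K,K]=[N,N]=N$, both $J_{i}$ lie in $N$, forcing $K=N$ and $R=\{0\}$; hence $K$ is semisimple. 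This gives part~(i).

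For part (ii), Proposition \ref{P6.1} already tells us that $S_{\mathrm{I}^{\text{sem}}}$ is balanced and lower stable, so it is an under radical; I only need upper stability to upgrade it to a radical on $\mathbf{L}^{\mathrm{f}}$. Write $S=S_{\mathrm{I}^{\text{sem}}}(\mathcal{L})$ and suppose $J/S\vartriangleleft\mathcal{L}/S$ is a nonzero semisimple Lie ideal, so $S\subseteq J\vartriangleleft\mathcal{L}$. The image of $\mathrm{rad}(J)$ in $J/S$ is a solvable ideal of a semisimple algebra, hence zero, so $\mathrm{rad}(J)\subseteq S$; but $S$ is semisimple by part (i), so $\mathrm{rad}(J)=\{0\}$ and $J$ itself is semisimple. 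By the maximality established in part~(i) this forces $J\subseteq S$, contradicting $J/S\neq\{0\}$. So $S_{\mathrm{I}^{\text{sem}}}$ restricted to $\mathbf{L}^{\mathrm{f}}$ is a radical.

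Finally, for hereditariness fix $I\vartriangleleft\mathcal{L}\in\mathfrak{L}^{\mathrm{f}}$. Balancedness gives $S_{\mathrm{I}^{\text{sem}}}(I)\subseteq S_{\mathrm{I}^{\text{sem}}}(\mathcal{L})$, and $S_{\mathrm{I}^{\text{sem}}}(I)\subseteq I$, whence $S_{\mathrm{I}^{\text{sem}}}(I)\subseteq I\cap S_{\mathrm{I}^{\text{sem}}}(\mathcal{L})$. For the reverse, $I\cap S_{\mathrm{I}^{\text{sem}}}(\mathcal{L})$ is simultaneously a Lie ideal of $I$ and of the semisimple algebra $S_{\mathrm{I}^{\text{sem}}}(\mathcal{L})$; as an ideal of a finite-dimensional semisimple Lie algebra it is itself semisimple, hence a finite-dimensional semisimple Lie ideal of $I$, and therefore contained in $S_{\mathrm{I}^{\text{sem}}}(I)$. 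The only genuine obstacle in the entire argument is the two-summand lemma underlying part~(i); the rest is a direct assembly of the machinery already developed together with standard Levi theory.
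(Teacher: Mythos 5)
Your proof is correct, but in part (i) it takes a genuinely different route from the paper. The paper decomposes each semisimple ideal $I\vartriangleleft\mathcal{L}$ into its simple constituents $I_{1}\dotplus\cdots\dotplus I_{n}$, observes via $I_{i}=[I_{i},I_{i}]$ and Lemma \ref{L3.1}(iii),(i) that each $I_{i}$ is a simple Lie ideal of $\mathcal{L}$ itself, and then identifies $S_{\mathrm{I}^{\text{sem}}}(\mathcal{L})$ explicitly as the direct sum of \emph{all} simple Lie ideals of $\mathcal{L}$ (distinct simple ideals intersect trivially, so the sum is direct and semisimple). You instead prove that the sum of two semisimple ideals is semisimple via a Levi--Maltsev decomposition of $K=J_{1}+J_{2}$, showing $\mathrm{rad}(K)$ is central and then absorbed by $[K,K]$; this avoids the structure theory of simple constituents but loses the paper's explicit description of $S_{\mathrm{I}^{\text{sem}}}(\mathcal{L})$ as the sum of the simple ideals. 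For part (ii) the paper merely says ``as in Theorem \ref{Levi}(i)''; note that the Levi-radical argument does not transfer verbatim (there one uses that the Levi factor $N_{L}$ is a semisimple \emph{subalgebra} and hence lies in $S_{\mathrm{A}^{\text{sem}}}(\mathcal{L})$, which fails for the ideal version), and your replacement --- pushing $\mathrm{rad}(q^{-1}(M))$, which is an ideal of $\mathcal{L}$ by Lemma \ref{L3.1}, down into the semisimple quotient and then into the semisimple $S$ to kill it --- is exactly the adaptation needed, so you have in fact supplied a detail the paper leaves implicit. The hereditariness argument coincides with the paper's.
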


\begin{proof}
(i) Set $\Gamma=I^{\text{sem}}.$ Let $\mathcal{L}\in\mathfrak{L}^{\mathrm{f}%
},$ let $I\vartriangleleft\mathcal{L}$ and $J\vartriangleleft\mathcal{L}$. If
$J$ is simple, then either $J=I$ or $J\cap I=\{0\}.$ If $J\subseteq I$ and $I$
is semisimple, then $I=I_{1}\dotplus...\dotplus I_{n},$ where all $\{I_{i}\}$
are simple Lie ideals of $I$, and $J$ coincides with one of $I_{i}.$ As all
$I_{i}=\left[  I_{i},I_{i}\right]  $, we have from Lemma \ref{L3.1}(iii) that
all $I_{i}$ are simple Lie ideals of $\mathcal{L}$. Let $\{I_{j}\}_{j=1}^{m}$
be the set of \textit{all }simple Lie ideals of $\mathcal{L}$. It follows from
the discussion above that $K=I_{1}\dotplus...\dotplus I_{m}$ is the largest
semisimple Lie ideal of $\mathcal{L}$ and it contains all semisimple Lie
ideals of $\mathcal{L}$. Hence $S_{\Gamma}\left(  \mathcal{L}\right)
=\mathfrak{s}(\Gamma_{\mathcal{L}})=K$.

(ii) As in Theorem \ref{Levi}(i), one can prove that $S_{\Gamma}%
|\mathbf{L}^{\mathrm{f}}$ is a radical. Let $I\vartriangleleft\mathcal{L}$.
Then $S_{\Gamma}\left(  I\right)  $ is the largest semisimple Lie ideal of
$I.$ As $S_{\Gamma}\left(  I\right)  $ is a characteristic Lie ideal of $I$,
by Lemma \ref{L3.1}(i), $S_{\Gamma}\left(  I\right)  $ is the largest
semisimple Lie ideal of $\mathcal{L}$ contained in $I.$ Therefore $S_{\Gamma
}\left(  I\right)  \subseteq S_{\Gamma}\left(  \mathcal{L}\right)  \cap I.$ As
$S_{\Gamma}\left(  \mathcal{L}\right)  \cap I$ is a Lie ideal of the
semisimple Lie algebra $S_{\Gamma}\left(  \mathcal{L}\right)  ,$ it is
semisimple. Hence $S_{\Gamma}\left(  \mathcal{L}\right)  \cap I$ is a
semisimple Lie ideal of $I.$ By (i), $S_{\Gamma}\left(  \mathcal{L}\right)
\cap I\subseteq S_{\Gamma}\left(  I\right)  .$ Thus $S_{\Gamma}\left(
\mathcal{L}\right)  \cap I=S_{\Gamma}\left(  I\right)  ,$ so that $S_{\Gamma}$
is hereditary on $\mathbf{L}^{\text{f}}$.
\end{proof}

To see an example that distinguishes the radicals\textbf{ }$S_{\text{A}%
^{\text{sem}}}|\mathbf{L}^{\text{f}}$\textbf{ }and $S_{\text{I}^{\text{sem}}}%
$\textbf{$|$}$\mathbf{L}^{\text{f}}$, consider the semidirect product
$\mathcal{L}=sl(X)\oplus^{\text{id}}X,$ where $X$ is a finite-dimensional
space and $sl(X)$ the Lie algebra of all operators on $X$ with zero trace. It
has no semisimple ideals, so that I$_{\mathcal{L}}^{\text{sem}}=\varnothing$
and $S_{\text{I}^{\text{sem}}}(\mathcal{L})=\{0\},$ while $S_{\text{A}%
^{\text{sem}}}(\mathcal{L})=\mathcal{L}$ because the only ideal that contains
the semisimple Levi subalgebra $sl(X)\oplus^{\text{id}}\{0\}$ is $\mathcal{L}$ itself.

We call the restriction of\emph{ }$S_{\text{A}^{\text{sem}}}$ to
$\mathbf{L}^{\text{f}}$ the \textit{Levi radical} and denote it by
$R_{\mathrm{Levi}}$:%
\[
R_{\mathrm{Levi}}=S_{\text{A}^{\text{sem}}}|\mathbf{L}^{\text{f}}.
\]
It is \textit{not hereditary}. Indeed, let $L\in\mathbf{L}^{\text{f}}$ be
semisimple and let $\pi$ be an irreducible representation of $L$ on a
finite-dimensional space $X$. Then $\mathcal{L}=L\oplus^{\pi}X$ (see
(\ref{fsemi})) is a Lie algebra and $I=\{0\}\oplus^{\pi}X$ is a Lie ideal of
$\mathcal{L}$. It is easy to see that $\mathfrak
{P}(\mathcal{L})=\mathcal{L}$ and $\mathfrak{P}(I)=\{0\}.$ Hence, by Theorem
\ref{Levi}(ii), $R_{\mathrm{Levi}}\left(  \mathcal{L}\right)  =\mathcal{L}$
and $R_{\mathrm{Levi}}\left(  I\right)  =\{0\}.$ Thus $R_{\mathrm{Levi}%
}\left(  I\right)  \neq R_{\mathrm{Levi}}\mathcal{(L})\cap I=I$.

\subsection{Some extensions of classical radicals}

In this section we consider some Lie ideal-multifunctions $\Gamma$ on
$\frak{L}$ related to commutative and solvable ideals.\textbf{ }Although they
generate different preradicals $S_{\Gamma}$, the preradicals $S_{\Gamma
}^{\mathbf{s}}$ corresponding to them (see (\ref{4.4})) often generate equal
radicals that extend the classical radical rad on $\mathbf{L}^{\text{f}}$.

We start with the multifunction I$^{\text{sol}}$ defined above and the
multifunction ``$\mathrm{Abel}$'':%
\[
\text{Abel = }\{\mathrm{Abel}_{\mathcal{L}}\}_{\mathcal{L}\in\frak{L}}\text{
where Abel}_{\mathcal{L}}\text{ is the family of all commutative Lie ideals of
}\mathcal{L}.
\]
As in Proposition \ref{P6.1}(i), we have that $\mathrm{Abel}$ is a strictly
direct and lower stable multifunction (see Definition \ref{D4.7}), so that
$S_{\mathrm{Abel}}$ is a lower stable preradical in $\overline{\mathbf{L}}.$
Hence $S_{\mathrm{Abel}}^{\mathbf{s}}$ is an under radical (Theorem
\ref{T2}(i)) and $(S_{\mathrm{Abel}}^{\mathbf{s}})^{\ast}$ is a radical
(Corollary \ref{C4.9}(i)).

\begin{theorem}
\label{T7.2}\emph{(i) }$\mathbf{Sem}(S_{\text{\emph{I}}^{\text{\emph{sol}}}%
}^{\mathbf{s}})=\mathbf{Sem}(S_{\mathrm{Abel}}^{\mathbf{s}})$ and
$\mathcal{L}$ belongs to them if and only if $\mathcal{L}$ has no non-zero
commutative finite-dimensional Lie subideals.

\emph{(ii) }The map $K:$ $\mathcal{L}\mapsto K(\mathcal{L})=\mathrm{Centre}%
\left(  \mathcal{L}\right)  ,$ for each $\mathcal{L}\in\frak{L,}$ is a lower
stable preradical and $(S_{\text{\emph{I}}^{\text{\emph{sol}}}}^{\mathbf{s}%
})^{\ast}=(S_{\text{\emph{Abel}}}^{\mathbf{s}})^{\ast}=(K^{\mathbf{s}})^{\ast}.$

\begin{itemize}
\item [$\mathrm{(iii)}$]$(S_{\text{\emph{Abel}}}^{\mathbf{s}})^{\ast
}|\mathbf{L}^{\text{\emph{f}}}=\operatorname*{rad}.$
\end{itemize}
\end{theorem}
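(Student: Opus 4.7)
The plan is to work along the chain $K\le S_{\mathrm{Abel}}\le S_{\mathrm{I}^{\mathrm{sol}}}$ and translate each $\mathbf{Sem}$-class into a condition on subideals. For (i), first I would note that every commutative Lie ideal is solvable, so $S_{\mathrm{Abel}}\le S_{\mathrm{I}^{\mathrm{sol}}}$ and Lemma \ref{sin} yields $S_{\mathrm{Abel}}^{\mathbf{s}}\le S_{\mathrm{I}^{\mathrm{sol}}}^{\mathbf{s}}$, giving $\mathbf{Sem}(S_{\mathrm{I}^{\mathrm{sol}}}^{\mathbf{s}})\subseteq\mathbf{Sem}(S_{\mathrm{Abel}}^{\mathbf{s}})$. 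The remaining task is to show that both classes coincide with the class of Banach Lie algebras having no non-zero finite-dimensional commutative Lie subideal. In one direction, any finite-dimensional commutative Lie subideal $I$ of $\mathcal L$ satisfies $S_{\mathrm{Abel}}(I)=I$ (being itself a commutative Lie ideal of $I$), so $I\in\mathrm{Sub}(\mathcal L,S_{\mathrm{Abel}})\subseteq\mathrm{Sub}(\mathcal L,S_{\mathrm{I}^{\mathrm{sol}}})$, forcing both $S_{\mathrm{Abel}}^{\mathbf{s}}(\mathcal L)$ and $S_{\mathrm{I}^{\mathrm{sol}}}^{\mathbf{s}}(\mathcal L)$ to be non-zero. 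Conversely, a non-zero $S_{\mathrm{I}^{\mathrm{sol}}}$-radical subideal $J$ of $\mathcal L$ equals $\mathfrak{s}(\mathrm{I}^{\mathrm{sol}}_J)$ by definition, hence contains a non-zero finite-dimensional solvable Lie ideal $I$; the last non-zero term $I_{[n]}$ of its derived series is a finite-dimensional commutative characteristic ideal of $I$, and Lemma \ref{L3.1}(i) together with transitivity of the subideal relation promotes $I_{[n]}$ to a finite-dimensional commutative Lie subideal of $\mathcal L$.

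For (ii), first I would check that $K$ is a preradical. The centre is closed because $K(\mathcal L)=\bigcap_{b\in\mathcal L}\ker(\operatorname{ad}(b))$, and for any morphism $f:\mathcal L\longrightarrow\mathcal M$ with dense image and $a\in K(\mathcal L)$, the identity $[f(a),f(b)]=f([a,b])=0$ extends by continuity of the bracket and density of $f(\mathcal L)$ to yield $f(a)\in K(\mathcal M)$. Lower stability $K(K(\mathcal L))=K(\mathcal L)$ is immediate from commutativity of $K(\mathcal L)$. Since the centre is a commutative Lie ideal, $K\le S_{\mathrm{Abel}}\le S_{\mathrm{I}^{\mathrm{sol}}}$, so Lemma \ref{sin} gives $K^{\mathbf{s}}\le S_{\mathrm{Abel}}^{\mathbf{s}}\le S_{\mathrm{I}^{\mathrm{sol}}}^{\mathbf{s}}$ and hence $\mathbf{Sem}(S_{\mathrm{I}^{\mathrm{sol}}}^{\mathbf{s}})\subseteq\mathbf{Sem}(S_{\mathrm{Abel}}^{\mathbf{s}})\subseteq\mathbf{Sem}(K^{\mathbf{s}})$. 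For the reverse inclusion $\mathbf{Sem}(K^{\mathbf{s}})\subseteq\mathbf{Sem}(S_{\mathrm{I}^{\mathrm{sol}}}^{\mathbf{s}})$ I would invoke (i): any non-zero finite-dimensional commutative Lie subideal $I$ of $\mathcal L$ satisfies $K(I)=I$, placing $I\in\mathrm{Sub}(\mathcal L,K)$ and forcing $K^{\mathbf{s}}(\mathcal L)\ne\{0\}$. Theorem \ref{T2}(i) makes $K^{\mathbf{s}}$, $S_{\mathrm{Abel}}^{\mathbf{s}}$ and $S_{\mathrm{I}^{\mathrm{sol}}}^{\mathbf{s}}$ under radicals, Theorem \ref{under-over}(i) makes their $\ast$-completions radicals, Theorem \ref{super}(i) preserves the $\mathbf{Sem}$-classes under $(-)^\ast$, and Corollary \ref{Crad}(ii) then concludes that the three radicals coincide.

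For (iii), both $\operatorname{rad}$ and the restriction of $(S_{\mathrm{Abel}}^{\mathbf{s}})^\ast$ are radicals on $\mathbf L^{\mathrm f}$, so by Corollary \ref{Crad}(ii) it is enough to compare their $\mathbf{Sem}$-classes. A finite-dimensional algebra $\mathcal L$ is $\operatorname{rad}$-semisimple precisely when it has no non-zero commutative Lie ideal (the classical definition), while part (i) combined with Theorem \ref{super}(i) shows that $(S_{\mathrm{Abel}}^{\mathbf{s}})^\ast$-semisimplicity amounts to the absence of non-zero commutative Lie subideals (finite-dimensionality being automatic in $\mathbf L^{\mathrm f}$). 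Corollary \ref{E5.1} asserts that every Lie subideal of a finite-dimensional semisimple Lie algebra is a Lie ideal, so the two conditions are equivalent. The main obstacle is the derived-series step in (i), namely descending from an $S_{\mathrm{I}^{\mathrm{sol}}}$-radical subideal to a finite-dimensional commutative subideal while preserving the subideal chain, together with the careful bookkeeping needed to transit between the stabilizations $(-)^{\mathbf{s}}$ and $(-)^\ast$ and their $\mathbf{Sem}$-invariance.
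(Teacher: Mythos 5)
Your overall strategy (characterize each class $\mathbf{Sem}(R^{\mathbf{s}})$ by the absence of subideals with the relevant property, descend from a solvable subideal to a commutative one, and then compare the resulting radicals through their semisimple classes via Corollary \ref{Crad}) is the same as the paper's, and parts (ii)--(iii) are otherwise in order. However, the opening claim of your part (i) is false: $S_{\mathrm{Abel}}\le S_{\mathrm{I}^{\mathrm{sol}}}$ does not hold, because the ideals in $\mathrm{Abel}_{\mathcal{L}}$ are \emph{not} required to be finite-dimensional, whereas those in $\mathrm{I}^{\mathrm{sol}}_{\mathcal{L}}$ are. For example, take $\mathcal{L}=\mathbb{C}U\oplus^{\mathrm{id}}\ell^{2}$ with $U$ the unilateral shift (or the algebra of Example \ref{E3}(i) built on Read's operator): the commutative ideal $\{0\}\oplus^{\mathrm{id}}\ell^{2}$ lies in $S_{\mathrm{Abel}}(\mathcal{L})$, but $\mathcal{L}$ has no non-zero finite-dimensional Lie ideals at all, so $S_{\mathrm{I}^{\mathrm{sol}}}(\mathcal{L})=\{0\}$. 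Hence Lemma \ref{sin} cannot be invoked, and the intermediate assertion $\mathrm{Sub}(\mathcal{L},S_{\mathrm{Abel}})\subseteq\mathrm{Sub}(\mathcal{L},S_{\mathrm{I}^{\mathrm{sol}}})$ is likewise false in general (an algebra can be the closed span of infinite-dimensional commutative ideals while its finite-dimensional ideals span a proper subspace). Indeed, the whole point of this section of the paper is that these preradicals are genuinely different and only their associated under/over radicals agree, so any argument resting on an order relation between $S_{\mathrm{Abel}}$ and $S_{\mathrm{I}^{\mathrm{sol}}}$ is doomed.

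This leaves a real gap, because the inclusion $\mathbf{Sem}(S_{\mathrm{I}^{\mathrm{sol}}}^{\mathbf{s}})\subseteq\mathbf{Sem}(S_{\mathrm{Abel}}^{\mathbf{s}})$ (and the corresponding link in your chain for part (ii)) is obtained \emph{only} from that false inequality; your direct arguments prove the other containments but never show that the absence of finite-dimensional commutative subideals forces $S_{\mathrm{Abel}}^{\mathbf{s}}(\mathcal{L})=\{0\}$. The repair is short and stays within your framework: if $S_{\mathrm{Abel}}^{\mathbf{s}}(\mathcal{L})\neq\{0\}$, some non-zero subideal $J$ satisfies $J=\mathfrak{s}(\mathrm{Abel}_{J})$, hence $J$ has a non-zero commutative Lie ideal $A$ (possibly infinite-dimensional), which is a commutative Lie subideal of $\mathcal{L}$; since $A$ is commutative, any one-dimensional subspace of $A$ is a Lie ideal of $A$, and so $\mathcal{L}$ acquires a non-zero \emph{finite-dimensional} commutative Lie subideal. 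With this observation, both $\mathbf{Sem}$-classes in (i) are identified with the class of algebras having no such subideals without ever comparing the preradicals themselves, which is exactly how the paper proceeds (it characterizes each class separately via the ``subideal with property $\mathbf{T}$'' criterion and passes from solvable to commutative subideals through $[Y,Y]$ and its centre, where your last-nonzero-derived-term argument is an acceptable variant). The remaining bookkeeping in (ii) and (iii) — $K\le S_{\mathrm{Abel}}$, Theorem \ref{T2}, Theorem \ref{under-over}, Theorem \ref{super}, Corollaries \ref{E5.1} and \ref{Crad} — matches the paper and is correct.
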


\begin{proof}
(i) If $J$ is a Lie subideal of $I$ and $I$ is a Lie subideal of
$\mathcal{L,}$ then $J$ is a Lie subideal of $\mathcal{L.}$

By (\ref{4.4}), $\mathcal{L}\in$ $\mathbf{Sem}\left(  R^{\mathbf{s}}\right)  $
for a preradical $R$, if and only if Sub($\mathcal{L}$,$R\mathcal{)}%
=\{\{0\}\},$ that is,
\begin{equation}
I=\{0\}\text{ is the only Lie subideal of }\mathcal{L}\text{ satisfying
}I=R(I). \label{C}%
\end{equation}

Let $\Gamma=\{\Gamma_{\mathcal{L}}\}_{\mathcal{L}\in\frak{L}}$ where each
family $\Gamma_{\mathcal{L}}=\{J$: $J\vartriangleleft\mathcal{L}$ and $J$ has
some property $\mathbf{T}\}\mathbf{.}$ Let $R=S_{\Gamma},$ so that
$R(\mathcal{L})\overset{(\ref{4.r})}{=}\frak{s}(\Gamma_{\mathcal{L}})\frak{.}$
Then (\ref{C}) is equivalent to the following condition:%
\begin{equation}
\mathcal{L}\text{ has no Lie subideals that have property }\mathbf{T}.
\label{T}%
\end{equation}
Indeed, let $(\ref{T})$ do not hold and let $I\neq\{0\}$ be a Lie subideal of
$\mathcal{L}$ that has property $\mathbf{T.}$ Then $I\in\Gamma_{I},$ so that
$\{0\}\neq I=\frak{s}(\Gamma_{I}).$ Conversely, let $I\neq\{0\}$ be a Lie
subideal of $\mathcal{L}$ such that $I=\frak{s}(\Gamma_{I}).$ Then $\Gamma
_{I}\neq\{\{0\}\},$ so that $I$ has a Lie ideal $J\neq\{0\}$ that has property
$\mathbf{T}.$ As $J$ is a Lie subideal of $\mathcal{L,}$ (\ref{T}) does not
hold. Thus $\mathcal{L}\in\mathbf{Sem}\left(  S_{\mathrm{Abel}}^{\mathbf{s}%
}\right)  $ if and only if $\mathcal{L}$ has no non-zero Lie subideals which
are finite-dimensional and commutative. Similarly, $\mathcal{L}\in
\mathbf{Sem}\left(  S_{\text{I}^{\text{sol}}}^{\mathbf{s}}\right)  $ if and
only if $\mathcal{L}$ has no non-zero Lie subideals which are
finite-dimensional and solvable.

Let us show that $\mathcal{L}$ has a finite-dimensional solvable Lie subideal
$Y\neq\{0\}$ if and only if it has a non-zero finite-dimensional commutative
Lie subideal. As $Z=[Y,Y]$ is a nilpotent Lie ideal of $Y,$ it is a Lie
subideal of $\mathcal{L}$. If $Z=\{0\}$ then $Y$ is a commutative Lie subideal
of $\mathcal{L}$. If $Z\neq\{0\}$ then $Z$ has a non-zero centre which is a
commutative Lie subideal of $\mathcal{L}$. The converse statement is obvious.
This implies $\mathbf{Sem}(S_{\text{I}^{\text{sol}}}^{\mathbf{s}%
})=\mathbf{Sem}(S_{\mathrm{Abel}}^{\mathbf{s}})$.

(ii) If $\mathcal{L}\in\frak{L}$ then $[K(\mathcal{L}),\mathcal{L}]=\{0\}.$ If
$f$: $\mathcal{L}\longrightarrow\mathcal{M}$ is a morphism in $\overline
{\mathbf{L}}$, then $[f\left(  K(\mathcal{L}\right)  ),f(\mathcal{L)}%
]=f([K(\mathcal{L}),\mathcal{L}])=\{0\}$, whence $\overline{f\left(
K(\mathcal{L}\right)  )}\subseteq K(\mathcal{M)}$. Thus $K$ is a preradical.
As $K(\mathcal{L})$ is commutative, $K\left(  K\left(  \mathcal{L}\right)
\right)  =K\left(  \mathcal{L}\right)  $, so that $K$ is lower stable.

By (\ref{C}), $\mathcal{L}\in$ $\mathbf{Sem}\left(  K^{\mathbf{s}}\right)  $
if and only if $I=\{0\}$ is the only Lie subideal of $\mathcal{L}$ satisfying
$I=K(I).$ From the definition of $K$ we have that this is possible if and only
if $\{0\}$ is the only commutative Lie subideal of $\mathcal{L}$. It means, in
turn, that $\{0\}$ is the only finite-dimensional commutative Lie subideal of
$\mathcal{L}$. Hence, by (i), $\mathbf{Sem}\left(  S_{\text{I}^{\text{sol}}%
}^{\mathbf{s}}\right)  =\mathbf{Sem}\left(  S_{\mathrm{Abel}}^{\mathbf{s}%
}\right)  =\mathbf{Sem}\left(  K^{\mathbf{s}}\right)  .$ Applying Theorem
\ref{super}, we have $\mathbf{Sem}\left(  (S_{\text{I}^{\text{sol}}%
}^{\mathbf{s}})^{\ast}\right)  =\mathbf{Sem}\left(  (S_{\mathrm{Abel}%
}^{\mathbf{s}})^{\ast}\right)  =\mathbf{Sem}\left(  (K^{\mathbf{s}})^{\ast
}\right)  .$ As $(S_{\text{I}^{\text{sol}}}^{\mathbf{s}})^{\ast},$
$(S_{\mathrm{Abel}}^{\mathbf{s}})^{\ast}$ and $(K^{\mathbf{s}})^{\ast}$ are
radicals, we have from Corollary \ref{Crad} that $(S_{\text{I}^{\text{sol}}%
}^{\mathbf{s}})^{\ast}=(S_{\mathrm{Abel}}^{\mathbf{s}})^{\ast}=(K^{\mathbf{s}%
})^{\ast}.$

(iii) Let $\mathcal{L}\in\frak{L}^{\mathrm{f}}$. Note that rad($\mathcal{L}%
)=\{0\}$ if and only if $\mathcal{L}$ has no non-zero commutative Lie ideals.
Indeed, if $\mathrm{rad}\left(  \mathcal{L}\right)  \neq\{0\}$ then
$J=[\mathrm{rad}\left(  \mathcal{L}\right)  ,\mathrm{rad}\left(
\mathcal{L}\right)  ]$ is a nilpotent Lie ideal of $\mathcal{L}$. If $J=\{0\}$
then rad($\mathcal{L}$) is a commutative Lie ideal of $\mathcal{L}$. If
$J\neq\{0\}$ then $J$ has a non-zero centre which is a commutative Lie ideal
of $\mathcal{L}$. Conversely, let $\mathrm{rad}\left(  \mathcal{L}\right)
=\{0\}.$ Since $\mathrm{rad}\left(  \mathcal{L}\right)  $ is the largest
solvable Lie ideal, $\mathcal{L}$ has no non-zero commutative Lie ideals.

By the above argument and by Lemma \ref{E5.1}, $\mathcal{L}\in\mathbf{Sem}%
\left(  \mathrm{rad}\right)  $ if and only if $\mathcal{L}$ has no non-zero
commutative Lie subideals. Hence, by (i), $\mathbf{Sem}\left(  \mathrm{rad}%
\right)  =\mathbf{Sem}(S_{\text{Abel}}^{\mathbf{s}}|\mathbf{L}^{\text{f}}).$
From Theorem \ref{super} it follows that $\mathbf{Sem}\left(  \mathrm{rad}%
\right)  =\mathbf{Sem}(S_{\text{Abel}}^{\mathbf{s}}|\mathbf{L}^{\text{f}%
})=\mathbf{Sem}((S_{\text{Abel}}^{\mathbf{s}})^{\ast}|\mathbf{L}^{\text{f}}).$
As $(S_{\text{Abel}}^{\mathbf{s}})^{\ast}$ and ''rad'' are radicals, we have
from Corollary \ref{Crad} that $(S_{\text{Abel}}^{\mathbf{s}})^{\ast
}|\mathbf{L}^{\text{f}}=\mathrm{rad}$.
\end{proof}

F. Vasilescu \cite{V} extended the notion of the classical solvable radical to
infinite dimensional Lie algebras $\mathcal{L}$ in the following way. He
called a Lie ideal $J$ of $\mathcal{L}$ \textit{primitive} if
\begin{equation}
\lbrack A,A]\subseteq J\Longrightarrow A\subseteq J, \label{primV}%
\end{equation}
for any Lie ideal $A$ of $\mathcal{L}$. This is equivalent to the condition
that $\mathcal{L}/J$ has no abelian Lie ideals. Denote by $R_{\mathcal{L}}$
the intersection of all primitive Lie ideals of $\mathcal{L}$. It was proved
in \cite{V} that $R_{\mathcal{L}}=\text{rad}(\mathcal{L})$ if $\mathcal{L}$ is
finite-dimensional. Applying this to Banach Lie algebras $\mathcal{L}$ and
denoting by $R_{\mathcal{L}}$ the intersection of all \textit{closed}
primitive Lie ideals of $\mathcal{L}$, one obtains an upper stable preradical
on $\mathbf{L}$. However, it is not clear whether this preradical is balanced
and lower stable. The main obstacle is that we don't know whether a Banach Lie
algebra, whose Lie ideal has a non-zero commutative Lie ideal, has itself a
non-zero commutative Lie ideal.

To avoid this difficulty let us change the definition of the radical as
follows. We call a closed Lie subideal (ideal) $J$ of a Banach Lie algebra
$\mathcal{L}$ \textit{primitive}, if the implication (\ref{primV}) holds for
any Lie \textit{subideal} $A$ of $\mathcal{L}$. Set
\begin{align}
P_{V}(\mathcal{L})  &  =\cap\text{ }\{J\text{: }J\text{ is a closed primitive
Lie ideal of }\mathcal{L}\},\label{6.6}\\
I_{V}(\mathcal{L})  &  =\cap\text{ }\{J\text{: }J\text{ is a closed primitive
Lie subideal of }\mathcal{L}\}.\nonumber
\end{align}

\begin{lemma}
$P_{V}(\mathcal{L})$ is a characteristic primitive ideal and $P_{V}%
(\mathcal{L})=I_{V}(\mathcal{L})$.
\end{lemma}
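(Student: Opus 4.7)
The plan is to prove the two assertions in parallel, exploiting two general observations about the family of primitive (sub)ideals: that intersections of them are primitive, and that bounded Lie automorphisms permute them.

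First I would record two lemmas-by-inspection. (a) If $\{J_\lambda\}$ is any family of closed primitive Lie subideals and $J=\cap_\lambda J_\lambda$, then for any Lie subideal $A$ of $\mathcal{L}$ with $[A,A]\subseteq J$ we have $[A,A]\subseteq J_\lambda$ for each $\lambda$, hence $A\subseteq J_\lambda$ for each $\lambda$ and so $A\subseteq J$; thus $J$ satisfies the implication \eqref{primV}. (b) If $\varphi$ is a bounded Lie automorphism of $\mathcal{L}$ and $K$ is a closed primitive Lie subideal, then $\varphi(K)$ is again a closed Lie subideal (apply $\varphi$ to a defining chain $K\vartriangleleft K_1\vartriangleleft\cdots\vartriangleleft\mathcal{L}$), and it is primitive: if $[A,A]\subseteq\varphi(K)$ for a subideal $A$, then $\varphi^{-1}(A)$ is a subideal with $[\varphi^{-1}(A),\varphi^{-1}(A)]\subseteq K$, so $\varphi^{-1}(A)\subseteq K$ and $A\subseteq\varphi(K)$.

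Using these, $P_V(\mathcal{L})$ is a closed Lie ideal by definition, and it is primitive by (a). For every bounded Lie automorphism $\varphi$, observation (b) shows that $\varphi$ permutes the family of closed primitive Lie ideals, whence $\varphi(P_V(\mathcal{L}))=P_V(\mathcal{L})$. Lemma \ref{L1} then gives that $P_V(\mathcal{L})$ is characteristic.

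For the equality $P_V(\mathcal{L})=I_V(\mathcal{L})$, the inclusion $I_V(\mathcal{L})\subseteq P_V(\mathcal{L})$ is immediate because every closed primitive Lie ideal is in particular a closed primitive Lie subideal, so $I_V(\mathcal{L})$ is an intersection over a larger family. For the reverse inclusion I would first show that $I_V(\mathcal{L})$ is actually a Lie ideal of $\mathcal{L}$: for each $a\in\mathcal{L}$, the inner automorphism $\exp(\mathrm{ad}(a))$ sends any closed primitive Lie subideal $K$ to another closed primitive Lie subideal by (b), so $I_V(\mathcal{L})\subseteq\exp(\mathrm{ad}(a))(K)$ for every such $K$; equivalently $\exp(-\mathrm{ad}(a))(I_V(\mathcal{L}))\subseteq I_V(\mathcal{L})$. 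Differentiating at $t=0$ the family $\exp(t\,\mathrm{ad}(a))$ shows $\mathrm{ad}(a)(I_V(\mathcal{L}))\subseteq I_V(\mathcal{L})$ for all $a$, so $I_V(\mathcal{L})\vartriangleleft\mathcal{L}$. Combined with (a), $I_V(\mathcal{L})$ is a closed primitive Lie ideal and therefore belongs to the family defining $P_V(\mathcal{L})$, yielding $P_V(\mathcal{L})\subseteq I_V(\mathcal{L})$.

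The only step requiring real care is observation (b) applied at the level of inner automorphisms used to promote $I_V(\mathcal{L})$ to an ideal; here one must verify that bounded automorphisms preserve the chain witnessing ``Lie subideal of $\mathcal{L}$'', which is routine, and that the derivative argument goes through in norm (identical to the one already used in the proof of Lemma \ref{L1}). No substantive obstacle is anticipated.
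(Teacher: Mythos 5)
Your proposal is correct and follows essentially the same route as the paper: the intersection argument shows both $P_V(\mathcal{L})$ and $I_V(\mathcal{L})$ are primitive, and invariance of $I_V(\mathcal{L})$ under bounded Lie automorphisms (the paper invokes Lemma \ref{L1} directly, you re-run its $\exp(t\,\mathrm{ad}(a))$ differentiation argument) promotes $I_V(\mathcal{L})$ to a Lie ideal, forcing $P_V(\mathcal{L})\subseteq I_V(\mathcal{L})$. The only cosmetic difference is that you establish characteristicity of $P_V(\mathcal{L})$ directly from the permutation of primitive ideals, whereas the paper deduces it from the already-proved equality with the characteristic ideal $I_V(\mathcal{L})$.
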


\begin{proof}
$P_{V}(\mathcal{L})$ is a closed Lie ideal of $\mathcal{L.}$ If $A$ is a Lie
subideal of $\mathcal{L}$ and $[A,A]\subseteq P_{V}(\mathcal{L}),$ then
$[A,A]\subseteq J$ for each primitive ideal of $\mathcal{L}$. Hence
$A\subseteq J$, so that $A\subseteq P_{V}(\mathcal{L})$. Thus $P_{V}%
(\mathcal{L})$ is primitive. A similar argument shows that $I_{V}%
(\mathcal{L})$ is a closed primitive subideal of $\mathcal{L}$.

Clearly, $I_{V}(\mathcal{L})\subseteq P_{V}(\mathcal{L})$. Each bounded
isomorphism of $\mathcal{L}$ maps Lie subideals of $\mathcal{L}$ into Lie
subideals and primitive Lie subideals into primitive Lie subideals. Hence
$I_{V}(\mathcal{L})$ is invariant for all bounded isomorphisms of
$\mathcal{L.}$ Therefore, by Lemma \ref{L1}, $I_{V}(\mathcal{L})$ is a
characteristic Lie ideal of $\mathcal{L.}$ Thus $I_{V}(\mathcal{L})$ is a
closed primitive Lie ideal of $\mathcal{L.}$ By (\ref{6.6}), $P_{V}%
(\mathcal{L})\subseteq I_{V}(\mathcal{L}),$ so $P_{V}(\mathcal{L}%
)=I_{V}(\mathcal{L})$.
\end{proof}

In the same way as Vasilescu's radical coincides with ''rad'' in the category
$\mathbf{L}^{\text{f}}$ of all finite-dimensional Lie algebras, $P_{V}$ also
coincides with ''rad'' in $\mathbf{L}^{\text{f}}.$

\begin{lemma}
\label{Vit}$P_{V}(\mathcal{L})=$ \emph{rad}$(\mathcal{L)}$ if $\mathcal{L}$ is finite-dimensional.
\end{lemma}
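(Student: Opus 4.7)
The plan is to show that a closed Lie ideal $J$ of a finite-dimensional $\mathcal{L}$ is primitive (in the paper's subideal-based sense) if and only if $J\supseteq\operatorname{rad}(\mathcal{L})$; then $P_V(\mathcal{L})=\bigcap\{J:J\supseteq\operatorname{rad}(\mathcal{L})\}=\operatorname{rad}(\mathcal{L})$ follows immediately, giving both inclusions at once. Everything reduces to one auxiliary fact about subideals, which will be the main obstacle: a subideal is not an ideal in general, so one cannot directly invoke the maximality of $\operatorname{rad}$.

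\textbf{Key lemma.} \emph{Every solvable Lie subideal $A$ of a finite-dimensional $\mathcal{L}$ is contained in $\operatorname{rad}(\mathcal{L})$.} I would prove this by induction on the length of a chain $A=J_0\vartriangleleft J_1\vartriangleleft\cdots\vartriangleleft J_n=\mathcal{L}$. The case $n=1$ is definition of rad. For the inductive step, $A$ is a solvable ideal of $J_1$, hence $A\subseteq\operatorname{rad}(J_1)$; since $\operatorname{rad}(J_1)$ is a characteristic ideal of $J_1$ (standard in finite dimensions: every derivation preserves the solvable radical) and $J_1\vartriangleleft J_2$, Lemma~\ref{L3.1}(i) gives $\operatorname{rad}(J_1)\vartriangleleft J_2$, and this ideal is solvable, so it lies in $\operatorname{rad}(J_2)$. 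Iterating the argument along the chain yields $A\subseteq\operatorname{rad}(\mathcal{L})$.

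\textbf{``$\Rightarrow$'' of the equivalence.} Let $J$ be primitive. I would check that $\mathcal{L}/J$ has no non-zero abelian subideal: if $B$ is such a subideal, then $A:=q^{-1}(B)$ (with $q:\mathcal{L}\to\mathcal{L}/J$) is a subideal of $\mathcal{L}$ with $[A,A]\subseteq J$, so primitivity forces $A\subseteq J$, i.e. $B=0$. Combining this with the key lemma applied to $\mathcal{L}/J$, any solvable subideal of $\mathcal{L}/J$ — in particular, $q(\operatorname{rad}(\mathcal{L}))$, which is a solvable ideal — must be zero. Hence $\operatorname{rad}(\mathcal{L})\subseteq J$.

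\textbf{``$\Leftarrow$'' of the equivalence, and conclusion.} Conversely, suppose $J\supseteq\operatorname{rad}(\mathcal{L})$. Then $\mathcal{L}/J$ is a quotient of the semisimple algebra $\mathcal{L}/\operatorname{rad}(\mathcal{L})$, hence semisimple, so $\operatorname{rad}(\mathcal{L}/J)=\{0\}$. If $A$ is any Lie subideal of $\mathcal{L}$ with $[A,A]\subseteq J$, then $q(A)$ is an abelian (hence solvable) Lie subideal of $\mathcal{L}/J$, and the key lemma forces $q(A)\subseteq\operatorname{rad}(\mathcal{L}/J)=\{0\}$, i.e. $A\subseteq J$. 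Thus $J$ is primitive. The two directions together identify the primitive ideals as precisely those containing $\operatorname{rad}(\mathcal{L})$, and since $\operatorname{rad}(\mathcal{L})$ is itself primitive, $P_V(\mathcal{L})=\operatorname{rad}(\mathcal{L})$.
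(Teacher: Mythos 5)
Your proof is correct, and its first half coincides with the paper's: your ``key lemma'' (every solvable subideal lies in $\operatorname{rad}(\mathcal{L})$) is exactly what the paper invokes in one line via the balancedness of $\operatorname{rad}$ on $\mathbf{L}^{\mathrm{f}}$ ($A=\operatorname{rad}(A)\subseteq\operatorname{rad}(A_1)\subseteq\cdots\subseteq\operatorname{rad}(\mathcal{L})$ along the chain), and it yields $P_V(\mathcal{L})\subseteq\operatorname{rad}(\mathcal{L})$ in the same way. For the reverse inclusion you take a genuinely different route: the paper uses the preceding (unnumbered) lemma asserting that $P_V(\mathcal{L})$ is itself primitive and then ascends the derived series of $\operatorname{rad}(\mathcal{L})$ term by term, each step absorbing the next term into $P_V(\mathcal{L})$; you instead prove the stronger statement that the primitive ideals are precisely the ideals containing $\operatorname{rad}(\mathcal{L})$, so that $P_V(\mathcal{L})=\bigcap\{J: J\supseteq\operatorname{rad}(\mathcal{L})\}=\operatorname{rad}(\mathcal{L})$. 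Your version buys independence from the preceding lemma and gives a clean description of $\mathrm{Prim}$ in the finite-dimensional case, at the cost of two small verifications you leave implicit but which are routine: that ``no non-zero abelian subideals'' forces $\operatorname{rad}(\mathcal{L}/J)=\{0\}$ (pass to the last non-zero term of the derived series, which is a characteristic ideal, hence an ideal by Lemma \ref{L3.1}), and that a quotient of a semisimple finite-dimensional Lie algebra is semisimple. Both arguments are sound.
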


\begin{proof}
Note first that $\text{rad}(\mathcal{L})$ is a primitive Lie ideal of
$\mathcal{L}$. Indeed, let $A$ be a subideal of $\mathcal{L}$ with
$[A,A]\subseteq$ rad$(\mathcal{L})$. Then $[A,A]$ is a solvable Lie algebra,
whence $A$ is a solvable Lie algebra. Let $A\vartriangleleft A_{1}%
\vartriangleleft A_{2}\vartriangleleft...\vartriangleleft A_{n}%
\vartriangleleft\mathcal{L.}$ As rad is a radical, $A=\text{rad(}%
A)\subseteq\text{rad(}A_{1})\subseteq...\subseteq\text{rad(}\mathcal{L)}$.
Thus $\text{rad}(\mathcal{L})$ is a primitive Lie ideal of $\mathcal{L.}$
Hence, by (\ref{6.6}), $P_{V}(\mathcal{L})\subseteq$ rad$(\mathcal{L}).$

To prove the lemma, it remains to establish the converse inclusion. Let
$J=\text{rad}(\mathcal{L})$ and let $J_{[1]}\supseteq J_{[2]}\supseteq
...\supseteq J_{[n]}\subseteq J_{[n+1]}=0$ be the Lie ideals of $\mathcal{L}$
defined in (\ref{5.3}). Since $[J_{[n]},J_{[n]}]=J_{[n+1]}=\{0\}\subseteq
P_{V}(\mathcal{L})$ and $P_{V}(\mathcal{L})$ is primitive, we have
$J_{[n]}\subseteq P_{V}(\mathcal{L})$. Proceeding in this way, we obtain that
$J_{[n-1]}\subseteq P_{V}(\mathcal{L})$,..., and finally $J\subseteq
P_{V}(\mathcal{L})$. Thus $\text{rad}(\mathcal{L})=P_{V}(\mathcal{L})$.
\end{proof}

Our aim is to show that $P_{V}$ is an over radical on $\frak{L}$ and that the
corresponding radical $P_{V}^{\circ}$ coincides with $(S_{\text{Abel}%
}^{\text{s}})^{\ast}$.

\begin{proposition}
\label{VasHer} $P_{V}$ is an over radical on $\frak{L}$.
\end{proposition}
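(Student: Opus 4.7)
The plan is to verify the three conditions defining an over radical (preradical, balanced, upper stable) by exploiting the fact that $P_V(\mathcal{L})$ is itself a closed primitive Lie ideal (established in the preceding lemma) together with the standard observation that subideal-ness is preserved under $\overline{f(\cdot)}$ for a morphism in $\overline{\mathbf{L}}$ (this was used in the proof of Theorem \ref{T2}(i)).

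First I would establish the preradical property. Let $f:\mathcal{L}\longrightarrow\mathcal{M}$ be a morphism in $\overline{\mathbf{L}}$ and let $J$ be an arbitrary closed primitive Lie ideal of $\mathcal{M}$; it suffices to show $f(P_V(\mathcal{L}))\subseteq J$, as $P_V(\mathcal{M})$ is their intersection. Consider $K=f^{-1}(J)$, which is a closed Lie ideal of $\mathcal{L}$. If $A\!\vartriangleleft\!\!\!\vartriangleleft\mathcal{L}$ satisfies $[A,A]\subseteq K$, then $[f(A),f(A)]\subseteq J$; since $J$ is closed, $[\overline{f(A)},\overline{f(A)}]\subseteq J$, and since $\overline{f(A)}\!\vartriangleleft\!\!\!\vartriangleleft\mathcal{M}$ by the argument in Theorem \ref{T2}(i), primitivity of $J$ in $\mathcal{M}$ gives $\overline{f(A)}\subseteq J$, so $A\subseteq K$. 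Thus $K$ is a primitive ideal of $\mathcal{L}$, whence $P_V(\mathcal{L})\subseteq K$ and $f(P_V(\mathcal{L}))\subseteq J$.

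Next I would check that $P_V$ is balanced. Let $I\vartriangleleft\mathcal{L}$; I claim $P_V(\mathcal{L})\cap I$ is a primitive Lie ideal of $I$. Indeed, if $A\!\vartriangleleft\!\!\!\vartriangleleft I$ and $[A,A]\subseteq P_V(\mathcal{L})\cap I$, then $A$ is a Lie subideal of $\mathcal{L}$ as well, so primitivity of $P_V(\mathcal{L})$ in $\mathcal{L}$ forces $A\subseteq P_V(\mathcal{L})$, hence $A\subseteq P_V(\mathcal{L})\cap I$. Therefore $P_V(I)\subseteq P_V(\mathcal{L})\cap I\subseteq P_V(\mathcal{L})$.

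Finally, for upper stability, set $q:\mathcal{L}\longrightarrow\mathcal{L}/P_V(\mathcal{L})$ and consider the zero ideal of $\mathcal{L}/P_V(\mathcal{L})$: if $B\!\vartriangleleft\!\!\!\vartriangleleft\mathcal{L}/P_V(\mathcal{L})$ satisfies $[B,B]=\{0\}$, then $A=q^{-1}(B)$ is a Lie subideal of $\mathcal{L}$ (preimages of subideals under quotient maps are subideals) with $[A,A]\subseteq P_V(\mathcal{L})$, so primitivity yields $A\subseteq P_V(\mathcal{L})$ and hence $B=\{0\}$. Thus $\{0\}$ is a closed primitive ideal of $\mathcal{L}/P_V(\mathcal{L})$, giving $P_V(\mathcal{L}/P_V(\mathcal{L}))=\{0\}$.

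The one nontrivial ingredient is the preradical step, where one must transfer primitivity from $\mathcal{M}$ back to $\mathcal{L}$ along $f$; this is where the preservation of the subideal property under $\overline{f(\cdot)}$ is essential (and where the closedness of $J$, together with the dense image of $f$, is used to pass from $f(A)$ to $\overline{f(A)}$). The balanced and upper stable steps are then straightforward consequences of the definition of primitivity applied to $P_V(\mathcal{L})$ itself.
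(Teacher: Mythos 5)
Your proof is correct and follows essentially the same route as the paper's: pulling back primitive ideals along a morphism via the preservation of subideals under $\overline{f(\cdot)}$ for the preradical property, and using the primitivity of $P_V(\mathcal{L})$ itself (from the preceding lemma) for both the balanced and upper stable conditions. No gaps.
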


\begin{proof}
Let $f:\mathcal{L}\rightarrow\mathcal{M}$ be a continuous homomorphism of
Banach Lie algebras and $\overline{f(\mathcal{L})}=\mathcal{M}$. Then
$\overline{f(A)}$ is a closed Lie ideal of $\mathcal{M}$, for each Lie ideal
$A$ of $\mathcal{L}$, which implies that $\overline{f(A)}$ is a closed Lie
subideal of $\mathcal{M}$ if $A$ is a Lie subideal of $\mathcal{L}$.

Let $I$ be a closed primitive Lie ideal of $\mathcal{M.}$ Then $J:=f^{-1}(I)$
is a closed primitive Lie ideal of $\mathcal{L}$. Indeed, if $A$ is a Lie
subideal of $\mathcal{L}$ with $[A,A]\subseteq J$, then $[\overline
{f(A)},\overline{f(A)}]\subseteq I$. Hence $\overline{f(A)}\subseteq I,$ so
that $A\subseteq J$. By (\ref{6.6}), $P_{V}(\mathcal{L})\subseteq J$, so that
$f(P_{V}(\mathcal{L}))\subseteq I$. Thus $f(P_{V}(\mathcal{L}))$ is contained
in all closed primitive Lie ideals of $\mathcal{M}$, so that $f(P_{V}%
(\mathcal{L}))\subseteq P_{V}(\mathcal{M})$. Therefore $P_{V}$ is a preradical.

Let $I$ be a closed Lie ideal of $\mathcal{L}$. If $A$ is a Lie subideal of
$I$ with $[A,A]\subseteq I\cap P_{V}(\mathcal{L}),$ then $A\subseteq
P_{V}(\mathcal{L})$ because $P_{V}(\mathcal{L})$ is primitive. Thus
$A\subseteq I\cap P_{V}(\mathcal{L}),$ so that $I\cap P_{V}(\mathcal{L})$ is a
primitive ideal of $I$. This implies that $P_{V}(I)\subseteq I\cap
P_{V}(\mathcal{L})$. We proved that the preradical $P_{V}$ is balanced.

Let $q$: $\mathcal{L}\rightarrow\mathcal{L}/P_{V}(\mathcal{L})$ be the
quotient map. If $K$ is a commutative Lie subideal of $\mathcal{L}%
/P_{V}(\mathcal{L}),$ then $F:=q^{-1}(K)$ is a Lie subideal of $\mathcal{L}$
and $[F,F]\subseteq P_{V}(\mathcal{L})$. As $P_{V}(\mathcal{L})$ is primitive,
$F\subseteq P_{V}(\mathcal{L})$, so that $K=\{0\}$. Thus $\{0\}$ is a
primitive Lie ideal in $\mathcal{L}/P_{V}(\mathcal{L})$ whence $P_{V}%
(\mathcal{L}/P_{V}(\mathcal{L}))=\{0\}$. Thus $P_{V}$ is upper stable.
\end{proof}

\begin{theorem}
\label{Vas=Abel} The radicals $(S_{\text{\emph{Abel}}}^{\text{\emph{s}}%
})^{\ast}$ and $P_{V}^{\circ}$ coincide.
\end{theorem}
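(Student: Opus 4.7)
The plan is to verify that $(S_{\mathrm{Abel}}^{\mathbf{s}})^{\ast}$ and $P_V^{\circ}$ are both radicals sharing the same semisimple class, and then conclude by Corollary \ref{Crad}(ii). Both are radicals: $P_V^{\circ}$ by Proposition \ref{VasHer} together with Theorem \ref{under-over}(ii), and $(S_{\mathrm{Abel}}^{\mathbf{s}})^{\ast}$ because $S_{\mathrm{Abel}}^{\mathbf{s}}$ is an under radical (Theorem \ref{T2}(i)), so Theorem \ref{under-over}(i) applies.

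For the inequality $(S_{\mathrm{Abel}}^{\mathbf{s}})^{\ast}\leq P_V^{\circ}$, I first establish $S_{\mathrm{Abel}}^{\mathbf{s}}\leq P_V$. The core observation, extracted by unfolding (\ref{4.4}) and (\ref{F0'}), is that $S_{\mathrm{Abel}}^{\mathbf{s}}(\mathcal{L})$ equals the closed linear span of all commutative Lie subideals of $\mathcal{L}$: any commutative Lie subideal $K$ lies in $\mathrm{Sub}(\mathcal{L},S_{\mathrm{Abel}})$ because $K$ is itself a commutative ideal of $K$, while conversely each $I\in\mathrm{Sub}(\mathcal{L},S_{\mathrm{Abel}})$ is the closed sum of its commutative ideals, and these are in turn commutative Lie subideals of $\mathcal{L}$. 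Primitivity of $P_V(\mathcal{L})$ then forces every commutative Lie subideal $K$ (which trivially satisfies $[K,K]=\{0\}\subseteq P_V(\mathcal{L})$) into $P_V(\mathcal{L})$, giving $S_{\mathrm{Abel}}^{\mathbf{s}}\leq P_V$. Since both preradicals are balanced, Lemma \ref{cin} promotes this to $(S_{\mathrm{Abel}}^{\mathbf{s}})^{\circ}\leq P_V^{\circ}$; using lower stability of $S_{\mathrm{Abel}}^{\mathbf{s}}$ (Theorem \ref{T4.1}(ii)) this simplifies to $S_{\mathrm{Abel}}^{\mathbf{s}}\leq P_V^{\circ}$. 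Since $P_V^{\circ}$ is a radical and $(S_{\mathrm{Abel}}^{\mathbf{s}})^{\ast}$ is the smallest radical dominating $S_{\mathrm{Abel}}^{\mathbf{s}}$ by Theorem \ref{under-over}(i), we obtain $(S_{\mathrm{Abel}}^{\mathbf{s}})^{\ast}\leq P_V^{\circ}$.

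For the reverse inequality I work at the level of semisimple classes. By Theorem \ref{super}(i), $\mathbf{Sem}((S_{\mathrm{Abel}}^{\mathbf{s}})^{\ast})=\mathbf{Sem}(S_{\mathrm{Abel}}^{\mathbf{s}})$, which by Theorem \ref{T7.2}(i) consists of those $\mathcal{L}$ with no non-zero finite-dimensional commutative Lie subideal — equivalently, no non-zero commutative Lie subideal at all, since for any non-zero element $y$ of a commutative subideal, $\mathbb{C}y$ is a one-dimensional commutative Lie subideal of $\mathcal{L}$. For such $\mathcal{L}$ the zero ideal is automatically primitive in the sense of (\ref{primV}), so $P_V(\mathcal{L})=\{0\}$ and hence $P_V^{\circ}(\mathcal{L})=\{0\}$. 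This yields $\mathbf{Sem}((S_{\mathrm{Abel}}^{\mathbf{s}})^{\ast})\subseteq\mathbf{Sem}(P_V^{\circ})$; the opposite inclusion follows from $(S_{\mathrm{Abel}}^{\mathbf{s}})^{\ast}\leq P_V^{\circ}$. The semisimple classes thus coincide, and Corollary \ref{Crad}(ii) finishes the proof.

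The one nontrivial point to watch is the identification of $S_{\mathrm{Abel}}^{\mathbf{s}}(\mathcal{L})$ as the closed span of commutative Lie subideals of $\mathcal{L}$: a Lie subideal $I$ with $S_{\mathrm{Abel}}(I)=I$ need not itself be commutative, but decomposing it into its constituent commutative ideals (which remain commutative Lie subideals of $\mathcal{L}$) gives the required description. Everything else is routine application of the preradical machinery assembled in Sections \ref{section4} and \ref{section5}.
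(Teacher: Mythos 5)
Your proof is correct and follows essentially the same route as the paper: both reduce the statement to the coincidence of the semisimple classes via Corollary \ref{Crad}(ii), and both rest on the observation that a commutative Lie subideal $A$ satisfies $[A,A]=\{0\}$ and is therefore absorbed into $P_V(\mathcal{L})$ by primitivity. The only cosmetic difference is that you package the paper's explicit transfinite induction (showing $A\subseteq P_V^{\alpha}(\mathcal{L})$ for every ordinal $\alpha$) into the single inequality $S_{\mathrm{Abel}}^{\mathbf{s}}\leq P_V$ followed by Lemma \ref{cin} and Theorem \ref{under-over}(i), which is the same computation carried out in general form.
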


\begin{proof}
As $P_{V}$ is an over radical, $P_{V}^{\circ}$ is a radical by Theorem
\ref{under-over}(ii). Hence, by Corollary \ref{Crad}, it suffices to show that
$\mathbf{Sem}(P_{V}^{\circ})=\mathbf{Sem}(S_{\text{Abel}}^{\text{s}})^{\ast}$.
By Theorem \ref{T7.2}, $\mathcal{L}\in\mathbf{Sem}(S_{\text{Abel}}^{\text{s}%
})^{\ast}$ if and only if $\mathcal{L}$ has no non-zero commutative Lie
subideals. If this condition is fulfilled, then $\{0\}$ is a primitive ideal
of $\mathcal{L}$, so that $P_{V}(\mathcal{L})=\{0\}$ and therefore
$P_{V}^{\circ}(\mathcal{L})=\{0\}$. We have to show the converse: if
$P_{V}^{\circ}(\mathcal{L})=\{0\}$ then $\mathcal{L}$ has no commutative subideals.

It follows from (\ref{primV}) that if $A$ is a commutative Lie subideal of
$\mathcal{L,}$ then $A$ is contained in each primitive Lie ideal of
$\mathcal{L}$, so that $A\subseteq P_{V}(\mathcal{L})$. Furthermore, $A$ is a
commutative Lie ideal of $P_{V}(\mathcal{L}).$ Hence, as above, $A\subseteq
P_{V}(P_{V}(\mathcal{L}))$. Arguing in this way, one easily shows that
$A\subseteq P_{V}^{\alpha}(\mathcal{L})$ for each ordinal $\alpha$, whence
$A\subseteq P_{V}^{\circ}(\mathcal{L})$. As $P_{V}^{\circ}(\mathcal{L}%
)=\{0\},$ we get $A=\{0\}$.
\end{proof}

Consider now the map $D$: $\mathcal{L}\mapsto\overline{\left[  \mathcal{L}%
,\mathcal{L}\right]  }=\mathcal{L}_{[1]}.$ Then $D^{n}\left(  \mathcal{L}%
\right)  =\overline{[\mathcal{L}_{[n]},\mathcal{L}_{[n]}\mathcal{]}%
}=\mathcal{L}_{[n+1]}$ for each $n\in\mathbb{N}$. Defining $D^{\alpha}\left(
\mathcal{L}\right)  $ as in (\ref{4.o}) for each ordinal $\alpha$, we obtain
the $D$-superposition series $\left\{  D^{\alpha}\left(  \mathcal{L}\right)
\right\}  $ --- a transfinite analogue of the \textit{derived series} of
$\mathcal{L}$, and define $D^{\circ}$ as in (\ref{r1}). Set
\begin{equation}
\mathcal{D}=D^{\circ}. \label{6.1}%
\end{equation}

\begin{theorem}
\label{T7.4}\emph{(i) }$D$ is an over radical and $\mathcal{D}$ is a radical
in $\overline{\mathbf{L}}$.

\begin{itemize}
\item [$\mathrm{(ii)}$]For each $\mathcal{L}\in\mathfrak{L}$, $\mathcal{D}%
\left(  \mathcal{L}\right)  $ is the largest $\mathcal{D}$-radical Lie ideal
of $\mathcal{L;}$ in other words it is the largest of all Lie ideals $I$ of
$\mathcal{L}$ satisfying $I=\overline{\left[  I,I\right]  }.$

\item[$\mathrm{(iii)}$] The restriction\emph{ }of $\mathcal{D}$ to
$\mathbf{L}^{\mathrm{f}}$ coincides with $R_{\emph{Levi}}.$
\end{itemize}
\end{theorem}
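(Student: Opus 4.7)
For part (i), I will first verify that $D$ is a preradical in $\overline{\mathbf{L}}$: for any morphism $f\colon\mathcal{L}\to\mathcal{M}$, one has $f([\mathcal{L},\mathcal{L}])=[f(\mathcal{L}),f(\mathcal{L})]\subseteq[\mathcal{M},\mathcal{M}]$, and continuity of $f$ gives $f(D(\mathcal{L}))\subseteq D(\mathcal{M})$, while Lemma~\ref{L3.1}(iii) ensures $D(\mathcal{L})$ is a closed characteristic Lie ideal of $\mathcal{L}$. Balancedness is immediate from $[I,I]\subseteq[\mathcal{L},\mathcal{L}]$ whenever $I\vartriangleleft\mathcal{L}$, and upper stability holds because $\mathcal{L}/D(\mathcal{L})$ is commutative, so $D(\mathcal{L}/D(\mathcal{L}))=\{0\}$. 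Thus $D$ is an over radical, and Theorem~\ref{under-over}(ii) then yields that $\mathcal{D}=D^{\circ}$ is a radical, in fact the largest radical $\leq D$.

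For part (ii), since $\mathcal{D}$ is a radical, Corollary~\ref{Crad}(i) shows at once that $\mathcal{D}(\mathcal{L})$ is $\mathcal{D}$-radical and contains every $\mathcal{D}$-radical Lie ideal of $\mathcal{L}$, so only the reformulation in terms of the equation $I=\overline{[I,I]}$ remains. Using $\mathcal{D}\leq D$: if $\mathcal{D}(I)=I$ then $I=\mathcal{D}(I)\subseteq D(I)\subseteq I$, forcing $I=D(I)=\overline{[I,I]}$. Conversely, if $I=D(I)$, a transfinite induction gives $D^{\alpha}(I)=I$ for every ordinal $\alpha$: at a successor, $D^{\alpha+1}(I)=D(D^{\alpha}(I))=D(I)=I$, and at a limit ordinal $\alpha$, $D^{\alpha}(I)=\bigcap_{\alpha'<\alpha}D^{\alpha'}(I)=I$. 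Hence $\mathcal{D}(I)=D^{\circ}(I)=I$, so such an $I$ is $\mathcal{D}$-radical, which closes the equivalence.

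For part (iii), an easy induction using \eqref{5.3} yields $D^{n}(\mathcal{L})=\mathcal{L}_{[n]}$ for every positive integer $n$. When $\mathcal{L}\in\mathfrak{L}^{\mathrm{f}}$ the decreasing chain $\{\mathcal{L}_{[n]}\}$ stabilizes in finitely many steps, so the ordinal $r_{D}^{\circ}(\mathcal{L})$ is finite and $\mathcal{D}(\mathcal{L})=\bigcap_{n}\mathcal{L}_{[n]}=\mathfrak{P}(\mathcal{L})$ in the notation of \eqref{6.2}. Theorem~\ref{Levi}(ii) identifies $\mathfrak{P}(\mathcal{L})$ with $S_{\text{A}^{\text{sem}}}(\mathcal{L})=R_{\mathrm{Levi}}(\mathcal{L})$, completing the identification of the restriction.

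The only item requiring any real thought is the transfinite induction in (ii); both the successor and limit steps, however, are immediate from the definition of $D^{\alpha}$ combined with the idempotence $D(I)=I$, so the theorem reduces entirely to the general machinery of Section~\ref{section4} applied to the concrete observation that $\mathcal{L}/\overline{[\mathcal{L},\mathcal{L}]}$ is commutative and hence annihilated by $D$.
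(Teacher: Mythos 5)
Your proof is correct and follows essentially the same route as the paper: verify directly that $D$ is a balanced, upper stable preradical (using that $\mathcal{L}/D(\mathcal{L})$ is commutative), invoke Theorem \ref{under-over}(ii) to conclude that $\mathcal{D}=D^{\circ}$ is a radical, reduce (ii) to the idempotence of $D$ on ideals with $I=\overline{[I,I]}$, and identify $\mathcal{D}|\mathbf{L}^{\mathrm{f}}$ with $\mathfrak{P}=R_{\mathrm{Levi}}$ via Theorem \ref{Levi}(ii) and (\ref{6.2}). One small quibble: Lemma \ref{L3.1}(iii) does not apply to $J=D(\mathcal{L})$ since its hypothesis $J=\overline{[J,J]}$ need not hold there, but the property you actually need for the preradical axiom is only that $D(\mathcal{L})$ is a closed Lie ideal, which is immediate from the Jacobi identity (and the characteristic property then follows from Corollary \ref{C1}(i)), so nothing is lost.
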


\begin{proof}
(i) For every $\mathcal{L}\in\frak{L}$, $\overline{f\left(  \mathcal{L}%
_{\left[  1\right]  }\right)  }=\mathcal{M}_{\left[  1\right]  }$ for each
morphism $f$: $\mathcal{L}\longrightarrow\mathcal{M}$ in $\overline
{\mathbf{L}}$ and $F(I)=I_{\left[  1\right]  }\subseteq\mathcal{L}_{\left[
1\right]  }=D\left(  \mathcal{L}\right)  $ for each $I\vartriangleleft
\mathcal{L}$. Also $D\left(  \mathcal{L}/D(\mathcal{L})\right)  =\left(
\mathcal{L}/\mathcal{L}_{\left[  1\right]  }\right)  _{\left[  1\right]  }=0.$
Hence $D$ is a balanced upper stable preradical. Thus $D$ is an over radical.
By Theorem \ref{under-over}(ii), $D^{\circ}=\mathcal{D}$ is a radical.

(ii) As $\mathcal{D}$ is a radical, $\mathcal{D}\left(  \mathcal{D}\left(
\mathcal{L}\right)  \right)  =\mathcal{D}\left(  \mathcal{L}\right)  $. Thus
$\mathcal{D}\left(  \mathcal{L}\right)  $ is $\mathcal{D}$-radical. On the
other hand, the condition $I=\mathcal{D}(I)$ implies $I=\mathcal{D}\left(
I\right)  \subseteq\mathcal{D}\left(  \mathcal{L}\right)  $.

(iii) By Theorem \ref{Levi}(ii) and (\ref{6.2}), $R_{\text{Levi}}%
(\mathcal{L})=\mathfrak{P}(\mathcal{L})=\mathcal{D}(\mathcal{L})$ for each
$\mathcal{L}\in\mathbf{L}^{\text{f}}$.
\end{proof}

As $R_{\mathrm{Levi}}$ is not a hereditary radical, $\mathcal{D}$ is not
hereditary too.

Consider now the Lie ideal-multifunction C $=$ $\{$C$_{\mathcal{L}}\}$ on
$\frak{L}$ such that each family $\mathrm{C}_{\mathcal{L}}$ consists of closed
characteristic Lie ideals $\left\{  \mathcal{L}^{\left[  \alpha\right]
}\right\}  _{\alpha}$, where $\mathcal{L}^{\left[  1\right]  }=\mathcal{L}$,
$\mathcal{L}^{\left[  \alpha+1\right]  }=\overline{\left[  \mathcal{L}%
,\mathcal{L}^{\left[  \alpha\right]  }\right]  }$ for each ordinal $\alpha,$
and $\mathcal{L}^{\left[  \alpha\right]  }=\cap_{\alpha^{\prime}<\alpha
}\mathcal{L}^{[\alpha^{\prime}]}$ for limit ordinal $\alpha$. The series
$\left\{  \mathcal{L}^{\left[  \alpha\right]  }\right\}  _{\alpha}$ is a
transfinite analogue of the \textit{lower central series} of $\mathcal{L}$. As
in (\ref{4.r}), set $P_{\mathrm{C}}(\mathcal{L})=\frak{p}($C$_{\mathcal{L}%
})=\cap_{\alpha}\mathcal{L}^{[\alpha]}.$

\begin{theorem}
$P_{\mathrm{C}}$ is an over radical and $P_{\mathrm{C}}^{\circ}=\mathcal{D}.$
\end{theorem}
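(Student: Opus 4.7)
The plan is to verify the two conditions separately. First I will show that $\mathrm{C}$ is an inverse, balanced, upper stable multifunction, which by Theorem \ref{Cmain} makes $P_{\mathrm{C}}$ a balanced, upper stable preradical, i.e., an over radical. Then I will show $\mathbf{Rad}(P_{\mathrm{C}})=\mathbf{Rad}(\mathcal{D})$ and conclude by Corollary \ref{Crad}.

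The inverse and balanced properties rest on a single transfinite induction: for a bounded homomorphism $f:\mathcal{L}\to\mathcal{M}$ with dense image, I will show $f(\mathcal{L}^{[\alpha]})\subseteq\mathcal{M}^{[\alpha]}$ for every ordinal $\alpha$. At the successor step I would use continuity of $f$ and the Lie bracket to write $f(\mathcal{L}^{[\alpha+1]})=f(\overline{[\mathcal{L},\mathcal{L}^{[\alpha]}]})\subseteq\overline{[f(\mathcal{L}),f(\mathcal{L}^{[\alpha]})]}\subseteq\overline{[\mathcal{M},\mathcal{M}^{[\alpha]}]}=\mathcal{M}^{[\alpha+1]}$; limit ordinals are immediate from commuting $f$ with intersections in the inclusion direction. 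Specializing this to the inclusion of a closed ideal $J\vartriangleleft\mathcal{L}$ gives $J^{[\alpha]}\subseteq\mathcal{L}^{[\alpha]}$, which is the balanced condition for $\mathrm{C}$ in the sense of Definition \ref{D4.7}.

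Upper stability is the most delicate point. Set $I=P_{\mathrm{C}}(\mathcal{L})=\cap_{\alpha}\mathcal{L}^{[\alpha]}$ and let $q:\mathcal{L}\to\mathcal{L}/I$ be the quotient. I would prove by transfinite induction that $(\mathcal{L}/I)^{[\alpha]}=q(\mathcal{L}^{[\alpha]})$. Since $I\subseteq\mathcal{L}^{[\alpha]}$, the set $q(\mathcal{L}^{[\alpha]})$ is closed, and $q^{-1}(q(\mathcal{L}^{[\alpha]}))=\mathcal{L}^{[\alpha]}$. At the successor step, using $I\subseteq\mathcal{L}^{[\alpha+1]}=\overline{[\mathcal{L},\mathcal{L}^{[\alpha]}]}$ I have $\overline{q([\mathcal{L},\mathcal{L}^{[\alpha]}])}=q(\overline{[\mathcal{L},\mathcal{L}^{[\alpha]}]})=q(\mathcal{L}^{[\alpha+1]})$, and by the inductive hypothesis this equals $\overline{[q(\mathcal{L}),q(\mathcal{L}^{[\alpha]})]}=(\mathcal{L}/I)^{[\alpha+1]}$. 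At a limit $\alpha$, since each $q^{-1}(q(\mathcal{L}^{[\alpha']}))=\mathcal{L}^{[\alpha']}$, the preimage of $\cap_{\alpha'<\alpha}q(\mathcal{L}^{[\alpha']})$ is $\cap_{\alpha'<\alpha}\mathcal{L}^{[\alpha']}=\mathcal{L}^{[\alpha]}$, giving $(\mathcal{L}/I)^{[\alpha]}=q(\mathcal{L}^{[\alpha]})$. Intersecting over all $\alpha$ yields $P_{\mathrm{C}}(\mathcal{L}/I)=q(\cap_{\alpha}\mathcal{L}^{[\alpha]})=q(I)=\{0\}$, so $P_{\mathrm{C}}$ is upper stable. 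This step, where quotients and transfinite intersections must interact well, is the main obstacle; everything hinges on the observation $I\subseteq\mathcal{L}^{[\alpha]}$.

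With $P_{\mathrm{C}}$ established as an over radical, Theorem \ref{under-over}(ii) tells us $P_{\mathrm{C}}^{\circ}$ is a radical, as is $\mathcal{D}$ by Theorem \ref{T7.4}(i). By Corollary \ref{Crad} it suffices to compare $\mathbf{Rad}$-classes. By Theorem \ref{T4.1}(ii), $\mathbf{Rad}(P_{\mathrm{C}}^{\circ})=\mathbf{Rad}(P_{\mathrm{C}})$. On one hand $\mathcal{L}\in\mathbf{Rad}(P_{\mathrm{C}})$ forces $\mathcal{L}^{[2]}=\overline{[\mathcal{L},\mathcal{L}]}=\mathcal{L}$, and by Theorem \ref{T7.4}(ii) this characterizes $\mathbf{Rad}(\mathcal{D})$; conversely if $\mathcal{L}=\overline{[\mathcal{L},\mathcal{L}]}$ a straightforward transfinite induction shows $\mathcal{L}^{[\alpha]}=\mathcal{L}$ for all $\alpha$, hence $\mathcal{L}\in\mathbf{Rad}(P_{\mathrm{C}})$. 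Thus $\mathbf{Rad}(P_{\mathrm{C}}^{\circ})=\mathbf{Rad}(\mathcal{D})$ and Corollary \ref{Crad} gives $P_{\mathrm{C}}^{\circ}=\mathcal{D}$.
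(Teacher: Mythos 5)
Your proposal is correct and follows essentially the same route as the paper: transfinite induction on the lower central series to get the preradical, balanced and upper-stable properties, then reduction via Corollary \ref{Crad} and Theorem \ref{T4.1}(ii) to the observation that both $\mathbf{Rad}(P_{\mathrm{C}})$ and $\mathbf{Rad}(D)$ consist exactly of the algebras with $\mathcal{L}=\overline{[\mathcal{L},\mathcal{L}]}$. The only (harmless) difference is that for upper stability you establish the equality $(\mathcal{L}/I)^{[\alpha]}=q(\mathcal{L}^{[\alpha]})$, exploiting $I\subseteq\mathcal{L}^{[\alpha]}$, where the paper gets by with the one-sided inclusion $(\mathcal{L}/I)^{[\alpha]}\subseteq\mathcal{L}^{[\alpha]}/I$.
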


\begin{proof}
By induction and by (\ref{F2}), $\overline{f\left(  \mathcal{L}^{\left[
\alpha\right]  }\right)  }\subseteq\mathcal{M}^{\left[  \alpha\right]  }$ for
every morphism $f:\mathcal{L}\longrightarrow\mathcal{M}$ in $\overline
{\mathbf{L}}.$ Hence, by (\ref{F2}), $P_{\text{C}}$ is a preradical. For each
$I\vartriangleleft\mathcal{L}$ and $\alpha,$ $I^{\left[  \alpha\right]
}\subseteq\mathcal{L}^{\left[  \alpha\right]  }.$ Hence $P_{\text{C}}$ is balanced.

Set $I=P_{\mathrm{C}}\left(  \mathcal{L}\right)  .$ By induction, it is easy
to see that $\left(  \mathcal{L}/I\right)  ^{\left[  \alpha\right]  }%
\subseteq\mathcal{L}^{\left[  \alpha\right]  }/I$, whence $P_{\text{C}%
}(\mathcal{L}/I)=\cap_{\alpha}(\mathcal{L}/I)^{[\alpha]}\subseteq\cap_{\alpha
}(\mathcal{L}^{[\alpha]}/I)=P_{\text{C}}(\mathcal{L})/I=I/I=\{0\}$. Thus
$P_{\text{C}}$ is upper stable. Hence $P_{\mathrm{C}}$ is an over radical and,
by Theorem \ref{under-over}, $P_{\mathrm{C}}^{\circ}$ is a radical.

By Corollary \ref{Crad}(ii), to prove the equality $P_{\mathrm{C}}^{\circ
}=\mathcal{D}$ it suffices to show that $\mathbf{Rad}\left(  P_{\mathrm{C}%
}^{\circ}\right)  =\mathbf{Rad}\left(  \mathcal{D}\right)  $. As
$P_{\mathrm{C}}$ and $D$ are balanced, it follows from Theorem \ref{T4.1}(i)
that we only need to show that $\mathbf{Rad}\left(  P_{\mathrm{C}}\right)
=\mathbf{Rad}\left(  D\right)  $ which is obvious, as $\mathcal{L}$ belongs to
any of these classes if and only if $\mathcal{L}=\overline{\left[
\mathcal{L},\mathcal{L}\right]  }$.
\end{proof}

\section{\label{7}Frattini radical}

The Frattini theory of finite-dimensional Lie algebras $\mathcal{L}$ studies
the structure of maximal Lie ideals and maximal Lie subalgebras in
$\mathcal{L}$. To extend it to Banach Lie algebras, one can introduce
multifunctions S$_{\mathcal{L}}^{\max}$ and J$_{\mathcal{L}}^{\max}$, where
S$_{\mathcal{L}}^{\max}$ (respectively, J$_{\mathcal{L}}^{\max}),$ for each
$\mathcal{L}\in\frak{L,}$ is the family of \textit{all} maximal proper closed
Lie subalgebras (respectively, Lie ideals) of $\mathcal{L}.$ It can be shown
that $P_{\text{S}^{\max}}$ and $P_{\text{J}^{\max}}$ are upper stable
preradicals on $\overline{\mathbf{L}}$. However, this approach encounters
serious obstacles and has not given, so far, any further interesting results.
For example, we do not know whether the preradicals $P_{\text{S}^{\max}}$ and
$P_{\text{J}^{\max}}$ are balanced.

As we will see further, a substantial theory can be developed if, instead of
all maximal Lie subalgebras (ideals), one considers maximal Lie subalgebras
and ideals \textit{of finite codimension}.

Let us consider the following four Lie subalgebra-multifunctions on $\frak{L}%
$: \smallskip

\begin{itemize}
\item [$\mathrm{1)}$]$\frak{S}=\{\frak{S}_{\mathcal{L}}\}_{\mathcal{L}%
\in\frak{L}}$ where $\frak{S}_{\mathcal{L}}$ consists of all \textit{proper
}closed \textit{ }Lie subalgebras of finite codimension in $\mathcal{L;}$

\item[$\mathrm{2)}$] $\frak{S}^{\text{max}}=\{\frak{S}_{\mathcal{L}%
}^{\text{max}}\}_{\mathcal{L}\in\frak{L}}$ where $\frak{S}_{\mathcal{L}%
}^{\text{max}}$ consists of all \textit{maximal proper }closed Lie subalgebras
of finite codimension in $\mathcal{L;}$

\item[$\mathrm{3)}$] $\frak{J}=\{\frak{J}_{\mathcal{L}}\}_{\mathcal{L}%
\in\frak{L}}$ where $\frak{J}_{\mathcal{L}}$ consists of all \textit{proper
}closed \textit{ }Lie ideals of finite codimension in $\mathcal{L}$;

\item[$\mathrm{4)}$] $\frak{J}^{\text{max}}=\{\frak{J}_{\mathcal{L}%
}^{\text{max}}\}_{\mathcal{L}\in\frak{L}}$ where $\frak{J}_{\mathcal{L}%
}^{\text{max}}$ consists of all \textit{maximal} \textit{proper }closed Lie
ideals of finite codimension in $\mathcal{L}$.
\end{itemize}

\begin{proposition}
\label{quotient}Let $\Gamma$ be any of the multifunctions $\mathfrak{S},$
$\mathfrak{S}^{\max},$ $\mathfrak{J,}$ $\mathfrak{J}^{\max}$ and let
$f:\mathcal{L}\rightarrow\mathcal{M}$ be a morphism in $\overline{\mathbf{L}%
}.$\emph{\ }If $K\in\Gamma_{\mathcal{M}}$ then $f^{-1}\left(  K\right)
:=f^{-1}\left(  K\cap f\left(  \mathcal{L}\right)  \right)  \in\Gamma
_{\mathcal{L}}$.
\end{proposition}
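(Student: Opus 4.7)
The plan is to verify the defining properties of $\Gamma_{\mathcal{L}}$ for $f^{-1}(K)$ in a uniform manner (closedness, Lie subalgebra or ideal structure, properness, finite codimension), and then treat the two ``maximal'' cases separately, using Theorem~\ref{KST1}(i) to reduce the harder subalgebra case to a Lie algebra isomorphism between finite-dimensional quotients.

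First I would record that $f^{-1}(K)=\{x\in\mathcal{L}:f(x)\in K\}$ is closed by continuity of $f$ and closedness of $K$, and that it is a Lie subalgebra (respectively, a Lie ideal) of $\mathcal{L}$ whenever $K$ is, since $f([x,y])=[f(x),f(y)]$. For finite codimension I would consider the induced linear map
\[
\bar f\colon \mathcal{L}/f^{-1}(K)\longrightarrow \mathcal{M}/K,\qquad x+f^{-1}(K)\mapsto f(x)+K,
\]
which is well-defined and injective by construction. Its image $(f(\mathcal{L})+K)/K$ is dense in the finite-dimensional space $\mathcal{M}/K$ (since $f(\mathcal{L})$ is dense in $\mathcal{M}$), hence equal to it. So $\bar f$ is a linear bijection and $\dim\mathcal{L}/f^{-1}(K)=\dim\mathcal{M}/K<\infty$. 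Properness of $f^{-1}(K)$ follows because $f^{-1}(K)=\mathcal{L}$ would give $f(\mathcal{L})\subseteq K$, whence $\mathcal{M}=\overline{f(\mathcal{L})}\subseteq K$, contradicting $K\subsetneq\mathcal{M}$. This already settles the cases $\Gamma=\mathfrak{S}$ and $\Gamma=\mathfrak{J}$.

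For $\Gamma=\mathfrak{J}^{\max}$, the quotients $\mathcal{L}/f^{-1}(K)$ and $\mathcal{M}/K$ carry Lie algebra structures since $K$ is an ideal, and $\bar f$ is then a Lie algebra isomorphism because $\bar f([x,y]+f^{-1}(K))=[f(x),f(y)]+K$. Consequently closed Lie ideals of $\mathcal{L}$ containing $f^{-1}(K)$ correspond bijectively to closed Lie ideals of $\mathcal{M}$ containing $K$, and maximality of $K$ in $\mathcal{M}$ transfers to maximality of $f^{-1}(K)$ in $\mathcal{L}$.

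The main obstacle is $\Gamma=\mathfrak{S}^{\max}$: since $K$ is only a subalgebra, $\bar f$ is a mere vector space bijection and there is no automatic correspondence between subalgebras of $\mathcal{L}$ containing $f^{-1}(K)$ and subalgebras of $\mathcal{M}$ containing $K$ (the obstruction being the bracket $[f(H),K]$, which need not land in $f(H)+K$). To bypass this I would invoke Theorem~\ref{KST1}(i): since $K$ is a maximal closed Lie subalgebra of finite codimension in $\mathcal{M}$, it contains a closed Lie ideal $N$ of $\mathcal{M}$ of finite codimension. Then $f^{-1}(N)\vartriangleleft\mathcal{L}$ has finite codimension by the first paragraph, and the analogous map $\bar f\colon \mathcal{L}/f^{-1}(N)\to\mathcal{M}/N$ is now a Lie algebra isomorphism of finite-dimensional Lie algebras. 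A short check (using that $f(\mathcal{L})+N=\mathcal{M}$, since the finite-codimensional $N$ absorbs the density of $f(\mathcal{L})$) shows $\bar f$ sends $f^{-1}(K)/f^{-1}(N)$ onto $K/N$. Maximality of $K$ in $\mathcal{M}$ is equivalent to maximality of $K/N$ in $\mathcal{M}/N$, which transports through the isomorphism to maximality of $f^{-1}(K)/f^{-1}(N)$ in $\mathcal{L}/f^{-1}(N)$; since any closed subalgebra properly containing $f^{-1}(K)$ automatically contains $f^{-1}(N)$, this yields maximality of $f^{-1}(K)$ in $\mathcal{L}$, completing the proof.
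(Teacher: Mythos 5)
Your proof is correct, but for the two maximal cases it takes a genuinely different route from the paper's. The paper argues uniformly for $\mathfrak{S}^{\max}$ and $\mathfrak{J}^{\max}$ without ever leaving the Banach setting: it takes a maximal proper closed Lie subalgebra (respectively, ideal) $L$ of $\mathcal{L}$ containing $F:=f^{-1}(K)$, notes that $L=f^{-1}(f(L))$ because $\ker f\subseteq F$, and exploits the dichotomy forced by maximality of $K$ --- either $f(L)\subseteq K$, which gives $L=F$ and hence maximality of $F$, or $f(L)+K=\mathcal{M}$, which pulls back to $L=\mathcal{L}$, a contradiction; the only external input is Lemma \ref{Closed}. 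You instead reduce everything to finite-dimensional quotients: for $\mathfrak{J}^{\max}$ via the induced Lie isomorphism $\mathcal{L}/f^{-1}(K)\to\mathcal{M}/K$, and for $\mathfrak{S}^{\max}$ by invoking Theorem \ref{KST1}(i) to find a closed Lie ideal $N\subseteq K$ of finite codimension in $\mathcal{M}$ and transporting maximality through the isomorphism $\mathcal{L}/f^{-1}(N)\to\mathcal{M}/N$. Your route is heavier in that it imports the main result of \cite{KST2} into a proposition the paper proves from scratch (this is harmless --- that theorem is quoted in Section 2 and there is no circularity), but it buys a clean bypass of the one delicate point in the paper's argument, namely why $f(L)+K$ is a Lie subalgebra rather than merely a closed subspace; justifying that requires observing that $f(f^{-1}(K))=K\cap f(\mathcal{L})$ is dense in $K$, so that $\overline{f(L)}$ is a closed subalgebra containing $K$. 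Your preliminary verifications of closedness, properness and finite codimension via the injective induced map with dense image into the finite-dimensional $\mathcal{M}/K$ are exactly the details the paper asserts without proof.
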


\begin{proof}
Since $K$ is a proper closed subspace of finite codimension in $\mathcal{M}$,
then $F:=f^{-1}\left(  K\right)  $ is a closed proper subspace of finite
codimension in $\mathcal{L}$. Clearly, $F$ is a Lie subalgebra, and it is a
Lie ideal if $K$ is a Lie ideal.

We claim that $F$ is maximal if $K$ is maximal. Indeed, let $L\subseteq
\mathcal{L}$ be a maximal proper closed Lie subalgebra (ideal) containing $F$.
Note that $L=f^{-1}(f(L))$ because $f^{-1}(0)\subseteq F\subseteq L$. By Lemma
\ref{Closed}, $f(L)+K$ is a closed Lie subalgebra (ideal) in $\mathcal{M}$. If
$K$ is maximal then either $f(L)\subseteq K$ or $f(L)+K=\mathcal{M}$. In the
first case $L\subseteq f^{-1}(K)=F$. As $L$ is maximal, $F=L$ and $F$ is
maximal. In the second case $f(\mathcal{L})\subseteq\mathcal{M}=f(L)+K$,
whence $\mathcal{L}\subseteq f^{-1}(f(L))+f^{-1}(K)=L+F=L$, a contradiction.
\end{proof}

\begin{remark}
\emph{If }$f$\emph{ is }onto\emph{ then, as above, we have that }$L\in
\Gamma_{\mathcal{L}}$ \emph{implies} $f\left(  L\right)  \in\Gamma
_{\mathcal{M}}\cup\left\{  \mathcal{M}\right\}  .$ \emph{Thus}%
\begin{equation}
\Gamma_{\mathcal{M}}\subseteq\{f(L):L\in\Gamma_{\mathcal{L}}\}\subseteq
\left\{  \mathcal{M}\right\}  \cup\Gamma_{\mathcal{M}}. \label{8.1}%
\end{equation}
\end{remark}

Recall (see (\ref{F0}), (\ref{4.r})) that, for each family $\Gamma$ of Lie
subalgebras of a Banach Lie algebra $\mathcal{L}$,%
\begin{equation}
P_{\Gamma}(\mathcal{L})=\frak{p}(\Gamma)=\cap_{L\in\Gamma}L,\text{ if }%
\Gamma\neq\varnothing;\text{ and }P_{\Gamma}(\mathcal{L})=\mathcal{L}\text{,
if }\Gamma=\varnothing. \label{8.1'}%
\end{equation}

\begin{theorem}
\label{Cfam2}$P_{\mathfrak{S}},$ $P_{\mathfrak{S}^{\max}},$ $P_{\mathfrak{J}%
},$ $P_{\mathfrak{J}^{\max}}$ are upper stable preradicals in $\overline
{\mathbf{L}}$.
\end{theorem}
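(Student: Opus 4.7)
My plan is to handle all four multifunctions simultaneously, with Proposition~\ref{quotient} supplying the main input.

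First, I will verify that each of $\mathfrak{S}$, $\mathfrak{S}^{\max}$, $\mathfrak{J}$, $\mathfrak{J}^{\max}$ is an inverse multifunction in the sense of Definition~\ref{D4.7}. Given a morphism $f\colon\mathcal{L}\to\mathcal{M}$ in $\overline{\mathbf{L}}$ and any $K\in\Gamma_{\mathcal{M}}$, Proposition~\ref{quotient} yields $L:=f^{-1}(K)\in\Gamma_{\mathcal{L}}$, and $\overline{f(L)}\subseteq K$ because $f(L)=K\cap f(\mathcal{L})$ and $K$ is closed in $\mathcal{M}$. Hence $f(\Gamma_{\mathcal{L}})\overleftarrow{\subset}\Gamma_{\mathcal{M}}$, so Proposition~\ref{L2}(ii) produces the preradical $P_{\Gamma}$ in $\overline{\mathbf{L}}$; that $P_{\Gamma}(\mathcal{L})$ is actually a Lie ideal (and not merely a Lie subalgebra, which would otherwise be an issue for $\mathfrak{S}$ and $\mathfrak{S}^{\max}$) falls out of the inner-automorphism argument $f=\exp(t\,\mathrm{ad}(a))$ built into the proof of that proposition.

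For upper stability, let $I:=P_{\Gamma}(\mathcal{L})$ and let $q\colon\mathcal{L}\to\mathcal{L}/I$ be the quotient map. The crucial observation is that every $L\in\Gamma_{\mathcal{L}}$ satisfies $L\supseteq I$, so the standard correspondence theorem for $q$ furnishes a bijection $L\leftrightarrow L/I$ between $\Gamma_{\mathcal{L}}$ and $\Gamma_{\mathcal{L}/I}$. This bijection preserves each defining condition --- closedness, being a Lie subalgebra or ideal, finite codimension, properness, and (in the maximal cases) maximality among proper closed Lie subalgebras/ideals of finite codimension: any chain $L/I\subsetneq M\subsetneq\mathcal{L}/I$ lifts to $L\subsetneq q^{-1}(M)\subsetneq\mathcal{L}$, contradicting maximality of $L$, and conversely any $L'\in\Gamma_{\mathcal{L}}$ strictly between $L$ and $\mathcal{L}$ automatically contains $I$ and descends to $L'/I$. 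Consequently
\[
P_{\Gamma}(\mathcal{L}/I)=\bigcap_{K\in\Gamma_{\mathcal{L}/I}}K=\Bigl(\bigcap_{L\in\Gamma_{\mathcal{L}}}L\Bigr)\Big/I=I/I=\{0\},
\]
which is upper stability of $P_{\Gamma}$.

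The only boundary case is $\Gamma_{\mathcal{L}}=\varnothing$: then $I=\mathcal{L}$ by convention, $\mathcal{L}/I=\{0\}$, and upper stability is immediate. Since Proposition~\ref{quotient} has already absorbed the substantive work (in particular the preservation of maximality and finite codimension under pullback through a morphism with dense image), I anticipate no serious obstacle; the remaining content is routine bookkeeping with the correspondence theorem applied to $q$.
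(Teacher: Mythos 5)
Your proof is correct and takes essentially the same route as the paper: both the preradical property and upper stability rest on Proposition \ref{quotient}, and your correspondence-theorem bijection between $\Gamma_{\mathcal{L}}$ and $\Gamma_{\mathcal{L}/I}$ (possible because every member of $\Gamma_{\mathcal{L}}$ contains $I=P_{\Gamma}(\mathcal{L})$) is exactly what the paper encodes in $(\ref{8.1})$ to get $P_{\Gamma}(\mathcal{L}/I)=\{0\}$. The only difference is cosmetic: you package the preradical half through the inverse-multifunction machinery of Proposition \ref{L2}(ii), whereas the paper writes out the same intersection computation directly.
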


\begin{proof}
Let $\Gamma$ be any of the multifunctions $\mathfrak{S},$ $\mathfrak{S}^{\max
},$ $\mathfrak{J,}$ $\mathfrak{J}^{\max}$ and $f$: $\mathcal{L}\rightarrow
\mathcal{M}$ be a morphism in $\overline{\mathbf{L}}.$ If $\Gamma
_{\mathcal{M}}=\varnothing$ then $P_{\Gamma}(\mathcal{M})\overset
{(\ref{8.1'})}{=}\mathcal{M}$, so that $f(P_{\Gamma}(\mathcal{L}))\subseteq
P_{\Gamma}(\mathcal{M}).$

Let $\Gamma_{\mathcal{M}}\neq\varnothing.$ By Proposition \ref{quotient},
$\Gamma_{\mathcal{L}}\neq\varnothing$ and if $x\in P_{\Gamma}(\mathcal{L}%
)=\mathfrak{p}(\Gamma_{\mathcal{L}})=\underset{L\in\Gamma_{\mathcal{L}}}{\cap
}L,$ then $f(x)\in\underset{M\in\Gamma_{\mathcal{M}}}{\cap}f\left(
f^{-1}\left(  M\right)  \right)  =\underset{M\in\Gamma_{\mathcal{M}}}{\cap
}M=P_{\Gamma}(\mathcal{M}).$ This implies $f(P_{\Gamma}(\mathcal{L}))\subseteq
P_{\Gamma}(\mathcal{M})$, so that $P_{\Gamma}$ is a preradical.

Let $I=P_{\Gamma}(\mathcal{L})$ and $q$: $\mathcal{L}\rightarrow\mathcal{L}/I$
be the quotient map. As $I\subseteq L,$ for each $L\in\Gamma_{\mathcal{L}},$
we have%
\[
\{0\}=q(P_{\Gamma}\left(  \mathcal{L}\right)  )=q\left(  \underset{L\in
\Gamma_{\mathcal{L}}}{\cap}L\right)  =\underset{L\in\Gamma_{\mathcal{L}}}%
{\cap}q(L)\overset{(\ref{8.1})}{=}\underset{M\in\Gamma_{\mathcal{L}/I}}{\cap
}M=P_{\Gamma}(\mathcal{L}/I).
\]
Hence $P_{\Gamma}$ is upper stable.
\end{proof}

For a subset $N$ of $\mathcal{L}$, denote by $\mathrm{Alg}(N)$ the closed Lie
subalgebra and by \textrm{Id}$\left(  N\right)  $ the closed Lie ideal of
$\mathcal{L}$ generated by $N.$ Let $F$ be a family of proper closed Lie
ideals of $\mathcal{L}$. We say that $a\in\mathcal{L}$ is an \textit{ideal}
$F$\textit{-nongenerator} if \textrm{Id}$\left(  L\cup\left\{  a\right\}
\right)  \neq\mathcal{L}$ for all $L\in F$. If $F$ is a family of proper
closed Lie subalgebras of $\mathcal{L}$, then $a\in\mathcal{L}$ is an
$F$\textit{-nongenerator} if $\mathrm{Alg}\left(  L\cup\left\{  a\right\}
\right)  \neq\mathcal{L}$ for all $L\in F$.

\begin{lemma}
\label{Tnongen}Let $\mathcal{L}$ be a Banach Lie algebra. Then $P_{\mathfrak
{S}^{\max}}\left(  \mathcal{L}\right)  $ is the set of all $\mathfrak
{S}_{\mathcal{L}}$-nongenerators and $P_{\mathfrak{J}^{\max}}\left(
\mathcal{L}\right)  $ is the set of all ideal $\mathfrak{J}_{\mathcal{L}}$-nongenerators.
\end{lemma}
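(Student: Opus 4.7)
The plan is to prove the two set equalities in parallel, treating the subalgebra statement in detail and obtaining the ideal version by a word-for-word translation (with $\mathrm{Id}$ in place of $\mathrm{Alg}$, and ideals in place of subalgebras). The underlying structural remark that drives both directions is that, for any $L \in \mathfrak{S}_{\mathcal{L}}$, closed Lie subalgebras of $\mathcal{L}$ containing $L$ correspond bijectively to Lie subalgebras of the finite-dimensional quotient $\mathcal{L}/L$; consequently every $L \in \mathfrak{S}_{\mathcal{L}}$ is contained in some $M \in \mathfrak{S}_{\mathcal{L}}^{\max}$, and any such $M$ is automatically maximal among \emph{all} proper closed Lie subalgebras of $\mathcal{L}$, not only among those of finite codimension, because any strict superset of $M$ still contains $L$ and therefore has finite codimension itself.

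For the inclusion $P_{\mathfrak{S}^{\max}}(\mathcal{L}) \subseteq \{\mathfrak{S}_{\mathcal{L}}\text{-nongenerators}\}$, I will fix $a \in P_{\mathfrak{S}^{\max}}(\mathcal{L})$ and an arbitrary $L \in \mathfrak{S}_{\mathcal{L}}$, extend $L$ to some $M \in \mathfrak{S}_{\mathcal{L}}^{\max}$ by the structural remark, and note that $a \in M$ by the definition of $P_{\mathfrak{S}^{\max}}$, so that $\mathrm{Alg}(L \cup \{a\}) \subseteq M \subsetneq \mathcal{L}$.

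For the reverse inclusion, I will fix an $\mathfrak{S}_{\mathcal{L}}$-nongenerator $a$ and some $L \in \mathfrak{S}_{\mathcal{L}}^{\max}$ and argue by contradiction. If $a \notin L$, then $\mathrm{Alg}(L \cup \{a\})$ is a closed Lie subalgebra strictly larger than $L$; by maximality of $L$ among all proper closed Lie subalgebras (again via the structural remark), this forces $\mathrm{Alg}(L \cup \{a\}) = \mathcal{L}$, contradicting the nongenerator property of $a$ applied to $L \in \mathfrak{S}_{\mathcal{L}}$. Hence $a$ lies in every $L \in \mathfrak{S}_{\mathcal{L}}^{\max}$, i.e.\ in $P_{\mathfrak{S}^{\max}}(\mathcal{L})$.

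The proof for $P_{\mathfrak{J}^{\max}}(\mathcal{L})$ is formally identical once one observes that the lattice of closed Lie ideals of $\mathcal{L}$ containing a fixed $L \in \mathfrak{J}_{\mathcal{L}}$ corresponds to the lattice of ideals in the finite-dimensional Lie algebra $\mathcal{L}/L$, so that every element of $\mathfrak{J}_{\mathcal{L}}$ extends to some element of $\mathfrak{J}_{\mathcal{L}}^{\max}$. No serious obstacle is anticipated; the only point requiring care is the equivalence between "maximal proper closed subalgebra (ideal) of finite codimension" and "maximal proper closed subalgebra (ideal) that happens to have finite codimension", and this is precisely what the finite-dimensional quotient argument delivers.
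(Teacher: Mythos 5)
Your proof is correct and follows essentially the same route as the paper: reduce $\mathfrak{S}_{\mathcal{L}}$-nongenerators to $\mathfrak{S}_{\mathcal{L}}^{\max}$-nongenerators by extending each $L\in\mathfrak{S}_{\mathcal{L}}$ to a maximal element, then verify both inclusions directly from maximality, with the ideal case handled verbatim. One cosmetic caveat: since $L$ is only a subalgebra, $\mathcal{L}/L$ is merely a finite-dimensional vector space rather than a Lie algebra, but your argument only needs that the poset of closed Lie subalgebras containing $L$ embeds into the subspaces of $\mathcal{L}/L$, so the reasoning is unaffected.
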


\begin{proof}
As each $L\in\mathfrak{S}_{\mathcal{L}}$ is contained in some $M\in
\mathfrak{S}_{\mathcal{L}}^{\max},$ the sets of $\mathfrak{S}_{\mathcal{L}}%
$-nongenerators and $\mathfrak{S}_{\mathcal{L}}^{\max}$-nongenerators
coincide. If $a\notin P_{\mathfrak{S}^{\max}}\left(  \mathcal{L}\right)  $
then $a\notin L$ for some $L\in\mathfrak{S}_{\mathcal{L}}^{\max}$. Hence
$\mathrm{Alg}\left(  L\cup\left\{  a\right\}  \right)  =\mathcal{L}$ while
$L\neq\mathcal{L}$, so that $a$ is not a $\mathfrak{S}_{\mathcal{L}}%
$-nongenerator. Conversely, if $b\in P_{\mathfrak{S}^{\max}}\left(
\mathcal{L}\right)  $ then $\mathrm{Alg}\left(  L\cup\left\{  b\right\}
\right)  =L\neq\mathcal{L}$ for all $L\in\mathfrak{S}_{\mathcal{L}}^{\max}$.
Hence $b$ is a $\mathfrak{S}_{\mathcal{L}}^{\max}$-nongenerator.

The proof of the result for $P_{\mathfrak{J}^{\max}}\left(  \mathcal{L}%
\right)  $ is identical.
\end{proof}

For $\dim\left(  \mathcal{L}\right)  <\infty$ the above result was proved in
\cite[Lemma 2.3]{T} (see also \cite[1.7.2]{B}).

\begin{lemma}
\label{ideal} Let $J\vartriangleleft\mathcal{L}$ and let $I$ be a maximal
proper closed Lie ideal of finite codimension in $\mathcal{L}$. Then either
$J\subseteq I$ or $I\cap J$ is a maximal proper closed Lie ideal of finite
codimension in $J$.
\end{lemma}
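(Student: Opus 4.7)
The plan is to handle the dichotomy directly: assume $J\not\subseteq I$ and then verify each required property of $I\cap J$ (Lie ideal of $J$, proper, finite codimension, maximal) in turn.

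First I would observe that $I+J$ is a closed Lie ideal of $\mathcal{L}$: closedness follows from Lemma \ref{Closed} because $I$ has finite codimension, and it is a Lie ideal as the sum of two. Since $J\not\subseteq I$ we have $I\subsetneq I+J$, and the maximality of $I$ forces $I+J=\mathcal{L}$. The purely algebraic second isomorphism theorem then gives
\[
J/(I\cap J)\cong (I+J)/I=\mathcal{L}/I,
\]
so $I\cap J$ is a closed subspace of $J$ of finite codimension equal to the codimension of $I$ in $\mathcal{L}$; in particular $I\cap J$ is a proper subspace of $J$. It is clearly a Lie ideal of $J$, since both $I$ and $J$ are Lie ideals of $\mathcal{L}$.

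The main point is the maximality of $I\cap J$ inside $J$. Suppose, for contradiction, that there is a closed Lie ideal $K$ of $J$ with $I\cap J\subsetneq K\subsetneq J$. I would use $K$ to construct a closed Lie ideal of $\mathcal{L}$ strictly between $I$ and $\mathcal{L}$, contradicting the maximality of $I$. Set $M=I+K$. Then $M$ is closed (by Lemma \ref{Closed}, as $I$ has finite codimension), and under the isomorphism $J/(I\cap J)\cong\mathcal{L}/I$ the image of $K/(I\cap J)$ is $M/I$; hence $I\subsetneq M\subsetneq\mathcal{L}$, and $M$ automatically has finite codimension in $\mathcal{L}$.

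The delicate step will be checking that $M=I+K$ is a Lie ideal of the whole $\mathcal{L}$, not just of $J$. This is where the decomposition $\mathcal{L}=I+J$ is essential: for $x\in\mathcal{L}$ write $x=x_I+x_J$ with $x_I\in I$, $x_J\in J$, and for $a\in I$, $k\in K$ compute
\[
[x,a+k]=[x,a]+[x_I,k]+[x_J,k].
\]
The first two summands lie in $I$ because $I$ is a Lie ideal of $\mathcal{L}$, and the last summand lies in $K$ because $K$ is a Lie ideal of $J$ and $x_J\in J$. Thus $[x,a+k]\in I+K=M$, which proves that $M$ is a closed Lie ideal of $\mathcal{L}$ of finite codimension strictly between $I$ and $\mathcal{L}$, contradicting the maximality of $I$. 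This contradiction shows $I\cap J$ is maximal in $J$ and completes the proof.
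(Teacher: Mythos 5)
Your proof is correct and follows essentially the same route as the paper: both hinge on the decomposition $\mathcal{L}=I+J$ (forced by maximality of $I$) to verify that $I$ plus a Lie ideal of $J$ is a Lie ideal of all of $\mathcal{L}$, and then contradict the maximality of $I$. The only cosmetic difference is that you work with an arbitrary intermediate ideal $K$ and show $I\subsetneqq I+K\subsetneqq\mathcal{L}$ directly via the second isomorphism theorem, whereas the paper passes to a maximal ideal $W\supseteq I\cap J$ of $J$ and rules out both alternatives $I+W=I$ and $I+W=\mathcal{L}$.
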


\begin{proof}
Let $J\nsubseteq I.$ By Lemma \ref{Closed}, $\left(  I+J\right)
\vartriangleleft\mathcal{L}$. As $I\subsetneqq I+J$, it follows that
$I+J=\mathcal{L}$. Set $K=I\cap J$. Then $K\neq J,$ $K$ is a closed Lie ideal
of $\mathcal{L}$ and, by Lemma \ref{Closed}, $K$ has finite codimension in
$J$. If $K$ is not maximal Lie ideal of $J$, there is a maximal closed proper
Lie ideal $W$ of $J$ containing $K$. Then $\left[  I+W,\mathcal{L}\right]
=\left[  I+W,I+J\right]  \subseteq I+W$, whence $I+W$ is a closed Lie ideal of
$\mathcal{L}$. As $I$ is maximal, either $I+W=I$ or $I+W=\mathcal{L}$.

If $I+W=\mathcal{L}$ then $(I+W)\cap J=I\cap J+W=\mathcal{L}\cap J=J$ which is
impossible because $K=I\cap J\subsetneqq W$. Hence $I+W=I$, so that $W\subset
I$. Thus $W\subset I\cap J=K$, a contradiction.
\end{proof}

\begin{theorem}
\label{T4}The preradicals $P_{\frak{S}},$ $P_{\frak{S}^{\max}},$ $P_{\frak{J}%
},$ $P_{\frak{J}^{\max}}$ are balanced\emph{,} so that they are over radicals.
\end{theorem}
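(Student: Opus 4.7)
The plan is to verify that each of the four preradicals $R\in\{P_{\mathfrak{S}},P_{\mathfrak{S}^{\max}},P_{\mathfrak{J}},P_{\mathfrak{J}^{\max}}\}$ satisfies the balance condition (\ref{4.3'}); combined with the upper stability already proved in Theorem \ref{Cfam2}, this yields that each is an over radical in the sense of Definition \ref{D3.1}. Fix $I\vartriangleleft\mathcal{L}$ throughout; since $R(\mathcal{L})$ is the intersection of the associated family $\Gamma_{\mathcal{L}}$ (or all of $\mathcal{L}$ when that family is empty), it suffices to check $R(I)\subseteq M$ for every $M\in\Gamma_{\mathcal{L}}$.

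Three of the four cases are immediate. For $R=P_{\mathfrak{S}}$ or $R=P_{\mathfrak{J}}$, given $M\in\Gamma_{\mathcal{L}}$, either $I\subseteq M$ (whence $R(I)\subseteq I\subseteq M$) or $M\cap I$ is a proper closed Lie subalgebra of $I$ of finite codimension by Lemma \ref{Closed}, which is moreover a Lie ideal of $I$ when $M\vartriangleleft\mathcal{L}$ (since $[M\cap I,I]\subseteq[M,\mathcal{L}]\cap I\subseteq M\cap I$). Hence $M\cap I\in\Gamma_{I}$ and $R(I)\subseteq M\cap I\subseteq M$. For $R=P_{\mathfrak{J}^{\max}}$, Lemma \ref{ideal} strengthens this: when $I\not\subseteq M$, the intersection $M\cap I$ is itself a maximal proper closed Lie ideal of $I$ of finite codimension, so $M\cap I\in\mathfrak{J}_{I}^{\max}$ and the same inclusion follows.

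The only delicate case is $R=P_{\mathfrak{S}^{\max}}$, because the analogue of Lemma \ref{ideal} for maximal subalgebras can genuinely fail: for instance in $\mathcal{L}=\mathfrak{sl}_{2}\oplus^{\mathrm{id}}V$ with $V$ the standard representation, $M=\mathfrak{sl}_{2}$ is maximal in $\mathcal{L}$ while $M\cap V=\{0\}$ is not maximal in the ideal $V$. The plan is to reduce to the finite-dimensional case by a quotient trick. Given $M\in\mathfrak{S}_{\mathcal{L}}^{\max}$, apply Theorem \ref{KST1}(i) to obtain a closed Lie ideal $K\vartriangleleft\mathcal{L}$ of finite codimension with $K\subseteq M$, and let $\pi\colon\mathcal{L}\to\mathcal{L}/K$. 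Then $\mathcal{L}/K$ is finite-dimensional; $\pi(M)\in\mathfrak{S}_{\mathcal{L}/K}^{\max}$ (because any strictly larger proper closed Lie subalgebra of $\mathcal{L}/K$ would lift through $\pi$ to one strictly larger than $M$); and $\pi(I)=(I+K)/K$ is a Lie ideal of $\mathcal{L}/K$. Since $P_{\mathfrak{S}^{\max}}$ is a preradical by Theorem \ref{Cfam2}, $\pi(P_{\mathfrak{S}^{\max}}(I))\subseteq P_{\mathfrak{S}^{\max}}(\pi(I))$, and the classical balance of the Frattini ideal for finite-dimensional Lie algebras (as in \cite{M,S,T}) gives $P_{\mathfrak{S}^{\max}}(\pi(I))\subseteq P_{\mathfrak{S}^{\max}}(\mathcal{L}/K)\subseteq\pi(M)$. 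Combining and pulling back through $\pi$, with $K\subseteq M$, yields $R(I)\subseteq M+K=M$.

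The main obstacle is thus the $P_{\mathfrak{S}^{\max}}$ case. The key insight is that although maximality of the subalgebra $M$ in $\mathcal{L}$ need not restrict to maximality of $M\cap I$ in $I$, it does become an honest maximality statement once one quotients by a sufficiently large closed Lie ideal $K\subseteq M$; Theorem \ref{KST1}(i) makes such a $K$ available, and the resulting finite-dimensional question is the well-known balance of the Frattini ideal.
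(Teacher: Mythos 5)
Your proof is correct. For $P_{\mathfrak{S}}$, $P_{\mathfrak{J}}$ and $P_{\mathfrak{J}^{\max}}$ you argue exactly as the paper does: intersect each member of $\Gamma_{\mathcal{L}}$ with the ideal $I$ and use Lemma \ref{Closed} (respectively Lemma \ref{ideal} for the maximal-ideal case) to see that the intersection lies in $\Gamma_{I}\cup\{I\}$. The interesting divergence is the $P_{\mathfrak{S}^{\max}}$ case. The paper stays entirely in the Banach setting: since $P_{\mathfrak{S}^{\max}}(I)\vartriangleleft\mathcal{L}$ by Corollary \ref{C1}, the failure of $P_{\mathfrak{S}^{\max}}(I)\subseteq L$ for a maximal $L$ forces $\mathcal{L}=L\dotplus K$ with $K$ a finite-dimensional subspace of $P_{\mathfrak{S}^{\max}}(I)$, and then the nongenerator characterization of Lemma \ref{Tnongen} is applied repeatedly inside $I$ to conclude $I=L\cap I$, a contradiction; this argument does not invoke Theorem \ref{KST1} at all and in particular reproves the finite-dimensional Frattini balance as a special case. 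You instead use Theorem \ref{KST1}(i) to find a closed finite-codimensional ideal $K\subseteq M$, pass to the finite-dimensional quotient $\mathcal{L}/K$ (where $\pi(M)$ is still maximal and $\pi(I)$ is an ideal), and import the classical balance $\phi(B)\subseteq\phi(A)$ for ideals of finite-dimensional Lie algebras. Your reduction is sound --- all the correspondences you use are valid, and the imported fact is true over $\mathbb{C}$ precisely because the Frattini subalgebra of a finite-dimensional complex Lie algebra is a characteristic ideal (it can fail in positive characteristic, where $\phi(B)$ need not be an ideal of $A$). The trade-off is that your route leans on the deep result from \cite{KST2} plus an external classical theorem whose proof is essentially the same nongenerator argument, whereas the paper's version is self-contained modulo Lemma \ref{Tnongen}; if you wanted to close the external dependency you could simply run the paper's four-line nongenerator argument inside $\mathcal{L}/K$.
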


\begin{proof}
Let $J\vartriangleleft\mathcal{L}$. As $P_{\mathfrak{S}^{\max}}$ is a
preradical, it follows from Corollary \ref{C1} that $P_{\mathfrak{S}^{\max}%
}\left(  J\right)  \vartriangleleft\mathcal{L}$. Assume that $P_{\mathfrak
{S}^{\max}}\left(  J\right)  \nsubseteq L$ for some $L\in\mathfrak
{S}_{\mathcal{L}}^{\max}$. Then $L+P_{\mathfrak{S}^{\max}}\left(  J\right)  $
is a Lie subalgebra of $\mathcal{L}$ larger than $L$ and, by Lemma
\ref{Closed}, it is closed. As $L$ is a maximal closed Lie subalgebra of
$\mathcal{L}$, $L+P_{\mathfrak{S}^{\max}}\left(  J\right)  =\mathcal{L}$.
Hence there is a finite-dimensional linear subspace $K$ of $P_{\mathfrak
{S}^{\max}}\left(  J\right)  $ such that $\mathcal{L}=L\dotplus K.$

Let $\{e_{i}\}_{i=1}^{m}$ be a basis in $K$. As $K\subseteq J$, we have
$J=(L\cap J)\dotplus K$. Then $M=L\cap J$ and all Lie algebras $M_{i}%
=\mathrm{Alg}\left(  M\cup\left\{  e_{1},\ldots,e_{i}\right\}  \right)  $ have
finite codimension in $J$, so that $M,M_{i}\in\mathfrak{S}_{J}.$ By Lemma
\ref{Tnongen}, all $e_{i}$ are $\mathfrak{S}_{J}$-nongenerators of $J.$ As
$\mathrm{Alg}\left(  M_{m-1}\cup\left\{  e_{m}\right\}  \right)  =J,$ we have
$M_{m-1}=J$. Repeating this $m-1$ times, we obtain that $M=J$. Hence
$J\subseteq L$, so that $P_{\mathfrak{S}^{\max}}\left(  J\right)  \subseteq
J\subseteq L.$ This contradiction shows that $P_{\mathfrak{S}^{\max}}\left(
J\right)  \subseteq L,$ for all $L\in\mathfrak{S}_{\mathcal{L}}^{\max},$ so
that $P_{\mathfrak
{S}^{\max}}\left(  J\right)  \subseteq P_{\mathfrak{S}^{\max}}\left(
\mathcal{L}\right)  .$ Thus $P_{\mathfrak{S}^{\max}}\mathcal{\ }$is balanced.

By Lemma \ref{ideal}, $\mathfrak{J}_{J}^{\max}\cup\left\{  J\right\}
\overleftarrow{\subset}\{I\cap J:$ $I\in\mathfrak{J}_{\mathcal{L}}^{\max
}\}\overleftarrow{\subset}\mathfrak{J}_{\mathcal{L}}^{\max}$ (see (\ref{KK})).

For $\Gamma=\mathfrak{S}$ or $\mathfrak{J,}$ the condition $\Gamma_{J}%
\cup\left\{  J\right\}  \overleftarrow{\subset}\{I\cap J:$ $I\in
\Gamma_{\mathcal{L}}\}\overleftarrow{\subset}\Gamma_{\mathcal{L}}$ follows
from Lemma \ref{Closed}.

This implies (see (\ref{LP0})) that $P_{\Gamma}\left(  J\right)
=\frak{p}(\Gamma_{J})\subseteq\frak{p}(\Gamma_{\mathcal{L}})=P_{\Gamma}\left(
\mathcal{L}\right)  $, for $\Gamma=\frak{J}^{\max},$ $\frak{S}$ and $\frak{J}$.
\end{proof}

Recall that $\mathcal{L}^{[2]}=\mathcal{L}_{[1]}=\overline{[\mathcal{L}%
,\mathcal{L}]}$. The following proposition extends to the infinite-dimensional
case some results due to Marshall \cite[Lemma, p. 420, and Theorem, p.
422.]{M}.

\begin{proposition}
\label{Cder}Let $\mathcal{L}$ be a Banach Lie algebra and $Z_{\mathcal{L}}$ be
its centre. Then

\begin{itemize}
\item [$\mathrm{(i)}$]$P_{\mathfrak{J}^{\max}}\left(  \mathcal{L}\right)
\subseteq\mathcal{L}_{[1]}$ and $\mathcal{L}_{[1]}\cap Z_{\mathcal{L}%
}\subseteq P_{\mathfrak{S}^{\max}}\left(  \mathcal{L}\right)  $.

\item[$\mathrm{(ii)}$] If $\mathcal{L}$ is solvable then $\mathcal{L}%
_{[1]}=P_{\mathfrak{J}^{\max}}\left(  \mathcal{L}\right)  $.

\item[$\mathrm{(iii)}$] If\emph{ }$\mathcal{L}=I\oplus J$ then $P_{\mathfrak
{S}^{\max}}\left(  \mathcal{L}\right)  =P_{\mathfrak{S}^{\max}}\left(
I\right)  \oplus P_{\mathfrak{S}^{\max}}\left(  J\right)  $.
\end{itemize}
\end{proposition}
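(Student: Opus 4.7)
My plan for (i) is to handle the two inclusions separately. For $P_{\mathfrak{J}^{\max}}(\mathcal{L})\subseteq\mathcal{L}_{[1]}$, I will show that $\mathcal{L}_{[1]}$ already arises as an intersection of codimension-one closed Lie ideals, which belong to $\mathfrak{J}_{\mathcal{L}}^{\max}$. Given $x\notin\mathcal{L}_{[1]}$, I will apply Hahn--Banach in the commutative Banach Lie algebra $\mathcal{L}/\mathcal{L}_{[1]}$ to produce a continuous linear functional non-vanishing at $x+\mathcal{L}_{[1]}$; since the algebra is commutative, the kernel of this functional is automatically a Lie ideal, and pulling it back will give the desired codimension-one closed Lie ideal in $\mathfrak{J}_{\mathcal{L}}^{\max}$ missing $x$. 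For $\mathcal{L}_{[1]}\cap Z_{\mathcal{L}}\subseteq P_{\mathfrak{S}^{\max}}(\mathcal{L})$, I will take $z\in\mathcal{L}_{[1]}\cap Z_{\mathcal{L}}$ and suppose $z\notin L$ for some $L\in\mathfrak{S}_{\mathcal{L}}^{\max}$, aiming for a contradiction: centrality of $z$ makes $L+\mathbb{C}z$ a closed Lie subalgebra strictly larger than $L$, forcing $L+\mathbb{C}z=\mathcal{L}$ by maximality, and centrality also upgrades $L$ to a Lie ideal of this sum (since brackets into the $\mathbb{C}z$ summand vanish), so $\mathcal{L}/L$ is one-dimensional abelian, giving $\mathcal{L}_{[1]}\subseteq L$ and hence $z\in L$.

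For (ii), the inclusion $\supseteq$ is immediate from (i), so only $\mathcal{L}_{[1]}\subseteq P_{\mathfrak{J}^{\max}}(\mathcal{L})$ will remain. Given any $L\in\mathfrak{J}_{\mathcal{L}}^{\max}$, I plan to exploit that $\mathcal{L}/L$ is finite-dimensional with no proper non-zero Lie ideals (by maximality of $L$), hence either one-dimensional abelian or simple non-abelian; solvability of $\mathcal{L}$ descends to the quotient and rules out the second option, leaving $\mathcal{L}/L$ abelian and $\mathcal{L}_{[1]}\subseteq L$.

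For (iii), one direction $P_{\mathfrak{S}^{\max}}(I)\oplus P_{\mathfrak{S}^{\max}}(J)\subseteq P_{\mathfrak{S}^{\max}}(\mathcal{L})$ will follow at once from the balanced property of $P_{\mathfrak{S}^{\max}}$ (Theorem \ref{T4}) applied to the Lie ideals $I,J$ of $\mathcal{L}$. The reverse inclusion is the main step. I will show that for each $L_I\in\mathfrak{S}_I^{\max}$ the subspace $L_I\oplus J$ is a closed proper Lie subalgebra of finite codimension in $\mathcal{L}$ and is in fact maximal there: closed subalgebras $M\supseteq L_I\oplus J$ satisfy $M=\pi_I(M)\oplus J$ with $\pi_I(M)$ a closed Lie subalgebra of $I$ containing $L_I$ (closedness of $\pi_I(M)$ comes from $\pi_I^{-1}(\pi_I(M))=M$), and maximality of $L_I$ forces $\pi_I(M)=L_I$ or $\pi_I(M)=I$, i.e., $M=L_I\oplus J$ or $M=\mathcal{L}$. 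Intersecting over $L_I\in\mathfrak{S}_I^{\max}$ will yield $P_{\mathfrak{S}^{\max}}(\mathcal{L})\subseteq P_{\mathfrak{S}^{\max}}(I)\oplus J$ (vacuously so when $\mathfrak{S}_I^{\max}=\varnothing$); the symmetric argument using $I\oplus L_J$ will give $P_{\mathfrak{S}^{\max}}(\mathcal{L})\subseteq I\oplus P_{\mathfrak{S}^{\max}}(J)$; and intersecting these two yields exactly $P_{\mathfrak{S}^{\max}}(I)\oplus P_{\mathfrak{S}^{\max}}(J)$. I do not anticipate a serious obstacle; the most delicate bookkeeping will be verifying closedness, finite codimension, and maximality of $L_I\oplus J$, all of which will reduce to straightforward applications of Lemma \ref{Closed} together with continuity of $\pi_I$.
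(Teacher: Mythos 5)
Your proof is correct. Parts (i) and (ii) follow essentially the paper's route: the first inclusion of (i) rests, exactly as in the paper, on the observation that closed codimension-one subspaces of the commutative quotient $\mathcal{L}/\mathcal{L}_{[1]}$ are maximal closed Lie ideals, and (ii) is the same dichotomy for the finite-dimensional solvable quotient $\mathcal{L}/J$ with $J\in\mathfrak{J}_{\mathcal{L}}^{\max}$. For the second inclusion of (i) the paper first passes to $\mathcal{M}=\mathcal{L}/P_{\mathfrak{S}^{\max}}(\mathcal{L})$ (invoking upper stability of $P_{\mathfrak{S}^{\max}}$ from Theorem \ref{Cfam2}) and derives the contradiction there; you run the identical computation ($L+\mathbb{C}z=\mathcal{L}$, hence $\overline{[\mathcal{L},\mathcal{L}]}=\overline{[L,L]}\subseteq L$) directly in $\mathcal{L}$, which is marginally cleaner since it needs no upper stability. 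The genuine divergence is in (iii): the paper's proof is a one-line citation of Theorem \ref{T4} together with Proposition \ref{semi}(ii)1), where the inclusion $P_{\mathfrak{S}^{\max}}(\mathcal{L})\subseteq P_{\mathfrak{S}^{\max}}(I)\oplus J$ comes for free from the preradical property applied to the coordinate projection $\mathcal{L}\rightarrow I$. You instead exhibit explicit elements of $\mathfrak{S}_{\mathcal{L}}^{\max}$ of the form $L_I\oplus J$ and $I\oplus L_J$ and intersect; your verification that these are maximal (via $M=\pi_I(M)\oplus J$ for any closed subalgebra $M\supseteq L_I\oplus J$, with $\pi_I(M)$ closed and trapped between $L_I$ and $I$) is sound, and you correctly handle the degenerate case $\mathfrak{S}_I^{\max}=\varnothing$. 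Your route is more hands-on and self-contained but reproves what the semidirect-product machinery of Proposition \ref{semi} already delivers; the paper's route buys brevity at the cost of leaning on that general apparatus.
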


\begin{proof}
(i) Let $I=\mathcal{L}_{[1]}$. Then $\mathcal{L}/I$ is a commutative Banach
Lie algebra. As each closed subspace of codimension one is a maximal closed
Lie ideal of $\mathcal{L}/I$, we have $P_{\mathfrak{J}^{\max}}\left(
\mathcal{L}/I\right)  =\{0\}$. Hence, by Lemma \ref{L-sem}(ii), $P_{\mathfrak
{J}^{\mathrm{\max}}}^{\circ}\left(  \mathcal{L}\right)  \subseteq
I=\mathcal{L}_{[1]}$.

Set $J=P_{\mathfrak{S}^{\max}}\left(  \mathcal{L}\right)  .$ Let
$\mathcal{M}=\mathcal{L}/J,$ $Z_{\mathcal{M}}$ be its centre and $q$:
$\mathcal{L}\rightarrow\mathcal{M}$ the quotient map. Then%
\[
q\left(  \mathcal{L}_{[1]}\cap Z_{\mathcal{L}}\right)  \subseteq
q(\mathcal{L}_{[1]})\cap q\left(  Z_{\mathcal{L}}\right)  \subseteq
\mathcal{M}_{[1]}\cap Z_{\mathcal{M}}%
\]
and it suffices to show that $\mathcal{M}_{[1]}\cap Z_{\mathcal{M}}=\{0\}.$
Assume, to the contrary, that $\mathcal{M}_{[1]}\cap Z_{\mathcal{M}}$ contains
$a\neq0$. By Theorem \ref{Cfam2}, $P_{\mathfrak{S}^{\max}}$ is upper stable.
Hence $P_{\mathfrak{S}^{\max}}\left(  \mathcal{M}\right)  =\{0\},$ so that
there is $M\in\mathfrak{S}_{\mathcal{M}}^{\max}$ such that $a\notin M$. As
$a\in Z_{\mathcal{M}}$, we have $\left[  a,\mathcal{M}\right]  =0$, so that
$M+\mathbb{C}a$ is a closed Lie subalgebra of finite codimension in
$\mathcal{M}$ larger than $M.$ As $M$ is maximal, $M+\mathbb{C}a=\mathcal{M}$
and $\mathcal{M}_{[1]}=\overline{\left[  M+\mathbb{C}a,M+\mathbb{C}a\right]
}=\overline{\left[  M,M\right]  }\subseteq M$. Hence $a\in M$, a
contradiction. Thus $\mathcal{M}_{[1]}\cap Z_{\mathcal{M}}=\{0\}.$

(ii) Let $\mathcal{L}$ be solvable. For each $J\in\mathfrak{J}_{\mathcal{L}%
}^{\max},$ the finite-dimensional Lie algebra $\mathcal{L/}J$ is solvable and
has no non-zero Lie ideals. Hence $\dim\left(  \mathcal{L/}J\right)  =1$, so
that $[\mathcal{L},\mathcal{L}]\subseteq J.$ Thus $\mathcal{L}_{[1]}%
=\overline{[\mathcal{L},\mathcal{L}]}\subseteq\underset{J\in\mathfrak
{J}_{\mathcal{L}}^{\max}}{\cap}J=P_{\mathfrak{J}^{\text{max}}}\left(
\mathcal{L}\right)  $. Using (i), we have $\mathcal{L}_{[1]}=P_{\mathfrak
{J}^{\max}}\left(  \mathcal{L}\right)  $.

(iii) follows from Theorem \ref{T4} and Proposition \ref{semi}(ii) 1).
\end{proof}

By Theorem \ref{under-over}, $P_{\mathfrak{S}}^{\circ}$, $P_{\mathfrak
{S}^{\max}}^{\circ}$, $P_{\mathfrak{J}}^{\circ}$, $P_{\mathfrak{J}^{\max}%
}^{\circ}$ are radicals. We will see now that they all coincide.

\begin{theorem}
\label{C6.6}\textbf{ }\emph{(i) }For every Banach Lie algebra $\mathcal{L,}$%
\begin{equation}
P_{\frak{S}}\left(  \mathcal{L}\right)  \subseteq P_{\frak{J}}\left(
\mathcal{L}\right)  \subseteq P_{\frak{S}^{\mathrm{\max}}}\left(
\mathcal{L}\right)  \subseteq P_{\frak{J}^{\mathrm{\max}}}\left(
\mathcal{L}\right)  , \label{8.2}%
\end{equation}

so that $P_{\frak{S}}\leq P_{\frak{J}}\leq P_{\frak{S}^{\mathrm{\max}}}\leq
P_{\frak{J}^{\mathrm{\max}}}$. If $\frak{S}_{\mathcal{L}}\neq\varnothing$ then
$P_{\frak{J}^{\mathrm{\max}}}\left(  \mathcal{L}\right)  \neq\mathcal{L}$.

\emph{(ii) }The radicals $P_{\mathfrak{S}}^{\circ},$ $P_{\mathfrak
{J}}^{\circ},$ $P_{\mathfrak{S}^{\mathrm{\max}}}^{\circ}$ and $P_{\mathfrak
{J}^{\mathrm{\max}}}^{\circ}$ coincide.

\emph{(iii) }$r_{P_{\mathfrak{S}}}^{\circ}\left(  \mathcal{L}\right)  \leq
r_{P_{\mathfrak{J}}}^{\circ}\left(  \mathcal{L}\right)  \leq r_{P_{\mathfrak
{S}^{\max}}}^{\circ}\left(  \mathcal{L}\right)  \leq r_{P_{\mathfrak{J}^{\max
}}}^{\circ}\left(  \mathcal{L}\right)  $ $($see $(\ref{r1})),$ for each
$\mathcal{L}\in\frak{L.}$
\end{theorem}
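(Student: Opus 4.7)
The plan is to prove the three assertions in order, with the essential input concentrated in part (i).

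For (i), three of the four inclusions in (\ref{8.2}) follow immediately from the set inclusions $\frak{J}_{\mathcal{L}}^{\max}\subseteq\frak{J}_{\mathcal{L}}\subseteq\frak{S}_{\mathcal{L}}$ and $\frak{S}_{\mathcal{L}}^{\max}\subseteq\frak{S}_{\mathcal{L}}$, together with the observation that intersection over a larger family produces a smaller subspace. The only nontrivial inclusion is $P_{\frak{J}}(\mathcal{L})\subseteq P_{\frak{S}^{\max}}(\mathcal{L})$, and here Theorem \ref{KST1}(i) is indispensable: for each $L\in\frak{S}_{\mathcal{L}}^{\max}$ it produces a closed Lie ideal $I$ of $\mathcal{L}$ of finite codimension with $I\subseteq L$; since $I\subseteq L\subsetneqq\mathcal{L}$, $I$ is itself proper and of finite codimension, so $I\in\frak{J}_{\mathcal{L}}$ and $P_{\frak{J}}(\mathcal{L})\subseteq I\subseteq L$; intersecting over all such $L$ yields the inclusion. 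For the second assertion of (i), if $\frak{S}_{\mathcal{L}}\neq\varnothing$ then Theorem \ref{KST1} gives a proper closed Lie ideal $I$ of finite codimension in $\mathcal{L}$; since $\mathcal{L}/I$ is finite-dimensional it has a maximal proper ideal, whose pull-back lies in $\frak{J}_{\mathcal{L}}^{\max}$, so $\frak{J}_{\mathcal{L}}^{\max}\neq\varnothing$ and $P_{\frak{J}^{\max}}(\mathcal{L})\neq\mathcal{L}$.

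For (ii), by Theorem \ref{Cfam2} and Theorem \ref{T4} each preradical $P_{\Gamma}$, $\Gamma\in\{\frak{S},\frak{S}^{\max},\frak{J},\frak{J}^{\max}\}$, is balanced and upper stable, i.e.\ an over radical, so by Theorem \ref{under-over}(ii) each $P_{\Gamma}^{\circ}$ is a radical; by Theorem \ref{T4.1}(ii), $\mathbf{Rad}(P_{\Gamma})=\mathbf{Rad}(P_{\Gamma}^{\circ})$. Since the members of $\Gamma_{\mathcal{L}}$ are proper subspaces, $\mathcal{L}\in\mathbf{Rad}(P_{\Gamma})$ if and only if $\Gamma_{\mathcal{L}}=\varnothing$. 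The four emptiness conditions are equivalent: one direction is given by the set inclusions $\frak{J}_{\mathcal{L}}^{\max}\subseteq\frak{J}_{\mathcal{L}}\subseteq\frak{S}_{\mathcal{L}}$ and $\frak{S}_{\mathcal{L}}^{\max}\subseteq\frak{S}_{\mathcal{L}}$, the converse by the last statement of (i). So all four $\mathbf{Rad}$ classes coincide, and Corollary \ref{Crad}(ii) forces the four radicals to be equal.

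For (iii), by Theorem \ref{T4} all four preradicals are balanced; together with the chain $P_{\frak{S}}\leq P_{\frak{J}}\leq P_{\frak{S}^{\max}}\leq P_{\frak{J}^{\max}}$ from (i), Lemma \ref{cin} yields $(P_{\frak{S}})^{\alpha}(\mathcal{L})\subseteq(P_{\frak{J}})^{\alpha}(\mathcal{L})\subseteq(P_{\frak{S}^{\max}})^{\alpha}(\mathcal{L})\subseteq(P_{\frak{J}^{\max}})^{\alpha}(\mathcal{L})$ for every ordinal $\alpha$. Fixing $\beta=r_{P_{\frak{J}}}^{\circ}(\mathcal{L})$, we get $(P_{\frak{S}})^{\beta}(\mathcal{L})\subseteq(P_{\frak{J}})^{\beta}(\mathcal{L})=P_{\frak{J}}^{\circ}(\mathcal{L})=P_{\frak{S}}^{\circ}(\mathcal{L})$, the last equality by (ii); the automatic reverse inclusion $(P_{\frak{S}})^{\beta}(\mathcal{L})\supseteq P_{\frak{S}}^{\circ}(\mathcal{L})$ from monotonicity of the superposition series forces equality, so $r_{P_{\frak{S}}}^{\circ}(\mathcal{L})\leq\beta$. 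The remaining two inequalities are obtained in exactly the same way. The main obstacle is the single inclusion $P_{\frak{J}}(\mathcal{L})\subseteq P_{\frak{S}^{\max}}(\mathcal{L})$ in (i), whose proof genuinely depends on Theorem \ref{KST1}(i) from \cite{KST2}; this bridge from closed maximal Lie subalgebras of finite codimension to closed Lie ideals of finite codimension is what makes the ideal-style and subalgebra-style preradicals interlock, after which the rest of the theorem is routine bookkeeping with the constructions of Section \ref{section4}.
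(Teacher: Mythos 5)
Your treatment of part (i) has a genuine gap. The chain (\ref{8.2}) contains three inclusions, and only the first, $P_{\frak{S}}(\mathcal{L})\subseteq P_{\frak{J}}(\mathcal{L})$, follows from a containment of index families (namely $\frak{J}_{\mathcal{L}}\subseteq\frak{S}_{\mathcal{L}}$). The containments $\frak{J}_{\mathcal{L}}^{\max}\subseteq\frak{J}_{\mathcal{L}}$ and $\frak{S}_{\mathcal{L}}^{\max}\subseteq\frak{S}_{\mathcal{L}}$ yield only $P_{\frak{J}}\leq P_{\frak{J}^{\max}}$ and $P_{\frak{S}}\leq P_{\frak{S}^{\max}}$, which are consequences of the chain rather than links of it. The third link, $P_{\frak{S}^{\max}}(\mathcal{L})\subseteq P_{\frak{J}^{\max}}(\mathcal{L})$, is just as nontrivial as the second and is nowhere addressed: $\frak{J}_{\mathcal{L}}^{\max}$ is \emph{not} contained in $\frak{S}_{\mathcal{L}}^{\max}$ in general (in $N\oplus N$ with $N$ simple, $N\oplus\{0\}$ is a maximal Lie ideal but not a maximal Lie subalgebra, since $N\oplus\frak{h}$ lies strictly between it and the whole algebra for any proper subalgebra $\frak{h}$ of $N$). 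The missing argument runs as follows: set $I=P_{\frak{S}^{\max}}(\mathcal{L})$; each $J\in\frak{J}_{\mathcal{L}}^{\max}$, being a proper closed Lie subalgebra of finite codimension, is contained in some $M\in\frak{S}_{\mathcal{L}}^{\max}$; then $I\subseteq M$, so $I+J$ is a proper Lie ideal of $\mathcal{L}$ contained in $M$, closed by Lemma \ref{Closed}, and maximality of $J$ forces $I+J=J$, i.e.\ $I\subseteq J$. The gap propagates to part (iii), where the inequality $r_{P_{\frak{S}^{\max}}}^{\circ}(\mathcal{L})\leq r_{P_{\frak{J}^{\max}}}^{\circ}(\mathcal{L})$ requires precisely $P_{\frak{S}^{\max}}\leq P_{\frak{J}^{\max}}$.

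The rest is sound. Your proof of the second inclusion via Theorem \ref{KST1}(i) and of the nonemptiness statement is the paper's argument, as is your proof of (iii). For (ii) you take a mildly different and arguably cleaner route than the paper: you note that membership in $\mathbf{Rad}(P_{\Gamma})$ is equivalent to emptiness of $\Gamma_{\mathcal{L}}$, that the four emptiness conditions are equivalent, and then invoke Theorem \ref{T4.1}(ii) and Corollary \ref{Crad}(ii); the paper instead passes to the quotient $\mathcal{L}/P_{\frak{S}}^{\circ}(\mathcal{L})$ and uses lower stability of $P_{\frak{J}^{\max}}^{\circ}$ to force $P_{\frak{J}^{\max}}^{\circ}(\mathcal{L})\subseteq P_{\frak{S}}^{\circ}(\mathcal{L})$. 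Both work, and it is worth noting that your version of (ii) does not depend on the full chain (\ref{8.2}), only on the nonemptiness statement, so it survives the gap above.
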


\begin{proof}
We begin with the proof of the last statement in part (i). Suppose that
$\mathfrak
{S}_{\mathcal{L}}\neq\varnothing$. It follows from Theorem \ref{KST1} that
$\mathfrak{J}_{\mathcal{L}}\neq\varnothing$, whence $\mathfrak{J}%
_{\mathcal{L}}^{\max}\neq\varnothing.$ Hence $P_{\mathfrak{J}^{\mathrm{\max}}%
}\left(  \mathcal{L}\right)  \neq\mathcal{L}$.

If $\mathfrak{S}_{\mathcal{L}}^{\text{max}}=\varnothing,$ then $\mathfrak
{J}_{\mathcal{L}}^{\max}=\mathfrak{J}_{\mathcal{L}}=\mathfrak{S}_{\mathcal{L}%
}=\varnothing,$ and it follows from (\ref{F0}) and (\ref{4.r}) that
$P_{\Gamma}(\mathcal{L})=\mathfrak{p}(\Gamma_{\mathcal{L}})=\mathcal{L}$,
where $\Gamma$ is any of these multifunctions. Hence in this case (\ref{8.2}) holds.

Let $\frak{S}_{\mathcal{L}}^{\mathrm{\max}}\neq\varnothing.$ Then
$\frak{S}_{\mathcal{L}}\neq\varnothing$ and, by the above, $P_{\frak{J}%
^{\mathrm{\max}}}\left(  \mathcal{L}\right)  \neq\mathcal{L}$. As
$\frak{J}_{\mathcal{L}}\subseteq\frak{S}_{\mathcal{L}}$, we have $P_{\frak{S}%
}\left(  \mathcal{L}\right)  =\frak{p}(\frak{S}_{\mathcal{L}})\subseteq
\frak{p}(\frak{J}_{\mathcal{L}})=P_{\frak{J}}\left(  \mathcal{L}\right)  .$

By Theorem \ref{KST1}(i), each $M\in\mathfrak{S}_{\mathcal{L}}^{\mathrm{\max}%
}$ contains a closed Lie ideal $J$ of $\mathcal{L}$ of finite codimension.
This means that $\mathfrak{J}_{\mathcal{L}}\overleftarrow{\subset}\mathfrak
{S}_{\mathcal{L}}^{\mathrm{\max}}.$ Therefore, by (\ref{LP0}), $P_{\mathfrak
{J}}\left(  \mathcal{L}\right)  \subseteq P_{\mathfrak{S}^{\mathrm{\max}}%
}\left(  \mathcal{L}\right)  $.

To prove the last inclusion in (\ref{8.2}), set $I=P_{\mathfrak{S}%
^{\mathrm{\max}}}\left(  \mathcal{L}\right)  $. For each $J\in\mathfrak
{J}_{\mathcal{L}}^{\max},$ there is $M\in\mathfrak{S}_{\mathcal{L}%
}^{\mathrm{\max}}$ such that $J\subseteq M.$ Then $I\subseteq M$, so that
$I+J\subseteq M$ . Thus $I+J$ is a proper Lie ideal of $\mathcal{L}$ and, by
Lemma \ref{Closed}, it is closed. As $J\subseteq I+J$ and $J$ is a maximal
closed Lie ideal of $\mathcal{L}$, we have $J=I+J.$ Hence $I\subseteq J.$
Therefore $P_{\mathfrak{S}^{\mathrm{\max}}}\left(  \mathcal{L}\right)
=I\subseteq\underset{J\in\mathfrak{J}_{\mathcal{L}}^{\mathrm{\max}}}{\cap
}J=P_{\mathfrak
{J}^{\mathrm{\max}}}\left(  \mathcal{L}\right)  .$ Part (i) is proved.

As $P_{\mathfrak{S}}\left(  \mathcal{L}\right)  ,P_{\mathfrak{J}}\left(
\mathcal{L}\right)  ,P_{\mathfrak{S}^{\mathrm{\max}}}\left(  \mathcal{L}%
\right)  ,P_{\mathfrak{J}^{\mathrm{\max}}}\left(  \mathcal{L}\right)  $ are
Lie ideals of $\mathcal{L}$ and the preradicals are balanced, it follows by
induction from (\ref{8.2}) that%
\begin{equation}
P_{\mathfrak{S}}^{\alpha}\left(  \mathcal{L}\right)  \subseteq P_{\mathfrak
{J}}^{\alpha}\left(  \mathcal{L}\right)  \subseteq P_{\mathfrak{S}%
^{\mathrm{\max}}}^{\alpha}\left(  \mathcal{L}\right)  \subseteq P_{\mathfrak
{J}^{\mathrm{\max}}}^{\alpha}\left(  \mathcal{L}\right)  ,\text{ for all
}\mathcal{L}\in\mathfrak{L} \label{f-alfa}%
\end{equation}
and all ordinal $\alpha$. This implies%
\begin{equation}
P_{\mathfrak{S}}^{\circ}\left(  \mathcal{L}\right)  \subseteq P_{\mathfrak{J}%
}^{\circ}\left(  \mathcal{L}\right)  \subseteq P_{\mathfrak{S}^{\mathrm{\max}%
}}^{\circ}\left(  \mathcal{L}\right)  \subseteq P_{\mathfrak{J}^{\mathrm{\max
}}}^{\circ}\left(  \mathcal{L}\right)  . \label{f-rad}%
\end{equation}
Set $J=P_{\frak{S}}^{\circ}\left(  \mathcal{L}\right)  $. As $P_{\frak{S}%
}^{\circ}$ is a radical, it is upper stable. Hence $\{0\}=P_{\frak{S}}^{\circ
}(\mathcal{L}/J)\subseteq P_{\frak{J}^{\mathrm{\max}}}^{\circ}\left(
\mathcal{L}/J\right)  .$

Set $I=P_{\mathfrak{J}^{\mathrm{\max}}}^{\circ}\left(  \mathcal{L}/J\right)  $
and assume that $I\neq\{0\}$. As $P_{\frak{S}}^{\circ}$ is balanced and as
$I\vartriangleleft\mathcal{L}/J$ and $P_{\mathfrak{S}}^{\circ}(\mathcal{L}%
/J)=\{0\}$, we have $P_{\mathfrak{S}}^{\circ}\left(  I\right)  =\{0\}$. Hence,
by (\ref{4.o}), $P_{\mathfrak{S}}\left(  I\right)  \neq I$, whence the family
$\mathfrak{S}_{I}\neq\varnothing$. Hence, by (i), $P_{\mathfrak{J}%
^{\mathrm{\max}}}\left(  I\right)  \neq I$. Therefore $P_{\mathfrak
{J}^{\mathrm{\max}}}^{\circ}(I)\subseteq P_{\mathfrak{J}^{\mathrm{\max}}%
}\left(  I\right)  \neq I$. On the other hand, as $P_{\mathfrak{J}%
^{\mathrm{\max}}}^{\circ}$ is a radical, $P_{\mathfrak{J}^{\mathrm{\max}}%
}^{\circ}(I)=I.$ Thus $I=\{0\}$. Therefore it follows from Lemma
\ref{L-sem}(ii) that $P_{\mathfrak{J}^{\mathrm{\max}}}^{\circ}\left(
\mathcal{L}\right)  \subseteq J=P_{\mathfrak{S}}^{\circ}\left(  \mathcal{L}%
\right)  $. Taking into account (\ref{f-rad}), we complete the proof of (ii).

(iii) Denote temporarily by $P$ the common radical constructed in (ii). Let
$\beta=r_{P_{\mathfrak{J}^{\max}}}^{\circ}\left(  \mathcal{L}\right)  .$ It
follows from (\ref{r1}) and (\ref{f-alfa}) that $P(\mathcal{L})\subseteq
P_{\mathfrak{S}^{\mathrm{\max}}}^{\beta}\left(  \mathcal{L}\right)  \subseteq
P_{\mathfrak{J}^{\mathrm{\max}}}^{\beta}\left(  \mathcal{L}\right)
=P(\mathcal{L}).$ Hence $P(\mathcal{L})=P_{\mathfrak{S}^{\mathrm{\max}}%
}^{\beta}\left(  \mathcal{L}\right)  ,$ so that $r_{P_{\mathfrak{S}^{\max}}%
}^{\circ}\left(  \mathcal{L}\right)  \leq\beta=r_{P_{\mathfrak{J}^{\max}}%
}^{\circ}\left(  \mathcal{L}\right)  .$ The proof of other inequalities is identical.
\end{proof}

\begin{definition}
We denote by $\mathcal{F}$ the common radical in Theorem\emph{\ref{C6.6}(ii):}%
\[
\mathcal{F}=P_{\mathfrak{S}}^{\circ}=P_{\mathfrak
{J}}^{\circ}=P_{\mathfrak{S}^{\mathrm{\max}}}^{\circ}=P_{\mathfrak
{J}^{\mathrm{\max}}}^{\circ},
\]
and call it the\emph{ Frattini radical.} $\mathcal{F}$\emph{-}radical Banach
Lie algebras are called also \emph{Frattini-radical}.
\end{definition}

As\textbf{ }$P_{\frak{S}},$ $P_{\frak{S}^{\max}},$ $P_{\frak{J}},$
$P_{\frak{J}^{\max}}$ are balanced preradicals\textbf{ (}Theorem \ref{T4}), it
follows from Theorem \ref{T4.1}(ii) that%
\begin{equation}
\mathbf{Rad}(\mathcal{F})=\mathbf{Rad}(P_{\frak{S}})=\mathbf{Rad}%
(P_{\frak{S}^{\max}})=\mathbf{Rad}(P_{\frak{J}})=\mathbf{Rad}(P_{\frak{J}%
^{\max}}). \label{7.7}%
\end{equation}
Thus (see (\ref{8.1'})) a Banach Lie algebra is $\mathcal{F}$-radical if and
only if it satisfies one of the following equivalent conditions:

\begin{itemize}
\item [$\mathrm{a)}$]it has no proper closed subalgebras of finite codimension;\smallskip

\item[$\mathrm{b)}$] it has no proper closed ideals of finite codimension;\smallskip

\item[$\mathrm{c)}$] it has no maximal proper closed subalgebras of finite codimension;\smallskip

\item[$\mathrm{d)}$] it has no maximal proper closed ideals of finite codimension.
\end{itemize}

\begin{corollary}
\label{Yura}A Banach Lie algebra $\mathcal{L}$ has closed Lie subalgebras of
finite codimension if and only if it has closed Lie ideals of finite codimension.
\end{corollary}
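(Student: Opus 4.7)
The statement is a packaging of results already in hand and admits two essentially equivalent proofs, one direct and one through the radical framework. The plan is to present the direct one, since it is shorter. The ``if'' direction is immediate: any member of $\mathfrak{J}_{\mathcal{L}}$, being a proper closed Lie ideal of finite codimension, is in particular a proper closed Lie subalgebra of finite codimension, hence lies in $\mathfrak{S}_{\mathcal{L}}$. So $\mathfrak{J}_{\mathcal{L}} \neq \varnothing$ forces $\mathfrak{S}_{\mathcal{L}} \neq \varnothing$. The ``only if'' direction is precisely the first assertion of Theorem \ref{KST1}: if $\mathcal{L}$ has a proper closed Lie subalgebra of finite codimension, then it has a proper closed Lie ideal of finite codimension.

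As a cross-check, one can also derive the corollary from the radical-theoretic framework already in place. By (\ref{7.7}), $\mathbf{Rad}(P_{\mathfrak{S}}) = \mathbf{Rad}(P_{\mathfrak{J}})$. Using the convention (\ref{8.1'}) together with the fact that the intersection of any nonempty family of proper subspaces is a proper subspace, one checks that $P_{\mathfrak{S}}(\mathcal{L}) = \mathcal{L}$ is equivalent to $\mathfrak{S}_{\mathcal{L}} = \varnothing$, and analogously $P_{\mathfrak{J}}(\mathcal{L}) = \mathcal{L}$ is equivalent to $\mathfrak{J}_{\mathcal{L}} = \varnothing$. Comparing and taking contrapositives yields the corollary. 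Since the hard analytic input (Theorem \ref{KST1} from \cite{KST2}) is quoted as given, there is no genuine obstacle; the only point requiring mild care is the empty-family convention for $\mathfrak{p}$ and $P_\Gamma$, so that ``$\Gamma_{\mathcal{L}} \neq \varnothing$'' translates correctly into ``$P_\Gamma(\mathcal{L})$ is a proper ideal''.
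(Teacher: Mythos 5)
Your proposal is correct. Your primary argument is more direct than the paper's: you dispose of the ``if'' direction by noting that every proper closed Lie ideal of finite codimension is in particular a proper closed Lie subalgebra of finite codimension, and you get the ``only if'' direction by quoting the unnumbered first assertion of Theorem \ref{KST1} verbatim. The paper instead argues entirely through the radical machinery: by the convention (\ref{8.1'}), $\mathfrak{J}_{\mathcal{L}}=\varnothing$ is equivalent to $P_{\mathfrak{J}}(\mathcal{L})=\mathcal{L}$, i.e.\ $\mathcal{L}\in\mathbf{Rad}(P_{\mathfrak{J}})$, and by (\ref{7.7}) this class equals $\mathbf{Rad}(P_{\mathfrak{S}})$, which unwinds to $\mathfrak{S}_{\mathcal{L}}=\varnothing$ --- exactly your ``cross-check''. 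The two routes are not genuinely independent: the equality $\mathbf{Rad}(P_{\mathfrak{S}})=\mathbf{Rad}(P_{\mathfrak{J}})$ in (\ref{7.7}) is itself deduced, via Theorem \ref{C6.6}, from Theorem \ref{KST1}, so both proofs rest on the same analytic input from \cite{KST2}. What your direct version buys is brevity and independence from the radical formalism of Section \ref{7}; what the paper's version buys is a showcase of that formalism. You are also right that the only point needing care in the radical route is the empty-family convention: if $\Gamma_{\mathcal{L}}\neq\varnothing$ then $P_{\Gamma}(\mathcal{L})$ is an intersection of proper subspaces and hence proper, so $P_{\Gamma}(\mathcal{L})=\mathcal{L}$ holds exactly when $\Gamma_{\mathcal{L}}=\varnothing$.
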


\begin{proof}
The condition that $\mathcal{L}$ has no closed Lie ideals of finite
codimension means $\frak{J}_{\mathcal{L}}=\varnothing.$ Then%
\[
\frak{J}_{\mathcal{L}}=\varnothing\overset{(\ref{8.1'})}{\Longleftrightarrow
}P_{\frak{J}}(\mathcal{L})=\mathcal{L}\Longleftrightarrow\mathcal{L}%
\in\mathbf{Rad}(P_{\frak{J}})\overset{(\ref{7.7})}{=}\mathcal{L}%
\in\mathbf{Rad}(P_{\frak{S}})\Longleftrightarrow P_{\frak{S}}(\mathcal{L}%
)=\mathcal{L}\overset{(\ref{8.1'})}{\Longleftrightarrow}\frak{S}_{\mathcal{L}%
}=\varnothing.
\]
Hence $\mathcal{L}$ has no closed Lie subalgebras of finite codimension.
\end{proof}

Murphy and Radjavi \cite{MR} proved that the algebra $C(H)$ of all compact
operators on a separable Hilbert space $H$ and Schatten ideals $C_{p}$ of
$B(H),$ for $p\geq2,$ have no proper closed Lie subalgebras of finite
codimension. In \cite{BKS} Bre\v{s}ar, Kissin and Shulman established that
simple Banach associative algebras with trivial centre and without tracial
functionals (in particular, $C(H)$ and \textit{all} Schatten ideals $C_{p},$
$1<p<\infty)$ have no proper closed Lie ideals. From Corollary \ref{Yura} it
follows that they also have no proper closed Lie subalgebras of finite codimension.

The following result can be considered as an ''external'' application of the
radical technique.

\begin{corollary}
\label{Frat}\emph{(i)} Each Banach Lie algebra $\mathcal{L}$ has the largest
closed Lie ideal $\mathcal{F}(\mathcal{L})$ that satisfies one and\emph{,}
therefore\emph{,} all the above conditions $\mathrm{a})$--$\mathrm{d});$ this
ideal is characteristic.

\emph{(ii)} Let $I\vartriangleleft\mathcal{L}$. If $I$ and $\mathcal{L}/I$
satisfy conditions $\mathrm{a})$--$\mathrm{d})$ then the same is true for
$\mathcal{L}$.
\end{corollary}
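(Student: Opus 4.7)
The plan is to derive both parts of the corollary as formal consequences of the general radical machinery already established, together with the equivalence of conditions a)--d) with $\mathcal{F}$-radicality recorded in (\ref{7.7}) and the list following it. The key inputs are: $\mathcal{F}$ is a radical in $\overline{\mathbf{L}}$ (so it is balanced, lower stable, and upper stable); every $R$-radical subalgebra is contained in $R(\mathcal{L})$ when $R$ is a radical (Corollary \ref{Crad}(i)); the value of any preradical is a characteristic closed Lie ideal (Corollary \ref{C1}(i)); and the class $\mathbf{Rad}(R)$ is closed under extensions for balanced upper stable $R$ (Lemma \ref{L-sem}(iv)).

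For part (i), the first step will be to observe that, by the equivalence of a)--d) recorded after (\ref{7.7}), a closed Lie ideal $J$ of $\mathcal{L}$ satisfies a)--d) if and only if $J\in\mathbf{Rad}(\mathcal{F})$, i.e.\ $\mathcal{F}(J)=J$. Since $\mathcal{F}$ is a radical, Corollary \ref{Crad}(i) then gives two facts simultaneously: $\mathcal{F}(\mathcal{L})\in\mathbf{Rad}(\mathcal{F})$, so $\mathcal{F}(\mathcal{L})$ itself satisfies a)--d); and $\mathcal{F}(\mathcal{L})$ contains every $\mathcal{F}$-radical Lie ideal of $\mathcal{L}$, hence every closed Lie ideal satisfying a)--d). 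Finally, Corollary \ref{C1}(i) applied to the preradical $\mathcal{F}$ yields $\mathcal{F}(\mathcal{L})\vartriangleleft^{\mathrm{ch}}\mathcal{L}$.

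For part (ii), the plan is to note that $\mathcal{F}$, being a radical, is in particular balanced and upper stable, so Lemma \ref{L-sem}(iv) applies with $R=\mathcal{F}$: if $I\in\mathbf{Rad}(\mathcal{F})$ and $\mathcal{L}/I=q(\mathcal{L})\in\mathbf{Rad}(\mathcal{F})$, then $\mathcal{L}\in\mathbf{Rad}(\mathcal{F})$. Re-translating via the equivalence of a)--d) with $\mathcal{F}$-radicality gives exactly the claimed closure of the class of Lie algebras satisfying a)--d) under extensions.

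I do not foresee any genuine obstacle: the whole statement is essentially an unpacking of ``$\mathcal{F}$ is a radical'' into the four equivalent structural descriptions a)--d). The only thing to be a little careful about is that the equivalence a)$\Leftrightarrow$b)$\Leftrightarrow$c)$\Leftrightarrow$d) is needed at the level of an \emph{arbitrary} Banach Lie ideal $J$ (not just of $\mathcal{L}$ itself), but this is immediate because the equivalence was proved using only intrinsic properties of the algebra under consideration.
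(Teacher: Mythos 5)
Your proposal is correct and follows essentially the same route as the paper: both translate conditions a)--d) into membership in $\mathbf{Rad}(\mathcal{F})$ and then invoke Corollary \ref{Crad}(i) for the maximality, Corollary \ref{C1}(i) for the characteristic property, and Lemma \ref{L-sem}(iv) for closure under extensions. No gaps.
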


\begin{proof}
As $\mathcal{F}$ is a radical, $\mathcal{F}(\mathcal{L})$ is a characteristic
Lie ideal$\mathcal{.}$ As $\mathcal{F}(\mathcal{L})\in\mathbf{Rad}%
(\mathcal{F}),$ it satisfies conditions a)-d). The rest of (i) follows from
Corollary \ref{Crad}(i). Part (ii) from Lemma \ref{L-sem}(iv).
\end{proof}

In the next theorem we compare the Frattini radical $\mathcal{F}$ and the
radical $\mathcal{D}$ (see (\ref{6.1})).

\begin{theorem}
\label{free1}$\mathbf{Sem}\left(  \mathcal{D}\right)  \subsetneqq
\mathbf{Sem}(\mathcal{F}),$ $\mathbf{Rad}\left(  \mathcal{F}\right)
\subsetneqq\mathbf{Rad}(\mathcal{D})$ and $\mathcal{F}<\mathcal{D}$.
\end{theorem}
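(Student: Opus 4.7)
The plan is to establish the key comparison $\mathcal{F}\leq\mathcal{D}$ via an inclusion of $\mathbf{Rad}$-classes, then derive $\mathbf{Sem}(\mathcal{D})\subseteq\mathbf{Sem}(\mathcal{F})$ formally, and finally exhibit a single finite-dimensional Lie algebra witnessing strictness of all three assertions at once.

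For the main step, both $\mathcal{F}$ and $\mathcal{D}$ are radicals, so $\mathcal{F}$ is lower stable and $\mathcal{D}$ is balanced; by Proposition \ref{P3.7}(i) the inequality $\mathcal{F}\leq\mathcal{D}$ will follow from $\mathbf{Rad}(\mathcal{F})\subseteq\mathbf{Rad}(\mathcal{D})$. To prove this containment, I would take an $\mathcal{F}$-radical Banach Lie algebra $\mathcal{L}$. By the equivalences listed after (\ref{7.7}), in particular condition (b), $\mathcal{L}$ admits no proper closed Lie ideals of finite codimension. Suppose for contradiction that $I:=\overline{[\mathcal{L},\mathcal{L}]}$ is a proper ideal of $\mathcal{L}$. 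Then $\mathcal{L}/I$ is a non-zero abelian Banach Lie algebra; if it is finite-dimensional, $I$ itself is a proper closed Lie ideal of finite codimension in $\mathcal{L}$; if it is infinite-dimensional, Hahn--Banach supplies a closed hyperplane in $\mathcal{L}/I$, which is automatically a Lie ideal of the quotient (the quotient being abelian) and whose preimage in $\mathcal{L}$ is a proper closed Lie ideal of codimension one. Either case contradicts condition (b), so $\overline{[\mathcal{L},\mathcal{L}]}=\mathcal{L}$, i.e., $D(\mathcal{L})=\mathcal{L}$. Since $D$ is balanced (it is an over radical by Theorem \ref{T7.4}(i)), Theorem \ref{T4.1}(ii) gives $\mathbf{Rad}(D)=\mathbf{Rad}(D^{\circ})=\mathbf{Rad}(\mathcal{D})$, whence $\mathcal{L}\in\mathbf{Rad}(\mathcal{D})$. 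The inclusion $\mathbf{Sem}(\mathcal{D})\subseteq\mathbf{Sem}(\mathcal{F})$ then follows immediately from $\mathcal{F}\leq\mathcal{D}$ via the remark preceding Proposition \ref{P3.7}.

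For strictness of all three assertions I would use $\mathcal{L}=sl_{2}(\mathbb{C})$. Since $sl_{2}$ is simple and finite-dimensional, $[sl_{2},sl_{2}]=sl_{2}$, hence $D(sl_{2})=sl_{2}$ and $\mathcal{D}(sl_{2})=sl_{2}$. On the other hand $\{0\}$ is itself a proper closed Lie ideal of finite codimension in $sl_{2}$, so $\{0\}\in\mathfrak{J}_{sl_{2}}$ and $P_{\mathfrak{J}}(sl_{2})=\{0\}$; Theorem \ref{T4.1}(ii) gives $\mathcal{F}=P_{\mathfrak{J}}^{\circ}\leq P_{\mathfrak{J}}$, so $\mathcal{F}(sl_{2})=\{0\}$. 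Consequently $sl_{2}\in\mathbf{Sem}(\mathcal{F})\setminus\mathbf{Sem}(\mathcal{D})$ and $sl_{2}\in\mathbf{Rad}(\mathcal{D})\setminus\mathbf{Rad}(\mathcal{F})$, so both set inclusions are strict, and $\mathcal{F}(sl_{2})\neq\mathcal{D}(sl_{2})$ yields $\mathcal{F}<\mathcal{D}$. I do not foresee any substantial obstacle; the only point that requires care is invoking the finite-codimension characterisation of $\mathcal{F}$-radicality (condition (b)) rather than a statement about arbitrary closed ideals, so that the quotient argument produces a witness ideal lying in $\mathfrak{J}_{\mathcal{L}}$.
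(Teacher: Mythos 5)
Your proposal is correct, and it reaches the key inequality $\mathcal{F}\leq\mathcal{D}$ by a genuinely different route from the paper. The paper starts from Proposition \ref{Cder}(i), which gives $P_{\mathfrak{J}^{\max}}(\mathcal{L})\subseteq\mathcal{L}_{[1]}=D(\mathcal{L})$, and then iterates this pointwise through the superposition series to get $P_{\mathfrak{J}^{\max}}^{\alpha}(\mathcal{L})\subseteq D^{\alpha}(\mathcal{L})$ for every ordinal $\alpha$, concluding $\mathcal{F}=P_{\mathfrak{J}^{\max}}^{\circ}\leq D^{\circ}=\mathcal{D}$. You instead compare only the radical classes: you show directly that every $\mathcal{F}$-radical algebra is perfect (if $\overline{[\mathcal{L},\mathcal{L}]}$ were proper, the commutative quotient would have a closed hyperplane, pulling back to a closed Lie ideal of finite codimension, contradicting condition b)), hence $\mathbf{Rad}(\mathcal{F})\subseteq\mathbf{Rad}(D)=\mathbf{Rad}(\mathcal{D})$, and then recover the order from Proposition \ref{P3.7}(i) using that $\mathcal{F}$ is lower stable and $\mathcal{D}$ is balanced. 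The two arguments rest on the same elementary fact (a non-perfect Banach Lie algebra has a closed codimension-one ideal — this is exactly what drives the proof of Proposition \ref{Cder}(i)), but your version avoids the transfinite induction and the detour through $P_{\mathfrak{J}^{\max}}$ entirely, at the cost of re-deriving the perfectness of $\mathcal{F}$-radical algebras, which the paper states only implicitly. The strictness step is essentially identical: the paper uses an arbitrary finite-dimensional semisimple algebra where you use $sl_{2}(\mathbb{C})$, and both exploit $\mathcal{F}(\mathcal{L})=\{0\}$ versus $\mathcal{D}(\mathcal{L})=\mathcal{L}$ to kill all three assertions with one witness.
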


\begin{proof}
Recall (see Theorem \ref{T7.4}) that $D(\mathcal{L})=\mathcal{L}_{[1]}$ and
$\mathcal{D}\left(  \mathcal{L}\right)  =D^{\circ}(\mathcal{L})=\cap
\mathcal{L}_{[\alpha]}$ for $\mathcal{L}\in\mathfrak{L.}$ By Proposition
\ref{Cder}(i), $P_{\frak{J}^{\text{max}}}\left(  \mathcal{L}\right)
\subseteq\mathcal{L}_{[1]}=D\left(  \mathcal{L}\right)  $. This implies
$P_{\mathfrak{J}^{\text{max}}}^{\alpha}\left(  \mathcal{L}\right)
\subseteq\mathcal{L}_{\left[  \alpha\right]  }=D^{\alpha}(\mathcal{L)}$ for
all $\alpha.$ Hence, by Theorem \ref{C6.6}(ii), $\mathcal{F}\leq\mathcal{D},$
so that $\mathbf{Sem}\left(  \mathcal{D}\right)  \subseteq$ $\mathbf{Sem}%
(\mathcal{F}).$

Each finite-dimensional semisimple Lie algebra $\mathcal{L}$ belongs to
$\mathbf{Sem}(\mathcal{F}),$ as $\{0\}$ is a Lie ideal of finite
codimension$.$ However, $D^{\circ}(\mathcal{L})=\mathcal{L}$, as $D\left(
\mathcal{L}\right)  =[\mathcal{L}$,$\mathcal{L}]=\mathcal{L}$. Hence
$\mathcal{L}\notin\mathbf{Sem}\left(  \mathcal{D}\right)  .$ Thus
$\mathbf{Sem}\left(  \mathcal{D}\right)  \neq$ $\mathbf{Sem}(\mathcal{F})$. By
Proposition \ref{P3.7} and Corollary \ref{Crad}, $\mathbf{Rad}\left(
\mathcal{F}\right)  \subsetneqq\mathbf{Rad}(\mathcal{D})$ and $\mathcal{F}%
<\mathcal{D}$.
\end{proof}

\begin{corollary}
\label{fred}Let $\mathcal{L}$ be a Banach Lie algebra. Then $\mathcal{L}%
/\mathcal{D}\left(  \mathcal{L}\right)  $ is $\mathcal{F}$-semisimple.
\end{corollary}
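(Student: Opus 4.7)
The plan is very short: this is a direct consequence of two results already established in the paper, namely that $\mathcal{D}$ is a radical and that $\mathcal{F}\leq\mathcal{D}$. I would not expect to need any independent computation about maximal ideals, multifunctions, or the inner structure of $\mathcal{D}$.

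First I would invoke Theorem~\ref{T7.4}(i), which tells us that $\mathcal{D}=D^{\circ}$ is a radical in $\overline{\mathbf{L}}$. By Corollary~\ref{Crad}(i), every radical $R$ has the property that $\mathcal{L}/R(\mathcal{L})\in\mathbf{Sem}(R)$ for each $\mathcal{L}\in\mathfrak{L}$. Applying this with $R=\mathcal{D}$ gives $\mathcal{L}/\mathcal{D}(\mathcal{L})\in\mathbf{Sem}(\mathcal{D})$, i.e.\ $\mathcal{D}(\mathcal{L}/\mathcal{D}(\mathcal{L}))=\{0\}$.

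Second, Theorem~\ref{free1} yields $\mathcal{F}\leq\mathcal{D}$, which by the general remark preceding Proposition~\ref{P3.7} (if $T\leq R$ then $\mathbf{Sem}(R)\subseteq\mathbf{Sem}(T)$) gives the inclusion $\mathbf{Sem}(\mathcal{D})\subseteq\mathbf{Sem}(\mathcal{F})$. Combining the two steps,
\[
\mathcal{L}/\mathcal{D}(\mathcal{L})\in\mathbf{Sem}(\mathcal{D})\subseteq\mathbf{Sem}(\mathcal{F}),
\]
so $\mathcal{F}(\mathcal{L}/\mathcal{D}(\mathcal{L}))=\{0\}$, which is precisely the assertion. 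There is no genuine obstacle — the entire content of the corollary is packaged in the inequality $\mathcal{F}\leq\mathcal{D}$ already proved in Theorem~\ref{free1} together with upper stability of the radical $\mathcal{D}$.
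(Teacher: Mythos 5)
Your proposal is correct and is essentially identical to the paper's own one-line argument: upper stability of the radical $\mathcal{D}$ gives $\mathcal{L}/\mathcal{D}(\mathcal{L})\in\mathbf{Sem}(\mathcal{D})$, and the inclusion $\mathbf{Sem}(\mathcal{D})\subseteq\mathbf{Sem}(\mathcal{F})$ from Theorem~\ref{free1} finishes it. No gaps.
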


\begin{proof}
The Lie algebra $\mathcal{L}/\mathcal{D}\left(  \mathcal{L}\right)  $ is
$\mathcal{D}$-semisimple and hence $\mathcal{F}$-semisimple by Theorem
\ref{free1}.
\end{proof}

We will consider now some examples of $\mathcal{F}$-semisimple algebras.

\begin{example}
\label{E7.1}\emph{(i)} \textit{Each finite-dimensional Lie algebra is}
$\mathcal{F}$-\textit{semisimple}\emph{.} \emph{This follows from the fact
that }$\{0\}$\emph{ is a Lie ideal of finite codimension.}

\emph{(ii)} \textit{Each solvable Banach Lie algebra }$\mathcal{L}$
\textit{is} $\mathcal{F}$-\textit{semisimple. }\emph{This follows from Theorem
\ref{free1}. If }$\mathcal{L}$ \emph{is commutative then, in addition, we have
from (\ref{8.2}) and Proposition \ref{Cder}(i) that}%
\begin{equation}
\mathcal{F}(\mathcal{L})=P_{\frak{S}}\left(  \mathcal{L}\right)  =P_{\frak{J}%
}\left(  \mathcal{L}\right)  =P_{\frak{S}^{\mathrm{\max}}}\left(
\mathcal{L}\right)  =P_{\frak{J}^{\mathrm{\max}}}\left(  \mathcal{L}\right)
=\{0\}. \label{7.1}%
\end{equation}
\emph{(iii)} \textit{Let }$X$\textit{ be a Banach space\emph{,} }$L$\textit{
be a Lie subalgebra of }$B(X)$\textit{ and let }$\mathcal{L}=L\oplus
^{\text{\emph{id}}}X$ \emph{(}see \emph{(\ref{sem})).} \textit{If }$L$\textit{
is} $\mathcal{F}$\textit{-semisimple} \textit{then }$\mathcal{L}$ \textit{is}
$\mathcal{F}$-\textit{semisimple}. \emph{Indeed, by Proposition \ref{semi}(ii)
3) and the above example,} $\mathcal{F}(\mathcal{L})=\{0\}\oplus
^{\text{\emph{id}}}\mathcal{F(}X)=\{0\},$ \emph{so that} $\mathcal{L}$
\emph{is} $\mathcal{F}$-\emph{semisimple.}
\end{example}

Each infinite-dimensional, topologically simple Banach Lie algebra
$\mathcal{L}$ is $\mathcal{F}$-radical, since $\frak{J}_{\mathcal{L}%
}=\varnothing$, so that (see (\ref{8.1'}) and (\ref{7.7})) $\mathcal{L}%
\in\mathbf{Rad}(P_{\frak{J}})=\mathbf{Rad}(\mathcal{F})$. For example, the Lie
algebra of all nuclear operators with zero trace on a Hilbert space $H$ with
respect to the usual Lie product $[a,b]=ab-ba$ is topologically simple (see
\cite[Theorem 5.8]{BKS}) and therefore $\mathcal{F}$-radical.

Examples of $\mathcal{F}$-radical Banach Lie algebras can be found also among
Lie algebras which are far from being simple.

\begin{example}
\label{E7.2}\textit{The Banach Lie algebra} $K_{\mathfrak{N}}$ \textit{of all
compact operators preserving a given continuous nest} $\mathfrak{N}$
\textit{of subspaces in }$H$\textit{ is }$\mathcal{F}$-\textit{radical}.

\emph{To see that }$K_{\mathfrak{N}}$\emph{ is }$\mathcal{F}$\emph{-radical,
we will show that it has no closed Lie ideals of finite codimension. Assume,
to the contrary, that }$J$\emph{ is such a Lie ideal. All operators in
}$K_{\mathfrak{N}}$\emph{ are quasinilpotent (see \cite{Ri}), so that all
operators ad}$\left(  a\right)  $\emph{ are quasinilpotent on }$K_{\mathfrak
{N}}$\emph{ and induce quasinilpotent operators on its quotients. Then
}$L=K_{\mathfrak{N}}/J$\emph{ is a nilpotent finite-dimensional Lie algebra.
Therefore }$[L,L]\neq L$\emph{, whence }$\overline{[K_{\mathfrak{N}%
},K_{\mathfrak{N}}]}\neq K_{\mathfrak{N}}$\emph{. On the other hand, each rank
one operator }$a=e\otimes f$\emph{ in }$K_{\mathfrak{N}}$\emph{ belongs to
}$\overline{[K_{\mathfrak{N}},K_{\mathfrak{N}}]}$\emph{. Indeed, by
\cite[Lemma 3.7]{Da}, there is a projection }$p$\emph{ on a subspace in
}$\mathfrak{N}$\emph{ with }$pe=e$\emph{ and }$pf=0$\emph{. Hence }%
$a=pa-ap$\emph{. Since projections on subspaces in }$\mathfrak{N}$\emph{
belong to the strong closure of }$K_{\mathfrak{N}}$\emph{ (\cite[Lemma
3.9]{Da}), }$a$\emph{ is the norm limit of a sequence }$b_{n}a-ab_{n}%
\in\lbrack K_{\mathfrak{N}},K_{\mathfrak{N}}]$\emph{. Thus }$\overline
{[K_{\mathfrak{N}},K_{\mathfrak{N}}]}$\emph{ contains all rank one operators.
It remains to note that rank one operators generate }$K_{\mathfrak
{N}}$\emph{ by \cite[Corollary 3.12 and Proposition 3.8]{Da}, whence
}$K_{\mathfrak
{N}}=$\emph{ }$\overline{[K_{\mathfrak{N}},K_{\mathfrak{N}}]}$\emph{, a
contradiction. \ \ }$\square$
\end{example}

\begin{example}
The Frattini radical is not hereditary \emph{(}see \emph{(\ref{4.4'}%
)).}\textit{ }

\emph{The Calkin algebra} $\mathcal{C}=\mathcal{B}(H)/\mathcal{K}(H)$ \emph{is
a simple} $C^{\ast}$\emph{-algebra. Considered as a Lie algebra with respect
to the Lie product} $[a,b]=ab-ba$\emph{, it has only one non-zero Lie ideal
}$J=\mathbb{C}e,$ \emph{where} $e$ \emph{is the unit of }$\mathcal{C}$
\emph{(since} $[\mathcal{C},\mathcal{C}]=\mathcal{C}$\emph{, this follows from
Herstein's \cite{H} description of Lie ideals in simple associative algebras).
Hence} $\mathcal{C}$ \emph{is} $\mathcal{F}$-\emph{radical:} $\mathcal{F}%
(\mathcal{C})=\mathcal{C}$. \emph{On the other hand,} $J$ \emph{is}
$\mathcal{F}$\emph{-semisimple, since it is finite-dimensional. Hence}
$\{0\}=\mathcal{F}(J)\neq J\cap\mathcal{F}(\mathcal{C})=J\cap\mathcal{C}=J$.
\ \ $\square$
\end{example}

\begin{proposition}
Let $A$ be a simple infinite-dimensional Banach associative algebra. If its
centre $Z_{A}=\{0\}$ then $\mathcal{F}(A)=\overline{[A,A]}$.
\end{proposition}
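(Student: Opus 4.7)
The plan is to establish $\mathcal{F}(A) \subseteq \overline{[A,A]}$ and $\overline{[A,A]} \subseteq \mathcal{F}(A)$ separately. For the first inclusion, since $A/\overline{[A,A]}$ is commutative as a Banach Lie algebra, Example~\ref{E7.1}(ii) shows it is $\mathcal{F}$-semisimple. Applying Lemma~\ref{L-sem}(ii) to the preradical $\mathcal{F}$ yields $\mathcal{F}(A) \subseteq \overline{[A,A]}$ immediately.

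For the reverse inclusion, set $B := \overline{[A,A]}$; by Corollary~\ref{Crad}(i) it suffices to show $B$ is itself $\mathcal{F}$-radical. In view of (\ref{7.7}) and (\ref{8.1'}), this is equivalent to $B$ having no proper closed Lie ideal of finite codimension, and by Corollary~\ref{Yura} equivalent to $B$ having no proper closed Lie subalgebra of finite codimension.

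The key input is Herstein's classical theorem on Lie ideals of simple associative algebras, applied in the closed/Banach setting: since $A$ is simple with $Z_A = \{0\}$, any non-zero closed Lie ideal of $A$ contains $[A,A]$, and hence contains $B$. Consequently the only closed Lie ideals of $A$ lying inside $B$ are $\{0\}$ and $B$ itself. Suppose, for contradiction, that $B$ has a proper closed Lie subalgebra of finite codimension. Provided $B$ is infinite-dimensional and non-commutative, Corollary~\ref{C3.1} applied to $\mathcal{L} = A$ and $J = B$ produces a closed Lie ideal $I$ of $A$ contained in $B$ with $0 < \operatorname{codim}_B I < \infty$; but the Herstein dichotomy above forces $I \in \{\{0\}, B\}$, neither of which satisfies the codimension condition. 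This contradiction shows $B$ has no proper closed Lie subalgebra of finite codimension, hence is $\mathcal{F}$-radical, and $\overline{[A,A]} \subseteq \mathcal{F}(A)$.

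The main obstacle, and the only subtle technical point, is the verification that $B$ is both infinite-dimensional and non-commutative. Finite-dimensionality of $B$ would bound the rank of every inner derivation of $A$ by $\dim[A,A]<\infty$, which is incompatible with $A$ being infinite-dimensional simple with trivial centre (inner derivations separate points since $Z_A=\{0\}$). Commutativity of $B$ would impose the polynomial identity $[[a,b],[c,d]] = 0$ on all of $A$, and standard polynomial identity theory (Kaplansky) forces a simple Banach algebra over $\mathbb{C}$ with trivial centre satisfying a nontrivial PI to be finite-dimensional, again contradicting the hypotheses.
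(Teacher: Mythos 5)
Your overall strategy for the reverse inclusion coincides with the paper's: reduce to showing that $B=\overline{[A,A]}$ has no proper closed Lie ideal (equivalently, subalgebra) of finite codimension, and obtain a contradiction from Herstein's dichotomy combined with Corollary \ref{C3.1}. The forward inclusion via commutativity of $A/B$ and Lemma \ref{L-sem}(ii) is also correct, and the use of Corollary \ref{Crad}(i), (\ref{7.7}) and Corollary \ref{Yura} is sound.

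The genuine gap is your verification that $\dim B=\infty$. Knowing that every inner derivation $\operatorname{ad}(a)$ has range inside the finite-dimensional space $[A,A]$, together with injectivity of $a\mapsto\operatorname{ad}(a)$, yields no contradiction: the space of linear maps from $A$ into a fixed finite-dimensional subspace is itself infinite-dimensional, so an injective $\operatorname{ad}$ of uniformly finite rank is perfectly consistent with $\dim A=\infty$. The argument that works (and is the one the paper uses) is to restrict the derivations to $B$: the map $a\mapsto\operatorname{ad}(a)|_{B}$ takes values in $\mathcal{B}(B)$, which is finite-dimensional when $\dim B<\infty$, so it has a non-zero kernel element $a$; then $[a,[y,z]]=0$ for all $y,z\in A$, in particular $[a,[a,x]]=0$ for all $x$, and the standard identity for simple associative algebras forces $a\in Z_{A}=\{0\}$, a contradiction. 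The same identity also shows at once that $A$ has no non-zero commutative Lie ideals, which gives non-commutativity of $B$ without PI theory; your Kaplansky route is not wrong in spirit, but Kaplansky's theorem concerns primitive rings with centre a field, and for a non-unital simple Banach algebra with $Z_{A}=\{0\}$ one would need the semiprime version (Rowen) to close the argument, so it imports avoidable and slightly delicate machinery.
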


\begin{proof}
As $A$ is a simple Banach algebra, each closed Lie ideal $J$ of $A$ either
contains the Lie ideal $C_{A}:=\overline{[A,A]},$ or is contained in $Z_{A}$
(see \cite{H} and \cite[Theorem 2.5]{BKS}). As $Z_{A}=\{0\},$ either
$C_{A}\subseteq J$ or $J=\{0\}.$ Hence, since $\dim A=\infty,$ we have that
each $J\in\frak{J}_{A}$ contains $C_{A}.$ Thus $C_{A}\subseteq\cap
_{J\in\frak{J}_{A}}J=P_{\frak{J}}(A).$ On the other hand, each subspace of
finite codimension containing $C_{A}$ is a closed Lie ideal of $A.$ As the
intersection of such subspaces is $C_{A},$ we have $P_{\frak{J}}(A)\subseteq
C_{A}$. Hence $P_{\frak{J}}(A)=C_{A}.$ It remains to prove that $\mathcal{F}%
(A)=P_{\frak{J}}(A)$. For this we only have to show that $C_{A}$ has no closed
Lie ideals of finite codimension.

It is well known (see, for example, the proof of \cite[Proposition 2.4]{BKS})
that $[a,[a,x]]=0,$ for all $x\in A,$ implies $a\in Z_{A}.$ In our case this
can be written in the form
\begin{equation}
\lbrack a,[a,x]]=0,\text{ for all }x\in A,\text{ implies }a=0.
\label{centerzero}%
\end{equation}
Note that (\ref{centerzero}) implies that $A$ has no commutative Lie ideals.
Indeed, if $a\in I,$ where $I$ is a commutative Lie ideal, then $[a,[a,x]]=0$
for all $x\in A$, so that $a=0$.

It follows also from (\ref{centerzero}) that $\dim C_{A}=\infty.$ Indeed,
otherwise, as $\dim A=\infty,$ the map $x\in A\rightarrow\text{ad}(x)|_{C_{A}%
}$ has a non-trivial kernel. Thus there is a non-zero $a\in A$ such that
$[a,[y,z]]=0$, for all $y,z\in A$. This contradicts (\ref{centerzero}).

Thus we have that $C_{A}=P_{\frak{J}}(A)$ is a non-commutative characteristic
Lie ideal of $A$ and $\dim C_{A}=\infty$. If $C_{A}$ has a proper closed Lie
ideal of finite codimension, it follows from Theorem \ref{C3.1} that $C_{A}$
has a proper closed characteristic ideal $J$ of finite codimension, so that
$J\vartriangleleft^{\text{ch}}C_{A}\vartriangleleft^{\text{ch}}A.$ Hence, by
Lemma \ref{L3.1}, $J$ is a Lie ideal of $A.$ As $J\subsetneqq C_{A},$ we have
$J\subseteq Z_{A}=\{0\}.$ As $\dim C_{A}=\infty,$ $\{0\}$ is not a Lie ideal
of finite codimension in $C_{A},$ a contradiction.
\end{proof}

It follows from (\ref{7.7}) that, for the preradicals $P_{\frak{S}%
},P_{\frak{J}},P_{\frak{S}^{\max}},P_{\frak{J}^{\max}}$ and the radical
$\mathcal{F,}$ the classes of their radical Lie algebras coincide. We will
finish this section by showing that the classes of their semisimple Lie
algebras differ.

Recall that $\mathbf{Sem}(\mathcal{F})=\{\mathcal{L}\in\frak{L}$:
$\mathcal{F}(\mathcal{L})=\{0\}\}$ is the class of $\mathcal{F}$-semisimple
Banach Lie algebras. Consider also the classes of semisimple algebras for the
preradicals $P_{\frak{S}},P_{\frak{J}},P_{\frak{S}^{\max}},P_{\frak{J}^{\max}%
}$:%
\begin{align*}
\mathbf{Sem}(P_{\frak{S}})  &  =\{\mathcal{L}\in\frak{L}\text{: }\cap
_{L\in\frak{S}_{\mathcal{L}}}L=\{0\}\},\text{ \ \ \ \ \ \ }\mathbf{Sem}%
(P_{\frak{J}})=\{\mathcal{L}\in\frak{L}\text{: }\cap_{L\in\frak{J}%
_{\mathcal{L}}}L=\{0\}\},\\
\mathbf{Sem}(P_{\frak{S}^{\max}})  &  =\{\mathcal{L}\in\frak{L}\text{: }%
\cap_{L\in\frak{S}_{\mathcal{L}}^{\max}}L=\{0\}\},\text{ \ \ \ \ }%
\mathbf{Sem}(P_{\frak{J}^{\max}})=\{\mathcal{L}\in\frak{L}\text{: }\cap
_{L\in\frak{J}_{\mathcal{L}}^{\max}}L=\{0\}\}.
\end{align*}
By (\ref{8.2}), $\mathbf{Sem}(P_{\mathfrak
{J}^{\mathrm{\max}}})\subseteq\mathbf{Sem}(P_{\mathfrak{S}^{\mathrm{\max}}%
})\subseteq\mathbf{Sem}(P_{\mathfrak{J}})\subseteq\mathbf{Sem}(P_{\mathfrak
{S}})\subseteq\mathbf{Sem}(\mathcal{F}).$ We will show that%
\begin{equation}
\mathbf{Sem}(P_{\mathfrak{J}^{\mathrm{\max}}})\subsetneqq\mathbf{Sem}%
(P_{\mathfrak{S}^{\mathrm{\max}}})\subsetneqq\mathbf{Sem}(P_{\mathfrak{J}%
})\subsetneqq\mathbf{Sem}(P_{\mathfrak{S}})\subsetneqq\mathbf{Sem}%
(\mathcal{F}). \label{9.2}%
\end{equation}
Firstly, in the following theorem we establish that $\mathbf{Sem}%
(P_{\mathfrak{S}})\neq\mathbf{Sem}(\mathcal{F}).$

Let $T$ be a bounded operator on a Banach space $X$ (an example of such
operator can be found in \cite{HL}) whose lattice of invariant subspaces
$\mathrm{Lat}(T)$ has the following properties:

\begin{itemize}
\item [$\mathrm{C}_{1})$]the subspaces of finite codimension in $\mathrm{Lat}%
(T)$ are linearly ordered by inclusion,

\item[$\mathrm{C}_{2}\mathrm{)}$] their intersection $X_{\omega}\neq\{0\}$.
\end{itemize}

\begin{lemma}
\label{cod}Let $T\in\mathcal{B}\left(  X\right)  $ and $\mathrm{Lat}(T)$
satisfy \emph{C}$_{1})$ and \emph{C}$_{2})$. If $p\neq0$ is a polynomial then

\begin{itemize}
\item [$\mathrm{(i)}$]the closure of the range of $p(T)$ has finite codimension\emph{;}

\item[$\mathrm{(ii)}$] each closed subspace $Y$ of finite codimension in $X$
which is invariant for $p(T)$\emph{,} contains a closed subspace of finite
codimension which is invariant for $T$.
\end{itemize}
\end{lemma}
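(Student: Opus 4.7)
Items (ii) and (i) are distinct in nature, so I treat them separately. Statement (ii) is essentially algebraic and requires neither C$_{1}$) nor C$_{2}$); it just exploits the $p(T)$-invariance of $Y$, and my argument covers $\deg p \geq 1$ (the case $\deg p = 0$ being different in character). Writing $p(z) = \sum_{k=0}^{d} a_k z^k$ with $a_d \neq 0$ and $d \geq 1$, I will show that
\[
Z := \bigcap_{k=0}^{d-1} T^{-k}(Y)
\]
is a closed, finite-codimension, $T$-invariant subspace contained in $Y$. Finite codimension of $Z$ follows because $T$ induces an injection $X/T^{-k}(Y) \hookrightarrow X/T^{-(k-1)}(Y)$, giving $\operatorname{codim} T^{-k}(Y) \leq \operatorname{codim} Y$ and hence $\operatorname{codim} Z \leq d \cdot \operatorname{codim} Y$. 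For $T$-invariance, pick $z \in Z$: then $T^k z \in Y$ for $0 \leq k \leq d-1$ and $p(T) z \in Y$ by the $p(T)$-invariance of $Y$, so solving $a_d T^d z = p(T) z - \sum_{k<d} a_k T^k z$ yields $T^d z \in Y$. Consequently $T^{k+1} z \in Y$ for $0 \leq k \leq d-1$, i.e.\ $Tz \in Z$.

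For (i), I reduce to the single-linear-factor claim
\[
\operatorname{codim}_X \overline{(T - \lambda I)(X)} \leq 1 \quad \text{for every } \lambda \in \mathbb{C},
\]
valid whenever $T$ satisfies C$_{1}$) on $X$. Factor $p(T) = c \prod_{i=1}^{d}(T - \mu_i I)$ counting multiplicity, and unfold this commuting product by setting $Y_0 := X$ and $Y_j := \overline{(T - \mu_j I)(Y_{j-1})}$ for $j = 1, \ldots, d$; continuity of each linear factor gives $Y_d = \overline{\operatorname{ran}(p(T))}$. Each $Y_j$ is a closed $T$-invariant subspace of $X$. Inductively, once $Y_{j-1}$ is known to have finite codimension in $X$, the restriction $T|_{Y_{j-1}}$ satisfies C$_{1}$) on $Y_{j-1}$: any finite-codimension $T|_{Y_{j-1}}$-invariant subspace of $Y_{j-1}$ is automatically a finite-codimension $T$-invariant subspace of $X$, hence lies in the linearly ordered chain furnished by C$_{1}$) for $T$ on $X$. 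Applying the single-factor claim to $T|_{Y_{j-1}}$ and $\mu_j$ gives $\operatorname{codim}_{Y_{j-1}} Y_j \leq 1$, and summing telescopically yields $\operatorname{codim}_X Y_d \leq d = \deg p$.

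The heart of (i) is the single-factor claim, which I establish by a short duality argument. Set $Y := \overline{(T - \lambda I)(X)}$, which is closed and $T$-invariant since $T$ commutes with $T - \lambda I$. Suppose for contradiction that $\operatorname{codim} Y \geq 2$. Then the annihilator $Y^\perp \subseteq X^*$ has dimension at least $2$; by duality $Y^\perp = \ker (T - \lambda I)^* = \ker(T^* - \lambda I)$, so $T^*$ acts as $\lambda I$ on $Y^\perp$. For any $f \in Y^\perp$, the identity $f(Tx) = (T^* f)(x) = \lambda f(x)$ shows that $\ker f$ is a $T$-invariant closed hyperplane of $X$. Picking two linearly independent $f_1, f_2 \in Y^\perp$ produces two distinct closed $T$-invariant hyperplanes of $X$; being distinct subspaces of the same finite codimension $1$, they are incomparable by inclusion, contradicting C$_{1}$). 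Hence $\operatorname{codim} Y \leq 1$. The main obstacle is organizational: threading the iteration through restrictions and confirming that C$_{1}$) is inherited by $T|_{Y_j}$. I note that condition C$_{2}$) does not appear to be used in the proof of this lemma itself; it presumably figures in the wider argument that invokes Lemma~\ref{cod}.
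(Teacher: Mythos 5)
Your proof is correct, and for both parts it takes a genuinely different route from the paper's. For (i), the paper argues by contradiction: if $\overline{p(T)X}$ had infinite codimension then so would $\overline{(T-\lambda_k)X}$ for some linear factor, and every subspace containing that range closure is $T$-invariant, so one gets incomparable finite-codimensional invariant subspaces, contradicting C$_1$). Your duality argument (identifying $\overline{(T-\lambda)X}^{\perp}$ with $\ker(T^*-\lambda)$ and producing distinct $T$-invariant hyperplanes from independent eigenfunctionals) is a sharper version of the same mechanism, and the iteration through the restrictions $T|_{Y_{j-1}}$ --- where you correctly note that C$_1$) is inherited because finite-codimensional invariant subspaces of $Y_{j-1}$ are finite-codimensional invariant subspaces of $X$ --- buys you the quantitative bound $\operatorname{codim}\overline{p(T)X}\leq\deg p$, which the paper does not state. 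For (ii), the paper passes to the induced algebraic operator on $X/Y$, picks a polynomial $q$ with $q(p(T))X\subseteq Y$, and invokes (i); your subspace $Z=\bigcap_{k=0}^{d-1}T^{-k}(Y)$ is an elementary purely algebraic substitute that uses neither (i) nor C$_1$), C$_2$). Your remark about $\deg p=0$ is apt: for constant $p$ the induced operator on $X/Y$ has minimal polynomial vanishing at $p$, so $q\circ p=0$ and the paper's appeal to (i) also fails; by C$_2$) a hyperplane not containing $X_\omega$ contains no finite-codimensional invariant subspace, so (ii) genuinely requires $\deg p\geq 1$, which is what the application in the subsequent theorem supplies.
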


\begin{proof}
(i) Suppose that $\mathrm{codim}(\overline{p(T)X})=\infty$. As
$p(T)=(T-\lambda_{1})\cdots(T-\lambda_{n}),$ it follows that $\mathrm{codim}%
(\overline{(T-\lambda_{k})X})=\infty$ for some $k.$ All subspaces that contain
$\overline{(T-\lambda_{k})X}$ are invariant for $T-\lambda_{k}$ and hence for
$T$. This contradicts the assumption that finite-codimensional subspaces in
$\mathrm{Lat}(T)$ are linearly ordered.

(ii) The operator $p(T)$ induces an algebraic operator on $X/Y$ because
$\dim(X/Y)<\infty$. Thus there is a polynomial $q(t)$ such that
$q(p(T))X\subset Y$. By (i), $\mathrm{codim}(\overline{q(p(T))X})<\infty$.
Since $q(p(T))X$ is invariant for $T$, we are done.
\end{proof}

\begin{theorem}
Let $T\in\mathcal{B}\left(  X\right)  $ with $\mathrm{Lat}(T)$ satisfying
\emph{C}$_{1})$ and \emph{C}$_{2})$. Let $A$ be the Banach algebra of
operators generated by $T,$ and let $\mathcal{L}=A\oplus^{\mathrm{id}}X$
$($see $(\ref{sem}))$. Then the intersection of all subalgebras of finite
codimension in $\mathcal{L}$ is non-zero$,$ while the Frattini radical is
trivial$:$
\[
P_{\frak{S}}(\mathcal{L})\neq\mathcal{F}(\mathcal{L})=\{0\}.
\]
\end{theorem}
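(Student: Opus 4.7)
The plan is to handle the two assertions separately: $\mathcal{F}(\mathcal{L})=\{0\}$ will follow essentially for free from earlier results, while the substance of the example lies in exhibiting a non-zero element of $P_{\frak{S}}(\mathcal{L})$.

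First I would observe that, since $A$ is the Banach algebra generated by the single operator $T$, $A$ is commutative, so the bracket $[(a;x),(b;y)]=([a,b];ay-bx)$ immediately yields $[\mathcal{L},\mathcal{L}]\subseteq\{0\}\oplus X$; as $X$ is commutative, the second derived algebra vanishes, so $\mathcal{L}$ is solvable of index at most $2$. By Example \ref{E7.1}(ii), which is a direct consequence of Theorem \ref{free1}, this gives $\mathcal{F}(\mathcal{L})=\{0\}$ at once.

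For the second assertion my plan is to show that $\{0\}\oplus X_{\omega}$ is contained in every $L\in\frak{S}_{\mathcal{L}}$, where $X_{\omega}=\cap\{Y\in\mathrm{Lat}(T):\dim(X/Y)<\infty\}$ is the non-zero subspace singled out by condition C$_{2})$. Fix such an $L$, identify $X$ with $\{0\}\oplus X\vartriangleleft\mathcal{L}$, let $\pi\colon\mathcal{L}\to A$ denote the first-coordinate projection, and set $L_{0}=L\cap X$. Lemma \ref{Closed} gives that $L_{0}$ has finite codimension in $X$ and that $\pi(L)$ has finite codimension in $A$. The crucial subalgebra calculation is that $L_{0}$ is invariant for every $a\in\pi(L)$: lifting $a$ to some $(a;x)\in L$, the bracket $[(a;x),(0;y)]=(0;ay)$ lies in $L_{0}$ for each $(0;y)\in L_{0}$. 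Since the quotient $A/\pi(L)$ is finite-dimensional, the image of $T$ in it is algebraic, so some non-zero polynomial $p$ satisfies $p(T)\in\pi(L)$; hence $L_{0}$ is a $p(T)$-invariant closed subspace of finite codimension in $X$. Lemma \ref{cod}(ii) then produces a closed $T$-invariant subspace $Y\subseteq L_{0}$ of finite codimension in $X$, and condition C$_{2})$ forces $X_{\omega}\subseteq Y\subseteq L$. Intersecting over all $L\in\frak{S}_{\mathcal{L}}$ yields $\{0\}\neq X_{\omega}\subseteq P_{\frak{S}}(\mathcal{L})$.

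The step I expect to be slightly delicate is ensuring that the polynomial $p$ extracted above is non-zero, so that Lemma \ref{cod}(ii) is applicable; this is handled by taking $p$ of minimal positive degree among monic polynomials whose image in $A/\pi(L)$ annihilates the class of $T$, with the degenerate case $\pi(L)=A$ reducing directly to $T$-invariance of $L_{0}$. Beyond that the argument is essentially bookkeeping: Lemma \ref{cod} already encapsulates the content of conditions C$_{1})$ and C$_{2})$, and the semidirect-product bracket formula does all remaining work.
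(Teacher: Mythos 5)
Your proof is correct and follows essentially the same route as the paper: extract a non-zero polynomial $p$ with $p(T)$ in a finite-codimensional slice of $A$ determined by $L$, check that $L\cap(\{0\}\oplus^{\mathrm{id}}X)$ is a $p(T)$-invariant closed subspace of finite codimension, and invoke Lemma \ref{cod}(ii) together with C$_{2})$ to trap $\{0\}\oplus^{\mathrm{id}}X_{\omega}$ inside every $L\in\frak{S}_{\mathcal{L}}$. The only cosmetic differences are that you work with the projection $\pi(L)$ rather than the vertical slice $\{a\in A:(a;0)\in L\}$, you get $\mathcal{F}(\mathcal{L})=\{0\}$ from solvability (Example \ref{E7.1}(ii)) rather than from Example \ref{E7.1}(i) and (iii), and you stop at the inclusion $\{0\}\oplus^{\mathrm{id}}X_{\omega}\subseteq P_{\frak{S}}(\mathcal{L})$ without the reverse inclusion, which indeed is not needed for the statement as given.
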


\begin{proof}
If $K$ is a subalgebra of finite codimension in $\mathcal{L}$, then $B=\{a\in
A$: ($a;0)\in K\}$ has finite codimension in $A.$ (Indeed, if $a_{1}%
,...,a_{n}\in A$ are linearly independent modulo $B,$ then ($a_{1}%
;0),...,(a_{n};0)$ are linearly independent modulo $K$.) Hence, since
polynomials of $T$ form an infinite-dimensional subspace of $A$, it follows
that $B$ contains a non-zero polynomial $p(T)$.

Similarly, the subspace $M=\{x\in X$: $(0;x)\in K\}$ has finite codimension in
$X$. It is invariant for $B$ because, if $b\in B$, $x\in M,$ then ($0;x)\in K$
and ($b;0)\in K.$ Hence ($0;bx)=[(b;0),(0;x)]\in K$, so that $bx\in M$. In
particular, $M$ is invariant for $p(T)$. By Lemma \ref{cod}(ii), $M$ contains
a closed subspace of finite codimension invariant for $T$. By condition
C$_{2}),$ each such subspace contains the subspace $X_{\omega}$ invariant for
$T.$ Hence $K$ contains $\{0\}\oplus^{\text{id}}X_{\omega}$. Thus
$\{0\}\oplus^{\text{id}}X_{\omega}\subseteq P_{\frak{S}}(\mathcal{L).}$

On the other hand, if $A_{\alpha}$ is a subspace of finite codimension in $A$
and $X_{\beta}$ is a subspace of finite codimension in $X$ invariant for $T,$
then $A_{\alpha}\oplus^{\text{id}}X$\textbf{ }and $A\oplus^{\text{id}}%
X_{\beta}$ are subalgebras of finite codimension in $\mathcal{L}.$\textbf{ }As
$A$ is commutative,\textbf{ }$P_{\mathcal{S}}(A)=\cap A_{\alpha}=\{0\}$ (see
(\ref{7.1}))\textbf{. }Therefore\textbf{ }$P_{\frak{S}}(\mathcal{L}%
)\subseteq\{0\}\oplus^{\text{id}}X_{\omega}$\textbf{. }Thus\textbf{
}$P_{\frak{S}}(\mathcal{L})=\{0\}\oplus^{\text{id}}X_{\omega}$.

As $A$ is commutative, it follows from Example \ref{E7.1}(i) and (iii) that
$\mathcal{F}(\mathcal{L})=\{0\}$.
\end{proof}

Denote by Lid(\emph{$\mathcal{L})$ }the set of all closed Lie ideals of
$\mathcal{L}$\emph{. }We will now construct examples that prove the rest of
(\ref{9.2}).

\begin{example}
\label{E3}\emph{(i) }$\mathbf{Sem}(P_{\mathfrak{J}})\subsetneqq\mathbf{Sem}%
(P_{\mathfrak{S}})$ \emph{(\cite{KST1}).} \emph{Let }$X$ \emph{be a Banach
space and }$\dim X=\infty.$ \emph{Let} $M$ \emph{be a finite-dimensional Lie
subalgebra of $\mathcal{B}$(}$X)$ \emph{that has no non-trivial invariant
subspaces and let }$\mathcal{L}=M\oplus^{\emph{id}}X$ \emph{(see (\ref{sem}%
))}.\emph{ Let} $\mathfrak{Y}$ \emph{be the set of all closed subspaces of
codimension} $1$ \emph{in} $X.$ \emph{For} $Y\in\mathfrak{Y},$ $L_{Y}%
=\{0\}\oplus^{\emph{id}}Y$ \emph{is a closed Lie subalgebra of} $\mathfrak{L}$
\emph{and} \emph{codim}$(L_{Y})=$ $1+\dim(M)<\infty.$ \emph{Thus}
$\mathcal{L}\in\mathbf{Sem}(P_{\mathfrak{S}}),$ \emph{as }$P_{\mathfrak{S}%
}\left(  \mathcal{L}\right)  \subseteq\cap_{Y\in\mathfrak{Y}}\mathcal{L}%
_{Y}=\{0\}.$ \emph{Then}%
\[
\mathrm{Lid(}\mathcal{L})=\{0\}\cup\{J\oplus^{\emph{id}}X:J\in
\text{\emph{Lid(}}M)\}.
\]
\emph{Thus }$\{0\}\oplus^{\emph{id}}X$\emph{ is the smallest non-zero Lie
ideal of }$\mathcal{L.}$ \emph{As }$\{0\}$ \emph{is not a Lie ideal of finite
codimension in $\mathcal{L}$, we have }$P_{\mathfrak{J}}\left(  \mathcal{L}%
\right)  =\cap(J\oplus^{\emph{id}}X)=\{0\}\oplus^{\emph{id}}X,$ \emph{so that
}$\mathcal{L}\notin\mathbf{Sem}(P_{\mathfrak{J}}).$ \emph{Thus }%
$\mathbf{Sem}(P_{\mathfrak{J}})\subsetneqq\mathbf{Sem}\left(  P_{\mathfrak{S}%
}\right)  .$

\emph{In particular, let }$e$ \emph{be a bounded operator on }$X=l_{1}$
\emph{that has no non-trivial closed invariant subspaces }$($\emph{see
\cite{R}}$).$ \emph{Then the Banach Lie algebra }$\mathcal{L}=\mathbb{C}%
e\oplus^{\emph{id}}X$ \emph{belongs to }$\mathbf{Sem}(P_{\mathfrak{S}}%
)$.\emph{ As }$e$ \emph{has no closed invariant subspaces and dim(}%
$X)=\infty,$ $\{0\}\oplus^{\emph{id}}X$ \emph{is the only proper Lie ideal of
$\mathcal{L}$ of finite codimension, so that $\mathcal{L}$}$\notin
\mathbf{Sem}(P_{\mathfrak{J}})$. \smallskip

\emph{(ii) }$\mathbf{Sem}(P_{\mathfrak{S}^{\text{\emph{max}}}})\subsetneqq
\mathbf{Sem}(P_{\mathfrak{J}}).$ \emph{Let} $\mathcal{L}=\left\{
a(x,y,z)=\left(
\begin{array}
[c]{ccc}%
0 & x & z\\
0 & 0 & y\\
0 & 0 & 0
\end{array}
\right)  :x,y,z\in\mathbb{C}\right\}  $ \emph{be the Heisenberg}
$3$\emph{-dimensional Lie algebra with one-dimensional centre} $Z=\left[
\mathcal{L},\mathcal{L}\right]  =\{a(0,0,z):z\in\mathbb{C}\}$. \emph{As the
Lie ideal} $\{0\}$ \emph{has finite codimension in $\mathcal{L}$,} \emph{we
have }$\mathcal{L}\in\mathbf{Sem}(P_{\mathfrak{J}})$. \emph{Let} $M$ \emph{be
a maximal Lie subalgebra of} $\mathcal{L}$ \emph{that does not contain} $Z$.
\emph{If} $\dim\left(  M\right)  =1,$\emph{ then} $M+Z$ \emph{is a }%
$2$\emph{-dimensional subalgebra larger than}\ $M,$ \emph{a contradiction}.
\emph{Thus} $\dim\left(  M\right)  =2$. \emph{Hence} $\mathcal{L}=M+Z$
\emph{and} $\left[  M,M\right]  =\left[  M+Z,M+Z\right]  =\left[
\mathcal{L},\mathcal{L}\right]  =Z,$\emph{ so that }$Z\subseteq M$ --- \emph{a
contradiction}. \emph{Thus all maximal Lie subalgebras of} $\mathcal{L}$
\emph{contain} $Z$. \emph{Hence }$Z\in\cap_{M\in\mathfrak
{S}^{\text{\emph{max}}}}M,$\emph{ so that }$\mathcal{L}\notin\mathbf{Sem}%
(P_{\mathfrak{S}^{\text{\emph{max}}}}).$ \emph{Thus }$\mathbf{Sem}%
(P_{\mathfrak{S}^{\text{\emph{max}}}})\subsetneqq\mathbf{Sem}(P_{\mathfrak{J}%
}).\smallskip$

\emph{(iii) }$\mathbf{Sem}(P_{\mathfrak{J}^{\text{\emph{max}}}})\subsetneqq
\mathbf{Sem}(P_{\mathfrak
{S}^{\text{\emph{max}}}}).$ \emph{Let} $\frak{h}$ \emph{be the Lie algebra of
all upper triangular matrices in }$\mathfrak{sl}$\emph{(}$2,\mathbb{C}%
)$\emph{,} $\mathfrak{n}$ \emph{be the Lie subalgebra of }$\mathfrak{h}$
\emph{of matrices with zero on the diagonal and }$\mathfrak{d}$ \emph{be the
Lie subalgebra of} $\mathfrak{h}$ \emph{of diagonal matrices}. \emph{Then}
$\mathfrak{n}$\emph{ is the only maximal Lie ideal of} $\mathfrak{h}$,
$\mathfrak{n}$ \emph{and} $\mathfrak{d}$ \emph{are maximal Lie subalgebras of
}$\mathfrak{h}$, \emph{so} $\mathfrak
{n}=P_{\mathfrak{J}^{\text{\emph{max}}}}\left(  \mathfrak{h}\right)  \neq
P_{\mathfrak{S}^{\text{\emph{max}}}}\left(  \mathfrak{h}\right)  \subseteq$
$\mathfrak{n}\cap\mathfrak{d}=0$.

\emph{To give an example of an infinite-dimensional algebra, we will use the
direct product. Let} $\mathfrak{h}_{i}=\mathfrak{h}$\emph{ for all }%
$i\in\mathbb{N}$. \emph{Then} $\widehat{\mathcal{L}}=\underset{i\in\mathbb{N}%
}{\widehat{\oplus}}\mathfrak{h}_{i}=\{a=\{a_{i}\}_{i\in\mathbb{N}}:$\emph{
all} $a_{i}\in\mathfrak{h}$ \emph{and }$\lim\left\|  a_{i}\right\|  =0\}$ ---
\emph{the} $c_{0}$-\emph{direct} \emph{product of all} $\mathfrak{h}_{i}$
\emph{(see (\ref{e3.1})) is a solvable Banach Lie algebra. For each }%
$i\in\mathbb{N},$\emph{ }$J_{i}=\{a\in\widehat{\mathcal{L}}$\emph{:} $a_{i}%
\in\mathfrak{n}\}$ \emph{is a maximal closed Lie ideal of codimension }$1$
\emph{in }$\widehat{\mathcal{L}}$ \emph{and }$M_{i}=\{a\in\widehat
{\mathcal{L}}$\emph{:} $a_{i}\in\mathfrak{d}\}$ \emph{is a maximal closed Lie
subalgebra in }$\widehat{\mathcal{L}}$\emph{ of codimension }$1.$ \emph{If
}$J\in\mathfrak{J}_{\widehat{\mathcal{L}}}^{\max}$ \emph{then }$[\mathfrak
{h}_{i},J]$ \emph{is either }$\{0\}$ \emph{or $\mathfrak{n}_{i}$ for each
}$i\in\mathbb{N}$.\emph{ If} $[\mathfrak{h}_{i},J]=\{0\}$ \emph{then}
$J+\mathfrak{n}_{i}$ \emph{is a closed proper Lie ideal of} $\widehat
{\mathcal{L}}$ \emph{larger than} $J.$ \emph{This contradiction shows that}
$[\mathfrak{h}_{i},J]=\mathfrak{n}_{i}$ \emph{for each} $i\in\mathbb{N}$.
\emph{Hence either} $\mathfrak{h}_{i}\subseteq J$ \emph{or} $\mathfrak{n}%
_{i}\subseteq J.$ \emph{As} $\widehat{\mathcal{L}}$ \emph{is the} $c_{0}%
$-\emph{direct product of all} $\mathfrak{h}_{i},$ \emph{we conclude that} $J$
\emph{coincides with one of} $J_{i}.$ \emph{Thus} $\mathfrak{J}_{\widehat
{\mathcal{L}}}^{\max}=\{J_{i}$\emph{:} $i\in\mathbb{N}\}.$ \emph{Therefore}
$\mathcal{L}\notin\mathbf{Sem}(P_{\mathfrak{J}^{\max}})$ \emph{and}
$\mathcal{L}\in\mathbf{Sem}(P_{\mathfrak{S}^{\max}}),$ \emph{as}%
\[
P_{\frak{J}^{\text{\emph{max}}}}(\mathcal{L})=\cap_{i\in\mathbb{N}}J_{i}%
\neq\{0\}\text{\emph{and} }P_{\frak{S}^{\text{\emph{max}}}}(\mathcal{L}%
)=\left(  \cap_{i\in\mathbb{N}}J_{i}\right)  \cap(\cap_{i\in\mathbb{N}}%
M_{i})=\{0\}.
\]
\end{example}

\section{Frattini-semisimple Banach Lie algebras\label{8}}

As in the classical theory of finite-dimensional Lie algebras, the most
''tractable'' infinite dimensional Banach Lie algebras are Frattini-semisimple
Lie algebras. In this section we show that they admit chains decreasing to
$\{0\}$ of closed Lie subalgebras and even Lie 2-step subideals with
finite-dimensional quotients. We also consider a subclass of $\mathbf{Sem}%
$($\mathcal{F)}$ that consists of Banach Lie algebras that admit chains of
closed Lie ideals decreasing to $\{0\}$ with finite-dimensional quotients. We
call them \textit{strongly Frattini-semisimple} and prove that these algebras
can be equivalently defined in terms of the structure of the sets of their Lie
ideals, of their $\mathcal{F}$-primitive Lie ideals and of their commutative
Lie ideals.

\subsection{Chains of Lie subalgebras and ideals in Banach Lie algebras}

We begin with a result which, in particular, shows that separable
$P_{\frak{S}}$- and $P_{\frak{J}}$-semisimple Banach Lie algebras are
characterized by the existence of sequences of Lie subalgebras and Lie ideals
of finite codimension, respectively, that decrease to $\{0\}$. Recall that
$P_{\frak{S}}(\mathcal{L})\subseteq P_{\frak{J}}(\mathcal{L).}$

\begin{proposition}
\label{T8.6}\emph{(i) }Each Banach Lie algebra $\mathcal{L}$ has
complete\emph{, }lower finite-gap chains of\emph{ }closed Lie ideals between
$\mathcal{L}$ and $P_{\frak{J}}(\mathcal{L),}$ and of closed Lie subalgebras
between $\mathcal{L}$ and $P_{\frak{S}}(\mathcal{L).}$

\emph{(ii) }Each separable Banach Lie algebra $\mathcal{L}$ has a sequence
$\{J_{n}\}_{n=1}^{\infty}$ of closed Lie ideals of finite codimension and a
sequence $\{L_{n}\}_{n=1}^{\infty}$ of closed Lie subalgebras of finite
codimension such that%
\[
J_{n+1}\subseteq J_{n}\text{ and }\cap_{n=0}^{\infty}J_{n}=P_{\frak{J}%
}(\mathcal{L);}\text{ \ \ }L_{n+1}\subseteq L_{n}\text{ and }\cap
_{n=0}^{\infty}L_{n}=P_{\frak{S}}(\mathcal{L)}.
\]
\end{proposition}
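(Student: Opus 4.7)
The plan is to reduce both parts to the general results on $\mathfrak{p}$-complete lower finite-gap families established in Section~\ref{5e1}, applied to the ``augmented'' families
\[
G_{\frak{J}}=\frak{J}_{\mathcal{L}}\cup\{\mathcal{L}\}\quad\text{and}\quad G_{\frak{S}}=\frak{S}_{\mathcal{L}}\cup\{\mathcal{L}\}.
\]
The role of adjoining $\mathcal{L}$ is to force $\frak{s}(G_{\frak{J}})=\frak{s}(G_{\frak{S}})=\mathcal{L}$ so that the chains we produce start at $\mathcal{L}$; note $\frak{p}(G_{\frak{J}})=P_{\frak{J}}(\mathcal{L})$ and $\frak{p}(G_{\frak{S}})=P_{\frak{S}}(\mathcal{L})$ in both cases (or, when $\frak{J}_{\mathcal{L}}$ or $\frak{S}_{\mathcal{L}}$ is empty, the chain is the trivial one $\{\mathcal{L}\}$ and there is nothing to prove). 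The key structural observation is that intersections of closed Lie ideals are closed Lie ideals, and intersections of closed Lie subalgebras are closed Lie subalgebras, so the $\frak{p}$-completions $G_{\frak{J}}^{\frak{p}}$ and $G_{\frak{S}}^{\frak{p}}$ consist, respectively, of closed Lie ideals and closed Lie subalgebras of $\mathcal{L}$.

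For part~(i), I would apply Lemma~\ref{L2.4}(i) to each of $G_{\frak{J}}$ and $G_{\frak{S}}$ (both consist, apart from $\mathcal{L}$ itself, of closed subspaces of finite codimension) to conclude that $G_{\frak{J}}^{\frak{p}}$ and $G_{\frak{S}}^{\frak{p}}$ are $\frak{p}$-complete lower finite-gap families. Theorem~\ref{T2.5} then produces maximal lower finite-gap chains $C_{\frak{J}}\subseteq G_{\frak{J}}^{\frak{p}}$ and $C_{\frak{S}}\subseteq G_{\frak{S}}^{\frak{p}}$ with $\frak{p}(C_{\frak{J}})=P_{\frak{J}}(\mathcal{L})$, $\frak{p}(C_{\frak{S}})=P_{\frak{S}}(\mathcal{L})$, and $\frak{s}(C_{\frak{J}})=\frak{s}(C_{\frak{S}})=\mathcal{L}$. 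Maximality in the interval $[\frak{p}(C),\frak{s}(C)]$ combined with Lemma~\ref{L2.2}(iii) guarantees that each of these chains is complete. Since $C_{\frak{J}}\subseteq G_{\frak{J}}^{\frak{p}}$ and $C_{\frak{S}}\subseteq G_{\frak{S}}^{\frak{p}}$, their elements are automatically closed Lie ideals (respectively, closed Lie subalgebras) of $\mathcal{L}$, which is exactly what is required.

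For part~(ii), I assume $\mathcal{L}$ is separable; then the quotient spaces $\mathcal{L}/P_{\frak{J}}(\mathcal{L})$ and $\mathcal{L}/P_{\frak{S}}(\mathcal{L})$ are separable. If $\mathcal{L}/P_{\frak{J}}(\mathcal{L})$ is infinite-dimensional, I invoke the second part of Theorem~\ref{T2.5} for the family $G_{\frak{J}}$: it produces a sequence $\{V_k\}\subseteq G_{\frak{J}}$ (which may clearly be taken to lie in $\frak{J}_{\mathcal{L}}$, by discarding any occurrence of $\mathcal{L}$) whose finite intersections $Y_n=\cap_{k=1}^{n}V_k$ form a strictly decreasing chain with $\cap_{n} Y_n=P_{\frak{J}}(\mathcal{L})$. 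Setting $J_n=Y_n$ gives the required sequence, because each $J_n$ is a finite intersection of closed Lie ideals of finite codimension, hence itself a closed Lie ideal of finite codimension by Lemma~\ref{Closed} applied iteratively. If instead $\mathcal{L}/P_{\frak{J}}(\mathcal{L})$ is finite-dimensional, I pick a strictly decreasing finite chain of closed Lie ideals of finite codimension from $\mathcal{L}$ to $P_{\frak{J}}(\mathcal{L})$ and pad it by repeating the final term. The argument for $\{L_n\}$ is word-for-word the same with $\frak{S}_{\mathcal{L}}$ in place of $\frak{J}_{\mathcal{L}}$.

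The argument is essentially a bookkeeping application of Section~\ref{5e1}, so I do not expect a serious obstacle; the only thing to verify carefully is that the abstract lattice-theoretic operations (intersection, $\frak{p}$-completion, finite-codimensional restriction) preserve the relevant algebraic structure of being a Lie ideal or Lie subalgebra. This, together with the augmentation trick of appending $\mathcal{L}$ to $\frak{J}_{\mathcal{L}}$ and $\frak{S}_{\mathcal{L}}$ so that $\frak{s}$ of the family equals $\mathcal{L}$, is all that is needed to translate Theorem~\ref{T2.5} and Lemma~\ref{L2.2}(iii) into the desired statements.
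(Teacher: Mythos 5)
Your proposal is correct and follows essentially the same route as the paper: reduce everything to the finite-gap machinery of Section 6.1 by applying Lemma \ref{L2.4}(i) to $\frak{J}_{\mathcal{L}}$ and $\frak{S}_{\mathcal{L}}$, then invoke Theorem \ref{T2.2}/Theorem \ref{T2.5} together with Lemma \ref{L2.2} for existence and completeness of the chains, and the second part of Theorem \ref{T2.5} for the separable case. Your explicit augmentation by $\{\mathcal{L}\}$ (so that $\frak{s}$ of the family is forced to be $\mathcal{L}$) and your separate treatment of the finite-dimensional quotient in (ii) are just careful bookkeeping for points the paper's terse proof leaves implicit.
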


\begin{proof}
(i) The family $\frak{J}_{\mathcal{L}}$ consists of all closed Lie ideals of
finite codimension in $\mathcal{L.}$ Hence its $\frak{p}$-completion
$\frak{J}_{\mathcal{L}}^{\frak{p}}$ --- the set of intersections of Lie ideals
from all subfamilies of $\frak{J}_{\mathcal{L}}$ --- consists of Lie ideals of
$\mathcal{L.}$ The family $\frak{S}_{\mathcal{L}}$ consists of all closed Lie
subalgebras of finite codimension in $\mathcal{L}$ and its $\frak{p}%
$-completion $\frak{S}_{\mathcal{L}}^{\frak{p}}$ consists of Lie subalgebras
of $\mathcal{L.}$ By Lemma \ref{L2.4}, $\frak{J}_{\mathcal{L}}^{\frak{p}}$ and
$\frak{S}_{\mathcal{L}}^{\frak{p}}$ are lower finite-gap families. Thus (i)
follows from Theorem \ref{T2.2}.

Part (ii) follows from Theorem \ref{T2.5}.
\end{proof}

\begin{corollary}
\label{polup}A separable Banach Lie algebra is $P_{\frak{S}}$-semisimple if
and only if it has a chain $\{L_{n}\}_{n=1}^{\infty}$ of closed Lie
subalgebras of finite codimension such that $L_{n+1}\subseteq L_{n}$ and
$\cap_{n=0}^{\infty}L_{n}=\{0\}.$

It is $P_{\frak{J}}$-semisimple if and only if it has a chain $\{J_{n}%
\}_{n=1}^{\infty}$ of closed Lie ideals of finite codimension such that
$J_{n+1}\subseteq J_{n}$ and $\cap_{n=0}^{\infty}J_{n}=\{0\}$.
\end{corollary}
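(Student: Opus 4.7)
The plan is to deduce both equivalences almost immediately from Proposition \ref{T8.6}(ii), since that proposition already does the substantive work of producing a decreasing sequence of closed Lie subalgebras (resp. ideals) of finite codimension whose intersection is exactly $P_{\frak{S}}(\mathcal{L})$ (resp. $P_{\frak{J}}(\mathcal{L})$). The corollary is essentially a restatement specialized to the case where those intersections equal $\{0\}$.

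For the forward direction, I would simply apply Proposition \ref{T8.6}(ii) to a separable $P_{\frak{S}}$-semisimple algebra $\mathcal{L}$: it provides a chain $\{L_n\}_{n=1}^{\infty}$ of closed Lie subalgebras of finite codimension with $L_{n+1}\subseteq L_n$ and $\cap_n L_n = P_{\frak{S}}(\mathcal{L})$, and by the hypothesis $P_{\frak{S}}(\mathcal{L})=\{0\}$ this chain has zero intersection. The argument for $P_{\frak{J}}$-semisimplicity is word-for-word the same, using the ideal version in the same proposition.

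For the backward direction, suppose such a chain $\{L_n\}$ exists with $\cap_n L_n=\{0\}$. If $\mathcal{L}=\{0\}$ there is nothing to prove, so assume $\mathcal{L}\neq\{0\}$; then some (hence, by nesting, all sufficiently large) $L_n$ is a proper closed Lie subalgebra of finite codimension, i.e.\ $L_n\in\frak{S}_{\mathcal{L}}$. Truncating the chain at that index, we get
\[
P_{\frak{S}}(\mathcal{L})\;=\;\bigcap_{L\in\frak{S}_{\mathcal{L}}} L \;\subseteq\; \bigcap_{n} L_n \;=\;\{0\},
\]
so $\mathcal{L}$ is $P_{\frak{S}}$-semisimple. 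Replacing $\frak{S}_{\mathcal{L}}$ by $\frak{J}_{\mathcal{L}}$ and ``subalgebras'' by ``ideals'' throughout gives the $P_{\frak{J}}$ version.

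There is no real obstacle here: all technical work (the existence of a countable cofinal subchain inside the $\frak{p}$-completion of $\frak{S}_{\mathcal{L}}$ or $\frak{J}_{\mathcal{L}}$, which uses separability via Lemma \ref{nest} and the finite-gap machinery of Section \ref{5e1}) has already been carried out in Proposition \ref{T8.6}. The only minor bookkeeping is the observation that a chain whose intersection is $\{0\}$ can be assumed to consist of proper subalgebras (resp.\ ideals), which is needed so that its members actually belong to $\frak{S}_{\mathcal{L}}$ (resp.\ $\frak{J}_{\mathcal{L}}$).
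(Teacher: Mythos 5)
Your proof is correct and follows exactly the route the paper intends: the paper states this corollary without proof as an immediate consequence of Proposition \ref{T8.6}(ii), which is precisely your forward direction, and the converse is the trivial containment $P_{\frak{S}}(\mathcal{L})\subseteq\cap_{n}L_{n}$ (resp. $P_{\frak{J}}(\mathcal{L})\subseteq\cap_{n}J_{n}$) once one notes, as you do, that the chain members are eventually proper and hence lie in $\frak{S}_{\mathcal{L}}$ (resp. $\frak{J}_{\mathcal{L}}$).
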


\begin{proposition}
\label{comdiff}Let the quotient Lie algebra\emph{ }$\mathcal{L}/P_{\frak{S}%
}(\mathcal{L})$ have no characteristic commutative\emph{,}
infinite-dimensional Lie ideals\emph{. }Then $\mathcal{L}$ has a maximal lower
finite-gap chain of\emph{ }closed characteristic Lie ideals between
$\mathcal{L}$ and $P_{\frak{S}}(\mathcal{L).}$
\end{proposition}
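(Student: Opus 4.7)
The plan is to apply Theorem~\ref{T2.2} to the family
\[
G=\{K\vartriangleleft^{\text{ch}}\mathcal{L}:P_{\frak{S}}(\mathcal{L})\subseteq K\}
\]
of all closed characteristic Lie ideals of $\mathcal{L}$ containing $P_{\frak{S}}(\mathcal{L})$. Since the intersection of closed characteristic Lie ideals is closed and characteristic, $G$ is $\frak{p}$-complete; both $\mathcal{L}$ and $P_{\frak{S}}(\mathcal{L})$ lie in $G$, so $\frak{s}(G)=\mathcal{L}$ and $\frak{p}(G)=P_{\frak{S}}(\mathcal{L})$. Once $G$ is shown to be a lower finite-gap family, Theorem~\ref{T2.2} furnishes a maximal lower finite-gap chain in $G$ with these endpoints, which is exactly the chain the proposition asserts.

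To establish the lower finite-gap property, I fix $Z\in G$ with $Z\supsetneq P_{\frak{S}}(\mathcal{L})$ and write $q:\mathcal{L}\longrightarrow M:=\mathcal{L}/P_{\frak{S}}(\mathcal{L})$ for the quotient and $\tilde Z:=q(Z)$. If $\dim\tilde Z<\infty$, the choice $Y=P_{\frak{S}}(\mathcal{L})$ already does the job, so I may assume $\tilde Z$ is infinite-dimensional. Let $\hat Z$ be the intersection of all closed characteristic Lie ideals of $M$ containing $\tilde Z$: this is the smallest such ideal, is characteristic in $M$, and contains the infinite-dimensional $\tilde Z$; the hypothesis then rules out $\hat Z$ being commutative.

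By Theorem~\ref{Cfam2} the preradical $P_{\frak{S}}$ is upper stable, hence $P_{\frak{S}}(M)=\{0\}$; some closed Lie subalgebra of $M$ of finite codimension therefore does not contain $\hat Z$, and intersecting it with $\hat Z$ yields (via Lemma~\ref{Closed}) a proper closed Lie subalgebra of finite codimension in $\hat Z$. Corollary~\ref{C3.1} applied in $M$ to the non-commutative, infinite-dimensional, characteristic Lie ideal $\hat Z$ then produces a closed characteristic Lie ideal $\hat Y$ of $M$ with $\hat Y\subsetneq\hat Z$ and $0<\dim(\hat Z/\hat Y)<\infty$. By the minimality of $\hat Z$, the ideal $\hat Y$ cannot contain $\tilde Z$, so $\tilde Y:=\hat Y\cap\tilde Z$ is a proper closed subspace of $\tilde Z$ of finite codimension (Lemma~\ref{Closed}). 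Setting $Y:=q^{-1}(\tilde Y)$, each $\delta\in\frak{D}(\mathcal{L})$ preserves $P_{\frak{S}}(\mathcal{L})$ and descends to $\delta^{q}\in\frak{D}(M)$ preserving both $\hat Y$ (by its characteristicity in $M$) and $\tilde Z$ (since $Z$ is characteristic in $\mathcal{L}$), whence $\delta(Y)\subseteq Y$. Thus $Y\in G$, $P_{\frak{S}}(\mathcal{L})\subseteq Y\subsetneq Z$, and $\dim(Z/Y)=\dim(\tilde Z/\tilde Y)$ is finite and non-zero, as required.

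The key point is the appeal to the minimality of the characteristic closure $\hat Z$ of $\tilde Z$: it is precisely this that guarantees every proper characteristic Lie ideal of $M$ strictly inside $\hat Z$ fails to contain $\tilde Z$, so the intersection $\hat Y\cap\tilde Z$ is a proper subspace of $\tilde Z$ that can be pulled back to a characteristic Lie ideal of $\mathcal{L}$ strictly inside $Z$. Without this minimality step Corollary~\ref{C3.1} would only give an ideal $\hat Y$ of finite codimension in $\hat Z$, which a priori carries no information about $Z$ itself; everything else is routine bookkeeping with the machinery already built in the paper.
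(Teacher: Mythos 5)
Your proof is correct, and it rests on exactly the same ingredients as the paper's: upper stability of $P_{\frak{S}}$, Lemma \ref{Closed}, Theorem \ref{KST1}(ii) (via Corollary \ref{C3.1}), and the chain machinery of Lemma \ref{L2.2}/Theorem \ref{T2.2}. The organizational difference is that the paper passes to $M=\mathcal{L}/P_{\frak{S}}(\mathcal{L})$ once at the outset, proves that the family of \emph{all} closed characteristic Lie ideals of $M$ is a lower finite-gap family (where the hypothesis applies directly to each infinite-dimensional member), and then pulls the entire chain back to $\mathcal{L}$ via Lemma \ref{essl}; you instead work with the family of characteristic ideals of $\mathcal{L}$ containing $P_{\frak{S}}(\mathcal{L})$ and quotient gap by gap, which is what forces you to introduce the characteristic closure $\hat Z$ of $q(Z)$ — a device the paper's arrangement avoids, since $q(Z)$ need not be characteristic in $M$ while the ideals in the paper's family already are. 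Your version buys a marginally cleaner maximality statement (the chain is maximal within the characteristic ideals of $\mathcal{L}$ itself rather than being a pullback of a chain maximal downstairs), at the cost of the extra closure argument; both are sound.
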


\begin{proof}
Recall that $P_{\frak{S}}(\mathcal{L})=\cap_{L\in\frak{S}_{\mathcal{L}}}L$ is
a characteristic Lie ideal of $\mathcal{L.}$ Firstly assume that $P_{\frak{S}%
}(\mathcal{L})=\{0\}.$ Let $G$ be the family of all closed characteristic Lie
ideals of $\mathcal{L.}$ Then $\frak{p}(G)=\{0\}\in G$ and $\frak{s}%
(G)=\mathcal{L}\in G.$ For each subfamily $G^{\prime}$ of $G,$ the Lie ideal
$\frak{p}(G^{\prime})=\cap_{J\in G^{\prime}}J$ of $\mathcal{L}$ is
characteristic. Hence $G$ is $\frak{p}$-complete.

Let $\{0\}\neq I\in G.$ If $\dim I<\infty$ then $\{0\}$ has finite codimension
in $I.$ Let $\dim I=\infty.$ As $\cap_{L\in\frak{S}_{\mathcal{L}}}L=\{0\},$
there is $L\in\frak{S}_{\mathcal{L}}$ that does not contain $I.$ By Lemma
\ref{Closed}, $I\cap L$ is a proper closed Lie subalgebra of finite
codimension in $I.$ By our assumption, $I$ is non-commutative. Hence, by
Theorem \ref{KST1}(ii), $I$ has a proper closed characteristic Lie ideal $K$
of finite codimension$.$ Then, by Lemma \ref{L3.1}(ii), $K\vartriangleleft
^{\text{ch}}\mathcal{L}\mathcal{,}$ so that $K\in G$ and $0<\dim(I/K)<\infty.$
Hence $G$ is a $\frak{p}$-complete lower finite-gap family. We have from Lemma
\ref{L2.2} that $G$ has a maximal, lower finite-gap chain\emph{ }$C$ of closed
characteristic Lie ideals such that $\frak{p}(C)=\{0\}$ and $\frak{s}%
(C)=\mathcal{L.}$

Let now $P_{\frak{S}}(\mathcal{L})\neq\{0\}$ and $\dim(\mathcal{L}%
/P_{\frak{S}}(\mathcal{L}))=\infty.$ Set $\widetilde{\mathcal{L}}%
=\mathcal{L}/P_{\frak{S}}(\mathcal{L).}$ As $P_{\frak{S}}$ is upper stable
(see Theorem \ref{Cfam2}), we have from (\ref{4.2'}) that $P_{\frak{S}%
}(\widetilde{\mathcal{L}})=\{0\}.$ By our assumption, $\widetilde{\mathcal{L}%
}$ has no infinite-dimensional commutative characteristic Lie ideals. Hence,
by the above, $\widetilde{\mathcal{L}}$ has a maximal, lower finite-gap
chain\emph{ }$\widetilde{C}=\{\widetilde{I_{\lambda}}\}$ of closed
characteristic Lie ideals such that $\frak{p}(\widetilde{C})=\{0\}$ and
$\frak{s}(\widetilde{C})=\widetilde{\mathcal{L}}\mathcal{.}$ By Lemma
\ref{essl}, the preimages $I_{\lambda}$ of $\widetilde{I_{\lambda}}$ in
$\mathcal{L}$ are closed characteristic Lie ideals of $\mathcal{L.}$ Hence
$C=\{I_{\lambda}\}$ is a maximal, lower finite-gap chain\emph{ }of closed
characteristic Lie ideals of $\mathcal{L}$ with $\frak{p}(C)=P_{\frak{S}%
}(\mathcal{L})$ and $\frak{s}(C)=\mathcal{L}$.
\end{proof}

We will prove in this subsection that $\mathcal{F}$-semisimple algebras are
characterized by the existence of complete lower finite-gap chains of closed
Lie subalgebras decreasing to $\{0\}$. Since the radical $\mathcal{F}$ is
generated by the preradical $P_{\frak{J}}$, one can expect that the same holds
for chains of Lie ideals. However, this is not true in general; the class of
algebras for which this is true will be considered in the next subsection.
Nevertheless, we will see that symmetry can be partially recovered if instead
of ideals one works with 2-step subideals.

Recall (see Definition \ref{subideal}) that a Lie subalgebra $I$ of
$\mathcal{L}$ is a $2$\textit{-step Lie subideal} if it is a Lie ideal of some
Lie ideal of $\mathcal{L}$. We write $I\vartriangleleft_{2}\mathcal{L}$ if $I$
is closed. This matches the notation in Definition \ref{subideal} because a
closed $2$\textit{-}step Lie subideal of $\mathcal{L}$ is clearly a Lie ideal
of a closed Lie ideal of $\mathcal{L}$. Clearly, all Lie ideals are $2$-step
Lie subideals.

\begin{lemma}
\label{sub}Let $\mathcal{L}$ be a Banach Lie algebra. Then

\begin{itemize}
\item [$\mathrm{(i)}$]the sum of a Lie ideal of $\mathcal{L}$ and a $2$-step
Lie subideal of $\mathcal{L}$ is a $2$-step Lie subideal of $\mathcal{L;}$

\item[$\mathrm{(ii)}$] the intersection of a family of $2$-step Lie subideals
of $\mathcal{L}$ is a $2$-step Lie subideal of $\mathcal{L}$.
\end{itemize}
\end{lemma}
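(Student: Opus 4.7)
The plan is to handle part (ii) first because it is essentially formal, and then attack (i), where closure issues demand a bit more care.

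For (ii), I would let $\{I_\lambda\}_{\lambda\in\Lambda}$ be a family of closed $2$-step subideals, fix for each $\lambda$ a closed Lie ideal $K_\lambda\vartriangleleft\mathcal{L}$ with $I_\lambda\vartriangleleft K_\lambda$, and set $K=\bigcap_\lambda K_\lambda$ and $I=\bigcap_\lambda I_\lambda$. Then $K\vartriangleleft\mathcal{L}$ (an intersection of closed Lie ideals), $I$ is a closed subspace contained in $K$, and for $x\in K$, $y\in I$ one has $x\in K_\lambda$ and $y\in I_\lambda$ for every $\lambda$, whence $[x,y]\in I_\lambda$ for every $\lambda$ and therefore $[x,y]\in I$. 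Thus $I\vartriangleleft K\vartriangleleft\mathcal{L}$, proving $I\vartriangleleft_2\mathcal{L}$.

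For (i), let $J\vartriangleleft\mathcal{L}$ and let $I$ be a closed $2$-step subideal with $I\vartriangleleft K\vartriangleleft\mathcal{L}$ for some closed $K$. Put $K':=\overline{J+K}$, a closed Lie ideal of $\mathcal{L}$, and $I':=\overline{J+I}$, a closed subspace of $K'$. I would show $I'\vartriangleleft K'$ by the direct expansion
\[
[a+b,\,c+d]=[a,c]+[a,d]+[b,c]+[b,d]
\]
for $a,c\in J$, $b\in K$, $d\in I$: the first three summands lie in $J$ because $J\vartriangleleft\mathcal{L}$, while $[b,d]\in I$ because $I\vartriangleleft K$. Consequently $[J+K,\,J+I]\subseteq J+I\subseteq I'$, and continuity of the bracket together with closedness of $I'$ upgrades this to $[K',I']\subseteq I'$. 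Hence $I'\vartriangleleft K'\vartriangleleft\mathcal{L}$, so $I'=\overline{J+I}$ is a closed $2$-step subideal of $\mathcal{L}$.

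The only real subtlety is the one just addressed: the algebraic sum $J+I$ need not be closed in a Banach Lie algebra, so the statement of (i) has to be understood with the closure implicit (this is harmless for every place in Section~\ref{8} where the lemma is invoked, since the $2$-step subideal notation $\vartriangleleft_{2}$ by definition refers to closed subspaces). Once one commits to working with $\overline{J+I}$ and $\overline{J+K}$, the bracket computation above combined with continuity of $[\cdot,\cdot]$ closes the argument, and (ii) reduces to the elementary intersection computation sketched first.
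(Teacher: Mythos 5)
Your proof is correct and follows the same route as the paper's: for (ii) one intersects both the subideals $I_\lambda$ and the ambient ideals $K_\lambda$, and for (i) one uses exactly the bracket expansion showing $[J+K,\,J+I]\subseteq J+I$ with $K+J\vartriangleleft\mathcal{L}$. The only divergence is that the paper's definition of a $2$-step Lie subideal does not require closedness, so its proof works with the algebraic sums $J+I$ and $K+J$ directly and never needs the closure step you add; your closed variant is nonetheless the one matching the notation $\vartriangleleft_{2}$, and your continuity argument correctly upgrades the algebraic inclusion to the closures.
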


\begin{proof}
(i) If $I$ is a Lie ideal of some Lie ideal $K$ of $\mathcal{L}$ and
$J\vartriangleleft\mathcal{L,}$ then $I+J$ is a\textbf{ }Lie ideal of $K+J$
and $K+J$ is a Lie ideal of $\mathcal{L}$.

(ii) Let $I_{\alpha}$ be a Lie ideal of some Lie ideal $K_{\alpha}$ of
$\mathcal{L}$ for each $\alpha\in\Lambda$, where $\Lambda$ is an index set.
Then $\cap_{\alpha}K_{\alpha}$ is a Lie ideal of $\mathcal{L}$ and
$\cap_{\alpha}I_{\alpha}$ is a Lie ideal of $\cap_{\alpha}K_{\alpha}$.
\end{proof}

\begin{proposition}
\label{P8.2}For each Banach Lie algebra $\mathcal{L,}$ there is a
complete\emph{, }lower finite-gap chain of closed Lie $2$-step subideals of
$\mathcal{L}$ between $\mathcal{F}(\mathcal{L)}$ and $\mathcal{L.}$
\end{proposition}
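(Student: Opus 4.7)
The plan is to build the required chain by transfinite induction along the Frattini superposition series and then upgrade it to a complete chain using the chain-extension machinery of Section~\ref{5e1}. Set $M_\alpha = P_{\frak{J}}^\alpha(\mathcal{L})$ for each ordinal $\alpha$, and let $\beta$ be the least ordinal with $M_\beta = M_{\beta+1}$; then $M_\beta = P_{\frak{J}}^\circ(\mathcal{L}) = \mathcal{F}(\mathcal{L})$ by Theorem~\ref{C6.6}(ii) and the definition of $\mathcal{F}$. A straightforward induction using Corollary~\ref{C1}(iii) (with Lemma~\ref{essl} at successor steps and the fact that intersections of characteristic ideals are characteristic at limit steps) shows that every $M_\alpha$ is a closed characteristic Lie ideal of $\mathcal{L}$, and by minimality of $\beta$ we have $M_\alpha \supsetneq M_{\alpha+1}$ for each $\alpha<\beta$.

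For each $\alpha<\beta$, treat $M_\alpha$ as a Banach Lie algebra and apply Proposition~\ref{T8.6}(i): this produces a complete, lower finite-gap chain $C_\alpha$ of closed Lie ideals of $M_\alpha$ with $\frak{s}(C_\alpha)=M_\alpha$ and $\frak{p}(C_\alpha)=P_{\frak{J}}(M_\alpha)=M_{\alpha+1}$. Since $M_\alpha\vartriangleleft\mathcal{L}$, each element of $C_\alpha$ is a closed $2$-step Lie subideal of $\mathcal{L}$. Set
\[
C_0 \;=\; \bigcup_{\alpha<\beta} C_\alpha \;\cup\; \{\mathcal{F}(\mathcal{L})\}.
\]
Monotonicity of $\{M_\alpha\}$ turns $C_0$ into a linearly ordered chain of closed $2$-step subideals with $\frak{s}(C_0)=\mathcal{L}$ and $\frak{p}(C_0)=\mathcal{F}(\mathcal{L})$. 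The lower finite-gap property of $C_0$ is checked case by case for $Z\in C_0\setminus\{\mathcal{F}(\mathcal{L})\}$: if $Z$ lies strictly above $M_{\alpha+1}$ inside some $C_\alpha$, the predecessor is furnished by $C_\alpha$ itself; if $Z=M_{\alpha+1}$ with $\alpha+1<\beta$, then $C_{\alpha+1}$ is nontrivial and supplies a finite-codimension subspace below $M_{\alpha+1}$; and if $Z=M_\alpha$ for a limit ordinal $\alpha<\beta$, one uses $\frak{s}(C_\alpha)=M_\alpha\neq\frak{p}(C_\alpha)$ combined with the lower finite-gap property of $C_\alpha$.

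The last step is to upgrade $C_0$ to a complete chain via Lemma~\ref{L2.2}(ii)c) inside the ambient family $G$ of all closed $2$-step subideals of $\mathcal{L}$, which is $\frak{p}$-complete by Lemma~\ref{sub}(ii). This requires first verifying that $C_0$ itself is $\frak{p}$-complete in $G$: given a nonempty $C'\subseteq C_0$, either the set of $\alpha$'s such that $C'$ contains some element of $C_\alpha$ below $M_{\alpha+1}$-level is cofinal in $\beta$, in which case $\frak{p}(C')=\mathcal{F}(\mathcal{L})\in C_0$; or it has a largest such index $\alpha^*<\beta$, and then $\frak{p}(C')$ falls inside the range of $C_{\alpha^*}$ and lies in $C_0$ by $\frak{p}$-completeness of $C_{\alpha^*}$. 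Once $\frak{p}$-completeness of $C_0$ is in hand, Lemma~\ref{L2.2}(ii)c) yields a maximal, complete lower finite-gap chain $C\subseteq[\mathcal{F}(\mathcal{L}),\mathcal{L}]_G$ containing $C_0$; every element of $C$ is a closed $2$-step subideal by the choice of $G$, finishing the proof. The main technical obstacle is the $\frak{p}$-completeness verification for $C_0$, which requires careful bookkeeping at limit ordinals and at the overlaps of consecutive pieces $C_\alpha$ and $C_{\alpha+1}$ at $M_{\alpha+1}$.
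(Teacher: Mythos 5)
Your argument is correct and follows essentially the same route as the paper: build the chain along the $P_{\mathfrak{J}}$-superposition series $\{P_{\mathfrak{J}}^{\alpha}(\mathcal{L})\}$, apply Proposition \ref{T8.6}(i) inside each $P_{\mathfrak{J}}^{\alpha}(\mathcal{L})$ to bridge to $P_{\mathfrak{J}}^{\alpha+1}(\mathcal{L})$, observe that the resulting ideals are $2$-step subideals, and take the union. Your extra verifications of the finite-gap and $\mathfrak{p}$-completeness properties (which the paper only asserts) are sound, and the final maximality upgrade via Lemma \ref{L2.2}(ii)c) is harmless though not needed once $\mathfrak{p}$-completeness of $C_{0}$ is established.
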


\begin{proof}
Let $\left\{  P_{\frak{J}}^{\alpha}\left(  \mathcal{L}\right)  \right\}
_{\alpha=0}^{\beta}$ be the $P_{\frak{J}}$-superposition series of closed Lie
ideals of $\mathcal{L}$. Then $P_{\frak{J}}^{\beta}\left(  \mathcal{L}\right)
=P_{\frak{J}}^{\circ}\left(  \mathcal{L}\right)  =\mathcal{F(L})$. As
$P_{\frak{J}}^{\alpha+1}\left(  \mathcal{L}\right)  =P_{\frak{J}}\left(
P_{\frak{J}}^{\alpha}\left(  \mathcal{L}\right)  \right)  ,$ we have from
Proposition \ref{T8.6} that there is a complete chain $C_{\alpha}$ of closed
Lie ideals of $P_{\frak{J}}^{\alpha}\left(  \mathcal{L}\right)  $ such that it
is a lower finite-gap chain, $\frak{s}\left(  C_{\alpha}\right)  =P_{\frak{J}%
}^{\alpha}\left(  \mathcal{L}\right)  $ and $\frak{p}\left(  C_{\alpha
}\right)  =P_{\frak{J}}^{\alpha+1}\left(  \mathcal{L}\right)  $. As
$P_{\frak{J}}^{\alpha}\left(  \mathcal{L}\right)  \vartriangleleft
\mathcal{L,}$ these Lie ideals are Lie 2-step subideals of $\mathcal{L.}$
Hence $\left(  \cup_{\alpha=0}^{\beta}C_{\alpha}\right)  \cup\left(
\cup_{\alpha=0}^{\beta}P_{\frak{J}}^{\alpha}\left(  \mathcal{L}\right)
\right)  $ is a complete, lower finite-gap chain between $\mathcal{F(L})$ and
$\mathcal{L}$ that consists of closed $2$-step Lie subideals of $\mathcal{L}$.
\end{proof}

The following theorem describes $\mathcal{F}$-semisimple (Frattini-semisimple)
Lie algebras in terms of lower finite-gap chains of Lie subalgebras and 2-step ideals.

\begin{theorem}
\label{free} Let $\mathcal{L}$ be a Banach Lie algebra. Then the following
conditions are equivalent$:$

\begin{itemize}
\item [$\mathrm{(i)}$]$\mathcal{L}$ is $\mathcal{F}$-semisimple.

\item[$\mathrm{(ii)}$] $\mathcal{L}$ has a complete\emph{,} lower finite-gap
chain of closed Lie $2$-step subideals from $\{0\}$ to $\mathcal{L}$.

\item[$\mathrm{(iii)}$] $\mathcal{L}$ has a complete\emph{,} lower finite-gap
chain of closed Lie subalgebras from $\{0\}$ to $\mathcal{L}$.

\item[$\mathrm{(iv)}$] The set of all closed Lie $2$-step subideals of
$\mathcal{L}$ is a $\frak{p}$-complete\emph{,} lower finite-gap family.

\item[$\mathrm{(v)}$] The set of all closed Lie subalgebras of $\mathcal{L}$
is a $\frak{p}$-complete\emph{, }lower finite-gap family.
\end{itemize}
\end{theorem}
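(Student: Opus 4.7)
My proof will go through the cycle (i)$\Rightarrow$(ii)$\Rightarrow$(iii)$\Rightarrow$(v)$\Rightarrow$(i), plus the short equivalences (ii)$\Leftrightarrow$(iv) and (iii)$\Leftrightarrow$(v). The implication (i)$\Rightarrow$(ii) is immediate from Proposition~\ref{P8.2}, which supplies a complete lower finite-gap chain of closed Lie $2$-step subideals between $\mathcal{F}(\mathcal{L})$ and $\mathcal{L}$; under (i) this becomes a chain from $\{0\}$ to $\mathcal{L}$. The step (ii)$\Rightarrow$(iii) is formal, because every closed Lie $2$-step subideal is a closed Lie subalgebra and the properties \emph{complete} and \emph{lower finite-gap} are intrinsic to the chain, independent of the ambient family.

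For the equivalences (ii)$\Leftrightarrow$(iv) and (iii)$\Leftrightarrow$(v), I will use that the family $G$ of all closed Lie subalgebras is $\frak{p}$-complete, and by Lemma~\ref{sub}(ii) so is the family of all closed Lie $2$-step subideals; in both cases $\frak{p}(G)=\{0\}$ and $\frak{s}(G)=\mathcal{L}$. Given (ii) or (iii), the supplied chain $C_{0}$ is $\frak{p}$-complete, lower finite-gap, with $\frak{s}(C_{0})=\mathcal{L}=\frak{s}(G)$, and Lemma~\ref{L2.2}(iv) extends it to a maximal lower finite-gap chain $C$ in $G$; since $C\supseteq C_{0}$ one has automatically $\frak{p}(C)=\{0\}=\frak{p}(G)$ and $\frak{s}(C)=\mathcal{L}=\frak{s}(G)$, so Theorem~\ref{T2.2} forces $G$ to be a lower finite-gap family. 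Conversely, if $G$ is a $\frak{p}$-complete lower finite-gap family, Theorem~\ref{T2.2} together with Lemma~\ref{L2.2}(ii)(a) extracts a complete lower finite-gap chain in $G$ running from $\{0\}$ to $\mathcal{L}$, yielding (ii) from (iv) and (iii) from (v).

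The substantive step, and the only one where the specific behaviour of $\mathcal{F}$ enters, is (v)$\Rightarrow$(i). I will argue by contradiction: suppose $\mathcal{F}(\mathcal{L})\neq\{0\}$. Because $\mathcal{F}$ is a radical, $\mathcal{F}(\mathcal{L})\in\mathbf{Rad}(\mathcal{F})$, and by~\eqref{7.7} this equals $\mathbf{Rad}(P_{\frak{S}})$; equivalently, by condition a) following~\eqref{7.7}, $\mathcal{F}(\mathcal{L})$ has no proper closed Lie subalgebra of finite codimension in itself. But $\mathcal{F}(\mathcal{L})$ is a nonzero element of the family $G$ of all closed Lie subalgebras of $\mathcal{L}$, so hypothesis (v) produces some $Y\in G$ with $Y\subsetneq\mathcal{F}(\mathcal{L})$ and $\dim(\mathcal{F}(\mathcal{L})/Y)<\infty$; such $Y$, being contained in $\mathcal{F}(\mathcal{L})$ and closed under the bracket, is a proper closed Lie subalgebra of finite codimension in $\mathcal{F}(\mathcal{L})$, a contradiction.

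The main obstacle, I expect, is precisely this last implication. The earlier steps are essentially bookkeeping built on Proposition~\ref{P8.2}, Lemma~\ref{L2.2}(iv) and Theorem~\ref{T2.2}; closing the loop requires recognising that the hypothesis ``$G$ is lower finite-gap'' is perfectly matched to the intrinsic characterisation of $\mathcal{F}$-radical algebras through conditions a)--d) after~\eqref{7.7}. Once this match is isolated, applying the finite-gap condition to the single subalgebra $\mathcal{F}(\mathcal{L})$ closes the argument.
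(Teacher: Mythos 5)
Your proof is correct, and its overall architecture coincides with the paper's: (i)$\Rightarrow$(ii) via Proposition~\ref{P8.2}, the trivial (ii)$\Rightarrow$(iii), and the chain/family equivalences via Theorem~\ref{T2.2} and Lemma~\ref{L2.2} (using Lemma~\ref{sub}(ii) for $\frak{p}$-completeness of the family of $2$-step subideals). The one place where you diverge is in closing the cycle: the paper proves (iii)$\Rightarrow$(i) directly by a transfinite descent along the chain $\{M_{\alpha}\}$ — it locates the first ordinal $\alpha_{0}$ at which $M_{\alpha_0}$ fails to contain $\mathcal{F}(\mathcal{L})$, observes that $\alpha_{0}$ is a successor $\delta+1$, and derives $\mathcal{F}(\mathcal{L})=\mathcal{F}(\mathcal{F}(\mathcal{L}))\subseteq\mathcal{F}(M_{\delta})\subseteq P_{\frak{S}}(M_{\delta})\subseteq M_{\alpha_0}$, a contradiction — whereas you route through (v) and apply the lower finite-gap condition of the whole family to the single element $\mathcal{F}(\mathcal{L})$, producing a proper closed finite-codimension subalgebra of $\mathcal{F}(\mathcal{L})$ and contradicting $\mathcal{F}(\mathcal{L})\in\mathbf{Rad}(P_{\frak{S}})$. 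Both arguments rest on exactly the same two facts ($\mathcal{F}$ is lower stable and balanced, and $\mathbf{Rad}(\mathcal{F})=\mathbf{Rad}(P_{\frak{S}})$), but your version avoids the ordinal bookkeeping at the cost of needing (iii)$\Rightarrow$(v) first — which you correctly justify by extending the given chain to a maximal one via Lemma~\ref{L2.2}(iv) and noting $\frak{p}(C)\subseteq\frak{p}(C_{0})=\{0\}$ before invoking Theorem~\ref{T2.2}. This is a clean, slightly more economical packaging of the same proof.
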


\begin{proof}
(i) $\Longrightarrow$ (ii) follows from Proposition \ref{P8.2}.

(ii) $\Longrightarrow$ (iii) is obvious. The set in (v) is $\frak{p}%
$-complete. The set in (iv) contains $\mathcal{L,}$ so that it is $\frak{p}%
$-complete by Lemma \ref{sub}(ii). Hence (iv) $\Longleftrightarrow$ (ii) and
(v) $\Longleftrightarrow$ (iii) follow from Theorem \ref{T2.2}.

(iii) $\Longrightarrow$ (i) By Lemma \ref{L2.2}, the chain $C=\left\{
M_{\alpha}\right\}  _{\alpha=0}^{\beta}$ of closed Lie subalgebras in (iii) is
a strictly decreasing transfinite chain with $M_{0}=\mathcal{L}$ and
$M_{\beta}=\{0\}$. Let $\mathcal{F(L)}\neq\{0\}$ and $\alpha_{0}$ be the first
ordinal such that $M_{\alpha_{0}}$ does not contain $\mathcal{F(L)}$. Clearly,
$\alpha_{0}$ is not a limit ordinal. Hence $\alpha_{0}=\delta+1$ and
$M_{{\alpha_{0}}}$ is a Lie subalgebra of finite codimension in $M_{\delta}$.
Thus $\mathcal{F}(M_{\delta})\subseteq P_{\frak{S}}(M_{\delta})\subseteq
M_{\alpha_{0}}$. As $\mathcal{F(L)}$ is a Lie ideal of $M_{\delta}$ and
$\mathcal{F}$ is a radical, we have $\mathcal{F(L)}=\mathcal{F(F(L)}%
)\subseteq\mathcal{F}(M_{\delta})\subset M_{\alpha_{0}}$, a contradiction.
Thus all $M_{\alpha}$ contain $\mathcal{F(L)}$, whence\textbf{ }%
$\mathcal{F(L)}=\{0\}$.
\end{proof}

A closed ideal $I$ of a Banach Lie algebra $\mathcal{L}$ is $\mathcal{F}%
$\textit{-primitive} (\textit{Frattini-primitive}) if $\mathcal{L}/I$ is
$\mathcal{F}$-semisimple (cf. Definition \ref{D3.3}). We saw in Theorem
\ref{free1}(ii) that $\mathcal{D}\left(  \mathcal{L}\right)  $ is
$\mathcal{F}$-primitive. Denote the set of all $\mathcal{F}$-primitive ideals
of $\mathcal{L}$ by Prim$_{\mathcal{F}}\left(  \mathcal{L}\right)  $.

\begin{lemma}
\label{prim}\emph{(i)} A closed Lie ideal $I$ of $\mathcal{L}$ is
$\mathcal{F}$-primitive if and only if there is a complete\emph{,} lower
finite-gap chain of closed Lie subalgebras between $I$ and $\mathcal{L}$.

\emph{(ii) \ }Let $I\in\mathrm{Prim}_{\mathcal{F}}\left(  \mathcal{L}\right)
.$ If $J\vartriangleleft\mathcal{L,}$ $J\subseteq I$ and $\dim(I/J)<\infty,$
then $J\in\mathrm{Prim}_{\mathcal{F}}\left(  \mathcal{L}\right)  $.

\emph{(iii) }Each complete\emph{,} lower finite-gap chain $C$ of closed Lie
ideals of $\mathcal{L}$ with $\frak{s}\left(  C\right)  =\mathcal{L}$ consists
of $\mathcal{F}$-primitive ideals of $\mathcal{L}$.

\emph{(iv) }The set $\mathrm{Prim}_{\mathcal{F}}\left(  \mathcal{L}\right)  $
is $\frak{p}$-complete\emph{, }$\frak{s}(\mathrm{Prim}_{\mathcal{F}}\left(
\mathcal{L}\right)  )=\mathcal{L}$ and $\frak{p}\left(  \mathrm{Prim}%
_{\mathcal{F}}\left(  \mathcal{L}\right)  \right)  =\mathcal{F}\left(
\mathcal{L}\right)  .$

\emph{(v) }Let $\mathcal{M}$ be a closed Lie subalgebra of $\mathcal{L.}$ If
$I\in\mathrm{Prim}_{\mathcal{F}}\left(  \mathcal{L}\right)  $ then
$I\cap\mathcal{M}\in\mathrm{Prim}_{\mathcal{F}}\left(  \mathcal{M}\right)  .$
\end{lemma}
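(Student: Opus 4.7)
The plan is to derive all five parts by combining the characterization of $\mathcal{F}$-semisimplicity from Theorem \ref{free} (especially the equivalence between (i) and (iii) there) with the general facts about $R$-primitive ideals from Theorem \ref{primgen} and the transfer results for $\frak{p}$-complete lower finite-gap chains in Lemma \ref{L2.2} and Corollary \ref{pi}.

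For (i), I would argue bidirectionally via the quotient map $q\colon\mathcal{L}\to\mathcal{L}/I$. If $I$ is $\mathcal{F}$-primitive, Theorem \ref{free}(iii) applied to $\mathcal{L}/I$ gives a complete, lower finite-gap chain of closed Lie subalgebras of $\mathcal{L}/I$ from $\{0\}$ to $\mathcal{L}/I$; pulling back via $q^{-1}$ yields such a chain between $I$ and $\mathcal{L}$ (preimages of closed subalgebras are closed subalgebras, preimage preserves inclusions and intersections, and all finite-dimensional quotients $q^{-1}(B)/q^{-1}(A)\cong B/A$ are preserved). Conversely, given such a chain between $I$ and $\mathcal{L}$, pushing it forward by $q$ gives a complete, lower finite-gap chain of closed Lie subalgebras in $\mathcal{L}/I$ from $\{0\}$ to $\mathcal{L}/I$, so $\mathcal{L}/I\in\mathbf{Sem}(\mathcal{F})$ by Theorem \ref{free}.

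For (ii), since $\dim(I/J)<\infty$, I pick a finite strictly descending chain $J=W_0\subsetneqq W_1\subsetneqq\cdots\subsetneqq W_n=I$ of closed subspaces (no Lie structure is needed here because the target chain lives in the lattice of Lie subalgebras of $\mathcal{L}$, and any finite-dimensional refinement is admissible). Concatenating this with the complete, lower finite-gap chain from $I$ to $\mathcal{L}$ given by (i) produces a complete, lower finite-gap chain from $J$ to $\mathcal{L}$ of closed Lie subalgebras, so (i) applies to $J$. Part (iii) is then immediate: given a complete, lower finite-gap chain $C$ of closed ideals with $\frak{s}(C)=\mathcal{L}$ and any $I\in C$, the upper part $\{Y\in C:I\subseteq Y\}$ is a complete, lower finite-gap chain of closed Lie subalgebras from $I$ to $\mathcal{L}$, whence (i) gives $I\in\mathrm{Prim}_{\mathcal{F}}(\mathcal{L})$.

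For (iv), the $\frak{p}$-completeness of $\mathrm{Prim}_{\mathcal{F}}(\mathcal{L})$ is Theorem \ref{primgen}(i) applied to the preradical $\mathcal{F}$; that $\frak{s}(\mathrm{Prim}_{\mathcal{F}}(\mathcal{L}))=\mathcal{L}$ follows since $\mathcal{L}\in\mathrm{Prim}_{\mathcal{F}}(\mathcal{L})$ trivially; and $\frak{p}(\mathrm{Prim}_{\mathcal{F}}(\mathcal{L}))=\mathcal{F}(\mathcal{L})$ follows from Theorem \ref{primgen}(iii), since $\mathcal{F}$ is upper stable (being a radical). For (v), I would combine (i) with Corollary \ref{pi}: choose a complete, lower finite-gap chain $C$ of closed Lie subalgebras of $\mathcal{L}$ from $I$ to $\mathcal{L}$, and form $C\cap\mathcal{M}=\{Y\cap\mathcal{M}:Y\in C\}$. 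Intersecting a chain of subalgebras with $\mathcal{M}$ gives a chain of closed Lie subalgebras of $\mathcal{M}$ (with bottom $I\cap\mathcal{M}$ and top $\mathcal{M}$); by Corollary \ref{pi} this chain is $\frak{p}$-complete and lower finite-gap, and by Lemma \ref{L2.2}(ii)(a) any such chain is automatically complete. Noting that $I\cap\mathcal{M}$ is a closed Lie ideal of $\mathcal{M}$ since $I\vartriangleleft\mathcal{L}$, (i) applied inside $\mathcal{M}$ delivers $I\cap\mathcal{M}\in\mathrm{Prim}_{\mathcal{F}}(\mathcal{M})$. The main (though mild) obstacle is the bookkeeping in (i): one must verify that the pull-back/push-forward via $q$ preserves the "complete, lower finite-gap" property, which ultimately reduces to the fact that $q^{-1}$ is inclusion-preserving and commutes with arbitrary intersections on closed subspaces containing $I$, and that codimensions are preserved under this correspondence.
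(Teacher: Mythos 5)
Your proposal follows essentially the same route as the paper's proof: part (i) by pulling back and pushing forward chains through the quotient map using Theorem \ref{free}, part (iii) by truncating the chain above $I$, part (iv) from Theorem \ref{primgen}, and part (v) via Corollary \ref{pi}. One small correction in (ii): the intermediate subspaces $W_1,\dots,W_{n-1}$ you interpolate between $J$ and $I$ are arbitrary closed subspaces, not Lie subalgebras, so the concatenated chain would no longer consist of closed Lie subalgebras and (i) could not be applied to it; the refinement is also unnecessary, since the definition of a lower finite-gap chain only asks for a finite (not one-dimensional) gap below each element, so it suffices to adjoin $J$ itself (a closed Lie ideal, hence subalgebra) to the chain $C$ from (i), using $\dim(I/J)<\infty$ --- which is exactly what the paper does.
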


\begin{proof}
(i) If $\mathcal{L}/I$ is $\mathcal{F}$-semisimple then, by Theorem
\ref{free}, there is a complete, lower finite-gap chain of closed Lie
subalgebras of $\mathcal{L}/I$ between $\{0\}$ and $\mathcal{L}/I$. Their
preimages in $\mathcal{L}$ form a complete, lower finite-gap chain of closed
Lie subalgebras of $\mathcal{L}$ between $I$ and $\mathcal{L}$.

Conversely, if $C=\{L_{\lambda}\}$ is such a chain$,$ then the quotients
$L_{\lambda}/I$ form a complete, lower finite-gap chain of closed Lie
subalgebras of $\mathcal{L}/I$ between $\{0\}$ and $\mathcal{L}/I$. Thus
$\mathcal{L}/I$ is $\mathcal{F}$-semisimple.

(ii) By (i), there is a complete\emph{,} lower finite-gap chain $C$ of closed
Lie subalgebras between $I$ and $\mathcal{L}$. Then $C^{\prime}=J\cup C$ is
the same type of chain between $J$ and $\mathcal{L.}$ By (i), $J$ is
$\mathcal{F}$-primitive.

(iii) For $I\in C,$ $\{J\in C$: $I\subseteq J\}$ is a complete\emph{,} lower
finite-gap chain. By (i), $I$ is $\mathcal{F}$-primitive.

(iv) As $\mathcal{L}\in$ Prim$_{\mathcal{F}}\left(  \mathcal{L}\right)  ,$ we
have $\frak{s}($Prim$_{\mathcal{F}}\left(  \mathcal{L}\right)  )=\mathcal{L.}$
The rest follows from Theorem \ref{primgen}.

(v) If $I\in\mathrm{Prim}_{\mathcal{F}}\left(  \mathcal{L}\right)  $ then, by
(i$)$, $\mathcal{L}$ has a complete\emph{,} lower finite-gap chain $C$ of
closed Lie subalgebras between $I$ and $\mathcal{L}$. By Corollary \ref{pi},
$C_{\mathcal{M}}=\{J\cap\mathcal{M}$: $J\in C\}$ is a complete, lower
finite-gap chain of closed Lie subalgebras of $\mathcal{M}$ between
$I\cap\mathcal{M}$ and $\mathcal{M}$. Hence, by $($i), $I\cap\mathcal{M}%
\in\mathrm{Prim}_{\mathcal{F}}\left(  \mathcal{M}\right)  $.
\end{proof}

Let $G$ be the set of all closed Lie subalgebras of $\mathcal{L.}$ It is
$\frak{p}$-complete. Comparing (\ref{6.d}), Theorem \ref{T6.x} and Lemma
\ref{prim}, we have that $G_{\text{f}}=$ Prim$_{\mathcal{F}}(\mathcal{L})$ and
$\Delta_{G}=\mathcal{F(L).}$ This and Lemma \ref{L2.2} yield

\begin{corollary}
\emph{(i) }$\mathcal{F(L)}=\frak{p}(C)$ for each maximal\emph{,} lower
finite-gap chain $C$ of closed Lie subalgebras of $\mathcal{L}$ with
$\frak{s}(C)=\mathcal{L}$.

\emph{(ii) }Each $\frak{p}$-complete$,$ lower finite-gap chain $C$ of closed
Lie subalgebras of $\mathcal{L}$ with $\frak{s}(C)=\mathcal{L}$ extends to a
maximal\emph{,} lower finite-gap chain of closed Lie subalgebras.
\end{corollary}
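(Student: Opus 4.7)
The plan is to apply the abstract framework for lower finite-gap families, already developed in Section~\ref{5e1}, directly to the family $G$ of all closed Lie subalgebras of $\mathcal{L}$. First I would verify that $G$ satisfies the hypotheses needed for Theorem~\ref{T6.x} and Lemma~\ref{L2.2}: $G$ is $\frak{p}$-complete because an arbitrary intersection of closed Lie subalgebras is again a closed Lie subalgebra, and $\frak{s}(G)=\mathcal{L}$ because $\mathcal{L}\in G$.

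For part~(i), I would invoke the identifications stated immediately before the corollary, namely $G_{\text{f}}=\mathrm{Prim}_{\mathcal{F}}(\mathcal{L})$ and $\Delta_{G}=\mathcal{F}(\mathcal{L})$. The first is a consequence of Lemma~\ref{prim}(i), which characterizes $\mathcal{F}$-primitive ideals as exactly those $Y\in G$ such that there is a complete lower finite-gap chain of closed Lie subalgebras from $Y$ up to $\mathcal{L}$; the second is then the definition $\Delta_G=\frak{p}(G_{\text{f}})$ combined with Lemma~\ref{prim}(iv), which says $\frak{p}(\mathrm{Prim}_{\mathcal{F}}(\mathcal{L}))=\mathcal{F}(\mathcal{L})$. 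With these identifications in hand, Theorem~\ref{T6.x}(ii) gives $\frak{p}(C)=\Delta_G=\mathcal{F}(\mathcal{L})$ for every maximal lower finite-gap chain $C$ in $G$ with $\frak{s}(C)=\mathcal{L}$, which is precisely (i).

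For part~(ii), I would apply Lemma~\ref{L2.2}(iv) directly to the $\frak{p}$-complete family $G$ with $C_{0}:=C$. The chain $C$ is $\frak{p}$-complete and lower finite-gap by hypothesis, and satisfies $\frak{s}(C)=\mathcal{L}=\frak{s}(G)$, so the lemma produces a maximal lower finite-gap chain in $G$ containing $C$, which is the required extension.

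There is no genuine obstacle here: the corollary is a packaging result that simply transports the already-proved general statements about $\frak{p}$-complete lower finite-gap families of closed subspaces to the concrete family of closed Lie subalgebras. The only point that requires any care is invoking Lemma~\ref{prim} correctly to identify $\Delta_G$ with $\mathcal{F}(\mathcal{L})$; once that identification is in place, both parts are immediate specializations.
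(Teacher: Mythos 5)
Your proposal is correct and follows essentially the same route as the paper: the authors likewise apply Theorem \ref{T6.x} and Lemma \ref{L2.2} to the $\frak{p}$-complete family $G$ of all closed Lie subalgebras, using the identifications $G_{\text{f}}=\mathrm{Prim}_{\mathcal{F}}(\mathcal{L})$ and $\Delta_{G}=\mathcal{F}(\mathcal{L})$ obtained from Lemma \ref{prim}.
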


In general, for a radical $R,$ a subalgebra of an $R$-semisimple Lie algebra
is not necessarily $R$-semisimple. However, for the Frattini radical
$\mathcal{F,}$ the situation is much better.

\begin{corollary}
\label{free3}If $\mathcal{L}\in\mathbf{Sem}(\mathcal{F)}$ then each closed Lie
subalgebra $\mathcal{M}$ of $\mathcal{L}$ is $\mathcal{F}$-semisimple.

\begin{proof}
As $\{0\}\in\mathrm{Prim}_{\mathcal{F}}\left(  \mathcal{L}\right)  ,$ by Lemma
\ref{prim}(v), $\{0\}\in\mathrm{Prim}_{\mathcal{F}}\left(  \mathcal{M}\right)
$. Hence $\mathcal{M}\in\mathbf{Sem}(\mathcal{F).}$
\end{proof}
\end{corollary}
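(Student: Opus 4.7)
The plan is to reduce the statement to Lemma \ref{prim}(v) applied to the zero ideal. By definition, $\mathcal{L}\in\mathbf{Sem}(\mathcal{F})$ means $\mathcal{F}(\mathcal{L})=\{0\}$, so the quotient $\mathcal{L}/\{0\}=\mathcal{L}$ is $\mathcal{F}$-semisimple. This is precisely the statement that $\{0\}$ belongs to $\mathrm{Prim}_{\mathcal{F}}(\mathcal{L})$.

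Now I would invoke Lemma \ref{prim}(v), which asserts that the intersection of any $\mathcal{F}$-primitive ideal of $\mathcal{L}$ with a closed Lie subalgebra $\mathcal{M}$ is an $\mathcal{F}$-primitive ideal of $\mathcal{M}$. Applying this with the $\mathcal{F}$-primitive ideal $I=\{0\}$ gives $\{0\}=\{0\}\cap\mathcal{M}\in\mathrm{Prim}_{\mathcal{F}}(\mathcal{M})$, which unfolds to $\mathcal{M}/\{0\}=\mathcal{M}\in\mathbf{Sem}(\mathcal{F})$, as desired.

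Since the entire argument is a one-line application of a previously established lemma, there is essentially no obstacle. The only conceptual content lies in unpacking the definition of $\mathcal{F}$-primitivity at the zero ideal on both sides. As a backup route in case one prefers not to rely on Lemma \ref{prim}(v) directly, one could equivalently argue via Theorem \ref{free}: take a complete, lower finite-gap chain $C$ of closed Lie subalgebras from $\{0\}$ to $\mathcal{L}$, intersect with $\mathcal{M}$, and use Corollary \ref{pi} to see that $C\cap\mathcal{M}$ is again a $\mathfrak{p}$-complete, lower finite-gap chain running from $\{0\}$ to $\mathcal{M}$, which by the equivalence (iii)$\Longleftrightarrow$(i) of Theorem \ref{free} forces $\mathcal{M}\in\mathbf{Sem}(\mathcal{F})$.
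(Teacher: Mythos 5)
Your proposal is correct and is essentially identical to the paper's proof: both establish $\{0\}\in\mathrm{Prim}_{\mathcal{F}}(\mathcal{L})$ and then apply Lemma \ref{prim}(v) to get $\{0\}\in\mathrm{Prim}_{\mathcal{F}}(\mathcal{M})$. Your backup route via Theorem \ref{free} and Corollary \ref{pi} is just an unwinding of the proof of Lemma \ref{prim}(v), not a genuinely different argument.
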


Now we consider the sets of $\mathcal{F}$-primitive characteristic Lie ideals
in Banach Lie algebras.

\begin{theorem}
\label{T-prim}Let $\mathcal{L}$ be a Banach Lie algebra.

\begin{itemize}
\item [\textrm{(i)}]$\mathcal{L}$ has a maximal chain of $\mathcal{F}%
$-primitive ideals between $\mathcal{F(L)}$ and $\mathcal{L.}$

\item[\textrm{(ii)}] $\mathcal{L}$ has a maximal lower finite-gap chain of
$\mathcal{F}$-primitive Lie ideals between $P_{\frak{J}}(\mathcal{L)}$ and
$\mathcal{L.}$

\item[\textrm{(iii)}] Let $R$ be one of the preradicals $P_{\frak{S}},$
$P_{\frak{J}},$ $P_{\frak{S}^{\max}},$ $P_{\frak{J}^{\max}}.$ Then

\begin{itemize}
\item [\textrm{a)}]for each $\mathcal{F}$-primitive Lie ideal $I$ of
$\mathcal{L,}$ the Lie ideal $P_{R}(I)$ is also $\mathcal{F}$-primitive$;$

\item[\textrm{b)}] the characteristic Lie ideals $P_{R}^{\alpha}\left(
\mathcal{L}\right)  $ in the $R$-superposition series are $\mathcal{F}$-primitive.
\end{itemize}

\item[\textrm{(iv)}] Let $\mathcal{L}/P_{\frak{S}}(\mathcal{L})$ have no
characteristic commutative\emph{,} infinite-dimensional Lie ideals$.$ Then
$\mathcal{L}$ has a maximal\emph{, }lower finite-gap chain\emph{ }of
$\mathcal{F}$-primitive characteristic Lie ideals between $P_{\frak{S}%
}(\mathcal{L})$ and $\mathcal{L.}$
\end{itemize}
\end{theorem}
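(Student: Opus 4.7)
The plan is to handle the four parts of Theorem~\ref{T-prim} by combining three already-developed tools: the closure of $\mathrm{Prim}_{\mathcal{F}}(\mathcal{L})$ under intersections (Theorem~\ref{primgen}(i)), the structural results on lower finite-gap chains from Section~6.1 (in particular Lemma~\ref{L2.2} and Proposition~\ref{T8.6}), and the two-term extension lemma for preradicals (Lemma~\ref{L-sem}(v)).

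For (i), I would apply Zorn's lemma to the poset of chains of $\mathcal{F}$-primitive ideals lying in the interval $[\mathcal{F}(\mathcal{L}),\mathcal{L}]$ ordered by inclusion. This interval is non-empty because $\mathcal{F}(\mathcal{L})$ itself is the smallest $\mathcal{F}$-primitive ideal (Theorem~\ref{primgen}(iii), using that $\mathcal{F}$ is upper stable). The union of any linearly ordered family of such chains is again a chain of $\mathcal{F}$-primitive ideals, giving a maximal element. For (ii), I would first invoke Proposition~\ref{T8.6}(i) to obtain a complete lower finite-gap chain $C_{0}$ of closed Lie ideals between $\mathcal{L}$ and $P_{\frak{J}}(\mathcal{L})$. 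By Lemma~\ref{prim}(iii), every member of $C_{0}$ is $\mathcal{F}$-primitive. Next, let $G$ be the set of all $\mathcal{F}$-primitive Lie ideals of $\mathcal{L}$ contained in the interval $[P_{\frak{J}}(\mathcal{L}),\mathcal{L}]$; it is $\frak{p}$-complete by Theorem~\ref{primgen}(i) and satisfies $\frak{s}(G)=\mathcal{L}$. Then Lemma~\ref{L2.2}(iv) applied to the pair $(C_{0},G)$ produces a maximal lower finite-gap chain $C \supseteq C_{0}$ in $G$. Because $C \supseteq C_{0}$ forces $\frak{p}(C)\subseteq P_{\frak{J}}(\mathcal{L})$, while every member of $C$ contains $P_{\frak{J}}(\mathcal{L})$, we get $\frak{p}(C)=P_{\frak{J}}(\mathcal{L})$, as required.

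For (iii)(a), let $I$ be $\mathcal{F}$-primitive and $R$ one of the four preradicals. Since $R$ is upper stable (Theorem~\ref{Cfam2}), the ideal $I/P_{R}(I)$ of $\mathcal{L}/P_{R}(I)$ is $R$-semisimple; the inclusion chain
\[
\mathbf{Sem}(P_{\frak{J}^{\max}})\subseteq\mathbf{Sem}(P_{\frak{S}^{\max}})\subseteq\mathbf{Sem}(P_{\frak{J}})\subseteq\mathbf{Sem}(P_{\frak{S}})\subseteq\mathbf{Sem}(\mathcal{F})
\]
established at the end of Section~7 then yields that $I/P_{R}(I)$ is $\mathcal{F}$-semisimple. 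The quotient $(\mathcal{L}/P_{R}(I))/(I/P_{R}(I))\cong \mathcal{L}/I$ is $\mathcal{F}$-semisimple by hypothesis. Since $\mathcal{F}$ is balanced and lower stable, Lemma~\ref{L-sem}(v) applied with preradical $\mathcal{F}$ delivers $\mathcal{F}(\mathcal{L}/P_{R}(I))=\{0\}$, i.e.\ $P_{R}(I)$ is $\mathcal{F}$-primitive. Part (iii)(b) follows by transfinite induction on $\alpha$ in the definition (\ref{4.o}) of the $R$-superposition series: the base case $P_{R}^{0}(\mathcal{L})=\mathcal{L}$ is trivial; the successor step is (iii)(a) applied to $I=P_{R}^{\alpha}(\mathcal{L})$; and limit ordinals are handled by the intersection clause in Theorem~\ref{primgen}(i).

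For (iv), I would invoke Proposition~\ref{comdiff} to produce a maximal lower finite-gap chain $C$ of closed characteristic Lie ideals between $\mathcal{L}$ and $P_{\frak{S}}(\mathcal{L})$. Because $\frak{s}(C)=\mathcal{L}$, Lemma~\ref{prim}(iii) gives that every element of $C$ is $\mathcal{F}$-primitive, so $C$ is a lower finite-gap chain of $\mathcal{F}$-primitive characteristic Lie ideals. Maximality transfers automatically: any strictly larger lower finite-gap chain of $\mathcal{F}$-primitive characteristic Lie ideals would be, in particular, a larger lower finite-gap chain of characteristic Lie ideals, contradicting the maximality provided by Proposition~\ref{comdiff}. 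The main obstacle I anticipate is in (ii), namely ensuring that a Zorn-type extension does not drop below $P_{\frak{J}}(\mathcal{L})$; this is circumvented by pre-restricting the ambient family $G$ to the interval $[P_{\frak{J}}(\mathcal{L}),\mathcal{L}]$ before invoking Lemma~\ref{L2.2}(iv). A secondary point that must be handled with care in (iii)(a) is the need for both balancedness and lower stability of $\mathcal{F}$ in order to apply Lemma~\ref{L-sem}(v); both are immediate from $\mathcal{F}$ being a radical.
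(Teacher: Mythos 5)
Your proposal is correct. For parts (i), (ii), (iii)(b) and (iv) you follow essentially the paper's own route: the paper proves these by one-line citations to Lemma \ref{L2.2}, Proposition \ref{T8.6}(i) (for (iv), the relevant tool is Proposition \ref{comdiff}, which is the one you correctly use), Lemma \ref{prim}(iii)--(iv) and transfinite induction, and your text merely spells out the Zorn-type and interval-restriction details those citations compress. The genuine divergence is in (iii)(a). The paper proves that $P_{R}(I)$ is $\mathcal{F}$-primitive by gluing chains: it takes a complete, lower finite-gap chain of closed Lie subalgebras between $I$ and $\mathcal{L}$ (Lemma \ref{prim}(i), since $I$ is $\mathcal{F}$-primitive), concatenates it with a complete, lower finite-gap chain between $P_{R}(I)$ and $I$ obtained from Proposition \ref{T8.6}(i), and reads off $\mathcal{F}$-primitivity of $P_{R}(I)$ from Lemma \ref{prim}(i) again. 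You instead argue at the level of semisimple classes: $I/P_{R}(I)$ is $R$-semisimple by upper stability of $R$, hence $\mathcal{F}$-semisimple via $\mathbf{Sem}(R)\subseteq\mathbf{Sem}(\mathcal{F})$, while $(\mathcal{L}/P_{R}(I))/(I/P_{R}(I))\cong\mathcal{L}/I$ is $\mathcal{F}$-semisimple by hypothesis, so closure of $\mathbf{Sem}(\mathcal{F})$ under extensions (Lemma \ref{L-sem}(v), $\mathcal{F}$ being balanced and lower stable as a radical) yields $\mathcal{F}(\mathcal{L}/P_{R}(I))=\{0\}$. Your route avoids the chain surgery entirely and, in particular, sidesteps a point the paper glosses over, namely that Proposition \ref{T8.6}(i) directly produces a subalgebra chain down to $P_{\frak{S}}(I)$ rather than to $P_{R}(I)$ for the other three preradicals; the paper's route has the advantage of staying inside the finite-gap-chain formalism in which the whole section is written. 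The only detail you should add is that $P_{R}(I)\vartriangleleft\mathcal{L}$ (Corollary \ref{C1}(ii)), so that the quotient $\mathcal{L}/P_{R}(I)$ and the ideal $I/P_{R}(I)$ of it make sense.
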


\begin{proof}
(i) follows from Lemmas \ref{L2.2} and \ref{prim}(iv).

(ii) follows from Proposition \ref{T8.6}(i) and Lemma \ref{prim}(iii).

(iii) a) By Lemma \ref{L3.1}(i), $P_{R}(I)$ is a Lie ideal of $\mathcal{L.}$
As $I$ is $\mathcal{F}$-primitive, we have from Lemma \ref{prim}(i) that there
is a complete\emph{,} lower finite-gap chain $C_{I}$ of closed Lie subalgebras
between $I$ and $\mathcal{L}$. By Proposition \ref{T8.6}(i), there is a
complete lower finite-gap chain $C^{\prime}$ of closed Lie subalgebras between
$R(I\mathcal{)}$ and $I\mathcal{.}$ Hence $C=C_{I}\cup C^{\prime}$ is a
complete lower finite-gap chain of closed Lie subalgebras between
$R(I\mathcal{)}$ and $\mathcal{L.}$ Thus, by Lemma \ref{prim}(i),
$R(I\mathcal{)}$ is $\mathcal{F}$-primitive.

(iii) b) follows by induction. Let $P_{R}^{\alpha}\left(  \mathcal{L}\right)
$ be $\mathcal{F}$-primitive. By (i), $P_{R}^{\alpha+1}\left(  \mathcal{L}%
\right)  $ is also $\mathcal{F}$-primitive. The case of a limit ordinal
$\alpha$ follows from Lemma \ref{prim}(iv).

(iv) follows from Proposition \ref{T8.6}(i) and Lemma \ref{prim}(iii).
\end{proof}

Denote, as in Example \ref{E3}, by Lid$(\mathcal{L)}$ the lattice of all
closed Lie ideals in $\mathcal{L.}$ Denote by

1) $\mathcal{A}_{\mathcal{L}}$ the set of all closed commutative Lie ideals of
$\mathcal{L;}$

2) $\mathcal{A}_{\mathcal{L}}^{\text{ch}}$ the set of all characteristic Lie
ideals of $\mathcal{L}$ in $\mathcal{A}_{\mathcal{L}};$

3) $\mathcal{A}_{\mathcal{L}}^{\mathrm{Prim}}:=\mathcal{A}_{\mathcal{L}}%
\cap\mathrm{Prim}_{\mathcal{F}}\left(  \mathcal{L}\right)  $ the set of all
$\mathcal{F}$-primitive Lie ideals of $\mathcal{L}$ in $\mathcal{A}%
_{\mathcal{L}}$.

\begin{proposition}
\label{strongl}Let $\mathcal{L}\in\mathbf{Sem}(\mathcal{F})$. Then

\emph{(i) }the set\emph{ Lid}$(\mathcal{L})\diagdown\mathcal{A}_{\mathcal{L}}$
is a lower finite-gap family modulo $\mathcal{A}_{\mathcal{L}}$ \emph{(}see
Definition $\ref{modd}$\emph{);}

\emph{(ii) }the set of all infinite-dimensional non-commutative closed
characteristic Lie ideals of $\mathcal{L}$ is a lower finite-gap family modulo
$\mathcal{A}_{\mathcal{L}}^{\emph{ch}};$

\emph{(iii) }the set $\mathrm{Prim}_{\mathcal{F}}\left(  \mathcal{L}\right)
\diagdown\mathcal{A}_{\mathcal{L}}^{\mathrm{Prim}}$ is a lower finite-gap
family modulo $\mathcal{A}_{\mathcal{L}}^{\mathrm{Prim}}.$
\end{proposition}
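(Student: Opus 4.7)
The plan is to handle the three parts uniformly. In each case I will fix $Z$ in the listed family $G$ and first identify $\mathfrak{p}(G\cup G')$: it equals $\{0\}$ for (i) and (ii) because $\{0\}$ lies in $\mathrm{Lid}(\mathcal{L})$ and in $\mathcal{A}_{\mathcal{L}}^{\mathrm{ch}}$ respectively, and it equals $\mathcal{F}(\mathcal{L})=\{0\}$ for (iii) by Lemma \ref{prim}(iv) together with $\mathcal{L}\in\mathbf{Sem}(\mathcal{F})$. So the hypothesis $Z\neq\mathfrak{p}(G\cup G')$ reduces uniformly to $Z\neq\{0\}$, and in every case $Z$ is a non-commutative closed Lie ideal of $\mathcal{L}$. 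The task is then to produce $Y\in G\cup G'$ with $Y\subsetneq Z$ and $0<\dim(Z/Y)<\infty$.

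The main step is to manufacture a proper closed characteristic Lie ideal $K$ of $Z$ of finite codimension and then lift it to $\mathcal{L}$. Because $Z$ is a closed Lie subalgebra of the $\mathcal{F}$-semisimple algebra $\mathcal{L}$, Corollary \ref{free3} gives $\mathcal{F}(Z)=\{0\}$; combined with $Z\neq\{0\}$ and (\ref{7.7}) this forces $P_{\mathfrak{S}}(Z)\neq Z$, hence $\mathfrak{S}_Z\neq\varnothing$. Theorem \ref{KST1}(ii) then applies to the non-commutative Banach Lie algebra $Z$ and yields a proper closed characteristic Lie ideal $K$ of $Z$ with $0<\dim(Z/K)<\infty$. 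Since $Z\vartriangleleft\mathcal{L}$, Lemma \ref{L3.1}(i) upgrades $K$ to a closed Lie ideal of $\mathcal{L}$, and Lemma \ref{L3.1}(ii) additionally shows that when $Z$ is characteristic in $\mathcal{L}$ so is $K$.

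It remains to verify $K\in G\cup G'$. In (i), $G\cup G'=\mathrm{Lid}(\mathcal{L})$, so nothing further is needed. In (ii), the preceding paragraph places $K$ among the closed characteristic Lie ideals of $\mathcal{L}$; since $\dim Z=\infty$ and $\dim(Z/K)<\infty$ force $\dim K=\infty$, the commutative/non-commutative dichotomy lands $K$ either in $\mathcal{A}_{\mathcal{L}}^{\mathrm{ch}}=G'$ or in $G$. In (iii), since $Z$ is $\mathcal{F}$-primitive and $K$ is a closed Lie ideal of $\mathcal{L}$ with $K\subseteq Z$ and $\dim(Z/K)<\infty$, Lemma \ref{prim}(ii) yields $K\in\mathrm{Prim}_{\mathcal{F}}(\mathcal{L})=G\cup G'$. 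The only delicate point in this scheme is the passage from $\mathcal{F}$-semisimplicity of $\mathcal{L}$ to $\mathfrak{S}_Z\neq\varnothing$ for an arbitrary non-commutative Lie ideal $Z$, which rests on the hereditary behaviour of $\mathbf{Sem}(\mathcal{F})$ encoded in Corollary \ref{free3}; once that is secured everything else is a routine assembly of the cited lemmas.
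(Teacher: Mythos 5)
Your proof is correct and follows essentially the same route as the paper: reduce each part to producing, inside a nonzero non-commutative closed Lie ideal $Z$ of the $\mathcal{F}$-semisimple algebra $\mathcal{L}$, a proper closed subalgebra of finite codimension, and then upgrade it to a (characteristic) Lie ideal of $\mathcal{L}$ of finite codimension in $Z$ via Theorem \ref{KST1}(ii) and Lemma \ref{L3.1}. The only cosmetic differences are that the paper gets the finite-codimensional subalgebra of $Z$ directly from Theorem \ref{free}(v) where you use Corollary \ref{free3} together with $\mathbf{Rad}(\mathcal{F})=\mathbf{Rad}(P_{\mathfrak{S}})$, and that you re-derive inline what the paper cites as Corollary \ref{C3.1}.
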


\begin{proof}
Let $J$ be a non-commutative Lie ideal of $\mathcal{L}$ and $\dim J=\infty$.
By Theorem \ref{free}(v), $J$ has a proper subalgebra of finite codimension.
By Corollary \ref{C3.1}, it contains a closed Lie ideal $I$ of $\mathcal{L}$
that has non-zero finite codimension in $J$. Part (i) is proved.

If $J$ is characteristic then, by Corollary \ref{C3.1}, $I$ is also
characteristic. This proves (ii).

Let $J\in\mathrm{Prim}_{\mathcal{F}}\left(  \mathcal{L}\right)  \diagdown
\mathcal{A}_{\mathcal{L}}.$ Then, by (i), $\mathcal{L}$ has a closed Lie ideal
$I$ such that $I\subsetneqq J$ and $\dim(J/I)<\infty.$ By Lemma \ref{prim}%
(ii), $I\in\mathrm{Prim}_{\mathcal{F}}\left(  \mathcal{L}\right)
\subseteq(\mathrm{Prim}_{\mathcal{F}}\left(  \mathcal{L}\right)
\diagdown\mathcal{A}_{\mathcal{L}}^{\mathrm{Prim}})\cup\mathcal{A}%
_{\mathcal{L}}^{\mathrm{Prim}}.$
\end{proof}

\subsection{Strongly Frattini-semisimple Banach Lie algebras}

Theorem \ref{free} gives us a satisfactory description of $\mathcal{F}%
$-semisimple Lie algebras in terms of lower finite-gap chains of Lie
subalgebras and 2-step subideals. These algebras may also have lower
finite-gap chains of Lie ideals. However, there are $\mathcal{F}$-semisimple
algebras where these chains do not stretch from $\mathcal{L}$ to $\{0\}$.

Indeed, the Lie algebra $\mathcal{L}=M\oplus^{\text{id}}X$ in Example
\ref{E3}(i) is $\mathcal{F}$-semisimple (see Example \ref{E7.1}(iii)) and its
Lie ideal $J=\{0\}\oplus^{\text{id}}X$ is infinite-dimensional, commutative
and contained in each non-zero Lie ideal of $\mathcal{L}.$ Hence if $C$ is a
maximal lower finite-gap chains of Lie ideals of $\mathcal{L}$ then
$\frak{p}(C)=J.$ Thus $C$ does not continue to $\{0\}.$

\begin{definition}
A Banach Lie algebra $\mathcal{L}$ is strongly\textbf{ }\textit{$\mathcal{F}%
$-semisimple} \emph{(}\textit{Frattini-semisimple}\emph{) }if there is a
complete\emph{,} lower finite-gap chain of closed Lie ideals of $\mathcal{L}$
between $\{0\}$ and $\mathcal{L}$.
\end{definition}

We will see later that each $\mathcal{F}$-semisimple Banach Lie algebra
$\mathcal{L}$ contains a characteristic commutative Lie ideal $J$ such that
$\mathcal{L}/J$ is strongly $\mathcal{F}$-semisimple. Therefore, for Lie
algebras without commutative Lie ideals, these two notions coincide. Thus the
presence of the commutative Lie ideal $J=\{0\}\oplus^{\text{id}}X$ in the
above example is not incidental.

The following result shows that one can define strongly $\mathcal{F}%
$-semisimple Lie algebras as algebras with complete, lower finite-gap chains
of $\mathcal{F}$-\textit{primitive} Lie ideals between $\{0\}$ and
$\mathcal{L}$.

\begin{theorem}
\label{strong}Let $\mathcal{L}$ be a Banach Lie algebra. Then the following
conditions are equivalent\emph{.}

\begin{itemize}
\item [$\mathrm{(i)}$]$\mathcal{L}$ is strongly $\mathcal{F}$-semisimple.

\item[$\mathrm{(ii)}$] $\mathcal{L}$ has a complete\emph{,} lower finite-gap
chain of $\mathcal{F}$-primitive ideals between $\{0\}$ and $\mathcal{L}$.

\item[$\mathrm{(iii)}$] The set \emph{Lid}$(\mathcal{L)}$ of all closed Lie
ideals of $\mathcal{L}$ is a lower finite-gap family.

\item[$\mathrm{(iv)}$] The set $\mathrm{Prim}_{\mathcal{F}}\left(
\mathcal{L}\right)  $ is a lower finite-gap family containing $\{0\}$.
\end{itemize}
\end{theorem}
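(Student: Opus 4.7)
The plan is to prove the equivalence by a short cyclic argument built on two pieces of machinery already developed in the paper: Lemma \ref{prim}, which identifies the structure of $\mathrm{Prim}_{\mathcal{F}}(\mathcal{L})$ and links complete lower finite-gap chains of closed Lie ideals to $\mathcal{F}$-primitivity, and Theorem \ref{T2.2}, which characterizes $\frak{p}$-complete lower finite-gap families in terms of maximal lower finite-gap chains with the correct endpoints. The key background observations I would record at the outset are: the set $\mathrm{Lid}(\mathcal{L})$ is $\frak{p}$-complete with $\frak{p}(\mathrm{Lid}(\mathcal{L})) = \{0\}$ and $\frak{s}(\mathrm{Lid}(\mathcal{L})) = \mathcal{L}$; and $\mathrm{Prim}_{\mathcal{F}}(\mathcal{L})$ is $\frak{p}$-complete with $\frak{s}(\mathrm{Prim}_{\mathcal{F}}(\mathcal{L})) = \mathcal{L}$ by Lemma \ref{prim}(iv).

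For (i) $\Longleftrightarrow$ (ii): the reverse direction is immediate since every $\mathcal{F}$-primitive ideal is a closed Lie ideal. For the forward direction, if $C$ is a complete lower finite-gap chain of closed Lie ideals between $\{0\}$ and $\mathcal{L}$, Lemma \ref{prim}(iii) (applied with $\frak{s}(C)=\mathcal{L}$) says every member of $C$ is $\mathcal{F}$-primitive, so $C$ already witnesses (ii).

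For (i) $\Longleftrightarrow$ (iii): given the chain $C$ in (i), I would apply Lemma \ref{L2.2}(iv) inside the $\frak{p}$-complete family $\mathrm{Lid}(\mathcal{L})$ (with $C_{0}=C$ and $\frak{s}(C_{0}) = \mathcal{L} = \frak{s}(\mathrm{Lid}(\mathcal{L}))$) to obtain a maximal lower finite-gap chain containing $C$; its $\frak{p}$ is sandwiched between $\{0\}$ and $\frak{p}(C) = \{0\}$, so Theorem \ref{T2.2} yields that $\mathrm{Lid}(\mathcal{L})$ is a lower finite-gap family, proving (iii). Conversely, if (iii) holds, Theorem \ref{T2.2} delivers a maximal lower finite-gap chain $C$ in $\mathrm{Lid}(\mathcal{L})$ with $\frak{p}(C)=\frak{p}(\mathrm{Lid}(\mathcal{L}))=\{0\}$ and $\frak{s}(C)=\mathcal{L}$; by Lemma \ref{L2.2}(ii) a) this chain is automatically complete, so (i) holds.

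For (ii) $\Longleftrightarrow$ (iv): the same scheme runs verbatim with $\mathrm{Prim}_{\mathcal{F}}(\mathcal{L})$ replacing $\mathrm{Lid}(\mathcal{L})$. Note that $\{0\}\in\mathrm{Prim}_{\mathcal{F}}(\mathcal{L})$ is equivalent to $\frak{p}(\mathrm{Prim}_{\mathcal{F}}(\mathcal{L}))=\{0\}$, which is in turn equivalent to $\mathcal{F}(\mathcal{L})=\{0\}$ by Lemma \ref{prim}(iv); in both directions this forces $\mathcal{L}\in\mathbf{Sem}(\mathcal{F})$, which the data in (ii) or (iv) provide. Assuming (ii), Lemma \ref{L2.2}(iv) extends the given chain to a maximal lower finite-gap chain inside $\mathrm{Prim}_{\mathcal{F}}(\mathcal{L})$ with $\frak{p}=\{0\}$, $\frak{s}=\mathcal{L}$, and Theorem \ref{T2.2} gives (iv). Assuming (iv), Theorem \ref{T2.2} supplies a maximal lower finite-gap chain with $\frak{p}=\{0\}$ and $\frak{s}=\mathcal{L}$, which is complete by Lemma \ref{L2.2}(ii) a); this is exactly (ii).

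The argument is essentially a bookkeeping exercise assembling results already in place, so there is no substantial obstacle; the only point demanding care is verifying the boundary values $\frak{p}$ and $\frak{s}$ of the relevant families before each appeal to Theorem \ref{T2.2}, in particular that $\mathcal{L}$ itself lies in both $\mathrm{Lid}(\mathcal{L})$ and $\mathrm{Prim}_{\mathcal{F}}(\mathcal{L})$ so that $\frak{s}$ equals $\mathcal{L}$ in each case.
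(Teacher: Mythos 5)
Your proposal is correct and follows essentially the same route as the paper: the paper likewise notes that $\mathrm{Lid}(\mathcal{L})$ and $\mathrm{Prim}_{\mathcal{F}}(\mathcal{L})$ are $\frak{p}$-complete, obtains (i) $\Rightarrow$ (ii) from Lemma \ref{prim}(iii) with the converse being trivial, and derives (iii) $\Leftrightarrow$ (i) and (iv) $\Leftrightarrow$ (ii) from Theorem \ref{T2.2}. Your extra care about the boundary values $\frak{p}$ and $\frak{s}$ is exactly the bookkeeping the paper leaves implicit.
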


\begin{proof}
The set Lid($\mathcal{L)}$ is $\frak{p}$-complete. The set Prim$_{\mathcal{F}%
}(\mathcal{L)}$ is $\frak{p}$-complete by Lemma \ref{prim}(iv); (i)
$\Longrightarrow$ (ii) follows from Lemma \ref{prim}(iii); (ii)
$\Longrightarrow$ (i) is obvious; (iii) $\Longleftrightarrow$ (i) and (iv)
$\Longleftrightarrow$ (ii) follow from Theorem \ref{T2.2}.
\end{proof}

Strongly Frattini-semisimple algebras can be characterized in the class of
Frattini-semisimple algebras by the structure of the sets of their commutative ideals.

\begin{theorem}
\label{stron}Let $\mathcal{L}\in\frak{L}$. Then the following conditions are equivalent.

\begin{itemize}
\item [$\mathrm{(i)}$]$\mathcal{L}$ is strongly $\mathcal{F}$-semisimple.

\item[$\mathrm{(ii)}$] The set $\mathcal{A}_{\mathcal{L}}$ of all closed
commutative Lie ideals of $\mathcal{L}$ is a lower finite-gap family.

\item[$\mathrm{(iii)}$] The set $\mathcal{A}_{\mathcal{L}}^{\mathrm{Prim}%
}=\mathcal{A}_{\mathcal{L}}\cap\mathrm{Prim}_{\mathcal{F}}\left(
\mathcal{L}\right)  $ is a lower finite-gap family.

\item[$\mathrm{(iv)}$] $\mathcal{L}$ has a complete\emph{,} lower finite-gap
chain of closed Lie ideals between $\{0\}$ and $\frak{s}\left(  \mathcal{A}%
_{\mathcal{L}}\right)  $.

\item[$\mathrm{(v)}$] $\mathcal{L}$ has a complete\emph{,} lower finite-gap
chain of closed ideals between $\{0\}$ and $\frak{s}\left(  \mathcal{A}%
_{\mathcal{L}}^{\mathrm{Prim}}\right)  $.
\end{itemize}
\end{theorem}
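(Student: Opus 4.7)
My plan is to establish the equivalences by two parallel cycles: (i) $\Leftrightarrow$ (ii) $\Leftrightarrow$ (iv) and (i) $\Leftrightarrow$ (iii) $\Leftrightarrow$ (v). The preliminary observation that drives everything is that both $\mathcal{A}_{\mathcal{L}}$ and $\mathcal{A}_{\mathcal{L}}^{\mathrm{Prim}}$ are automatically $\mathfrak{p}$-complete: the intersection of any family of commutative closed Lie ideals is commutative (being a subalgebra of each), and the intersection of $\mathcal{F}$-primitive ideals is $\mathcal{F}$-primitive by Theorem \ref{primgen}(i). Since $\{0\}$ is a commutative closed Lie ideal, $\mathfrak{p}(\mathcal{A}_{\mathcal{L}})=\{0\}$, and once $\mathcal{F}$-semisimplicity is established we also have $\mathfrak{p}(\mathcal{A}_{\mathcal{L}}^{\mathrm{Prim}})=\{0\}$.

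The equivalences (ii) $\Leftrightarrow$ (iv) and (iii) $\Leftrightarrow$ (v) will follow directly from Theorem \ref{T2.2}: a $\mathfrak{p}$-complete family is lower finite-gap iff it admits a maximal lower finite-gap chain with the same $\mathfrak{p}$ and $\mathfrak{s}$, which is automatically complete by Lemma \ref{L2.2}(ii)(a), and whose elements are by construction closed Lie ideals of $\mathcal{L}$. Conversely, from a chain $C$ as in (iv), for any $Z\in\mathcal{A}_{\mathcal{L}}$ with $Z\neq\{0\}$, Corollary \ref{pi} shows that $C\cap Z$ is a $\mathfrak{p}$-complete lower finite-gap family in $Z$ with $\mathfrak{p}(C\cap Z)=\{0\}$ and $\mathfrak{s}(C\cap Z)=Z$, producing a strictly smaller $Y\in\mathcal{A}_{\mathcal{L}}$ with $\dim(Z/Y)<\infty$. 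For (i) $\Rightarrow$ (ii) and (i) $\Rightarrow$ (iii), I would invoke Theorem \ref{strong} — under (i) the family $\mathrm{Lid}(\mathcal{L})$ (respectively $\mathrm{Prim}_{\mathcal{F}}(\mathcal{L})$) is lower finite-gap — and note that the smaller ideal produced inside a commutative $Z$ is automatically commutative, hence inherits $\mathcal{F}$-primitivity from the ambient family when needed.

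The heart of the argument is the converse (ii) $\Rightarrow$ (i), and analogously (iii) $\Rightarrow$ (i). The main tool is Proposition \ref{strongl}(i) (resp.\ (iii)), which states that $\mathrm{Lid}(\mathcal{L})\setminus\mathcal{A}_{\mathcal{L}}$ (resp.\ $\mathrm{Prim}_{\mathcal{F}}(\mathcal{L})\setminus\mathcal{A}_{\mathcal{L}}^{\mathrm{Prim}}$) is a lower finite-gap family modulo the commutative part, whenever $\mathcal{L}$ is $\mathcal{F}$-semisimple. Combined with hypothesis (ii) (resp.\ (iii)) via Lemma \ref{mod}, this yields that $\mathrm{Lid}(\mathcal{L})$ (resp.\ $\mathrm{Prim}_{\mathcal{F}}(\mathcal{L})$) itself is lower finite-gap, and (i) is then delivered by Theorem \ref{strong}. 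The principal obstacle is verifying the standing hypothesis of Proposition \ref{strongl}, namely that (ii) or (iii) forces $\mathcal{F}(\mathcal{L})=\{0\}$. I expect to handle this by first passing to (iv) (respectively (v)) and observing that the chain there makes $A=\mathfrak{s}(\mathcal{A}_{\mathcal{L}})$ $\mathcal{F}$-semisimple by Theorem \ref{free}; then a centre-of-preimage argument shows that any closed Lie ideal $I\supsetneq A$ with $I/A$ commutative would, via $Z_{I}=\{x\in I:[x,I]=0\}$ (which I will verify is a commutative closed Lie ideal of $\mathcal{L}$ using the Jacobi identity), contribute a commutative closed Lie ideal outside $A$, contradicting the definition of $A$; combining this with Lemma \ref{L-sem}(v) for the balanced lower stable radical $\mathcal{F}$ then propagates $\mathcal{F}$-semisimplicity from $A$ and $\mathcal{L}/A$ back to $\mathcal{L}$.
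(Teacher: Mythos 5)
Your main line for the hard implications --- Lemma \ref{mod} combined with Proposition \ref{strongl} and Theorem \ref{strong} --- is exactly the paper's argument, and your treatment of (iv)$\Rightarrow$(ii) via Corollary \ref{pi} matches the paper's as well. But two of your steps fail. First, the attempt to extract $\mathcal{F}(\mathcal{L})=\{0\}$ from (ii) or (iii) cannot succeed: an infinite-dimensional topologically simple Banach Lie algebra satisfies (ii) and (iii) vacuously (there $\mathcal{A}_{\mathcal{L}}=\{\{0\}\}$ and $\mathcal{A}_{\mathcal{L}}^{\mathrm{Prim}}=\varnothing$) yet is $\mathcal{F}$-radical and hence not strongly $\mathcal{F}$-semisimple. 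The theorem has to be read with the standing hypothesis $\mathcal{L}\in\mathbf{Sem}(\mathcal{F})$ announced in the sentence preceding it; the paper's own proof uses this silently when it invokes Proposition \ref{strongl}. Your specific mechanism also breaks down on its own terms: the centre $Z_{I}$ of an ideal $I$ with $[I,I]\subseteq A$ can lie entirely inside $A$ (take $I$ of Heisenberg type with $A=[I,I]$ equal to its centre), so no commutative ideal outside $A$ is produced; and even if you knew that $\mathcal{L}/A$ has no non-zero commutative closed Lie ideals, that would not place $\mathcal{L}/A$ in $\mathbf{Sem}(\mathcal{F})$ (topologically simple algebras again), so Lemma \ref{L-sem}(v) cannot be applied.

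Second, your preliminary claim that $\mathcal{A}_{\mathcal{L}}$ is automatically $\frak{p}$-complete is false: by the paper's definition, $\frak{p}$-completeness of $G$ requires $\frak{s}(G)\in G$, and the closed span of commutative Lie ideals need not be commutative (two commutative ideals of the $3$-dimensional Heisenberg algebra already span the whole algebra). This is not cosmetic, because it is precisely where (ii)$\Rightarrow$(iv) fails as a ``direct'' application of Theorem \ref{T2.2}: the chain in (iv) consists of arbitrary closed Lie ideals and must climb from $\{0\}$ up to the possibly non-commutative subspace $\frak{s}(\mathcal{A}_{\mathcal{L}})$, and hypothesis (ii) by itself supplies no closed Lie ideal of $\mathcal{L}$ of finite non-zero codimension sitting below $\frak{s}(\mathcal{A}_{\mathcal{L}})$. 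The paper obtains (iv) and (v) only after (i) is established, by applying Lemma \ref{L2.2} to the family of \emph{all} closed Lie ideals contained in $\frak{s}(\mathcal{A}_{\mathcal{L}})$, which is lower finite-gap by Theorem \ref{strong}(iii). Your scheme should therefore be reorganized as (i)$\Rightarrow$(ii)$\Rightarrow$(iii)$\Rightarrow$(i) together with (i)$\Rightarrow$(iv)$\Rightarrow$(ii) and (i)$\Rightarrow$(v)$\Rightarrow$(iii), with the Frattini-semisimplicity of $\mathcal{L}$ taken as a hypothesis rather than derived.
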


\begin{proof}
(i) $\Longrightarrow$ (ii) follows from Theorem \ref{strong}(iii), and (ii)
$\Longrightarrow$ (iii) follows from Lemma \ref{prim}(ii).

(iii) $\Longrightarrow$ (i). It follows from Lemma \ref{mod} and Proposition
\ref{strongl} that $\mathrm{Prim}_{\mathcal{F}}\left(  \mathcal{L}\right)  $
is a lower finite-gap family. By Theorem \ref{strong}, $\mathcal{L}$ is
strongly $\mathcal{F}$-semisimple.

(i) $\Longrightarrow$ (iv) and (v). By Theorem \ref{strong}(iii),
Lid($\mathcal{L)}$ is a $\frak{p}$-complete, lower finite-gap family. Hence,
for any $J\in$ Lid($\mathcal{L),}$ the set Lid$_{J}$($\mathcal{L})=\{I\in$
Lid($\mathcal{L)}$: $I\subseteq J\}$ is a $\frak{p}$-complete, lower
finite-gap family. Thus (iv) and (v) follow from Lemma \ref{L2.2}.

(iv) $\Longrightarrow$ (ii). Lid$_{\frak{s}\left(  \mathcal{A}_{\mathcal{L}%
}\right)  }$($\mathcal{L})$ is a $\frak{p}$-complete family. If the required
chain exists then, by Theorem \ref{T2.2}, Lid$_{\frak{s}\left(  \mathcal{A}%
_{\mathcal{L}}\right)  }$($\mathcal{L})$ is a lower finite-gap family. As
$\mathcal{A}_{\mathcal{L}}\subseteq$ Lid$_{\frak{s}\left(  \mathcal{A}%
_{\mathcal{L}}\right)  }$($\mathcal{L}),$ we easily have that $\mathcal{A}%
_{\mathcal{L}}$ is a lower finite-gap family.

(v) $\Longrightarrow$ (iii). Replacing $\frak{s}\left(  \mathcal{A}%
_{\mathcal{L}}\right)  $ by $\frak{s}\left(  \mathcal{A}_{\mathcal{L}%
}^{\mathrm{Prim}}\right)  $ in (iv) $\Longrightarrow$ (ii) and using Lemma
\ref{prim}(ii), we obtain that $\mathcal{A}_{\mathcal{L}}^{\mathrm{Prim}}$ is
a lower finite-gap family.
\end{proof}

\begin{corollary}
\label{C8.1c}Let $\mathcal{L}\in\mathbf{Sem}(\mathcal{F)}$. If the set
$\mathcal{A}_{\mathcal{L}}^{\emph{ch}}$ of all closed commutative
characteristic Lie ideals of $\mathcal{L}$ is a lower finite-gap family\emph{,
}then $\mathcal{L}$ has a maximal\emph{,} lower finite-gap chain of
characteristic Lie ideals between $\{0\}$ and $\mathcal{L.}$
\end{corollary}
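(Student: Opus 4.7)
The plan is to apply Theorem \ref{T2.2} to the family $G:=\{J:J\vartriangleleft^{\text{ch}}\mathcal{L}\}$ of all closed characteristic Lie ideals of $\mathcal{L}$. The family $G$ is $\frak{p}$-complete, since intersections of characteristic Lie ideals are characteristic, and clearly $\frak{p}(G)=\{0\}$, $\frak{s}(G)=\mathcal{L}$ (both $\{0\}$ and $\mathcal{L}$ lie in $G$). Once I show that $G$ is a lower finite-gap family, Theorem \ref{T2.2} delivers a maximal, lower finite-gap chain $C\subseteq G$ with $\frak{p}(C)=\{0\}$ and $\frak{s}(C)=\mathcal{L}$, which is precisely the required chain of characteristic Lie ideals.

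To verify the lower finite-gap property, I fix $\{0\}\neq Z\in G$ and distinguish three cases according to the structure of $Z$ as a Lie algebra. First, if $Z$ is commutative, then $Z\in\mathcal{A}_{\mathcal{L}}^{\text{ch}}$; since $\{0\}\in\mathcal{A}_{\mathcal{L}}^{\text{ch}}$ forces $\frak{p}(\mathcal{A}_{\mathcal{L}}^{\text{ch}})=\{0\}\neq Z$, the assumption that $\mathcal{A}_{\mathcal{L}}^{\text{ch}}$ is a lower finite-gap family produces some $Y\in\mathcal{A}_{\mathcal{L}}^{\text{ch}}\subseteq G$ with $Y\subsetneqq Z$ and $0<\dim(Z/Y)<\infty$. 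Second, if $Z$ is non-commutative and finite-dimensional, it suffices to take $Y=\{0\}\in G$. Third, if $Z$ is non-commutative with $\dim Z=\infty$, the hypothesis $\mathcal{L}\in\mathbf{Sem}(\mathcal{F})$ together with Corollary \ref{free3} gives $Z\in\mathbf{Sem}(\mathcal{F})$, so Theorem \ref{free}(v) guarantees that $Z$ has a proper closed Lie subalgebra of finite codimension. Corollary \ref{C3.1}, applied to the non-commutative infinite-dimensional characteristic Lie ideal $Z$ of $\mathcal{L}$, then supplies a $Y\vartriangleleft^{\text{ch}}\mathcal{L}$ with $Y\subsetneqq Z$ and $0<\dim(Z/Y)<\infty$, and in particular $Y\in G$.

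In each case a suitable $Y$ is produced, so $G$ is a lower finite-gap family and the conclusion follows from Theorem \ref{T2.2}. The only delicate step is the third case: it depends on the hereditarity of $\mathbf{Sem}(\mathcal{F})$ under passage to closed subalgebras (Corollary \ref{free3}) in order to invoke Theorem \ref{free}(v) inside $Z$, after which Corollary \ref{C3.1} simultaneously packages the finite-codimensional characteristic ideal construction of Theorem \ref{KST1}(ii) and the characteristicity transfer supplied by Lemma \ref{L3.1}(ii). All other ingredients are bookkeeping about closure of characteristic ideals under intersections.
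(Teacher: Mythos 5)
Your proof is correct and follows essentially the same route as the paper: the paper packages your first and third cases into Proposition \ref{strongl}(ii) (whose proof is exactly your Corollary \ref{free3} + Theorem \ref{free}(v) + Corollary \ref{C3.1} argument) together with Lemma \ref{mod}, and then concludes via the same $\frak{p}$-completeness observation and Lemma \ref{L2.2}. Your explicit treatment of the finite-dimensional non-commutative case (taking $Y=\{0\}$) is a detail the paper leaves implicit, so nothing is missing.
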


\begin{proof}
If $\mathcal{A}_{\mathcal{L}}^{\text{ch}}$ is a lower finite-gap family, we
have from Lemma \ref{mod} and Proposition \ref{strongl} that the set
Lid$^{\text{ch}}(\mathcal{L)}$ of all closed characteristic Lie ideals of
$\mathcal{L}$ is a lower finite-gap family. As $\mathcal{L,}\{0\}\in$
Lid$^{\text{ch}}(\mathcal{L)}$ and the intersection of any subfamily of
characteristic Lie ideals is also a characteristic Lie ideal, Lid$^{\text{ch}%
}(\mathcal{L)}$ is $\frak{p}$-complete. Applying Lemma \ref{L2.2}, we complete
the proof.
\end{proof}

Let $G$ be a family of closed subspaces in a Banach space $X.$ A subspace
$Y\in G$ is called\textit{ lower essential in }$G$ if the set%
\[
G_{-}(Y)=\{Z\in G:Z\subsetneqq Y\}\neq\varnothing\text{ and }\dim
(Y/Z)=\infty\text{ for each }Z\in G_{-}(Y).
\]
Denote by $\mathrm{Ess}_{l}\left(  G\right)  $ the set of all lower essential
subspaces $Y$ in $G$.

\begin{corollary}
\label{stronc}$\mathcal{L}$ is strongly $\mathcal{F}$-semisimple if and only
if $\mathcal{A}_{\mathcal{L}}^{\mathrm{Prim}}\cap\mathrm{Ess}_{l}\left(
\mathcal{A}_{\mathcal{L}}\right)  =\varnothing.$
\end{corollary}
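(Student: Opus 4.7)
The plan is to reduce the claim to Theorem \ref{stron}, which characterises strong $\mathcal{F}$-semisimplicity as the condition that $\mathcal{A}_{\mathcal{L}}^{\mathrm{Prim}}$ be a lower finite-gap family. Granted this reduction, the corollary is equivalent to the set-theoretic assertion that $\mathcal{A}_{\mathcal{L}}^{\mathrm{Prim}}$ is a lower finite-gap family if and only if no element of $\mathcal{A}_{\mathcal{L}}^{\mathrm{Prim}}$ is lower essential in the larger family $\mathcal{A}_{\mathcal{L}}$. I would verify this equivalence by unpacking both definitions carefully.

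For the forward direction I would first note that a strongly $\mathcal{F}$-semisimple algebra is in particular $\mathcal{F}$-semisimple (a chain of closed Lie ideals is a chain of closed Lie subalgebras, so Theorem \ref{free} applies), hence $\{0\}\in\mathcal{A}_{\mathcal{L}}^{\mathrm{Prim}}$ and $\mathfrak{p}(\mathcal{A}_{\mathcal{L}}^{\mathrm{Prim}})=\{0\}$. Suppose some $Y\in\mathcal{A}_{\mathcal{L}}^{\mathrm{Prim}}\cap\mathrm{Ess}_{l}(\mathcal{A}_{\mathcal{L}})$ existed; since $Y$ properly contains some element of $\mathcal{A}_{\mathcal{L}}$ we have $Y\neq\{0\}$, so the lower finite-gap property of $\mathcal{A}_{\mathcal{L}}^{\mathrm{Prim}}$ furnishes $Z\in\mathcal{A}_{\mathcal{L}}^{\mathrm{Prim}}\subseteq\mathcal{A}_{\mathcal{L}}$ with $Z\subsetneqq Y$ and $0<\dim(Y/Z)<\infty$. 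This immediately contradicts $Y\in\mathrm{Ess}_{l}(\mathcal{A}_{\mathcal{L}})$.

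For the reverse direction, assume $\mathcal{A}_{\mathcal{L}}^{\mathrm{Prim}}\cap\mathrm{Ess}_{l}(\mathcal{A}_{\mathcal{L}})=\varnothing$ and take an arbitrary $Y\in\mathcal{A}_{\mathcal{L}}^{\mathrm{Prim}}$ with $Y\neq\mathfrak{p}(\mathcal{A}_{\mathcal{L}}^{\mathrm{Prim}})$. First I would exhibit an element of $\mathcal{A}_{\mathcal{L}}$ strictly smaller than $Y$: choose $W\in\mathcal{A}_{\mathcal{L}}^{\mathrm{Prim}}$ not containing $Y$; then $W\cap Y$ is commutative (being contained in $Y$) and strictly contained in $Y$, so $G_{-}(Y)\neq\varnothing$. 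Since $Y\notin\mathrm{Ess}_{l}(\mathcal{A}_{\mathcal{L}})$, there must exist $Z\in\mathcal{A}_{\mathcal{L}}$ with $Z\subsetneqq Y$ and $\dim(Y/Z)<\infty$. The key step is then to upgrade $Z$ to a member of $\mathcal{A}_{\mathcal{L}}^{\mathrm{Prim}}$: this is precisely Lemma \ref{prim}(ii), which says that a closed Lie ideal of $\mathcal{L}$ of finite codimension in an $\mathcal{F}$-primitive ideal is itself $\mathcal{F}$-primitive. Thus $Z\in\mathcal{A}_{\mathcal{L}}^{\mathrm{Prim}}$ realises the lower finite-gap property at $Y$, and Theorem \ref{stron} concludes strong $\mathcal{F}$-semisimplicity.

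The only genuinely non-tautological ingredient will be this upgrading step in the reverse direction: the lower-essential hypothesis only gives a $Z$ in the bigger family $\mathcal{A}_{\mathcal{L}}$, whereas the conclusion requires membership in $\mathcal{A}_{\mathcal{L}}^{\mathrm{Prim}}$. Lemma \ref{prim}(ii) is tailor-made for this transfer (with commutativity of $Z$ inherited from $Y$ for free), so once that lemma is invoked the rest of the argument is a routine comparison of definitions.
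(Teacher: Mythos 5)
Your proof is correct and follows essentially the same route as the paper: both directions reduce to Theorem \ref{stron} (the paper's forward direction happens to go through condition (ii) on $\mathcal{A}_{\mathcal{L}}$ rather than your condition (iii) on $\mathcal{A}_{\mathcal{L}}^{\mathrm{Prim}}$, an immaterial difference), and the key step in the converse — upgrading the finite-codimension $Z\in\mathcal{A}_{\mathcal{L}}$ to an element of $\mathcal{A}_{\mathcal{L}}^{\mathrm{Prim}}$ via Lemma \ref{prim}(ii) — is exactly the paper's argument. If anything, your treatment of why $(\mathcal{A}_{\mathcal{L}})_{-}(Y)\neq\varnothing$ is slightly more careful than the paper's.
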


\begin{proof}
If $\mathcal{A}_{\mathcal{L}}^{\mathrm{Prim}}\cap\mathrm{Ess}_{l}\left(
\mathcal{A}_{\mathcal{L}}\right)  \neq\{0\}$ then $\mathcal{A}_{\mathcal{L}}$
is not a lower finite-gap family. By Theorem \ref{stron}, $\mathcal{L}$ is not
strongly $\mathcal{F}$-semisimple. Conversely, if $\mathcal{A}_{\mathcal{L}%
}^{\mathrm{Prim}}\cap\mathrm{Ess}_{l}\left(  \mathcal{A}_{\mathcal{L}}\right)
=\varnothing$ then, for each $Y\in\mathcal{A}_{\mathcal{L}}^{\mathrm{Prim}},$
$Y\neq\frak{p}(\mathcal{A}_{\mathcal{L}}),$ there is $Z\in\left(
\mathcal{A}_{\mathcal{L}}\right)  _{-}\left(  Y\right)  $ of finite
codimension in $Y$. By Lemma \ref{prim}(ii), $Z\in\mathcal{A}_{\mathcal{L}%
}^{\mathrm{Prim}}$ . Hence $\mathcal{A}_{\mathcal{L}}^{\mathrm{Prim}}$ is a
lower finite-gap family. By Theorem \ref{stron}, $\mathcal{L}$ is strongly
$\mathcal{F}$-semisimple.
\end{proof}

In two examples below $H$ is a Hilbert space with an orthonormal basis
$\{e_{i}\}_{i=1}^{\infty}$, and $H_{0}=\{0\}$ and $H_{n}=\sum_{i=1}^{n}%
\oplus\mathbb{C}e_{i}$, for $n>0$, are its finite-dimensional subspaces. In
the first example we consider a $\mathcal{D}$-semisimple (hence $\mathcal{F}%
$-semisimple) Banach Lie algebra $\mathcal{L}$ that has a commutative
$\mathcal{F}$-primitive ideal in $\mathrm{Ess}_{l}\left(  \mathcal{A}%
_{\mathcal{L}}\right)  $, so it is not strongly $\mathcal{F}$-semisimple by
Corollary \ref{stronc}.

\begin{example}
\emph{Consider the nest }$G=H\cup\{H_{n}\}_{n=0}^{\infty}$\emph{ --- a
complete chain of subspaces from }$\{0\}$\emph{ to }$H$\emph{. Let }$M=$
\emph{Alg}$(G)$\emph{ be the algebra of all operators in }$B\left(  H\right)
$\emph{ leaving each subspace from }$G$\emph{ invariant. Then }$M$\emph{ has a
chain of closed two-sided ideals }$I_{n}=\{T\in M:T|_{H_{n}}=0\}$\emph{ that
have finite codimension in }$M$\emph{ and }$\cap_{n}I_{n}=\{0\}$\emph{.}

\emph{Let }$\mathcal{L}=M\oplus^{\emph{id}}H$ \emph{(see (\ref{sem}))}.
\emph{For each }$n,$ $J_{n}:=I_{n}\oplus^{\emph{id}}H$ \emph{is a closed Lie
ideal of finite codimension and }$K:=\{0\}\oplus^{\emph{id}}H=\cap_{n}J_{n}%
$\emph{ is the largest commutative closed Lie ideal of }$\mathcal{L}$.
\emph{Then} $D^{n}(\mathcal{L})\subseteq J_{n}.$ \emph{Hence} $D^{\infty
}(\mathcal{L})=\cap_{n}D^{n}(\mathcal{L})=K,$ \emph{so that} $\mathcal{D}%
(\mathcal{L})=D(K)=\{0\}.$ \emph{Thus} $\mathcal{L}$ \emph{is} $\mathcal{D}%
$\emph{-semisimple and, hence, $\mathcal{F}$-semisimple. Apart from }$K,$
\emph{only} $K_{n}:=\{0\}\oplus^{\emph{id}}H_{n}$\emph{ for }$n\in
N\cup\left\{  0\right\}  $ \emph{are} \emph{the other commutative closed Lie
ideals of} $\mathcal{L}$\emph{. Then }$K\in\mathrm{Ess}_{l}\left(
\mathcal{A}_{\mathcal{L}}\right)  ,$ \emph{as }$\dim K/K_{n}=\infty$\emph{ for
all }$n\mathcal{.}$\emph{ As $\mathcal{L}/K\approx M$ is $\mathcal{F}%
$-semisimple, }$K$ \emph{is a $\mathcal{F}$-primitive ideal of} $\mathcal{L}$.
\emph{Thus} $K\in\mathcal{A}_{\mathcal{L}}^{\mathrm{Prim}}\cap\mathrm{Ess}%
_{l}\left(  \mathcal{A}_{\mathcal{L}}\right)  .$ \emph{By Corollary
\ref{stronc},} $\mathcal{L}$ \emph{is not strongly $\mathcal{F}$-semisimple.}
\end{example}

The algebra $\mathcal{L}$ in the next example is $\mathcal{D}$-radical and
strongly $\mathcal{F}$-semisimple.

\begin{example}
\emph{Modify the nest }$G$\emph{ in the example above as follows. Let
}$G=H\cup\{H_{2n}\}_{n=0}^{\infty}$\emph{. Let }$P_{n}$\emph{ be the
orthogonal projections on }$H_{2n}$\emph{ and }$Q_{n}=P_{n}-P_{n-1}$\emph{.
Let $\mathcal{L}$ be the Lie algebra of all compact operators }$T$\emph{
preserving }$G:$ $TP_{n}=P_{n}TP_{n},$\emph{ for all }$n$\emph{, and such that
Tr(}$Q_{n}TQ_{n})=0,$\emph{ for all }$n$\emph{. Let us check that }%
$\overline{[\mathcal{L},\mathcal{L}]}=\mathcal{L}$\emph{ whence }%
$\mathcal{D}(\mathcal{L})=\mathcal{L,}$ \emph{so that }$\mathcal{L}$\emph{ is
}$\mathcal{D}$\emph{-radical}.

\emph{For each} $n,$ \emph{set }$\mathcal{L}_{n}=\{T\in\mathcal{L:}$\emph{
}$T=P_{n}TP_{n}\}.$ \emph{For all }$T\in\mathcal{L,}$ \emph{we have }%
$TP_{n}\in\mathcal{L}_{n}$\emph{ and }$TP_{n}\rightarrow T.$\emph{ Hence
}$\cup_{n}\mathcal{L}_{n}$\emph{ is norm dense in }$\mathcal{L}$\emph{ and it
suffices to show that }$[\mathcal{L}_{n},\mathcal{L}_{n}]=\mathcal{L}_{n}%
$\emph{ for all }$n.$\emph{ Each }$T\in\mathcal{L}_{n}$\emph{ can be realized
as an upper triangular block-matrix }$T=(T_{ij})$\emph{ with entries }%
$T_{ij}=Q_{i}TQ_{j}$ \emph{in }$M_{2}(\mathbb{C})$\emph{ whose diagonal
entries }$T_{ii}$\emph{ belong to }$sl(2,\mathbb{C})$\emph{ and }$T_{ij}=0$
\emph{if }$i>n,$\emph{ or }$j>n.$

\emph{For }$k\leq m\leq n,$\emph{ the subspace} $\mathcal{L}_{n}%
^{km}=\{T=(T_{ij})\in\mathcal{L}_{n}:$ $T_{ij}=0$ \emph{if} $(i,j)\neq(k,m)\}$
\emph{of} $\mathcal{L}_{n}$ \emph{is isomorphic to }$M_{2}(\mathbb{C})$
\emph{if} $k\neq m,$ \emph{the Lie algebra }$\mathcal{L}_{n}^{kk}$ \emph{to}
$sl(2,\mathbb{C})$ \emph{and }$\mathcal{L}_{n}$ \emph{is the direct sum of
all} $\mathcal{L}_{n}^{km}.$ \emph{As} $[sl(2,\mathbb{C}),sl(2,\mathbb{C}%
)]=sl(2,\mathbb{C})$ \emph{and} $sl(2,\mathbb{C})M_{2}(\mathbb{C}%
)=M_{2}(\mathbb{C}),$ \emph{we have} $[\mathcal{L}_{n}^{kk},\mathcal{L}%
_{n}^{kk}]=\mathcal{L}_{n}^{kk}$ \emph{and} $[\mathcal{L}_{n}^{kk}%
,\mathcal{L}_{n}^{km}]=\mathcal{L}_{n}^{kk}\mathcal{L}_{n}^{km}=\mathcal{L}%
_{n}^{km}.$ \emph{Thus} $[\mathcal{L}_{n},\mathcal{L}_{n}]=\mathcal{L}_{n},$
\emph{so that }$\mathcal{L}$\emph{ is }$\mathcal{D}$\emph{-radical}.

\emph{Setting }$I_{n}=\{T\in\mathcal{L}:T|_{H_{2n}}=0\},$\emph{ we see that
all }$I_{n}$\emph{ are closed ideals of finite codimension in $\mathcal{L,}$
}$I_{n+1}\subseteq I_{n}$\emph{ and }$\cap_{n=1}^{\infty}I_{n}=\{0\}$\emph{,
so that $\mathcal{L}$ is strongly $\mathcal{F}$-semisimple}.
\end{example}

A closed Lie ideal $I$ of $\mathcal{L}\in\frak{L}$ is called \textit{strongly}
$\mathcal{F}$\textit{-primitive} (\textit{strongly Frattini-primitive}) if
$\mathcal{L}/I$ is strongly $\mathcal{F}$-semisimple. Denote by $\mathrm{Prim}%
_{\mathcal{F}}^{s}\left(  \mathcal{L}\right)  $ the set of all strongly
$\mathcal{F}$-primitive ideals of $\mathcal{L}$. Then $\mathrm{Prim}%
_{\mathcal{F}}^{s}\left(  \mathcal{L}\right)  \subseteq\mathrm{Prim}%
_{\mathcal{F}}\left(  \mathcal{L}\right)  ,$ for each $\mathcal{L}\in
\frak{L.}$ Set%
\begin{equation}
\mathcal{F}_{s}\left(  \mathcal{L}\right)  =\frak{p}\left(  \mathrm{Prim}%
_{\mathcal{F}}^{s}\left(  \mathcal{L}\right)  \right)  =\cap_{J\in
\mathrm{Prim}_{\mathcal{F}}^{s}\left(  \mathcal{L}\right)  }J. \label{8.3}%
\end{equation}
Then
\begin{equation}
\mathcal{F}\left(  \mathcal{L}\right)  \subseteq\mathcal{F}_{s}\left(
\mathcal{L}\right)  ,\text{ so that }\mathcal{F}\leq\mathcal{F}_{s}.
\label{8.4}%
\end{equation}
Clearly $\mathcal{F}_{s}\left(  \mathcal{L}\right)  =\{0\}$ if and only if
$\mathcal{L}$ is strongly $\mathcal{F}$-semisimple.

The following statement is similar to Lemma \ref{prim}.

\begin{lemma}
\label{sprim}Let $\mathcal{L}$ be a Banach Lie algebra$\frak{.}$

\emph{(i)} A closed Lie ideal $I$ of $\mathcal{L}$ is strongly $\mathcal{F}%
$-primitive if and only if there is a complete\emph{,} lower finite-gap chain
of closed Lie ideals between $I$ and $\mathcal{L}$.

\emph{(ii) }Let $I,J$ be closed Lie ideal of $\mathcal{L,}$ $J\subseteq I$ and
$\dim(I/J)<\infty.$ If $I$ is strongly $\mathcal{F}$-primitive then $J$ is
strongly $\mathcal{F}$-primitive.

\emph{(iii) }Each complete\emph{,} lower finite-gap chain $C$ of closed Lie
ideals of $\mathcal{L}$ with $\frak{s}\left(  C\right)  =\mathcal{L}$ consists
of strongly $\mathcal{F}$-primitive ideals of $\mathcal{L}$.

\emph{(iv) }The set $\mathrm{Prim}_{\mathcal{F}}^{s}\left(  \mathcal{L}%
\right)  $ is $\frak{p}$-complete\emph{, }lower finite-gap family\emph{,}
$\frak{s}(\mathrm{Prim}_{\mathcal{F}}^{s}\left(  \mathcal{L}\right)
)=\mathcal{L}$ and $\mathcal{F}_{s}\left(  \mathcal{L}\right)  $ is the
smallest strongly $\mathcal{F}$-primitive Lie ideal of $\mathcal{L.}$

\emph{(v) }Let $\mathcal{M}$ be a closed Lie subalgebra of $\mathcal{L.}$ If
$I\in\mathrm{Prim}_{\mathcal{F}}^{s}\left(  \mathcal{L}\right)  $ then
$I\cap\mathcal{M}\in\mathrm{Prim}_{\mathcal{F}}^{s}\left(  \mathcal{M}\right)  .$

\emph{(vi) }If $\mathcal{L}$ is a commutative Banach Lie algebra then
$\mathcal{F}_{s}(\mathcal{L})=\{0\}.$
\end{lemma}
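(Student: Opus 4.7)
The plan is to follow the structure of Lemma \ref{prim}, replacing the characterization of $\mathcal{F}$-semisimplicity via chains of closed Lie subalgebras (Theorem \ref{free}) by the parallel statement for strong $\mathcal{F}$-semisimplicity via chains of closed Lie ideals (the defining property itself and Theorem \ref{strong}). The substantive work is concentrated in part (iv); the other parts reduce either to (i)--(ii) or to the closed-subspace machinery of Section \ref{5e1}.

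For part (i), I will pass to $\mathcal{L}/I$ via the quotient $q:\mathcal{L}\to\mathcal{L}/I$. By definition, $\mathcal{L}/I$ is strongly $\mathcal{F}$-semisimple iff it carries a complete, lower finite-gap chain of closed Lie ideals from $\{0\}$ to $\mathcal{L}/I$; such a chain lifts through $q$ to a complete, lower finite-gap chain of closed Lie ideals of $\mathcal{L}$ from $I$ to $\mathcal{L}$ (preimages preserve chain gaps since they agree with the quotient in finite codimension), and conversely any such chain in $\mathcal{L}$ projects to one in $\mathcal{L}/I$. For part (ii), I will take the chain from (i) between $I$ and $\mathcal{L}$ and prepend $J$ at the bottom; the resulting chain is still complete, lower finite-gap (its new gap $I/J$ is finite-dimensional by hypothesis), and runs from $J$ to $\mathcal{L}$, so (i) then gives that $J$ is strongly $\mathcal{F}$-primitive. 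For part (iii), for any $I\in C$ the sub-chain $\{K\in C:I\subseteq K\}$ is a complete, lower finite-gap chain of closed Lie ideals from $I$ to $\frak{s}(C)=\mathcal{L}$, and (i) applies.

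Part (iv) is the main obstacle. First, I will establish $\frak{p}$-completeness by a quotient argument: given $\{I_\alpha\}\subseteq\mathrm{Prim}_{\mathcal{F}}^s(\mathcal{L})$ with intersection $I$, set $\mathcal{M}=\mathcal{L}/I$ and $J_\alpha=I_\alpha/I$, so $\cap_\alpha J_\alpha=\{0\}$. By (i), $\mathcal{M}$ carries, for each $\alpha$, a complete, lower finite-gap chain $G_\alpha$ of closed Lie ideals from $J_\alpha$ to $\mathcal{M}$ (lift the chain witnessing strong $\mathcal{F}$-semisimplicity of $\mathcal{M}/J_\alpha\cong\mathcal{L}/I_\alpha$). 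Each $G_\alpha$ is $\frak{p}$-complete (by Lemma \ref{L2.2}(ii)), lower finite-gap, and contains $\mathcal{M}$, so Proposition \ref{ui} makes $G^{\frak{p}}$ (with $G=\cup_\alpha G_\alpha$) a $\frak{p}$-complete, lower finite-gap family of closed Lie ideals of $\mathcal{M}$ with $\frak{p}(G^{\frak{p}})=\cap_\alpha J_\alpha=\{0\}$ and $\frak{s}(G^{\frak{p}})=\mathcal{M}$; Theorem \ref{T2.2} then produces a complete, lower finite-gap chain of closed Lie ideals of $\mathcal{M}$ from $\{0\}$ to $\mathcal{M}$, so $\mathcal{M}$ is strongly $\mathcal{F}$-semisimple, i.e., $I\in\mathrm{Prim}_{\mathcal{F}}^s(\mathcal{L})$. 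Granted $\frak{p}$-completeness, $\mathcal{F}_s(\mathcal{L})=\frak{p}(\mathrm{Prim}_{\mathcal{F}}^s(\mathcal{L}))\in\mathrm{Prim}_{\mathcal{F}}^s(\mathcal{L})$ is the smallest element of this set, while $\frak{s}(\mathrm{Prim}_{\mathcal{F}}^s(\mathcal{L}))=\mathcal{L}$ is trivial. For the lower finite-gap property, fix $Z\in\mathrm{Prim}_{\mathcal{F}}^s(\mathcal{L})$ with $Z\neq\mathcal{F}_s(\mathcal{L})$; since $\mathcal{L}/\mathcal{F}_s(\mathcal{L})$ is strongly $\mathcal{F}$-semisimple, Theorem \ref{strong} makes $\mathrm{Lid}(\mathcal{L}/\mathcal{F}_s(\mathcal{L}))$ a lower finite-gap family, so the non-zero ideal $Z/\mathcal{F}_s(\mathcal{L})$ admits a strictly smaller closed Lie ideal $Y_0$ with $\dim((Z/\mathcal{F}_s(\mathcal{L}))/Y_0)<\infty$; lifting, $Y\vartriangleleft\mathcal{L}$ satisfies $Y\subsetneqq Z$ and $\dim(Z/Y)<\infty$, and (ii) upgrades $Y$ to a strongly $\mathcal{F}$-primitive ideal, completing the verification.

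Part (v) will use (i) together with Corollary \ref{pi}: fix a complete, lower finite-gap chain $C$ of closed Lie ideals of $\mathcal{L}$ from $I$ to $\mathcal{L}$; the family $C\cap\mathcal{M}=\{K\cap\mathcal{M}:K\in C\}$ is a chain of closed Lie ideals of $\mathcal{M}$ (since $[\mathcal{M},K\cap\mathcal{M}]\subseteq[\mathcal{M},K]\cap\mathcal{M}\subseteq K\cap\mathcal{M}$) from $I\cap\mathcal{M}$ to $\mathcal{M}$, inheriting $\frak{p}$-completeness and the lower finite-gap property from Corollary \ref{pi} (hence completeness by Lemma \ref{L2.2}(ii)); applying (i) in $\mathcal{M}$ gives $I\cap\mathcal{M}\in\mathrm{Prim}_{\mathcal{F}}^s(\mathcal{M})$. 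Finally, for part (vi), when $\mathcal{L}$ is commutative every closed subspace is a Lie ideal; the family $\frak{J}_{\mathcal{L}}$ of closed subspaces of finite codimension has $\frak{p}(\frak{J}_{\mathcal{L}})=\{0\}$ (Hahn--Banach) and $\frak{s}(\frak{J}_{\mathcal{L}})=\mathcal{L}$, Lemma \ref{L2.4}(i) makes $\frak{J}_{\mathcal{L}}^{\frak{p}}$ a lower finite-gap family, and Theorem \ref{T2.2} supplies a complete, lower finite-gap chain of closed Lie ideals from $\{0\}$ to $\mathcal{L}$, so by the definition of strong $\mathcal{F}$-semisimplicity, $\mathcal{F}_s(\mathcal{L})=\{0\}$.
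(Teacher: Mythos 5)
Your proof is correct and follows essentially the same route as the paper: parts (i)--(iii), (v) and (vi) are the ideal-analogues of Lemma \ref{prim}, and part (iv) rests on exactly the same machinery (Proposition \ref{ui}, Lemma \ref{L2.2}, Theorem \ref{T2.2}, together with parts (i)--(ii)). The only cosmetic differences are that you run the $\frak{p}$-completeness argument in the quotient $\mathcal{L}/I$ rather than directly in $\mathcal{L}$, and you obtain the lower finite-gap property of $\mathrm{Prim}_{\mathcal{F}}^{s}\left(  \mathcal{L}\right)$ from Theorem \ref{strong} applied to $\mathcal{L}/\mathcal{F}_{s}\left(  \mathcal{L}\right)$ instead of from the auxiliary family $\left(  \cup_{\lambda}C_{\lambda}\right)  ^{\frak{p}}$; both choices use only results established before the lemma, so no circularity arises.
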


\begin{proof}
Parts (i)-(iii), (v) can be proved in the same way as parts (i)-(iii), (v) in
Lemma \ref{prim}.

(iv) As $\mathcal{L}\in\mathrm{Prim}_{\mathcal{F}}^{s}\left(  \mathcal{L}%
\right)  $, we have $\frak{s}\left(  \mathrm{Prim}_{\mathcal{F}}^{s}\left(
\mathcal{L}\right)  \right)  =\mathcal{L}$. Let $G=\{I_{\lambda}\}_{\lambda
\in\Lambda}$ be a subfamily in $\mathrm{Prim}_{\mathcal{F}}^{s}\left(
\mathcal{L}\right)  $. By (i), for each $I_{\lambda},$ there is a complete,
lower finite-gap chain $C_{\lambda}$ of closed Lie ideals of $\mathcal{L}$
between $I_{\lambda}$ and $\mathcal{L}$. By Proposition \ref{ui},
$X_{G}:=\left(  \cup_{\lambda}C_{\lambda}\right)  ^{\frak{p}}$ is a lower
finite-gap family of closed Lie ideals of $\mathcal{L.}$ By Lemma \ref{L2.2},
$X_{G}$ has a complete, lower finite-gap chain $C$ of subspaces (i.e., closed
Lie ideals of $\mathcal{L}$) between $\frak{p}\left(  X_{G}\right)  $ and
$\mathcal{L}$. By (i), $\frak{p}\left(  X_{G}\right)  \in\mathrm{Prim}%
_{\mathcal{F}}^{s}\left(  \mathcal{L}\right)  .$ Also
\[
\frak{p}\left(  X_{G}\right)  =\frak{p}\left(  \left(  \cup_{\lambda
}C_{\lambda}\right)  ^{\frak{p}}\right)  =\cap_{\lambda}\frak{p}\left(
C_{\lambda}\right)  =\cap_{\lambda}I_{\lambda}=\frak{p}\left(  G\right)  .
\]
Thus $\frak{p}\left(  G\right)  \in\mathrm{Prim}_{\mathcal{F}}^{s}\left(
\mathcal{L}\right)  $. Therefore $\mathrm{Prim}_{\mathcal{F}}^{s}\left(
\mathcal{L}\right)  $ is $\frak{p}$-complete.

Take $G=\mathrm{Prim}_{\mathcal{F}}^{s}\left(  \mathcal{L}\right)  $ and let
$I\in\mathrm{Prim}_{\mathcal{F}}^{s}\left(  \mathcal{L}\right)  $. By the
above argument, $X_{G}$ is a lower finite-gap family and $\mathrm{Prim}%
_{\mathcal{F}}^{s}\left(  \mathcal{L}\right)  \subseteq X_{G}.$ Then there is
$J\in X_{G}$ such that $J\subsetneqq I$ and $\dim(I/J)<\infty.$ By (ii),
$J\in\mathrm{Prim}_{\mathcal{F}}^{s}\left(  \mathcal{L}\right)  $. Hence
$\mathrm{Prim}_{\mathcal{F}}^{s}\left(  \mathcal{L}\right)  $ is a lower
finite-gap family.

(vi) If $\mathcal{L}$ is commutative then, by (ii), each subspace of
$\mathcal{L}$ of finite codimension is a strongly $\mathcal{F}$-primitive Lie
ideal of $\mathcal{L}$. Hence, by (\ref{8.3}), $\mathcal{F}_{s}\left(
\mathcal{L}\right)  =\{0\}$.
\end{proof}

We will construct now some new examples of strongly $\mathcal{F}$-semisimple
Lie algebras as the normed direct products and the $c_{0}$-direct products of
strongly $\mathcal{F}$-semisimple Lie algebras. Let $\{\mathcal{L}_{\lambda
}\}_{\lambda\in\Lambda}$ be a family of Banach Lie algebras with a bounded set
of multiplication constants, let $\mathcal{L}=\oplus_{\Lambda}\mathcal{L}%
_{\lambda}$ and $\widehat{\mathcal{L}}=\widehat{\oplus}_{\Lambda}%
\mathcal{L}_{\lambda}$ (see (\ref{e3.1})). For $a=(a_{\lambda})_{\lambda
\in\Lambda}\in\mathcal{L}$, let $\psi_{\mu}(a)=a_{\mu}$, so $\psi_{\mu}$ is a
homomorphism from $\mathcal{L}$ to $\mathcal{L}_{\mu}$.

\begin{proposition}
\label{E2.4}\emph{(i)} If all $\mathcal{L}_{\lambda}$ are strongly
$\mathcal{F}$-semisimple then $\mathcal{L}$ and $\widehat{\mathcal{L}}$ are
strongly $\mathcal{F}$-semisimple.

\emph{(ii)} If all $\mathcal{L}_{\lambda}$ are finite-dimensional and
semisimple then

$\qquad a)$ $\mathcal{L}$ has a maximal lower finite-gap chain of
characteristic Lie ideals from $\{0\}$ to $\mathcal{L}$;

\qquad$b)$ $\widehat{\mathcal{L}}$ also has such a chain and is $\mathcal{D}$-radical.
\end{proposition}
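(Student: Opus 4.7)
For (i), my plan is to reduce strong $\mathcal{F}$-semisimplicity of $\mathcal{L}$ and $\widehat{\mathcal{L}}$ to the criterion $\mathcal{F}_s(\mathcal{L})=\{0\}$ noted immediately after (\ref{8.4}). For every $\lambda\in\Lambda$ the projection $\psi_\lambda$ is a bounded surjective homomorphism onto $\mathcal{L}_\lambda$, so $\mathcal{N}_\lambda:=\ker\psi_\lambda$ is a closed Lie ideal with $\mathcal{L}/\mathcal{N}_\lambda\cong\mathcal{L}_\lambda$; by hypothesis $\mathcal{L}_\lambda$ is strongly $\mathcal{F}$-semisimple, so $\mathcal{N}_\lambda\in\mathrm{Prim}_{\mathcal{F}}^{s}(\mathcal{L})$. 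Since an element of $\mathcal{L}$ vanishes exactly when all its coordinates do, $\bigcap_\lambda\mathcal{N}_\lambda=\{0\}$. Lemma~\ref{sprim}(iv) gives that $\mathrm{Prim}_{\mathcal{F}}^{s}(\mathcal{L})$ is $\mathfrak{p}$-complete, whence $\{0\}\in\mathrm{Prim}_{\mathcal{F}}^{s}(\mathcal{L})$ and $\mathcal{F}_s(\mathcal{L})=\{0\}$. Replacing $\psi_\lambda$ by $\widehat{\psi}_\lambda:\widehat{\mathcal{L}}\to\mathcal{L}_\lambda$ yields the same conclusion for $\widehat{\mathcal{L}}$.

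For (ii)(a), the key observation is that under the semisimplicity hypothesis $\mathcal{L}$ has no nonzero commutative Lie ideals. Indeed, if $I\vartriangleleft\mathcal{L}$ is commutative and $\mu\in\Lambda$, then the projection $\psi_\mu(I)$ is a commutative subspace of $\mathcal{L}_\mu$ that is in addition a Lie ideal: given $y=\psi_\mu(a)\in\psi_\mu(I)$ and $z\in\mathcal{L}_\mu$, the embedded lift $b\in\mathcal{L}$ with $\psi_\mu(b)=z$ gives $[y,z]=\psi_\mu([a,b])\in\psi_\mu(I)$. Since $\mathcal{L}_\mu$ is semisimple, $\psi_\mu(I)=\{0\}$ for every $\mu$, so $I=\{0\}$. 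In particular $\mathcal{A}_{\mathcal{L}}^{\mathrm{ch}}\subseteq\{\{0\}\}$, which is trivially a lower finite-gap family. Each finite-dimensional semisimple $\mathcal{L}_\lambda$ is strongly $\mathcal{F}$-semisimple (the trivial chain $\{0\}\subset\mathcal{L}_\lambda$ is complete and lower finite-gap), so part (i) and (\ref{8.4}) place $\mathcal{L}$ in $\mathbf{Sem}(\mathcal{F})$. Corollary~\ref{C8.1c} then produces the desired maximal, lower finite-gap chain of characteristic Lie ideals from $\{0\}$ to $\mathcal{L}$.

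The chain claim in (ii)(b) is obtained by the same recipe applied to $\widehat{\mathcal{L}}$: the projections $\widehat{\psi}_\mu$ remain surjective, so the preceding argument shows that $\widehat{\mathcal{L}}$ has no nonzero commutative Lie ideals and lies in $\mathbf{Sem}(\mathcal{F})$, so that Corollary~\ref{C8.1c} applies. For the $\mathcal{D}$-radicality, each semisimple factor satisfies $\mathcal{L}_\lambda=[\mathcal{L}_\lambda,\mathcal{L}_\lambda]\subseteq[\widehat{\mathcal{L}},\widehat{\mathcal{L}}]$, so
\[
\widehat{\mathcal{L}}=\widehat{\oplus}_\Lambda\mathcal{L}_\lambda=\overline{\sum_\lambda\mathcal{L}_\lambda}\subseteq\overline{[\widehat{\mathcal{L}},\widehat{\mathcal{L}}]}=D(\widehat{\mathcal{L}}),
\]
so $D(\widehat{\mathcal{L}})=\widehat{\mathcal{L}}$ and hence $\mathcal{D}(\widehat{\mathcal{L}})=\widehat{\mathcal{L}}$ by Theorem~\ref{T7.4}(ii).

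The only point requiring care is the verification that each $\psi_\mu(I)$ is a Lie \emph{ideal} (not merely a subspace) of $\mathcal{L}_\mu$; this uses surjectivity of $\psi_\mu$ together with the coordinate embedding $\mathcal{L}_\mu\hookrightarrow\mathcal{L}$, which supplies lifts for arbitrary elements of $\mathcal{L}_\mu$. Once this is in place the rest is a direct assembly of Lemma~\ref{sprim}, Corollary~\ref{C8.1c}, and Theorem~\ref{T7.4}.
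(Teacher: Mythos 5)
Your proof is correct and follows essentially the same route as the paper: part (i) via the kernels $\mathcal{N}_\lambda=\ker\psi_\lambda$ being strongly $\mathcal{F}$-primitive with trivial intersection, and part (ii) via the absence of nonzero commutative Lie ideals together with Corollary \ref{C8.1c} (you usefully supply the projection argument for this fact, which the paper only asserts). The one small divergence is the final step: the paper deduces $\mathcal{D}(\widehat{\mathcal{L}})=\widehat{\mathcal{L}}$ from Proposition \ref{P3.1n} applied to the balanced radical $\mathcal{D}$, whereas you verify $D(\widehat{\mathcal{L}})=\widehat{\mathcal{L}}$ directly from $\mathcal{L}_\lambda=[\mathcal{L}_\lambda,\mathcal{L}_\lambda]$ and density of $\sum_\lambda\mathcal{L}_\lambda$ — both are valid and of comparable length.
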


\begin{proof}
For each $\mu\in\Lambda$, set $\mathcal{N}_{\mu}=\psi_{\mu}^{-1}(0).$ Then
$\mathcal{L/N}_{\mu}$ is strongly $\mathcal{F}$-semisimple, as it is
isomorphic to $\mathcal{L}_{\mu}$. Hence $\mathcal{N}_{\mu}$ is a strongly
$\mathcal{F}$-primitive Lie ideal. Therefore, by (\ref{8.3}), $\mathcal{F}%
_{s}(\mathcal{L})\subseteq\cap_{\mu\in\Lambda}\mathcal{N}_{\mu}=\{0\}.$ Part
(i) is proved.

If each $\mathcal{L}_{\lambda}$ is semisimple finite-dimensional, then
$\mathcal{L}$ has no non-zero commutative Lie ideals$.$ Hence the set
$\mathcal{A}_{\mathcal{L}}^{\text{ch}}=\mathcal{A}_{\mathcal{L}}=\{\{0\}\}$ is
a lower finite-gap family. By Corollary \ref{C8.1c}, $\mathcal{L}$ has the
required chain. The existence of this type of chains in $\widehat{\mathcal{L}%
}$ can be proved similarly. As $\mathcal{D}(\mathcal{L}_{\lambda}%
)=\mathcal{L}_{\lambda},$ for each $\lambda,$ we have from Proposition
\ref{P3.1n} that $\mathcal{D}(\widehat{\mathcal{L}})=\widehat{\oplus}%
_{\Lambda}\mathcal{D}(\mathcal{L}_{\lambda})=\widehat{\mathcal{L}}.$
\end{proof}

Let $G$ be the set of all closed Lie ideals of $\mathcal{L.}$ It is $\frak{p}%
$-complete. Comparing (\ref{6.d}), Theorem \ref{T6.x} and Lemma \ref{prim}, we
have that $G_{\text{f}}=$ Prim$_{\mathcal{F}}^{s}(\mathcal{L})$ and
$\Delta_{G}=\mathcal{F}_{s}\mathcal{(L).}$ This and Lemma \ref{L2.2} yield

\begin{corollary}
\label{C8.1n}\emph{(i) }$\mathcal{F}_{s}\mathcal{(L)}=\frak{p}(C)$ for each
maximal\emph{,} lower finite-gap chain $C$ of closed Lie ideals of
$\mathcal{L}$ with $\frak{s}(C)=\mathcal{L}$.

\emph{(ii) }Each $\frak{p}$-complete$,$ lower finite-gap chain $C$ of closed
Lie ideals of $\mathcal{L}$ with $\frak{s}(C)=\mathcal{L}$ extends to a
maximal\emph{,} lower finite-gap chain of closed Lie ideals of $\mathcal{L}$.
\end{corollary}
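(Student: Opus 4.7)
The plan is to reduce the corollary to the general theory of $\frak{p}$-complete, lower finite-gap families developed in Subsection~\ref{5e1}, specifically Theorem~\ref{T6.x} and Lemma~\ref{L2.2}(iv), by taking $G$ to be the family of all closed Lie ideals of $\mathcal{L}$ viewed as a family of closed subspaces of the Banach space $\mathcal{L}$. First I would observe that $G$ is $\frak{p}$-complete, since an arbitrary intersection of closed Lie ideals is again a closed Lie ideal, and that $\frak{s}(G) = \mathcal{L}$ because $\mathcal{L} \in G$. This puts $G$ in the setting where Theorem~\ref{T6.x} applies.

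The crucial step is the identification of the subset $G_{\mathrm{f}}$ defined just before Theorem~\ref{T6.x} with $\mathrm{Prim}_{\mathcal{F}}^{s}(\mathcal{L})$. Unwinding the definition, $Y \in G_{\mathrm{f}}$ precisely when there is a $\frak{p}$-complete, lower finite-gap chain of closed Lie ideals $C_Y$ with $\frak{s}(C_Y)=\mathcal{L}$ and $\frak{p}(C_Y)=Y$; by Lemma~\ref{L2.2}(ii) such a chain is automatically complete, so by Lemma~\ref{sprim}(i) this is equivalent to $Y$ being strongly $\mathcal{F}$-primitive. Combining this identification with the definition~(\ref{6.d}) of $\Delta_G$ and the definition~(\ref{8.3}) of $\mathcal{F}_s$ yields
\[
\Delta_{G} \;=\; \frak{p}(G_{\mathrm{f}}) \;=\; \frak{p}\bigl(\mathrm{Prim}_{\mathcal{F}}^{s}(\mathcal{L})\bigr) \;=\; \mathcal{F}_{s}(\mathcal{L}).
\]

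With this identification in hand, part~(i) is a direct instance of Theorem~\ref{T6.x}(ii), which states that $\Delta_{G} = \frak{p}(C)$ for every maximal lower finite-gap chain $C$ in $G$ with $\frak{s}(C) = \mathcal{L}$. For part~(ii) I would apply Lemma~\ref{L2.2}(iv) to the $\frak{p}$-complete family $G$: any $\frak{p}$-complete, lower finite-gap chain $C_0 \subseteq G$ with $\frak{s}(C_0) = \frak{s}(G) = \mathcal{L}$ extends to a maximal lower finite-gap chain in $G$, which is precisely the required extension statement. I do not foresee any genuine obstacle; the only non-mechanical point is the identification $G_{\mathrm{f}} = \mathrm{Prim}_{\mathcal{F}}^{s}(\mathcal{L})$, and even this is merely the content of Lemma~\ref{sprim}(i) read off against the definition of $G_{\mathrm{f}}$.
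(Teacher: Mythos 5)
Your proposal is correct and follows exactly the paper's route: the authors likewise set $G=\mathrm{Lid}(\mathcal{L})$, note it is $\frak{p}$-complete, identify $G_{\mathrm{f}}=\mathrm{Prim}_{\mathcal{F}}^{s}(\mathcal{L})$ and $\Delta_{G}=\mathcal{F}_{s}(\mathcal{L})$ via (\ref{6.d}) and the characterization of strongly $\mathcal{F}$-primitive ideals, and then invoke Theorem \ref{T6.x} and Lemma \ref{L2.2}. Your explicit appeal to Lemma \ref{sprim}(i) (rather than Lemma \ref{prim}) for the identification is in fact the correct reference, so nothing is missing.
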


Note that $\mathcal{F}_{s}\mathcal{(L)}$ may have closed Lie ideals of finite
codimension, but they are not Lie ideals of $\mathcal{L.}$ Thus all lower
finite-gap chains of closed Lie ideals end at $\mathcal{F}_{s}\mathcal{(L)}$
and can not be extended further.

\begin{corollary}
Each closed Lie subalgebra $\mathcal{M}$ of a strongly $\mathcal{F}%
$-semisimple algebra $\mathcal{L}$ is strongly $\mathcal{F}$-semisimple.
\end{corollary}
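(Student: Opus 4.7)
The plan is to reduce this to Lemma \ref{sprim}(v), which is the analogue for the strong Frattini radical of Lemma \ref{prim}(v) already used for Corollary \ref{free3}. The key observation is that the hypothesis ``$\mathcal{L}$ is strongly $\mathcal{F}$-semisimple'' is equivalent to the statement $\{0\} \in \mathrm{Prim}_{\mathcal{F}}^{s}(\mathcal{L})$, since $\mathcal{L}/\{0\} \cong \mathcal{L}$ is strongly $\mathcal{F}$-semisimple by assumption. Applying Lemma \ref{sprim}(v) with $I = \{0\}$ yields $\{0\} = \{0\} \cap \mathcal{M} \in \mathrm{Prim}_{\mathcal{F}}^{s}(\mathcal{M})$, which by definition means that $\mathcal{M}/\{0\} \cong \mathcal{M}$ is strongly $\mathcal{F}$-semisimple. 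This essentially completes the proof.

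For exposition it may be worth unpacking why Lemma \ref{sprim}(v) is available here, since the argument underlying that lemma is exactly what makes the result tick: by Lemma \ref{sprim}(i), strong $\mathcal{F}$-primitivity of $\{0\}$ in $\mathcal{L}$ furnishes a complete, lower finite-gap chain $C$ of closed Lie ideals of $\mathcal{L}$ from $\{0\}$ to $\mathcal{L}$. Intersecting with $\mathcal{M}$ produces the candidate chain $C \cap \mathcal{M} = \{J \cap \mathcal{M} : J \in C\}$; each $J \cap \mathcal{M}$ is a closed Lie ideal of $\mathcal{M}$ because $J \vartriangleleft \mathcal{L}$ and $\mathcal{M}$ is a Lie subalgebra. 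The chain $C \cap \mathcal{M}$ is $\mathfrak{p}$-complete and lower finite-gap by Corollary \ref{pi}, it is linearly ordered since $C$ is, and therefore complete by Lemma \ref{L2.2}(ii)a); its endpoints are $\mathfrak{p}(C) \cap \mathcal{M} = \{0\}$ and $\mathfrak{s}(C) \cap \mathcal{M} = \mathcal{L} \cap \mathcal{M} = \mathcal{M}$. Thus $\mathcal{M}$ itself admits a complete, lower finite-gap chain of closed Lie ideals from $\{0\}$ to $\mathcal{M}$.

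There is no real obstacle here; the entire content has been absorbed into Corollary \ref{pi} (stability of lower finite-gap families under intersection with a closed subspace) and Lemma \ref{sprim}. The only small point worth verifying explicitly is that $J \cap \mathcal{M}$ is indeed a Lie ideal of $\mathcal{M}$ whenever $J \vartriangleleft \mathcal{L}$, but this is immediate: if $a \in J \cap \mathcal{M}$ and $b \in \mathcal{M}$, then $[a,b] \in J$ because $J \vartriangleleft \mathcal{L}$ and $[a,b] \in \mathcal{M}$ because $\mathcal{M}$ is a subalgebra. The corollary therefore reduces to a one-line invocation of Lemma \ref{sprim}(v), mirroring the way Corollary \ref{free3} was deduced from Lemma \ref{prim}(v).
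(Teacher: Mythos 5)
Your proof is correct and is essentially identical to the paper's: the paper also observes that $\{0\}\in\mathrm{Prim}_{\mathcal{F}}^{s}(\mathcal{L})$ and invokes Lemma \ref{sprim}(v) to conclude $\{0\}\in\mathrm{Prim}_{\mathcal{F}}^{s}(\mathcal{M})$. The additional unpacking of why Lemma \ref{sprim}(v) holds (via Corollary \ref{pi} and Lemma \ref{sprim}(i)) is accurate but not needed, since that lemma is already established in the paper.
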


\begin{proof}
As $\{0\}\in\mathrm{Prim}_{\mathcal{F}}^{s}\left(  \mathcal{L}\right)  ,$ we
have from Lemma \ref{sprim}(v) that $\{0\}\in\mathrm{Prim}_{\mathcal{F}}%
^{s}\left(  \mathcal{M}\right)  .$ Thus $\mathcal{M}$ is a strongly
$\mathcal{F}$-semisimple$\mathcal{.}$
\end{proof}

\begin{theorem}
\label{ess1}$\mathcal{F}_{s}$ is an over radical in $\overline{\mathbf{L}}$
\emph{(}see Definition \emph{\ref{D3.1}).}
\end{theorem}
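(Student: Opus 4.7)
The plan is to verify the three conditions in Definition \ref{D3.1}(ii): that $\mathcal{F}_s$ is a preradical, upper stable, and balanced. Two of these follow almost immediately from the preceding Lemma \ref{sprim}. For upper stability, part (iv) of that lemma already tells us $\mathcal{F}_s(\mathcal{L})\in\mathrm{Prim}^s_{\mathcal{F}}(\mathcal{L})$, so $\mathcal{L}/\mathcal{F}_s(\mathcal{L})$ is strongly $\mathcal{F}$-semisimple, i.e.\ $\mathcal{F}_s(\mathcal{L}/\mathcal{F}_s(\mathcal{L}))=\{0\}$. For balanced-ness, let $I\vartriangleleft\mathcal{L}$; for each $K\in\mathrm{Prim}^s_{\mathcal{F}}(\mathcal{L})$, Lemma \ref{sprim}(v) gives $K\cap I\in\mathrm{Prim}^s_{\mathcal{F}}(I)$, hence $\mathcal{F}_s(I)\subseteq K\cap I\subseteq K$. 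Intersecting over all such $K$ yields $\mathcal{F}_s(I)\subseteq \mathcal{F}_s(\mathcal{L})$.

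The real work is in the preradical condition: given a morphism $f\colon\mathcal{L}\to\mathcal{M}$ in $\overline{\mathbf{L}}$, show $f(\mathcal{F}_s(\mathcal{L}))\subseteq\mathcal{F}_s(\mathcal{M})$. I would reduce this to showing that $N:=f^{-1}(\mathcal{F}_s(\mathcal{M}))$ is a strongly $\mathcal{F}$-primitive ideal of $\mathcal{L}$, for then $\mathcal{F}_s(\mathcal{L})\subseteq N$ by (\ref{8.3}), which gives $f(\mathcal{F}_s(\mathcal{L}))\subseteq\mathcal{F}_s(\mathcal{M})$. Note that $f$ induces an injective morphism $\bar f\colon\mathcal{L}/N\to\mathcal{M}/\mathcal{F}_s(\mathcal{M})$ with dense image, and that the target is strongly $\mathcal{F}$-semisimple. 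Thus the whole argument reduces to the following key claim: \emph{if $g\colon\mathcal{L}'\to\mathcal{N}$ is an injective morphism in $\overline{\mathbf{L}}$ with dense image and $\mathcal{N}$ is strongly $\mathcal{F}$-semisimple, then $\mathcal{L}'$ is strongly $\mathcal{F}$-semisimple.} This claim is where the difficulty lies, since the naive strategy of pulling back a lower finite-gap chain of closed Lie ideals of $\mathcal{N}$ fails --- the preimage $g^{-1}(Y)$ can collapse too much, and a dense subspace of $\mathcal{N}$ need not meet a finite-codimensional closed subspace densely.

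To circumvent this, I would invoke the characterization in Theorem \ref{stron}: it suffices to prove that $\mathcal{A}_{\mathcal{L}'}$ is a lower finite-gap family. The crucial observation is that the \emph{image} side (rather than the preimage side) of a dense injection behaves well for commutative ideals. Given $A'\in\mathcal{A}_{\mathcal{L}'}$ with $A'\neq\{0\}$, the subspace $\overline{g(A')}\subseteq\mathcal{N}$ is a closed commutative Lie ideal of $\mathcal{N}$: it is a Lie ideal because $[\overline{g(A')},\mathcal{N}]\subseteq\overline{[g(A'),g(\mathcal{L}')]}\subseteq\overline{g(A')}$ using density of $g(\mathcal{L}')$ and continuity of the bracket, and it is commutative because $[\overline{g(A')},\overline{g(A')}]\subseteq\overline{g([A',A'])}=\{0\}$. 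Since $\overline{g(A')}\neq\{0\}$ and $\mathcal{A}_{\mathcal{N}}$ is lower finite-gap with $\frak{p}(\mathcal{A}_{\mathcal{N}})=\{0\}$, there exists $B\in\mathcal{A}_{\mathcal{N}}$ with $B\subsetneq\overline{g(A')}$ and $0<\dim(\overline{g(A')}/B)<\infty$.

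Finally, set $B':=g^{-1}(B)\cap A'$. This is a closed commutative Lie ideal of $\mathcal{L}'$. Strict containment $B'\subsetneq A'$ holds because otherwise $g(A')\subseteq B$ would give $\overline{g(A')}\subseteq B$, contradicting $B\subsetneq\overline{g(A')}$. For the finite codimension, the natural linear map $A'/B'\to\overline{g(A')}/B$ induced by $g$ is injective (its kernel is $A'\cap g^{-1}(B)=B'$), so $\dim(A'/B')\leq\dim(\overline{g(A')}/B)<\infty$. This proves $\mathcal{A}_{\mathcal{L}'}$ is a lower finite-gap family, whence $\mathcal{L}'$ is strongly $\mathcal{F}$-semisimple by Theorem \ref{stron}, completing the proof of the key claim and hence of the theorem.
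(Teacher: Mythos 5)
Your handling of the balanced and upper-stable conditions coincides with the paper's: both are read off directly from Lemma \ref{sprim}(iv) and (v). For the preradical condition you take a genuinely different route — and, notably, the paper does exactly the thing you dismiss as failing. It takes a complete, lower finite-gap chain $C$ of strongly $\mathcal{F}$-primitive ideals of $\mathcal{M}$ between $\mathcal{F}_s(\mathcal{M})$ and $\mathcal{M}$ (Lemma \ref{sprim}(i),(iv)) and pulls it back, asserting that $C'=\{f^{-1}(I):I\in C\}$ is again a complete, lower finite-gap chain of closed Lie ideals; Lemma \ref{sprim}(iii) then makes $f^{-1}(\mathcal{F}_s(\mathcal{M}))$ strongly $\mathcal{F}$-primitive and the minimality in Lemma \ref{sprim}(iv) finishes. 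Your worry about collapsing preimages does not defeat that argument: if $W\in C'$ is not the bottom of $C'$, $\frak{p}$-completeness of $C$ yields a smallest $Z_0\in C$ with $f^{-1}(Z_0)=W$; a finite gap $Y\subsetneq Z_0$ of $C$ pulls back to $f^{-1}(Y)\subsetneq W$ with $\dim\left(W/f^{-1}(Y)\right)\le\dim\left(Z_0/Y\right)$, strictness holding because $Y$ cannot satisfy $f^{-1}(Y)=W$ by minimality of $Z_0$. Your alternative — reducing to the dense injection $\bar f\colon\mathcal{L}/N\to\mathcal{M}/\mathcal{F}_s(\mathcal{M})$ and then pushing commutative ideals \emph{forward} so as to apply the characterization of Theorem \ref{stron} — is correct and is an interesting inversion of the paper's argument, but it is heavier: the implication (ii)$\Rightarrow$(i) of Theorem \ref{stron} rests on Proposition \ref{strongl}, hence on Theorem \ref{KST1}, whereas the paper's pullback is elementary.

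One small point you should add: the implication (ii)$\Rightarrow$(i) of Theorem \ref{stron} is proved (and is only valid) for $\mathcal{F}$-semisimple algebras — a topologically simple infinite-dimensional algebra has $\mathcal{A}_{\mathcal{L}}=\{\{0\}\}$ but is $\mathcal{F}$-radical. So before invoking it you must note that $\mathcal{L}'\in\mathbf{Sem}(\mathcal{F})$; this is immediate from injectivity of $g$, since $g(\mathcal{F}(\mathcal{L}'))\subseteq\mathcal{F}(\mathcal{N})=\{0\}$ because $\mathcal{F}$ is a preradical and $\mathcal{N}$ is $\mathcal{F}$-semisimple by Theorem \ref{free}.
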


\begin{proof}
Let $f$: $\mathcal{L}\longrightarrow\mathcal{M}$ be a morphism in
$\overline{\mathbf{L}}$. By Lemma \ref{sprim}(i) and (iv), there is a
complete, lower finite-gap chain $C$ of strongly $\mathcal{F}$-primitive
ideals of $\mathcal{M}$ between $\mathcal{F}_{s}\left(  \mathcal{M}\right)  $
and $\mathcal{M}$. Then $C^{\prime}:=\left\{  f^{-1}\left(  I\right)  \text{:
}I\in C\right\}  $ is a complete, lower finite-gap chain of closed Lie ideals
between $f^{-1}(\mathcal{F}_{s}\left(  \mathcal{M}\right)  )$ and
$\mathcal{L}$. By Lemma \ref{sprim}(iii), $C^{\prime}$ consists of strongly
$\mathcal{F}$-primitive ideals of $\mathcal{L}$. So $\mathcal{F}_{s}\left(
\mathcal{L}\right)  \subseteq f^{-1}(\mathcal{F}_{s}\left(  \mathcal{M}%
\right)  ),$ as $\mathcal{F}_{s}\left(  \mathcal{L}\right)  $ is the smallest
strongly $\mathcal{F}$-primitive ideal of $\mathcal{L}$ by Lemma
\ref{sprim}(iv). Hence $f\left(  \mathcal{F}_{s}\left(  \mathcal{L}\right)
\right)  \subseteq\mathcal{F}_{s}\left(  \mathcal{M}\right)  $. This means
that $\mathcal{F}_{s}$ is a preradical.

Let $J\vartriangleleft\mathcal{L}.$ By Lemma \ref{sprim}(v), $I\cap J$ is a
strongly $\mathcal{F}$-primitive ideal of $J$, for each $I\in\mathrm{Prim}%
_{\mathcal{F}}^{s}\left(  \mathcal{L}\right)  .$ Thus $\{I\cap J$:
$I\in\mathrm{Prim}_{\mathcal{F}}^{s}\left(  \mathcal{L}\right)  \}\subseteq
\mathrm{Prim}_{\mathcal{F}}^{s}\left(  J\right)  $. Hence $\mathcal{F}_{s}$ is
balanced, as%
\[
\mathcal{F}_{s}\left(  J\right)  =\frak{p}(\mathrm{Prim}_{\mathcal{F}}%
^{s}\left(  J\right)  )\subseteq\frak{p}(J\cap\mathrm{Prim}_{\mathcal{F}}%
^{s}\left(  \mathcal{L}\right)  )=J\cap\frak{p}(\mathrm{Prim}_{\mathcal{F}%
}^{s}\left(  \mathcal{L}\right)  )=J\cap\mathcal{F}_{s}\left(  \mathcal{L}%
\right)  \subseteq\mathcal{F}_{s}\left(  \mathcal{L}\right)  .
\]

By Lemma \ref{sprim}(iv), $\mathcal{F}_{s}\left(  \mathcal{L}\right)
\in\mathrm{Prim}_{\mathcal{F}}^{s}\left(  \mathcal{L}\right)  .$ Hence
$\mathcal{L}/\mathcal{F}_{s}\left(  \mathcal{L}\right)  $ is strongly
$\mathcal{F}$-semisimple. Thus $\{0\}\in\mathrm{Prim}_{\mathcal{F}}%
^{s}(\left(  \mathcal{L}/\mathcal{F}_{s}\left(  \mathcal{L}\right)  \right)
,$ so that $\mathcal{F}_{s}\left(  \mathcal{L}/\mathcal{F}_{s}\left(
\mathcal{L}\right)  \right)  =\{0\}$. Therefore $\mathcal{F}_{s}$ is an over radical.
\end{proof}

Consider a Banach space $X$ as a commutative Lie algebra. Let\emph{ }$L$ be a
Banach Lie algebra and $\varphi$ be a bounded Lie homomorphism from $L$ into
$B(X)=\frak{D}(X).$ Let $\mathcal{L}=L\oplus^{\varphi}X$ (see (\ref{fsemi}))
be the semidirect product. Set $M=\varphi(L).$ The set Lat $M$ of all closed
subspaces of $X$ invariant for all operators in $M$ is\emph{ }$\frak{p}%
$-complete. It follows from Corollary \ref{C6.5} that there is a subspace
$\Delta_{M}\in$ Lat $M$ such that $\frak{p}(C)=\Delta_{M}$ for each maximal,
lower finite-gap chain $C$ of invariant subspaces of $M$ with $\frak{s}(C)=X;$
and $\Delta_{M}$ has no invariant subspaces of finite codimension.

\begin{proposition}
\label{P8.2n}\emph{(i) }If $L$ is strongly $\mathcal{F}$-semisimple then
$\mathcal{F}_{s}\left(  \mathcal{L}\right)  =\{0\}\oplus^{\text{\emph{id}}%
}\Delta_{M}.$

\emph{(ii) }If $\Delta_{M}\neq\{0\}$ $($e.g. $M$ has no non-trivial invariant
subspaces in $X),$ then%
\[
\mathcal{F}\left(  \mathcal{L}\right)  =\mathcal{F}_{s}\left(  \mathcal{F}%
_{s}\left(  \mathcal{L}\right)  \right)  =\{0\}\neq\mathcal{F}_{s}\left(
\mathcal{L}\right)  .
\]
\end{proposition}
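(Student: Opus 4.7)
The plan is to prove (i) by two opposite inclusions and then derive (ii) as a short corollary. First note that $\{0\}\oplus^{\mathrm{id}}\Delta_{M}$ is a closed Lie ideal of $\mathcal{L}$: since $\Delta_{M}\in\mathrm{Lat}(M)$ and $X$ is commutative, the bracket of $(a,x)\in\mathcal{L}$ with $(0,y)\in\{0\}\oplus^{\mathrm{id}}\Delta_{M}$ equals $(0,\varphi(a)y)\in\{0\}\oplus^{\mathrm{id}}\Delta_{M}$. By Lemma~\ref{sprim}(iv), $\mathcal{F}_{s}(\mathcal{L})$ is the smallest strongly $\mathcal{F}$-primitive ideal of $\mathcal{L}$, so (i) reduces to showing: (a) the ideal $\{0\}\oplus^{\mathrm{id}}\Delta_{M}$ is strongly $\mathcal{F}$-primitive; (b) every strongly $\mathcal{F}$-primitive ideal of $\mathcal{L}$ contains $\{0\}\oplus^{\mathrm{id}}\Delta_{M}$.

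For (a), I will build a complete, lower finite-gap chain of closed Lie ideals of $\mathcal{L}^{\prime}:=\mathcal{L}/(\{0\}\oplus^{\mathrm{id}}\Delta_{M})\cong L\oplus^{\varphi^{\prime}}(X/\Delta_{M})$ from $\{0\}$ up to $\mathcal{L}^{\prime}$ and invoke Lemma~\ref{sprim}(i). The chain is spliced at the middle ideal $\{0\}\oplus^{\varphi^{\prime}}(X/\Delta_{M})$. The lower half arises by quotienting by $\Delta_{M}$ a maximal lower finite-gap chain of $M$-invariant subspaces of $X$ with $\mathfrak{s}=X$ and $\mathfrak{p}=\Delta_{M}$ (furnished by Theorem~\ref{T6.x}(ii)); each invariant $Y\subseteq X/\Delta_{M}$ gives the closed Lie ideal $\{0\}\oplus^{\varphi^{\prime}}Y$, with finite gaps inherited from the original chain. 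The upper half uses a complete, lower finite-gap chain of closed Lie ideals of $L$ from $\{0\}$ to $L$ (available from Theorem~\ref{strong}, since $L$ is strongly $\mathcal{F}$-semisimple by hypothesis), lifted to ideals $K\oplus^{\varphi^{\prime}}(X/\Delta_{M})$ of $\mathcal{L}^{\prime}$ with identical gap dimensions.

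For (b), let $I$ be a strongly $\mathcal{F}$-primitive ideal of $\mathcal{L}$. By Lemma~\ref{sprim}(i) there is a complete, lower finite-gap chain $C$ of closed Lie ideals of $\mathcal{L}$ with $\mathfrak{p}(C)=I$ and $\mathfrak{s}(C)=\mathcal{L}$. Intersecting $C$ with $X=\{0\}\oplus^{\mathrm{id}}X$ produces, by Corollary~\ref{pi}, a $\mathfrak{p}$-complete lower finite-gap family of closed subspaces of $X$; each such subspace is $M$-invariant because $[(a,0),(0,y)]=(0,\varphi(a)y)$ remains in any ideal of $\mathcal{L}$. Extend this family to a maximal lower finite-gap chain in $\mathrm{Lat}(M)$ with $\mathfrak{s}=X$ via Lemma~\ref{L2.2}(iv); by Theorem~\ref{T6.x}(ii) the intersection of such a maximal chain is exactly $\Delta_{M}$, and extending can only decrease intersections, so $\Delta_{M}\subseteq I\cap X$ and hence $\{0\}\oplus^{\mathrm{id}}\Delta_{M}\subseteq I$. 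Intersecting over all such $I$ and combining with (a) completes (i).

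Part (ii) then follows quickly. By (i), $\mathcal{F}_{s}(\mathcal{L})=\{0\}\oplus^{\mathrm{id}}\Delta_{M}\neq\{0\}$, and this ideal is commutative as a Banach Lie algebra. Hence Lemma~\ref{sprim}(vi) gives $\mathcal{F}_{s}(\mathcal{F}_{s}(\mathcal{L}))=\{0\}$, and equation~(\ref{7.1}) gives $\mathcal{F}(\mathcal{F}_{s}(\mathcal{L}))=\{0\}$. Since $\mathcal{F}$ is a radical (so balanced and lower stable) with $\mathcal{F}(\mathcal{L})\subseteq\mathcal{F}_{s}(\mathcal{L})$ and $\mathcal{F}(\mathcal{L})\vartriangleleft\mathcal{F}_{s}(\mathcal{L})$, we conclude $\mathcal{F}(\mathcal{L})=\mathcal{F}(\mathcal{F}(\mathcal{L}))\subseteq\mathcal{F}(\mathcal{F}_{s}(\mathcal{L}))=\{0\}$. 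The main obstacle is the chain construction in step (a): one must check that the lower and upper halves splice into a single lower finite-gap chain and, in particular, that the quotient of a maximal invariant-subspace chain by $\Delta_{M}$ descends to a chain in $X/\Delta_{M}$ whose intersection is $\{0\}$; this is precisely where the Theorem~\ref{T6.x} characterization of $\Delta_{M}$ as the common $\mathfrak{p}$ of all maximal lower finite-gap chains with $\mathfrak{s}=X$ is used in an essential way.
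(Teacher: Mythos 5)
Your proof is correct and rests on the same ingredients as the paper's: the spliced chain $\widetilde{C}_{M}\cup\widetilde{C}_{\Delta}$ of closed Lie ideals of $\mathcal{L}$ and the characterization of $\Delta_{M}$ as the common $\mathfrak{p}$ of all maximal lower finite-gap chains in $\mathrm{Lat}\,M$ with $\mathfrak{s}=X$. The only difference is organizational: the paper gets both inclusions in one stroke from Corollary \ref{C8.1n}(i) (after first locating $\mathcal{F}_{s}(\mathcal{L})$ inside $\{0\}\oplus^{\varphi}X$ via Proposition \ref{semi}(i)), whereas you prove the two inclusions separately, with your step (b) — intersecting the chain of an arbitrary strongly $\mathcal{F}$-primitive ideal with $\{0\}\oplus^{\mathrm{id}}X$ and extending to a maximal invariant-subspace chain — serving as a hands-on substitute for that corollary; both routes are sound.
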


\begin{proof}
(i) By (\ref{fsemi}), any Lie ideal of $\mathcal{L}$ contained in
$\{0\}\oplus^{\varphi}X$ has form $J_{Z}=\{0\}\oplus^{\varphi}Z,$ where
$Z\subseteq X$ is invariant for $M,$ i.e., $Z\in$ Lat $M.$ By Proposition
\ref{semi}(i),\emph{ }$\mathcal{F}_{s}\left(  \mathcal{L}\right)
\subseteq\mathcal{F}_{s}(L\mathcal{)}\oplus^{\varphi}X=\{0\}\oplus^{\varphi
}X.$ Hence, since $\mathcal{F}_{s}\left(  \mathcal{L}\right)  $ is a Lie ideal
of $\mathcal{L,}$ $\mathcal{F}_{s}\left(  \mathcal{L}\right)  =J_{Y}%
=\{0\}\oplus^{\varphi}Y$ where $Y\in$ Lat $M\mathcal{.}$

As $\mathcal{F}_{s}(L)=\{0\}$, it follows from Corollary \ref{C8.1n} that
there is a maximal, lower finite-gap chain $C_{M}=\{I_{\lambda}\}$ of closed
Lie ideals of $L$ between $L$ and $\{0\}.$ Then $\widetilde{C}_{M}%
=\{I_{\lambda}\oplus^{\varphi}X\}$ is a maximal, lower finite-gap chain of
closed Lie ideals of $\mathcal{L}$ between $\mathcal{L}$ and $\{0\}\oplus
^{\varphi}X.$

Let $C_{\Delta}=\{L_{\mu}\}$ be some maximal, lower finite-gap chain of
invariant subspaces of $M$ with $\frak{s}(C_{\Delta})=X.$ By Corollary
\ref{C6.5}, $\frak{p}(C_{\Delta})=\Delta_{M}.$ Hence $\widetilde{C}_{\Delta
}=\{\{0\}\oplus^{\varphi}L_{\mu}\}$ is a maximal, lower finite-gap chain of
Lie ideals of $\mathcal{L}$ in $\{0\}\oplus^{\varphi}X$ and $\frak{p}%
(\widetilde{C}_{\Delta})=\{0\}\oplus^{\varphi}\Delta_{M}.$ Therefore
$C=\widetilde{C}_{M}\cup\widetilde{C}_{\Delta}$ is a maximal, lower finite-gap
chain of Lie ideals of $\mathcal{L}$, $\frak{p}(C)=\{0\}\oplus^{\varphi}%
\Delta_{M}$ and $\frak{s}(C)=\mathcal{L}.$ By Corollary \ref{C8.1n},
$\mathcal{F}_{s}(\mathcal{L})=\frak{p}(C)=\{0\}\oplus^{\varphi}\Delta
_{M}=\{0\}\oplus^{\text{id}}\Delta_{M}.$

(ii) By (i) and Lemma \ref{sprim}(vi), $\mathcal{F}_{s}\left(  \mathcal{F}%
_{s}\left(  \mathcal{L}\right)  \right)  =\mathcal{F}_{s}\left(
\{0\}\oplus^{\text{id}}\Delta_{M}\right)  =\{0\}\neq\mathcal{F}_{s}\left(
\mathcal{L}\right)  $. By Example \ref{E7.1}(iii),\ $\mathcal{F}\left(
\mathcal{L}\right)  =\{0\}$.
\end{proof}

It follows from Proposition \ref{P8.2n} that $\mathcal{F}_{s}$ is not a
radical and $\mathcal{F}_{s}\left(  \mathcal{F}_{s}\left(  L\oplus^{\varphi
}X\right)  \right)  =\mathcal{F}\left(  L\oplus^{\varphi}X\right)  $. As the
following theorem shows, the equality $\mathcal{F}_{s}\left(  \mathcal{F}%
_{s}\left(  \mathcal{L}\right)  \right)  =\mathcal{F}\left(  \mathcal{L}%
\right)  $ holds for all $\mathcal{L}\in\frak{L.}$

\begin{theorem}
\label{ess}For each algebra $\mathcal{L}\in\frak{L}\mathcal{,}$ the quotient
Lie algebra $\mathcal{F}_{s}\left(  \mathcal{L}\right)  /\mathcal{F}\left(
\mathcal{L}\right)  $ is commutative\emph{,}%
\begin{equation}
\mathcal{F}_{s}\left(  \mathcal{L}/\mathcal{F}\left(  \mathcal{L}\right)
\right)  =\mathcal{F}_{s}\left(  \mathcal{L}\right)  /\mathcal{F}\left(
\mathcal{L}\right)  =\frak{s}\left(  \mathrm{Ess}_{l}\left(  \mathcal{A}%
_{\mathcal{L}/\mathcal{F}\left(  \mathcal{L}\right)  }\right)  \right)  \text{
\ and \ }\mathcal{F}_{s}\left(  \mathcal{F}_{s}\left(  \mathcal{L}\right)
\right)  =\mathcal{F}\left(  \mathcal{L}\right)  . \label{8.7}%
\end{equation}
\end{theorem}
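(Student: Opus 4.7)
The plan is to prove the three equalities in the stated order, reducing everything to the $\mathcal{F}$-semisimple case, and then bootstrapping the last equality from the commutativity of $\mathcal{F}_s(\mathcal{L})/\mathcal{F}(\mathcal{L})$ via Lemma~\ref{sprim}(vi).

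First I would establish $\mathcal{F}_s(\mathcal{L}/\mathcal{F}(\mathcal{L}))=\mathcal{F}_s(\mathcal{L})/\mathcal{F}(\mathcal{L})$ using only that $\mathcal{F}_s$ is a preradical (Theorem~\ref{ess1}) and $\mathcal{F}(\mathcal{L})\subseteq\mathcal{F}_s(\mathcal{L})$ (inequality \ref{8.4}). The inclusion ``$\supseteq$'' follows because the quotient map $p\colon\mathcal{L}/\mathcal{F}(\mathcal{L})\to\mathcal{L}/\mathcal{F}_s(\mathcal{L})$ has kernel $\mathcal{F}_s(\mathcal{L})/\mathcal{F}(\mathcal{L})$ and target strongly $\mathcal{F}$-semisimple (Lemma~\ref{sprim}(iv)), so the kernel lies in $\mathrm{Prim}_\mathcal{F}^s(\mathcal{L}/\mathcal{F}(\mathcal{L}))$. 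The inclusion ``$\subseteq$'' is exactly the preradical property applied to $q_\mathcal{F}\colon\mathcal{L}\to\mathcal{L}/\mathcal{F}(\mathcal{L})$.

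The heart of the proof is then the claim that for any $\mathcal{F}$-semisimple Banach Lie algebra $\mathcal{M}$,
\[
\mathcal{F}_s(\mathcal{M})=\mathfrak{s}(\mathrm{Ess}_l(\mathcal{A}_\mathcal{M})),
\]
and this ideal is commutative. For ``$\supseteq$'', take $Y\in\mathrm{Ess}_l(\mathcal{A}_\mathcal{M})$ and any $I\in\mathrm{Prim}_\mathcal{F}^s(\mathcal{M})$; assuming $Y\not\subseteq I$, the image of $Y$ in $\mathcal{M}/I$ is a nonzero commutative Lie ideal, and Theorem~\ref{stron} forces a strictly smaller $W'\in\mathcal{A}_{\mathcal{M}/I}$ of finite codimension. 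Setting $V=Y\cap q^{-1}(W')$ yields a closed commutative Lie ideal of $\mathcal{M}$ with $V\subsetneqq Y$ and $\dim(Y/V)=\dim(q(Y)/W')<\infty$, contradicting lower essentiality of $Y$. For ``$\subseteq$'', use Corollary~\ref{C8.1n}(i) to write $\mathcal{F}_s(\mathcal{M})=\mathfrak{p}(C)=:J$ for a maximal lower finite-gap chain $C$ of closed Lie ideals with $\mathfrak{s}(C)=\mathcal{M}$; I must then show $J\in\mathrm{Ess}_l(\mathcal{A}_\mathcal{M})\cup\{\{0\}\}$. If $J$ is finite-dimensional and nonzero, extending $C$ by $\{0\}$ contradicts maximality. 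If $J$ is infinite-dimensional and non-commutative, then either $J$ has a proper closed Lie subalgebra of finite codimension---so Corollary~\ref{C3.1} supplies a closed Lie ideal of $\mathcal{M}$ strictly inside $J$ of finite codimension, again contradicting maximality---or $J$ has none, in which case $J=P_{\mathfrak{S}}(J)$, forcing $\mathcal{F}(J)=J$, and the balancedness of $\mathcal{F}$ together with $\mathcal{F}(\mathcal{M})=\{0\}$ yields $J=\{0\}$. Hence $J$ is commutative, and for any $Z\in\mathcal{A}_\mathcal{M}$ with $Z\subsetneqq J$ we must have $\dim(J/Z)=\infty$ (else $C\cup\{Z\}$ is a strictly larger lower finite-gap chain); thus $J\in\mathrm{Ess}_l(\mathcal{A}_\mathcal{M})$. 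As a by-product, $\mathcal{F}_s(\mathcal{M})=J$ is commutative, which applied to $\mathcal{M}=\mathcal{L}/\mathcal{F}(\mathcal{L})$ gives the commutativity of $\mathcal{F}_s(\mathcal{L})/\mathcal{F}(\mathcal{L})$.

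Finally, for $\mathcal{F}_s(\mathcal{F}_s(\mathcal{L}))=\mathcal{F}(\mathcal{L})$, set $J=\mathcal{F}_s(\mathcal{L})$ and $F=\mathcal{F}(\mathcal{L})$. Since $\mathcal{F}$ is balanced, $\mathcal{F}(J)\subseteq F$; and since $F\vartriangleleft J$ is $\mathcal{F}$-radical, the balanced property gives $F=\mathcal{F}(F)\subseteq\mathcal{F}(J)$, so $\mathcal{F}(J)=F$. Applying the first equality (already proved for arbitrary Banach Lie algebras) to $J$ in place of $\mathcal{L}$,
\[
\mathcal{F}_s(J)/F=\mathcal{F}_s(J)/\mathcal{F}(J)=\mathcal{F}_s(J/\mathcal{F}(J))=\mathcal{F}_s(J/F).
\]
But $J/F$ is commutative by the previous step, so Lemma~\ref{sprim}(vi) gives $\mathcal{F}_s(J/F)=\{0\}$, whence $\mathcal{F}_s(J)=F$.

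The main obstacle is the inclusion $\mathcal{F}_s(\mathcal{M})\subseteq\mathfrak{s}(\mathrm{Ess}_l(\mathcal{A}_\mathcal{M}))$: forcing the infimum of a maximal lower finite-gap chain to be a commutative lower-essential ideal. This is where Theorem~\ref{KST1} (via Corollary~\ref{C3.1}) meets the balancedness of $\mathcal{F}$ and the hypothesis $\mathcal{F}(\mathcal{M})=\{0\}$ to rule out every non-commutative alternative.
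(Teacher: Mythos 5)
Your proof is correct, and while it follows the same overall architecture as the paper's (reduce to the $\mathcal{F}$-semisimple case, identify $\mathcal{F}_{s}$ with $\frak{s}(\mathrm{Ess}_{l}(\mathcal{A}))$ there, get commutativity from Corollary \ref{C3.1}, and bootstrap the last equality via Lemma \ref{sprim}(vi)), both halves of the central identity are argued differently. For $\frak{s}(\mathrm{Ess}_{l}(\mathcal{A}_{\mathcal{M}}))\subseteq\mathcal{F}_{s}(\mathcal{M})$ the paper intersects the lower finite-gap family $\mathrm{Prim}_{\mathcal{F}}^{s}(\mathcal{M})$ with a lower essential $I$ and applies Corollary \ref{pi} to conclude $I\cap\mathrm{Prim}_{\mathcal{F}}^{s}(\mathcal{M})=\{I\}$; you instead pass to each quotient $\mathcal{M}/I$ and invoke Theorem \ref{stron}(ii), which is a legitimate alternative since that theorem precedes this one. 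For the reverse inclusion the paper shows directly that the up-set of $K=\frak{s}(\mathrm{Ess}_{l}(\mathcal{A}_{\mathcal{M}}))$ in $\mathrm{Lid}(\mathcal{M})$ is a lower finite-gap family (via Proposition \ref{strongl}(i)), so that $K$ is strongly $\mathcal{F}$-primitive; you instead realize $\mathcal{F}_{s}(\mathcal{M})$ as $\frak{p}(C)$ of a maximal lower finite-gap chain via Corollary \ref{C8.1n}(i) and force $\frak{p}(C)$ to be zero or a lower essential commutative ideal by ruling out every alternative with the maximality of $C$, Corollary \ref{C3.1}, and the fact that $\frak{S}_{J}=\varnothing$ makes $J$ Frattini-radical. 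Your route leans on the uniqueness statement Theorem \ref{T6.x}/Corollary \ref{C8.1n}, whereas the paper's is self-contained at that point, but both buy the same conclusion; your endgame ($\mathcal{F}(J)=F$ by balancedness in both directions, then Lemma \ref{sprim}(vi)) is an unwinding of the paper's appeal to Proposition \ref{P3.7}(v).

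Two cosmetic points. In your first step the labels of the two inclusions are interchanged: the preradical property gives $\mathcal{F}_{s}(\mathcal{L})/\mathcal{F}(\mathcal{L})\subseteq\mathcal{F}_{s}(\mathcal{L}/\mathcal{F}(\mathcal{L}))$, while the observation that $\mathcal{F}_{s}(\mathcal{L})/\mathcal{F}(\mathcal{L})$ is a strongly $\mathcal{F}$-primitive ideal of $\mathcal{L}/\mathcal{F}(\mathcal{L})$ gives the opposite containment; both arguments are present and correct. In the quotient argument for the first inclusion, the image $q(Y)$ need not be closed, so you should work with $\overline{q(Y)}$ (still a nonzero commutative closed Lie ideal); the codimension estimate $0<\dim(Y/V)\leq\dim(\overline{q(Y)}/W')<\infty$ survives this replacement, so nothing breaks.
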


\begin{proof}
Firstly assume that $\mathcal{L}$ is $\mathcal{F}$-semisimple. Then
$\mathcal{F(L})=\{0\}$ and we have to show that%
\begin{equation}
\mathcal{F}_{s}\left(  \mathcal{L}\right)  \text{ is commutative},\text{
}\mathcal{F}_{s}\left(  \mathcal{L}\right)  =\frak{s}\left(  \mathrm{Ess}%
_{l}\left(  \mathcal{A}_{\mathcal{L}}\right)  \right)  \text{ and }%
\mathcal{F}_{s}\left(  \mathcal{F}_{s}\left(  \mathcal{L}\right)  \right)
=\{0\}. \label{8.6}%
\end{equation}

Let $I\in\mathrm{Ess}_{l}\left(  \mathcal{A}_{\mathcal{L}}\right)  $. Then
$\mathcal{L}$ has no Lie ideals contained in $I$ that have finite, non-zero
codimension in $I$. By Lemma \ref{sprim}(iv), Prim$_{\mathcal{F}}^{s}\left(
\mathcal{L}\right)  $ is a lower finite-gap family of Lie ideals of
$\mathcal{L.}$ Hence, by Corollary \ref{pi}, $I\cap\mathrm{Prim}_{\mathcal{F}%
}^{s}\left(  \mathcal{L}\right)  :=\left\{  I\cap J\text{: }J\in
\mathrm{Prim}_{\mathcal{F}}^{s}\left(  \mathcal{L}\right)  \right\}  $ is also
a lower finite-gap family of Lie ideals of $\mathcal{L}$ and $I$ belongs to
it$,$ as $\mathcal{L}\in\mathrm{Prim}_{\mathcal{F}}^{s}\left(  \mathcal{L}%
\right)  $. Thus $I\cap\mathrm{Prim}_{\mathcal{F}}^{s}\left(  \mathcal{L}%
\right)  =\{I\}$, so that $I$ lies in each $J$ in $\mathrm{Prim}_{\mathcal{F}%
}^{s}\left(  \mathcal{L}\right)  .$ Hence $I\subseteq\frak{p}\left(
\mathrm{Prim}_{\mathcal{F}}^{s}\left(  \mathcal{L}\right)  \right)
=\mathcal{F}_{s}\left(  \mathcal{L}\right)  $. As $I$ is arbitrary,
$\frak{s}\left(  \mathrm{Ess}_{l}\left(  \mathcal{A}_{\mathcal{L}}\right)
\right)  \subseteq\mathcal{F}_{s}\left(  \mathcal{L}\right)  $. Set
$K=\frak{s}\left(  \mathrm{Ess}_{l}\left(  \mathcal{A}_{\mathcal{L}}\right)
\right)  $.

Let a Lie ideal $I$ contain $K.$ If $I$ contains a Lie ideal $J$ of non-zero,
finite codimension in $I$, then $K\subseteq J$. Indeed, if $L\in
\mathrm{Ess}_{l}\left(  \mathcal{A}_{\mathcal{L}}\right)  $ then $L\subseteq
J$; otherwise, by Lemma \ref{Closed}, $L\cap J$ has non-zero, finite
codimension in $L$ which contradicts the fact that $L\in\mathrm{Ess}%
_{l}\left(  \mathcal{A}_{\mathcal{L}}\right)  $. Hence $K\subseteq J$.

Assume that $K\neq I.$ If $I\in\mathcal{A}_{\mathcal{L}}$ then $I$ contains a
Lie ideal $J\in\mathcal{A}_{\mathcal{L}}$ of non-zero, finite codimension in
$I$. By the above, $K\subseteq J$. Let $I$ be non-commutative, i.e., $I\in$
Lid$(\mathcal{L})\diagdown\mathcal{A}_{\mathcal{L}}.$ By Proposition
\ref{strongl}(i), Lid$(\mathcal{L})\diagdown\mathcal{A}_{\mathcal{L}}$ is a
lower finite-gap family modulo $\mathcal{A}_{\mathcal{L}}.$ Hence $I$ contains
a Lie ideal $J$ that has non-zero, finite codimension in $I.$ By the above,
$K\subseteq J$.

Thus the set $\{I$: $I\vartriangleleft\mathcal{L}$ and $K\subseteq I\}$ is a
$\frak{p}$-complete, lower finite-gap family. By Lemma \ref{L2.2}, there is a
complete, lower finite-gap chain of closed Lie ideals between $K$ and
$\mathcal{L}$. Hence $K$ is strongly $\mathcal{F}$-primitive by Lemma
\ref{sprim}(i). Therefore $\mathcal{F}_{s}\left(  \mathcal{L}\right)
\subseteq K$. Thus we have finally that $\mathcal{F}_{s}\left(  \mathcal{L}%
\right)  =K=\frak{s}\left(  \mathrm{Ess}_{l}\left(  \mathcal{A}_{\mathcal{L}%
}\right)  \right)  $.

By Lemma \ref{sprim}(iv), $\mathcal{F}_{s}\left(  \mathcal{L}\right)  $ is the
smallest strongly $\mathcal{F}$-primitive ideal of $\mathcal{L.}$ Hence, by
Lemma \ref{sprim}(ii),%
\begin{equation}
\mathcal{F}_{s}\left(  \mathcal{L}\right)  \text{ contains no closed Lie
ideals of }\mathcal{L}\text{ of non-zero finite codimension}. \label{8.5}%
\end{equation}

Let $\dim\mathcal{F}_{s}(\mathcal{L})<\infty.$ As $\{0\}$ is a Lie ideal of
finite codimension in $\mathcal{F}_{s}\left(  \mathcal{L}\right)  $, we have
from (\ref{8.5}) that $\mathcal{F}_{s}\left(  \mathcal{L}\right)  =\{0\}$ and
(\ref{8.6}) holds.

Let $\dim\mathcal{F}_{s}(\mathcal{L})=\infty.$ If $\mathcal{F}_{s}\left(
\mathcal{L}\right)  $ is not commutative, it has a closed Lie subalgebra of
non-zero, finite codimension by Theorem \ref{free}(v). Hence, by Corollary
\ref{C3.1}, $\mathcal{F}_{s}\left(  \mathcal{L}\right)  $ contains a closed
Lie ideal of $\mathcal{L}$ of non-zero, finite codimension$.$ This contradicts
(\ref{8.5}) and shows that $\mathcal{F}_{s}\left(  \mathcal{L}\right)  $ is
commutative. By Lemma \ref{sprim}(vi), $\mathcal{F}_{s}\left(  \mathcal{F}%
_{s}\left(  \mathcal{L}\right)  \right)  =\{0\}$ and (\ref{8.6}) is proved.

Suppose now that $\mathcal{L}$ is not $\mathcal{F}$-semisimple. Let $q$:
$\mathcal{L}\rightarrow\mathcal{M:}=\mathcal{L}/\mathcal{F}\left(
\mathcal{L}\right)  .$ If $I\in\mathrm{Prim}_{\mathcal{F}}^{s}\left(
\mathcal{M}\right)  $ then $\mathcal{M}/I$ is strongly $\mathcal{F}%
$-semisimple. As $\mathcal{M}/I\approx\mathcal{L}/q^{-1}(I)$, we have
$q^{-1}(I)\in\mathrm{Prim}_{\mathcal{F}}^{s}\left(  \mathcal{L}\right)  .$

Conversely, let $J\in\mathrm{Prim}_{\mathcal{F}}^{s}\left(  \mathcal{L}%
\right)  .$ As $\mathrm{Prim}_{\mathcal{F}}^{s}\left(  \mathcal{L}\right)
\subseteq\mathrm{Prim}_{\mathcal{F}}\left(  \mathcal{L}\right)  $ and (see
Lemma \ref{prim}(iv)) $\mathcal{F}\left(  \mathcal{L}\right)  =\frak{p}%
(\mathrm{Prim}_{\mathcal{F}}\left(  \mathcal{L}\right)  ),$ we have
$\mathcal{F}\left(  \mathcal{L}\right)  \subseteq J.$ As $\mathcal{M}%
/q(J)\approx\mathcal{L}/J,$ we have $q(J)\in\mathrm{Prim}_{\mathcal{F}}%
^{s}\left(  \mathcal{M}\right)  .$ Thus, by (\ref{8.3}),%
\[
\mathcal{F}_{s}\left(  \mathcal{M}\right)  =\underset{I\in\mathrm{Prim}%
_{\mathcal{F}}^{s}\left(  \mathcal{M}\right)  }{\cap}I=\underset
{J\in\mathrm{Prim}_{\mathcal{F}}^{s}\left(  \mathcal{L}\right)  }{\cap
}q(J)=q(\mathcal{F}_{s}\left(  \mathcal{L}\right)  )=\mathcal{F}_{s}\left(
\mathcal{L}\right)  /\mathcal{F}\left(  \mathcal{L}\right)  .
\]

The Lie algebra $\mathcal{M}$ is $\mathcal{F}$-semisimple, as $\mathcal{F}$ is
a radical. Hence it follows from (\ref{8.6}) that the Lie ideal $\mathcal{F}%
_{s}(\mathcal{M})=\mathcal{F}_{s}\left(  \mathcal{L}\right)  /\mathcal{F}%
\left(  \mathcal{L}\right)  $ is commutative, (\ref{8.7}) holds and
\begin{equation}
\mathcal{F}_{s}(\mathcal{F}_{s}\left(  \mathcal{L}\right)  /\mathcal{F}\left(
\mathcal{L}\right)  )=\mathcal{F}_{s}\left(  \mathcal{F}_{s}\left(
\mathcal{M}\right)  \right)  =\{0\}. \label{8.8}%
\end{equation}

Set $I=\mathcal{F(L)}$ and $L=\mathcal{F}_{s}\left(  \mathcal{L}\right)  .$
Then (\ref{8.8}) turns into $\mathcal{F}_{s}(L/I)=\{0\}.$ By (\ref{8.4}),
$\mathcal{F}\leq\mathcal{F}_{s},$ so that $I\vartriangleleft L.$ As
$\mathcal{F}$ is a radical, $I=\mathcal{F(L})=\mathcal{F(}\mathcal{F(L)}%
)=\mathcal{F}(I).$ Hence, by Proposition \ref{P3.7}(v), $\mathcal{F(}%
\mathcal{L})=I=\mathcal{F}_{s}(L)=\mathcal{F}_{s}\mathcal{(F}_{s}%
(\mathcal{L)).}$ The proof is complete.
\end{proof}

Let $\mathcal{L}$ be an $\mathcal{F}$-semisimple Banach algebra and
$\mathcal{F}_{s}\left(  \mathcal{L}\right)  \neq\{0\}.$ By Theorem \ref{ess},
$\mathcal{F}_{s}\left(  \mathcal{L}\right)  $ is commutative. Let\emph{
}$L=\mathcal{L}/\mathcal{F}_{s}\left(  \mathcal{L}\right)  $ and\emph{ }$q$:
$\mathcal{L}\longrightarrow L$ be the quotient map. As \emph{$\mathcal{F}_{s}$
}is an over radical, $\mathcal{F}_{s}(L)=\{0\}.$\emph{ }Thus each
$\mathcal{F}$-semisimple Banach algebra $\mathcal{L}$ is an \textit{extension}%
\emph{ }of a commutative Banach Lie algebra $\mathcal{F}_{s}\left(
\mathcal{L}\right)  $ by an $\mathcal{F}_{s}$-semisimple algebra\emph{. }

Associate with $\mathcal{L}$ the semidirect product $\mathcal{N}%
=L\oplus^{\varphi}\mathcal{F}_{s}\left(  \mathcal{L}\right)  $ (see
(\ref{fsemi})) in the following way.\textbf{ }As $\mathcal{F}_{s}\left(
\mathcal{L}\right)  $ is commutative, the map $\varphi$ from $L$ into the Lie
algebra $\frak{D}(\mathcal{F}_{s}(\mathcal{L)})=B(\mathcal{F}_{s}%
(\mathcal{L)})$ of all bounded derivations on $\mathcal{F}_{s}\left(
\mathcal{L}\right)  $ defined by $\varphi(q(a))=\delta_{a}|_{\mathcal{F}%
_{s}\left(  \mathcal{L}\right)  },$ where $\delta_{a}(x)=[a,x]$ for
$x\in\mathcal{F}_{s}\left(  \mathcal{L}\right)  \mathcal{,}$ is a correctly
defined Lie homomorphism$.$ Hence $\mathcal{N}$ is well defined. If
$\mathcal{L}$ has a closed Lie subalgebra topologically isomorphic to $L,$
then $\mathcal{L}$ is topologically isomorphic to $\mathcal{N}$.

By Proposition \ref{semi}(ii) 3), $\mathcal{N}$ is $\mathcal{F}$-semisimple.
Moreover, $\mathcal{F}_{s}\left(  \mathcal{N}\right)  =\{0\}\oplus^{\varphi
}\mathcal{F}_{s}\left(  \mathcal{L}\right)  .$ Indeed, let $M=\varphi\left(
L\right)  .$ The lattice Lat $M$ of invariant subspaces in $\mathcal{F}%
_{s}\left(  \mathcal{L}\right)  $ for $M$ coincides with the lattice of Lie
ideals of $\mathcal{L}$\emph{ }in $\mathcal{F}_{s}\left(  \mathcal{L}\right)
.$ By Corollary \ref{C8.1n}, $\mathcal{F}_{s}\left(  \mathcal{L}\right)  $
contains no Lie ideals of $\mathcal{L}$ of non-zero, finite codimension. Hence
Lat $M$ has no subspaces of finite codimension. Thus (see Proposition
\ref{P8.2n}) the subspace $\Delta_{M}=\mathcal{F}_{s}\left(  \mathcal{L}%
\right)  $ and $\mathcal{F}_{s}\left(  \mathcal{N}\right)  =\{0\}\oplus
^{\varphi}\mathcal{F}_{s}\left(  \mathcal{L}\right)  .$

\section{The structure of Frattini-free Banach Lie algebras}

In this section we study a special subclass of Frattini-semisimple Lie
algebras that consists of Frattini-free Lie algebras. A Banach Lie algebra
$\mathcal{L}$ is called \textit{Frattini-free }if it has sufficiently many
maximal closed Lie subalgebras of finite codimension, that is,%
\[
P_{\frak{S}^{\max}}\left(  \mathcal{L}\right)  =\cap_{L\in\frak{S}%
_{\mathcal{L}}^{\max}}L=\{0\},\text{ or }\mathcal{L}\in\mathbf{Sem}%
(P_{\frak{S}^{\text{max}}}).
\]
We also will use the term \textit{Jacobson-free} for Banach Lie algebras in
$\mathbf{Sem}(P_{\frak{J}^{\text{max}}})$.

Marshall in \cite[p. 417]{M} proved that all \textit{simple }%
finite-dimensional\textit{ }Lie algebras are Frattini-free. In Theorem
\ref{T4.3} we give a full description of Frattini-free Lie algebras: they are
isomorphic to subdirect products of the normed direct products of
finite-dimensional subsimple Lie algebras.

\subsection{Subsimple algebras and submaximal ideals}

Frattini-free algebras need not have sufficiently many maximal Lie ideals (see
Example \ref{E3}(iii)). Instead they have sufficiently many
\textit{submaximal} ideals (see Theorem \ref{T4.3} below).

\begin{definition}
\label{D9.1}\emph{(i) }We call a finite-dimensional Lie algebra $\mathcal{L}$
\emph{subsimple }if either\emph{ }$\dim\mathcal{L}=1$ or it has a proper
maximal Lie subalgebra that contains no non-zero Lie ideals of\emph{
}$\mathcal{L}.$

\emph{(ii) }We call a closed Lie ideal $J$ of\emph{ }$\mathcal{L}\in\frak{L}$
\emph{submaximal}$,$ if\emph{ }$\mathcal{L}/J$ is a subsimple Lie algebra$.$
\end{definition}

\begin{lemma}
\label{L9.1}Each subsimple Lie algebra $\mathcal{L}$ is Frattini-free. Its
centre $Z=\{0\}$ if $\dim\mathcal{L}\geq2.$
\end{lemma}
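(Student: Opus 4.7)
The plan is to treat the two assertions separately, handling the Frattini-freeness by a direct argument using the ideal-preradical character of $P_{\mathfrak{S}^{\max}}$, and treating the centre claim by a short contradiction argument that exploits centrality to produce an ideal structure on a maximal subalgebra.

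First I would dispose of the Frattini-free claim. If $\dim\mathcal{L}=1$ then the only proper closed Lie subalgebra is $\{0\}$, so $\mathfrak{S}_{\mathcal{L}}^{\max}=\{\{0\}\}$ and $P_{\mathfrak{S}^{\max}}(\mathcal{L})=\{0\}$. If $\dim\mathcal{L}\geq 2$, let $M$ be the proper maximal Lie subalgebra provided by Definition \ref{D9.1}(i), which contains no non-zero Lie ideals of $\mathcal{L}$; it has finite codimension because $\mathcal{L}$ itself is finite-dimensional, so $M\in\mathfrak{S}_{\mathcal{L}}^{\max}$. By Theorem \ref{Cfam2} $P_{\mathfrak{S}^{\max}}$ is a preradical, hence $P_{\mathfrak{S}^{\max}}(\mathcal{L})$ is a (closed) Lie ideal of $\mathcal{L}$, and by its very definition it is contained in every element of $\mathfrak{S}_{\mathcal{L}}^{\max}$, in particular in $M$. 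Since $M$ contains no non-zero ideal of $\mathcal{L}$, we conclude $P_{\mathfrak{S}^{\max}}(\mathcal{L})=\{0\}$, so $\mathcal{L}$ is Frattini-free.

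For the statement about the centre, assume $\dim\mathcal{L}\geq 2$ and argue by contradiction: suppose $Z\neq\{0\}$, and let $M$ be as above. The key observation is that every linear subspace of $Z$ is automatically a Lie ideal of $\mathcal{L}$ because each of its elements commutes with all of $\mathcal{L}$. Pick $z\in Z\setminus\{0\}$; if $z\in M$ then $\mathbb{C}z\subseteq M$ is a non-zero ideal of $\mathcal{L}$ sitting inside $M$, contradicting the choice of $M$. So $z\notin M$, and since $z$ is central the subspace $M+\mathbb{C}z$ is a Lie subalgebra strictly larger than $M$. Maximality of $M$ forces $M+\mathbb{C}z=\mathcal{L}$, i.e.\ $M$ has codimension one.

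The final step is to derive a contradiction from codimension one. Because $z$ is central,
\[
[\mathcal{L},\mathcal{L}]=[M+\mathbb{C}z,\,M+\mathbb{C}z]=[M,M]\subseteq M,
\]
so $M$ itself is a Lie ideal of $\mathcal{L}$. Since $\dim\mathcal{L}\geq 2$ and $M$ has codimension one, $M\neq\{0\}$, which contradicts the defining property of $M$. Hence $Z=\{0\}$. The main (very mild) obstacle here is just noticing that in the codimension-one situation produced by a central element, the derived subalgebra lands inside $M$, turning $M$ into an ideal of itself — after that the contradiction is immediate.
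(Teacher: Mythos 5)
Your proof is correct and follows essentially the same route as the paper: the Frattini ideal is an ideal of $\mathcal{L}$ contained in the distinguished maximal subalgebra $M$, hence zero; and a non-zero centre would force $M$ to have codimension one via a central element outside $M$, making $M$ itself a non-zero ideal contained in $M$, a contradiction. Your explicit treatment of the $\dim\mathcal{L}=1$ case is a minor addition the paper leaves implicit.
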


\begin{proof}
Let $\dim\mathcal{L}\geq2$ and let a maximal Lie subalgebra $M$ of
$\mathcal{L}$ contain no non-zero Lie ideal\textbf{s} of $\mathcal{L}$. As
$P_{\frak{S}^{\max}}\left(  \mathcal{L}\right)  =\cap_{L\in\frak{S}%
_{\mathcal{L}}^{\max}}L$ is a Lie ideal of $\mathcal{L}$ contained in $M$, we
have $P_{\frak{S}^{\max}}\left(  \mathcal{L}\right)  =\{0\}$.

If $Z\neq\{0\}$ then $M\cap Z=\{0\},$ as $M\cap Z$ is a Lie ideal of
$\mathcal{L}$. Hence $Z\dotplus M=\mathcal{L}$ and $\dim\left(  Z\right)  =1$
by maximality of $M$. Thus $M$ itself is a Lie ideal of $\mathcal{L}$ -- a
contradiction. Hence $Z=\{0\}$.
\end{proof}

It follows from the definition that simple Lie algebras are subsimple. To
clarify the structure of subsimple Lie algebras, consider the following
classes of finite-dimensional Lie algebras:

\begin{itemize}
\item [$(\mathbf{I})$]\textit{Class} \textbf{(I) }\textit{consists of}\textbf{
}\textit{Lie algebras }$\mathcal{L}=N\oplus N,$ \textit{where} $N$ \textit{is
a simple Lie algebra}.

\item[$(\mathbf{II)}$] \textit{Class} \textbf{(II) }\textit{consists
of}\textbf{ }\textit{Lie algebras }$\mathcal{L}=N\oplus^{\mathrm{{id}}}X,$
\textit{where} $N$ \textit{is a Lie algebra of operators on a
finite-dimensional space} $X$ \textit{which has no non-trivial invariant
subspaces. }
\end{itemize}

\begin{lemma}
All Lie algebras in classes $(\mathbf{I})$ and $\mathbf{(II)}$ are subsimple\textit{.}
\end{lemma}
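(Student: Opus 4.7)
The plan is to exhibit, for each of the two classes, an explicit maximal proper Lie subalgebra that contains no non-zero Lie ideal of $\mathcal{L}$, so that Definition \ref{D9.1}(i) is directly verified. In both classes one may assume $\dim\mathcal{L}\ge 2$, since the 1-dimensional case is subsimple by definition.

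For class $(\mathbf{I})$, with $\mathcal{L}=N\oplus N$ and $N$ simple, I would take the diagonal subalgebra
\[
\Delta=\{(x,x):x\in N\}.
\]
It is clearly a Lie subalgebra. To check maximality, I would suppose $\Delta\subsetneq M$ and pick $(a,b)\in M\setminus\Delta$; subtracting $(b,b)\in\Delta$ gives $(a-b,0)\in M$ with $a-b\ne 0$. Brackets with $\Delta$ then yield $(0,0)\ne([x,a-b],0)\in M$ for all $x\in N$, so $M$ contains the set $\{(y,0):y\in I\}$, where $I$ is the Lie ideal of $N$ generated by $a-b$. Since $N$ is simple, $I=N$, whence $N\oplus\{0\}\subseteq M$; combined with $\Delta$, this forces $M=\mathcal{L}$. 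To show $\Delta$ contains no non-zero ideal, recall that the ideals of $N\oplus N$ are exactly $\{0\}$, $N\oplus\{0\}$, $\{0\}\oplus N$, and $\mathcal{L}$, and none of the three non-zero ones lies in $\Delta$.

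For class $(\mathbf{II})$, with $\mathcal{L}=N\oplus^{\mathrm{id}}X$ where $N\subseteq\mathcal{B}(X)$ acts on $X$ without non-trivial invariant subspaces, the candidate is
\[
M=N\oplus^{\mathrm{id}}\{0\},
\]
which is a proper closed Lie subalgebra of codimension $\dim X$. For maximality, if $M\subsetneq M'$ then $M'$ contains some $(a,x)$ with $x\ne 0$, hence $(0,x)=(a,x)-(a,0)\in M'$. Using the semidirect bracket (\ref{sem}), iterated commutators with elements of $M$ produce $(0,b_1b_2\cdots b_nx)\in M'$ for all $b_i\in N$ and all $n$; the linear span $Y$ of these vectors (together with $x$ itself) is a non-zero $N$-invariant subspace of $X$, so the hypothesis forces $Y=X$, and thus $M'=\mathcal{L}$. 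To verify that no non-zero Lie ideal of $\mathcal{L}$ sits inside $M$, suppose $J\vartriangleleft\mathcal{L}$ with $J\subseteq N\oplus\{0\}$; for any $(j,0)\in J$ and every $x\in X$, the bracket $[(j,0),(0,x)]=(0,jx)$ lies in $J\cap(\{0\}\oplus X)=\{0\}$, so $jx=0$ for all $x$, i.e.\ $j=0$ as an operator, whence $J=\{0\}$.

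The two arguments are short and essentially symmetric, and the only place that requires genuine care is the maximality step in class $(\mathbf{II})$: one must verify that the subspace generated in $X$ by iterated $N$-actions on $x$ is itself $N$-invariant, so that the no-invariant-subspace hypothesis can be applied. Everything else is a routine computation with the semidirect-product bracket (\ref{sem}) and the simplicity of $N$.
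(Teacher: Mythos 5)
Your proposal is correct and follows essentially the same route as the paper: the diagonal subalgebra for class $(\mathbf{I})$ and $N\oplus^{\mathrm{id}}\{0\}$ for class $(\mathbf{II})$, with the same maximality and no-nonzero-ideal verifications (the paper phrases the latter for class $(\mathbf{II})$ by noting that every non-zero Lie ideal contains $\{0\}\oplus^{\mathrm{id}}X$, whereas you argue directly that $j=0$ as an operator, but these are the same observation). No gaps.
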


\begin{proof}
Let $\mathcal{L}=N\oplus N.$ Clearly, $N\oplus\{0\}$ and $\{0\}\oplus N$ are
the only non-trivial Lie ideals of $\mathcal{L.}$ Hence the Lie subalgebra
$M=\{a\oplus a$: $a\in N\}$ does not contain non-zero Lie ideals. To see that
it is maximal note that, for each $b=a_{1}\oplus a_{2}\notin M$, the
subalgebra $M^{\prime}$ generated by $\{b\}\cup M$ contains all elements of
the form $0\oplus\lbrack a,a_{2}-a_{1}]$ where $a\in N$. Since $N$ is simple,
$M^{\prime}$ contains $\{0\}\oplus N$, whence $M^{\prime}=\mathcal{L}$. Thus
$\mathcal{L}$ is subsimple.

Let now $\mathcal{L}=N\oplus^{\mathrm{{id}}}X.$ The Lie ideal $\{0\}\oplus
^{\mathrm{{id}}}X$ is contained in every Lie ideal of $\mathcal{L}$, so that
the Lie subalgebra $M=N\oplus^{\mathrm{{id}}}\{0\}$ contains no non-zero Lie
ideals of $\mathcal{L}$. If $M^{\prime}$ is a Lie subalgebra that contains $M$
then the subspace $Y=\{x\in X$: $0\oplus x\in M^{\prime}\}$ is invariant for
$N$. Hence either $Y=\{0\}$ and $M^{\prime}=M$, or $Y=X$ and $M^{\prime
}=\mathcal{L}$. Thus $M$ is maximal, whence $\mathcal{L}$ is subsimple.
\end{proof}

Now we will show that our list of subsimple Lie algebras is exhausting.

\begin{theorem}
\label{TM1}Let $\dim\mathcal{L}\geq2.$ Then $\mathcal{L}$ is subsimple if and
only if it is either simple\emph{, }or isomorphic to a Lie algebra from
classes $\mathbf{(I)}$ or $\mathbf{(II)}$. More precisely\emph{,} if
$\mathcal{L}$ is semisimple and not simple\emph{,} it is isomorphic to a Lie
algebra in the class $\mathbf{(I);}$ if $\mathcal{L}$ is neither simple\emph{,
}nor semisimple\emph{, }it is isomorphic to a Lie algebra in the class
$\mathbf{(II)}$.
\end{theorem}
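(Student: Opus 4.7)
The ``if'' direction is already covered: the preceding lemma shows that classes $(\mathbf{I})$ and $(\mathbf{II})$ consist of subsimple algebras, and a simple algebra is trivially subsimple. So the content is the ``only if'' direction. Fix a proper maximal subalgebra $M \subseteq \mathcal{L}$ that contains no nonzero Lie ideal of $\mathcal{L}$ (such an $M$ exists by the definition of subsimple, as $\dim \mathcal{L} \geq 2$). The plan is to split on whether $\operatorname{rad}(\mathcal{L}) = 0$ and in each case use a well chosen minimal ideal to produce the decomposition.

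First I would treat the non-semisimple case $\operatorname{rad}(\mathcal{L}) \neq 0$. The last nonzero term of the derived series of $\operatorname{rad}(\mathcal{L})$ is a nonzero abelian ideal of $\mathcal{L}$; inside it pick a minimal $\mathcal{L}$-ideal $I$, which is necessarily abelian. Since $I \not\subseteq M$, maximality of $M$ forces $I + M = \mathcal{L}$. Writing $y = y_I + y_M$ and using $[I,I] = 0$, one checks that for $x \in I \cap M$ and $y \in \mathcal{L}$ the bracket $[x,y] = [x, y_M]$ lies in $I \cap M$, so $I \cap M$ is an ideal of $\mathcal{L}$ contained in $M$, hence zero. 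Thus $\mathcal{L} = M \dotplus I$ as vector spaces. The adjoint action $\varphi: M \to \mathfrak{gl}(I)$ is faithful, because $\ker \varphi$ is (by a Jacobi computation) an ideal of $\mathcal{L}$ contained in $M$; and it is irreducible, because any $M$-invariant $Y \subseteq I$ satisfies $[\mathcal{L},Y] = [M,Y] + [I,Y] = [M,Y] \subseteq Y$, making $Y$ an $\mathcal{L}$-ideal that is forced to be $\{0\}$ or $I$ by minimality. This realizes $\mathcal{L}$ as an algebra in class $(\mathbf{II})$ with $N = \varphi(M)$ and $X = I$.

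Next I would handle the semisimple, non-simple case $\mathcal{L} = I_1 \oplus \cdots \oplus I_n$ with each $I_j$ simple and $n \geq 2$. Setting $J_j = \bigoplus_{k \neq j} I_k$, each $J_j$ is a nonzero ideal so $J_j \not\subseteq M$, and maximality gives $J_j + M = \mathcal{L}$; equivalently, each projection $p_j\colon \mathcal{L} \to I_j$ restricts to a surjection $p_j|_M\colon M \twoheadrightarrow I_j$. Given $x \in M \cap I_j$ and $y \in I_j$, picking $m \in M$ with $p_j(m) = y$ and using $[x, m - y] = 0$ (since $m - y \in J_j$ commutes with $I_j$) shows $[x,y] = [x,m] \in M \cap I_j$, so $M \cap I_j$ is an ideal of $I_j$; simplicity and core-freeness force $M \cap I_j = 0$. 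Thus $M$ is a subdirect product of the $I_j$; a Goursat-type analysis, combined with the fact that any surjection from $I_1 \oplus I_2$ onto a simple algebra has a single $I_k$ as kernel, identifies $M$ with the graph $\{(x,\phi_2(x),\ldots,\phi_n(x)) : x \in I_1\}$ for some isomorphisms $\phi_j : I_1 \to I_j$. For $n=2$ this is exactly the diagonal embedding of a simple $N = I_1 \cong I_2$, placing $\mathcal{L}$ in class $(\mathbf{I})$.

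The main obstacle is ruling out $n \geq 3$. The plan here is to observe that, with $M$ as the graph above, the set
\[
M' = \{(x, y, \phi_3(x), \ldots, \phi_n(x)) : x \in I_1,\ y \in I_2\} = M + I_2
\]
is closed under brackets (since each $\phi_j$ is a homomorphism), strictly contains $M$, and has codimension $\sum_{j \geq 3} \dim I_j \geq 1$ in $\mathcal{L}$, hence is a proper subalgebra properly between $M$ and $\mathcal{L}$. This contradicts maximality of $M$ and so forces $n = 2$, completing the proof.
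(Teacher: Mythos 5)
Your treatment of the non-semisimple case is correct and essentially identical to the paper's: a minimal abelian ideal $I$ (the paper's $X$), the observation that $I\cap M$ and $\ker(\operatorname{ad}|_I)\cap M$ are ideals of $\mathcal{L}$ killed by core-freeness, and minimality of $I$ giving irreducibility. The semisimple case, however, contains a genuine gap at its pivotal step, namely the claim that $M$ is the graph $\{(x,\phi_2(x),\dots,\phi_n(x)):x\in I_1\}$.

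First, this claim does not follow from the facts you cite. A subalgebra of $I_1\oplus\dots\oplus I_n$ that projects onto every factor and meets every factor trivially need not be a graph over $I_1$ once $n\geq 4$: for $N$ simple, $M=\{(x,x,y,y):x,y\in N\}\subseteq N^{\oplus 4}$ satisfies both conditions but is a product of two diagonals, not a graph. The correct general statement is the structure theorem for subdirect products of simple algebras ($M$ is a product of diagonals over a partition with no singleton blocks), which is more than two-factor Goursat and whose multi-block cases must then be excluded by a further appeal to maximality — none of which you supply. Second, and more tellingly, the graph conclusion is actually incompatible with your own setup when $n\geq 3$: since $M$ is maximal and core-free, $M+I_j=\mathcal{L}$ and $M\cap I_j=\{0\}$ give $\dim M=\dim\mathcal{L}-\dim I_j=(n-1)\dim I_1$, whereas a graph over $I_1$ has dimension $\dim I_1$. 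So any valid derivation of the graph structure must already be, in disguise, the derivation of the contradiction — and yours is not valid as it stands. The paper avoids this entirely: it only constructs, for each pair $i\neq j$, an isomorphism $\varphi_{ij}:L_j\to L_i$ whose graph is \emph{contained in} $M$ (this needs only $L_j\subseteq M+L_i$ and $M\cap L_i=M\cap L_j=\{0\}$, and makes no claim that these graphs exhaust $M$), and then for $n\geq 3$ a short bracket computation with $\varphi_{21}$ and $\varphi_{31}$ shows $\varphi_{31}([L_1,L_1])\subseteq M\cap L_3=\{0\}$, contradicting $[L_1,L_1]=L_1$. You should either reproduce an argument of that pairwise type or prove the full subdirect-product structure theorem and invoke maximality twice; your $M+I_2$ trick is fine only after the graph structure has been legitimately established.
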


\begin{proof}
We saw above that simple Lie algebras and Lie algebras from the classes
\textbf{(I) }and \textbf{(II) }are subsimple.

Conversely, let $\mathcal{L}$ be subsimple and let $M$ be a maximal Lie
subalgebra that does not contain non-zero Lie ideals of $\mathcal{L}$. Assume
firstly that $\mathcal{L}$ is not semisimple. Then $\mathcal{L}$ has a proper
non-zero minimal commutative Lie ideal $X$. Since $M$ is maximal,
$M+X=\mathcal{L}$. Let $I=\{a\in\mathcal{L}$: $[a,X]=0\}$. Then $I$ is a Lie
ideal of $\mathcal{L.}$ We also have that $M\cap I$ is a Lie ideal of
$\mathcal{L,}$ since%
\[
\left[  M\cap I,\mathcal{L}\right]  =\left[  M\cap I,M+X\right]  =\left[
M\cap I,M\right]  \subseteq M\cap I.
\]
Therefore $M\cap I=\{0\}$; in particular, $M\cap X=\{0\},$ so that the sum
$L=M+X$ is direct. Moreover, the equality $M\cap I=\{0\}$ shows that the map
$a\longmapsto\mathrm{{ad}}\left(  a\right)  |_{X}$ is injective on $M$. Set
$N=\mathrm{{ad}}\left(  M\right)  |_{X}.$ Then $\mathcal{L}$ is isomorphic to
$N\oplus^{\mathrm{{id}}}X$ and belongs to class \textbf{(II)}.

Assume that $\mathcal{L}$ is semisimple, but not simple. Then $\mathcal{L}%
=L_{1}\oplus\cdots\oplus L_{n}$ is the direct sum of simple Lie algebras
$L_{i}$ and $n\geq2.$ As each $L_{i}$ is a Lie ideal of $\mathcal{L},$ we have
$\mathcal{L}=M+L_{i}.$ Let $P_{j}$ be the natural projection $x_{1}%
\oplus\cdots\oplus x_{n}\longmapsto x_{j}$ from $\mathcal{L}$ onto $L_{j},$
where $x_{j}\in L_{j}.$ Then $P_{j}\left(  M\right)  =L_{j}$. Let $K_{j}=M\cap
L_{j}$. Then $K_{j}$ is a Lie ideal of $M$, whence $[K_{j},L_{j}]=[K_{j}%
,P_{j}\left(  M\right)  ]=[K_{j},M]\subseteq K_{j}.$ Thus $K_{j}$ is a Lie
ideal of $L_{j}.$ Hence $K_{j}$ is a Lie ideal of $M+L_{j}=\mathcal{L.}$ As
$M$ contains no non-zero Lie ideals of $\mathcal{L}$, we have $K_{j}=\{0\}$
for all $j.$

Fix $i$ and $j$ for $j\neq i.$ Then $L_{j}\subseteq M+L_{i}.$ Hence, for each
$x\in L_{j},$ there is $y\in L_{i}$ such that $x+y\in M.$ Combining this with
the fact that $M\cap L_{j}=\{0\}$ for all $j$, we have that there is an
injective Lie homomorphism $\varphi_{ij}$ from $L_{j}$ into $L_{i}$ such that
$\{x\oplus\varphi_{ij}(x)$: $x\in L_{j}\}\subseteq M$ (as each $\varphi
_{ij}(x)\in L_{i},$ we have that all such $x\oplus\varphi_{ij}(x)$ lie in
$L_{j}\oplus L_{i}).$ Exchanging $i$ and $j$, we have that $\varphi_{ij}$ is a
Lie isomorphism. Thus all $L_{j}$ are isomorphic.

If $n\geq3,$ set $\psi=\varphi_{21}$ and $\omega=\varphi_{31}.$ Then $x_{\psi
}:=x\oplus\psi(x)\in M$ and $x_{\omega}:=x\oplus\omega(x)\in M$ for every
$x\in L_{1}.$ Therefore $x_{\psi}-x_{\omega}=\psi(x)\oplus(-\omega(x))\in M$
for all $x\in L_{1}.$ Hence $[x_{\psi}-x_{\omega},y_{\psi}-y_{\omega}%
]=\psi([x,y])\oplus\omega([x,y]))\in M$ for all $x,y\in L_{1}.$ On the other
hand, $[x,y]_{\psi}-[x,y]_{\omega}=\psi([x,y])\oplus(-\omega([x,y]))\in M.$
Therefore $\omega([x,y]))\in M$ for all $x,y\in L_{1}.$ Thus $M\cap L_{3}%
\neq\{0\}.$ This contradiction shows that $n\leq2$ and $\mathcal{L}$ is
isomorphic to an algebra from class \textbf{(I)}.
\end{proof}

\begin{corollary}
\label{C7.1}If a subsimple Lie algebra $\mathcal{L}$ is solvable then
$\dim(\mathcal{L})\leq2.$ If it is nilpotent\emph{,} $\dim\mathcal{L}\leq1$.
\end{corollary}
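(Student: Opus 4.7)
The plan is to invoke the classification Theorem~\ref{TM1} of subsimple Lie algebras to reduce to a concrete form, and then apply Lie's theorem over $\mathbb{C}$ to bound the dimension. If $\dim\mathcal{L}=1$ there is nothing to prove, so I would assume $\dim\mathcal{L}\geq 2$. Theorem~\ref{TM1} then gives three possibilities: $\mathcal{L}$ is simple, belongs to class $\mathbf{(I)}$, or belongs to class $\mathbf{(II)}$. A simple Lie algebra of dimension at least $2$ satisfies $[\mathcal{L},\mathcal{L}]=\mathcal{L}$ and is therefore not solvable, and an algebra $N\oplus N$ of class $\mathbf{(I)}$ with $N$ simple is semisimple, hence also not solvable. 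So solvability forces $\mathcal{L}=N\oplus^{\mathrm{id}}X$ in class $\mathbf{(II)}$, with $N$ a subalgebra of $\mathcal{B}(X)$ acting without non-trivial invariant subspace on the finite-dimensional space $X$.

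The key step is the dimension bound. Since $X$ is a commutative Lie ideal of $\mathcal{L}$ and $\mathcal{L}/X\cong N$, the quotient $N$ inherits solvability from $\mathcal{L}$. Here I would apply Lie's theorem: over $\mathbb{C}$, a solvable Lie algebra of operators on a non-zero finite-dimensional vector space has a common eigenvector, hence a one-dimensional invariant subspace. (The degenerate possibility $X=\{0\}$ would give $\mathcal{L}=N\subseteq\mathcal{B}(\{0\})=\{0\}$, contradicting $\dim\mathcal{L}\geq 2$.) Combined with the class $\mathbf{(II)}$ hypothesis that $N$ has no non-trivial invariant subspace in $X$, this forces $\dim X=1$, whence $N\subseteq\mathcal{B}(\mathbb{C})\cong\mathbb{C}$ and $\dim N\leq 1$, so $\dim\mathcal{L}\leq 2$ as required.

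For the nilpotent case it remains to exclude $\dim\mathcal{L}=2$. By the preceding analysis, any such $\mathcal{L}$ must take the form $\mathbb{C}\cdot a\oplus^{\mathrm{id}}\mathbb{C}\cdot x$ with $[a,x]=x$; this is the non-abelian two-dimensional Lie algebra, whose lower central series stabilises at $\mathbb{C}\cdot x\neq\{0\}$, so it fails to be nilpotent. Hence nilpotency of $\mathcal{L}$ rules out $\dim\mathcal{L}=2$ and forces $\dim\mathcal{L}\leq 1$. The only substantive ingredient beyond Theorem~\ref{TM1} is the application of Lie's theorem over $\mathbb{C}$; I foresee no further obstacle.
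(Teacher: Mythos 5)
Your proof is correct and follows essentially the same route as the paper: reduce via Theorem \ref{TM1} to the class $\mathbf{(II)}$ form $N\oplus^{\mathrm{id}}X$ and apply Lie's theorem to force $\dim X=1$. The only cosmetic difference is in the nilpotent case, where you identify the two-dimensional algebra explicitly as the non-abelian one, while the paper notes that a two-dimensional nilpotent algebra is commutative and invokes Lemma \ref{L9.1}; both are equally valid one-line finishes.
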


\begin{proof}
Let $\dim(\mathcal{L})>1.$ As $\mathcal{L}$ is solvable, we have from Theorem
\ref{TM1} that $\mathcal{L}=N\oplus^{\text{id}}X$ and the operator Lie algebra
$N$ has no non-trivial invariant subspaces in $X$. Since $\mathcal{L}$ is
solvable, $N$ is also solvable and, by the Lie Theorem, $N$ always has a
one-dimensional invariant subspace. Thus $\dim X=1.$ As $N\subseteq B(X),$ we
have $\dim N=1,$ so that $\dim\mathcal{L}=2.$ If $\mathcal{L}$ is nilpotent
then, as $\dim\mathcal{L}=2,$ it is commutative which contradicts Lemma
\ref{L9.1}.
\end{proof}

Let $\mathcal{L}\in\frak{L}$ and $\mathcal{M}\in\frak{S}_{\mathcal{L}}^{\max
}.$ If $\mathcal{M}$ is a Lie ideal, $\mathcal{L/M}$ has no proper Lie
subalgebras. Hence%
\begin{equation}
\dim\left(  \mathcal{L/M}\right)  =1. \label{9.1}%
\end{equation}
Denote by $\mathfrak{J}_{\mathcal{L}}^{\text{sm}}$ the set of all submaximal
Lie ideals of $\mathcal{L}.$ Then $\mathfrak{J}_{\mathcal{L}}^{\max}%
\subseteq\mathfrak{J}_{\mathcal{L}}^{\text{sm}}\subseteq\mathfrak
{J}_{\mathcal{L}}.$ The following result strengthens Theorem \ref{KST1} ---
the central result of \cite{KST2}.

\begin{proposition}
\label{P3.2}\emph{(i) }Every maximal closed subalgebra of finite codimension
in a Banach Lie algebra $\mathcal{L}$ contains a submaximal Lie ideal of
$\mathcal{L}.$

\emph{(ii) }Conversely\emph{,} for each submaximal Lie ideal $J$ of
$\mathcal{L},$ there is $\mathcal{M}\in\frak{S}_{\mathcal{L}}^{\max}$ such
that $J$ is a maximal element in the set of all closed Lie ideals of
$\mathcal{L}$ contained in $\mathcal{M}$.
\end{proposition}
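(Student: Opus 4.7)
My plan is to reduce both parts to finite-dimensional quotients via Theorem \ref{KST1}(i), which gives us that a maximal closed subalgebra $\mathcal{M}$ of finite codimension always contains \emph{some} closed Lie ideal $J_0$ of $\mathcal{L}$ of finite codimension. After passing to $\mathcal{L}/J_0$ everything becomes finite-dimensional, and the largest ideal contained in the image of $\mathcal{M}$ becomes available by a trivial dimension argument.

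For part (i), let $\mathcal{M} \in \mathfrak{S}_{\mathcal{L}}^{\max}$, and fix by Theorem \ref{KST1}(i) a closed Lie ideal $J_0 \subseteq \mathcal{M}$ of $\mathcal{L}$ of finite codimension. Let $K$ be the sum of all closed Lie ideals of $\mathcal{L}$ that are contained in $\mathcal{M}$ (such a sum exists as a closed Lie ideal contained in $\mathcal{M}$ because, modulo $J_0$, we are summing subspaces of the finite-dimensional space $\mathcal{L}/J_0$). By construction $J_0 \subseteq K \subseteq \mathcal{M}$, and $K$ is the largest closed Lie ideal of $\mathcal{L}$ contained in $\mathcal{M}$. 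I will then show that $K$ is submaximal. The image $\mathcal{M}/K$ is a maximal proper Lie subalgebra of the finite-dimensional algebra $\mathcal{L}/K$ (preimages of subalgebras between $\mathcal{M}/K$ and $\mathcal{L}/K$ would contradict maximality of $\mathcal{M}$). If $\dim(\mathcal{L}/K) = 1$ then $\mathcal{L}/K$ is subsimple by the first clause of Definition \ref{D9.1}(i). Otherwise $\dim(\mathcal{L}/K) \geq 2$, and $\mathcal{M}/K$ contains no non-zero Lie ideal of $\mathcal{L}/K$: indeed, the preimage of such an ideal would be a closed Lie ideal of $\mathcal{L}$ lying in $\mathcal{M}$ but strictly larger than $K$, contradicting the maximality of $K$. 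So $\mathcal{L}/K$ is subsimple and $K$ is submaximal.

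For part (ii), let $J$ be submaximal, so $\mathcal{L}/J$ is subsimple and in particular finite-dimensional. If $\dim(\mathcal{L}/J) = 1$, take $\mathcal{M} = J$ itself (a codimension-one subspace is automatically a maximal Lie subalgebra of finite codimension, and $J$ is trivially the maximal closed Lie ideal contained in it). If $\dim(\mathcal{L}/J) \geq 2$, apply Definition \ref{D9.1}(i) to produce a maximal proper Lie subalgebra $\overline{\mathcal{M}}$ of $\mathcal{L}/J$ which contains no non-zero Lie ideal of $\mathcal{L}/J$, and let $\mathcal{M} \subseteq \mathcal{L}$ be its preimage under the quotient map $q\colon \mathcal{L} \to \mathcal{L}/J$. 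Then $\mathcal{M}$ is closed, of finite codimension, and is maximal in $\mathcal{L}$ (any intermediate subalgebra contains $J$, so its image would properly refine the maximal chain $\overline{\mathcal{M}} \subsetneq \mathcal{L}/J$). Finally, if a closed Lie ideal $I$ of $\mathcal{L}$ satisfies $J \subsetneq I \subseteq \mathcal{M}$, then $q(I)$ would be a non-zero Lie ideal of $\mathcal{L}/J$ inside $\overline{\mathcal{M}}$, which is impossible. Hence $J$ is a maximal element of the set of closed Lie ideals of $\mathcal{L}$ contained in $\mathcal{M}$.

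The main conceptual obstacle is the existence step in (i), namely producing the largest ideal $K$ inside $\mathcal{M}$; but this is painless once Theorem \ref{KST1}(i) delivers the finite-codimensional ideal $J_0 \subseteq \mathcal{M}$, so that the family of candidate ideals lives inside the finite-dimensional quotient $\mathcal{L}/J_0$ and admits a (finite) maximum. Everything else is formal checking of the two clauses of Definition \ref{D9.1}(i) in the quotients $\mathcal{L}/K$ and $\mathcal{L}/J$ respectively.
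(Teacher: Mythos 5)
Your proof is correct and follows essentially the same route as the paper: in (i) you invoke Theorem \ref{KST1}(i) to plant a finite-codimensional ideal $J_0$ inside $\mathcal{M}$, pass to the largest closed Lie ideal $K$ of $\mathcal{L}$ contained in $\mathcal{M}$ (the paper's ``largest closed Lie ideal of finite codimension'' is the same object), and check subsimplicity of $\mathcal{L}/K$; in (ii) you lift the distinguished maximal subalgebra of the subsimple quotient exactly as the paper does. The only cosmetic difference is that you fold the paper's separate cases ($\dim\mathcal{L}<\infty$, and $\mathcal{M}$ itself an ideal) into the single case $\dim(\mathcal{L}/K)=1$.
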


\begin{proof}
(i) If dim$\left(  \mathcal{L}\right)  <\infty$ then each maximal Lie ideal of
$\mathcal{L}$ contained in every maximal subalgebra of $\mathcal{L}$ is submaximal.

Let dim$\left(  \mathcal{L}\right)  =\infty$ and $\mathcal{M}\in\mathfrak
{S}_{\mathcal{L}}^{\max}$. If $\mathcal{M}$ is a Lie ideal then, by
(\ref{9.1}), dim$\left(  \mathcal{L/M}\right)  =1.$ Thus $\mathcal{M}$ is a
submaximal Lie ideal. If $\mathcal{M}$ is not a Lie ideal of $\mathcal{L}$, it
follows from Theorem \ref{KST1}(i) that $\mathcal{M}$ contains a closed Lie
ideal of $\mathcal{L}$ of finite codimension. Hence $\mathcal{M}$ contains a
largest closed Lie ideal $J$ of $\mathcal{L}$ of finite codimension. Then
$\mathcal{L/}J$ is finite-dimensional and $\mathcal{M/}J$ is a maximal Lie
subalgebra of $\mathcal{L/}J$ that contains no non-zero Lie ideals of
$\mathcal{L/}J.$ Thus $\mathcal{L/}J$ is subsimple, so that $J$ is submaximal.

(ii) Let $J\in\mathfrak{J}_{\mathcal{L}}^{\text{sm}}.$ If $J\notin\mathfrak
{S}_{\mathcal{L}}^{\max}$ then, by (\ref{9.1}), $\dim\mathcal{L}/J\neq1$ and
there is a maximal proper Lie subalgebra $M$ of $\mathcal{L/}J$ that contains
no non-zero Lie ideals of\emph{ }$\mathcal{L}/J.$ Then the preimage
$\mathcal{M}$ of $M$ in $\mathcal{L}$ belongs to $\mathfrak{S}_{\mathcal{L}%
}^{\max}$ and $J$ is a maximal Lie ideal of $\mathcal{L}$ contained in
$\mathcal{M}$.
\end{proof}

We will show now that the Lie ideal-multifunctions $\mathfrak{J}^{\text{sm}}$
and $\mathfrak{S}^{\max}$ generate equal preradicals.

\begin{proposition}
\label{Lmaxli}$P_{\mathfrak{S}^{\max}}\left(  \mathcal{L}\right)
=P_{\mathfrak{J}_{\mathcal{L}}^{\text{\emph{sm}}}}\left(  \mathcal{L}\right)
$ for all Banach Lie algebras $\mathcal{L}$\emph{,} so that $P_{\mathfrak
{S}^{\max}}=P_{\mathfrak{J}_{\mathcal{L}}^{\text{\emph{sm}}}}$.
\end{proposition}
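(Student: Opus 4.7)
The plan is to prove the two inclusions $P_{\mathfrak{S}^{\max}}(\mathcal{L}) \subseteq P_{\mathfrak{J}^{\text{sm}}}(\mathcal{L})$ and $P_{\mathfrak{J}^{\text{sm}}}(\mathcal{L}) \subseteq P_{\mathfrak{S}^{\max}}(\mathcal{L})$ separately, each by a direct application of one half of Proposition \ref{P3.2} together with the fact that $P_{\mathfrak{S}^{\max}}(\mathcal{L})$ is itself a closed Lie ideal of $\mathcal{L}$ (being the value of a preradical, by Corollary \ref{C1}(i)). Since submaximal Lie ideals have finite codimension (because subsimple Lie algebras are finite-dimensional by Definition \ref{D9.1}), Lemma \ref{Closed} will let us freely add $P_{\mathfrak{S}^{\max}}(\mathcal{L})$ to such ideals and keep the sum closed.

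For the inclusion $P_{\mathfrak{J}^{\text{sm}}}(\mathcal{L}) \subseteq P_{\mathfrak{S}^{\max}}(\mathcal{L})$, I would fix any $\mathcal{M} \in \mathfrak{S}_{\mathcal{L}}^{\max}$ and invoke Proposition \ref{P3.2}(i) to produce a submaximal Lie ideal $J \subseteq \mathcal{M}$. Then $P_{\mathfrak{J}^{\text{sm}}}(\mathcal{L}) \subseteq J \subseteq \mathcal{M}$, and intersecting over all $\mathcal{M} \in \mathfrak{S}_{\mathcal{L}}^{\max}$ yields the desired inclusion.

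For the reverse inclusion, I would fix any $J \in \mathfrak{J}_{\mathcal{L}}^{\text{sm}}$ and apply Proposition \ref{P3.2}(ii) to obtain $\mathcal{M} \in \mathfrak{S}_{\mathcal{L}}^{\max}$ such that $J$ is a maximal element among closed Lie ideals of $\mathcal{L}$ contained in $\mathcal{M}$. By definition, $P_{\mathfrak{S}^{\max}}(\mathcal{L}) \subseteq \mathcal{M}$, so the sum $I := J + P_{\mathfrak{S}^{\max}}(\mathcal{L})$ is a closed Lie ideal of $\mathcal{L}$ (closedness comes from Lemma \ref{Closed} since $J$ has finite codimension) contained in $\mathcal{M}$. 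The maximality clause forces $I = J$, that is, $P_{\mathfrak{S}^{\max}}(\mathcal{L}) \subseteq J$; intersecting over all $J \in \mathfrak{J}_{\mathcal{L}}^{\text{sm}}$ completes the argument.

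There is essentially no obstacle: the content of the proposition has been entirely packaged into Proposition \ref{P3.2}, so the proof is a short bookkeeping argument. The only small point to watch is the use of the maximality clause in Proposition \ref{P3.2}(ii) — it is \emph{not} enough to know that $J$ lies in some $\mathcal{M} \in \mathfrak{S}_{\mathcal{L}}^{\max}$; one needs the stronger property that $J$ is maximal among closed Lie ideals of $\mathcal{L}$ inside $\mathcal{M}$, precisely so that the closed Lie ideal $J + P_{\mathfrak{S}^{\max}}(\mathcal{L}) \subseteq \mathcal{M}$ must collapse back to $J$.
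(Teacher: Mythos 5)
Your proof is correct and is essentially identical to the paper's: the first inclusion is the relation $\mathfrak{J}_{\mathcal{L}}^{\text{sm}}\overleftarrow{\subset}\mathfrak{S}_{\mathcal{L}}^{\max}$ from Proposition \ref{P3.2}(i) combined with (\ref{LP0}), and the second is exactly the paper's argument via Proposition \ref{P3.2}(ii), Lemma \ref{Closed}, and the maximality of $J$ inside $\mathcal{M}$. Your closing remark correctly identifies the one point where the full strength of \ref{P3.2}(ii) is needed.
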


\begin{proof}
By Proposition \ref{P3.2}(i), $\mathfrak{J}_{\mathcal{L}}^{\text{sm}%
}\overleftarrow{\subset}\mathfrak{S}_{\mathcal{L}}^{\max}$ for all
$\mathcal{L}\in\mathfrak{L.}$ Hence, by (\ref{LP0}), $P_{\mathfrak
{J}^{\text{sm}}}\left(  \mathcal{L}\right)  \subseteq P_{\mathfrak{S}^{\max}%
}\left(  \mathcal{L}\right)  $.

On the other hand, let $J\in\mathfrak{J}_{\mathcal{L}}^{\text{sm}}.$ By
Proposition \ref{P3.2}(ii), there exists $\mathcal{M}\in\mathfrak
{S}_{\mathcal{L}}^{\text{max}}$ such that $J$ is maximal among closed Lie
ideals of $\mathcal{L}$ contained in $\mathcal{M}.$ As $J$ has finite
codimension in $\mathcal{L}$, we have from Lemma \ref{Closed} that
$J+P_{\mathfrak{S}^{\max}}\left(  \mathcal{L}\right)  $ is closed. Since
$P_{\mathfrak{S}^{\max}}\left(  \mathcal{L}\right)  \subseteq\mathcal{M},$
$J+P_{\mathfrak{S}^{\max}}\left(  \mathcal{L}\right)  $ is a closed Lie ideal
of $\mathcal{L}$ contained in $\mathcal{M}.$ As $J$ is a maximal such Lie
ideal in $\mathcal{M},$ $P_{\mathfrak{S}^{\max}}\left(  \mathcal{L}\right)
\subseteq J.$ Thus $P_{\mathfrak{S}^{\max}}\left(  \mathcal{L}\right)
\subseteq J$ for all $J\in\mathfrak{J}_{\mathcal{L}}^{\text{sm}}$. Hence
$P_{\mathfrak{S}^{\max}}\left(  \mathcal{L}\right)  \subseteq P_{\mathfrak
{J}_{\mathcal{L}}^{\text{sm}}}\left(  \mathcal{L}\right)  $.
\end{proof}

As a consequence of the above proposition, $\mathbf{Sem}(P_{\mathfrak
{S}^{\text{max}}})$ coincides with the class of all algebras with sufficiently
many submaximal ideals: the intersection of submaximal ideals equals zero.

\subsection{ Subdirect products}

To describe Frattini-free Lie algebras in a more constructive way we need the
following definition. Let $\{\mathcal{L}_{\lambda}\}_{\lambda\in\Lambda}$ be a
family of Banach Lie algebras with multiplication constants $t_{\lambda}$
satisfying $\sup\{t_{\lambda}\}<\infty$. Let $\mathcal{L}_{\Lambda}%
=\oplus_{\Lambda}\mathcal{L}_{\lambda}$ be their normed direct product (see
Subsection 3.2). For each $\mu\in\Lambda,$ denote by $\psi_{\mu}$ the
homomorphism from $\mathcal{L}_{\Lambda}$ onto $\mathcal{L}_{\mu}$:
\begin{equation}
\psi_{\mu}(\{a_{\lambda}\})=a_{\mu}. \label{9.9}%
\end{equation}

\begin{definition}
\label{D9.2}A Lie subalgebra $\mathcal{M}$ of $\mathcal{L}_{\Lambda}%
=\oplus_{\Lambda}\mathcal{L}_{\lambda}$ is called a \textit{subdirect} product
of the algebras $\{\mathcal{L}_{\lambda}\}_{\lambda\in\Lambda}$ if $\psi_{\mu
}(\mathcal{M})=\mathcal{L}_{\mu}$ for each $\mu\in\Lambda$.
\end{definition}

\begin{theorem}
\label{T4.3}A Banach Lie algebra $\mathcal{L}$ belongs to $\mathbf{Sem}%
(P_{\mathfrak{S}^{\max}})$ if and only if there is a bounded isomorphism
$\theta$ from $\mathcal{L}$ onto a subdirect product of some family of
subsimple Lie algebras$.$

It belongs to $\mathbf{Sem}(P_{\mathfrak{J}^{\max}})$ $($respectively\emph{,}
to $\mathbf{Sem}(P_{\mathfrak{J}}))$ if and only if there is a bounded
isomorphism from $\mathcal{L}$ onto a subdirect product of some family of
simple or one-dimensional $($respectively\emph{,} finite-dimensional$)$ Lie algebras.
\end{theorem}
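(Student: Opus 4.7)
The plan is to establish all three equivalences through a single \emph{product-of-quotients} construction, with the three parts distinguished only by the family of closed Lie ideals used. For the $P_{\mathfrak{S}^{\max}}$ case I will take $\Lambda = \mathfrak{J}_{\mathcal{L}}^{\text{sm}}$ (submaximal ideals); for the $P_{\mathfrak{J}^{\max}}$ case $\Lambda = \mathfrak{J}_{\mathcal{L}}^{\max}$; for the $P_{\mathfrak{J}}$ case $\Lambda = \mathfrak{J}_{\mathcal{L}}$. The hypothesis in case one says $P_{\mathfrak{S}^{\max}}(\mathcal{L}) = \{0\}$, which by Proposition \ref{Lmaxli} is equivalent to $\bigcap_{J \in \Lambda} J = \{0\}$; cases two and three are direct. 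In every case each $J \in \Lambda$ has finite codimension, so $\mathcal{L}_J := \mathcal{L}/J$ is finite-dimensional.

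For the forward direction I equip each $\mathcal{L}_J$ with the quotient norm; then the quotient homomorphism $\pi_J \colon \mathcal{L} \to \mathcal{L}_J$ is a contraction, and the multiplication constant of $\mathcal{L}_J$ is bounded by $t_{\mathcal{L}}$ (pass to infima over lifts in the defining inequality). Hence $\{\mathcal{L}_J\}_{J \in \Lambda}$ has a uniformly bounded family of multiplication constants and the normed direct product $\oplus_{\Lambda} \mathcal{L}_J$ of Definition \ref{D3-dir} is well-defined. I then set
\[
\theta \colon \mathcal{L} \longrightarrow \oplus_{J \in \Lambda} \mathcal{L}_J, \qquad \theta(a) = (\pi_J(a))_{J \in \Lambda}.
\]
Since $\sup_J \|\pi_J(a)\| \leq \|a\|$, $\theta$ lands in the normed direct product and is a bounded Lie homomorphism with $\ker \theta = \bigcap_J J = \{0\}$, hence injective. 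Each composition $\psi_J \circ \theta = \pi_J$ is surjective, so $\theta(\mathcal{L})$ is a subdirect product in the sense of Definition \ref{D9.2}. The quotients are of the required type: subsimple (case one) by definition of $\mathfrak{J}^{\text{sm}}$; simple or one-dimensional (case two) because a finite-dimensional algebra with no proper non-zero Lie ideal is either simple or abelian of dimension one; finite-dimensional (case three) trivially.

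For the converse, suppose $\theta \colon \mathcal{L} \to \mathcal{M}$ is a bounded isomorphism onto a subdirect product $\mathcal{M} \subseteq \oplus_\Lambda \mathcal{L}_\lambda$ with each $\mathcal{L}_\lambda$ of the prescribed type. For each $\lambda$ the map $\psi_\lambda \circ \theta \colon \mathcal{L} \to \mathcal{L}_\lambda$ is a continuous surjection, so its kernel $K_\lambda$ is a closed Lie ideal of $\mathcal{L}$ with $\mathcal{L}/K_\lambda \cong \mathcal{L}_\lambda$ algebraically, hence of finite codimension. In case one $K_\lambda \in \mathfrak{J}_{\mathcal{L}}^{\text{sm}}$ directly; in cases two and three $K_\lambda \in \mathfrak{J}_{\mathcal{L}}^{\max}$ or $\mathfrak{J}_{\mathcal{L}}$ by the correspondence theorem for Lie ideals (a maximal proper ideal of $\mathcal{L}$ above $K_\lambda$ corresponds to one in $\mathcal{L}_\lambda$, which does not exist). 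Injectivity of $\theta$ yields $\bigcap_\lambda K_\lambda = \{0\}$, so the relevant preradical vanishes on $\mathcal{L}$; in case one Proposition \ref{Lmaxli} then supplies $P_{\mathfrak{S}^{\max}}(\mathcal{L}) = \{0\}$. The main technical point, rather than a deep obstacle, is the verification that the quotient norms make $\oplus_{\Lambda} \mathcal{L}_J$ a legitimate normed direct product with uniformly bounded multiplication constants; once that is in place the theorem is a formal consequence of Proposition \ref{Lmaxli} and the elementary structure of finite-dimensional quotients.
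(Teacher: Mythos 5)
Your proposal is correct and follows essentially the same route as the paper: the same product-of-quotients construction indexed by submaximal (resp.\ maximal, resp.\ all finite-codimensional) closed Lie ideals, the same verification that the quotient multiplication constants are uniformly bounded by $t_{\mathcal{L}}$, and the same reliance on Proposition \ref{Lmaxli} to pass between $P_{\mathfrak{S}^{\max}}$ and the submaximal ideals. The only cosmetic differences are that you invoke Proposition \ref{Lmaxli} in both directions of the first case where the paper uses Lemma \ref{L9.1} together with Lemma \ref{L-sem}(ii) for one of them, and that you spell out the two remaining cases which the paper dismisses as similar.
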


\begin{proof}
We will consider the case when $\mathcal{L}\in\mathbf{Sem}(P_{\mathfrak
{S}^{\max}})$; the two remaining cases can be proved similarly.

Let $\theta$ and $\mathcal{L}_{\Lambda}$ exist. For each $\lambda\in\Lambda,$
$\psi_{\lambda}\circ\theta$ is a bounded homomorphism from $\mathcal{L}$ onto
$\mathcal{L}_{\lambda}.$ Then $J_{\lambda}:=\ker(\psi_{\lambda}\circ\theta)$
is a closed Lie ideal of $\mathcal{L}$ and $\mathcal{L}/J_{\lambda}$ is
isomorphic to $\mathcal{L}_{\lambda},$ so that $J_{\lambda}$ is submaximal.
Also $\cap_{\lambda\in\Lambda}J_{\lambda}=\{0\}$. By Lemma \ref{L9.1},
$P_{\mathfrak{S}^{\max}}(\mathcal{L}/J_{\lambda})=\{0\}.$ Therefore, by Lemma
\ref{L-sem}(ii), $P_{\mathfrak{S}^{\max}}\mathcal{(L})\subseteq\cap
_{\lambda\in\Lambda}J_{\lambda}=\{0\}.$ Thus $\mathcal{L}\in$ $\mathbf{Sem}%
(P_{\mathfrak{S}^{\text{max}}}).$

Conversely, let $\mathcal{L}\in$ $\mathbf{Sem}(P_{\mathfrak{S}^{\text{max}}%
}).$ By Proposition \ref{Lmaxli}, $\cap_{J\in\mathfrak
{J}_{\mathcal{L}}^{\text{sm}}}J=P_{\mathfrak{J}^{\text{sm}}}\left(
\mathcal{L}\right)  =P_{\mathfrak{S}^{\text{max}}}\left(  \mathcal{L}\right)
=\{0\}.$ Choose any subset $\Lambda$ of $\mathfrak{J}_{\mathcal{L}}%
^{\text{sm}}$ such that $\cap_{J\in\Lambda}J=\{0\}.$ For each $J\in\Lambda,$
the quotient Lie algebra $\mathcal{L/}J$ is subsimple. Let $q_{J}$:
$\mathcal{L}\longrightarrow\mathcal{L}/J$ be the quotient map. Then, for
$a,b\in\mathcal{L}$,%
\begin{align*}
\left\|  \lbrack q_{J}(a),q_{J}(b)]\right\|  _{\mathcal{L/}J}  &  =\left\|
q_{J}([a,b])\right\|  _{\mathcal{L/}J}=\text{ }\underset{z\in J}{\inf}\left\|
[a,b]+z\right\|  _{\mathcal{L}}\leq\text{ }\underset{z,u\in J}{\inf}\left\|
[a+z,b+u]\right\|  _{\mathcal{L}}\\
&  \leq t\underset{z,u\in J}{\inf}\left\|  a+z\right\|  _{\mathcal{L}}\left\|
b+u\right\|  _{\mathcal{L}}=t\left\|  q_{J}(a)\right\|  _{\mathcal{L/}%
J}\left\|  q_{J}\left(  b\right)  \right\|  _{\mathcal{L/}J}.
\end{align*}

Hence $t_{J}\leq t$ for all $J\in\Lambda.$ Thus $\sup\{t_{J}$: $J\in
\Lambda\}<\infty.$ Let $\mathcal{L}_{\Lambda}:=\oplus_{\Lambda}(\mathcal{L/}%
J)$ be the normed direct product. For each $a\in\mathcal{L}$, we have
$\{q_{J}(a)\}_{J\in\Lambda}\in\mathcal{L}_{\Lambda}$ and the map $\theta$:
$a\longrightarrow\{q_{J}(a)\}_{J\in\Lambda}$ is bounded homomorphism. If
$\theta(a)=0$ then $q_{J}(a)=0$ for all $J\in\Lambda,$ so that $a\in\cap
_{J\in\Lambda}J=\{0\}.$ Thus $\theta$ is an isomorphism. As $\psi_{J}%
(\theta(a))=q_{J}(a),$ we have $\psi_{J}(\theta(\mathcal{L}))=q_{J}%
(\mathcal{L})=\mathcal{L}_{J}.$ Hence the Lie algebra $\theta\left(
\mathcal{L}\right)  $ is a semidirect product of the algebras $\{\mathcal{L}%
_{\lambda}\}_{\lambda\in\Lambda}$.
\end{proof}

As an illustration, consider the Lie algebra $\mathcal{L}=\mathbb{C}%
U\oplus^{\text{id}}l^{2},$ where $U$ is the unilateral shift: $Ue_{n}=e_{n+1}$
and $(e_{n})_{n=1}^{\infty}$ is a basis in $l^{2}$. Then $\{0\}\oplus
^{\text{id}}l^{2}$ is a maximal Lie subalgebra of $\mathcal{L.}$ Let
$\mathbb{D}\subset\mathbb{C}$ be the open unit disk. For each $\lambda
\in\mathbb{D,}$ the vector $e_{\lambda}=(1,\lambda,\lambda^{2},...)$ belongs
to $l^{2}$ and the subspace $E_{\lambda}=\mathbb{C}e_{\lambda}$ is invariant
for the adjoint operator $U^{\ast}$: $U^{\ast}e_{\lambda}=\lambda e_{\lambda
}.$ Hence $E_{\lambda}^{\bot}$ is invariant for $U$ and has codimension 1 in
$l^{2}.$ Thus (see (\ref{sem})) $L_{\lambda}=\mathbb{C}U\oplus^{\text{id}%
}E_{\lambda}^{\bot}$ is a maximal Lie subalgebra of $\mathcal{L}$ of
codimension 1. The Lie algebra $\mathcal{L}$ is Frattini-free, since%
\[
P_{\frak{S}^{\text{max}}}(\mathcal{L})\subseteq(\{0\}\oplus^{\text{id}}%
l^{2})\cap(\cap_{\lambda\in\mathbb{D}}L_{\lambda})=\{0\}.
\]

To map $\mathcal{L}$ onto a subdirect product of subsimple Lie algebras,
consider two-dimensional Lie algebras $\mathcal{L}_{\lambda}=\mathbb{C}%
U_{\lambda}\oplus^{\text{id}}E_{\lambda},$ $\lambda\in\mathbb{D},$ where
$U_{\lambda}=\overline{\lambda}1_{E_{\lambda}}.$ All $\mathcal{L}_{\lambda}$
are subsimple algebras of class (\textbf{II}). Set $\mathcal{M}=\oplus
_{\lambda\in\mathbb{D}}\mathcal{L}_{\lambda}$. Denote by $P_{\lambda}$ the
orthogonal projections in $l^{2}$ onto subspaces $E_{\lambda}.$ The map
$\theta$: $\mathcal{L}\rightarrow\mathcal{M}$ defined by the rule
\[
\theta(\alpha U\oplus^{\text{id}}x)=\oplus_{\lambda\in\mathbb{D}}(\alpha
U_{\lambda}\oplus^{\text{id}}P_{\lambda}x)\in\mathcal{M}%
\]
is a homomorphism, because $P_{\lambda}U=\overline{\lambda}P_{\lambda}$ for
each $\lambda\in\mathbb{D}$, since $U^{\ast}P_{\lambda}=\lambda P_{\lambda}$.
Furthermore $\theta$ is injective. Indeed, if $\theta(\alpha U\oplus
^{\text{id}}x)=0$ then $\alpha=0$ and $P_{\lambda}x=0$ for all $\lambda
\in\mathbb{D}$. Hence $(x,e_{\lambda})=\sum_{n}(x,e_{n})\lambda^{n}=0$, for
all $\lambda\in\mathbb{D}$. Thus all $(x,e_{n})=0$, so that $x=0$. Finally,
the projection of the image $\theta(\mathcal{L)}$ on each component
$\mathcal{L}_{\lambda}$ clearly coincides with $\mathcal{L}_{\lambda}$.
Therefore $\theta(\mathcal{L)}$ is a subdirect product of the Lie algebras
$\mathcal{L}_{\lambda}$.

Theorem \ref{T4.3} gives a fairly transparent description of the class of
finite-dimensional Frattini-free algebras as direct sums of simple ''model''
examples; we will return to this subject in the next subsection. Note that in
general the subdirect sums can be indecomposable even in the commutative case
(take any Banach space of bounded analytic functions).

We shall now consider the structure of some special types of Lie algebras from
$\mathbf{Sem}(P_{\mathfrak{S}^{\text{max}}}).$

\begin{theorem}
\label{T1}\emph{(i) }If a Lie algebra $\mathcal{L}\in\mathbf{Sem}%
(P_{\mathfrak{S}^{\max}})$ is solvable then $\mathcal{L}_{[2]}=\{0\}$.

\emph{(ii) \ }Let $\mathcal{L}\in\mathbf{Sem}(P_{\mathfrak{S}^{\max}}).$ Then
$\mathcal{L}$ is nilpotent if and only if $\mathcal{L}$ is commutative$.$

\emph{(iii) }Any solvable Lie algebra in $\mathbf{Sem}(P_{\mathfrak{J}^{\max}%
})$ is commutative.
\end{theorem}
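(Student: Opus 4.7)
The plan is to reduce all three parts to the structural classification in Theorem~\ref{T4.3}, using the dimension bounds in Corollary~\ref{C7.1} on solvable and nilpotent subsimple Lie algebras. The mechanism is uniform: for $\mathcal{L}\in\mathbf{Sem}(P_{\mathfrak{S}^{\max}})$ (resp.\ $\mathbf{Sem}(P_{\mathfrak{J}^{\max}})$) there is a bounded Lie isomorphism $\theta$ of $\mathcal{L}$ onto a subdirect product inside $\oplus_{\lambda\in\Lambda}\mathcal{L}_\lambda$ with components that are subsimple (resp.\ simple or one-dimensional), each $\psi_\mu\circ\theta$ is a continuous Lie surjection onto $\mathcal{L}_\mu$ (see~(\ref{9.9})), and $\bigcap_\mu\ker\psi_\mu=\{0\}$ in the ambient direct product. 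So to show that a closed characteristic Lie ideal $R(\mathcal{L})$ vanishes, it suffices to verify $R(\mathcal{L}_\mu)=\{0\}$ for each $\mu$ and use continuity of $\psi_\mu$.

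For part (i), I would take a solvable $\mathcal{L}\in\mathbf{Sem}(P_{\mathfrak{S}^{\max}})$ and observe that every $\mathcal{L}_\mu$ is a solvable subsimple quotient. By Corollary~\ref{C7.1}, $\dim\mathcal{L}_\mu\leq 2$, hence $\dim[\mathcal{L}_\mu,\mathcal{L}_\mu]\leq 1$ and therefore $(\mathcal{L}_\mu)_{[2]}=\{0\}$. Continuity of $\psi_\mu$ together with $\psi_\mu(\overline{[A,A]})\subseteq\overline{[\psi_\mu(A),\psi_\mu(A)]}$ then gives
\[
\psi_\mu(\theta(\mathcal{L}_{[2]}))\subseteq(\mathcal{L}_\mu)_{[2]}=\{0\}
\]
for every $\mu$, so $\theta(\mathcal{L}_{[2]})\subseteq\bigcap_\mu\ker\psi_\mu=\{0\}$, and injectivity of $\theta$ yields $\mathcal{L}_{[2]}=\{0\}$.

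For part (ii) the direction ``commutative $\Rightarrow$ nilpotent'' is trivial. For the converse, the same subdirect decomposition applies: if $\mathcal{L}$ is nilpotent, each subsimple quotient $\mathcal{L}_\mu$ is nilpotent, and Corollary~\ref{C7.1} forces $\dim\mathcal{L}_\mu\leq 1$, so $\mathcal{L}_\mu$ is commutative. The intersection-of-kernels argument from (i) then pushes $[\mathcal{L},\mathcal{L}]$ into $\bigcap_\mu\ker\psi_\mu=\{0\}$, giving $[\mathcal{L},\mathcal{L}]=\{0\}$. For part (iii), I would invoke the $\mathbf{Sem}(P_{\mathfrak{J}^{\max}})$ clause of Theorem~\ref{T4.3}, realising $\mathcal{L}$ as a subdirect product of simple or one-dimensional components. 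A simple Lie algebra in the paper's sense (a summand in a semisimple algebra, having no non-zero commutative ideals) is non-commutative, hence non-solvable, so solvability of $\mathcal{L}$ forces every component $\mathcal{L}_\mu$ to be one-dimensional, and the same kernel-intersection argument yields $[\mathcal{L},\mathcal{L}]=\{0\}$.

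There is no serious obstacle; the only point requiring care in each part is the interaction between the topological closure defining $\mathcal{L}_{[n]}$ and the continuous surjection $\psi_\mu$, but since each target $\mathcal{L}_\mu$ is finite-dimensional the closure bars are vacuous there and the containment $\psi_\mu(\overline{[A,A]})\subseteq\overline{[\psi_\mu(A),\psi_\mu(A)]}=[\psi_\mu(A),\psi_\mu(A)]$ collapses the issue. The whole proof is thus a short deduction from Theorem~\ref{T4.3} and Corollary~\ref{C7.1}, with no new ingredients required.
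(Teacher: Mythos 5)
Your proof is correct and, for parts (i) and (ii), essentially the same as the paper's: both arguments rest on Corollary \ref{C7.1} (a solvable subsimple algebra has dimension at most $2$, a nilpotent one dimension at most $1$) together with the fact that the submaximal ideals of $\mathcal{L}$ intersect to $P_{\mathfrak{S}^{\max}}(\mathcal{L})=\{0\}$; you merely access this through the subdirect-product formulation of Theorem \ref{T4.3}, whereas the paper works directly with the quotients $\mathcal{L}/J$, $J\in\mathfrak{J}_{\mathcal{L}}^{\mathrm{sm}}$, via Propositions \ref{P3.2} and \ref{Lmaxli} — the two packagings are interchangeable and there is no circularity, since Theorem \ref{T4.3} precedes Theorem \ref{T1} and does not use it. The only genuine divergence is in part (iii): the paper deduces it in one line from Proposition \ref{Cder}(ii) (the Marshall-type identity $\mathcal{L}_{[1]}=P_{\mathfrak{J}^{\max}}(\mathcal{L})$ for solvable $\mathcal{L}$), while you instead use the $\mathbf{Sem}(P_{\mathfrak{J}^{\max}})$ clause of Theorem \ref{T4.3} and note that a solvable algebra has no simple quotients, forcing every component to be one-dimensional; both are valid, yours buys uniformity across the three parts at the cost of invoking the heavier classification theorem where a direct computation suffices.
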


\begin{proof}
Let $\mathcal{L}\in$ $\mathbf{Sem}(P_{\mathfrak{S}^{\max}})$. By Proposition
\ref{P3.2}(ii), for each $J\in\mathfrak{J}_{\mathcal{L}}^{\text{sm}},$ either
$J\in\mathfrak{S}_{\mathcal{L}}^{\max}$ in which case $\dim\left(
\mathcal{L}/J\right)  =1$ by (\ref{9.1}), or there is $\mathcal{M}^{J}%
\in\mathfrak
{S}_{\mathcal{L}}^{\max}$ such that $\mathcal{M}^{J}/J$ is a maximal Lie
subalgebra of $\mathcal{L}/J$ and it contains no non-zero Lie ideals of
$\mathcal{L}/J$.

(i) If $\mathcal{L}$ is solvable, $\mathcal{L}/J$ are solvable for all
$J\in\mathfrak{J}_{\mathcal{L}}^{\text{sm}}.$ By Corollary \ref{C7.1}, we have
$\dim\left(  \mathcal{L}/J\right)  \leq2,$ so that $(\mathcal{L}%
/J)_{[2]}=\{0\}.$ Hence $\mathcal{L}_{[2]}\subseteq J$ for all $J\in\mathfrak
{J}_{\mathcal{L}}^{\text{sm}}.$ Therefore, by Proposition \ref{Lmaxli},
$\mathcal{L}_{[2]}\subseteq\cap_{J\in\mathfrak{J}_{\mathcal{L}}^{\text{sm}}%
}J=P_{\mathfrak{J}^{\text{sm}}}\left(  \mathcal{L}\right)  =P_{\mathfrak
{S}^{\max}}\left(  \mathcal{L}\right)  =\{0\}.$

(ii) If $\mathcal{L}$ is nilpotent then $\mathcal{L}/J$ is nilpotent for each
$J\in\mathfrak{J}_{\mathcal{L}}^{\text{sm}}.$ By Corollary \ref{C7.1},
$\dim(\mathcal{L}/J)=1.$ Hence $\mathcal{L}^{[1]}\subseteq J$ for all
$J\in\mathfrak{J}_{\mathcal{L}}^{\text{sm}}.$ Thus, by Proposition
\ref{Lmaxli},
\[
\mathcal{L}^{[1]}\subseteq\underset{J\in\mathfrak
{J}_{\mathcal{L}}^{\text{sm}}}{\cap}J=P_{\mathfrak{J}^{\text{sm}}}\left(
\mathcal{L}\right)  =P_{\mathfrak{S}^{\max}}\left(  \mathcal{L}\right)
=\{0\}.
\]

(iii) If $\mathcal{L}\in\mathbf{Sem}(P_{\mathfrak{J}^{\max}})$ is solvable
then, by Corollary \ref{Cder}(ii), $\{0\}=P_{\mathfrak{J}^{\max}}\left(
\mathcal{L}\right)  =\mathcal{L}_{[1]}.$
\end{proof}

The condition $\mathcal{L}_{[2]}=\{0\}$ is not sufficient for a Banach Lie
algebra $\mathcal{L}$ to belong to $\mathbf{Sem}(P_{\mathfrak
{S}^{\text{max}}})$. Indeed, if $\mathcal{L}$ is the Heisenberg 3-dimensional
Lie algebra$,$ as in Example \ref{E3}(ii), then $\mathcal{L}_{[2]}=\{0\}$ and
$\mathcal{L}\notin\mathbf{Sem}(P_{\frak{S}^{\text{max}}}).$

The following corollary shows that for Frattini-free algebras there is a
natural analogue of the classical solvable radical.

\begin{corollary}
\label{C9.3}Each $\mathcal{L}\in\mathbf{Sem}(P_{\mathfrak{S}^{\max}})$ has the
largest solvable \emph{(}commutative\emph{) }Lie ideal --- the closed solvable
\emph{(}commutative\emph{) }Lie\emph{ }ideal that contains all solvable
\emph{(}commutative\emph{) }Lie ideals of $\mathcal{L}$.
\end{corollary}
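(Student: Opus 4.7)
The plan is to construct the largest commutative and largest solvable closed Lie ideals directly as intersections of preimages under the submaximal quotient maps, using the fact that each submaximal quotient is a finite-dimensional subsimple algebra. By Proposition \ref{Lmaxli} and the hypothesis $P_{\mathfrak{S}^{\max}}(\mathcal{L})=\{0\}$, one has $\bigcap_{J\in\mathfrak{J}^{\mathrm{sm}}_{\mathcal{L}}} J=\{0\}$. For each such $J$, the quotient $\mathcal{L}/J$ is finite-dimensional subsimple, and so possesses a classical largest solvable ideal $\mathcal{S}_J$ and largest commutative ideal $\mathcal{A}_J$. Writing $q_J\colon\mathcal{L}\to\mathcal{L}/J$ for the quotient map, I would set
\[
A:=\bigcap_{J\in\mathfrak{J}^{\mathrm{sm}}_{\mathcal{L}}} q_J^{-1}(\mathcal{A}_J),\qquad S:=\bigcap_{J\in\mathfrak{J}^{\mathrm{sm}}_{\mathcal{L}}} q_J^{-1}(\mathcal{S}_J),
\]
each a closed Lie ideal of $\mathcal{L}$, and show that $A$ and $S$ are the desired largest ideals.

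The commutativity of $A$ is immediate: for every $J$, $q_J(A)\subseteq\mathcal{A}_J$ is commutative, so $[A,A]\subseteq J$; intersecting yields $[A,A]=\{0\}$. For $S$, I would first establish a \emph{uniform} bound $(\mathcal{S}_J)_{[2]}=\{0\}$. Each subsimple $\mathcal{L}/J$ is Frattini-free by Lemma \ref{L9.1}; its Lie ideal $\mathcal{S}_J$ is then itself Frattini-free by balancedness of $P_{\mathfrak{S}^{\max}}$ (Theorem \ref{T4}); being also solvable, $\mathcal{S}_J$ satisfies $(\mathcal{S}_J)_{[2]}=\{0\}$ by Theorem \ref{T1}(i). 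Passing this through $q_J$ gives $S_{[2]}\subseteq J$ for every $J$, hence $S_{[2]}=\{0\}$, so $S$ is solvable of index $\leq 2$.

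The maximality claim is the routine half: for any closed commutative (resp. solvable) Lie ideal $B$ of $\mathcal{L}$, the image $q_J(B)$ is a commutative (resp. solvable) Lie ideal of $\mathcal{L}/J$ and therefore lies inside $\mathcal{A}_J$ (resp. $\mathcal{S}_J$); thus $B\subseteq q_J^{-1}(\mathcal{A}_J)$ (resp. $q_J^{-1}(\mathcal{S}_J)$) for every $J$, whence $B\subseteq A$ (resp. $S$). A non-closed commutative (solvable) ideal is absorbed by first passing to its closure, using continuity of the bracket to deduce $(\overline{B})_{[k]}\subseteq\overline{B_{[k]}}$ so that closures of commutative (solvable) ideals remain commutative (solvable).

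The main obstacle is obtaining the uniform bound $(\mathcal{S}_J)_{[2]}=\{0\}$: without some device to cap the solvability indices across all $J$, the intersection $S$ need not even be solvable. The clean way around this is precisely the balancedness argument above, which transports Frattini-freeness from $\mathcal{L}/J$ down to $\mathcal{S}_J$ and invokes Theorem \ref{T1}(i) to force index $\leq 2$ uniformly, sidestepping any case analysis of the classification in Theorem \ref{TM1}.
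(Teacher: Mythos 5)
Your construction is genuinely different from the paper's. The paper works entirely inside $\mathcal{L}$: it applies Zorn's Lemma to the set of closed solvable (resp.\ commutative) Lie ideals, using Lemma \ref{L-sem}(iii) and Theorem \ref{T1}(i) to get the uniform bound $I_{[2]}=\{0\}$ so that unions of chains and sums $I+J$ remain solvable of index $\le 4$ (resp.\ so that $I+K$ is nilpotent and hence, by Theorem \ref{T1}(ii), commutative). You instead descend to the finite-dimensional subsimple quotients $\mathcal{L}/J$, $J\in\mathfrak{J}_{\mathcal{L}}^{\text{sm}}$, and pull back their largest solvable/commutative ideals, using $\bigcap_{J}J=P_{\mathfrak{S}^{\max}}(\mathcal{L})=\{0\}$ from Proposition \ref{Lmaxli}. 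Your approach is constructive (it exhibits the ideals as explicit intersections, avoiding Zorn), and your handling of the uniform solvability bound $(\mathcal{S}_J)_{[2]}=\{0\}$ via balancedness and Theorem \ref{T1}(i) is correct; the maximality half and the $S_{[2]}\subseteq\bigcap_J J=\{0\}$ computation are also fine.

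There is, however, one step you assert without justification and which is false as a ``classical'' fact: a general finite-dimensional Lie algebra need \emph{not} have a largest commutative ideal (the Heisenberg algebra is the standard counterexample --- the sum of its two-dimensional commutative ideals is the whole non-commutative algebra). So the existence of $\mathcal{A}_J$ in each quotient $\mathcal{L}/J$ must be proved, not quoted. It is true for subsimple algebras, but seeing this requires either the classification in Theorem \ref{TM1} (in class $(\mathbf{II})$ the only non-zero commutative ideal is $\{0\}\oplus^{\mathrm{id}}X$; in the simple and $(\mathbf{I})$ cases there are none; in dimension one the algebra itself works), or the observation that the sum of two commutative ideals of a Frattini-free algebra is nilpotent and hence commutative by Theorem \ref{T1}(ii) --- which is precisely the mechanism the paper's own proof uses. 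Once you supply that justification the argument closes; without it, the commutative half of your proof has a genuine gap.
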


\begin{proof}
Let $\mathcal{L}\in$ $\mathbf{Sem}(P_{\mathfrak{S}^{\text{max}}}).$ The set
$\mathcal{E}$ of all closed solvable Lie ideals of $\mathcal{L}$ is partially
ordered by inclusion. If $I$ is a closed ideal of $\mathcal{L}$ then
$I\in\mathbf{Sem}(P_{\mathfrak{S}^{\text{max}}})$ by Lemma \ref{L-sem}(iii).
Therefore $I_{[2]}=\{0\}$ for each $I\in\mathcal{E,}$ by Theorem \ref{T1}(i).
Hence if $\{I_{\lambda}\}_{\lambda\in\Lambda}$ is a linearly ordered subset of
$\mathcal{E,}$ then the ideal $I=\cup_{\lambda\in\Lambda}I_{\lambda}$
satisfies $I_{[2]}=\{0\}$. Therefore its closure $\overline{I}$ also satisfies
$\overline{I}_{[2]}=\{0\},$ so that $\overline{I}\in\mathcal{E}$. By Zorn's
Lemma, $\mathcal{E}$ has a maximal element $J.$

Let $I\in\mathcal{E}.$ Then $I+J$ is a Lie ideal of $\mathcal{L}$, $J\subseteq
I+J$ and $(I+J)_{[1]}\subseteq I_{[1]}+J.$ Hence $(I+J)_{[2]}\subseteq
(I_{[1]}+J)_{[1]}\subseteq I_{[2]}+J=J.$ Thus $(I+J)_{[4]}\subseteq
J_{[2]}=\{0\},$ so that $\overline{I+J}\in\mathcal{E}$ and $J\subseteq
\overline{I+J}.$ As $J$ is maximal, $I\subseteq J,$ so that $J$ contains all
solvable Lie ideals of $\mathcal{L}$.

As above, the set $\mathcal{E}_{\text{c}}$ of all closed commutative Lie
ideals of $\mathcal{L}$ is partially ordered by inclusion and contains a
maximal element $K.$ Let $I\in\mathcal{E}_{\text{c}}.$ Then $I+K$ is a Lie
ideal of $\mathcal{L}$, $K\subseteq I+K$ and (see \ref{5.3})) $(I+K)^{[2]}%
\subseteq I\cap K.$ Hence $(I+K)^{[3]}\subseteq\lbrack I+K,I\cap K]=\{0\}.$
Hence $\overline{I+K}^{[3]}=\{0\},$ so that $\overline{I+K}$ is nilpotent. By
Theorem \ref{T1}(ii), $\overline{I+K}$ is commutative. Thus $\overline{I+K}%
\in\mathcal{E}_{\text{c}}$ and $K\subseteq\overline{I+K}.$ As $K$ is maximal,
$I\subseteq K,$ so that $K$ contains all commutative Lie ideals of
$\mathcal{L}$.
\end{proof}

\subsection{Frattini- and Jacobson-free finite-dimensional Lie algebras}

The general description of {P$_{\frak{J}^{\text{max}}}$-semisimple and
P$_{\frak{S}^{\text{max}}}$-semisimple Banach Lie algebras in terms of
semidirect products of subsimple algebras (Theorem \ref{T4.3}) enables one to
obtain sufficiently simple ''models'' for such algebras in the
finite-dimensional case. }

We say that a Lie algebra $L$ of operators on a finite-dimensional linear
space $X$ is \textit{decomposable} if $X$ decomposes into the direct sum of
minimal subspaces invariant for $L$: $X=X_{1}\dotplus...\dotplus X_{n}$ where
all $X_{k}$ are invariant for $L$ and the restriction of $L$ to each $X_{k}$
is irreducible. A representation of a Lie algebra will be called decomposable
if its image is decomposable.

\begin{lemma}
\label{L9.10}Let $\pi$ be a decomposable representation of a Lie algebra $L$
on a finite-dimensional space $X$ and let $\mathcal{L}=L\oplus^{\pi}X$ $($see
$(\ref{fsemi})).$ Then $P_{\frak{S}^{\max}}(\mathcal{L})\subseteq(\ker\pi\cap
P_{\frak{S}^{\max}}(L))\oplus^{\pi}\{0\}.$
\end{lemma}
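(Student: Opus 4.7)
The plan is to prove the two containments separately: first that the $X$-component of any element of $P_{\mathfrak{S}^{\max}}(\mathcal{L})$ vanishes, and then that the $L$-component lies in $\ker\pi\cap P_{\mathfrak{S}^{\max}}(L)$.

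For the $X$-component, I would exploit the decomposition $X=X_{1}\dotplus\cdots\dotplus X_{n}$ into $\pi(L)$-invariant irreducible components. For each $k$, set $M_{k}=L\oplus^{\pi}\bigoplus_{j\neq k}X_{j}$. Since $\bigoplus_{j\neq k}X_{j}$ is $\pi(L)$-invariant and $X$ is commutative, $M_{k}$ is a closed Lie subalgebra of $\mathcal{L}$ of finite codimension $\dim X_{k}$. To see $M_{k}$ is maximal, observe that any strictly larger subalgebra $M'$ contains $L\oplus^{\pi}\{0\}$, so $M'$ is determined by $M'\cap X$, which is a $\pi(L)$-invariant subspace strictly containing $\bigoplus_{j\neq k}X_{j}$; by the irreducibility of $X_{k}$ this forces $M'\cap X=X$, whence $M'=\mathcal{L}$. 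Hence $P_{\mathfrak{S}^{\max}}(\mathcal{L})\subseteq\bigcap_{k}M_{k}=L\oplus^{\pi}\{0\}$.

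For the $\ker\pi$ part, I would use the subsimple ``building blocks'' of Theorem \ref{TM1}. For each $k$, let $N_{k}=\pi(L)|_{X_{k}}\oplus^{\mathrm{id}}X_{k}$. By the irreducibility of $X_{k}$, $N_{k}$ belongs to class $\mathbf{(II)}$ (or is $1$-dimensional in degenerate cases), hence is subsimple and therefore Frattini-free by Lemma \ref{L9.1}, i.e.\ $P_{\mathfrak{S}^{\max}}(N_{k})=\{0\}$. The map $\phi_{k}\colon\mathcal{L}\to N_{k}$ given by $(a;x)\mapsto(\pi(a)|_{X_{k}};x_{k})$, where $x_{k}$ is the $X_{k}$-component of $x$, is a bounded Lie epimorphism (the invariance of $X_{k}$ under $\pi(L)$ makes the bracket formula $[(a;x),(b;y)]=([a,b];\pi(a)y-\pi(b)x)$ transport correctly). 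The preradical property of $P_{\mathfrak{S}^{\max}}$ then gives $\phi_{k}(P_{\mathfrak{S}^{\max}}(\mathcal{L}))\subseteq P_{\mathfrak{S}^{\max}}(N_{k})=\{0\}$. Applied to an element $(a;0)\in P_{\mathfrak{S}^{\max}}(\mathcal{L})$ from the previous paragraph, this yields $\pi(a)|_{X_{k}}=0$ for every $k$, hence $a\in\ker\pi$. Finally, the projection $\rho\colon\mathcal{L}\to L$, $(a;x)\mapsto a$, is a bounded Lie epimorphism, so the preradical property yields $a=\rho(a;0)\in\rho(P_{\mathfrak{S}^{\max}}(\mathcal{L}))\subseteq P_{\mathfrak{S}^{\max}}(L)$.

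The main subtle point is the maximality of $M_{k}$, which is where the irreducibility of $X_{k}$ is used in an essential way; the construction of $\phi_{k}$ is routine once one has the decomposition, and the invocation of subsimplicity of $N_{k}$ via Theorem \ref{TM1} and Lemma \ref{L9.1} is precisely what makes the $\ker\pi$ part work. The combination of the three inclusions delivers $P_{\mathfrak{S}^{\max}}(\mathcal{L})\subseteq(\ker\pi\cap P_{\mathfrak{S}^{\max}}(L))\oplus^{\pi}\{0\}$ as claimed.
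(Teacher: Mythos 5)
Your proposal is correct, and its skeleton coincides with the paper's: both proofs introduce the maximal subalgebras $M_{k}=L\oplus^{\pi}\bigl(\bigoplus_{j\neq k}X_{j}\bigr)$ to force $P_{\mathfrak{S}^{\max}}(\mathcal{L})\subseteq L\oplus^{\pi}\{0\}$ (you supply the maximality argument in more detail than the paper does, and it is sound), and both finish the $P_{\mathfrak{S}^{\max}}(L)$ part via the projection onto $L$, which is exactly Proposition \ref{semi}(i). Where you genuinely diverge is the middle step, showing that the $L$-component lies in $\ker\pi$. The paper does this in one line using only the fact that $P_{\mathfrak{S}^{\max}}(\mathcal{L})$ is a Lie ideal of $\mathcal{L}$: if $(a;0)$ lies in it and $\pi(a)x\neq 0$ for some $x$, then $[(a;0),(0;x)]=(0;\pi(a)x)$ would be a nonzero element of $P_{\mathfrak{S}^{\max}}(\mathcal{L})$ outside $L\oplus^{\pi}\{0\}$, a contradiction. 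You instead build the epimorphisms $\phi_{k}$ onto the subsimple blocks $N_{k}=\pi(L)|_{X_{k}}\oplus^{\mathrm{id}}X_{k}$ and invoke their Frattini-freeness (Lemma \ref{L9.1}) together with the preradical property. This is valid --- the degenerate case $\pi(L)|_{X_{k}}=\{0\}$ forces $\dim X_{k}=1$, as you note, and no circularity arises since the subsimplicity of class $\mathbf{(II)}$ algebras is established earlier --- but it is heavier machinery for the same conclusion; the paper's ideal-bracket trick is the more economical route, while yours has the mild virtue of making visible exactly which quotients of $\mathcal{L}$ witness the vanishing of $\pi(a)$.
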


\begin{proof}
We have $X=X_{1}\dotplus...\dotplus X_{n}$ where all $X_{k}$ are invariant for
$\pi$ and all restrictions $\pi|_{X_{k}}$ are irreducible. Then all
$M_{k}=L\oplus^{\pi}(X-X_{k})$ are maximal Lie subalgebras of $\mathcal{L,}$
so that the Lie ideal $P_{\frak{S}^{\max}}(\mathcal{L})\subseteq\cap_{k}%
M_{k}=L\oplus^{\pi}\{0\}.$ Let $(a,0)\in P_{\frak{S}^{\max}}(\mathcal{L}).$ If
$a\notin\ker\pi$ then $\pi(a)x\neq0$ for some $x\in X.$ Hence
$[(a,0),(0,x)]=(0,\pi(a)x)\in P_{\frak{S}^{\max}}(\mathcal{L})$ -- a
contradiction. Thus $P_{\frak{S}^{\max}}(\mathcal{L})\subseteq\ker\pi
\oplus^{\pi}\{0\}.$ Using Proposition \ref{semi}(i), we conclude the proof.
\end{proof}

\begin{corollary}
\label{fdFf}A finite-dimensional Lie algebra $\mathcal{L}$ is Frattini-free if
and only if it is isomorphic to the direct sum of Lie algebras of the
following types\emph{:}

\emph{(i)} \ \ one-dimensional algebras\emph{;}

\emph{(ii)} \ simple Lie algebras\emph{;}

\emph{(iii)} Lie algebras $L\oplus^{\text{\emph{id}}}X,$ where $L$ is a
decomposable Lie algebra of operators on a linear space $X$.
\end{corollary}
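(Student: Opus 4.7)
The plan is to prove each direction separately. For the if direction, I would verify that each of the three types is individually Frattini-free and extend to direct sums via Proposition \ref{Cder}(iii). Specifically: (i) a one-dimensional algebra is commutative, so $P_{\mathfrak{S}^{\max}}=\{0\}$ by (\ref{7.1}); (ii) a simple Lie algebra is subsimple in the sense of Definition \ref{D9.1} since any maximal proper subalgebra vacuously contains no non-trivial ideal, hence Frattini-free by Lemma \ref{L9.1}; (iii) for $L\oplus^{\mathrm{id}}X$ with $L\subseteq\mathcal{B}(X)$ decomposable on $X$, Lemma \ref{L9.10} applied with $\pi=\mathrm{id}$ faithful yields $P_{\mathfrak{S}^{\max}}\subseteq(\{0\}\cap P_{\mathfrak{S}^{\max}}(L))\oplus^{\mathrm{id}}\{0\}=\{0\}$.

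For the only if direction, let $\mathcal{L}$ be finite-dimensional and Frattini-free. First I would split off the center: Proposition \ref{Cder}(i) gives $Z(\mathcal{L})\cap[\mathcal{L},\mathcal{L}]=\{0\}$, so a subspace $M\supseteq[\mathcal{L},\mathcal{L}]$ complementary to $Z(\mathcal{L})$ is a Lie ideal (since $[\mathcal{L},M]\subseteq[\mathcal{L},\mathcal{L}]\subseteq M$), and $\mathcal{L}=Z(\mathcal{L})\oplus M$ decomposes as a direct sum of Lie algebras, with $Z(\mathcal{L})$ a sum of one-dimensional (type (i)) pieces and $M$ Frattini-free with $Z(M)=\{0\}$ by Lemma \ref{L-sem}(iii). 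Reducing to $Z(\mathcal{L})=\{0\}$, I would apply the Levi-Malcev decomposition $\mathcal{L}=N\oplus^{\mathrm{ad}|_S}S$ with $N=\bigoplus_i N_i$ semisimple and $S=\mathrm{rad}(\mathcal{L})$. Since $S$ is Frattini-free as an ideal (Lemma \ref{L-sem}(iii)), Theorem \ref{T1}(i) yields $[S,S]$ commutative, and Corollary \ref{C9.3} provides the largest commutative Lie ideal $A\supseteq[S,S]$. Weyl's theorem on the $N$-module $A$ then produces the decomposition $A=A_0\dotplus A_1$ into the $N$-fixed part and a direct sum of non-trivial irreducibles; simple Levi components $N_i$ acting trivially on the whole radical $S$ peel off as type (ii) direct summands, and the remaining non-trivially acting components, together with $A_1$ and an abelian complement of $A$ in $S$, assemble into a type (iii) piece $L\oplus^{\mathrm{id}}X$ whose action is decomposable by the $N$-module splitting.

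The hard part will be verifying that this assembly genuinely produces a direct sum of Lie ideals, with each summand of exactly one of the three enumerated types. The essential classical input, underlying the finite-dimensional descriptions of Stitzinger \cite{S} and Towers \cite{T} and closely related to Marshall's results \cite{M} mentioned at the end of the introduction, is that the nilradical of a finite-dimensional Frattini-free Lie algebra is commutative and coincides with $A$; this yields an abelian complement $T$ of $A$ in $S$ whose action on $A$ is faithful and decomposable, supplying the operator algebra portion of the type (iii) piece. Once this is in place, Theorem \ref{T4.3} and Theorem \ref{TM1} confirm via the subdirect-product picture that each direct summand corresponds to a subsimple quotient and falls into one of the three types, with class-(I) subsimple algebras $N\oplus N$ further splitting into two simple type (ii) summands.
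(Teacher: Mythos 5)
Your ``if'' direction is sound and coincides with the paper's: types (i) and (ii) are subsimple and hence Frattini-free by Lemma \ref{L9.1}, type (iii) follows from Lemma \ref{L9.10} since $\ker(\mathrm{id})=\{0\}$, and Proposition \ref{Cder}(iii) passes to direct sums. The centre-splitting step opening your converse (Proposition \ref{Cder}(i) gives $Z_{\mathcal{L}}\cap[\mathcal{L},\mathcal{L}]=\{0\}$, so $\mathcal{L}=Z_{\mathcal{L}}\oplus M$ with $M$ an ideal containing $[\mathcal{L},\mathcal{L}]$) is also correct.

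The converse, however, has a genuine gap at its crucial step. You assert that commutativity of the nilradical $A=\mathcal{N}_{\mathcal{L}}$ ``yields an abelian complement $T$ of $A$ in $S$ whose action on $A$ is faithful and decomposable.'' Neither claim follows from commutativity of $A$. The existence of a subalgebra complementing the nilradical in a Frattini-free algebra is a separate, non-trivial theorem (Barnes/Towers) and is essentially the content to be proved: commutativity of $A$ only says the extension $0\to A\to S\to S/A\to 0$ has abelian kernel and abelian quotient, and such extensions need not split. Likewise, decomposability of the $T$-action on $A$ is not supplied by Weyl's theorem, which handles only the Levi part $N$; it is exactly where Frattini-freeness must be invoked a second time (for instance $\mathbb{C}h\oplus^{\mathrm{id}}\mathbb{C}^{2}$ with $h$ a single Jordan block has commutative nilradical and an abelian complement but a non-decomposable action, and its Frattini ideal is non-zero), and your sketch gives no such argument. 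A smaller bookkeeping issue: the $N$-fixed part $A_{0}$ of $A$ disappears from your final assembly. In effect you import the Stitzinger--Towers decomposition as a black box, which inverts the paper's logic: the paper proves this corollary first, by combining Theorem \ref{T4.3} (a subdirect-product realization over finitely many subsimple factors, with the number $n$ of factors minimal) with the classification of subsimple algebras in Theorem \ref{TM1}; the minimality argument forces every factor to be of type $L_{i}\oplus^{\mathrm{id}}X_{i}$ with $\{0\}\oplus^{\mathrm{id}}X_{i}$ sitting inside $\mathcal{L}$ as an ideal, so that $\mathcal{L}=L\oplus^{\mathrm{id}}X$ with $L$ decomposable --- the complement and the decomposability come for free --- and only afterwards is Corollary \ref{stitz-tow} deduced.
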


\begin{proof}
The subsimple Lie algebras in (i), (ii) are Frattini-free by Lemma \ref{L9.1}.
The Lie algebras $\mathcal{L}=L\oplus^{\text{id}}X$ in (iii) are also
Frattini-free: by Lemma \ref{L9.10}, $P_{\frak{S}^{\max}}(\mathcal{L})=\{0\}$
as $\ker($id$)=\{0\}.$

Conversely, let $\mathcal{L}$ be a Frattini-free Lie algebra. If it decomposes
in the direct sum of Lie ideals then, as the preradical $P_{\frak{S}%
^{\text{max}}}$ is balanced, each of them is Frattini-free. Hence we will
assume that $\mathcal{L}$ does not decompose in the direct sum of Lie ideals.

Theorem \ref{T4.3} implies that $\mathcal{L}$ can be identified with a
subdirect product of some set $\Lambda$ of subsimple algebras $\{\mathcal{L}%
_{\lambda}\}_{\lambda\in\Lambda}$. For each $\lambda\in\Lambda$, let
$\psi_{\lambda}$ be the homomorphism from $\oplus_{\Lambda}\mathcal{L}%
_{\lambda}$ onto $\mathcal{L}_{\lambda}$ (see (\ref{9.9})). We may assume that
$\Lambda$ is finite. Indeed, for each $\lambda\in\Lambda$, $N_{\lambda}%
:=\ker\psi_{\lambda}$ is a Lie ideal of $\mathcal{L}$ and $\cap_{\lambda
\in\Lambda}N_{\lambda}=\{0\}$. As $\dim\mathcal{L}<\infty,$ there is a finite
subfamily $\lambda_{1},...,\lambda_{n}$ of $\Lambda$ with
\begin{equation}
\cap_{i=1}^{n}N_{\lambda_{i}}=\{0\}. \label{9.8}%
\end{equation}
Choose the least possible $n$ in (\ref{9.8}). It follows that $\mathcal{L}$ is
isomorphic to a subdirect product of the direct product $\mathcal{M}%
=\oplus_{i=1}^{n}\mathcal{L}_{i},$ where $\mathcal{L}_{i}=\mathcal{L}%
_{\lambda_{i}}.$ Set $N_{i}=N_{\lambda_{i}}$ and $\psi_{i}=\psi_{\lambda_{i}}.$

Using the description of subsimple algebras in Theorem \ref{TM1}, we may
assume that each $\mathcal{L}_{i}$ is either one-dimensional or a simple Lie
algebra or isomorphic to $L_{i}\oplus^{\text{id}}X_{i}$, where $L_{i}$ is an
irreducible Lie algebra of operators on a linear finite-dimensional space
$X_{i}$.

If $n=1,$ the theorem is proved. Let $n>1$. Then $\cap_{i=2}^{n}N_{i}$ is a
Lie ideal of $\mathcal{L.}$ As $\mathcal{L}_{1}=\psi_{1}(\mathcal{L),}$ we
have that $J_{1}=\psi_{1}(\cap_{i=2}^{n}N_{i})$ is a Lie ideal of
$\mathcal{L}_{1}$. If $J_{1}=\{0\}$ then $\cap_{i=2}^{n}N_{i}\subseteq
N_{1}=\ker\psi_{1},\ $so\ that $\cap_{i=2}^{n}N_{i}=\{0\}\ $which contradicts
the fact that $n$ is the least in (\ref{9.8}).

If $J_{1}=\mathcal{L}_{1}$ then, for each $x\in\mathcal{L,}$ there is
$y_{x}\in\cap_{i=2}^{n}N_{i}$ such that $\psi_{1}(x)=\psi_{1}(y_{x}).$ Hence
$x=y_{x}+(x-y_{x})$ and $x-y_{x}\in\ker\psi_{1}=N_{1}.$ As $(\cap_{i=2}%
^{n}N_{i})\cap N_{1}=\{0\}$ by (\ref{9.8}), we have that $\mathcal{L}%
=(\cap_{i=2}^{n}N_{i})\oplus N_{1}$ is the direct sum of its Lie ideals. This
contradicts our assumption. Thus $\{0\}\neq J_{1}\neq\mathcal{L}_{1},$ so that
$\mathcal{L}_{1}=L_{1}\oplus^{\text{id}}X_{1}.$ As the Lie ideal
$\{0\}\oplus^{\text{id}}X_{1}$ is contained in each Lie ideal of
$\mathcal{L}_{1},$ it is contained in $J_{1}$ and, hence, in $\mathcal{L.}$

The similar argument shows that simple and one-dimensional summands are absent
in $\mathcal{M}$ and each $\mathcal{L}_{i}=L_{i}\oplus^{\text{id}}X_{i}$.
Moreover, $\mathcal{L}$ contains the Lie ideal $\{0\}\oplus^{\text{id}}X,$
where $X=\sum_{k=1}^{n}\dotplus X_{i}.$

Set $M=\oplus_{i=1}^{n}L_{i}.$ Clearly, $M$ can be considered as a Lie algebra
of operators on $X$, preserving each $X_{i}$ and irreducible on it, and
$\mathcal{M}=M\oplus^{\text{id}}X.$ As $\mathcal{L}\subseteq\mathcal{M}$ and
contains $\{0\}\oplus^{\text{id}}X,$ there is a Lie subalgebra $L$ of $M$ such
that $\mathcal{L}=L\oplus^{\text{id}}X.$ As $\mathcal{L}$ is a subdirect
product, $\psi_{i}(\mathcal{L})=\mathcal{L}_{i}=L_{i}\oplus^{\text{id}}X_{i}$
for each $i.$ As $\psi_{i}(\{0\}\oplus^{\text{id}}X)=\{0\}\oplus^{\text{id}%
}X_{i},$ we have $\psi_{i}(L\oplus^{\text{id}}\{0\})=L_{i}\oplus^{\text{id}%
}\{0\}.$ Thus $L|_{X_{i}}\approx L_{i}$ is irreducible on $X_{i},$ so that $L$
is decomposable.
\end{proof}

One can easily deduce from Corollary \ref{fdFf} the characterization of
finite-dimensional Frattini-free Lie algebras obtained by Stitzinger \cite{S}
and Towers \cite{T}. For this we will use the following well known result (see
for example \cite[Proposition 4.4.2.3]{Ch}).

\begin{lemma}
\label{reduct}Let $L$ be a decomposable Lie algebra of operators on a
finite-dimensional space $X=X_{1}\dotplus...\dotplus X_{n},$ where all $X_{i}$
are irreducible components. Let $Z_{L}$ be the centre of $L.$ Then

\emph{(i)} $a|_{X_{i}}=\lambda_{i}(a)\mathbf{1}_{X_{i}},$ for all $a\in Z_{L}$
and $i,$ where $\lambda_{i}$ are linear functionals on $Z_{L};$

\emph{(ii)} $[L,L]$ is semisimple and $L=[L,L]\oplus Z_{L}$.
\end{lemma}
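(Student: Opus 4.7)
The plan is to prove (i) by a direct application of Schur's lemma on each irreducible summand, and to prove (ii) by identifying the centre $Z_L$ with the solvable radical $\mathrm{rad}(L)$ and then invoking the Levi--Maltsev decomposition (\ref{5.2}).

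For (i), fix $a\in Z_L$ and note that $a|_{X_i}$ commutes with the irreducible Lie algebra of operators $L|_{X_i}\subseteq\mathcal{B}(X_i)$ because $X_i$ is $L$-invariant and $a$ commutes with every element of $L$. Since $X_i$ is a finite-dimensional complex vector space and $L|_{X_i}$ acts irreducibly on it, Schur's lemma (applied to the associative subalgebra of $\mathcal{B}(X_i)$ generated by $L|_{X_i}$) forces $a|_{X_i}=\lambda_i(a)\mathbf{1}_{X_i}$ for a uniquely determined scalar $\lambda_i(a)$. The map $a\mapsto\lambda_i(a)$ is clearly linear.

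For (ii), the key step is to show that $Z_L=\mathrm{rad}(L)$. The inclusion $Z_L\subseteq\mathrm{rad}(L)$ is immediate, as $Z_L$ is an abelian (hence solvable) Lie ideal of $L$. For the reverse inclusion, fix $r\in\mathrm{rad}(L)$ and fix an index $i$. Since $\mathrm{rad}(L)$ is a solvable Lie ideal of $L$ acting on the finite-dimensional space $X_i$, Lie's theorem produces a common eigenvector $v\in X_i$ for $\mathrm{rad}(L)$, say $rv=\chi(r)v$ for a linear functional $\chi$ on $\mathrm{rad}(L)$. Because $\mathrm{rad}(L)$ is an ideal in $L$, the $\chi$-weight space $X_i^{\chi}:=\{x\in X_i:rx=\chi(r)x\text{ for all }r\in\mathrm{rad}(L)\}$ is $L$-invariant (this is the standard invariance lemma used in the proof of Lie's theorem). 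Since $X_i$ is $L$-irreducible and $X_i^{\chi}$ is non-zero, $X_i^{\chi}=X_i$, so $r$ acts as the scalar $\chi(r)$ on the whole of $X_i$. Consequently $r|_{X_i}$ commutes with $L|_{X_i}$ for every $i$, and since $X=X_1\dotplus\cdots\dotplus X_n$ this means $r$ commutes with all of $L$, i.e.\ $r\in Z_L$. Thus $\mathrm{rad}(L)=Z_L$.

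With $\mathrm{rad}(L)=Z_L$ in hand, the Levi--Maltsev decomposition (\ref{5.2}) gives $L=N\oplus^{\mathrm{ad}|_{Z_L}}Z_L$ with $N$ semisimple. But $Z_L$ is central in $L$, so $\mathrm{ad}|_{Z_L}$ is the zero homomorphism from $N$ into $\mathfrak{D}(Z_L)$, and the semidirect product is actually a direct sum of Lie algebras: $L=N\oplus Z_L$. Taking commutators, $[L,L]=[N\oplus Z_L,N\oplus Z_L]=[N,N]=N$ (using that $N$ is semisimple, so $[N,N]=N$, and that $Z_L$ kills all brackets). Therefore $[L,L]=N$ is semisimple and $L=[L,L]\oplus Z_L$, completing (ii). The one step to be careful about is the $L$-invariance of the weight space $X_i^{\chi}$, which is the standard ingredient in Lie's theorem and the only non-routine point of the argument.
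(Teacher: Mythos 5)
Your proof is correct. Note that the paper does not prove this lemma at all --- it simply cites \cite[Proposition 4.4.2.3]{Ch} --- so there is no in-paper argument to compare against; what you have written is the standard self-contained derivation that a decomposable (completely reducible) linear Lie algebra is reductive. Part (i) is exactly Schur's lemma applied to the $L$-module endomorphism $a|_{X_i}$ of the irreducible module $X_i$. For part (ii), your key step --- identifying $\mathrm{rad}(L)$ with $Z_L$ by using Lie's theorem on each $X_i$ together with the $L$-invariance of the weight space $X_i^{\chi}$ (the invariance lemma from the proof of Lie's theorem, valid here since we are over $\mathbb{C}$) --- is sound: it shows each $r\in\mathrm{rad}(L)$ acts as a scalar on each $X_i$ and hence commutes with all of $L$, since every element of $L$ preserves each $X_i$ and $X=X_1\dotplus\cdots\dotplus X_n$. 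The passage from $\mathrm{rad}(L)=Z_L$ to $L=[L,L]\oplus Z_L$ via the Levi--Maltsev decomposition, with the semidirect product degenerating to a direct sum because the radical is central, is also correct. The only point worth making explicit is that the functional $\chi$ produced by Lie's theorem depends on $i$, which is harmless since you only need scalarity of $r|_{X_i}$ for each $i$ separately.
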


In fact, for a finite-dimensional Lie algebra $L$ the conditions
$L=[L,L]\oplus Z_{L}$ and $[L,L]$ is semisimple in (ii) are equivalent
(\cite[Proposition 4.4.2.1]{Ch}); the Lie algebras satisfying these conditions
are called \textit{reductive}.

\begin{corollary}
\label{stitz-tow}\cite{S,T}A finite-dimensional Lie algebra $\mathcal{L}$ is
Frattini-free if and only if it is the direct space sum $\mathcal{L}=C\dotplus
S\dotplus J,$ where $J$ is a commutative Lie ideal of $\mathcal{L}$\emph{,}
$C$ is a commutative Lie subalgebra of $\mathcal{L}$ whose adjoint
representation on $J$ is decomposable and $S$ is a semisimple Lie subalgebra
of $\mathcal{L}$ such that $[C,S]=\{0\}$.
\end{corollary}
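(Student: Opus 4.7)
The plan is to reduce both implications to the already-established Corollary \ref{fdFf} (which describes finite-dimensional Frattini-free Lie algebras as direct sums of algebras of types (i)-(iii)) combined with Lemma \ref{reduct} on the reductive structure of decomposable operator Lie algebras. The forward direction is then essentially a bookkeeping argument: take the decomposition of $\mathcal{L}$ supplied by Corollary \ref{fdFf} and further split each type-(iii) piece using Lemma \ref{reduct}. The converse direction is an application of Corollary \ref{Frat}(ii) after recognizing $C \dotplus J$ as a Frattini-free ideal with Frattini-free quotient.

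For the forward implication, I assume $\mathcal{L}$ is Frattini-free and apply Corollary \ref{fdFf} to write $\mathcal{L} = \bigoplus_i \mathbb{C}c_i \oplus \bigoplus_j S_j \oplus \bigoplus_k (L_k \oplus^{\text{id}} X_k)$ with $S_j$ simple and each $L_k$ a decomposable operator Lie algebra on $X_k$. Lemma \ref{reduct}(ii) splits $L_k = [L_k, L_k] \oplus Z_{L_k}$ with $[L_k, L_k]$ semisimple, so I set
\[
J := \bigoplus_k X_k, \quad S := \bigoplus_j S_j \oplus \bigoplus_k [L_k, L_k], \quad C := \bigoplus_i \mathbb{C}c_i \oplus \bigoplus_k Z_{L_k}.
\]
The required properties then follow routinely: $J$ is a commutative Lie ideal since each $X_k$ is; $C$ is a commutative subalgebra and $S$ is semisimple as direct sums of such pieces; $[C,S] = \{0\}$ because within each type-(iii) summand, $Z_{L_k}$ is central in $L_k$ (Lemma \ref{reduct}(ii)), and distinct direct summands commute. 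Decomposability of $\operatorname{ad} C$ on $J$ follows from Lemma \ref{reduct}(i): $Z_{L_k}$ acts on each $L_k$-irreducible component of $X_k$ by scalars, so each such component splits further into one-dimensional joint $C$-eigenspaces.

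For the converse, given $\mathcal{L} = C \dotplus S \dotplus J$, I first verify that $I := C \dotplus J$ is a Lie ideal of $\mathcal{L}$: this uses $[S,C] = 0$, $[S,J] \subseteq J$, $[C,C] = 0$, and $[\mathcal{L}, J] \subseteq J$. Viewing $I$ as the semidirect product $C \oplus^{\text{id}} J$ via the adjoint action and setting $K := \{c \in C : [c, J] = 0\}$, I decompose $C = K \oplus C'$ as vector spaces. Since $K$ is central in $I$ (by $[K,C]=0$ and $[K,J]=0$), choosing a basis yields $I = \mathbb{C}k_1 \oplus \cdots \oplus \mathbb{C}k_r \oplus (C' \oplus^{\text{id}} J)$ as a direct sum of Lie ideals. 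The action of $C'$ on $J$ is faithful and inherits decomposability from $C$ (the $C$-irreducible decomposition of $J$ coincides with the $C'$-irreducible one, since $K$ acts trivially), so $C' \oplus^{\text{id}} J$ is of type (iii) and the $\mathbb{C}k_i$ are of type (i); by the ``if'' direction of Corollary \ref{fdFf}, $I$ is Frattini-free. The quotient $\mathcal{L}/I \cong S$ is semisimple, hence a direct sum of simple algebras, and is Frattini-free by Corollary \ref{fdFf} as well. Corollary \ref{Frat}(ii) then concludes that $\mathcal{L}$ is Frattini-free.

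I expect the main obstacle to be verifying the decomposability claims cleanly: in the forward direction, refining the $L_k$-irreducible decomposition of each $X_k$ into joint $C$-eigenspaces across all type-(iii) summands; in the converse, checking that the quotient action of $C' = C/K$ on $J$ inherits decomposability from $C$ in a way that matches the hypothesis of Corollary \ref{fdFf}(iii) (which requires a decomposition into $C'$-irreducible, not merely $C$-invariant, pieces). Both reduce to standard facts about commuting families of semisimple operators, but they are the steps where care is needed to align the decompositions used in each direction.
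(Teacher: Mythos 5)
Your forward implication is essentially the paper's own argument: both reduce to Corollary \ref{fdFf} and then split each type-(iii) summand via Lemma \ref{reduct}, and your verification that $\mathrm{ad}(C)|_J$ is decomposable (scalar action of $Z_{L_k}$ on each irreducible component, zero action of the other summands) is correct.

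The converse, however, has a genuine gap at its final step. Corollary \ref{Frat}(ii) concerns conditions a)--d), which characterize $\mathcal{F}$-\emph{radical} algebras (those with no proper closed subalgebras of finite codimension); it says nothing about the class $\mathbf{Sem}(P_{\mathfrak{S}^{\max}})$ of Frattini-free algebras, so invoking it here is a misapplication. More seriously, no extension principle of the kind you need can exist: $P_{\mathfrak{S}^{\max}}$ is balanced and upper stable but not lower stable, so Lemma \ref{L-sem}(v) does not apply, and the Heisenberg algebra of Example \ref{E3}(ii) is an explicit counterexample --- its centre $Z$ and the quotient $\mathcal{L}/Z$ are commutative, hence Frattini-free by (\ref{7.1}), yet $P_{\mathfrak{S}^{\max}}(\mathcal{L})=Z\neq\{0\}$. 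Thus ``$I$ and $\mathcal{L}/I$ Frattini-free implies $\mathcal{L}$ Frattini-free'' is false, and your argument, which discards all information about how $S$ acts on $J$ once $S$ is pushed into the quotient, cannot be repaired by a formal extension lemma. The paper's proof keeps $C$ and $S$ together: it sets $L=C\oplus S$, a reductive algebra with $Z_L=C$, takes $\pi=\mathrm{ad}|_J$, uses the Chevalley result that decomposability of $\pi|_{Z_L}$ forces decomposability of $\pi$ itself, writes $\mathcal{L}=L\oplus^{\pi}J$, and applies Lemma \ref{L9.10} together with $P_{\mathfrak{S}^{\max}}(L)=\{0\}$. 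The decomposability of the \emph{full} $L$-action on $J$ is precisely what supplies the maximal subalgebras $L\oplus^{\pi}(J-J_i)$ whose intersection kills $J$; that is the step your route omits and must be restored.
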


\begin{proof}
Let $\mathcal{L}$ be Frattini-free. Applying Corollary \ref{fdFf}, it suffices
to obtain the needed decomposition for each direct summand of $\mathcal{L}$.
For summands of type (i) and (ii) this is evident. For $\mathcal{L}%
=L\oplus^{\text{id}}X,$ where $L$ is decomposable, set $J=\{0\}\oplus
^{\text{id}}X$, $S=[L,L]\oplus^{\text{id}}\{0\}$, $C=Z_{L}\oplus^{\text{id}%
}\{0\}$ and apply Lemma \ref{reduct}.

Conversely, let $\mathcal{L}=C\dotplus S\dotplus J$ and $J,C,S$ have the
properties listed above. Then the Lie algebra $L=C\oplus S$ is reductive and
$C=Z_{L}$. Let $\pi=$ ad$|_{J}$ be the adjoint representation of $L$ on $J$.
By our assumptions, the restriction of $\pi$ to $Z_{L}$ is decomposable. It
follows that $\pi$ is decomposable (see \cite[Corollary 4.4.1.2]{Ch}). As $L$
is the direct sum of a semisimple and commutative Lie ideals, we have
$P_{\frak{S}^{\max}}(L)=\{0\}$. Hence, by Lemma \ref{L9.10}, $P_{\frak{S}%
^{\max}}(\mathcal{L})=\{0\}.$
\end{proof}

Recall that $\mathcal{L}\in\frak{L}$ is Jacobson-free if $P_{\frak{J}^{\max}%
}(\mathcal{L})=\{0\}.$ Similar, but simpler arguments give us the description
of Jacobson-free algebras (for a different proof see the end of the paper).

\begin{corollary}
\label{C10.2}A finite-dimensional Lie algebra $\mathcal{L}$ is Jacobson-free
if and only if $\mathcal{L}$ is the direct sum of a semisimple and a
commutative Lie algebras.
\end{corollary}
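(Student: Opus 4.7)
The plan is to prove both directions directly, exploiting the structural results already established for Jacobson-free algebras and the behaviour of $P_{\frak{J}^{\max}}$ on direct sums.

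For the easier direction ($\Leftarrow$), suppose $\mathcal{L}=N\oplus A$ with $N$ semisimple and $A$ commutative. Since $P_{\frak{J}^{\max}}$ is balanced (Theorem \ref{T4}), Proposition \ref{semi}(ii)\,1) gives $P_{\frak{J}^{\max}}(\mathcal{L})=P_{\frak{J}^{\max}}(N)\oplus P_{\frak{J}^{\max}}(A)$. Writing $N=N_{1}\oplus\cdots\oplus N_{k}$ as a sum of simple ideals, each $\widehat{N}_{j}=\oplus_{i\neq j}N_{i}$ is a maximal Lie ideal of $N$ with $\cap_{j}\widehat{N}_{j}=\{0\}$, so $P_{\frak{J}^{\max}}(N)=\{0\}$; for the commutative algebra $A$ every subspace of codimension one is a maximal ideal and their intersection is $\{0\}$, so $P_{\frak{J}^{\max}}(A)=\{0\}$.

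For the harder direction ($\Rightarrow$), assume $\mathcal{L}$ is Jacobson-free. Since $P_{\frak{J}^{\max}}$ is balanced, $P_{\frak{J}^{\max}}(\mathrm{rad}(\mathcal{L}))\subseteq P_{\frak{J}^{\max}}(\mathcal{L})=\{0\}$; thus $\mathrm{rad}(\mathcal{L})$ is solvable and Jacobson-free, hence commutative by Theorem \ref{T1}(iii). By the Levi--Maltsev decomposition (\ref{5.2}) we can write $\mathcal{L}=N\oplus^{\mathrm{ad}|_{R}}R$ with $R:=\mathrm{rad}(\mathcal{L})$ commutative and $N$ a semisimple Levi subalgebra; the problem is thus reduced to showing $[N,R]=\{0\}$.

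Set $V:=[N,R]\subseteq R$. Then $V$ is $N$-invariant and $[R,V]\subseteq[R,R]=\{0\}$, so $V\vartriangleleft\mathcal{L}$. The key step is to verify that $V$ sits inside every maximal proper Lie ideal $M$ of $\mathcal{L}$: since $\dim\mathcal{L}<\infty$, the quotient $\mathcal{L}/M$ has no proper non-zero ideals, so it is either simple or one-dimensional commutative. In the first case $\mathrm{rad}(\mathcal{L}/M)=\{0\}$ forces $R\subseteq M$, whence $V\subseteq M$; in the second case $[\mathcal{L},\mathcal{L}]\subseteq M$, which again yields $V=[N,R]\subseteq M$. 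Therefore $V\subseteq\cap_{M\in\frak{J}^{\max}_{\mathcal{L}}}M=P_{\frak{J}^{\max}}(\mathcal{L})=\{0\}$, so $[N,R]=\{0\}$ and $\mathcal{L}=N\oplus R$ is a genuine direct sum of a semisimple and a commutative Lie algebra.

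I expect no serious obstacle: the two case analysis on $\mathcal{L}/M$ is the only subtlety, and it works cleanly because finite-dimensionality lets us bypass the infinite-dimensional gap between $P_{\frak{J}^{\max}}$ and $P_{\frak{S}^{\max}}$. One could alternatively derive the result from Corollary \ref{stitz-tow} by first noting $\mathcal{L}\in\mathbf{Sem}(P_{\frak{J}^{\max}})\subseteq\mathbf{Sem}(P_{\frak{S}^{\max}})$ (Theorem \ref{C6.6}(i)) and then checking that the $C\dotplus J$ part of the Stitzinger--Towers decomposition collapses to a commutative ideal commuting with $S$, but the direct argument above is shorter and self-contained.
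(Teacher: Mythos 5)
Your proof is correct, but it takes a genuinely different route from the paper's. The paper's argument for the forward direction is a two-line deduction from Marshall's identity $P_{\frak{J}^{\max}}(\mathcal{L})=\mathcal{K}_{\mathcal{L}}=[\mathcal{L},\operatorname*{rad}(\mathcal{L})]$, quoted as (\ref{Fmarsh}) from \cite{M} and \cite{J}, combined with the index computation of Proposition \ref{P9.1}: Jacobson-freeness means $r_{P_{\frak{J}^{\max}}}^{\circ}(\mathcal{L})=1$, which by (\ref{10}) forces $i_{s}(\mathcal{K}_{\mathcal{L}})=0$, so the radical is central and the Levi--Maltsev semidirect product degenerates to a direct sum. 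You instead reprove, from scratch, exactly the inclusion of Marshall's identity that is needed here --- namely $[\mathcal{L},\operatorname*{rad}(\mathcal{L})]\subseteq P_{\frak{J}^{\max}}(\mathcal{L})$ --- via the clean dichotomy that $\mathcal{L}/M$ is either simple (so $R\subseteq M$) or one-dimensional (so $[\mathcal{L},\mathcal{L}]\subseteq M$) for each maximal ideal $M$. This makes your argument self-contained where the paper leans on an external citation, at the cost of being slightly longer; note also that your case analysis already yields $[\mathcal{L},R]=\{0\}$, so the appeal to Theorem \ref{T1}(iii) for commutativity of $R$ is not strictly needed. Your converse direction, via Theorem \ref{T4} and Proposition \ref{semi}(ii)\,1), is correct and is spelled out more explicitly than in the paper, which leaves it implicit. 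All the individual steps check out: $V=[N,R]$ is indeed an ideal by the Jacobi identity and $[R,R]=\{0\}$, and in finite dimensions every maximal proper Lie ideal automatically has finite codimension, so the intersection you bound is exactly $P_{\frak{J}^{\max}}(\mathcal{L})$.
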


\subsection{Frattini and Jacobson indices of finite-dimensional Lie algebras}

In this section we study the class $\frak{L}^{\mathrm{f}}$ of complex
finite-dimensional Lie algebras. As $\{0\}$ is a Lie ideal of finite
codimension in each $\mathcal{L}\in\frak{L}^{\mathrm{f}},$ we have
$\mathcal{F}(\mathcal{L})=\{0\}$ and $\frak{L}^{\mathrm{f}}\subseteq
\mathbf{Sem}(P_{\frak{J}}).$

The Lie ideal $P_{\frak{S}^{\max}}\left(  \mathcal{L}\right)  $ is called the
\textit{Frattini ideal} and $P_{\frak{J}^{\max}}\left(  \mathcal{L}\right)  $
the \textit{Jacobson ideal} of $\mathcal{L}$ (in \cite{M} it was called the
\textit{Jacobson radical}). By Theorem \ref{C6.6}, $P_{\frak{S}^{\max}}\left(
\mathcal{L}\right)  \subseteq P_{\frak{J}^{\max}}\left(  \mathcal{L}\right)
$. The ordinal numbers $r_{P_{\frak{S}^{\max}}}^{\circ}\left(  \mathcal{L}%
\right)  $ and $r_{P_{\frak{J}^{\max}}}^{\circ}\left(  \mathcal{L}\right)  $
(see (\ref{r1})) belong to $\mathbb{N}$ and satisfy%
\[
\{0\}=\mathcal{F}(\mathcal{L})=P_{\frak{S}^{\max}}^{\alpha}(\mathcal{L)}%
=P_{\frak{J}^{\max}}^{\beta}(\mathcal{L)},\text{ where }\alpha=r_{P_{\frak{S}%
^{\max}}}^{\circ}\left(  \mathcal{L}\right)  \mathbb{,}\text{ }\beta
=r_{P_{\frak{J}^{\max}}}^{\circ}\left(  \mathcal{L}\right)  \mathbb{.}%
\]
They are called, respectively, the \textit{Frattini }(see \cite[Definitions
4]{M}) and \textit{Jacobson indices} of $\mathcal{L.}$ By Theorem \ref{C6.6},
$r_{P_{\frak{S}^{\max}}}^{\circ}\left(  \mathcal{L}\right)  \leq
r_{P_{\frak{J}^{\max}}}^{\circ}\left(  \mathcal{L}\right)  <\infty.$

Denote by $\mathcal{N}_{\mathcal{L}}$ the nil-radical of $\mathcal{L}$ --- the
maximal nilpotent ideal of $\mathcal{L}$. Combining this with results of
\cite[p. 420 and 422]{M} and \cite[Theorem II.7.13]{J}, yields%
\begin{equation}
P_{\frak{S}^{\max}}\left(  \mathcal{L}\right)  \subseteq P_{\frak{J}^{\max}%
}\left(  \mathcal{L}\right)  =\mathcal{K}_{\mathcal{L}}\subseteq
\mathcal{N}_{\mathcal{L}}\subseteq\operatorname*{rad}\left(  \mathcal{L}%
\right)  \text{ where }\mathcal{K}_{\mathcal{L}}=\left[  \mathcal{L}%
,\operatorname*{rad}\left(  \mathcal{L}\right)  \right]  . \label{Fmarsh}%
\end{equation}

For a solvable Lie algebra $\mathcal{L}$, the \textit{solvability index
}$i_{s}(\mathcal{L)}$ is the least $n$ such that $\mathcal{L}_{[n]}=0.$
Marshall \cite[p. 421]{M} proved that $r_{P_{\frak{S}^{\max}}}^{\circ}\left(
\mathcal{L}\right)  \leq i_{s}(\mathcal{N}_{\mathcal{L}})+1.$ Below we refine
this result.

\begin{proposition}
\label{P9.1}\emph{(i) }If $\mathcal{L}$ is nilpotent$,$ $r_{P_{\mathfrak
{S}^{\max}}}^{\circ}\left(  \mathcal{L}\right)  =r_{P_{\mathfrak{J}^{\max}}%
}^{\circ}\left(  \mathcal{L}\right)  =i_{s}\left(  \mathcal{L}\right)  .$

\begin{itemize}
\item [$\mathrm{(ii)}$]If $\mathcal{L}$ is a finite-dimensional complex Lie
algebra$,$ then%
\begin{equation}
i_{s}(\mathcal{N}_{\mathcal{L}})\leq r_{P_{\frak{S}^{\max}}}^{\circ}\left(
\mathcal{L}\right)  \leq r_{P_{\frak{J}^{\max}}}^{\circ}\left(  \mathcal{L}%
\right)  =i_{s}(\mathcal{K}_{\mathcal{L}})+1\leq i_{s}(\mathcal{N}%
_{\mathcal{L}})+1, \label{10}%
\end{equation}
so that $1\leq r_{P_{\frak{S}^{\max}}}^{\circ}\left(  \mathcal{L}\right)  \leq
r_{P_{\frak{J}^{\max}}}^{\circ}\left(  \mathcal{L}\right)  \leq r_{P_{\frak{S}%
^{\max}}}^{\circ}\left(  \mathcal{L}\right)  +1.$
\end{itemize}
\end{proposition}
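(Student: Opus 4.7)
The plan is to deduce (i) from a classical nilpotent-algebra fact and then bootstrap (ii) from (i). For (i), I would invoke the standard theorem that in a finite-dimensional nilpotent Lie algebra every maximal proper Lie subalgebra is an ideal of codimension one; consequently $\mathfrak{S}_{\mathcal{L}}^{\max}=\mathfrak{J}_{\mathcal{L}}^{\max}$, so $P_{\mathfrak{S}^{\max}}(\mathcal{L})=P_{\mathfrak{J}^{\max}}(\mathcal{L})$. Combining this with Proposition \ref{Cder}(ii), which for solvable $\mathcal{L}$ gives $P_{\mathfrak{J}^{\max}}(\mathcal{L})=\mathcal{L}_{[1]}$, we get $P_{\mathfrak{S}^{\max}}(\mathcal{L})=P_{\mathfrak{J}^{\max}}(\mathcal{L})=\mathcal{L}_{[1]}$ in the nilpotent case. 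Since any Lie ideal of a nilpotent Lie algebra is itself nilpotent, an induction using the definition (\ref{4.o}) yields $P_{\mathfrak{S}^{\max}}^{k}(\mathcal{L})=P_{\mathfrak{J}^{\max}}^{k}(\mathcal{L})=\mathcal{L}_{[k]}$ for every $k\geq 0$. The smallest $k$ for which this vanishes is by definition $i_{s}(\mathcal{L})$, which proves (i).

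For the central equality in (ii), $r_{P_{\mathfrak{J}^{\max}}}^{\circ}(\mathcal{L})=i_{s}(\mathcal{K}_{\mathcal{L}})+1$, I would start from (\ref{Fmarsh}) to identify $P_{\mathfrak{J}^{\max}}(\mathcal{L})=\mathcal{K}_{\mathcal{L}}$ and note that $\mathcal{K}_{\mathcal{L}}\subseteq\mathcal{N}_{\mathcal{L}}$ is nilpotent. The iteration rule (\ref{4.o}) gives $P_{\mathfrak{J}^{\max}}^{n+1}(\mathcal{L})=P_{\mathfrak{J}^{\max}}^{n}(\mathcal{K}_{\mathcal{L}})$, and applying (i) to the nilpotent algebra $\mathcal{K}_{\mathcal{L}}$ shows this equals $(\mathcal{K}_{\mathcal{L}})_{[n]}$. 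Hence the superposition series stabilizes at index $n+1$ precisely when $(\mathcal{K}_{\mathcal{L}})_{[n]}=\{0\}$, i.e., when $n=i_{s}(\mathcal{K}_{\mathcal{L}})$. The inequality $i_{s}(\mathcal{K}_{\mathcal{L}})+1\leq i_{s}(\mathcal{N}_{\mathcal{L}})+1$ is immediate from the containment $\mathcal{K}_{\mathcal{L}}\subseteq\mathcal{N}_{\mathcal{L}}$ in (\ref{Fmarsh}).

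For the left-hand bound $i_{s}(\mathcal{N}_{\mathcal{L}})\leq r_{P_{\mathfrak{S}^{\max}}}^{\circ}(\mathcal{L})$, I would use balancedness. By Theorem \ref{T4} the preradical $P_{\mathfrak{S}^{\max}}$ is balanced, and by Theorem \ref{T4.1}(i) every iterate $P_{\mathfrak{S}^{\max}}^{\alpha}$ is also balanced. Applied to the ideal $\mathcal{N}_{\mathcal{L}}\vartriangleleft\mathcal{L}$, this gives $P_{\mathfrak{S}^{\max}}^{k}(\mathcal{N}_{\mathcal{L}})\subseteq P_{\mathfrak{S}^{\max}}^{k}(\mathcal{L})$ for every $k$. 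Setting $k=r_{P_{\mathfrak{S}^{\max}}}^{\circ}(\mathcal{L})$ makes the right-hand side zero, and (i) applied to the nilpotent algebra $\mathcal{N}_{\mathcal{L}}$ rewrites the left-hand side as $(\mathcal{N}_{\mathcal{L}})_{[k]}$. Thus $(\mathcal{N}_{\mathcal{L}})_{[k]}=\{0\}$, whence $i_{s}(\mathcal{N}_{\mathcal{L}})\leq k$. The middle inequality $r_{P_{\mathfrak{S}^{\max}}}^{\circ}(\mathcal{L})\leq r_{P_{\mathfrak{J}^{\max}}}^{\circ}(\mathcal{L})$ is already recorded in Theorem \ref{C6.6}(iii), so the full chain (\ref{10}) is assembled, and the sandwich $i_{s}(\mathcal{N}_{\mathcal{L}})\leq r_{P_{\mathfrak{S}^{\max}}}^{\circ}\leq r_{P_{\mathfrak{J}^{\max}}}^{\circ}\leq i_{s}(\mathcal{N}_{\mathcal{L}})+1$ gives the concluding estimate $r_{P_{\mathfrak{J}^{\max}}}^{\circ}(\mathcal{L})\leq r_{P_{\mathfrak{S}^{\max}}}^{\circ}(\mathcal{L})+1$.

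The principal hurdle is the classical reduction used at the very start, namely that every maximal proper Lie subalgebra of a finite-dimensional nilpotent Lie algebra is an ideal of codimension one. This is the only place where the nilpotent hypothesis is really exploited; once it is in hand, the remaining content is a formal manipulation of Marshall's identity (\ref{Fmarsh}), the balancedness of the iterates (Theorem \ref{T4.1}(i)), and the iteration formulas for the superposition series.
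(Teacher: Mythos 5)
Your proof is correct and follows essentially the same route as the paper: the key reduction that maximal subalgebras of a nilpotent algebra are (codimension-one) ideals, the identification $P_{\mathfrak{J}^{\max}}^{k}(\mathcal{L})=P_{\mathfrak{J}^{\max}}(\mathcal{L})_{[k-1]}$ via part (i) and Marshall's identity (\ref{Fmarsh}), and balancedness to bound $i_{s}(\mathcal{N}_{\mathcal{L}})$ from above by $r_{P_{\mathfrak{S}^{\max}}}^{\circ}(\mathcal{L})$. The only cosmetic differences are that the paper gets $P_{\mathfrak{J}^{\max}}(\mathcal{L})=\mathcal{L}_{[1]}$ in (i) directly from (\ref{Fmarsh}) rather than from Proposition \ref{Cder}(ii), and for the lower bound it iterates the derived series of the inclusion $(\mathcal{N}_{\mathcal{L}})_{[1]}\subseteq P_{\mathfrak{S}^{\max}}(\mathcal{L})$ rather than invoking balancedness of all iterates $P_{\mathfrak{S}^{\max}}^{k}$; both variants are equivalent.
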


\begin{proof}
(i) If $\mathcal{L}$ is nilpotent then (see \cite[p. 420]{M}) every maximal
Lie subalgebra is a Lie ideal, so that $P_{\mathfrak{S}^{\max}}\left(
\mathcal{L}\right)  =P_{\mathfrak{J}^{\max}}\left(  \mathcal{L}\right)  $.
Hence, by (\ref{Fmarsh}), $P_{\mathfrak{S}^{\max}}\left(  \mathcal{L}\right)
=P_{\mathfrak{J}^{\max}}\left(  \mathcal{L}\right)  =\mathcal{K}_{\mathcal{L}%
}=\mathcal{L}_{[1]}.$ Thus%
\begin{equation}
P_{\mathfrak{S}^{\max}}^{k}\left(  \mathcal{L}\right)  =P_{\mathfrak{J}^{\max
}}^{k}\left(  \mathcal{L}\right)  =\mathcal{L}_{[k]}\text{ for each }k,
\label{10.1}%
\end{equation}
so that $r_{P_{\mathfrak{S}^{\max}}}^{\circ}\left(  \mathcal{L}\right)
=r_{P_{\mathfrak{J}^{\max}}}^{\circ}\left(  \mathcal{L}\right)  =i_{s}\left(
\mathcal{L}\right)  .$

(ii) By (\ref{Fmarsh}), $P_{\mathfrak{S}^{\max}}\left(  \mathcal{L}\right)  $
and $P_{\mathfrak{J}^{\max}}\left(  \mathcal{L}\right)  $ are nilpotent for
each $\mathcal{L}\in\mathfrak{L}^{\mathrm{f}}.$ Hence, by (\ref{10.1}),%
\begin{align*}
P_{\mathfrak{S}^{\max}}^{k}\left(  \mathcal{L}\right)   &  =P_{\mathfrak
{S}^{\max}}^{k-1}\left(  P_{\mathfrak{S}^{\max}}\left(  \mathcal{L}\right)
\right)  =P_{\mathfrak{S}^{\max}}\left(  \mathcal{L}\right)  _{[k-1]}%
\text{,}\\
P_{\mathfrak{J}^{\max}}^{k}\left(  \mathcal{L}\right)   &  =P_{\mathfrak
{J}^{\max}}^{k-1}\left(  P_{\mathfrak{J}^{\max}}\left(  \mathcal{L}\right)
\right)  =P_{\mathfrak{J}^{\max}}\left(  \mathcal{L}\right)  _{[k-1]}.
\end{align*}
Let $R$ be $P_{\frak{S}^{\max}}$ or $P_{\frak{J}^{\max}}.$ By (\ref{r1}),
$r_{R}^{\circ}\left(  \mathcal{L}\right)  $ is the least $n$ such that
$R^{n}\left(  \mathcal{L}\right)  =\{0\}.$ Thus%
\begin{equation}
r_{P_{\mathfrak{S}^{\max}}}^{\circ}\left(  \mathcal{L}\right)  =i_{s}%
(P_{\mathfrak{S}^{\max}}\left(  \mathcal{L}\right)  )+1\text{ and
}r_{P_{\mathfrak{J}^{\max}}}^{\circ}\left(  \mathcal{L}\right)  =i_{s}%
(P_{\mathfrak{J}^{\max}}\left(  \mathcal{L}\right)  )+1. \label{e10.2}%
\end{equation}
Hence, by (\ref{Fmarsh}) and (\ref{e10.2}),%
\begin{equation}
r_{P_{\mathfrak{J}^{\max}}}^{\circ}\left(  \mathcal{L}\right)  =i_{s}%
(P_{\mathfrak{J}^{\max}}\left(  \mathcal{L}\right)  )+1=i_{s}(\mathcal{K}%
_{\mathcal{L}})+1\leq i_{s}(\mathcal{N}_{\mathcal{L}})+1. \label{e10.3}%
\end{equation}

As $P_{\mathfrak{S}^{\max}}$ is balanced and $\mathcal{N}_{\mathcal{L}}$ is
nilpotent, we obtain $(\mathcal{N}_{\mathcal{L}})_{[1]}=P_{\mathfrak{S}^{\max
}}(\mathcal{N}_{\mathcal{L}})\subseteq P_{\mathfrak{S}^{\max}}\left(
\mathcal{L}\right)  $ from (\ref{10.1}). Hence $(\mathcal{N}_{\mathcal{L}%
})_{[k+1]}\subseteq P_{\mathfrak{S}^{\max}}\left(  \mathcal{L}\right)
_{[k]},$ so that $i_{s}(\mathcal{N}_{\mathcal{L}})\leq i_{s}(P_{\mathfrak
{S}^{\max}}\left(  \mathcal{L}\right)  )+1.$ Combining this with (\ref{e10.2})
and (\ref{e10.3}) and taking into account that $r_{P_{\mathfrak
{S}^{\max}}}^{\circ}\left(  \mathcal{L}\right)  \leq r_{P_{\mathfrak{J}^{\max
}}}^{\circ}\left(  \mathcal{L}\right)  $, we have (\ref{10}).
\end{proof}

For ordinals $\alpha$ and $\beta$ with $\alpha\leq\beta$ set%
\[
\frak{L}_{\left(  \alpha,\beta\right)  }=\{\mathcal{L}\in\text{$\mathbf{Sem}$%
}(\mathcal{F}):r_{P_{\frak{S}^{\max}}}^{\circ}\left(  \mathcal{L}\right)
=\alpha,\text{ }r_{P_{\frak{J}^{\max}}}^{\circ}\left(  \mathcal{L}\right)
=\beta\}.
\]
From Proposition \ref{P9.1}(ii) it follows that $\frak{L}^{\mathrm{f}}$ can be
partitioned into three following classes:%
\begin{align*}
\mathrm{C}_{1}  &  =\{\mathcal{L}\in\mathfrak{L}^{\mathrm{f}}:r_{P_{\mathfrak
{S}^{\max}}}^{\circ}\left(  \mathcal{L}\right)  =r_{P_{\mathfrak{J}^{\max}}%
}^{\circ}\left(  \mathcal{L}\right)  =i_{s}(\mathcal{K}_{\mathcal{L}}%
)+1=i_{s}(\mathcal{N}_{\mathcal{L}})+1\};\\
\mathrm{C}_{2}  &  =\{\mathcal{L}\in\mathfrak{L}^{\mathrm{f}}:r_{P_{\mathfrak
{S}^{\max}}}^{\circ}\left(  \mathcal{L}\right)  =r_{P_{\mathfrak{J}^{\max}}%
}^{\circ}\left(  \mathcal{L}\right)  =i_{s}(\mathcal{K}_{\mathcal{L}}%
)+1=i_{s}(\mathcal{N}_{\mathcal{L}})\};\\
\mathrm{C}_{3}  &  =\{\mathcal{L}\in\mathfrak{L}^{\mathrm{f}}:r_{P_{\mathfrak
{S}^{\max}}}^{\circ}\left(  \mathcal{L}\right)  +1=r_{P_{\mathfrak{J}^{\max}}%
}^{\circ}\left(  \mathcal{L}\right)  =i_{s}(\mathcal{K}_{\mathcal{L}}%
)+1=i_{s}(\mathcal{N}_{\mathcal{L}})+1\},
\end{align*}
Hence%
\[
\mathrm{C}_{1}\cup\mathrm{C}_{2}\cup\mathrm{C}_{3}=\mathfrak{L}^{\mathrm{f}%
},\text{ }\mathrm{C}_{1}\cup\mathrm{C}_{2}=\cup_{n\geq1}(\frak{L}_{(n,n)}%
\cap\frak{L}^{\text{f}})\text{ and }\mathrm{C}_{3}=\cup_{n\geq1}%
(\frak{L}_{(n,n+1)}\cap\frak{L}^{\text{f}}).
\]
It follows from Proposition \ref{P9.1}(i) and the above formulae that%
\begin{align*}
\{\mathcal{L}  &  \in\frak{L}^{\text{f}}:\mathcal{L}\text{ is nilpotent}%
\}\subseteq C_{2},\text{ so that }\frak{L}_{(n,n)}\neq\varnothing\text{ for
all }n\geq1;\\
\mathrm{C}_{1}\cap\frak{L}_{(1,1)}  &  =\{\mathcal{L}\in\frak{L}^{\mathrm{f}%
}\text{: }\mathcal{L}\text{ is semisimple}\};\\
\mathrm{C}_{2}\cap\frak{L}_{(1,1)}  &  =\{\mathcal{L}\in\frak{L}^{\mathrm{f}%
}\text{: }\mathcal{L}=N_{\mathcal{L}}\oplus\mathrm{rad}\left(  \mathcal{L}%
\right)  ,\text{ }N_{\mathcal{L}}\text{ is semisimple, }\mathrm{rad}\left(
\mathcal{L}\right)  \neq\{0\}\text{ is commutative}\}.
\end{align*}

Consider the solvable Lie algebra $\mathcal{L}$ of all upper triangular
$n\times n$ matrices. Then $\mathcal{K}_{\mathcal{L}}=\mathcal{L}%
_{[1]}=\mathcal{N}_{\mathcal{L}}$ is the nilpotent Lie subalgebra of
$\mathcal{L}$ of all matrices with zero on the diagonal. The Lie subalgebras
$\mathcal{L}_{kk}=\{a=(a_{ij})\in\mathcal{L}$: $a_{kk}=0\},$ $1\leq k\leq n,$
and $\mathcal{L}_{k,k+1}=\{a=(a_{ij})\in\mathcal{L}$: $a_{k,k+1}=0\},$ $1\leq
k\leq n-1,$ have codimension $1$ in $\mathcal{L}$, so that they are maximal.
Hence%
\[
P_{\mathfrak{S}^{\max}}\left(  \mathcal{L}\right)  \subseteq(\cap
_{k}\mathcal{L}_{kk})\cap(\cap_{k}\mathcal{L}_{k,k+1})=\mathcal{L}_{[2]}.
\]
Therefore%
\[
r_{P_{\frak{S}^{\max}}}^{\circ}\left(  \mathcal{L}\right)  \overset
{(\ref{e10.2})}{=}i_{s}(P_{\frak{S}^{\text{max}}}(\mathcal{L}))+1\leq
i_{s}(\mathcal{L}_{[2]})+1\text{ and }r_{P_{\frak{J}^{\max}}}^{\circ}\left(
\mathcal{L}\right)  \overset{(\ref{e10.3})}{=}i_{s}(\mathcal{K}_{\mathcal{L}%
})+1=i_{s}(\mathcal{L}_{[1]})+1,
\]
so that $r_{P_{\mathfrak{S}^{\max}}}^{\circ}\left(  \mathcal{L}\right)  +1\leq
r_{P_{\mathfrak{J}^{\max}}}^{\circ}\left(  \mathcal{L}\right)  .$ Thus, by
Proposition \ref{P9.1},
\[
r_{P_{\mathfrak{S}^{\max}}}^{\circ}\left(  \mathcal{L}\right)
+1=r_{P_{\mathfrak{J}^{\max}}}^{\circ}\left(  \mathcal{L}\right)
=i_{s}(\mathcal{K}_{\mathcal{L}})+1=i_{s}(\mathcal{L}_{[1]})+1=n.
\]
Then $\mathcal{L}\in\mathfrak{L}_{(n,n+1)}.$ Combining this and Proposition
\ref{P9.1} yields

\begin{corollary}
$\mathfrak{L}^{\mathrm{f}}\subseteq\cup_{n}\left(  \mathfrak{L}_{(n,n)}%
\cup\mathfrak{L}_{(n,n+1)}\right)  $ and all classes $\mathfrak{L}_{(n,n)}$
and $\mathfrak{L}_{(n,n+1)}$ contain finite-dimensional Lie algebras.
\end{corollary}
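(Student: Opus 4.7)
The statement has two parts and I would treat them separately. The inclusion $\mathfrak{L}^{\mathrm{f}}\subseteq\cup_{n}(\mathfrak{L}_{(n,n)}\cup\mathfrak{L}_{(n,n+1)})$ is immediate from the bound already established: given $\mathcal{L}\in\mathfrak{L}^{\mathrm{f}}$, I would set $n=r_{P_{\mathfrak{S}^{\max}}}^{\circ}(\mathcal{L})$; since $\mathcal{L}$ is finite-dimensional, $n$ is a finite positive integer, and Proposition \ref{P9.1}(ii) forces $r_{P_{\mathfrak{J}^{\max}}}^{\circ}(\mathcal{L})\in\{n,n+1\}$, placing $\mathcal{L}$ in $\mathfrak{L}_{(n,n)}\cup\mathfrak{L}_{(n,n+1)}$.

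For the non-emptiness of $\mathfrak{L}_{(n,n+1)}$ for arbitrary $n\geq1$, I would invoke the worked example of upper triangular matrices given immediately before the corollary. Choosing the matrix size to depend on $n$ (concretely $m=2^{n-1}+1$, so that the lower-superdiagonal derived-series computation there forces $i_s(\mathcal{L}_{[1]})+1=n+1$ and $r_{P_{\mathfrak{S}^{\max}}}^{\circ}(\mathcal{L})=n$), one obtains an explicit element of $\mathfrak{L}_{(n,n+1)}$ for each $n\geq1$.

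For the non-emptiness of $\mathfrak{L}_{(n,n)}$, I would use Proposition \ref{P9.1}(i), which asserts that every nilpotent Lie algebra $\mathcal{L}$ satisfies $r_{P_{\mathfrak{S}^{\max}}}^{\circ}(\mathcal{L})=r_{P_{\mathfrak{J}^{\max}}}^{\circ}(\mathcal{L})=i_{s}(\mathcal{L})$ and thus lies in $\mathfrak{L}_{(i_{s}(\mathcal{L}),i_{s}(\mathcal{L}))}$. It therefore suffices to exhibit a nilpotent Lie algebra of solvability index exactly $n$ for each $n\geq1$. For $n=1$ the one-dimensional algebra $\mathbb{C}$ works; for $n\geq2$ I would take the Lie algebra $N_{m}$ of strictly upper triangular $m\times m$ matrices. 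A routine computation with the derived series shows that $(N_{m})_{[k]}$ is spanned by matrix units $e_{ij}$ with $j-i\geq 2^{k}$, so $i_{s}(N_{m})=\lceil\log_{2}m\rceil$; setting $m=2^{n-1}+1$ gives an example with $i_{s}(N_{m})=n$.

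The main technical point is the verification that nilpotent Lie algebras realizing every solvability index $n\geq1$ exist, and this reduces to the elementary derived-series computation for $N_{m}$ indicated above; everything else in the proof is a direct appeal to Proposition \ref{P9.1} and to the preceding example.
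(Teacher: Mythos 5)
Your proof is correct and follows the paper's own route: the inclusion comes from the inequality $r_{P_{\frak{S}^{\max}}}^{\circ}\leq r_{P_{\frak{J}^{\max}}}^{\circ}\leq r_{P_{\frak{S}^{\max}}}^{\circ}+1$ of Proposition \ref{P9.1}(ii), non-emptiness of $\frak{L}_{(n,n)}$ from Proposition \ref{P9.1}(i) applied to nilpotent algebras, and non-emptiness of $\frak{L}_{(n,n+1)}$ from the upper-triangular example preceding the corollary. Your added detail is in fact a needed repair: since the derived series of the strictly upper triangular $m\times m$ matrices $N_m$ satisfies $i_s(N_m)=\lceil\log_2 m\rceil$, the paper's assertion that the $n\times n$ upper triangular algebra lies in $\frak{L}_{(n,n+1)}$ (which would require $i_s(N_n)=n-1$) is an arithmetic slip for $n\geq 3$, and your choice $m=2^{n-1}+1$ gives the correct representative of $\frak{L}_{(n,n+1)}$; the same computation legitimately supplies the nilpotent algebras of every solvability index needed for $\frak{L}_{(n,n)}\neq\varnothing$, a point the paper leaves implicit.
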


Proposition \ref{P9.1} also gives us a proof of Corollary \ref{C10.2}.\medskip

\textit{Proof of Corollary }\ref{C10.2}. Let $\mathcal{L}\in$ $\mathbf{Sem}%
(P_{\frak{J}^{\text{max}}})\cap\frak{L}^{\mathrm{f}}.$ Then $r_{P_{\frak{J}%
^{\max}}}^{\circ}\left(  \mathcal{L}\right)  =1$ and, by (\ref{10}),
$i_{s}(\mathcal{K}_{\mathcal{L}})=0.$ Hence $\mathcal{K}_{\mathcal{L}}=\left[
\mathcal{L},\operatorname*{rad}\left(  \mathcal{L}\right)  \right]  =\{0\},$
so that rad$(\mathcal{L})$ is the centre $Z_{\mathcal{L}}$ of $\mathcal{L}.$
As $\mathcal{L}=N_{\mathcal{L}}\oplus^{\text{ad}}$rad$(\mathcal{L)}$ is the
semidirect product of a semisimple Lie algebra $N_{\mathcal{L}}$ and
rad($\mathcal{L),}$ we have that $\mathcal{L}=N_{\mathcal{L}}\oplus
Z_{\mathcal{L}}$.

E. Kissin: STORM, London Metropolitan University, 166-220 Holloway Road,
London N7 8DB, Great Britain; e-mail: e.kissin@londonmet.ac.uk\bigskip

V. S. Shulman: Department of Mathematics, Vologda State University, Vologda,
Russia; e-mail: shulman.victor80@gmail.com\bigskip

Yu. V. Turovskii: Institute of Mathematics and Mechanics, National Academy of Sciences

of Azerbaijan, 9 F. Agayev Street, Baku AZ1141, Azerbaijan

e-mail: yuri.turovskii@gmail.com
\end{document}